\def\@tocline#1#2#3#4#5#6#7{\relax
  \ifnum #1>\c@tocdepth   \else
    \par \addpenalty\@secpenalty\addvspace{#2}    \begingroup \hyphenpenalty\@M
    \@ifempty{#4}{      \@tempdima\csname r@tocindent\number#1\endcsname\relax
    }{      \@tempdima#4\relax
    }    \parindent\z@ \leftskip#3\relax \advance\leftskip\@tempdima\relax
    \rightskip\@pnumwidth plus4em \parfillskip-\@pnumwidth
    #5\leavevmode\hskip-\@tempdima
      \ifcase #1
       \or\or \hskip 1em \or \hskip 2em \else \hskip 3em \fi      #6\nobreak\relax
    \hfill\hbox to\@pnumwidth{\@tocpagenum{#7}}\par    \nobreak
    \endgroup
  \fi}
            \DeclareRobustCommand{\bigplus}{\pmb{+}}
\DeclareSymbolFont{rsfs}{OMS}{rsfs}{m}{n}
\DeclareSymbolFontAlphabet{\scr}{rsfs}
\DeclareMathOperator{\alt}{alt}
\DeclareSymbolFontAlphabet{\mathbb}{AMSb} \DeclareSymbolFontAlphabet{\mathbbl}{bbold}
\newcommand{\Prism}{{\mathlarger{\mathbbl{\Delta}}}}
\renewcommand{\m}{\mathfrak{m}}
\renewcommand{\fram}{\mathfrak{m}}
\numberwithin{equation}{theorem}
\newcommand{\mytau}{{\uptau}}
\renewcommand{\O}{\mathcal O}
\begin{document}

\title{Globally $\bigplus$-regular varieties and the minimal model program for threefolds in mixed characteristic}
\author{Bhargav Bhatt, Linquan Ma, Zsolt Patakfalvi, Karl Schwede, Kevin Tucker, Joe Waldron, Jakub Witaszek}
\address{Department of Mathematics, University of Michigan, Ann Arbor, MI 48109, USA}
\email{bhattb@umich.edu}
\address{Department of Mathematics, Purdue University, West Lafayette, IN 47907, USA}
\email{ma326@purdue.edu}
\address{\'Ecole Polytechnique F\'ed\'erale de Lausanne (EPFL), MA C3 635, Station 8, 1015 Lausanne, Switzerland}
\email{zsolt.patakfalvi@epfl.ch}
\address{Department of Mathematics, University of Utah, Salt Lake City, UT 84112, USA}
\email{schwede@math.utah.edu}
\address{Department of Mathematics, University of Illinois at Chicago, Chicago, IL 60607, USA}
\email{kftucker@uic.edu}
\address{Department of Mathematics, Michigan State University, East Lansing, MI 48824, USA}
\email{waldro51@msu.edu}
\address{Department of Mathematics, Princeton University, Fine Hall, Washington Road, Princeton NJ 08544, USA
}
\email{jwitaszek@princeton.edu}

\maketitle

\begin{abstract}
We establish the Minimal Model Program for arithmetic threefolds whose residue characteristics are greater than five.  In doing this, we generalize the theory of global $F$-regularity to mixed characteristic and identify certain stable sections of adjoint line bundles.  Finally, by passing to graded rings, we generalize a special case of Fujita's conjecture to mixed characteristic.
\end{abstract}
\setcounter{tocdepth}{2}
\tableofcontents

\section{Introduction}
The Kodaira and Kawamata-Viehweg vanishing theorems are among the most important tools used in algebraic geometry in characteristic zero and are a key component of the minimal model program \cite{BirkarCasciniHaconMcKernan}.  They are crucial to understanding linear systems as they allow the lifting of global sections of line bundles from lower dimensional subvarieties.  Unfortunately, these vanishing theorems are false in general when working over fields of positive characteristic (such as $\mathbb{F}_p$, \cite{raynaud_contre-exemple_1978}) or mixed characteristic rings (such as $\bZ$ or $\bZ_p$\footnote{Burt Totaro \cite{BurtTotaroPrivateCommunication} has pointed out to us that examples of failure of relative Kawamata-Viehweg vanishing in mixed characteristic can be obtained via methods similar to those in \cite{TotaroFailureOfKodairaVanishingForFanoVarieties}.}).

In characteristic $p > 0$, the Frobenius morphism and asymptotic Serre vanishing can be used as a replacement in some contexts.  An important class of such applications of Frobenius goes back to the development of tight closure theory and the notions of $F$-split and $F$-regular varieties \cite{HochsterHunekeTC1,MehtaRamanathanFrobeniusSplittingAndCohomologyVanishing,RamananRamanathanProjectiveNormality}. The discovery of connections between these notions and birational geometry led to a plethora of applications, for instance: 
 \cite{SmithFRatImpliesRat,MehtaSrinivasRatImpliesFRat,HaraRatImpliesFRat,HaraWatanabeFRegFPure,HaraYoshidaGeneralizationOfTightClosure, TakagiInterpretationOfMultiplierIdeals,SchwedeTuckerZhangAlterations, TakagiInversion,PatakfalviSemipositivity, ZhangYuchen-PluricanonicalMapsOfMaximalAlbenese, MustataSchwedeSeshadri, CasciniHaconMustataSchwedeNumDimPseudoEffective, HaconSingularitiesOfThetaDivisors,  BlickleSchwedeTuckerTestAlterations,das_different_different_different, gongyo_rational_2015,  CTW17,carvajal-rojas_fundamental_2016,  HaconPatakfalviGenericVanishingInCharacteristicPAndTheCharacterization, hacon_birational_2017,  aberbach_polstra, EjiriWhenIsTheAlbaneseMorphismAnAlgebraicFiberSpace,BernasconiBPFInLargeCharacteristic}.
In particular, building on \cite{KeelBasepointFreenessForNefAndBig} and \cite{SchwedeACanonicalLinearSystem}, Hacon and Xu proved the existence of minimal models for positive characteristic terminal threefolds over algebraically closed fields of characteristic $p>5$ \cite{HaconXuThreeDimensionalMinimalModel}; this was then extended in various directions  \cite{CTX15, Birkar16, Xu15Bpf, BW17, waldron2017lmmp, hashizume2019minimal, GNT06, DW19, XuLeiNonvanishing, HW19a, HaconWitaszekMMPp=5,HaconWitaszekMMP4fold}.

In the mixed characteristic setting, the theory of perfectoid algebras and spaces \cite{ScholzePerfectoidspaces} has led to spectacular advancements, including proofs of the direct summand conjecture and the existence of big Cohen-Macaulay algebras by Andr\'e  \cite{AndreDirectsummandconjecture}
 (see also \cite{BhattDirectsummandandDerivedvariant}).  Building on these techniques, the second and fourth authors developed a mixed characteristic analog of $F$-regularity called BCM-regularity in \cite{MaSchwedePerfectoidTestideal,MaSchwedeSingularitiesMixedCharBCM}, and, together with the fifth, sixth, and the seventh author, its adjoint {(plt)} variant (see \cite{MaSchwedeTuckerWaldronWitaszekAdjoint}).  In particular, it was shown that klt surface singularities of mixed characteristic $(0,p>5)$ are BCM-regular {and that inversion of adjunction holds for three-dimensional plt singularities}; the positive characteristic {analogs} of these results were key initial ingredients for the aforementioned work of Hacon and Xu.

What is missing is a mixed characteristic analog of the theory of global $F$-regularity
,
a strengthening of the log Fano condition
which was introduced in positive characteristic in \cite{SmithGloballyFRegular} (see also \cite{SchwedeSmithLogFanoVsGloballyFRegular}).  We establish such a theory, which we call \emph{globally $\bigplus$-regularity}, based upon the recent work of the first author, \cite{BhattAbsoluteIntegralClosure}, who showed that the absolute integral closure $R^+$ of an excellent domain $R$ is Cohen-Macaulay in mixed characteristic and deduced a variant of Kodaira vanishing up to finite covers.  Like in positive characteristic one may also define globally $\bigplus$-regularity by the study of BCM-regularity of section rings (normalizations of cones); in fact, this point of view will be important in proofs of some of our results.  Note that, while globally $\bigplus$-regular varieties (and pairs) could also reasonably be called \emph{global splinters}, our syntax  more closely matches existing terminology for global $F$-regularity.

As our main application we develop the mixed characteristic Minimal Model Program for threefolds when the residual characteristics are zero or  bigger than 5.
\begin{theoremA*} 
Let $R$ be a finite-dimensional excellent domain with a dualizing complex and containing $\bZ$ whose closed points have residual characteristics zero or greater than $5$.  Let $X$ be a klt integral scheme of dimension three which is projective  and surjective over $\Spec(R)$. Then we can run a Minimal Model Program on $X$ over $\Spec(R)$ which terminates with a minimal model or a Mori fiber space. 
\end{theoremA*}

\noindent In fact, our results are much stronger (see \autoref{ss:MMP-Intro} for more details).  They extend earlier results on the mixed characteristic case including {H.~Tanaka's work on the MMP for excellent surfaces (\cite{tanaka_mmp_excellent_surfaces}) and the work of} Y.~Kawamata on the MMP for mixed characteristic semistable threefolds \cite{KawamataMixedThreefolds}. Other related work appears in \cite[Theorem 4.1]{LipmanRationalSingularities}, \cite{LichtenbaumCurvesOverDVRS} and \cite{ShafarevichLecturesOnMinimalModels}.
We also point out that some variants of this theorem were obtained independently by Takamatsu and Yoshikawa in \cite{TakamatsuYoshikawaMMP} (see \autoref{remark:TakamatsuYoshikawa} for additional discussion). 

From now on, $(R,\fram)$ is a Noetherian complete local domain of mixed characteristic $(0,p>0)$ (although what follows also works when $R$ is of characteristic $p > 0$). For simplicity, in the introduction, we present our initial results in the non-boundary-case ($\Delta = 0$) and append references to full statements.

First, we discuss the analog of global $F$-regularity.  We say that a normal integral scheme $X$ proper over $R$ is \emph{globally $\bigplus$-regular} if $\sO_X \to f_* \sO_Y$ splits for every finite cover $f \colon Y \to X$, and observe the following as a straightforward consequence of generalizations and reformulations (see \autoref{sec.Vanishing}) of the vanishing theorems of \cite{BhattAbsoluteIntegralClosure}.
\begin{theoremB*}[\autoref{cor.RelativeKVVanishingForG+Regular}] \label{thmB} Suppose that $X$ is globally $\bigplus$-regular and proper over $\Spec(R)$.  If $\sL$ is a big and semiample line bundle on  $X$,  then Kawamata-Viehweg vanishing holds for $\sL$, that is $H^i(X, \omega_X \otimes \sL) = 0$ for $i>0$.
\end{theoremB*}
\noindent In positive characteristic, global $F$-regularity implies global $\bigplus$-regularity (\autoref{lemma:BregularINpositiveCharacteristic}), but the converse is an open problem even in the affine setting.

In fact, the previous result is an direct consequence of the following generalization of the vanishing theorem of Bhatt to more arbitrary excellent local bases \cite{BhattAbsoluteIntegralClosure}.  Indeed, this vanishing theorem will be used multiple times in key ways in this paper.

\begin{theoremC*}[\autoref{cor.VanishingWithoutRestrictingToPFiber}]
    Suppose that $(T,x)$ is an excellent local ring  of residue characteristic $p>0$. Let $\pi : X \to \Spec(T)$ be a proper map with $X$ integral. Suppose that $L \in \Pic(X)$ is a big and semiample line bundle. Then for all $b<0$ and all $i < \dim(X)$, we have that $H^{i}(\myR \Gamma_x(\myR\Gamma(X^+, L^b)))=0$.
\end{theoremC*}

Another key notion used in applications in positive characteristic birational geometry is that of Frobenius stable sections $S^0(X,\sM) \subseteq H^0(X,\sM)$, for a line bundle $\sM$, and its variant $T^0(X,\sM)$, introduced in \cite{SchwedeACanonicalLinearSystem} and \cite{BlickleSchwedeTuckerTestAlterations} respectively. These sections behave as if Kodaira vanishing was valid for them. In this article, we consider the following mixed characteristic analog thereof (see \autoref{def:B_0}):
\[
\myB^0(X, \sM):= \bigcap_{\substack{f \colon Y \to X\\ \textnormal{finite}}}\im \left( H^0(Y, \sO_Y( K_{Y/X} + {f^*M})) \to H^0(X, \sM) \right).
\]
We call these global sections \emph{$\bigplus$-stable}.
We also consider an adjoint (plt-like) version $\myB^0_S(X,S; \sM)$ for an irreducible divisor $S$ and a line bundle $\sM = \sO_X(M)$. 

\begin{theoremD*}The following holds for a normal integral scheme $X$ proper over $\Spec (R)$:
\begin{enumerate}
    \item Under appropriate assumptions,
		\[
		\myB^0_S(X,S; \sO_X(K_X+S+A)) \to \myB^0(S; \sO_S(K_S+A|_S))
		\]
		is surjective, where $A$ is an ample Cartier divisor,  see \autoref{thm:main-lifting}.
    \item $X$ is globally $\bigplus$-regular if and only if\ $\myB^0(X, \sO_X) = H^0(X, \sO_X)$, see \autoref{prop.GlobalBRegularSplits}.  
    \item If $X$ is $\bQ$-Gorenstein, then
		\[
		\myB^0(X, \sM):= \bigcap_{\substack{f \colon Y \to X\\ \textnormal{alteration}}}\im \left( H^0(Y, \sO_Y( K_{Y/X} + {f^*M})) \to H^0(X, \sM) \right),
		\]
		see 
        \autoref{cor.B0VsB0Alt}.  \label{itm.B0DescribedByAlterations}
	\item If $X = \Spec R$ is $\bQ$-Gorenstein and affine, then $\myB^0(X,\sO_X) = \mytau_{R^+}(R)$, where the latter term is the BCM-test ideal defined in \cite{MaSchwedeSingularitiesMixedCharBCM}, see
        \autoref{prop:B^0-agrees-with-test-ideal-for-affines}.	
	\item $\myB^0$ transforms as expected under finite maps and alterations, see \autoref{subsec.TransformationsOfB0UnderAlterations}.  	\item If $\sL$ is an ample line bundle on $X$, and $S = \bigoplus_{i \geq 0} H^0(X, \sL^i)$ is the section ring, then for $i > 0$ we have that $\myB^0(X, \omega_X \otimes \sL^i)$ is the $i$th graded piece of a test submodule $\mytau_{R^{+,\gr}}(\omega_S)$ on $S$, see \autoref{prop.B^0forgradedringsvsB^0}.  
	\item If $X$ is projective over $\Spec R$, is regular (or has sufficiently mild singularities) and $\sL$ is ample, then for $n \gg 0$
		\[
			\myB^0(X, \omega_X \otimes \sL^n) = H^0(X, \omega_X \otimes \sL^n),
		\]
		see \autoref{thm:B0-equals-H0-for-high-ample}.  \label{itm.MyB0Agrees}
\end{enumerate}
\end{theoremD*}
\noindent The proofs of the above results are based on \cite{BhattAbsoluteIntegralClosure,BhattLuriepadicRHmodp} as well as ideas developed in \cite{SchwedeACanonicalLinearSystem,BlickleSchwedeTuckerTestAlterations,MaSchwedeSingularitiesMixedCharBCM,MaSchwedeTuckerWaldronWitaszekAdjoint}. We should note that \autoref{itm.B0DescribedByAlterations} shows that $\myB^0$ agrees with the notion of $T^0$ introduced in \cite{BlickleSchwedeTuckerTestAlterations} for {$\bQ$-Gorenstein} varieties in characteristic $p > 0$ and defined and used in similar ways in mixed characteristic in the independent work \cite{TakamatsuYoshikawaMMP} mentioned above.
\begin{theoremE*}[\autoref{thm:Karen_Fujita_v1}]  Let $X$ be a $d$-dimensional scheme that is regular (or has sufficiently mild singularities) and which is flat and projective over $R$.  Set $t = \dim R$ and let $\sL$ be an ample globally generated line bundle on $X$.  Then $\omega_X \otimes \sL^{d - t + 1}$ is globally generated by $\myB^0(X, \omega_X \otimes \sL^{d-t+1})$.
\end{theoremE*}

We also note that we obtain related global generation results for adjoint line bundles $\omega_X \otimes \sL$ via Seshadri constants, see \autoref{theorem:seshadri}.

One should expect that this variant of the Fujita conjecture would hold for any $X$ admitting BCM-rational singularities (in the sense of \cite{MaSchwedeSingularitiesMixedCharBCM}), as in \cite{SmithFujitaFreenessForVeryAmple,KeelerFujita}.   Indeed, our argument would show this if we knew that the formation of our test ideals commuted with localization in a sufficiently strong sense.  Indeed, a limited localization result from \cite{MaSchwedeTuckerWaldronWitaszekAdjoint} was how we proved \autoref{itm.MyB0Agrees} above, which was used in our proof of this theorem.
The question of whether BCM-test ideals localize in general is one of the key remaining open problems about BCM-singularities.  In forthcoming work, we shall prove that localization holds in certain circumstances and derive geometric consequences.

We warn the reader however that, in general, the localization is false for $\myB^0(X,\sM)$ when $X$ is projective: 
\begin{theoremF*}[{\autoref{ex:elliptic_curve}}]
	Let $E$ be a smooth elliptic curve over $\bZ_p$. 
		 Then
	\begin{enumerate}
	\item $\myB^0(E, \sO_E) = 0$, but 
	\item $\myB^0(E_{\bQ_p}, \sO_{E_{\bQ_p}}) = \bQ_p$. 
	\end{enumerate}
\end{theoremF*}
\noindent This also shows that in contrast to positive characteristic, $\myB^0(X,\sM)$ cannot be calculated on a single finite cover (or an alteration).

Our definition of $\myB^0$ works most naturally when the base ring is complete. However, certain partial results on lifting sections can be obtained when the base is not complete, see \autoref{cor:lifting_from_BCM-regular}.
Since most geometric results can be deduced from the complete case, we shall always assume, when talking about $\myB^0$, that the base is complete. In particular, our setup allows for running the Minimal Model Program over {algebraic and analytic singularities}. Since many results of \cite{BhattAbsoluteIntegralClosure} assume that the base is finitely presented over a DVR, we provide generalizations thereof in \autoref{sec.Vanishing}.

\subsection{Minimal Model Program} \label{ss:MMP-Intro}
In this subsection, $R$ is an excellent domain of finite Krull dimension {admitting a dualizing complex. In most theorems, we will also} assume that the closed points of $R$ have residual characteristics {zero or} greater than $5$ (the cases $R=\mathbf{Z}[1/30]$ or $R=\mathbf{Z}_p$ for $p > 5$ are already interesting). Let $T$ be a quasi-projective scheme over $R$.

\begin{theoremG*}[MMP, \autoref{prop:psef_termination}, \autoref{thm:MFS}] Let $(X,B)$ be a three-dimensional $\bQ$-factorial dlt pair with $\mathbb{R}$-boundary, which is projective over $T$.  {Assume that {the image of $X$ in $T$ is of positive dimension and that} $T$ has no residue fields of characteristic $2,3$ or $5$}.  

	If $K_X+B$ is pseudo-effective, then {we can run a $(K_X+B)$-MMP and any sequence of steps of this MMP terminates with a log minimal model.} 

	If $K_X+B$ is not pseudo-effective, then we can run a $(K_X+B)$-MMP with scaling over $T$ which terminates with a Mori fiber space.
\end{theoremG*}

Note that the assumption on {the image of $X$ in $T$}  is needed {because we do not know that all flips terminate in purely positive characteristic. {In fact, even the MMP with scaling is not known to terminate when the base field is imperfect, however, log minimal models exist in this case by \cite{DW19}}.  Also, we  do not know the existence of Mori fibre spaces} when $T=\Spec(k)$ for an imperfect field $k$. Indeed,   we do not know the validity of the Borisov-Alexeev-Borisov conjecture in this setting, the version of which over an algebraically closed field was used in \cite{BW17}.

\begin{theoremH*}[{Base-point-free theorem, \autoref{thm:bpf}, \autoref{thm:MMP_bpf}}] Let $(X,B)$ be a three-dimensional $\bQ$-factorial klt pair with $\mathbb{R}$-boundary admitting a projective morphism $f \colon X \to T$.
 Let $L$ be an $f$-nef $\bQ$-Cartier divisor on $X$ such that $L-(K_X+B)$ is $f$-big and nef. Suppose that
\begin{enumerate}
    \item $L$ is $f$-big, or
    \item the image of $X$ in $T$ is positive dimensional and {$T$ has no residue fields of characteristic $2,3$ or $5$}.
\end{enumerate}
Then, $L$ is $f$-semiample.
\end{theoremH*}

Note that a similar result for $\mathbb{R}$-divisors is proved in \autoref{thm:bpf_for_R_boundary}.

\begin{theoremI*}[{Cone theorem, \autoref{thm:full-cone-theorem}}]
Let $(X,\Delta)$ be a three-dimensional $\mathbb{Q}$-factorial dlt pair with $\mathbb{R}$-boundary, projective over $T$ {having no residue fields of characteristic $2,3$ or $5$} {and such that the image of $X$ in $T$ is of positive dimension}.  Then there exists a countable collection of curves\footnote{curves in this article are assumed to be projective over the base, see the definition in \autoref{sec:preliminaries_LMMP}} $\{C_i\}$ over $T$ such that 
\begin{enumerate}
    \item 
\[
\overline{\mathrm{NE}}(X/T) = \overline{\mathrm{NE}}(X/T)_{K_X+\Delta\geq 0} + \sum_i \bR_{\geq 0}[C_i],
\]
\item The rays $[C_i]$ do not accumulate in the half space $(K_X+\Delta)_{<0}$, and

        \item For all $i$, there is a positive integer $d_{C_i}$ such that $$0<-(K_X+\Delta)\cdot_k C\leq 4 d_{C_i}$$ and if $L$ is any Cartier divisor on $X$, then $L\cdot_k C_i$ is divisible by $d_{C_i}$, where $k$ is the residue field of the closed point of $T$ lying under $C$.
\end{enumerate}
\end{theoremI*}
\noindent Note that we cannot expect the bounds on extremal rays to be as in characteristic zero, since the residue fields of $T$ might not be {algebraically closed (cf.\ \cite[Example 7.3]{tanaka_behaviour} and} \cite{tanaka_mmp_excellent_surfaces,DW19}).

Besides the above constructions and results on $\myB^0$,
the proofs of the above results are based on the recent generalization of Keel's theorem on the semiampleness of line bundles to mixed characteristic (see \cite{Witaszek2020KeelsTheorem}), the MMP for mixed characteristic surfaces (see \cite{tanaka_mmp_excellent_surfaces}), and all the previous work on the positive characteristic MMP (most notably: \cite{HaconXuThreeDimensionalMinimalModel} for the existence of pl-flips with standard coefficients, \cite{Birkar16} for the existence of pl-flips with arbitrary coefficients, \cite{BW17} for the termination of the MMP with scaling and the existence of Mori fiber spaces, and \cite{DW19} for the generalization of the cone and contraction theorems to non-perfect residue fields).

Our proof of the fact that pl-flips, with standard coefficients, exist follows the strategy of \cite{HaconXuThreeDimensionalMinimalModel}, see \autoref{Section:flips}. Although we employ all key ideas of their work, we are able to simplify each step. Further, we provide a new proof of the base point free theorem for nef and big line bundles; we infer it from the mixed characteristic Keel's theorem by employing the recent work of Koll\'ar, \cite{Kollar2020RelativeMMPWithoutQfactoriality}, on the  non-$\bQ$-factorial MMP, and the ideas of \cite{HW19a}.  In fact, our proof yields the validity of the base point free theorem for big and nef line bundles for threefolds in any positive characteristic $p>0$, a result which was not known before.

{The termination of all flips when the image of $X$ in $T$ has positive dimension and when $K_X+\Delta$ is pseudo-effective, is proven by the argument of Alexeev-Hacon-Kawamata, see \cite{AHK07}.  Our proof of the base point free theorem for non-big line bundles uses this together with abundance in lower dimensions to provide substantial simplifications over the argument from \cite{BW17}. }{Furthermore, our more general set-up also requires a different proof of the cone theorem.  These are used to deduce termination with scaling and the existence of Mori fiber spaces following \cite{BW17}.}

\begin{remark} \label{remark:TakamatsuYoshikawa}
    While finalizing our project, we were contacted by Teppei Takamatsu and Shou Yoshikawa, who informed us that they were working on related topics (see \cite{TakamatsuYoshikawaMMP}). In their article, among many other things, they show the validity of some special cases of the three-dimensional MMP in all (mixed) characteristics $p \geq 0$: for semistable threefolds (generalizing \cite{KawamataMixedThreefolds}) and for resolutions of singularities. Aside from \cite{BhattAbsoluteIntegralClosure} and   \cite{KawamataMixedThreefolds}, their work builds upon ideas from the proof of the existence of some flips  discovered recently in \cite{HaconWitaszekMMP4fold} and on the results of \cite{HW19a}.  They also define and study the notion of global $T$-regularity which is very closely related to our global $\bigplus$-regularity, and obtain results on lifting sections.
            \end{remark}

\subsection{Applications to moduli theory}

We have the following sample corollaries to the moduli theory of surfaces. We recall that stable surfaces are the two dimensional generalizations of stable curves. In particular, they are supposed to provide a good compactification of the moduli space of smooth canonically polarized surfaces. The present article concludes the last step needed to show that their moduli stack exists over $\bZ[1/30]$ (see \cite{PatakfalviProjectivityStableSrufaces} for a historical overview of the subject).

\begin{theoremJ*}
(Existence of $\overline{\sM}_{2,v}$ over $\bZ[1/30]$, \autoref{cor:moduli_exists})
\begin{enumerate}
    \item 
 The moduli stack $\overline{\sM}_{2,v}$ of stable surfaces of volume $v$ over $\bZ[1/30]$ exists as a separated Artin stack of finite type over $\bZ[1/30]$ with finite diagonal.
\item 
 The  coarse moduli space $\overline{\mathrm{M}}_{2,v}$ of stable surfaces of volume $v$  over $\bZ[1/30]$  exists as a separated algebraic space of finite type over $\bZ[1/30]$.
\end{enumerate}
\end{theoremJ*}

Unfortunately at this point it is not known whether $\overline{\sM}_{2,v}$ is proper, and $\overline{\mathrm{M}}_{2,v}$ is projective over $\bZ[1/30]$. The best we can say is the following. 

\begin{theoremK*}
(\autoref{thm:closure_moduli_proper})
{Fix an integer $v > 0$} and let 
\begin{equation*}
    d=\prod_{p \textrm{ prime, } p \leq f(v)} p,  
        \qquad \textrm{where } \qquad
        f(v)= \left\{
    \begin{array}{ll}
    373 & \textrm{if $v=1$} \\[10pt]
    213 v + 48 \qquad & \textrm{if $v \geq 2$.}
    \end{array}
    \right.
\end{equation*}
Then, the closure $\overline{\sM}_{2,v}^{\sm}$ of the locus of smooth surfaces  in $\overline{\sM}_{2,v}$ is proper over $\bZ[ 1 / d ]$. Additionally, it admits a projective coarse moduli space $\overline{\mathrm{M}}_{2,v}^{\sm}$ over $\bZ[ 1 / d ]$.
\end{theoremK*}

These results are shown in \autoref{sec:applications}.

\subsection{Applications to commutative algebra}

We highlight one more standard corollary of the minimal model program which we expect to be useful in commutative algebra.  It follows from the above results as in \cite[Exercises 108, 109]{KollarExercisesInBiratGeom}.
\begin{corollaryL*}
	Suppose $(X = \Spec R, \Delta)$ is a three-dimensional klt pair where $R$ is an excellent local domain of 
		residue characteristic $p$ for $p>5$.  Then for every Weil divisor $D$ on $X$ we have that the local section ring $\bigoplus_{i \geq 0} \sO_X(iD)$ is finitely generated.  In other words, if $I$ is an ideal of pure height one in $R$, then the symbolic power algebra
	\[
		R \oplus I \oplus I^{(2)} \oplus I^{(3)} \oplus \dots
	\]
	is finitely generated.
\end{corollaryL*}
    
This result in characteristic $p$ has applications to tight closure theory. In fact, combining the above Corollary with \cite[Theorem B]{aberbach_polstra} yields a generalization of \cite[Theorem A]{aberbach_polstra} from rings essentially of finite type over a field to the case of excellent local rings.

\begin{corollaryM*}
Let $(R,\m)$ be a four-dimensional excellent local ring  of equal characteristic $p>5$. If $R$ is $F$-regular then $R$ is strongly $F$-regular. 
\end{corollaryM*}

{\subsection{Applications to four-dimensional Minimal Model Program and liftability}
In \cite{HaconWitaszekMMP4fold}, it is shown that a variant of the four-dimensional semistable Minimal Model Program over curves and over singularities is valid in positive characteristic $p>5$ contingent upon the existence of resolutions of singularities. Using the techniques and results of our paper as well as the generalisation of the result of Cascini and Tanaka on relative semiampleness (now proven in \cite{Witaszek21}), this semistable MMP may be extended to mixed characteristic. In turn, this may be used to show that liftability of three-dimensional varieties of characteristic $p>5$ is stable under the Minimal Model Program. These results {are now} contained in an update to \cite{HaconWitaszekMMP4fold}. }   

\subsection{Technical notes} We summarize here the major technical points of the article.
\begin{enumerate}
\item Most of the theory developed in the article assumes we are working over a  complete local  base.  This lets us show, in \autoref{lem.B0AsInverseLimit}, that elements of $\myB^0(X, \cO_X(M))$ have \emph{compatible} systems of pre-images in $H^0(Y, \cO_Y(K_{Y/X} + f^* M))$.  
In fact, even in characteristic $p > 0$, \cite{DattaMurayamaTate} gives examples of excellent regular local (non-$F$-finite) rings that are not $F$-split.  It follows that there cannot be \emph{compatible systems} of pre-images for these examples for $X = \Spec R$.  Our proofs crucially use this compatibility (or Matlis dual versions).  In proofs, typically completeness comes as the necessary condition to apply Matlis-duality, e.g., \autoref{cor.B0VsB0Alt} and \autoref{thm:main-lifting}.

\item A priori plt pairs in the non-$\bQ$-factorial setting could have intersecting boundary components, cf., \autoref{lem:properties-of-plt}. 
\item We needed Bertini-type statements over a local ring of mixed characteristic, see \autoref{sec:Bertini}.
\item The known resolution theorems for Noetherian excellent schemes of dimension $3$ do not produce resolutions by sequences of blow-ups of non-singular centers. See \autoref{rem:dlt_anti_ample}.
\item When we pass to the localization or the completion of the base, then $\bQ$-factoriality or the Picard number being $1$ may be lost. In particular certain theorems and definitions had to be adapted, e.g.,  the paragraphs after \autoref{def:singularities_of_the_MMP} and \autoref{def:pl_flipping_contraction}, as well as the proof of  \autoref{cor:flips-exist2}. 
\item When working over arbitrary Noetherian excellent schemes, it can happen that the codimension and the dimension of a closed subscheme does not add up to the dimension of the ambient scheme, cf., \autoref{remark:divisors-of-unexpected-dimension}. 
\item For the technical advances related to the Minimal Model Program, see the beginnings of \autoref{Section:flips} and \autoref{section:MinimalModelProgram}.\end{enumerate}

\subsection*{Acknowledgements}
The authors thank 
Javier Caravajal-Rojas, Paolo Cascini, Christopher Hacon, Srikanth Iyengar, J\'anos Koll\'ar, Alicia Lamarche, Olivier Piltant, Hiromu Tanaka, Kazuma Shimomoto, Michael Temkin, and Burt Totaro for valuable conversations related to, and useful comments on, this paper.  We also thank the referees for numerous thoughtful and valuable comments on previous drafts of this article.
\begin{itemize}
	\item{} This material is partially based upon work supported by the National Science Foundation under Grant No. DMS-1440140 while the authors were in residence at the Mathematical Sciences Research Institute in Berkeley California during the Spring 2019 semester.  The authors also worked on this while attending an AIM SQUARE in June 2019.
	\item{} {Bhatt was supported by NSF Grant DMS \#1801689, NSF FRG Grant \#1952399, a Packard fellowship, and the Simons Foundation grant \#622511.}
	\item{} {Ma was supported by NSF Grant DMS \#190167, NSF FRG Grant \#1952366, and a fellowship from the Sloan Foundation.}
	\item{} {Patakalvi was partially supported by the following grants: grant \#200021/169639 from the Swiss National Science Foundation,  ERC Starting grant \#804334.}
	\item{} {Schwede was supported by NSF CAREER Grant DMS \#1252860/1501102, NSF Grants \#1801849 and \#2101800, NSF FRG Grant \#1952522 and a Fellowship from the Simons Foundation.}
	\item{} {Tucker was supported by NSF Grant DMS \#1602070, \#1707661 and \#2200716 and by a Fellowship from the Sloan Foundation.}
	\item{} Waldron was supported by a grant from the Simons Foundation \#850684.
	\item{} {Witaszek was supported by the National Science Foundation DMS \#2101897 and under Grant No.\ DMS-1638352 at the Institute for Advanced Study in Princeton.}
\end{itemize}

\section{Preliminaries}
\label{sec:notation}
\label{sec:preliminaries}

\emph{For much of the article we work over an excellent domain $R$ of finite Krull dimension with a dualizing complex.  Unless otherwise specified, we shall write $R^+$ to denote an absolute integral closure of $R$ in the sense of \cite{ArtinJoins} (i.e., the integral closure of $R$ in an algebraic closure of $\mathrm{Frac}(R)$); this object is unique up to isomorphism, and our constructions will be independent of the specific choice. {Moreover}, except for \autoref{sec:preliminaries}, \autoref{sec.Vanishing} and \autoref{section:MinimalModelProgram} or where otherwise noted, we will also assume that $(R, \fram)$ is a complete local Noetherian domain whose residual characteristic is $p > 0$ (in this case $R$ is excellent \cite[\href{https://stacks.math.columbia.edu/tag/07QW}{Tag 07QW}]{stacks-project}, it has finite Krull dimension \cite[\href{https://stacks.math.columbia.edu/tag/0323}{Tag 0323}]{stacks-project}, and it admits a dualizing complex as discussed in \autoref{sec:dualizing_complexes_local_duality}).  Most typically, we are interested in the case that $R$ is of mixed characteristic $(0,p>0)$.  Now suppose that a scheme $S$ is excellent with a dualizing complex (most typically $S = \Spec R$).  Observe that any scheme $X$ with a map $f : X \to S$ of finite type is also excellent \cite[\href{https://stacks.math.columbia.edu/tag/07QU}{Tag 07QU}]{stacks-project} and has a dualizing complex induced from $S$, see \cite[\href{https://stacks.math.columbia.edu/tag/0AUA}{Tag 0AUA}]{stacks-project}, which we take as $\omega_X^{\mydot} = f^! \omega_S^{\mydot}$ when $f$ is separated (our typical case).  
Furthermore, in \autoref{section:MinimalModelProgram} we will sometimes assume that our schemes $X$ have $X_{\mathbb{Q}}$ non-trivial.}

{In this article, we say that a scheme over $R$ is $n$-dimensional if its  \emph{absolute dimension} is equal to $n$ (as opposed to the relative dimension).} Furthermore, the underlying scheme of a pair is always assumed to be normal, excellent, Noetherian, integral and admitting a dualizing complex (see \autoref{def:log-pair} for the precise statement).

If $X$ is a normal integral scheme over $R$, then $X_{\fram}$ denotes the fiber over $\fram \in \Spec R$.

\begin{definition}
\label{def:alteration}
	Given an integral Noetherian scheme $X$, an \emph{alteration} $\pi:Y\to X$ is a surjective generically finite proper morphism with $Y$ integral. (We shall often be in the situation where $Y$ is normal.)
\end{definition}

{Note that constructibility of the level sets and the upper semi-continuity of the dimension of fibers function holds in the setting of \autoref{def:alteration}  \cite[\href{https://stacks.math.columbia.edu/tag/05F9}{Tag 05F9}]{stacks-project}, \cite[\href{https://stacks.math.columbia.edu/tag/0D4I}{Tag 0D4I}]{stacks-project}. Similarly, it holds that over the locus where the fibers are finite, $\pi$ is finite \cite[\href{https://stacks.math.columbia.edu/tag/02OG}{Tag 02OG}]{stacks-project}. In particular, if  $\pi$ is an alteration, then there exists a non-empty open set over which $\pi$ is finite. Additionally, the additivity of dimension also holds here \cite[\href{https://stacks.math.columbia.edu/tag/02JX}{Tag 02JX}]{stacks-project}, and so we have $\dim X = \dim Y$.
}

    Throughout this article, we will frequently use that local cohomology on the Noetherian ring $R$ commutes with direct limits (in other words, filtered colimits) just as sheaf cohomology does on Noetherian topological spaces, see \cite[Chapter III, Proposition 2.9]{Hartshorne} \cite[\href{https://stacks.math.columbia.edu/tag/01FF}{Tag 01FF}]{stacks-project}.  In particular, we have for a directed system of $R$-modules $M_\alpha$ that  
    \[
        \varinjlim_{M_\alpha} H^i_{\m}(M_\alpha) = H^i_{\m}(\varinjlim_{M_\alpha} M_\alpha),
    \]
    see \cite[Theorem 3.4.10]{BrodmannSharpLocalCohomology}.  More generally, if $X$ is a Noetherian scheme and $E \subseteq X$ is closed, by mimicking the argument of \cite[Chapter III, Proposition 2.9]{Hartshorne}, one immediately sees for a directed system of sheaves of $\sO_X$-modules $\sF_{\alpha}$ that 
    \begin{equation}
        \label{eq.CohomologyWithSupportsCommutesWithDirectLimits}
        \varinjlim_{\sF_{\alpha}} H^i_Z(X, \sF_{\alpha}) = H^i_Z(X, \varinjlim_{\sF_{\alpha}} \sF_{\alpha}).
    \end{equation}
    Recall also that tensor products commute with arbitrary colimits \cite[\href{https://stacks.math.columbia.edu/tag/00DD}{Tag 00DD}]{stacks-project}.

\subsection{Dualizing complexes and local duality}
\label{sec:dualizing_complexes_local_duality}

Recall that any complete Noetherian local ring $(R, \fram)$ has a dualizing complex $\omega_R^{\mydot}$, since such an $R$ is a quotient of a regular ring (\cite[\href{https://stacks.math.columbia.edu/tag/032A}{Tag 032A}]{stacks-project},
\cite[\href{https://stacks.math.columbia.edu/tag/0A7I}{Tag 0A7I}]{stacks-project},
\cite[\href{https://stacks.math.columbia.edu/tag/0A7J}{Tag 0A7J}]{stacks-project}). 
We always choose $\omega_R^{\mydot}$ to be normalized in the sense of \cite{HartshorneResidues}, that is $\myH^{-i} \omega_R^{\mydot} = 0$ for $i > \dim R$ and $\myH^{-\dim R} \omega_R^{\mydot} \neq 0$.  If then $\pi : X \to \Spec R$ is a proper morphism (or even separated morphism), we define the dualizing complex $\omega_X^\mydot$ of $X$ to be $\pi^! \omega_R^\mydot$  and the dualizing sheaf $\omega_X$ to be $\myH^{-\dim X}(\omega_X^\mydot)$.  We make these choices so that Grothendieck local duality can be applied as described below. Before doing that however, we observe that when $R$ is an excellent regular domain of finite Krull dimension, we can define $\omega_{X}^\mydot$ and $\omega_X$ similarly.  We shall work in this non-local generality in \autoref{section:MinimalModelProgram}.

Back in the complete local case, fix $E = E_R(R/\fram)$ to be an injective hull of the residue field.  This provides an exact Matlis duality functor $(-)^{\vee} := \Hom_R(-, E)$ which induces an anti-equivalence of categories of Noetherian $R$-modules with Artinian $R$-modules \cite[\href{https://stacks.math.columbia.edu/tag/08Z9}{Tag 08Z9}]{stacks-project}; by exactness, Matlis duality extends to the derived category as well, and we continue to denote it by $(-)^\vee$.  In particular, since $E$ is injective, we may harmlessly identify $\Hom_R(-, E)$ and $\myR \Hom_R(-, E)$. Note that here, and when working in the derived category in general, we shall also simplify notation by writing $E$ (rather than $E[0]$) for the relevant complex concentrated in degree zero.

There is also a Matlis duality when $(R, \m)$ is not complete (but still local and Noetherian).  In this, we still define $E = E_R(R/\fram)$ to be the injective hull of the residue field.  Then $(-)^{\vee} := \Hom_R(-, E)$ is an exact functor which takes Noetherian modules to Artinian modules (which are then canonically modules over $\widehat{R}$).  Note that for $M$ Noetherian, $(M^{\vee})^{\vee} \cong \widehat{M}$.  For additional discussion see \cite[10.2.18]{BrodmannSharpLocalCohomology}.

Since we work with normalized dualizing complexes, we have an isomorphism $\myR\Gamma_{\mathfrak{m}}(\omega^{\mydot}_R) \simeq E$  \cite[\href{https://stacks.math.columbia.edu/tag/0A81}{Tag 0A81}]{stacks-project}. Using this isomorphism and the complete-torsion equivalence (\cite[\href{https://stacks.math.columbia.edu/tag/0A6X}{Tag 0A6X}]{stacks-project}) shows the following compatibility of Grothendieck and Matlis duality:  for any $K \in D_{\mathrm{coh}}^b(R)$, the following natural maps give isomorphisms
\[ \myR\Hom_R(K,\omega_R^{\mydot}) \simeq \myR\Hom_R\big(\myR\Gamma_{\mathfrak{m}}(K), \myR\Gamma_{\mathfrak{m}}(\omega^{\mydot}_R)\big) \simeq \Hom_R\big(\myR\Gamma_{\mathfrak{m}}(K),E\big) = \myR\Gamma_{\mathfrak{m}}(K)^\vee. \]
As $R$ is complete and $\Hom_R(-,E)$ induces an anti-equivalence of Noetherian and Artinian $R$-modules, this yields
\[
    \big(\myR \Hom_R(K, \omega_R^{\mydot})\big)^{\vee} \simeq \myR \Gamma_{\fram}(K)
\]
for $K \in D^b_{\mathrm{coh}}(R)$. For more details see for instance \cite{HartshorneLocalCohomology, HartshorneResidues, BrunsHerzog} and \cite[\href{https://stacks.math.columbia.edu/tag/0A81}{Tag 0A81}]{stacks-project}.  

We will be particularly interested in applying the above considerations in the following situation. 
\begin{lemma}
    \label{lem:duality-general}
	Suppose that $(R, \fram)$ is a Noetherian complete local ring, $X$ is an integral scheme proper over $\Spec R$, and that $\sL$ is a line bundle on $X$.  Then
    \[        
        \myR \Gamma_{\fram} \myR \Gamma(X, \sL) \cong \Hom\big(\myR \Gamma(X, \sL^{-1} \otimes \omega_X^{\mydot}), E\big).
    \]    
    In the case that $X$ is Cohen-Macaulay, the right side becomes $\Hom\big(\myR \Gamma(X, \sL^{-1} \otimes \omega_X[\dim X]), E\big)$.  

    Furthermore, 
    \[
        \myR \Gamma_{\m} \myR \Gamma(X, \omega_X^{\mydot} \otimes \sL^{-1}) \cong \Hom\big(\myR \Gamma(X, \sL), E \big)
    \]
    and if $X$ is Cohen-Macaulay, the left side becomes $\myR \Gamma_{\m}(\myR \Gamma(X, \omega_X \otimes \sL^{-1}))[\dim X]$.
\end{lemma}
\begin{proof}
    Both statements follow by combining Grothendieck and local duality with the observations made above.  In the first case take $K = \myR\Gamma(X, \sL)$ and in the second take $K = \myR\Gamma(X, \omega_X^{\mydot} \otimes \sL^{-1})$.
\end{proof}

We will also use the following consequence of local duality frequently.

\begin{lemma}   
	\label{lem:duality}
	Suppose that $(R, \fram)$ is a Noetherian complete local ring, $X$ is an integral scheme proper over $\Spec R$, and that $\sF$ is a coherent sheaf on $X$. Then we have an isomorphism of $R$-modules
	\begin{equation*}
	    \big(\myH^d\myR \Gamma_{\fram} \myR \Gamma(X, \sF)\big)^\vee\cong \Hom_{\sO_X}(\sF, \omega_X),
	\end{equation*}
	where $d=\dim X$.\end{lemma}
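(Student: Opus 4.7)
The plan is to chain together Grothendieck local duality (already spelled out in the paper) with Grothendieck duality for the proper morphism $\pi \colon X \to \Spec R$, then extract the appropriate degree using a local-to-global spectral sequence.

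First I would write $\myR\Gamma(X,\sF) \in D^b_{\mathrm{coh}}(R)$ (boundedness uses that $X$ is Noetherian of finite dimension and $\pi$ is proper) and apply the Matlis/Grothendieck duality isomorphism recorded in the paper just before the lemma, namely
\[
    \big(\myR\Gamma_{\fram}\, \myR\Gamma(X,\sF)\big)^{\vee} \;\simeq\; \myR\Hom_R\!\big(\myR\Gamma(X,\sF),\,\omega_R^{\mydot}\big).
\]
Then I would invoke Grothendieck duality for the proper map $\pi$, together with the choice $\omega_X^{\mydot} = \pi^{!}\omega_R^{\mydot}$, to rewrite the right-hand side as
\[
    \myR\Gamma\!\big(X,\, \myR\sHom_{\sO_X}(\sF,\omega_X^{\mydot})\big).
\]

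Next I would take cohomology in the right degree. Since Matlis duality is exact and reverses cohomological grading, $\big(H^d \myR\Gamma_{\fram}\,\myR\Gamma(X,\sF)\big)^{\vee}$ is identified with $H^{-d}$ of the complex displayed above. To compute this, I would use the local-to-global spectral sequence
\[
    E_2^{p,q} \;=\; H^p\!\big(X,\, \sExt^{q}_{\sO_X}(\sF,\omega_X^{\mydot})\big) \;\Longrightarrow\; H^{p+q}\!\big(X,\, \myR\sHom_{\sO_X}(\sF,\omega_X^{\mydot})\big).
\]
The key local input is that $\omega_X^{\mydot}$, normalized so that $\sH^{-d}(\omega_X^{\mydot})=\omega_X$ is its lowest nonzero cohomology sheaf, can be represented locally by a bounded complex of injectives in degrees $[-d,0]$. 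This forces $\sExt^q_{\sO_X}(\sF,\omega_X^{\mydot})=0$ for $q<-d$, and identifies $\sExt^{-d}_{\sO_X}(\sF,\omega_X^{\mydot})$ with $\sHom_{\sO_X}(\sF,\omega_X)$ (it is the kernel of $\sHom(\sF,I^{-d})\to \sHom(\sF,I^{-d+1})$, which equals $\sHom(\sF,\sH^{-d}(\omega_X^{\mydot}))$).

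Therefore only the term $(p,q)=(0,-d)$ contributes to $H^{-d}$, giving
\[
    H^{-d}\!\big(X,\, \myR\sHom_{\sO_X}(\sF,\omega_X^{\mydot})\big) \;\cong\; H^0\!\big(X,\,\sHom_{\sO_X}(\sF,\omega_X)\big) \;=\; \Hom_{\sO_X}(\sF,\omega_X),
\]
which finishes the proof. I do not foresee a serious obstacle; the only point that deserves care is the bookkeeping for the normalization of $\omega_X^{\mydot}$ (ensuring its cohomology sheaves start in degree $-d$), since the paper's normalization is pulled back from the complete local base via $\pi^{!}$ rather than chosen directly on $X$.
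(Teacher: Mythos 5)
Your proposal is correct and follows essentially the same route as the paper: Matlis/local duality, then Grothendieck duality for $\pi$, then extraction of the bottom cohomology of $\myR\sHom_{\sO_X}(\sF,\omega_X^{\mydot})$ using that $\omega_X^{\mydot}$ has no cohomology below degree $-d$. The only difference is packaging in the last step — you use the local-to-global $\sExt$ spectral sequence, while the paper uses the truncation triangle $\omega_X[d]\to\omega_X^{\mydot}\to C\xrightarrow{+1}$ — and these are equivalent arguments.
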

\begin{proof}
    The fact that $X \to \Spec R$ is proper is essential in what follows.
    By local duality (\cite[Tag 0A84]{stacks-project}) and Grothendieck duality (cf.\ \cite[Tag 0AU3(4c)]{stacks-project}), we have
    \begin{multline}
    \label{eq:duality:dualities}
            \Big(\myH^d \myR\Gamma_\m\big(\myR\Gamma(X,\sF)\big)\Big)^\vee
        \cong \myH^{-d}\myR\Hom_R\big(\myR\Gamma(X,\sF), \omega_R^{\mydot}\big)
        \\ \cong \myH^{-d}\myR \Gamma \circ \myR\sHom_{\sO_X}(\sF, \omega_X^{\mydot})
        \cong \myH^{-d}\myR\Hom_{\sO_X}(\sF, \omega_X^{\mydot})
    .
    \end{multline}
    If $X$ was Cohen-Macaulay so that $\omega_X^{\mydot} = \omega_X[d]$, then we would be done.  However, we are taking the bottom cohomology, so the higher cohomologies  of the dualizing complex do not interfere, as we work out in detail now.
    Form a triangle $\omega_X[d]\to \omega_X^{\mydot} \to C \xrightarrow{+1}$. Applying $\myR\Hom_{\sO_X}(\sF, -)$ to this triangle we get:
    \[
        \myR\Hom_{\sO_X}(\sF, \omega_X[d])\to \myR\Hom_{\sO_X}(\sF, \omega_X^{\mydot})\to \myR\Hom_{\sO_X}(\sF, C)\xrightarrow{+1}.
    \]
    Note that $C$ and hence $\myR\Hom_{\sO_X}(\sF, C)$ only live in cohomological degree $\geq -d+1$, thus we have
    \begin{equation}
    \label{eq:duality:omegas}
    \myH^{-d} \myR\Hom_{\sO_X}(\sF, \omega_X^{\mydot})\cong \myH^{-d}\myR\Hom_{\sO_X}(\sF, \omega_X[d])  \cong  \Hom_{\sO_X}(\sF, \omega_X).
    \end{equation}
    Combining \autoref{eq:duality:dualities} and \autoref{eq:duality:omegas} yields exactly the statement of the claim. 
\end{proof}

\subsection{Big Cohen-Macaulay algebras} 
Let $(R,\m)$ be a Noetherian local ring of dimension $d$ and let $M$ be a (not necessarily finitely generated) $R$-module. A sequence of elements $x_1,\dots,x_n$ of $R$ is called a {\it regular sequence} on $M$ if $x_{i+1}$ is a nonzerodivisor on $M/(x_1,\dots,x_i)M$ for each $i$. We consider the following conditions on $M$ ({which are equivalent when $M$ is finitely generated}).
\begin{enumerate}
    \item \label{itm:BCM:local_coho_max_ideal} $H_\m^i(M)=0$ for all $i<d$.  
    \item  \label{itm:BCM:one_sop_is_reg_seq} There is a system of parameters $x_1,\dots, x_d$ of $R$ that is a regular sequence on $M$. 
    \item \label{itm:BCM:every_sop_is_reg_seq} Every system of parameters of $R$ is a regular sequence on $M$.
    \item \label{itm:BCM:local_coho_prime_ideal} $H_P^i(M_P)=0$ for all $P\in\Spec(R)$ and all $i<\dim(R_P)$.
\end{enumerate}

It is straightforward to see that  \autoref{itm:BCM:every_sop_is_reg_seq} $\Rightarrow$ \autoref{itm:BCM:one_sop_is_reg_seq} $\Rightarrow$ \autoref{itm:BCM:local_coho_max_ideal}. If $M$ satisfies condition \autoref{itm:BCM:local_coho_max_ideal} and $M/\m M\neq 0$, then the $\m$-adic completion $\widehat{M}$ satisfies condition \autoref{itm:BCM:every_sop_is_reg_seq} by \cite[Exercise 8.1.7, Theorem 8.5.1]{BrunsHerzog}. We will see below (\autoref{lem: coh CM vs balanced CM}) that, under mild assumptions on $R$, condition \autoref{itm:BCM:every_sop_is_reg_seq} and condition \autoref{itm:BCM:local_coho_prime_ideal} are equivalent. These implications are summarized in the diagram below.
\[
    \xymatrix{
        {\left({\begin{array}{cl} \text{ \autoref{itm:BCM:local_coho_max_ideal}} & H^i_{\m}(M) =0 \\ & \text{for $i < d$ } \end{array}}\right)} \ar@{=>}@/_12pc/[dd]_{\text{\parbox[c]{50pt}{if $M \neq \m M$ and $M := \widehat{M}$}}} & & & \\
        \ar@{=>}[u] {\left(\begin{array}{cl}\text{ \autoref{itm:BCM:one_sop_is_reg_seq}} & \text{$\exists x_1, \dots, x_d\;$ s.o.p. of $R$,}\\ & \text{which is a reg. seq. on $M$}\end{array}\right)} \\
        \ar@{=>}[u] {\left(\begin{array}{cl}\text{ \autoref{itm:BCM:every_sop_is_reg_seq}} & \text{$\forall x_1, \dots, x_d\;$ s.o.p. of $R$,}\\ & \text{is a reg. seq. on $M$}\end{array}\right)} \ar@{<=>}[dd]^{\text{\parbox[l]{80pt}{\raggedright if $R$ is \\ equidimensional\\ \& catenary}}}\\ \\
        {\left({\begin{array}{cl} \text{ \autoref{itm:BCM:local_coho_prime_ideal}} & H^i_{P}(M_P) =0 \\ & \text{for $i < \dim R_P$ } \end{array}}\right)}
    }
\]

The module $M$ is called:
\begin{itemize}{}
\item{} \emph{big Cohen-Macaulay} if $M$ satisfies condition \autoref{itm:BCM:one_sop_is_reg_seq} and $M/\m M\neq 0$, see \cite{HochsterTopicsInTheHomologicalTheory}, 
\item{} \emph{balanced big Cohen-Macaulay} if $M$ satisfies condition \autoref{itm:BCM:every_sop_is_reg_seq} and $M/\m M\neq 0$, see \cite[Chapter 8]{BrunsHerzog}.     
\item{} \emph{cohomologically Cohen-Macaulay} if $M$ satisfies condition \autoref{itm:BCM:local_coho_prime_ideal}, see \cite{BhattAbsoluteIntegralClosure}.
\end{itemize}
If $B$ is an $R$-algebra that is (big/balanced big/cohomologically) Cohen-Macaulay as an $R$-module, then it is called a \emph{(big/balanced big/cohomologically) Cohen-Macaulay algebra}.
Note that, in the definition of cohomologically Cohen-Macaulay, we do not require the non-triviality condition $M/\m M \neq 0$, so this definition passes to localization, which is convenient for some inductive arguments in \cite{BhattAbsoluteIntegralClosure}.

\begin{remark}
For our purpose, even the weakest notion \autoref{itm:BCM:local_coho_max_ideal} above suffices for most of our applications. In fact, we can usually replace $B$ by its $\m$-adic completion to obtain the strongest notion. Thus, for most practical purposes, the distinctions between these notions can be ignored. 
\end{remark}

Balanced big Cohen-Macaulay algebras always exist: in equal characteristic, this is a result of Hochster-Huneke \cite{HochsterHunekeInfiniteIntegralExtensionsAndBigCM,HochsterHunekeApplicationsofBigCM}, and in mixed characteristic, this is settled by Andr\'{e} \cite{AndreDirectsummandconjecture}. For our purposes, the following theorem (due to Hochster-Huneke in equal characteristic $p>0$, and the first author in mixed characteristic $(0,p>0)$) gives an explicit construction of balanced big Cohen-Macaulay algebras, and is the key behind our definitions and constructions.

\begin{theorem}
\label{RPlusCM}
Let $(R,\m)$ be an excellent local domain of residue characteristic $p>0$. Let $R^+$ be an absolute integral closure of $R$. Then $H^i_{\fram}(R^+) =0$ for $i < \dim R$ and the $p$-adic completion of $R^+$ is a balanced big Cohen-Macaulay algebra. \end{theorem}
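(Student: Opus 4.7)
The plan is to split into the two characteristic cases, invoke the corresponding deep vanishing result for $R^+$ in each case, and then formally upgrade the local cohomology vanishing to the balanced big Cohen-Macaulay assertion for the $p$-adic completion.

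In equal characteristic $p>0$ (so $p=0$ in $R^+$), the vanishing $H^i_\m(R^+)=0$ for $i<\dim R$ is the Hochster--Huneke theorem \cite{HochsterHunekeInfiniteIntegralExtensionsAndBigCM}.  Their argument shows that any class $\eta\in H^i_\m(R)$ in subtop degree can be killed after passing to a suitable module-finite domain extension $R\subseteq S\subseteq R^+$, using the Frobenius action on $H^i_\m(R)$ together with local duality to dualize $\eta$ to a map out of a finite-length module, and then engineering a finite cover that cancels it.  Colimiting over all module-finite extensions of $R$ inside $R^+$ then yields the vanishing for $R^+$, and since the $p$-adic completion of $R^+$ equals $R^+$ itself in this case, the theorem follows once we carry out the upgrade described below.

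In mixed characteristic $(0,p>0)$, the analogous vanishing (both for $R^+$ and for its $p$-adic completion $\widehat{R^+}$) is the first author's theorem \cite{BhattAbsoluteIntegralClosure}.  This is the main obstacle of the statement: Frobenius is not directly available on $R^+$, so the strategy is to pass to a perfectoid setting via Andr\'e's almost-purity machinery \cite{AndreDirectsummandconjecture}, work with derived $p$-complete/prismatic cohomology of $R^+$, and transport characteristic-$p$ vanishing results to the mixed characteristic setting via a mod-$p$ Riemann--Hilbert correspondence \cite{BhattLuriepadicRHmodp}.  I would invoke this as a black box; no simpler argument seems available at present.

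Finally, to pass from the local cohomology vanishing to the balanced big Cohen-Macaulay statement for $\widehat{R^+}$, first note that since $R\hookrightarrow R^+$ is integral we have $\m R^+\cap R=\m$, hence $R^+/\m R^+\neq 0$, and this nonvanishing persists after $p$-adic completion since $p\in\m$.  Combined with the local cohomology vanishing (which is preserved on $\widehat{R^+}$ via derived $p$-completeness, using that $p\in\m$), the general principle recalled in the text preceding the theorem---that $H^i_\m(M)=0$ for all $i<\dim R$ together with $M/\m M\neq 0$ implies that the $\m$-adic completion of $M$ has every system of parameters of $R$ as a regular sequence, by \cite[Exercise~8.1.7, Theorem~8.5.1]{BrunsHerzog}---yields the balanced big Cohen-Macaulay property for $\widehat{R^+}$, completing the proof.
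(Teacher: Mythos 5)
Your identification of the two deep inputs (Hochster--Huneke in equal characteristic $p$, and \cite{BhattAbsoluteIntegralClosure} in mixed characteristic) matches the paper, and the equal characteristic case is fine since there the $p$-adic completion of $R^+$ is $R^+$ itself. The problem is in your final ``formal upgrade'' step in mixed characteristic. The tool you invoke from \cite[Exercise 8.1.7, Theorem 8.5.1]{BrunsHerzog} says: if $H^i_{\mathfrak{m}}(M)=0$ for $i<\dim R$ and $M/\mathfrak{m}M\neq 0$, then the \emph{$\mathfrak{m}$-adic} completion of $M$ is balanced big Cohen--Macaulay. Applying this to $M=R^+$ (or to its $p$-adic completion) only yields the conclusion for the $\mathfrak{m}$-adic completion, whereas the theorem asserts it for the \emph{$p$-adic} completion $\widehat{R^+}$. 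These completions differ whenever $\dim R>1$, and the property ``every system of parameters of $R$ is a regular sequence'' does not transfer from the $\mathfrak{m}$-adic completion back down to the $p$-adic completion. What you can legitimately extract from your argument is that $\widehat{R^+}$ satisfies the weak condition $H^i_{\mathfrak{m}}(\widehat{R^+})=0$ for $i<\dim R$ together with $\widehat{R^+}/\mathfrak{m}\widehat{R^+}\neq 0$, i.e.\ it is big Cohen--Macaulay in the cohomological sense at $\mathfrak{m}$; the \emph{balanced} statement for the $p$-adic completion is genuinely stronger and needs a separate argument.

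This is exactly the point of \autoref{thm: balanced CM} and \autoref{cor: p-adic completion of R^+} in the paper: one does \emph{not} deduce the balanced property for $\widehat{R^+}$ from the local cohomology vanishing at $\mathfrak{m}$ alone. Instead, the paper starts from the stronger mixed-characteristic input that $R^+/pR^+$ is balanced big Cohen--Macaulay over $R/pR$ (\cite[Corollary 5.10]{BhattAbsoluteIntegralClosure}) together with the fact that $p$ is a nonzerodivisor on $R^+$, and runs an induction on $\dim R$: given a system of parameters containing $p$ one permutes it past the $p$-adic completion using \autoref{lem: regular sequence} and \autoref{lem: completion}, and for prime ideals not containing $p$ one uses prime avoidance to complete to a system of parameters through $p$. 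Your proposal is missing this inductive mechanism, so as written it does not establish the balanced big Cohen--Macaulay property of the $p$-adic completion.
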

\begin{proof}
In positive characteristic, the $p$-adic completion of $R^+$ is $R^+$ and this is \cite[Theorem 1.1]{HochsterHunekeInfiniteIntegralExtensionsAndBigCM}. In mixed characteristic, the statement about local cohomology is \cite[Theorem 5.1]{BhattAbsoluteIntegralClosure}.  The extension to the $p$-adic completion is explained below in \autoref{cor: p-adic completion of R^+}, also see \cite[Corollary 5.17]{BhattAbsoluteIntegralClosure}.
\end{proof}

We caution the reader that if $R$ has equal characteristic $0$ (i.e., contains $\mathbb{Q}$) with $\dim(R) \geq 3$, then $R^+$ is never big Cohen-Macaulay in any of the senses discussed above because of a simple trace obstruction. For example, one may first construct a finite normal domain extension $S$ of $R$ that is not Cohen-Macaulay and $H^i_\fram(S) \neq 0$ for some $i < \dim R$. Since the normalized (field) trace splits the inclusion $S \to S^+ = R^+$, $H^i_\fram(S)$ is a direct summand of $H^i_\fram(R^+)$ and thus $R^+$ fails to satisfy condition \autoref{itm:BCM:local_coho_max_ideal}. See also \cite[Proof of Proposition 2.1]{ShimomotoTavanfarLocalRingsNoSmallCMMixedChar} for a collection of explicit constructions.

We next want to explain how to drop the additional assumptions on the existence of Noether normalization in \cite[Corollary 5.17]{BhattAbsoluteIntegralClosure} in the local case.

\begin{lemma}
\label{lem: coh CM vs balanced CM}
Let $(R,\m)$ be a Noetherian, equidimensional, catenary local ring and let $M$ be an $R$-module. Then every system of parameters of $R$ is a regular sequence on $M$ if and only if $H_P^i(M_P)=0$ for all $P\in\Spec(R)$ and all $i<\dim(R_P)$. In particular, $M$ is balanced big Cohen-Macaulay if and only if $M$ is cohomologically Cohen-Macaulay and $M/\m M\neq 0$.
\end{lemma}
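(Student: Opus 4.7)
The strategy is to prove the two directions of the biconditional separately, after which the ``in particular'' clause is immediate: $M$ is cohomologically Cohen-Macaulay is defined exactly by the stated vanishing of local cohomologies, and the balanced big Cohen-Macaulay condition simply augments ``every sop is regular'' with the non-triviality $M/\m M \neq 0$.  Throughout I will use two consequences of the equidimensional catenary hypothesis: for any prime $P \subseteq R$ one has $\dim(R/P) + \height(P) = \dim R$; and if $x_1, \ldots, x_j$ extends to a full sop of $R$, then every minimal prime of $(x_1, \ldots, x_j)$ has height exactly $j$, so every prime containing $(x_1, \ldots, x_j)$ has height at least $j$.

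For the forward direction, I will fix $P \in \Spec R$ and set $h := \dim(R_P)$, then construct elements $x_1, \ldots, x_h \in P$ which simultaneously (a) extend to a sop of $R$ and (b) generate a $PR_P$-primary ideal.  The construction is inductive via prime avoidance at each step: by the setup above, the finitely many minimal primes of the previously chosen partial sop all have height strictly less than $h = \height(P)$, so $P$ is not contained in their union, and one can pick the next element inside $P$ avoiding all of them.  Once $x_1, \ldots, x_h$ are obtained, extending to a full sop $x_1, \ldots, x_d$ of $R$ and invoking the hypothesis produces an $M$-regular sequence; the standard Koszul computation then gives $H^i_{(x_1, \ldots, x_h)}(M) = 0$ for $i < h$, and localizing at $P$ (which preserves regular sequences) together with the equality of the radicals of $(x_1, \ldots, x_h)R_P$ and $PR_P$ will conclude this direction.

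The converse is where the substantive work lies and will be the main obstacle.  Given a sop $x_1, \ldots, x_d$ of $R$, I will induct on $i$ to show $x_1, \ldots, x_i$ is $M$-regular; writing $N_j := M/(x_1, \ldots, x_j)M$, the inductive hypothesis makes the short exact sequences $0 \to N_{j-1} \xrightarrow{x_j} N_{j-1} \to N_j \to 0$ available for $j \leq i$.  To prove $x_{i+1}$ is a nonzerodivisor on $N_i$, I will assume the contrary: take a nonzero $m' \in N_i$ with $x_{i+1} m' = 0$, choose an associated prime of the form $P = \Ann_R(m'')$ with $m'' \in Rm' \setminus \{0\}$, and note that $P \supseteq (x_1, \ldots, x_{i+1})$ forces $\dim(R_P) \geq i+1$.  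After localizing the above short exact sequences at $P$, the hypothesis $H^\ell_{PR_P}(M_P) = 0$ for $\ell \leq i$ will propagate via iterated long exact sequences (at each stage, a term $H^\ell_{PR_P}((N_j)_P)$ is sandwiched between two vanishing terms of the form $H^\ell_{PR_P}((N_{j-1})_P)$ and $H^{\ell+1}_{PR_P}((N_{j-1})_P)$) to give $H^\ell_{PR_P}((N_j)_P) = 0$ whenever $\ell + j \leq i$.  Specializing at $(\ell, j) = (0, i)$ will contradict the fact that $m''/1 \in (N_i)_P$ is both nonzero (otherwise some $s \notin P$ would annihilate $m''$) and $P$-torsion, hence a nonzero element of $H^0_{PR_P}((N_i)_P)$.
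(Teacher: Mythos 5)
Your proof is correct, and its overall shape (prove the two implications separately, then read off the ``in particular'' clause) is the same as the paper's. For the direction ``every sop is regular $\Rightarrow$ local cohomology vanishes,'' your argument is essentially identical to the paper's: both locate a partial system of parameters $x_1,\dots,x_h \in P$ with $P$ minimal over $(x_1,\dots,x_h)$ (the paper asserts its existence in one line; you spell out the prime-avoidance induction, using the catenary equidimensional hypothesis to know the minimal primes of a partial sop have the expected height), and both then localize and use that a weak regular sequence of length $h$ inside an ideal kills local cohomology below degree $h$. The one direction where you genuinely diverge is the converse: the paper simply cites \cite[Corollary 2.7]{BhattAbsoluteIntegralClosure}, whereas you give a complete self-contained proof --- induct on the length of the partial sequence, pick an associated prime $P = \Ann_R(m'')$ of a putative $x_{i+1}$-torsion element of $N_i = M/(x_1,\dots,x_i)M$, observe $\height P \geq i+1$, and propagate the vanishing of $H^\ell_{PR_P}(M_P)$ through the localized short exact sequences $0 \to (N_{j-1})_P \to (N_{j-1})_P \to (N_j)_P \to 0$ to contradict $H^0_{PR_P}((N_i)_P) = 0$. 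This is a clean elementary argument that makes the lemma independent of the external reference; the paper's citation buys brevity at the cost of self-containment. Two cosmetic remarks: when you localize a regular sequence at $P$ it may become improper (the final quotient can vanish), but since the paper's definition of regular sequence does not require nonvanishing of the quotient and the local cohomology vanishing only needs a weak regular sequence, nothing breaks; and the base case $N_i = 0$ of your contradiction argument is vacuous, as you implicitly assume a nonzero torsion element exists.
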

\begin{proof}
The if direction follows from \cite[Corollary 2.8]{BhattAbsoluteIntegralClosure}. For the only if direction, let $P$ be a prime ideal of height $h$. There exists $x_1,\dots,x_h$ part of a system of parameters such that $P$ is a minimal prime of $(x_1,\dots,x_h)$. Thus we know that $x_1,\dots,x_h$ is a regular sequence on $M$ and hence a regular sequence on $M_P$. But the image of $x_1,\dots,x_h$ is a system of parameters on $R_P$, and thus $H_P^i(M_P)=0$ for all $i<h$ as desired.
\end{proof}

\begin{lemma}
\label{lem: regular sequence}
Suppose $R$ is a commutative ring and $f,g \in R$ is a regular sequence on an $R$-module $N$. Then $g, f$ is a regular sequence on $\widehat{N}^f$, the $f$-adic completion of $N$.
\end{lemma}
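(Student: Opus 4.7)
The plan is to verify the two defining conditions of a regular sequence for $g, f$ acting on $\widehat{N} := \widehat{N}^f = \varprojlim_k N/f^kN$ directly, by manipulating compatible systems of representatives in $N$.

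\textbf{Auxiliary step.} First I would establish by induction on $k \geq 1$ that $g$ remains a nonzerodivisor on $N/f^kN$. The case $k=1$ is the hypothesis. For the inductive step, I consider the short exact sequence
\[
0 \to f^kN/f^{k+1}N \to N/f^{k+1}N \to N/f^kN \to 0,
\]
together with the identification $f^kN/f^{k+1}N \cong N/fN$ (which is valid because $f$ is a nonzerodivisor on $N$). Applying the snake lemma to multiplication by $g$ on this sequence and using injectivity on the two flanks propagates injectivity to the middle term.

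\textbf{Injectivity of $g$ on $\widehat{N}$.} If $(n_k) \in \widehat{N}$ is killed by $g$, then $gn_k \in f^kN$ for every $k$, so the auxiliary step forces $n_k \in f^kN$, i.e.\ $(n_k)=0$.

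\textbf{Injectivity of $f$ on $\widehat{N}/g\widehat{N}$.} Suppose $fx = gy$ in $\widehat{N}$, and choose compatible lifts $(x_k), (y_k)$ in $N$ with $x_{k+1}-x_k,\ y_{k+1}-y_k \in f^kN$ and $fx_k - gy_k \in f^kN$ for every $k$. Writing $fx_{k+1}-gy_{k+1}=f^{k+1}r_{k+1}$, we obtain $gy_{k+1} \in fN$; by the regular sequence hypothesis this forces $y_{k+1} \in fN$, so $y_{k+1} = fy_{k+1}'$ with $y_{k+1}'$ uniquely determined by $f$ being a nonzerodivisor on $N$. Canceling $f$ yields
\[
x_{k+1} = gy_{k+1}' + f^k r_{k+1}.
\]
Define $z_k := y_{k+1}'$. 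Reducing the previous identity mod $f^kN$ and combining with $x_{k+1}\equiv x_k\pmod{f^kN}$ gives $x_k \equiv g z_k \pmod{f^kN}$, which is exactly $x=gz$ in $\widehat{N}$, provided $(z_k)\in\widehat{N}$. For the compatibility $z_{k+1}-z_k \in f^kN$, I would compare the analogous formulas at levels $k+1$ and $k+2$ and use $x_{k+2}\equiv x_{k+1}\pmod{f^{k+1}N}$ to deduce $g(z_{k+1}-z_k) \in f^kN$; the auxiliary step then gives $z_{k+1}-z_k \in f^kN$, as required.

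\textbf{Main obstacle.} The principal technical subtlety is the index shift $z_k := y_{k+1}'$: a naive attempt with $z_k = y_k'$ would only yield the congruence $x_k \equiv g z_k \pmod{f^{k-1}N}$, which is one order weaker than needed. Borrowing information from the next level both sharpens this congruence and simultaneously validates compatibility of $(z_k)$ in the inverse system, and both of these rely on the auxiliary step.
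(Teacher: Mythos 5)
Your proof is correct. It splits into the same two verifications as the paper's proof, and your argument that $g$ is a nonzerodivisor on $\widehat{N}^f$ is essentially the paper's (the paper writes elements as power series $\sum a_i f^i$ and deduces that each partial sum lies in $f^{k+1}N$; this silently uses that $g$ is a nonzerodivisor on $N/f^{k+1}N$, which is exactly your auxiliary step, so making that lemma explicit via the snake lemma is a genuine improvement in rigor). Where you diverge is the second condition. The paper disposes of the injectivity of $f$ on $\widehat{N}^f/g\widehat{N}^f$ in two lines: since $\widehat{N}^f/f\widehat{N}^f \cong N/fN$, the pair $f,g$ is a regular sequence on $\widehat{N}^f$, and for \emph{any} regular sequence $x,y$ on a module $M$ one has that $x$ is automatically a nonzerodivisor on $M/yM$ (if $xm=yn$ then $n\in xM$, say $n=xn'$, and cancelling $x$ gives $m=yn'$). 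This abstract permutation fact completely sidesteps your compatible-systems computation with the index shift $z_k:=y_{k+1}'$. Your hands-on argument is valid — the shift is handled correctly and the compatibility of $(z_k)$ does follow from the auxiliary step as you say — but it is doing by hand, inside the inverse limit, what the swap lemma does formally. What your route buys is self-containedness and an explicit record of the only completion-specific input (namely $g$ being a nonzerodivisor modulo every power of $f$); what the paper's route buys is brevity, at the cost of leaving that same input implicit in its power-series manipulation.
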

\begin{proof}
First of all, $f$ is a nonzerodivisor on $N$ and hence a nonzerodivisor on $\widehat{N}^f$. Because $N/fN\cong \widehat{N}^f/f\widehat{N}^f$, $f, g$ is a regular sequence on $\widehat{N}^f$. This implies that $f$ is a nonzerodivisor on $\widehat{N}^f/g\widehat{N}^f$. 
It remains to prove that $g$ is a nonzerodivisor on $\widehat{N}^f$. So suppose $ga=0$ where $a=\sum_{i=0}^{\infty}a_if^i$ where $a_i\in N$. Then for each $k$,
$$g\cdot \sum_{i=0}^{k}a_if^i=-g\cdot\sum_{j=k+1}^{\infty}a_jf^j\in f^{k+1}\widehat{N}^f .$$
Thus we actually have $g\cdot \sum_{i=0}^{k}a_if^i\in f^{k+1}N$ and hence $\sum_{i=0}^{k}a_if^i\in f^{k+1}N$ for each $k$ since $f, g$ is a regular sequence on $N$. But then we have
$$a=\sum_{i=0}^{\infty}a_if^i= \sum_{i=0}^{k}a_if^i+  \sum_{j=k+1}^{\infty}a_jf^j\in f^{k+1}\widehat{N}^f$$
for all $k$, which implies $a=0$ since $\widehat{N}^f$ is $f$-adically separated.
\end{proof}

\begin{lemma}
\label{lem: completion}
Suppose $N$ is $f$-adically complete and $f$ is a nonzerodivisor on $N/gN$, then $N/gN$ is $f$-adically complete.
\end{lemma}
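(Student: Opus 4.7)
The plan is to reduce to showing that $gN$ is closed in the $f$-adic topology on $N$, and then exploit the hypothesis that $f$ is a nonzerodivisor on $N/gN$ to force convergence of a naturally constructed Cauchy sequence.

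First I would observe that the hypothesis on $f$ upgrades to $gN :_N f^k = gN$ for every $k \geq 1$ by a straightforward induction (if $f^{k+1}y \in gN$, then $f \cdot f^k y \in gN$, forcing $f^k y \in gN$ and hence $y \in gN$). An immediate consequence is the identity $gN \cap f^k N = g f^k N$: any element $f^k y \in gN$ has $y \in gN :_N f^k = gN$. Using this identification, the standard short exact sequence of inverse systems
\begin{equation*}
0 \to gN/gf^k N \to N/f^k N \to N/(gN + f^k N) \to 0
\end{equation*}
is exact, and the left-hand system has the canonical surjective transition maps, so the Mittag-Leffler $\varprojlim^1$ vanishes. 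Applying $\varprojlim$ and using $\varprojlim N/f^k N = N$ yields
\begin{equation*}
0 \to \widehat{gN}^f \to N \to \varprojlim N/(gN+f^k N) \to 0,
\end{equation*}
with $\widehat{gN}^f$ the $f$-adic completion of $gN$. Surjectivity of $N/gN \to \varprojlim (N/gN)/f^k(N/gN)$ is then automatic, so everything reduces to showing that the image of $\widehat{gN}^f$ in $N$, which is exactly the $f$-adic closure $\overline{gN} = \bigcap_k (gN + f^k N)$, coincides with $gN$ itself.

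The main obstacle is precisely this closedness of $gN$: we cannot simply quote that multiplication by $g$ is injective on $N$, because this is not given. Instead, suppose $w \in \overline{gN}$ and write $w = g a_k + f^k b_k$ for each $k$. Then $g(a_{k+1}-a_k) = f^k b_k - f^{k+1}b_{k+1} \in gN \cap f^k N = g f^k N$, so we may choose $c_k \in N$ with $g(a_{k+1}-a_k) = g f^k c_k$. I would then set $a'_k := a_1 + \sum_{i=1}^{k-1} f^i c_i$; this is Cauchy in the $f$-adic topology on $N$, hence converges to some $a \in N$ by completeness. A telescoping computation gives $g a'_k = g a_k$ for all $k$, so continuity of multiplication by $g$ yields $g a = \lim g a_k = \lim (w - f^k b_k) = w$, showing $w \in gN$ as desired. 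This proves $gN$ is $f$-adically closed, and combining with the exact sequence above gives $N/gN \cong \varprojlim N/(gN + f^k N)$, completing the proof.
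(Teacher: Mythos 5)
Your proof is correct, but it takes a genuinely different route from the paper's. The paper disposes of the lemma in two lines using derived completion: $N/gN$ is derived $f$-complete because it is a quotient of the (derived) complete module $N$ and derived complete modules form a weak Serre subcategory; since $f$ is a nonzerodivisor on $N/gN$, derived completeness coincides with classical $f$-adic completeness, and one is done. You instead work entirely at the level of classical inverse limits. The algebraic heart of your argument is the colon identity $gN :_N f^k = gN$, equivalently $gN \cap f^kN = gf^kN$, extracted from the nonzerodivisor hypothesis; this makes the system $\{gN/gf^kN\}$ the correct kernel system with surjective transition maps, so $\varprojlim{}^1$ vanishes and surjectivity of $N/gN \to \varprojlim (N/gN)/f^k(N/gN)$ is free, reducing everything to the $f$-adic closedness of $gN$. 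Your telescoping construction of the Cauchy sequence $a_k'$ is the right way to handle the fact that $g$ may be a zerodivisor on $N$ (one cannot simply cancel $g$ from $g(a_{k+1}-a_k)=gf^kc_k$), and the final step correctly uses the separatedness built into the hypothesis that $N$ is $f$-adically complete, i.e.\ $N \cong \varprojlim N/f^kN$, which is also how the paper uses the term in the neighbouring \autoref{lem: regular sequence}. The trade-off: the paper's argument is shorter and slots into the derived-completion toolkit already in use throughout \autoref{sec:preliminaries}, while yours is self-contained and elementary, requiring no input beyond Mittag-Leffler and the definition of completeness.
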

\begin{proof}
$N/gN$ is always derived $f$-adically complete. Since $f$ is a nonzerodivisor on $N/gN$, we know that the $f$-adic completion of $N/gN$ is the same as the derived $f$-adic completion of $N/gN$, which is $N/gN$. Hence $N/gN$ is $f$-adically complete.
\end{proof}

\begin{theorem}
\label{thm: balanced CM}
Let $(R,\m)$ be a Noetherian, equidimensional, catenary local ring and let $M$ be an $R$-module. Suppose $t\in R$ is a parameter such that
\begin{enumerate}[series=balancedcm]
  \item $t$ is a nonzerodivisor on $M$ \label{thm: balanced CM.cond1}
  \item $M/tM$ is balanced big Cohen-Macaulay over $R/tR$. \label{thm: balanced CM.cond2}
\end{enumerate}
Then $\widehat{M}^t$, the $t$-adic completion of $M$, is balanced big Cohen-Macaulay over $R$.
\end{theorem}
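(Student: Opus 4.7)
The plan is to establish balanced big Cohen-Macaulayness of $\widehat{M}^t$ by verifying the cohomologically Cohen-Macaulay property and then invoking \autoref{lem: coh CM vs balanced CM}. The main engine is the short exact sequence
\[
0 \longrightarrow \widehat{M}^t \xrightarrow{\ t\ } \widehat{M}^t \longrightarrow M/tM \longrightarrow 0,
\]
which I would obtain by passing to the inverse limit (via Mittag-Leffler) of the sequences $0 \to M/t^{n-1}M \xrightarrow{t} M/t^nM \to M/tM \to 0$ afforded by hypothesis (1). This immediately shows that $t$ is a nonzerodivisor on $\widehat{M}^t$ with $\widehat{M}^t / t\widehat{M}^t \cong M/tM$, and since $t \in \m$ it follows that $\widehat{M}^t/\m\widehat{M}^t \cong M/\m M$, which is nonzero by hypothesis (2).

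It then suffices to verify $H_P^i((\widehat{M}^t)_P) = 0$ for every $P \in \Spec R$ and every $i < \dim R_P$. Localizing the displayed short exact sequence at $P$ (by flatness) and taking local cohomology produces a long exact sequence linking $H_P^\ast((\widehat{M}^t)_P)$ to $H_P^\ast((M/tM)_P)$. Applying \autoref{lem: coh CM vs balanced CM} to $R/tR$ — which inherits the equidimensional and catenary hypotheses from $R$ and has dimension $d-1$ — the balanced big Cohen-Macaulay assumption on $M/tM$ translates into cohomologically Cohen-Macaulayness: $H_Q^i((M/tM)_Q) = 0$ for every $Q \in \Spec(R/tR)$ and $i < \dim(R/tR)_Q$. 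For a prime $P \subseteq R$ containing $t$, this yields $H_P^i((M/tM)_P) = 0$ for $i < \dim R_P - 1$, and the long exact sequence forces multiplication by $t$ to act injectively on $H_P^i((\widehat{M}^t)_P)$ in the relevant range. Since this local cohomology is $P$-power torsion and $t \in P$ acts locally nilpotently on $P$-power torsion modules, the map is simultaneously injective and nilpotent, hence zero — yielding $H_P^i((\widehat{M}^t)_P) = 0$ for $i < \dim R_P$.

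The main obstacle I anticipate is handling primes $P$ that do not contain $t$: there the localized short exact sequence degenerates (as $(M/tM)_P = 0$) and the above argument carries no information. My plan is to handle this case by induction on $d = \dim R$, exploiting that $\dim R_P < d$ whenever $P \subsetneq \m$ does not contain $t$, and reducing the vanishing at such $P$ to a lower-dimensional instance of the theorem — applied to $R_P$ together with a judiciously chosen parameter of $R_P$ and a module whose $t$-adic completion governs $(\widehat{M}^t)_P$. Once the local cohomology vanishings are established at every prime of $R$, \autoref{lem: coh CM vs balanced CM} yields that every system of parameters of $R$ is a regular sequence on $\widehat{M}^t$, completing the proof.
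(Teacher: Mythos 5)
Your reduction to the cohomologically Cohen--Macaulay property, the verification that $t$ is a nonzerodivisor on $\widehat{M}^t$ with $\widehat{M}^t/t\widehat{M}^t \cong M/tM$ (hence $\widehat{M}^t/\m\widehat{M}^t \neq 0$), and your treatment of primes $P \ni t$ via the long exact sequence plus the ``injective and $P$-power torsion, hence zero'' trick all work and match what the paper does at the corresponding stages. The genuine gap is exactly where you flagged it: the primes $P$ with $t \notin P$. Your plan --- localize at $P$ and apply a lower-dimensional instance of the theorem to ``a module whose $t$-adic completion governs $(\widehat{M}^t)_P$'' --- does not have a viable candidate for that module. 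Once you localize at a prime not containing $t$, the element $t$ becomes a unit in $R_P$, so $(M/tM)_P = 0$ and the entire hypothesis of the theorem evaporates: $(\widehat{M}^t)_P$ is not the $t$-adic (or any $I$-adic) completion of anything natural, and there is no parameter of $R_P$ for which the inductive hypothesis could be checked. All the information about $M/tM$ lives on $V(t)$, and you cannot recover it after inverting $t$.

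The paper's proof avoids this by never localizing until the very end. It inducts on $\dim R$, but the inductive step passes to the \emph{global} quotient $\widehat{M}^t/x_1\widehat{M}^t$ over $R/x_1R$ (not to a localization): using \autoref{lem: regular sequence} one checks that $t$ is a nonzerodivisor on $\widehat{M}^t/x_1\widehat{M}^t$ and that $(\widehat{M}^t/x_1\widehat{M}^t)/t(\widehat{M}^t/x_1\widehat{M}^t) = M/(x_1,t)M$ is balanced big Cohen--Macaulay over $R/(x_1,t)$, so induction applies to the $t$-adic completion of this quotient, and \autoref{lem: completion} shows the quotient is already $t$-adically complete. This yields that every system of parameters of $R$ \emph{containing $t$ as one of its entries} is a regular sequence on $\widehat{M}^t$. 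Only then, for a prime $P \not\ni t$ of height $h$, does one use prime avoidance to choose $x_1,\dots,x_h$ with $P$ minimal over $(x_1,\dots,x_h)$ and extend by $t, x_{h+2},\dots,x_d$ to a full system of parameters; the global regular-sequence statement then gives that $x_1,\dots,x_h$ is a regular sequence on $(\widehat{M}^t)_P$, whence $H^i_P((\widehat{M}^t)_P)=0$ for $i<h$. If you want to keep your structure, you should replace the ``localize and induct'' step by this ``quotient and induct, then prime-avoid'' argument; as written, the case $t \notin P$ is not established.
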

\begin{proof}
We prove by induction on $d=\dim(R)$. So we assume the conclusion of the theorem holds whenever the local ring has dimension $<d$.

We first prove that every system of parameters $x_1,\dots,x_d$ of $R$ such that $x_i=t$ for some $i$ is a regular sequence on $M$. This is clear if $i=1$ and so we assume $x_1\neq t$. We claim that
\begin{enumerate}[resume*=balancedcm]
  \item $t$ is a nonzerodivisor on $\widehat{M}^t/x_1\widehat{M^t}$. \label{thm: balanced CM.cond3}
  \item $\widehat{M}^t/(x_1, t)\widehat{M}^t$ is balanced big Cohen-Macaulay over $R/(x_1, t)$. \label{thm: balanced CM.cond4}
\end{enumerate}
Here \autoref{thm: balanced CM.cond4} is obvious since $\widehat{M}^t/(x_1, t)\widehat{M}^t=M/(t, x_1)M$, and \autoref{thm: balanced CM.cond3} follows from \autoref{lem: regular sequence} since $t, x_1$ is a regular sequence on $M$.

By induction, we know that the $t$-adic completion of $\widehat{M}^t/x_1\widehat{M}^t$ is balanced big Cohen-Macaulay over $R/x_1R$. Since $t$ is a nonzerodivisor on $\widehat{M}^t/x_1\widehat{M}^t$, by \autoref{lem: completion} $\widehat{M}^t/x_1\widehat{M}^t$ is $t$-adically complete. Therefore $\widehat{M}^t/x_1\widehat{M}^t$ is balanced big Cohen-Macaulay over $R/x_1R$. But since $x_1$ is a nonzerodivisor on $\widehat{M}^t$ by \autoref{lem: regular sequence}, $x_1,\dots, x_d$ is a regular sequence on $\widehat{M}^t$.

Now let $P$ be a prime ideal of height $h$. Suppose $t\in P$, then since $\widehat{M}^t/t\widehat{M}^t=M/tM$, we have
$H_P^i((\widehat{M}^t)_P/t(\widehat{M}^t)_P)=H_P^i((M/tM)_P)=0$ for all $i<h-1$, which by the long exact sequence of local cohomology implies that $H_P^i(\widehat{M}^t)=0$ for all $i<h$. Now suppose $t\notin P$, by prime avoidance, we can pick $x_1,\dots,x_h$ and $x_{h+2},\dots,x_d$ such that
\begin{enumerate}[resume*=balancedcm]
  \item $P$ is a minimal prime of $(x_1,\dots,x_h)$  \label{thm: balanced CM.cond5}
  \item $x_1,\dots,x_h, t, x_{h+2},\dots,x_d$ is a system of parameters of $R$.  \label{thm: balanced CM.cond6}
\end{enumerate}
By what we have already proved, $x_1,\dots,x_h, t, x_{h+2},\dots,x_d$ and hence $x_1,\dots,x_h$ is a regular sequence on $\widehat{M}^t$. Thus $x_1,\dots,x_h$ is a regular sequence on $(\widehat{M}^t)_P$ and so $H_P^i((\widehat{M}^t)_P)=0$ for all $i<h$. Therefore $\widehat{M}^t$ is cohomologically Cohen-Macaulay. Since $\widehat{M}^t/\m\widehat{M}^t=M/\m M\neq 0$ (by condition \autoref{thm: balanced CM.cond2}), $\widehat{M}^t$ is balanced big Cohen-Macaulay as desired.
\end{proof}

Now we can prove the promised extension of \cite[Corollary 5.17]{BhattAbsoluteIntegralClosure}.

\begin{corollary}
\label{cor: p-adic completion of R^+}
Let $(R,\m)$ be an excellent local domain of mixed characteristic $(0,p>0)$. Then $\widehat{R^+}^p$, the $p$-adic completion of $R^+$, is a balanced big Cohen-Macaulay.
\end{corollary}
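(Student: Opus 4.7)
The plan is to apply \autoref{thm: balanced CM} with $M = R^+$ and $t = p$; its conclusion immediately yields that $\widehat{R^+}^p$ is balanced big Cohen-Macaulay over $R$. Since $R$ is an excellent local domain of mixed characteristic $(0,p>0)$, it is Noetherian, equidimensional, and catenary, so the ambient hypotheses of the theorem are met. The first module-level hypothesis --- that $p$ is a nonzerodivisor on $R^+$ --- is immediate, because $R^+$ is a domain containing $0 \neq p$. The real work is in the second: $R^+/pR^+$ must be balanced big Cohen-Macaulay over $R/pR$.

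For this, note that $R/pR$ is an excellent, equidimensional, catenary local ring of characteristic $p > 0$ and dimension $d-1$, where $d = \dim R$. By \autoref{lem: coh CM vs balanced CM}, it suffices to show $R^+/pR^+$ is cohomologically Cohen-Macaulay over $R/pR$ together with the non-vanishing $R^+/\m R^+ \neq 0$; the latter follows at once from going-up along the integral extension $R \hookrightarrow R^+$. To establish the former, we transfer the classical Hochster--Huneke big Cohen-Macaulay theorem from positive characteristic via reduction modulo $p$. Specifically, for each minimal prime $\mathfrak{q}$ of $pR^+$, the quotient $R^+/\mathfrak{q}$ is an absolute integral closure of the excellent local domain $R/(\mathfrak{q} \cap R)$ (as the fraction field of $R^+/\mathfrak{q}$ is algebraic over $\Frac(R/(\mathfrak{q} \cap R))$ and is algebraically closed), which has characteristic $p > 0$ and dimension $d-1$ (since $\mathfrak{q} \cap R$ is a minimal prime of $pR$). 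By \cite{HochsterHunekeInfiniteIntegralExtensionsAndBigCM}, each $R^+/\mathfrak{q}$ is thus balanced big --- and so, by \autoref{lem: coh CM vs balanced CM}, cohomologically --- Cohen-Macaulay over $R/(\mathfrak{q} \cap R)$, hence also over $R/pR$.

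A standard long-exact-sequence argument in local cohomology, applied to the filtration of $R^+/pR^+$ by powers of the nilpotent ideal $\sqrt{pR^+}/pR^+$, then propagates cohomological Cohen-Macaulayness from the quotients $R^+/\mathfrak{q}$ to $R^+/pR^+$ itself. The main obstacle is this last propagation in the non-Noetherian setting of $R^+$: one must reduce to finitely many minimal primes of $pR^+$ and carefully control the successive quotients in the nilpotent filtration. The natural approach is to pass through finite intermediate extensions $R \subseteq R' \subseteq R^+$, where $R'$ is Noetherian and $pR'$ has only finitely many minimal primes, and then take a filtered colimit, using that local cohomology commutes with filtered colimits.
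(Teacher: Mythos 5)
Your overall strategy matches the paper's: apply \autoref{thm: balanced CM} with $M=R^+$ and $t=p$, noting that $p$ is a nonzerodivisor since $R^+$ is a domain. The gap is in how you verify the second hypothesis, that $R^+/pR^+$ is balanced big Cohen--Macaulay over $R/pR$. This is exactly the content of \cite[Corollary 5.10]{BhattAbsoluteIntegralClosure} (resting on Theorem 5.1 of that paper), which is the deep mixed-characteristic input the corollary is designed to package; the paper simply cites it. Your attempt to derive it instead from the equal-characteristic Hochster--Huneke theorem via the quotients $R^+/\mathfrak{q}$ does not work.

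Concretely: the ideal $\sqrt{pR^+}/pR^+$ is nil but \emph{not} nilpotent. Since $R^+$ is absolutely integrally closed, it contains $p^{1/p^n}$ for all $n$, and each such element is nilpotent modulo $pR^+$; but no power of the ideal they generate lands in $pR^+$, so $(\sqrt{pR^+})^N \not\subseteq pR^+$ for every $N$. Hence the finite filtration ``by powers of the nilpotent ideal'' on which your propagation step relies does not exist, and no passage through finite levels $R'$ repairs this, because the nilpotency degree of $\sqrt{pR'}/pR'$ blows up along the colimit. Moreover $pR^+$ has infinitely many minimal primes (already $p\overline{\bZ}$ does), and $R^+/\sqrt{pR^+}$ is only a subring of the product of the $R^+/\mathfrak{q}$, with no control on the cokernel, so even the reduced quotient is not governed by the characteristic-$p$ Hochster--Huneke theorem in the way you need. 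The conceptual point is that the finite covers which annihilate lower local cohomology of $R/pR$ inside $R^+/pR^+$ are genuinely ramified mixed-characteristic covers of $R$ (e.g.\ adjoining $p$-power roots), not covers of the characteristic-$p$ quotients $R/(\mathfrak{q}\cap R)$; if the statement followed by reduction mod $p$ from \cite{HochsterHunekeInfiniteIntegralExtensionsAndBigCM}, it would not have required the $p$-adic Riemann--Hilbert machinery of \cite{BhattAbsoluteIntegralClosure}. The fix is simply to invoke \cite[Corollary 5.10]{BhattAbsoluteIntegralClosure} for hypothesis (2) of \autoref{thm: balanced CM}.
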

\begin{proof}
This follows from \cite[Corollary 5.11]{BhattAbsoluteIntegralClosure} and \autoref{thm: balanced CM}.
\end{proof}

As we mentioned before, one advantage of the notion of cohomologically Cohen-Macaulay is that it behaves well under localization.  It is not clear that (balanced) big Cohen-Macaulay algebras behave well under localization, we record the following partial result for psychological comfort; it will not be used in this paper.

\begin{proposition}
Suppose $R$ is a complete Noetherian local domain and $B$ is a balanced big Cohen-Macaulay algebra, then $B_P$ is balanced big Cohen-Macaulay for $R_P$ for all $P\in\Spec(R)$.
\end{proposition}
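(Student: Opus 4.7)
The plan is to invoke \autoref{lem: coh CM vs balanced CM}, which characterizes balanced big Cohen-Macaulayness as the conjunction of cohomological Cohen-Macaulayness with the nontriviality condition $B/\fram B \neq 0$. This lemma applies to $R_P$ because $R_P$ is equidimensional (being a localization of the domain $R$, hence itself a domain) and catenary (since $R$, as a complete local Noetherian ring, is universally catenary, so every localization is catenary). Reducing via the lemma, it suffices to show that $B_P$ is cohomologically Cohen-Macaulay over $R_P$ and that $B_P / PR_P \cdot B_P \neq 0$.

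The cohomological Cohen-Macaulayness of $B_P$ over $R_P$ is inherited directly from that of $B$ over $R$ (which itself follows from the lemma applied to $R$). Indeed, every prime of $R_P$ has the form $QR_P$ for some $Q \in \Spec R$ with $Q \subseteq P$, and one has $(R_P)_{QR_P} = R_Q$ and $(B_P)_{QR_P} = B_Q$. Thus the required vanishing $H^i_{QR_P}((B_P)_{QR_P}) = H^i_Q(B_Q) = 0$ for $i < \dim R_Q = \dim (R_P)_{QR_P}$ transfers immediately. This step is essentially formal.

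The main obstacle is to establish $B_P / PR_P \cdot B_P \neq 0$. To approach this, I would first choose $x_1,\ldots,x_h \in P$ a partial system of parameters of $R$ with $P$ minimal over $(x_1,\ldots,x_h)$, where $h = \height(P)$. Then $(x_1,\ldots,x_h)R_P$ is $PR_P$-primary, so $R_P / (x_1,\ldots,x_h)R_P$ is an Artinian local ring with nilpotent maximal ideal. The Nakayama lemma for modules over such rings (valid without any finite generation assumption, since a nilpotent maximal ideal annihilates any module equal to its own product with that ideal) reduces the question to showing $B_P / (x_1,\ldots,x_h) B_P \neq 0$. Equivalently, the image of $1 \in B$ should be nonzero in $(B/(x_1,\ldots,x_h)B)_P$, which amounts to the containment $(x_1,\ldots,x_h)B \cap R \subseteq P$. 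For $P = \fram$ this is immediate from $B/\fram B \neq 0$, since $\fram B \cap R$ is the kernel of the map $R \to B/\fram B$ and the image of $1$ there is nonzero. For general $P$, and particularly when $R$ is not Cohen-Macaulay, verifying $(x_1,\ldots,x_h)B \cap R \subseteq P$ is delicate and constitutes the main difficulty of the proof. I would resolve it using either a purity-type property of $R \hookrightarrow B$ specific to big Cohen-Macaulay algebras over complete local Noetherian rings, or by passing through a Noether normalization $A \hookrightarrow R$ (with $A$ a complete regular local ring, over which $B$ is faithfully flat since a system of parameters of $A$ is also one of $R$) and transferring the desired nonvanishing through the finite extension $A \to R$.
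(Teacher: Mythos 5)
Your overall architecture is sound and parallels the paper's: the cohomological Cohen--Macaulay property passes to localizations formally, and everything hinges on the single nontrivial point $B_P/PB_P \neq 0$. (The paper does not route through \autoref{lem: coh CM vs balanced CM}; it directly checks that every system of parameters of $R_P$ lifts, via prime avoidance, to part of a system of parameters of $R$ and is therefore a regular sequence on $B$, hence a possibly improper one on $B_P$ --- but this is the same reduction in different clothing, and your version is fine.) Your reduction of the nontriviality to $(x_1,\ldots,x_h)B \cap R \subseteq P$ via Nakayama for the nilpotent ideal $PR_P/(x_1,\ldots,x_h)R_P$ is also correct.

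The gap is that you never actually prove $B_P/PB_P \neq 0$; you only name two candidate strategies, and one of them does not work as stated. The paper's resolution is Hochster's theorem that a big Cohen--Macaulay algebra over a complete local domain is a \emph{solid} $R$-algebra (\cite[Corollary 2.4]{HochsterSolidClosure}), and that solidity forces $\Spec(B) \to \Spec(R)$ to be surjective --- this is the ``purity-type property'' you gesture at, and citing it would close the argument in one line. Your Noether normalization alternative, however, has a genuine hole: faithful flatness of $B$ over the regular ring $A$ gives surjectivity of $\Spec(B) \to \Spec(A)$, equivalently $B \otimes_A \kappa(Q) \neq 0$ for $Q = P \cap A$. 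But $B \otimes_A \kappa(Q) \cong B \otimes_R \bigl(R \otimes_A \kappa(Q)\bigr)$, and $R \otimes_A \kappa(Q)$ is an Artinian ring whose primes are \emph{all} the primes of $R$ lying over $Q$; its nonvanishing only tells you that $B_{P'}/P'B_{P'} \neq 0$ for \emph{some} prime $P'$ over $Q$, not for your chosen $P$. Since $R \to B$ is not known to be flat, you cannot transfer the nonvanishing to the specific fiber over $P$ this way. So as written the proof is incomplete: it stands or falls on supplying the solidity/surjectivity input, which is precisely the content the proposition is really about.
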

\begin{proof}
Let $x_1,...,x_h$ be a system of parameters in $R_P$, by prime avoidance, we may assume that $x_1,...,x_h$ is also part of a system of parameters of $R$, thus it is a regular sequence on $B$ and hence a possibly improper regular sequence on $B_P$. But since $B$ is big Cohen-Macaulay and $R$ is a Noetherian complete local domain, $B$ is a solid $R$-algebra (see \cite[Corollary 2.4]{HochsterSolidClosure}) and thus $\Spec(B)\to \Spec(R)$ is surjective, so $B_P/PB_P\neq 0$ and hence $x_1,\dots,x_h$ is a regular sequence on $B_P$. 
\end{proof}

We conclude our discussion with a definition related to the discussion above.

\begin{definition}[Splinters]
    A Noetherian reduced ring $R$ is called a \emph{splinter} if for every finite extension of rings $R \subseteq S$ we have that $R \hookrightarrow S$ splits as a map of $R$-modules.  
\end{definition}

As mentioned above, in characteristic zero, every normal ring is a splinter (the trace can be used to split the inclusions).  However, in characteristic $p > 0$ or mixed characteristic $(0, p>0)$, if a local ring $(R, \m)$ is a splinter, then $H^i_{\m}(R) \to H^i_{\m}(R^+) = H^i_{\m}(\varinjlim_{S \subseteq R^+} S)$ is injective for every $i > 0$.  In particular, by \autoref{RPlusCM} we see that splinters are Cohen-Macaulay.

\subsection{Resolution of singularities}
\label{subsec.ResolutionOfSings}
In this section, we recall known results about resolutions of singularities for mixed characteristic three-dimensional schemes. {Note that resolutions of singularities exist for Noetherian excellent surfaces in full generality by \cite{LipmanDesingularizationOf2Dimensional}.}

\begin{theorem}[{\cite[Theorem 1.1]{CP19} and \cite[Corollary 1.5]{CossartJannsenSaito}}] \label{thm:proper-resolutions} Let $X$ be a reduced and separated Noetherian scheme which is quasi-excellent and of dimension at most three, and let $T$ be a subscheme of $X$. Then there exists a proper birational morphism $g \colon Y \to X$ from a regular scheme $Y$ such that both $g^{-1}(T)$ and $\mathrm{Ex}(g)$ are divisors and $\Supp(g^{-1}(T)\cup \mathrm{Ex}(g))$ is simple normal crossing.
\end{theorem}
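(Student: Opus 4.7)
The plan is to invoke the embedded resolution theorems of Cossart--Piltant \cite{CP19} and Cossart--Jannsen--Saito \cite{CossartJannsenSaito} directly. Both references handle reduced separated quasi-excellent Noetherian schemes of dimension at most three, and in fact furnish an \emph{embedded} (principalization) version of resolution in which a chosen subscheme is tracked throughout the algorithm, so that its total transform becomes an SNC divisor on a regular ambient scheme.

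First, I would apply the embedded form of the cited theorems with input $T \subset X$. This produces a proper birational morphism $g \colon Y \to X$ with $Y$ regular such that $g^{-1}(T)$ is a divisor with simple normal crossings. Since $g$ is built as a finite sequence of blow-ups along regular centers (contained in the successive singular and non-SNC loci), its exceptional locus $\mathrm{Ex}(g)$ is automatically a Cartier divisor, whose components are the strict transforms of the exceptional divisors introduced at each stage.

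To upgrade the SNC conclusion from $g^{-1}(T)$ alone to the union $g^{-1}(T) \cup \mathrm{Ex}(g)$, I would use that the algorithms in \cite{CP19} and \cite{CossartJannsenSaito} are already formulated to keep the total transform together with the accumulated exceptional divisor in SNC position at every stage: each chosen center is regular and meets the existing SNC configuration transversely in the stratified sense, so the SNC property is preserved inductively through the sequence of blow-ups. Equivalently, one may feed the algorithm the scheme $T$ augmented by a dummy divisor that records the accumulated exceptional locus.

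The main obstacle, which is also the only nontrivial input, is the deep existence of embedded resolution in dimension three over arbitrary quasi-excellent bases; this is precisely what \cite{CP19} and \cite{CossartJannsenSaito} supply. Beyond that, the work consists in verifying that the formulation provided by those references (or an easy variant thereof) directly yields the combined SNC assertion on $\Supp(g^{-1}(T) \cup \mathrm{Ex}(g))$, which is a standard bookkeeping point once the algorithm is set up with both $T$ and the growing exceptional divisor as marked data.
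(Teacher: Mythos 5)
Your proposal takes the same route as the paper, which gives no independent argument for this statement and simply attributes it to \cite[Theorem 1.1]{CP19} and \cite[Theorem 1.3]{CossartJannsenSaito}; the content is entirely in those references, and your remarks about the exceptional divisor being tracked as marked SNC data through the blow-up sequence are the standard reading of their embedded formulation. Nothing further is needed.
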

\begin{proof}
    By \cite[Theorem 1.1]{CP19}, there is a projective morphism $f:Z\to X$ such that $X$ is regular.  Then applying \cite[Corollary 1.5]{CossartJannsenSaito} to $(Z, T')$ with $T'=f^{-1}(T)$ gives the required $g$. 
\end{proof}

\begin{proposition}\label{proj-resolutions}
Let $X$ be a reduced scheme of dimension $3$, quasi-projective over a Noetherian quasi-excellent affine scheme $\Spec(R)$.  Let $T$ be a subscheme of $X$.  Then there exists a projective birational morphism $g:Y\to X$ from a regular scheme $Y$ such that both $g^{-1}(T)$, and $\mathrm{Ex}(g)$ are divisors, $\Supp(g^{-1}(T)\cup\mathrm{Ex}(g))$ is simple normal crossing and $Y$ supports a $g$-ample $g$-exceptional divisor.
\end{proposition}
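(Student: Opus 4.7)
The plan is in two steps: first, upgrade the proper birational morphism of \autoref{thm:proper-resolutions} to a projective one, and then use the resulting blow-up structure to produce the $g$-ample $g$-exceptional divisor.

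For the projective upgrade, I first apply \autoref{thm:proper-resolutions} to $(X,T)$ to obtain a proper birational morphism $g_0 \colon Y_0 \to X$ from a regular $Y_0$ with $g_0^{-1}(T) \cup \mathrm{Ex}(g_0)$ simple normal crossings. Relative Chow's lemma applied to the proper morphism $g_0$ then produces a projective birational morphism $h \colon Z \to Y_0$, realized as the blow-up of an ideal sheaf on $Y_0$ and an isomorphism over the open locus where $g_0$ is already projective, such that $g_0 \circ h \colon Z \to X$ is projective. The scheme $Z$ may have singularities, so I apply \autoref{thm:proper-resolutions} a second time to $Z$, with ambient subscheme $h^{-1}(T) \cup h^{-1}(\mathrm{Ex}(g_0)) \cup \mathrm{Ex}(h)$, to obtain $\pi \colon Y \to Z$ from a regular $Y$ satisfying the full SNC condition. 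The resolutions produced by \cite{CP19, CossartJannsenSaito} are built as finite sequences of blow-ups along closed permissible centers and are therefore projective, so $\pi$ is projective, and hence $g := g_0 \circ h \circ \pi \colon Y \to X$ is projective birational with $Y$ regular and $g^{-1}(T) \cup \mathrm{Ex}(g)$ simple normal crossings.

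For the $g$-ample $g$-exceptional divisor, I use that $g$ is now projective birational of Noetherian schemes. By Raynaud-Gruson's theorem on flattening by blow-ups, $g$ is isomorphic to the blow-up $\mathrm{Bl}_I(X) \to X$ of some coherent ideal sheaf $I \subseteq \sO_X$, and we may take $V(I)$ to lie in any specified closed subset containing the non-isomorphism locus of $g$. The associated exceptional Cartier divisor $E$ on $Y$ then satisfies $\sO_Y(-E) = I \cdot \sO_Y$, which is $g$-ample by the universal property of the blow-up; thus $-E$ is a $g$-ample Cartier divisor. To ensure that $E$ is also $g$-exceptional, I arrange $V(I)$ to have codimension $\geq 2$ in $X$: this holds once $g$ is an isomorphism in codimension one, which can be forced by precomposing with the normalization of $X$ in codimension one (a finite, hence projective, birational step). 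Alternatively, from the explicit sequence-of-blow-ups description above, one may construct the desired divisor directly as $-\sum_i a_i \tilde E_i$ for suitable positive rationals $a_i$, where $\tilde E_i$ are the strict transforms on $Y$ of the exceptional divisors of the individual blow-ups.

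The main obstacle is the first step — producing a projective resolution. This rests on the fact, implicit in the Cossart-Piltant / Cossart-Jannsen-Saito construction, that resolution of singularities of a three-dimensional excellent scheme may be realized via a finite sequence of blow-ups along closed permissible centers; once that is in hand, Chow's lemma and a single re-resolution suffice to absorb the singularities introduced when making things projective, and Raynaud-Gruson supplies the rest.
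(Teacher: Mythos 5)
Your overall strategy (Chow's lemma to gain projectivity, a second resolution to restore regularity, then a blow-up structure to produce the ample exceptional divisor) is the same as the paper's, but there is a genuine gap at the crucial step. You apply \autoref{thm:proper-resolutions} to the possibly singular scheme $Z=\mathrm{Bl}_J(Y_0)$ and assert that the resulting $\pi\colon Y\to Z$ is projective because ``the resolutions produced by \cite{CP19, CossartJannsenSaito} are built as finite sequences of blow-ups along closed permissible centers.'' This is exactly what is \emph{not} known: for a singular excellent threefold, the Cossart--Piltant resolution is obtained by patching local uniformizations and is only proper, not a global sequence of blow-ups (the paper flags this explicitly in its technical notes: ``The known resolution theorems for excellent schemes of dimension $3$ do not produce resolutions by sequences of blow-ups of non-singular centers''). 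So your $\pi$, and hence your composite $g$, is only proper, and the argument does not close.

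The paper's workaround is to never resolve a singular scheme a second time. After normalizing $X$ and taking the proper resolution $g'\colon Y'\to X$, Chow's lemma plus \cite[Theorem 1.24]{liu_algebraic_2002} exhibits a projective birational $\tilde g\colon \tilde Y\to X$ as the blow-up of an ideal $\sI$ on $X$, and by the universal property $\tilde Y$ is also the blow-up of $\sI'=\sI\sO_{Y'}$, the ideal of a subscheme $Z\subseteq Y'$ of the \emph{regular} scheme $Y'$. One then takes a projective \emph{embedded} resolution $Y\to Y'$ of $(Y', Z\cup (g')^{-1}(T)\cup\mathrm{Ex}(g'))$ via \cite[Theorem 1.3]{CossartJannsenSaito} --- which, being a principalization in a regular ambient threefold, genuinely is a sequence of blow-ups along regular centers and hence projective --- and uses the universal property of blow-ups to factor $Y\to X$ through the projective $\tilde Y\to X$. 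Two smaller remarks: your application of Liu's theorem to $Z\to Y_0$ is outside its hypotheses ($Y_0$ is only proper over $X$, not quasi-projective over an affine base, which is why the paper applies it to $\tilde Y \to X$ instead); and for the $g$-ample $g$-exceptional divisor, the paper simply cites \cite[Theorem 1]{kollar_witaszek}, whereas your Raynaud--Gruson sketch still has to deal with the fact that the blow-up ideal may be non-invertible along divisors (so its exceptional Cartier divisor need not be $g$-exceptional), an issue your ``normalize in codimension one'' remark does not resolve.
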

\begin{proof}
    {By taking normalization, we may assume that $X$ is normal and integral.} Let $g':Y'\to X$ be the proper birational morphism given by \autoref{thm:proper-resolutions}.  By Chow's lemma \cite[Theorem 5.6.1(a)]{EGA} applied to $g$, there exists a projective birational map $\tilde{g}:\tilde{Y}\to X$ which factors through $f:\tilde{Y}\to Y'$, and which is the blow up of some ideal sheaf $\sI$ by \cite[Theorem 1.24]{liu_algebraic_2002}. By the universal property of blow-ups \cite[Tag 0806]{stacks-project}, $\tilde{Y}$ is also the blow-up of $\sI'=\sI\sO_{Y'}$, which is the ideal sheaf of a subscheme $Z$.
        Now let $h:Y\to Y'$ be the projective  embedded resolution of $(Y', Z\cup (g')^{-1}(T)\cup\mathrm{Ex}(g'))$ given by \cite[Corollary 1.5]{CossartJannsenSaito}, which is projective since it is a composition of blowups. 
    Then $g:Y\to X$ factors through $\tilde{Y}$ by the universal property of blow-ups, and so $g$ is projective by \cite[0C4P]{stacks-project} since $Y$ is projective over $\tilde{Y}$ and $\tilde{Y}$ is projective over $X$.  Given this $Y$, we may replace it with a resolution supporting a $g$-ample $g$-exceptional divisor by \cite[Theorem 1]{kollar_witaszek}.
\end{proof}

\begin{remark}
    Note that the construction in \autoref{proj-resolutions} does not result in a morphism $g$ which is an isomorphism over the simple normal crossing locus of $(X,T)$.  Cossart and Piltant \cite{CP19} prove \autoref{thm:proper-resolutions} with this hypothesis, but they do not have the requirement that $g$ is projective or that $Y$ supports a $g$-ample $g$-exceptional divisor as in \autoref{proj-resolutions}.
        
{Furthermore, we do not know if \autoref{proj-resolutions} is valid over non-affine bases (due to the assumptions of \cite[Theorem 1.24]{liu_algebraic_2002}). For this reason, we assume in \autoref{section:MinimalModelProgram} that all the schemes are quasi-projective over an affine scheme.}
\end{remark}

We also need the following version of the negativity lemma from birational geometry \cite[Lem 3.39]{KollarMori}.

\begin{lemma}
\label{lem:negativity}
Let $f:Y\to X$ be a projective birational morphism of normal 
excellent integral schemes and $\Gamma$ is a $\bQ$-Cartier $\bQ$-divisor on $Y$ such that $f_* \Gamma$ is effective and $-\Gamma$ is $f$-nef. Then $\Gamma$ is effective.
\end{lemma}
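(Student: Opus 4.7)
The plan is to follow the classical birational-geometry argument for the negativity lemma, adapting each step to the excellent setting. Since the hypotheses and conclusion are local on $X$, first reduce to the case $X$ affine. Write $\Gamma = \Gamma_+ - \Gamma_-$ with $\Gamma_\pm$ effective having no common prime components. The preliminary observation is that $\Gamma_-$ is necessarily $f$-exceptional: if some prime component $D$ of $\Gamma_-$ dominated a divisor $f(D)$ on $X$, then $f(D)$ would appear with a strictly negative coefficient in $f_*\Gamma = f_*\Gamma_+ - f_*\Gamma_-$ (there being no cancellation possible from $f_*\Gamma_+$ since $\Gamma_+$ and $\Gamma_-$ share no components), contradicting $f_*\Gamma \geq 0$.

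It then remains to show $\Gamma_- = 0$. Assume for contradiction that $\Gamma_- \neq 0$, pick a generic point $\eta$ of a component of $f(\Gamma_-)$, and a closed point $x \in \overline{\{\eta\}}$. The aim is to cut $X$ by iterated sufficiently general hyperplane sections through $x$ to reduce to a situation where $f$ restricts to a birational morphism $f'\colon Y' \to X'$ of excellent normal surfaces, in such a way that: $\Gamma|_{Y'}$ is still $\bQ$-Cartier and $-\Gamma|_{Y'}$ is $f'$-nef, $f'_*(\Gamma|_{Y'})$ is still effective, and the negative part $(\Gamma|_{Y'})_-$ is nonzero, effective, and $f'$-exceptional (contracted to finitely many closed points). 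The Bertini-type statements established earlier (\autoref{sec:Bertini}) are what allow such hyperplane cuts to preserve normality and integrality in this mixed-characteristic excellent setting, which is the principal technical obstacle.

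On the resulting surface, Mumford's negative-definiteness theorem for the intersection pairing on prime divisors contracted by a birational morphism of normal surfaces remains valid — this is part of Lipman's surface theory and applies verbatim to excellent normal surfaces. Hence $(\Gamma|_{Y'})_-^2 < 0$. Combining this with $f'$-nefness of $-\Gamma|_{Y'}$ and effectiveness of $(\Gamma|_{Y'})_-$ gives
\[
0 \;\geq\; (-\Gamma|_{Y'}) \cdot (\Gamma|_{Y'})_- \;=\; -(\Gamma_+|_{Y'}) \cdot (\Gamma_-|_{Y'}) + (\Gamma_-|_{Y'})^2,
\]
so $(\Gamma_-|_{Y'})^2 \geq (\Gamma_+|_{Y'}) \cdot (\Gamma_-|_{Y'}) \geq 0$, contradicting negative definiteness. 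Thus $\Gamma_- = 0$, i.e.\ $\Gamma$ is effective. The main obstacle, as noted, is ensuring that the hyperplane reduction to the surface case is legitimate over an excellent — rather than finite-type-over-a-field — base; everything else is formal once the reduction is in place.
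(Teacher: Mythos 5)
Your overall strategy---decompose $\Gamma=\Gamma_+-\Gamma_-$, observe $\Gamma_-$ is $f$-exceptional, reduce to a birational morphism of excellent normal surfaces, and conclude by negative definiteness of the exceptional intersection form---belongs to the same family of argument as the paper's, which likewise reduces to the surface case (quoting Tanaka's excellent-surface negativity lemma rather than re-running Mumford's argument). But there is a genuine gap at the reduction step. You cut the \emph{target} $X$ by hyperplanes through a closed point $x$ under a component $D$ of $\Gamma_-$ and assert this can be arranged so that $f$ restricts to a birational morphism $f'\colon Y'\to X'$ of surfaces with $(\Gamma|_{Y'})_-$ still nonzero. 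In the essential case where $D$ is contracted to the closed point $x$ itself, the strict transform $Y'$ of a hyperplane section of $X$ through $x$ gives you no control over which part of the fibre $f^{-1}(x)$ it passes through: $Y'\cap f^{-1}(x)$ is nonempty by properness, but nothing forces it to meet the particular divisorial component $D$, so $(\Gamma|_{Y'})_-$ may vanish and the contradiction evaporates. (Taking the total preimage $f^{-1}(H_X)$ instead does not yield a birational morphism of surfaces.) The paper sidesteps this by choosing the hypersurface \emph{upstairs}: it takes $H\subseteq Y$ with $H\cap E\neq\emptyset$ and with no component of $H$ contained in a component of $\Exc(f)$, normalizes a component of $H$ meeting $E$, and notes that its image in $X$ has codimension one, so the induced map is birational onto the normalization of that image; the survival of the negative coefficient is then automatic. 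It also inducts on dimension (first localizing at non-closed points of $X$ to force the bad divisor to lie over the closed point) rather than dropping to surfaces in one step.

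Two smaller points. First, your appeal to \autoref{sec:Bertini} does not do the work you want: \autoref{thm.FinalBertini} produces hypersurface sections of \emph{regular} closed subschemes that remain regular, and says nothing about sections through a prescribed point of a merely normal $X$, nor about preserving normality or integrality; the paper's proof needs no Bertini statement at all, because it normalizes after cutting and only requires set-theoretic conditions on $H$. Second, the displayed inequality $0\geq(-\Gamma|_{Y'})\cdot(\Gamma|_{Y'})_-$ has the wrong sign---a nef divisor against an effective contracted curve gives $\geq 0$---though the chain you then deduce, $(\Gamma_-|_{Y'})^2\geq(\Gamma_+|_{Y'})\cdot(\Gamma_-|_{Y'})\geq 0$, is the correct one, so this is only a typo.
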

\begin{proof}
Note first that $f$-nefness is preserved by localisation on $X$ \cite[Lem 2.6]{CasciniTanaka2020}, and so is the birationality of $f$. Additionally,   effectivity of divisors can be checked on all localizations of $X$. Hence, we may assume that $X= \Spec A$, where $(A, \fram)$ is local.
In particular then $Y$  has finite Krull dimension.
If $\dim Y \leq 2$, then we are done by 
\cite[Lem 2.11]{tanaka_mmp_excellent_surfaces}. Hence we may assume that $\dim Y >2$ and that the statement of the lemma is known for all dimensions smaller than $\dim Y$. 

Assume then that $\Gamma$ is not effective. Let $E$ be the prime divisor on $Y$ which has a negative coefficient in $\Gamma$. By localizing at the points of positive codimension, and using the induction hypothesis, we see that the components of $\Gamma$ that are mapping to the non-closed point of $X$ have non-negative coefficients. In particular, $E$ lies over the closed point of $X$. 
As $\dim Y > 2$ we can find  a hypersurface $H \subseteq Y$ such that  \begin{enumerate}
    \item \label{itm:negativity:intersects} $H \cap E \neq \emptyset$, and
    \item \label{itm:negativity:does_not_intersect}
 { no component of $H$ is  contained in any irreducible component of $\Exc(f)$.} \end{enumerate}
We introduce the following notation:
\begin{itemize}
    \item $Y'$ is the normalization of an irreducible component of $H$ that intersects $E$,
    \item $h : Y' \to Y$ is the induced morphism,
    \item  $X'$ is the normalization of $f(Y')$, where $f(Y')$ is also local as it is a closed subscheme of $X$, { and then $X'$ is semi-local},
    \item $f' : Y' \to X'$ is the induced morphism, which is birational due to assumption \autoref{itm:negativity:does_not_intersect} and the fact that $\codim_Y h(Y') =1$, we have $h(Y') \not\subseteq \Exc(f)$,
    \item $\Gamma':=h^* \Gamma$, for which we have that $f'_* \Gamma'$ is effective { as we know that the coefficients of $\Gamma'$ are already positive over the non-closed points of $X$}. \end{itemize}
 By the above observations we may apply the induction  hypothesis to $f' : Y' \to X'$ and to $\Gamma'$. By our choice of $Y'$, $\Gamma'$ has a negative coefficient, which is a contradiction. 
\end{proof}

\subsection{Bertini}
\label{sec:Bertini}

We will need certain Bertini theorems in mixed characteristic.

\begin{theorem}
    \label{thm.FinalBertini}
Let $R$ be a Noetherian local domain. Fix an integer $N \geq 1$. Let $X_1,...,X_n \subset \mathbf{P}^N_R$ be a finite collection of regular closed subschemes. Then there exist some $d \gg 0$ and $0 \neq h \in H^0(\mathbf{P}^N_R, \sO(d))$ such that $V(h) \cap X_i$ is regular for all $i$.
\end{theorem}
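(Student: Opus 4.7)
My approach is a Jacobian-criterion Bertini argument combined with the ampleness of $|\mathcal{O}(d)|$ for $d \gg 0$. Since each $X_i$ is regular, the Jacobian criterion says that $V(h) \cap X_i$ is regular at a point $x \in V(h) \cap X_i$ if and only if, after choosing a local trivialization of $\mathcal{O}(d)$ near $x$ and identifying $h$ with a local function on $X_i$, the class of $h$ in $\mathfrak{m}_{X_i, x}/\mathfrak{m}_{X_i, x}^2$ is non-zero. Equivalently, the ``bad locus'' for $h$ at $X_i$ is
\[
B_i(h) \;=\; \big\{\, x \in V(h) \cap X_i \;:\; h \in \mathfrak{m}_{X_i, x}^2 \cdot \mathcal{O}(d)_x \,\big\},
\]
and the goal is to choose $h$ so that $B_i(h) = \emptyset$ for every $i$.

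The key step is a parameter-space/incidence argument. Let $V_d$ be the affine $R$-scheme whose $R$-points are $H^0(\mathbf{P}^N_R, \mathcal{O}(d))$, and consider
\[
\mathcal{I}_{i, d} \;=\; \big\{\,(x, h) \in X_i \times_R V_d \;:\; h \in \mathfrak{m}_{X_i, x}^2 \cdot \mathcal{O}(d)_x \,\big\}.
\]
For $d$ sufficiently large, Serre vanishing applied to the ideal sheaf of the first-order infinitesimal neighborhood of any point of $X_i$ implies that the restriction $H^0(\mathbf{P}^N_R, \mathcal{O}(d)) \to \mathcal{O}(d)_x / \mathfrak{m}_{X_i, x}^2 \cdot \mathcal{O}(d)_x$ is surjective at every $x \in X_i$. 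A fibre-dimension count for $\mathcal{I}_{i, d} \to X_i$ then gives $\dim \mathcal{I}_{i, d} \le \dim V_d - 1$, so the second projection is not dominant; its image $B_{i, d} \subseteq V_d$ is a proper closed subscheme, and the complement $U_d := V_d \setminus \bigcup_i B_{i, d}$ is a non-empty open subscheme of the affine $R$-space $V_d$.

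The final task---and the principal obstacle---is to produce an actual $R$-point $h \in U_d(R)$, i.e., an element of $H^0(\mathbf{P}^N_R, \mathcal{O}(d))$ whose closed-fibre reduction lies in the closed fibre of $U_d$. When the residue field $k = R/\mathfrak{m}$ is infinite this is automatic, since any non-empty Zariski open in affine space over an infinite field contains a rational point, which then lifts to $R$ via smoothness of $V_d$ over $\Spec R$ and Nakayama. The delicate case is when $k$ is finite: here one must enlarge $d$ still further and invoke a Poonen-style sieve, showing that among the $k$-rational sections of $\mathcal{O}(d)|_{X_{i,k}}$ on the closed fibre, the ``good'' ones form a positive-density subset and hence are non-empty; such a section then lifts to an $h \in H^0(\mathbf{P}^N_R, \mathcal{O}(d))$ doing the job. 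Simultaneous handling of the finitely many $X_i$ is absorbed into the ``$d \gg 0$'' choice, since a finite union of proper closed subsets of $V_d$ remains proper.
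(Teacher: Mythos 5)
There is a genuine gap in the step where you pass from the non-empty open $U_d \subseteq V_d = \mathbf{A}^M_R$ to an actual $R$-point $h \in U_d(R)$. An $R$-point of $V_d$ is a section of $V_d \to \Spec R$, and since $B_{i,d}$ is closed, such a section avoids $B_{i,d}$ if and only if its \emph{closed} point does, i.e.\ if and only if $U_d$ meets the special fibre $V_{d,k} = \mathbf{A}^M_k$. But your dimension count only gives $\dim B_{i,d} \le \dim V_d - 1 = M + \dim R - 1$, which is perfectly consistent with $B_{i,d} \supseteq V_{d,k}$ (of dimension $M$) whenever $\dim R \ge 1$; a non-empty open subset of $\mathbf{A}^M_R$ can be entirely contained in the generic fibre and then has no $R$-points at all. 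So the incidence count over $R$ does not produce the section you need, for infinite or finite residue field alike. Two further problems lurk behind this: (i) if you try to fix it by running the count on the closed fibre, the relevant schemes $X_{i,s} = X_i \times_R k$ need not be regular (only the $X_i$ are), so neither classical Bertini nor Poonen's sieve applies to them directly; and (ii) the condition $h \in \mathfrak{m}_{X_i,x}^2$ at a closed point $x$ genuinely depends on the lift $h$ of $\bar h = h \bmod \mathfrak{m}_R$, not just on $\bar h$ (and in mixed characteristic $\mathfrak{m}_x/\mathfrak{m}_x^2$ is not computed by $\Omega^1 \otimes \kappa(x)$, e.g.\ $dp = 0$), so your incidence locus is not obviously the closed subscheme of $X_i \times_R V_d$ that the fibre-dimension count requires.

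The paper's proof circumvents all of this by working on the special fibre from the start: it stratifies $X_s = \bigcup_i X_{i,s}$ into connected \emph{regular} locally closed strata $U_j$ refining all the $X_{i,s}$, applies classical Bertini ($d=1$, $k$ infinite) or the Poonen--Ghosh--Krishna version ($d \gg 0$, $k$ finite) to get $a$ with $V(a) \cap U_j$ regular for all $j$, and then lifts $a$ to $h$ and chases the chain of surjections
\[
\sO(d) \otimes \sO_{X_i}/\mathfrak{m}_{X_i,x}^2 \twoheadrightarrow \sO(d) \otimes \sO_{X_{i,s}}/\mathfrak{m}_{X_{i,s},x}^2 \twoheadrightarrow \sO(d) \otimes \sO_{U_j}/\mathfrak{m}_{U_j,x}^2
\]
to conclude that nonvanishing of $a$ in the rightmost term forces nonvanishing of $h$ in the leftmost, for \emph{any} lift $h$; regularity at all points then follows from regularity at closed points by generization. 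The stratification is exactly the device that lets one apply a field-level Bertini theorem even though the special fibres of the $X_i$ may be singular, and the surjection onto the special-fibre quotient is what makes the condition depend only on $a$ and not on the chosen lift. I would suggest rebuilding your argument along these lines.
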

\begin{proof}
Let $k$ denote the residue field of $R$, and let $X_s = \cup_i X_{i,s} \subset \mathbf{P}^N_k$ be the subscheme of $\mathbf{P}^N_k$ obtained by taking the scheme-theoretic union of the special fibres $X_{i,s} \subset X_i$. Choose a stratification $\{U_j\}_{j \in J}$ of $X_s$ by locally closed subschemes such that each $U_j$ is connected, regular (and so $k$-smooth if $k$ {is perfect}, for instance if $k$ is finite), and such that each $X_{i,s} \subset X_s$ is (set-theoretically) a union of strata: this is clearly possible without assuming connectedness/regularity of the strata, and the connectedness/regularity can be ensured a posteriori by further refining the stratification. 

Next, we claim that there exists some $d \gg 0$ and some $0 \neq a \in H^0(\mathbf{P}^N_k,\sO(d))$ such that $V(a) \cap U_i$ is regular for all $i$. If $k$ is infinite, then this follows with $d=1$ from the classical Bertini theorem (see, e.g., 
{\cite[Corollary 3.4.14]{FlennerJoins}}): there is a Zariski dense open inside $\mathbf{V}(H^0(\mathbf{P}^N_k,\sO(1)))$ parametrizing the sections $a$ that solve the problem for each $U_i$ separately, and intersecting these opens gives a Zariski dense open inside $\mathbf{V}(H^0(\mathbf{P}^N_k,\sO(1)))$ parametrizing the sections $a$ solving the problem for all the $U_i$'s simultaneously; we then conclude by noting that any $k$-rational variety has a $k$-point as $k$ is infinite. When $k$ is finite, this follows with $d \gg 0$ from the variant of Poonen's Bertini theorem presented in \cite[Proposition 5.2]{GhoshKrishna} applied with $Z=Y=V_i=\emptyset$ and $T=\{0\}$, noting that $\zeta_{U_i}(s)$ does not have a zero or a pole at $s=\dim(U_i)+1$ (e.g., by the Weil conjectures).

Pick a section $0 \neq a \in H^0(\mathbf{P}^N_k,\sO(d))$ as constructed in the previous paragraph, and pick a lift $0 \neq h \in H^0(\mathbf{P}^N_R,\sO(d))$ of $a$. We shall show that $h$ solves our problem. First, by construction, for any closed point $u$ of any $U_j$, the image of $a$ in $\sO(d) \otimes_{\sO_{\mathbf{P}^N_k}} \sO_{U_j}/\mathfrak{m}_{U_j,u}^2$ is nonzero. Now each $X_{i,s}$ is a union of strata, so for each closed point $x \in X_{i,s}$, we can find some stratum $U_j \subset X_{i,s}$ containing $x$. As there is a natural restriction map $\sO_{X_{i,s}}/\mathfrak{m}_{X_{i,s},x}^2 \to \sO_{U_j}/\mathfrak{m}_{U_j,x}^2$, we conclude that the image of $a$ in $\sO(d) \otimes_{\sO_{\mathbf{P}^N_k}} \sO_{X_{i,s}}/\mathfrak{m}_{X_{i,s},x}^2$ is also nonzero for all closed points $x \in X_{i,s}$. But closed points of $X_i$ and $X_{i,s}$ are the same by properness of $\mathrm{Spec}(R)$. By the same reasoning used to pass from $U_j$ to $X_{i,s}$ and functoriality of restriction maps, we learn that for any index $i$ and any closed point $x \in X_i$, the image of $h$ in $\sO(d) \otimes_{\sO_{\mathbf{P}^N_R}} \sO_{X_i}/\mathfrak{m}_{X_i,x}^2$ is also nonzero. This means exactly that $V(h) \cap X_i$ is regular at all closed points of $X_i$ that it contains, i.e., $V(h) \cap X_i$ is regular at its closed points. As the regular locus is stable under generalization, we conclude that $V(h) \cap X_i$ is regular, as wanted.
\end{proof}

\begin{remark}
    \label{log_bertini}
    Now suppose that $X \to \Spec R$ is projective, $X$ is regular and $B$ is a snc divisor on $X$.  If we apply \autoref{thm.FinalBertini} to $X$ itself and the finitely many strata of $B$, then we obtain an $H = V(h)$ such that $(X, H+B)$ and $(H, B \cap H)$ are also snc pairs.
\end{remark}

\subsection{Log minimal model program}
\label{sec:preliminaries_LMMP}

We refer the reader to \cite{KollarMori} for the standard definitions and results in the Minimal Model Program. Here we briefly recall some basic notions, {in particular highlighting the adjustments required by our generality}.

\begin{definition}
\label{def:mobile_part}
Given a Cartier divisor $D$ on a Noetherian normal separated scheme $X$, we define $\Mob(D) = D - \Fix(D)$, where the divisor $\Fix(D)$ is defined by requiring that for each prime divisor $E$ on $X$
\begin{equation*}
    \coeff_E \Fix(D)= \min_{D' \in |D|} \coeff_E D' 
\end{equation*}
Note that as $D$ is Cartier the above coefficients are integers and hence the minimum exists. We also note that here, and in general in the article, the linear system $|D|$ simply means the \emph{set} of all effective divisors linearly equivalent to $D$. That is, we do not put any scheme structure on $|D|$.
\end{definition}
\begin{remark} \label{rem:mob}
In the situation of \autoref{def:mobile_part},
 there is a natural identification of $H^0(X,\sO_X(D))$ with $H^0(X, \sO_X(\Mob(D)))$. Note also that if $D' = D + F$ for a Cartier divisor $F \geq 0$, then $\Mob(D') \geq \Mob(D)$. {Further observe that when $D$ is effective, so is $\Mob(D)$.}
\end{remark}

A \emph{$\mathbb{Q}$-divisor} (resp.\ \emph{$\mathbb{R}$-divisor}) is a finite formal sum $\sum_{i=1}^n d_iD_i$ where $D_i$ is an integral codimension one subscheme of $X$, and $d_i\in\mathbb{Q}$ (resp.\ $d_i\in\mathbb{R}$).  Two divisors are $\bQ$-linearly (resp.\ $\bR$-linearly) equivalent if their difference is a $\bQ$-linear (resp.\ $\bR$-linear) combination of principal divisors.
A $\mathbb{Q}$-divisor (resp.\ $\mathbb{R}$-divisor) is \emph{$\bQ$-Cartier} (resp.\ \emph{$\mathbb{R}$-Cartier}) if some multiple of it is a Cartier divisor (resp.\ if it can be written as an $\mathbb{R}$-linear combination of Cartier divisors). Note that a $\bQ$-divisor which is $\bR$-Cartier is automatically $\bQ$-Cartier.

An $\mathbb{R}$-Cartier $\bR$-divisor $D$ is \emph{$\mathbb{R}$-ample} if it is
$\bR$-linearly equivalent to $\sum \alpha_i D_i$, where $\alpha_i \in \bR_{>0}$ and $D_i$ are ample Cartier divisors (not necessarily effective).  Note that if $D$ is \emph{$\mathbb{R}$-ample}, it is in fact equal to an $\bR$-linear combination $\sum \alpha_i D_i$ of ample $D_i$ with $\alpha_i \in \bR_{>0}$ (no $\bR$-combination of principal divisors is necessary as we may perturb them to ample divisors).  
Note that a $\bR$-ample $\bQ$-Cartier divisor is automatically ample. Henceforth, we will refer to $\bR$-ample $\bR$-Cartier divisors as ample $\bR$-Cartier divisors, as no confusion can arise.  
\begin{lemma}[{Nakai-Moishezon Criterion, cf.\ \cite[Remark 2.3]{tanaka_mmp_excellent_surfaces}}] \label{lem:Nakai-Moishezon}
Let $\pi \colon X \to Y$ be a proper morphism from an algebraic space $X$ to a Noetherian scheme $Y$.  Let $D$ be a $\bQ$-Cartier $\bQ$-divisor on $X$. Then $D$ is ample over $Y$ if and only if $D^{\dim V} \cdot V > 0$ for every $y \in Y$ and {every} positive dimensional closed integral subscheme $V$ of the fiber $X_y$ over $y$.

If $X$ is scheme, then the same condition characterizes ampleness of $\bR$-Cartier divisors $D$. 
\end{lemma}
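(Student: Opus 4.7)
The plan is to reduce the relative criterion to the absolute Nakai–Moishezon theorem on each fiber, combined with a fiberwise characterization of $\pi$-ampleness, and then bootstrap to $\bR$-divisors by a perturbation argument.

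The ``only if'' direction is immediate. If $D$ is ample over $Y$ and $V \subseteq X_y$ is a positive dimensional closed integral subscheme of some fiber, then $D|_V$ is ample on the proper algebraic space $V$ over $k(y)$, so $D^{\dim V} \cdot V = (D|_V)^{\dim V} > 0$ by the absolute Nakai–Moishezon theorem applied on $V$ (after passing to a scheme via Chow's lemma, if needed).

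For the ``if'' direction in the $\bQ$-Cartier case, the statement is Zariski-local on $Y$, so I would reduce to the case where $Y = \Spec A$ is affine. After replacing $D$ by a multiple, assume $D$ is Cartier. I would then invoke the fiberwise criterion for relative ampleness (the algebraic-space analogue of \cite[\href{https://stacks.math.columbia.edu/tag/0D3A}{Tag 0D3A}]{stacks-project}, or EGA III$_1$, 4.7.1, together with Knutson's Chow lemma for algebraic spaces) to reduce to showing that $D|_{X_y}$ is ample on every fiber $X_y$. Since $X_y$ is a proper algebraic space over $k(y)$, Chow's lemma gives a projective birational scheme cover $X'_y \to X_y$, and the intersection numbers pull back to $X'_y$ preserving positivity on all integral subschemes; hence one can appeal to the classical Nakai–Moishezon theorem for proper schemes over a field (e.g.\ Kleiman's criterion, as in \cite[Theorem III.1.1]{KollarMori} or the original \cite{KleimanToward}) to conclude that $D|_{X_y}$ is ample.

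For the $\bR$-divisor case (where $X$ is a scheme), write $D = \sum \alpha_i D_i$ with $D_i$ Cartier and $\alpha_i \in \bR$. The intersection numbers $D^{\dim V} \cdot V$ depend polynomially (and continuously) on the tuple $(\alpha_i)$. Using that the subvarieties $V \subseteq X_y$ with bounded degree (with respect to a fixed auxiliary ample polarization) form a bounded family, and that on such $V$ the intersection $D^{\dim V} \cdot V$ varies continuously in $(\alpha_i)$, one can find small rational perturbations $D^{(\varepsilon)}$ of $D$ that still satisfy the positivity hypothesis on every positive-dimensional $V \subseteq X_y$. By the $\bQ$-case, each $D^{(\varepsilon)}$ is ample, and $D$ is then an $\bR_{>0}$-combination of such ample $\bQ$-Cartier perturbations, hence $\bR$-ample.

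The main obstacle I expect is the passage from the scheme case to algebraic spaces in the fiberwise step, which requires careful use of Chow's lemma for algebraic spaces together with the fact that the fiberwise ampleness criterion extends to this setting; a secondary subtlety is the boundedness step in the $\bR$-divisor perturbation argument, which needs care because a priori there are infinitely many subvarieties to test against simultaneously.
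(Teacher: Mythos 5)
Your reduction to fiberwise ampleness via \cite[Tag 0D3A]{stacks-project} is the same first step as the paper's proof, and the ``only if'' direction is fine. However, both of the remaining steps have genuine gaps.

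First, the Chow's lemma reduction in the algebraic space case does not work as stated. If $f \colon X'_y \to X_y$ is a projective birational scheme cover and $W \subseteq X'_y$ is a positive-dimensional integral subscheme that is \emph{contracted} by $f$ (i.e.\ $\dim f(W) < \dim W$), then the projection formula gives $(f^*D)^{\dim W} \cdot W = 0$, so the pulled-back divisor $f^*D$ does \emph{not} satisfy the Nakai--Moishezon positivity hypothesis on $X'_y$, and the classical criterion cannot be applied to it. (Indeed $f^*D$ is essentially never ample on $X'_y$ when $f$ is not an isomorphism.) Proving Nakai--Moishezon for proper algebraic spaces over a field is a genuine theorem --- it also yields that the fiber is in fact a projective scheme --- and the paper handles this by first base-changing to an algebraically closed residue field (\cite[Tag 0D2P]{stacks-project}) and then citing Koll\'ar's criterion for algebraic spaces \cite[Theorem 3.11]{KollarProjectivityOfCompleteModuli}. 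Some reduction of this kind is needed; your sketch elides it.

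Second, the perturbation argument for $\bR$-divisors does not close. The positivity hypothesis must hold for \emph{all} positive-dimensional integral subschemes $V$ of the fibers, of which there are infinitely many with unbounded degree, and the numbers $D^{\dim V} \cdot V$ can accumulate at $0$ along such a family. A fixed small rational perturbation can therefore destroy positivity on some $V$, and your boundedness step only controls subvarieties of bounded degree. This uniformity problem is exactly why the real Nakai--Moishezon criterion (Campana--Peternell, Fujino) is a nontrivial theorem rather than a formal consequence of the rational case; the paper accordingly cites \cite[Theorem 1.3]{FujinoNakaiMoishezonReal} over algebraically closed fields together with the reduction of \cite[Lemma 6.2]{FujinoNakaiMoishezonReal}, rather than perturbing.
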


\begin{proof}
It is enough to show that $D|_{X_{{y}}}$ is ample for every {$y \in Y$} (\cite[Tag 0D3A]{stacks-project}). By \cite[Tag 0D2P]{stacks-project}, we can assume that the residue field $k({y})$ is algebraically closed. Then, the statement follows from \cite[Theorem 3.11]{KollarProjectivityOfCompleteModuli}.

As for $\bR$-divisors on schemes, the statement over algebraically closed fields follows from \cite[Theorem 1.3]{FujinoNakaiMoishezonReal}; the reduction to that case can be done similarly to \cite[Lemma 6.2]{FujinoNakaiMoishezonReal}).
 \end{proof}

Given a projective morphism $f:X\to Z$, we define a \emph{curve over $Z$} to be a scheme $C$ of dimension $1$ such that $C$ is proper over some closed point $z\in Z$.  
Define $N_1(X)$ to be the vector space generated by integral curves over $Z$ modulo numerical equivalance: that is $\sum a_i C_i=0$ in $N_1(X)$ if and only if $(\sum a_i C_i)\cdot D=0$ for every Cartier divisor $D$ on $X$.  We say that a $\mathbb{R}$-Cartier divisor $D$ is $f$-nef if $D\cdot C\geq 0$ whenever $C$ is an integral curve over $Z$.

\begin{remark}[The relative Picard rank] \label{remark:relative-Picard-rank}
Let $f:X \to S$ be a proper morphism of Noetherian schemes. Write $\mathrm{Pic}^{\tau}(X/S) \subset \mathrm{Pic}(X)$ for the subgroup of line bundles $L$ on $X$ which are numerically trivial on all fibres $X_s$ of $f$, i.e., for every point $s \in S$ and every irreducible curve $C \subset X_s$, the restriction $L|_C$ has degree $0$ {(in fact, it is enough to verify this for closed points only)}.  Define $N^1(X/S) = \left(\mathrm{Pic}(X)/\mathrm{Pic}^{\tau}(X/S)\right) \otimes_{\mathbf{Z}} \mathbf{R}$. This $\mathbf{R}$-vector space is finite dimensional: the case of varieties over a field is explained in \cite[\S 4, Proposition 2]{KleimanNumerical}, and the same arguments go through in the general case (we learnt of the reference \cite{KleimanNumerical} from \cite{TakamatsuYoshikawaMMP}). The integer $\rho(X/S) := \dim_{\mathbf{R}} N^1(X/S)$ is called the {\em relative Picard rank} of $f$.
\end{remark}

\begin{remark} \label{remark:divisors-of-unexpected-dimension}
We warn the reader that in some situations we consider, a Cartier divisor may not have the expected dimension:  for example if $X=\Spec \bZ_p[t]$ and $Z = \Spec \bZ_p[t]/(pt-1) \simeq \Spec \bQ_p$, then $\dim X=2$, but $\dim Z=0$ despite $Z$ being a divisor.

{Furthermore, we make the following related observation. Although it is enough to check nefness of line bundles on proper curves only, it may still happen in mixed characteristic that some of these proper curves map to points of characteristic zero. For example, let $X = \Spec \bZ_p[x,y]$, let $\pi \colon Y \to X$ be the blow-up of $X$ along the subscheme $Z$ given by the ideal $(x,y)$, with the relatively ample line bundle $\cO_Y(1)$. Let $O$ be the point given by $(p,x,y)$, and let $\eta$ be the generic point of $Z$. Here $Z = \{O, \eta\}$. Let $X' = X \,\backslash\, \{O\}$ and $Y' = \pi^{-1}(X')$. In particular, $\eta$ is a characteristic zero closed point of $X'$. Then $\cO_{Y'}(-1)$ is non-negative (in fact, zero) on all positive characteristic proper curves, but it is not relatively nef. This may be checked on the proper characteristic zero curve $\pi^{-1}(\eta)$. Note that when $X = \bZ[x,y]$, the situation is different as there are many closed points of positive characteristic on $Z$.}

\end{remark}

\begin{definition}
{We say   that a proper map $f \colon X \to Z$ is \emph{small} if $\mathrm{Exc}(f)$ is of codimension at least two (all flips and flipping contractions are assumed to be \emph{small}) and that it is \emph{divisorial} if $\mathrm{Exc}(f)$ is of codimension one (but it could still happen that $\dim \mathrm{Exc}(f) \leq \dim X - 2$ as in \autoref{remark:divisors-of-unexpected-dimension}).} {Note that the codimension of a subscheme $Y$ in $X$ is {equal to} $\dim(\sO_{X,\xi})$, where $\xi$ is the generic point of $Y$ \cite[Tag 02IZ]{stacks-project}.}\end{definition}

\begin{remark} \label{remark:divisors-of-unexpected-dimension2}
The fact that curves on a three-dimensional scheme can be of codimension one may be a source of understandable confusion. However, when $T$ is a spectrum of an excellent local domain (denote the closed point of $T$ by $s$), it is always true that divisors on a proper integral scheme $X$  over $T$ are of dimension $\dim X -1$. 

{To see this, first  the following computation shows that  every closed point $x \in X_s$ has codimension $\dim X$:
\[
\dim \sO_{X,x}=\dim T+\text{trdeg}_{K(T)}K(X)-\text{trdeg}_{\kappa(s)}\kappa(x)=\dim T+\text{trdeg}_{K(T)}K(X) = \dim X,
\] 
where
\begin{itemize}
    \item in the first equality, we used  \cite[\href{https://stacks.math.columbia.edu/tag/02JT}{Tag 02JT}]{stacks-project}
    \item in the second equality, we used that $\text{trdeg}_{\kappa(s)}\kappa(x)=0$ since $X_s$ is a scheme of finite type over $\kappa(s)$ and $x$ is a closed point, and
    \item the last equality is given by \cite[\href{https://stacks.math.columbia.edu/tag/02JX}{Tag 02JX}]{stacks-project}.
\end{itemize}
Now, if $D$ is a divisor of $X$, then $f(D)$ is closed, where $f \colon X \to T$ is the structure morphism. Hence $f(D)$ contains $s \in T$, and so $D$ intersects $X_s$ in a non-empty closed subset of $X$. In particular, $X$ contains a closed point $x \in X_s$, { which must necessarily map to $s \in T$ since $f$ is proper: the argument gives this by construction}.  Then,
\[
\dim X > \dim D \geq \dim \sO_{D, x}=\dim \sO_{X,x}-1 =\dim X -1.
\]
where in the first equality we used that $X$ is catenary. We obtain that $\dim D = \dim X -1$.

}

Since the existence of contractions and flips in the Minimal Model Program can be checked after localisation at each point, in their proofs we may always assume that $T$ is a spectrum of a local domain. However, we cannot reduce to the local situation in the case of the cone theorem and termination of flips. 
\end{remark}

\begin{remark} \label{remark:divisors-of-unexpected-dimension3}
{Let $T$ be a quasi-projective scheme over a finite dimensional excellent ring.} The reader should be wary that $\dim T_{\mathbb{Q}}$ may be equal to $\dim T$ even when $T \neq T_{\mathbb Q}$. For example, take $T = (\mathrm{Spec}\,\mathbb{Z}_p{[[x,y]]}) \backslash (p,x,y)$ which is two-dimensional, as so is $T_{\mathbb Q} = {\mathrm{Spec}\, \mathbb{Z}_p[[x,y]] \otimes \mathbb{Q}_p}$.    In particular, it may happen that given a three-dimensional proper scheme $X$ over $T$, the localisation $X_{\mathbb Q}$ is still three-dimensional.

However, it is always true that $\dim X_{\mathbb{Q}} \geq \dim X - 1$ when all the generic points of $X$ have characteristic $0$. Pick a point $x\in X$ such that $d:=\dim X=\dim \sO_{X,x}=\dim (\sO_{X,x}/P)$ where $P$ is a minimal prime of $\sO_{X,x}$. Now if the residue field $\sO_{X,x}/\m_{x}$ has characteristic zero, then $\sO_{X,x}$ contains $\bQ$ and hence $\dim X_{\bQ}\geq \dim(\sO_{X,x}\otimes\bQ)=\dim\sO_{X,x}=\dim X$. If the residue field $\sO_{X,x}/\m_x$ has characteristic $p>0$, then by our assumption on generic points, we know that $p\notin P$ and thus we can complete $p$ to a system of parameters $(p, x_2,\dots, x_d)$ of the excellent local domain $\sO_{X,x}/P$ and we have $(\sO_{X,x}/P)\otimes \bQ\cong (\sO_{X,x}/P)[1/p]$. Since $p$ is not in any minimal prime $Q$ of $(x_2,\dots,x_d)$ and any such $Q$ has height $d-1$ in $\sO_{X,x}/P$, it follows that $\dim X_{\bQ}\geq \dim((\sO_{X,x}/P)\otimes \bQ) = \dim (\sO_{X,x}/P)[1/p] \geq \dim(\sO_{X,x}/P)_Q=\dim X-1.$
\end{remark}

Given a projective morphism  $f:X\to Z$, 
we say that a $\bQ$-Cartier divisor $D$ is $f$-big if $D|_{X_\eta}$ is big where $\eta$ is the generic point of $f(X)$.  Equivalently, $\rank f_*\sO_X(kD)>ck^{\dim X_\eta}$ for some constant $c$ for $k$ sufficiently large and divisible.    If $D$ is $f$-nef, then $D$ is $f$-big if and only if $D^{\dim(X_\eta)}|_{X_\eta}\neq 0$. We say that an $\mathbb{R}$-Cartier divisor is $f$-big if it can be written as $\sum \alpha_i D_i$, where $\alpha_i \in \bR_{>0}$ and $D_i$ are $f$-big Cartier divisors.

\begin{definition} \label{def:log-pair}
In this article, $(X,\Delta)$ is a (\emph{log}) \emph{pair} if $X$ is a normal Noetherian excellent integral $d$-dimensional scheme with a dualizing complex, $\Delta$ is an effective $\bR$-divisor.  Frequently, but not always, we also assume that $K_X+\Delta$ is $\bR$-Cartier. 

If $\Delta$ is a $\mathbb{Q}$-divisor (resp.\ $\mathbb{R}$-divisor), we call it a $\mathbb{Q}$-boundary (resp.\ $\mathbb{R}$-boundary). 
Outside of \autoref{section:MinimalModelProgram}, we will assume that our boundaries are $\mathbb{Q}$-boundaries unless otherwise stated. We say that $\Delta$ has \emph{standard coefficients} if they are contained in $\{ 1 - \frac{1}{m} \, \mid \, m \in \bZ_{>0}\} \cup \{1\}$. 
\end{definition}

{Before the next definition note that if $X$ is a Noetherian excellent integral scheme of dimension $d$ with a dualizing complex, then the canonical sheaf $\omega_X$ 
introduced in \autoref{sec:dualizing_complexes_local_duality} is $S_2$ by \cite[\href{https://stacks.math.columbia.edu/tag/0AWN}{Tag 0AWN}]{stacks-project}. Additionally $\omega_X^{\mydot}$
is compatible with localization 
\cite[\href{https://stacks.math.columbia.edu/tag/0A7G}{Tag 0A7G}]{stacks-project}. In particular, taking into account the normalization of dualizing complexes  (also explained in \autoref{sec:dualizing_complexes_local_duality}) we obtain that for the generic point $\eta \in X$ we have $\omega_{X,\eta}^{\mydot} \cong \omega_{\eta}^{\mydot}[-d] \cong \sO_{\eta}[d]$ and for any codimension $1$ point $x \in X$ we have $\omega_{X,x}^{\mydot} \cong \omega_{\Spec \sO_{X,x}}^{\mydot} [ -(d-1)]$. So,    if $X$ is normal, then  also at the points of the latter type we have    $\omega_{X,x}^{\mydot} \cong \sO_{\Spec \sO_{X,x}}[{d}]$, and hence $\omega_X$ is a rank $1$ divisorial sheaf   \cite{HartshorneGeneralizedDivisorsOnGorensteinSchemes}.  We denote the corresponding linear equivalence class of divisors by $K_X$. 

If $f : Y \to X$ is a proper birational morphism of Noetherian excellent integral schemes of finite Krull dimension with dualizing complexes, and $\Delta$ is an $\bR$-divisor on $X$ with $K_X + \Delta$ $\bR$-Cartier, then we can find an $\bR$-divisor $\Delta$ satisfying the equation
\begin{equation}
    \label{eq.DefinitionOfDeltaY}
    f^*(K_X+\Delta)=K_Y+\Delta_Y.
\end{equation}
Note that $\Delta_Y$ is uniquely defined if we add the assumption $f_* K_Y = K_X$, which we will always assume in such situations. 
}

\begin{definition} 
\label{def:singularities_of_the_MMP}
Consider a pair  $(X,\Delta)$ with $K_X + \Delta$ being $\bR$-Cartier such that every coefficient in $\Delta$ is at most $1$.
If for every birational morphism $f\colon Y\to X$ {from a normal scheme}, divisor $\Delta_Y$ as in \autoref{eq.DefinitionOfDeltaY} and for every prime divisor $E$ on $Y$ which is exceptional over $X$, we have
\begin{itemize}
    \item $\mathrm{mult}_E(\Delta_Y) <  0$, then $(X,\Delta)$ is \emph{terminal},
    \item $\mathrm{mult}_E(\Delta_Y) \leq 0$, then $(X,\Delta)$ is \emph{canonical},
    \item $\mathrm{mult}_E(\Delta_Y) <1$ and $\lfloor \Delta \rfloor =0$, then $(X,\Delta)$ is \emph{kawamata log terminal} (\emph{klt}),
    \item $\mathrm{mult}_E(\Delta_Y) <1$, then $(X,\Delta)$ is \emph{purely log terminal} (\emph{plt}),
    \item $\mathrm{mult}_E(\Delta_Y) <1$ unless the generic point of the image of $E$ on $X$ is contained in the simple normal crossing locus of $(X,\Delta)$,  then $(X,\Delta)$ is \emph{divisorially log terminal} (\emph{dlt}),
    \item $\mathrm{mult}_E(\Delta_Y) \leq 1$, then $(X,\Delta)$ is \emph{log canonical} (\emph{lc}).
\end{itemize}
\end{definition}
In the first definition, $\lfloor \Delta \rfloor = 0$ is automatic. Further, notice that $(X,\Delta)$ being plt does not imply $\lfloor \Delta \rfloor$ is irreducible for $(X,\Delta)$ plt. {This is not merely a technical subtlety, as otherwise plt would fail to be stable under certain base-changes.
On the other hand, the irreducibility of $\lfloor \Delta \rfloor$ is at times required in a number of standard arguments, which then we have to revise with extra care (c.f. \autoref{Section:flips}).}

We call the number $a(E,X,\Delta)=1-\mult_E(\Delta_Y)$ the \emph{log discrepancy} of $(X,\Delta)$ along $E$ (the number $-\mult_E(\Delta_Y)$ is called \emph{discrepancy}). If $(X,\Delta)$ admits a log resolution $f \colon Y \to X$, then it suffices to verify the above definitions (except the terminal and the plt case) for the divisors on $Y$ only  \cite[Section 2.10]{KollarKovacsSingularitiesBook}.

The base-change properties of the notions defined in \autoref{def:singularities_of_the_MMP} can be deduced from the following lemma.
\begin{lemma}
    \label{lem.BaseChangeOfResolution}
    Suppose $\pi : X \to \Spec R$ is a log resolution of some pair $(\Spec R, \Delta)$.  If $R \to R'$ is a flat map to an  excellent ring with geometrically regular fibers (for instance, an \'etale cover, the strict henselization at some point of $\Spec R$, or the completion thereof), then the base change 
    \[
       \pi' :  X' = X_{R'} \to \Spec R'
    \]
    is a log resolution of the base changed pair $(\Spec R', \Delta_{R'})$.  
\end{lemma}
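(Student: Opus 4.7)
The plan is to verify each of the defining properties of a log resolution, namely properness, birationality, regularity of the source, and the simple normal crossings condition on the combined exceptional and boundary divisor, is preserved under a flat base change with geometrically regular fibers.

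First, properness of $\pi'$ is immediate, as proper morphisms are stable under arbitrary base change. Regularity of $X'$ follows from \cite[\href{https://stacks.math.columbia.edu/tag/07NG}{Tag 07NG}]{stacks-project}: since $R \to R'$ is flat with geometrically regular fibers, the base-changed morphism $X \to X'$ is flat with geometrically regular fibers, and flat morphisms with regular fibers from (or rather, to) a regular scheme yield a regular total space. For birationality, observe that $\pi$ is an isomorphism over a dense open $U \subset \Spec R$ containing the generic point; since $R \to R'$ is flat, going-down ensures that the generic points of $\Spec R'$ lie over the generic point of $\Spec R$ (noting $\Spec R$ is irreducible, being the spectrum of a domain), so the preimage of $U$ in $\Spec R'$ is a dense open over which $\pi'$ is an isomorphism. (When $R'$ is a product of domains, this says the same over each component; in any case, $X'$ and $\Spec R'$ are generically isomorphic.)

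The main technical point is the simple normal crossings condition, which I would handle as follows. Write $D = \pi^{-1}(\Delta)_{\mathrm{red}} \cup \Exc(\pi)$, with irreducible components $D_1, \dots, D_n$, so that $D$ is SNC on $X$. At any point $x \in X$ through which $D_{i_1}, \dots, D_{i_r}$ pass, by the definition of SNC, the local equations $f_{i_1}, \dots, f_{i_r}$ of these components form part of a regular system of parameters in $\sO_{X,x}$. For a point $x' \in X'$ lying over $x$, since $\sO_{X,x} \to \sO_{X',x'}$ is flat with a geometrically regular fiber, the images of $f_{i_1}, \dots, f_{i_r}$ remain part of a regular system of parameters in $\sO_{X',x'}$ (see \cite[\href{https://stacks.math.columbia.edu/tag/07NV}{Tag 07NV}]{stacks-project} or direct computation with the fiber being a regular local ring). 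Moreover, the base-changed components $D_{i_j} \times_X X'$ remain reduced and regular (using geometrically regular fibers again) and cut out by the same equations, so their union has SNC on $X'$. Thus $(\pi')^{-1}(\Delta_{R'})_{\mathrm{red}} \cup \Exc(\pi')$ remains SNC, noting that $\Exc(\pi') = \Exc(\pi) \times_X X'$ by flatness and preservation of fibrewise dimensions.

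The only mildly subtle step is the SNC verification, which reduces to checking that flat base change with geometrically regular fibers preserves membership in a regular system of parameters; once this stacks-project reference is invoked, the rest is bookkeeping. Everything else (properness, birationality, regularity of the source) follows from standard permanence results for flat base change with geometrically regular fibers.
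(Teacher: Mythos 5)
Your proof is correct and follows essentially the same route as the paper: the paper's (very terse) argument is exactly that flat base change with geometrically regular fibers preserves regularity of $X$ and of all strata of the snc divisor, which is what your regular-system-of-parameters computation verifies in local coordinates. Your additional checks of properness and birationality are fine and simply make explicit what the paper leaves unsaid.
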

\begin{proof}
    Since $X$ is regular and $X' \to X$ is flat with regular fibers, we see that $X'$ is regular (and in particular reduced).  But this also applies to all strata of the simple normal crossings divisor $\pi^{-1} \Delta$ and so its base change is also simple normal crossings.  This proves the lemma.
\end{proof}

\begin{remark} Let $(X,\Delta)$ be a three-dimensional klt pair and let $D$ be an effective divisor. Then $(X,\Delta + \varepsilon D)$ is klt for $0 < \varepsilon \ll 1$ as proper resolutions exist in this setting.
\end{remark}

\begin{definition}
We say that a projective birational morphism  $g \colon Y \to X$ is a \emph{terminalization} of a klt pair $(X,B)$ if when writing $K_Y+B_Y=f^*(K_X+B)$, the pair $(Y,B_Y)$ is terminal. 
\end{definition}

\begin{lemma}
\label{lem:plt_dlt_base_change}
Let $f : (X, \Delta) \to Z= \Spec R$ be a projective birational morphism from a three-dimensional plt (resp.\ klt, dlt)  pair to the spectrum of an excellent base ring $R$ with a dualizing complex, and let $h : R \to R'$ be a flat map between excellent local ring s with dualizing complexes and suppose that $h$ has geometrically regular fibers (for instance, an \'etale cover, or the strict henselization at a maximal ideal, or the completion thereof).   Then the base changed pair $\left(X_{R'}, \Delta_{R'} \right)$ is plt (resp.\ klt,\ dlt).
\end{lemma}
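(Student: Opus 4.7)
The plan is to reduce the claim to checking the defining discrepancy inequalities on a single log resolution, and then to use \autoref{lem.BaseChangeOfResolution} to see that such a resolution base-changes correctly.

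Since $X$ is three-dimensional and quasi-projective over the affine excellent scheme $\Spec R$, we can invoke \autoref{proj-resolutions} to produce a projective log resolution $\pi\colon Y\to X$ of the pair $(X,\Delta)$, so that $\pi^{-1}(\Delta)\cup\Exc(\pi)$ has simple normal crossing support. Write $K_Y+\Delta_Y=\pi^*(K_X+\Delta)$, with $\pi_* K_Y=K_X$. By the criterion recalled after \autoref{def:singularities_of_the_MMP}, it suffices to check the relevant inequalities on the exceptional divisors of $\pi$ together with the components of $\Delta$.

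Because $R\to R'$ is flat with geometrically regular fibers, \autoref{lem.BaseChangeOfResolution} shows that $\pi'\colon Y_{R'}\to X_{R'}$ is again a log resolution of $(X_{R'},\Delta_{R'})$. Since $h$ is flat with Gorenstein fibers, relative dualizing sheaves commute with base change, so pulling the equality $K_Y+\Delta_Y=\pi^*(K_X+\Delta)$ back to $Y_{R'}$ yields
\[
K_{Y_{R'}}+(\Delta_Y)_{R'}=\pi'{}^*(K_{X_{R'}}+\Delta_{R'}),
\]
i.e.\ $\Delta_{Y_{R'}}=(\Delta_Y)_{R'}$. Each irreducible component $E'$ of the base change of a prime divisor $E\subset Y$ appears in $\Delta_{Y_{R'}}$ with the same coefficient as $E$ does in $\Delta_Y$; likewise, the components of $\Delta_{R'}$ are components of base changes of components of $\Delta$, with the same coefficients. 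Moreover, a component $E'\subset Y_{R'}$ is $\pi'$-exceptional if and only if its image in $Y$ is $\pi$-exceptional. Hence the multiplicity conditions in \autoref{def:singularities_of_the_MMP} transfer verbatim, giving the klt and plt cases (including $\lfloor\Delta_{R'}\rfloor=0$ in the klt case, and the coefficient bound $\leq 1$ in the plt case).

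For the dlt case, the only additional point is the treatment of exceptional components $E$ over $X$ with $\mathrm{mult}_E(\Delta_Y)=1$: the dlt hypothesis says that the generic point of $\pi(E)$ lies in the snc locus $U\subset X$ of $(X,\Delta)$. The preimage $U_{R'}\subset X_{R'}$ is contained in the snc locus of $(X_{R'},\Delta_{R'})$ since flatness with geometrically regular fibers preserves regularity of the strata, exactly as in the proof of \autoref{lem.BaseChangeOfResolution}. For each component $E'$ of $E_{R'}$, the image $\pi'(E')$ meets $U_{R'}$, so its generic point lies in the snc locus of $(X_{R'},\Delta_{R'})$, which gives the dlt condition on $\pi'$. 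No step presents a real obstacle; the only subtlety is the bookkeeping of components under base change, which is handled by the observation that the coefficients and the exceptional/snc-locus conditions are all preserved componentwise.
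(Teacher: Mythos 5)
Your proposal is correct and is essentially the paper's argument: the paper's proof is the one-line observation that the conditions of \autoref{def:singularities_of_the_MMP} can be checked on a single log resolution, which base-changes to a log resolution by \autoref{lem.BaseChangeOfResolution}. You have simply written out the details (compatibility of the crepant pullback with flat Gorenstein base change, the componentwise bookkeeping of coefficients and exceptionality, and the snc-locus condition in the dlt case), all of which are sound.
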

\begin{proof}
    This follows from \autoref{lem.BaseChangeOfResolution} since we can check these conditions on a single log resolution.
\end{proof}

\noindent Note that the above lemmas in the smooth case are discussed in \cite[2.14 and 2.15]{KollarKovacsSingularitiesBook}.

 We say that a scheme is \emph{normal up to a universal homeomorphism} if its normalization is a universal homeomorphism.
\begin{lemma} \label{lem:properties-of-plt}
Let $(X,\Delta)$ be a dlt pair such that all the irreducible components $S_1, \ldots, S_k$ of $\lfloor \Delta \rfloor$ are $\bQ$-Cartier. Then all $S_i$ are normal up to a universal homeomorphism (and normal in codimension one). Moreover, if $(X,\Delta)$ is plt, then $\lfloor \Delta \rfloor= S_1 \sqcup \ldots \sqcup S_k$.

The same holds for $(X', \Delta')$, where $\phi \colon X' \to X$ is a flat map with geometrically regular fibers (for example, a completion at a point $x \in X$) and $\Delta'= \phi^*(\Delta)$. 
\end{lemma}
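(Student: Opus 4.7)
The plan is to establish the three claims separately and then address the base-change statement.

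\emph{Normality in codimension one.} At the generic point $\eta_i$ of $S_i$, the local ring $\sO_{X,\eta_i}$ is a DVR, so $(X,\Delta)$ is trivially snc at $\eta_i$ and $S_i$ is regular there. For a codimension-one point $\xi\in S_i$, I would localize to the two-dimensional local pair $(X_\xi,\Delta_\xi)$ at $\xi$ and apply Lipman's theorem to obtain a log resolution $f\colon Y\to X_\xi$. The $\bQ$-Cartier hypothesis on $S_i$ lets us pull back $S_i$ cleanly, and the dlt hypothesis forces every $f$-exceptional prime over $\xi$ not lying in the snc locus to have coefficient strictly less than one in $\Delta_Y$. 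A short discrepancy computation along the strict transform $\tilde S_i$ then shows that $\tilde S_i\to S_i$ is birational near $\xi$ and contracts no exceptional curve with coefficient one, which forces $S_i$ to be regular at $\xi$.

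\emph{Universal homeomorphism and plt disjointness.} For both conclusions I would reduce to a two-dimensional surface computation by iteratively cutting through an offending point (a candidate non-normal point of some $S_i$, or a point of $S_i\cap S_j$) using general very ample hyperplanes supplied by \autoref{thm.FinalBertini} together with \autoref{log_bertini}; this preserves dlt (resp.\ plt) and the $\bQ$-Cartier property of the relevant components while dropping the ambient dimension. In the resulting two-dimensional dlt pair, normalizations of components of $\lfloor\Delta\rfloor$ are universal homeomorphisms (as in \cite{tanaka_mmp_excellent_surfaces} together with the adjoint theory of \cite{MaSchwedeTuckerWaldronWitaszekAdjoint}). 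In the plt case, after cutting down, $S_i$ and $S_j$ become $\bQ$-Cartier curves meeting at a point; blowing up this point and running the standard local log-discrepancy computation (which only uses $\bQ$-Cartierness of the two curves) produces an exceptional divisor with coefficient exactly one in the pulled-back boundary, contradicting plt. The main obstacle is arranging the Bertini cuts over a mixed characteristic base with possibly finite or imperfect residue fields, which is exactly what \autoref{thm.FinalBertini} provides.

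\emph{Base change.} The pair $(X',\Delta')$ is again dlt (resp.\ plt) by \autoref{lem:plt_dlt_base_change}. Rather than re-verifying $\bQ$-Cartierness of each component of $\phi^{*}S_i$ (which can genuinely split under completion), I would observe directly that flatness of $\phi$ with geometrically regular fibers preserves normality in codimension one (by preservation of regularity under such base changes) and also preserves the universal-homeomorphism property of normalization maps: the base change $S_i^{\nu}\times_X X'$ is normal by Serre's criterion, so it is the normalization of $\phi^{-1}(S_i)=S_i\times_X X'$, and the pullback of a universal homeomorphism is a universal homeomorphism. Finally, because $S_i\times_X X'$ is normal it is the disjoint union of its irreducible components, and the disjointness $\phi^{-1}(S_i)\cap\phi^{-1}(S_j)=\emptyset$ for $i\neq j$ in the plt case then gives the required decomposition of $\lfloor\Delta'\rfloor$.
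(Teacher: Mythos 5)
Your treatment of normality in codimension one and of the base-change statement is essentially the paper's: localize at a codimension-two point of $X$ to reduce to a two-dimensional (pl)t pair, and for the base change use that normalization commutes with flat maps having geometrically regular fibers (so the normalization of $\phi^{-1}(S_i)$ is the base change of $S_i^{\nu}$, and universal homeomorphisms are stable under base change). For the plt disjointness, however, the paper's route is much more direct than yours and avoids your main obstacle: since $S_i$ and $S_j$ are $\bQ$-Cartier, Krull's height theorem forces $S_i\cap S_j$ (if nonempty) to contain a point of codimension two in $X$; localizing there immediately yields a two-dimensional plt pair with two coefficient-one boundary components through the closed point, contradicting the classification of plt surface pairs. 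No Bertini cut is needed, which matters because the lemma makes no quasi-projectivity assumption on $X$, \autoref{thm.FinalBertini} does not produce sections through a prescribed point, and the paper only establishes preservation of the pair structure under hyperplane sections in the snc case (\autoref{log_bertini}), not for a general singular dlt pair.

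The genuine gap is in your argument for the universal-homeomorphism claim at points of codimension $\geq 3$. This is not a statement that localizes to codimension two, and your proposed reduction by cutting with hyperplanes through a "candidate non-normal point" does not work as stated: normalization does not commute with hyperplane sections, so the normalization of $S_i\cap H$ need not be a hyperplane section of $S_i^{\nu}$, and a failure of $S_i^{\nu}\to S_i$ to be a universal homeomorphism at a deep closed point is not detected by the induced surface pair. The paper instead invokes Koll\'ar's argument (the proof of \cite[Lemma 2.1]{HaconWitaszekMMP4fold}), which establishes the universal-homeomorphism property of the normalization of a $\bQ$-Cartier component of $\lfloor\Delta\rfloor$ directly in all dimensions; some such global input is needed here and is missing from your proposal.
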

\begin{proof}
The first part follows by exactly the same proof as \cite[Lemma 2.1]{HaconWitaszekMMP4fold} (we learnt this result from J\'anos Koll\'ar). Suppose that $(X,\Delta)$ is plt and $S_i \cap S_j \neq \emptyset$ for some $i\neq j$. Since both $S_i$ and $S_j$ are $\bQ$-Cartier, then $S_i\cap S_j$ contains a codimension two point $\eta$. By localizing at $\eta$, we may assume that $X$ is two-dimensional, and so the result follows from the classification of plt surfaces (cf.\ \cite[Theorem 2.31]{KollarKovacsSingularitiesBook}). By the same argument $S_i$ are normal in codimension one.

To prove the last statement, we may assume that $x \in S_i$. Since normalizations are stable under flat maps with geometrically regular fibers (cf.\ $S_2$ is preserved under flat maps by \cite[15.1, 23.3]{MatsumuraCommutativeRingTheory},  $R_1$ is preserved by the argument of \autoref{lem.BaseChangeOfResolution}),
we get that $S'_i = \phi^*(S_i)$ is normal up to a universal homeomorphism. In particular, $S'_i$ is a disjoint union of its irreducible components. 
\end{proof}

\begin{lemma}[{{\cite[Lemma 9.2]{Birkar16}}}]\label{lemma:add_ample}
        
    {Let $g: (X,B) \to \Spec R$ be a projective morphism from a klt (resp. plt, dlt) pair with a $\bQ$-boundary over a Noetherian local domain.}
    Suppose that there exists $g : W \to X$, a log resolution of $(X, B)$ and of $X_{\fram}$ such that there exists a $g$-exceptional divisor $E \geq 0$ on $W$ such that $-E$ is ample.  In the case that $(X, B)$ is dlt, additionally assume that this resolution has no exceptional divisors with discrepancy $-1$ (this condition is automatic for the other cases).  Finally suppose that $A$ is an ample divisor on $X$.  Then there exists a divisor $0 \leq A' \sim_{\bQ} A$ such that $(X, B+A')$ is klt (respectively plt, dlt)
\end{lemma}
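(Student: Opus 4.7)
The plan is to produce $A'$ as (a multiple of) the pushforward of a general member of a suitable very ample linear system on $W$, and then to verify the singularity type via an explicit discrepancy computation on the log resolution. First, since $A$ is ample on $X$ and $-E$ is $g$-ample, the divisor $A_W := g^*A - \epsilon E$ is ample on $W$ for every sufficiently small rational $\epsilon > 0$. Writing $E = \sum_i m_i E_i$, I additionally impose
\begin{equation*}
\epsilon \cdot m_i \;<\; a(E_i, X, B) \qquad \text{for every exceptional prime } E_i \text{ on } W.
\end{equation*}
In all three cases this is possible: for klt and plt the positivity $a(E_i, X, B) > 0$ is automatic from the definition, and for dlt it is guaranteed by the extra hypothesis. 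Fix such an $\epsilon = 1/m$ and choose an integer $n$ divisible by $m$, with $n \geq 2$, large enough that $nA_W$ is very ample. Applying the mixed characteristic Bertini theorem in the form of \autoref{log_bertini} (which itself rests on \autoref{thm.FinalBertini}) to $W$ and the strata of $g^{-1}_* B + \mathrm{Exc}(g)$, we obtain an integral divisor $H \in |nA_W|$ (possibly after replacing $n$ by a multiple) such that $H + g^{-1}_* B + \mathrm{Exc}(g)$ is simple normal crossing on $W$ and $H$ shares no component with $g^{-1}_* B$ or $\mathrm{Exc}(g)$.

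Set $A' := \tfrac{1}{n} g_* H$. Pushing forward $H \sim n g^* A - n \epsilon E$ and using $g_* E = 0$ gives $g_* H \sim n A$ on $X$, so $A' \sim_{\bQ} A$ and is in particular $\bQ$-Cartier. The heart of the argument is the exact identity
\begin{equation*}
g^* A' \;=\; \tfrac{1}{n} H + \epsilon E
\end{equation*}
as $\bQ$-divisors on $W$, rather than merely up to $\bQ$-linear equivalence. Indeed, the difference $F := g^*(g_* H) - H - n\epsilon E$ is $g$-exceptional and, by the linear equivalences above, $\bQ$-linearly equivalent to zero over $X$; applying \autoref{lem:negativity} to both $F$ and $-F$ forces $F = 0$. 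Using this identity, and the fact that $W$ is also a log resolution of $(X, B + A')$ by our snc arrangement, we compute
\begin{equation*}
g^*(K_X + B + A') \;=\; K_W + g^{-1}_* B + \tfrac{1}{n} H + \sum_i \bigl(1 - a(E_i, X, B) + \epsilon m_i\bigr) E_i.
\end{equation*}
Each coefficient on the right is controlled: the coefficients of $g^{-1}_* B$ are unchanged from $B$; the coefficient of $H$ is $\tfrac{1}{n} < 1$; and each exceptional coefficient $1 - a(E_i, X, B) + \epsilon m_i$ is strictly less than $1$ by our choice of $\epsilon$. Generality of $H$ ensures that $H$ does not meet any component of $\lfloor B \rfloor$ in codimension one, so $\lfloor B + A' \rfloor = \lfloor B \rfloor$. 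This verifies klt (resp.\ plt, dlt) for $(X, B + A')$ from the corresponding hypothesis on $(X, B)$.

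The principal obstacle is the exact identification $g^* A' = \tfrac{1}{n} H + \epsilon E$: because log discrepancies are sensitive to specific divisors and not just their classes, a mere $\bQ$-linear equivalence is insufficient, and the negativity lemma is exactly what upgrades equivalence to equality for $g$-exceptional divisors. A secondary technical point is extracting $H$ with the required snc compatibility over a general Noetherian local base in mixed characteristic, handled by the Bertini machinery developed earlier in \autoref{sec:Bertini}.
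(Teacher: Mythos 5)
Your proof is correct and follows essentially the same route as the paper, which simply cites Birkar's Lemma 9.2 with the adjustments of rescaling $E$ and invoking the mixed characteristic Bertini theorem (\autoref{log_bertini}) to choose the general member. You have merely written out in full the standard argument the paper leaves implicit, including the negativity-lemma step identifying $g^*A'$ with $\tfrac{1}{n}H+\epsilon E$ and the resulting discrepancy computation.
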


\begin{proof}
    The proof follows \cite[Lemma 9.2]{Birkar16} (mimicking his argument from the dlt case) {with the following adjustments:}  we set $E' := \frac{E}{m}$ for some $m \gg 0$, and we use our Bertini theorems \autoref{thm.FinalBertini} (in particular \autoref{log_bertini}) where the ``general'' $A_W'$ is chosen.
\end{proof}

\begin{remark}
\label{rem:dlt_anti_ample}
    If $X$ is a $\bQ$-factorial threefold, then any projective resolution $\pi : Y \to X$ in the klt/plt case will satisfy the hypotheses of \autoref{lemma:add_ample}.  Indeed, if $H$ on $Y$ is relatively ample, then $H - \pi^* \pi_* H$ will be relatively ample and $\pi$-exceptional. {When $X$ is not necessarily $\bQ$-factorial, the existence of such a resolution locally is guaranteed for Noetherian quasi-excellent three-dimensional reduced schemes by Proposition \ref{proj-resolutions}. Unfortunately, in contrast to positive or zero characteristics, we do not know if the resolution as in the dlt case above exists in dimension three.     } 
            \end{remark}

Given a proper birational map $f \colon Y \to X$ between normal integral schemes over $\Spec R$, a Cartier divisor $D$ on $X$, and an exceptional effective divisor $E$ on $Y$, we have that $f_*\sO_Y(f^*D+E) = \sO_X(D)$. The following result, used extensively throughout this paper, is an easy generalisation of the above fact to $\bQ$-Cartier divisors.
\begin{lemma} \label{lemma:pushforward} Let $f \colon Y \to X$ be a proper birational morphism between {normal Noetherian schemes}.
Let $D_Y$ and $D_X$ be $\bQ$-Cartier Weil divisors on $Y$ and $X$, respectively, such that $f_*D_Y = D_X$ and $D_Y \geq \lfloor f^*D_X \rfloor$ (equivalently, $\lceil D_Y - f^*D_X \rceil \geq 0$). Then $f_*\sO_Y(D_Y) = \sO_X(D_X)$.
\end{lemma}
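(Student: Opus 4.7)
The plan is to prove the equality by realising both $\sO_X(D_X)$ and $f_*\sO_Y(D_Y)$ as subsheaves of the constant sheaf of rational functions on $X$ (canonically identified with the one on $Y$ via the birational isomorphism) and then checking the equality of valuations prime divisor by prime divisor. Since $X$ and $Y$ are normal, sections of $\sO_X(D_X)$ over an open $U$ are exactly those $s \in K(X)$ with $\mathrm{div}_X(s) + D_X \geq 0$ on $U$, and sections of $f_*\sO_Y(D_Y)$ over $U$ are those $s$ with $\mathrm{div}_Y(s) + D_Y \geq 0$ on $f^{-1}(U)$.

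For the inclusion $f_*\sO_Y(D_Y) \subseteq \sO_X(D_X)$, I would use strict transforms. Given a prime divisor $E \subset X$, let $\widetilde{E} \subset Y$ denote its strict transform under $f$. Since $f$ is a local isomorphism at the generic point of $\widetilde{E}$, orders of rational functions along $E$ and along $\widetilde{E}$ agree, and the hypothesis $f_*D_Y = D_X$ gives $\mathrm{ord}_{\widetilde{E}}(D_Y) = \mathrm{ord}_E(D_X)$. Hence if $s$ satisfies $\mathrm{div}_Y(s) + D_Y \geq 0$, then $\mathrm{ord}_E(s) + \mathrm{ord}_E(D_X) = \mathrm{ord}_{\widetilde{E}}(s) + \mathrm{ord}_{\widetilde{E}}(D_Y) \geq 0$ for every prime $E \subset X$, so $s \in \sO_X(D_X)$.

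For the reverse inclusion, take $s \in \sO_X(D_X)(U)$ and choose $m > 0$ with $mD_X$ Cartier; then the Cartier relation $\mathrm{div}_X(s^m) + mD_X \geq 0$ can be pulled back to $Y$ and divided by $m$ to yield $\mathrm{div}_Y(s) + f^*D_X \geq 0$ on $f^{-1}(U)$, where $f^*D_X := \tfrac{1}{m} f^*(mD_X)$ is the natural $\bQ$-Cartier pullback. For every prime divisor $F \subset Y$ this reads $\mathrm{ord}_F(s) \geq -\mathrm{ord}_F(f^*D_X)$; since $\mathrm{ord}_F(s) \in \bZ$, I may round to obtain $\mathrm{ord}_F(s) \geq -\lfloor \mathrm{ord}_F(f^*D_X) \rfloor = \mathrm{ord}_F(-\lfloor f^*D_X \rfloor)$. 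The hypothesis $D_Y \geq \lfloor f^*D_X \rfloor$ then gives $\mathrm{ord}_F(D_Y) \geq \mathrm{ord}_F(\lfloor f^*D_X \rfloor) \geq -\mathrm{ord}_F(s)$, hence $\mathrm{div}_Y(s) + D_Y \geq 0$ on $f^{-1}(U)$, so $s$ defines a section of $f_*\sO_Y(D_Y)$.

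There is no serious obstacle in this argument; the only mildly delicate points are handling the pullback of the $\bQ$-Cartier divisor $D_X$ through a Cartier multiple, and the rounding step that converts the $\bQ$-valued inequality $\mathrm{ord}_F(s) + \mathrm{ord}_F(f^*D_X) \geq 0$ into the integer-valued inequality involving $\lfloor f^*D_X \rfloor$ — precisely the quantity that appears in the hypothesis.
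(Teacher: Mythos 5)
Your proof is correct and complete; the paper itself gives no proof of this lemma (it is stated as an easy generalisation and left to the reader), and your argument — identifying both sheaves inside the constant sheaf $K(X)=K(Y)$, using strict transforms and $f_*D_Y=D_X$ for the inclusion $f_*\sO_Y(D_Y)\subseteq\sO_X(D_X)$, and pulling back through a Cartier multiple and rounding for the reverse inclusion — is exactly the standard one the authors have in mind. The two delicate points you flag (well-definedness of $f^*D_X$ via $\tfrac1m f^*(mD_X)$, and the integrality-of-$\mathrm{ord}_F(s)$ rounding step that makes the hypothesis $D_Y\geq\lfloor f^*D_X\rfloor$ exactly what is needed) are handled correctly.
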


The aim of the log minimal model program is to take a projective scheme with mild singularities and perform certain birational operations on it, to arrive at a projective scheme of the one of the following two special kinds. Here, a morphism $f \colon X \to Z$ is called \emph{a contraction} if it is projective and satisfies $f_* \sO_X = \sO_Z$.
\begin{definition}\label{def:lmm}
Let $(X,\Delta)$ be a pair and $f:X\to Z$ a projective contraction.  We say that $(Y,\Delta_Y)$ with projective contraction $g:Y\to Z$ is a \emph{log birational model} of $(X,\Delta)$ over $Z$ if $X$ is birational to $Y$ and $\Delta_Y$ is the sum of the birational transform of $\Delta$ and the reduced exceptional divisor of $Y\dashrightarrow X$.

We say that a log birational model $(Y,\Delta_Y)$ is a \emph{log minimal model} of $(X,\Delta)$ over $Z$ if 
\begin{enumerate}
\item $(Y,\Delta_Y)$ is $\mathbb{Q}$-factorial dlt,
\item $K_Y+\Delta_Y$ is nef over $Z$,
\item for any divisor $E$ on $X$ which is exceptional over $Y$, $a(E,X,\Delta)<a(E,Y,\Delta_Y)$, and 
\item\label{itm:lmm_contract_divisors} the induced map $Y\dashrightarrow X$ does not contract any divisors.
\end{enumerate}

We say that a log birational model $(Y,\Delta_Y)$ is a \emph{Mori fiber space} for $(X,\Delta)$ over $Z$ if 
\begin{enumerate}
\item $(Y,\Delta_Y)$ is $\mathbb{Q}$-factorial dlt,
\item there is a projective contraction $\phi \colon Y\to V$ over $Z$ such that
\begin{itemize}
\item the contraction $\phi$ is $(K_Y+\Delta_Y)$-negative,
    \item $\dim(V)<\dim(Y)$,
\item $\rho(Y/V)=1$,
\end{itemize}
\item for any divisor $E$ on $X$ which is exceptional over $Y$, $a(E,X,\Delta)<a(E,Y,\Delta_Y)$, and
\item the induced map $Y\dashrightarrow X$ does not contract any divisors.
\end{enumerate}

\end{definition}
\noindent If $(X,\Delta)$ is klt, then so is $(Y,\Delta_Y)$. We say that a log minimal model $(Y,\Delta_Y)$ of $(X,\Delta)$ is \emph{good} if $K_Y+\Delta_Y$ is semiample.

\begin{remark}
Note that for some authors e.g. \cite{Birkar16}, \autoref{def:lmm}\autoref{itm:lmm_contract_divisors} is not assumed in these definitions.  We include this assumption since the log minimal models and Mori fiber spaces we construct will satisfy this.\end{remark}

\begin{definition}
A \emph{flipping contraction} $f \colon X \to Z$ of a pair $(X, \Delta)$ is a small projective birational morphism such that $-(K_X+\Delta)$ is $f$-ample. \end{definition}

Note that it is usually assumed, and is the case when running the usual LMMP, that $\rho(X/Z)=1$.  However, we will need to make use of the above more general notion. 

\begin{definition}
Given a flipping contraction $f\colon X\to Z$ of a pair $(X,\Delta)$, the \emph{flip} of $f$ (if it exists) is a small projective birational morphism $f^+\colon X^+\to Z$ such that $K_{X^+}+\Delta_{X^+}$ is $f^+$-ample.\footnote{Notice that this $X^+$ is not the one corresponding to the absolute integral closure of $\sO_{X}$.}
\end{definition}

\subsection{Minimal Model Program for Noetherian excellent surfaces}\label{sec:surface_mmp}
We review the Minimal Model Program for Noetherian excellent surfaces following \cite{tanaka_mmp_excellent_surfaces}. Throughout this subsection {the base ring} $R$ is assumed to be a finite dimensional, excellent ring {admitting a dualizing complex}, and $T$ to be a quasi-projective scheme over $R$. In particular, this covers the key cases from the viewpoint of applications such as when
\begin{itemize}
    \item $T$ is a quasi-projective scheme over a field or a Dedekind domain, or
    \item $T= \Spec A$ for any complete Noetherian  local domain $A$ (see \cite[Tag 032D]{stacks-project}).
\end{itemize}

\begin{remark}
{Note that the assumption in \cite{tanaka_mmp_excellent_surfaces} is that the base ring $R$ is regular.  However all the arguments go through with the weaker assumption that $R$ admits a dualizing complex \cite{TanakaPrivateCommunication}.} 
\end{remark}

\begin{theorem}[{MMP, \cite[Theorem 1.1]{tanaka_mmp_excellent_surfaces}}]
\label{thm:surface-excellent-mmp}
Let $(X,\Delta)$ be a log canonical pair over $R$ of dimension two with $\bR$-boundary and admitting a projective morphism $f \colon X \to T$. Then we can run a $(K_X+\Delta)$-MMP over $T$ which terminates with a minimal model or a Mori fibre space.
\end{theorem}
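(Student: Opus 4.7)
The plan is to mirror the classical two-dimensional Mori program and reduce the theorem to a cone-and-contraction package, with termination coming essentially for free in dimension two. Throughout, I would work over the fixed base $T$, replace $(X,\Delta)$ by a dlt modification if convenient (using that resolutions of excellent surfaces exist by Lipman), and argue by induction on the relative Picard number $\rho(X/T)$ introduced in \autoref{remark:relative-Picard-rank}, which is finite.

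The first and main step is to establish a cone theorem over $T$ for $(X,\Delta)$: the $(K_X+\Delta)$-negative part of $\overline{\mathrm{NE}}(X/T)$ is generated by a countable collection of extremal rays that accumulate only at the hyperplane $(K_X+\Delta)^{\perp}$, and each such ray can be spanned by an integral curve of bounded length. In the surface case this can be proved essentially by hand: pick an effective $(K_X+\Delta)$-negative curve $C$ in a fibre of $X\to T$, use adjunction on its normalization (together with the Nakai-Moishezon criterion in \autoref{lem:Nakai-Moishezon}, available because $T$ admits a dualizing complex) to bound $(-(K_X+\Delta)\cdot C)$, and then argue as in \cite[\S 3]{tanaka_mmp_excellent_surfaces}. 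Note that in the relative setting over an excellent, not necessarily algebraically closed base, one must be careful with residue fields: the intersection numbers have to be taken over the relevant residue field $k(t)$ and the bounds on $-(K_X+\Delta)\cdot C$ will depend on $[k(C):k(t)]$, exactly as in \autoref{remark:divisors-of-unexpected-dimension}.

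The second step is the contraction theorem: given a $(K_X+\Delta)$-negative extremal ray $R$, produce a projective contraction $\varphi:X\to Y$ over $T$ contracting precisely the curves whose class lies in $R$. Here one exhibits a nef Cartier divisor $L$ on $X$ with $L^{\perp}\cap\overline{\mathrm{NE}}(X/T)=R$ and then applies the mixed-characteristic version of Keel's basepoint-freeness theorem from \cite{Witaszek2020KeelsTheorem} (for the relatively big case) or the direct excellent-surface basepoint-freeness result of Tanaka to produce $\varphi$. Since $\dim X=2$, the contraction $\varphi$ is automatically either divisorial (contracting finitely many curves to points, in which case $Y$ inherits a dlt log canonical structure and $\rho(Y/T)=\rho(X/T)-1$) or a Mori fibre space (if $\dim Y<2$), and there is no possibility of a small contraction, which is the crucial feature that makes surface MMP work without flips.

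Termination is then immediate: each divisorial step strictly drops the finite invariant $\rho(X/T)$, so after finitely many steps we either reach a model on which $K_X+\Delta$ is nef (a minimal model) or we produce a Mori fibre space. The main obstacle, and the reason this needs the excellent assumption and a dualizing complex rather than just being a formal consequence of the characteristic zero story, is the contraction step: one must ensure that the contraction morphism actually exists as a projective morphism to a scheme, which is where Keel-type semiampleness and the careful handling of possibly imperfect (even mixed characteristic) residue fields come in. A secondary, mostly technical, obstacle is reducing from $\bR$-boundaries to $\bQ$-boundaries by the standard convex-geometry argument, so that one can actually appeal to the $\bQ$-divisor cone/contraction statements; this is routine but must be checked to work uniformly over an excellent base.
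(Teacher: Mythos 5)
The paper does not prove this statement at all: it is imported verbatim as \cite[Theorem 1.1]{tanaka_mmp_excellent_surfaces}, with only the accompanying remark that Tanaka's standing hypothesis that the base ring be regular can be relaxed to the existence of a dualizing complex. So there is no in-paper argument to compare against; what you have written is a reconstruction of the proof of the cited reference, and your outline (cone theorem with length bounds corrected by the degree factor $d_C$ over imperfect residue fields, contraction via basepoint-freeness, termination because every birational contraction of a surface is divisorial and drops the finite invariant $\rho(X/T)$ of \autoref{remark:relative-Picard-rank}) is indeed the strategy Tanaka follows, and matches the auxiliary statements the paper does record (\autoref{thm:surface_cone}, \autoref{thm:surface-bpf-theorem}, \autoref{lem:Nakai-Moishezon}).

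Two caveats keep your sketch from standing on its own. First, it is partly circular: at the key analytic steps you ``argue as in \cite[\S 3]{tanaka_mmp_excellent_surfaces}'' or invoke Tanaka's basepoint-free theorem, i.e.\ you delegate the hard content to the very result being proved; the alternative appeal to \cite{Witaszek2020KeelsTheorem} is legitimate in the birational big case but postdates and partly relies on the surface theory. Second, the contraction step for \emph{log canonical} (as opposed to klt) pairs is thinner than you make it look: the basepoint-free theorem you quote (\autoref{thm:surface-bpf-theorem}) requires $(X,B)$ klt, and a $(K_X+\Delta)$-negative extremal ray of an lc pair need not remain negative after shrinking the coefficients of $\Delta$, so one cannot always perturb to klt while preserving the ray. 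Handling this (e.g.\ by contracting curves inside $\lfloor\Delta\rfloor$ separately, or by Tanaka's lc basepoint-freeness/Artin-type contractibility for negative curves) is a genuine piece of work in the reference that your outline elides. Neither point affects the truth of the theorem, but both should be acknowledged if the sketch is meant as a proof rather than a roadmap.
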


\begin{theorem}[{$\bQ$-factoriality of dlt singularities, \cite[Corollary 4.11]{tanaka_mmp_excellent_surfaces}, cf.\  \cite{LipmanRationalSingularities}}] \label{thm:surface-Qfactoriality-of-dlt}
Let $(X,\Delta)$ be a two-dimensional dlt pair with $\bR$-boundary. Then $X$ is $\bQ$-factorial.
\end{theorem}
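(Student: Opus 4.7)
The statement is quoted from \cite{tanaka_mmp_excellent_surfaces}, so the plan is to sketch the standard argument for $\bQ$-factoriality of dlt surface singularities, adapted to the excellent setting.

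Since $\bQ$-factoriality is local, my first step would be to reduce to the situation of a closed point $x \in X$ and work on the spectrum of the local ring $\sO_{X,x}$ (or its henselization, since henselization preserves dlt by \autoref{lem:plt_dlt_base_change}). By \cite{LipmanDesingularizationOf2Dimensional}, excellent normal surface singularities admit a log resolution $f \colon Y \to X$ with exceptional divisor $E = \sum E_i$ having simple normal crossings, and we may further arrange that $f$ is a log resolution of $(X,\Delta)$ in the dlt sense.

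The heart of the matter is the \emph{negative definiteness} of the intersection matrix $(E_i \cdot E_j)$. For surfaces over an algebraically closed field this is classical (Mumford), and the excellent analogue is established in \cite{LipmanRationalSingularities}: projective birational morphisms from a regular surface to a normal excellent local surface have negative definite exceptional intersection form. Granting this, the plan is purely linear algebra. Given an arbitrary prime Weil divisor $D$ on $X$ with strict transform $\tilde D$ on $Y$, negative definiteness lets me solve uniquely for $\alpha_i \in \bQ$ with
\[
   \bigl( \tilde D + \textstyle \sum_i \alpha_i E_i \bigr) \cdot E_j = 0 \quad \text{for every } j.
\]
Call this divisor $D_Y$. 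Then $D_Y$ is numerically $f$-trivial, and I would argue it descends to a $\bQ$-Cartier divisor on $X$ whose associated Weil divisor is $D$.

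The step I expect to be the main obstacle is precisely this descent: showing that an $f$-numerically-trivial $\bQ$-Cartier divisor on $Y$ comes from a $\bQ$-Cartier divisor on $X$. The standard route goes through \emph{rationality} of the singularity (i.e.\ $R^1 f_* \sO_Y = 0$), which via the exponential-type exact sequence $0 \to \bZ \to \sO_Y^\times \to \sO_Y^\times \to 0$ and Leray gives a surjection $\Pic(Y) \otimes \bQ \twoheadrightarrow \Pic(Y/X) \otimes \bQ$ onto a group containing the numerically trivial bundles; cf.\ \cite[Theorem 4.1]{LipmanRationalSingularities}. Thus the key remaining input is that dlt surface pairs have rational singularities in the excellent setting: this is exactly the content of the classification/rationality arguments of \cite[\S 3--4]{tanaka_mmp_excellent_surfaces}, which leverage Kawamata--Viehweg vanishing for birational morphisms between excellent surfaces (available since the obstruction there is two-dimensional and reduces to Lipman's rationality criterion that the fundamental cycle $Z$ satisfies $p_a(Z) \leq 0$). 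Once rationality is in hand, the descent is immediate and the theorem follows.
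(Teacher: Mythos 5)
The paper gives no proof of this statement at all: it is quoted verbatim from \cite[Corollary 4.11]{tanaka_mmp_excellent_surfaces}, with a ``cf.'' to \cite{LipmanRationalSingularities}, so there is nothing internal to compare against. Your sketch is the standard (and correct) route, and it is essentially the argument of the cited references: localize, take Lipman's log resolution, use negative definiteness of the exceptional intersection form to build the numerically $f$-trivial lift $D_Y$ of a Weil divisor $D$, and descend using rationality of dlt surface singularities. The only blemish is the ``exponential-type exact sequence'': as written it is garbled (the first term should be $\sO_Y$, not $\sO_Y^\times$), and more importantly the exponential sequence is an analytic device over $\bC$ that does not exist in positive or mixed characteristic. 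The correct mechanism --- which is exactly what \cite[\S 4]{LipmanRationalSingularities} does and which you already cite --- is to compute $\Pic(Y)$ via the inverse limit of $\Pic(nE)$ over infinitesimal neighborhoods of the exceptional divisor; the kernels of the transition maps are $H^1$ of coherent sheaves on $E$ that vanish by rationality ($R^1 f_*\sO_Y = 0$), so $\Pic(Y)$ injects into $\bigoplus_i \bZ$ via degrees on the $E_i$, numerically $f$-trivial line bundles are $f$-trivial, and $\mathrm{Cl}(\sO_{X,x})$ embeds into the finite group $E^*/E$. With that substitution your proof is complete; the remaining nontrivial input, rationality of dlt excellent surface pairs, is indeed the content of \cite{tanaka_mmp_excellent_surfaces}.
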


\begin{theorem}[{Base point free theorem, \cite[Theorem 4.2]{tanaka_mmp_excellent_surfaces}}] \label{thm:surface-bpf-theorem} Let $(X,B)$ be a klt pair of dimension two with $\bR$-boundary and admitting a projective morphism $f \colon X \to T$  over $R$. Let $L$ be an $f$-nef $\bQ$-Cartier divisor such that $L-(K_X+B)$ is $f$-nef and $f$-big. Then $L$ is $f$-semiample.
\end{theorem}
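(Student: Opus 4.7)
The plan is to reduce the problem to a semi-local affine base and then invoke the rationality of two-dimensional klt singularities to obtain the needed vanishing. Since semi-ampleness can be checked Zariski-locally on $T$, I would first replace $T$ by the spectrum of the localization of $R$ at a closed point of $T$; the hypotheses (nefness, bigness, klt) are preserved under this reduction by \autoref{lem:plt_dlt_base_change} and standard facts. By Kodaira's lemma applied to $L - (K_X+B)$, which is $f$-nef and $f$-big, I can write $L - (K_X+B) \sim_{\bR} A + \epsilon E$ where $A$ is an ample $\bR$-divisor, $E \geq 0$ is a $\bQ$-divisor, and $\epsilon > 0$ is small enough so that $(X, B + \epsilon E)$ remains klt (using that klt is an open condition and the pair admits a log resolution by \autoref{proj-resolutions}). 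Then $L \sim_{\bR} K_X + (B+\epsilon E) + A$ with $A$ ample.

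With this setup in place, the next step is to establish an appropriate form of Kawamata--Viehweg vanishing. Since $\dim X = 2$, the klt pair $(X, B+\epsilon E)$ has rational singularities in full generality (by \cite{LipmanRationalSingularities}), so on a log resolution $\pi \colon Y \to X$ one has $R^i\pi_* \sO_Y(\lceil \pi^* L \rceil) = 0$ for $i > 0$ after suitable choices, and thus vanishing on $X$ follows from vanishing on the regular $Y$. On the regular surface $Y$, nef-and-big vanishing can be obtained by the usual blow-up and induction tricks that work in any characteristic for surfaces (see \cite[\S3]{tanaka_mmp_excellent_surfaces}). Together these yield $R^i f_*\sO_X(mL) = 0$ for $i>0$ and $m\gg 0$.

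With vanishing in hand, the standard base-point free argument goes through: non-emptiness of $|mL/T|$ for $m \gg 0$ follows from the vanishing combined with Riemann--Roch / Euler characteristic calculations on fibers. Then, to remove base points, one cuts by general members (using \autoref{thm.FinalBertini} and \autoref{log_bertini} to produce snc cuts, even over finite residue fields) and restricts to the base locus, where induction on dimension applies. Alternatively, one could invoke Keel's theorem in mixed characteristic (\cite{Witaszek2020KeelsTheorem}): reduce semi-ampleness of $L$ to semi-ampleness of $L|_{\mathbb{E}(L)}$, where $\mathbb{E}(L)$ is the exceptional locus, which here is either empty (so $L$ is already $f$-ample) or a proper curve on which $L$ is numerically trivial, and then conclude by classical arguments on curves.

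The main obstacle is handling non-perfect residue fields at closed points of $T$, since classical proofs often implicitly assume algebraically closed or perfect base fields (for Bertini-type arguments or Frobenius techniques). Tanaka's strategy in \cite{tanaka_mmp_excellent_surfaces} exploits the relatively restricted local structure of klt surface singularities to ensure that all the relevant invariants and vanishing statements are stable under arbitrary base change, and the Bertini results assembled in \autoref{sec:Bertini} of the present paper should suffice to push through the argument in the excellent setting.
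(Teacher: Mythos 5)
The paper does not actually prove this statement: it is recalled from the literature, and the ``proof'' given is a two-line citation --- Tanaka's abundance theorem for surfaces over imperfect fields when $X$ is projective over a field, and \cite[Theorem 4.2]{tanaka_mmp_excellent_surfaces} otherwise. So your proposal has to be measured against those references rather than against an argument in the paper, and it does not reproduce what makes them work.

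The genuine gap is the vanishing step. You assert that on a regular surface ``nef-and-big vanishing can be obtained by the usual blow-up and induction tricks that work in any characteristic,'' and from this deduce $R^if_*\sO_X(mL)=0$. This is false precisely in the hardest case of the theorem, namely when $X$ lies over a closed point of $T$ with positive residue characteristic, i.e.\ when $X$ is a projective surface over a field of characteristic $p$: Raynaud's counterexamples (cited in the introduction of this very paper) are smooth projective surfaces with an ample line bundle violating Kodaira vanishing. Rationality of klt surface singularities does give the \emph{relative} vanishing $R^i\pi_*\sO_Y(\lceil \pi^*L\rceil)=0$ for the birational resolution $\pi$, and relative Kawamata--Viehweg vanishing also holds when the fibers of $f$ are at most one-dimensional; but neither gives global vanishing on a surface over a field. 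This is exactly why the citation is split into two cases: the positive-fiber-dimension case is comparatively soft, whereas the case over a (possibly imperfect) field requires Tanaka's quite different machinery (passage to the perfect closure, case analysis, and a separate treatment of the non-big $L$), none of which your outline recovers. Your fallback via Keel's theorem does not close the gap either: $\mathbb{E}(L)$ is the locus where $L$ fails to be big, so when $L$ is nef but not big --- which the hypotheses allow --- one has $\mathbb{E}(L)=X$ and the reduction is vacuous; and even when $L$ is big, proving that $L|_{\mathbb{E}(L)}$ is semiample (i.e.\ torsion on the contracted curves over a possibly imperfect field) is where the real content lies and is not supplied by ``classical arguments on curves.''
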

\begin{proof}
When $X$ is projective over a field, this follows from abundance (\cite[Theorem 1.1]{TanakaAbundanceImperfectFields}). Specifically, let $E$ be an effective divisor such that $A_\varepsilon=L-(K_X+B)-\varepsilon E$ is ample for all $\varepsilon$ sufficiently small.  Fix $\varepsilon$ such that $(X,B+\varepsilon E)$ is klt and by \autoref{lemma:add_ample} choose  $0\leq A'\sim_{\mathbb{Q}} A_{\varepsilon}$ such that $(X,B+\varepsilon E+A')$ is klt.  Then we can conclude by \cite[Theorem 1.1]{TanakaAbundanceImperfectFields} using the fact that $L\sim_{\mathbb{Q}} K_X+B+\varepsilon E+A'$.
If $X$ is not projective over a field the result is implied by \cite[Theorem 4.2]{tanaka_mmp_excellent_surfaces}. 
\end{proof}
\noindent Note that when $X$ is not defined over a field we even know that $nL$ is base point free for all $n\gg 0$ and not just divisible enough. Unfortunately, this does not hold in general, specifically when the numerical
dimension of $L$ is equal to one and the base field has characteristic two and three (see \cite[Theorem 1.2]{tanaka_pathologies}).

The following theorem is well-known in characteristic zero, and has been recently established for varieties which are projective over a field of positive characteristic \cite{TanakaAbundanceImperfectFields}. We prove the general case later on.
\begin{theorem}[\autoref{abundance}]
Let $(X,\Delta)$ be a log canonical pair of dimension $2$, projective over $T$ with $\mathbb{Q}$-boundary. Assume in addition that $T$ is the spectrum of a local ring with positive residue characteristic.  If $K_X+\Delta$ is nef over $T$, then it is semiample over $T$.
\end{theorem}

We present a strengthening of \cite[Theorem 2.14]{tanaka_mmp_excellent_surfaces} following \cite[Theorem 4.3]{DW19}.
As our residue fields are not necessarily algebraically closed, the bound on the length of extremal rays involves a term $d_C$ introduced in [op.\ cit.].

\begin{theorem}[Cone theorem] \label{thm:surface_cone}
Let $\pi:X\to T$ be a projective morphism with $X$ integral, normal, and of dimension at most  two. Let 
$\Delta\geq 0$ be such that $K_X+\Delta$ is $\mathbb{R}$-Cartier.  Then there exist countably many curves $\{C_i\}_{i\in I}$ on $X$ such that 
\begin{enumerate}
    \item $\pi(C_i)$ is a closed point\footnote{this is automatic by definition of a curve over $T$}.
    \item $$\overline{\mathrm{NE}}(X/T) = \overline{\mathrm{NE}}(X/T)_{K_X+\Delta\geq 0} + \sum_{i\in\mathbb{N}} \bR_{\geq 0}[C_i].$$
    \item\label{itm:surface_cone_ample} For any ample $\mathbb{R}$-divisor $A$, there is a finite $n$ such that 
    $$\overline{\mathrm{NE}}(X/T) = \overline{\mathrm{NE}}(X/T)_{K_X+\Delta+A\geq 0} + \sum_{i\leq n} \bR_{\geq 0}[C_i].$$
    \item\label{item:bound} For each $C_i$, either 
    \begin{enumerate}
    \item\label{item:case_non_lc} $C_i$ is contained in the non-lc locus of $(X,\Delta)$.
        \item\label{case:bound} $0<-(K_X+\Delta)\cdot_k C_i\leq 4 d_{C_i}$ where $d_{C_i}$ is as in \autoref{lem:d_C}.
    \end{enumerate}
\end{enumerate}
\end{theorem}

\begin{proof}
If $\dim(X)=1$, then the result is obvious. So we assume $\dim(X)=2$.  Furthermore if $\dim(\pi(X))=0$, the result is proved in \cite[Theorem 4.3]{DW19}.  Note that this did not assume that the field had positive characteristic, and while our phrasing of \autoref{item:case_non_lc} is slightly stronger than that of \cite{DW19}, it is actually what is given by the proof there. 
   
So we may assume that $\dim(\pi(X))\geq 1$.  The first three parts are implied by the stronger \cite[Lemma 2.13]{tanaka_mmp_excellent_surfaces}, so it remains to prove \autoref{item:bound}.
    For this we must show that each $(K_X+\Delta)$-negative extremal ray $\Sigma$  contains a curve satisfying the bound or contained in $\Supp(\Delta)$
    Using the argument of \cite[Proposition 4.5, Step 1]{DW19} we may assume that $X$ is regular and $(X,\Delta_{\leq 1})$ is dlt. 
    
    The extremal ray $\Sigma$ contains some curve $C$ by \cite[Lemma 2.13]{tanaka_mmp_excellent_surfaces}, and as $X$ is regular we claim that $C^2\leq 0$.  If $\pi$ is birational this follows from \autoref{lem:negativity}, while if $\pi$ has image of dimension $1$, it follows because $C\cdot F=0$ for $F$ a fiber of $\pi$.  Let $D$ be the normalization of an irreducible component of $(C\otimes_k\overline{k})_{\mathrm{red}}$.  Then $$(K_X+\Delta) \cdot_k C\geq (K_X+\Delta+aC) \cdot_k C=\deg_k(K_C+\Delta_C)\geq d_C\deg_{\overline{k}}(K_D+\Delta_D)\geq -2d_C$$ where $a$ is chosen such that $C$ has coefficient one in $\Delta+aC$, and $\Delta_C$ and $\Delta_D$ are effective divisors on $C$ and $D$ respectively.
\end{proof}

We used the following lemma in the proof of the above theorem.
\begin{lemma}\label{lem:d_C}
Let $X$ be a scheme over a Dedekind domain $V$ containing a proper curve $C$ over a point $v\in \Spec(V)$ with residue field $k$.  Let $\phi: X_v\otimes_k\overline{k}\to X_v$ be the natural projection.
Then there is a positive integer $d_C$ such that
for any $\mathbb{R}$-Cartier divisor $D$, if $C^{\overline{k}}$ is any integral curve on $X_v\otimes_k\overline{k}$ whose image on $X_v$ is $C$ we have $$D\cdot_k C=d_C(\phi^*D\cdot_{\overline{k}} C^{\overline{k}})$$
In particular if $L$ is any Cartier divisor on $X$, then $L\cdot_k C$ is divisible by $d_C$.
\end{lemma}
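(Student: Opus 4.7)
The plan is to define $d_C$ intrinsically in terms of the geometry of $C \otimes_k \overline{k}$ and then verify the stated formula using flat base change for degrees on proper curves. First, since $C$ is integral over $k$, the absolute Galois group $\mathrm{Gal}(\overline{k}/k)$ acts on the base change $C \otimes_k \overline{k}$ and permutes its irreducible components transitively. Let $n$ be the number of these components, and for any one such component with generic point $\eta$, let $m = \mathrm{length}(\mathcal{O}_{C \otimes_k \overline{k}, \eta})$; by Galois symmetry, $m$ is independent of the chosen component. Define
\[
    d_C := n \cdot m,
\]
which is a positive integer depending only on $C$ (not on $D$ or the choice of $C^{\overline{k}}$).

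Next, I would verify the intersection formula first for a Cartier divisor $L$ on $X$, by reducing to a computation on $C$ and its base change. Since $C \hookrightarrow X_v$ and pulling back to $X_v \otimes_k \overline{k}$ is flat, the Euler characteristic of $\mathcal{O}_C(L|_C)$ is preserved, giving
\[
    L \cdot_k C \;=\; \deg_k(L|_C) \;=\; \deg_{\overline{k}}\!\left((\phi^* L)|_{C \otimes_k \overline{k}}\right).
\]
Now decompose $C \otimes_k \overline{k}$ into its irreducible components $C_1^{\overline{k}}, \ldots, C_n^{\overline{k}}$. For any coherent sheaf, the degree on the scheme $C \otimes_k \overline{k}$ equals $\sum_i m_i \deg_{\overline{k}}(\cdot |_{C_i^{\overline{k}}})$ where $m_i$ is the length at the generic point of $C_i^{\overline{k}}$ (this is the standard additivity of degree over irreducible components with multiplicity). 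By Galois symmetry, all $m_i = m$ and moreover $\deg_{\overline{k}}((\phi^* L)|_{C_i^{\overline{k}}})$ is independent of $i$, since the Galois action moves one component to another and preserves $\phi^* L$. Hence
\[
    \deg_{\overline{k}}\!\left((\phi^* L)|_{C \otimes_k \overline{k}}\right) \;=\; n \cdot m \cdot \left(\phi^* L \cdot_{\overline{k}} C^{\overline{k}}\right) \;=\; d_C \cdot \left(\phi^* L \cdot_{\overline{k}} C^{\overline{k}}\right),
\]
which is the desired equality for Cartier $L$. The case of an $\mathbb{R}$-Cartier divisor $D$ then follows by writing $D$ as an $\mathbb{R}$-linear combination of Cartier divisors and using linearity of both sides in $D$. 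The final divisibility claim is immediate: if $L$ is Cartier, then $\phi^* L \cdot_{\overline{k}} C^{\overline{k}} \in \mathbb{Z}$, so $L \cdot_k C \in d_C \cdot \mathbb{Z}$.

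The main subtlety I expect is the Galois transitivity on components when $k$ is imperfect (which is a case we genuinely need to handle given the scope of the paper) or when $C$ fails to be geometrically reduced. In that case $C \otimes_k \overline{k}$ can have nilpotents, but integrality of $C$ still forces $\mathrm{Gal}(\overline{k}/k)$ to act transitively on the set of irreducible components and with the same stabilizer, so both $n$ and the multiplicity $m$ are well-defined and $d_C = nm$ is canonical. Equivalently, one can identify $d_C$ with the length of the Artinian $\overline{k}$-algebra $k(C) \otimes_k \overline{k}$ at its (necessarily all isomorphic) residue fields summed over the points of $\mathrm{Spec}(k(C) \otimes_k \overline{k})$, which makes clear the independence of the choice of $C^{\overline{k}}$.
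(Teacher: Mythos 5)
Your argument is correct. Note, though, that the paper does not actually prove this lemma: its entire proof is the citation ``This is \cite[Lemma 4.1]{DW19} applied to $C\subset X_v$,'' together with the observation that properness of $X_v$ in that reference can be relaxed to properness of $C$. What you have written is, in effect, a self-contained proof of that cited lemma: you take $d_C = nm$, with $n$ the number of irreducible components of $C\otimes_k\overline{k}$ and $m$ their common generic multiplicity, use invariance of Euler characteristics (hence of degree) under the flat base change $k\to\overline{k}$, decompose the degree over the components, and invoke transitivity of the $\mathrm{Aut}(\overline{k}/k)$-action to see that all components contribute equally. All of these steps are sound (the transitivity holds for $\overline{k}$ even when $k$ is imperfect, since $\overline{k}/k^{\mathrm{sep}}$ is a universal homeomorphism on spectra, and $k(C)\otimes_k\overline{k}$ is Artinian so $m$ is finite), and linearity handles the passage from Cartier to $\mathbb{R}$-Cartier divisors. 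The one implicit hypothesis you should make explicit is that $C$ is integral (or at least irreducible) — this is forced anyway by the requirement that an integral $C^{\overline{k}}$ have image exactly $C$, and it is what makes the Galois action on components transitive; without it the claimed identity can fail.
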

\begin{proof}
    This is \cite[Lemma 4.1]{DW19} applied to $C\subset X_v$.  Note that the statement of \cite[Subsection 3]{DW19} required that $X_v$ be proper, however the proofs only require that the curve $C$ be proper.
\end{proof}

\subsection{Characteristic zero base point free theorem}

We note that the base point free theorem for Noetherian excellent schemes of characteristic zero follows from the vanishing theorems in \cite{takumi}.

\begin{proposition}\label{prop:char_zero_bpf}
Suppose that $T$ is a scheme which is quasi-projective over a finite dimensional excellent ring $R$ {admitting a dualizing complex and containing} $\mathbb{Q}$. 

Let $(X,\Delta)$ be a {$\mathbb{Q}$-factorial} klt pair with $\bR$-boundary. Let $f:X\to T$ be a  projective morphism, and let $L$ be an $f$-nef $\bQ$-Cartier $\bQ$-divisor on $X$ such that $L-K_X-\Delta$ is $f$-nef and $f$-big.  Then $L$ is $f$-semiample. 
\end{proposition}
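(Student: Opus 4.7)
The plan is to adapt the classical Kawamata--Shokurov base-point-free theorem to this excellent characteristic-zero setting, using Murayama's Kawamata--Viehweg vanishing from \cite{takumi} in place of the standard vanishing. Since $R \supset \bQ$, we also have at our disposal resolution of singularities for quasi-excellent schemes in characteristic zero (Temkin); these are the only non-classical inputs.

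First I would carry out the standard preparatory reductions. Using the $f$-bigness of $L - K_X - \Delta$ together with a relative form of Kodaira's lemma, write
\[
L - K_X - \Delta \sim_{f,\bQ} A + E
\]
with $A$ an $f$-ample $\bQ$-divisor and $E \geq 0$. Replacing $\Delta$ with $\Delta + \epsilon E$ for small $\epsilon > 0$ and absorbing a small ample perturbation, we may assume $\Delta$ is a $\bQ$-divisor, $(X, \Delta)$ is klt, and $L - K_X - \Delta$ is $f$-ample. (The klt hypothesis is preserved because $(X,\Delta)$ admits a log resolution.) We may also freely pass between $L$ and $mL$ for any $m > 0$ coprime to any chosen integer.

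The heart is then two steps, run essentially verbatim from the classical argument. For \emph{non-vanishing}, one shows $f_*\sO_X(mL) \neq 0$ for some $m > 0$: pass to a log resolution $\mu : Y \to X$, write $m\mu^* L$ as $K_Y$ plus a klt boundary plus an $f$-nef and $f$-big $\bQ$-divisor, and combine Murayama's relative Kawamata--Viehweg vanishing with asymptotic Riemann--Roch over $T$ to produce a nonzero section. For \emph{base-point freeness}, given a point $x \in X$ lying in the relative base locus of $|mL|$ (mapping to some $t \in T$), use the $f$-bigness of $L - K_X - \Delta$ to build an effective $\bQ$-divisor $D \sim_{f,\bQ} c L$ with $c < 1$ whose multiplicity at $x$ is large enough that the minimal non-klt center of $(X, \Delta + D)$ is isolated through $x$. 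Applying Murayama's vanishing on a log resolution of $(X, \Delta + D)$ to the relevant adjoint bundle, the standard short exact sequence lifts a section of $mL$ from this non-klt center to $X$ which is nonzero at $x$. The usual coprimality trick (playing off two values $m_0$, $m_1$ with $\gcd(m_0, m_1) = 1$ for which $|m_i L|$ is base-point-free) then upgrades this to genuine $f$-semiampleness.

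The main obstacle will be ensuring that every ingredient functions in this excellent, not-necessarily-geometric setting relative to $T$: specifically, one needs relative Kawamata--Viehweg vanishing for $\bR$-divisors, log resolutions of pairs on $X$, and Bertini-type statements producing the auxiliary divisor $D$ with controlled singularities at the fixed point $x$. The first is precisely what \cite{takumi} supplies, resolution is given by Temkin in characteristic zero, and Bertini-style constructions here follow from $X \to T$ being projective together with $T$ being quasi-projective over an excellent ring (cf.\ \autoref{thm.FinalBertini}). Once these are in place, the perturbation, tie-breaking, and cohomological descent portions of the classical Kawamata--Shokurov proof transfer without essential change.
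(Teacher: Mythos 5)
Your overall skeleton matches the paper's: reduce by perturbation to the case where $\Delta$ is a $\bQ$-divisor and $L - K_X - \Delta$ is $f$-ample, then run the Kawamata--Shokurov argument of [KMM, Theorem 3-1-1] with Murayama's relative Kawamata--Viehweg vanishing and Temkin's characteristic-zero resolutions as the non-classical inputs. But your sketch of the two central steps has genuine problems. For non-vanishing you propose to ``combine Murayama's relative Kawamata--Viehweg vanishing with asymptotic Riemann--Roch over $T$ to produce a nonzero section.'' Shokurov non-vanishing is not a formal consequence of vanishing plus Riemann--Roch: its proof chooses very general points on a variety over an algebraically closed field, concentrates multiplicity there, and inducts on dimension, so it is intrinsically a statement over a field, not something one runs ``over $T$.'' The correct move --- and the one the paper makes --- is to observe that non-vanishing of $f_*\sO_X(mL)$ is detected on the generic fiber $X_\eta$, which is a variety over the characteristic-zero field $K(T)$ of dimension at most $\dim X$, and to apply the classical non-vanishing theorem there after base-changing the Stein factorization to the algebraic closure of $K(T)$ (this last step matters because $K(T)$ need not be algebraically closed). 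As written, your non-vanishing step is unjustified.

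Your base-point-freeness step is also not the classical one, and it does not apply in this setting. You propose to find, for a point $x$ in the relative base locus, an effective $D \sim_{f,\bQ} cL$ with $c<1$ whose multiplicity at $x$ isolates a non-klt center through $x$. That is the Angehrn--Siu multiplicity method, which requires enough sections of multiples of $L$ to impose high-order vanishing at $x$; here $L$ is not assumed $f$-big, and even the $f$-bigness of $L - K_X - \Delta$ is only bigness on the generic fiber, so it gives no control whatsoever at points $x$ lying in special fibers of $f$. The KMM/Koll\'ar--Mori argument instead works with the fixed divisor of $|mL|$ on a log resolution, computes a log canonical threshold along that fixed part, and lifts sections from the extracted divisor using vanishing and Noetherian induction on the base locus; that is the version which transfers to the relative excellent setting (and it is also where the paper's insistence on resolutions carrying an ample exceptional divisor enters). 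With the non-vanishing step repaired by restriction to the generic fiber and the freeness step replaced by the fixed-divisor argument, your proof collapses to the paper's.
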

\begin{proof}
{By a perturbation we may assume that $\Delta$ is a $\mathbb{Q}$-divisor and $L-K_X-\Delta$ is ample.}
    We may assume that $T$ is integral, and then use the argument of \cite[Theorem 3-1-1]{KMM}.  This has three main imputs: relative Kawamata-Viehweg vanishing \cite{takumi}, the existence of a projective resolution with ample exceptional divisor (\cite{TemkinAbsoluteChar0}), and the non-vanishing theorem on the generic fiber $X_\eta$ of $X\to T$ (that is, $H^0(X_{\eta}, \mathcal{O}_{X_{\eta}}(mL)) \neq 0$ for some $m \geq 1$).  As this generic fiber is a variety over a field of characteristic zero, the non-vanishing theorem \cite[Theorem 2-1-1]{KMM} applies directly via the base change of its Stein factorization to the algebraic closure of $K(T)$.
\end{proof}

\subsection{Mixed characteristic Keel's theorem} \label{ss:preliminaries-mixed-Keel-theorem}

{In what follows, we say that a nef Cartier divisor $L$ on a scheme $X$ proper over a Noetherian excellent base scheme $T$ is \emph{EWM} over $T$ if there exists a proper morphism $f \colon X \to Y$ to a proper (over $T$) algebraic space $Y$ such that a closed integral subscheme $V \subseteq X$ is contracted (that is, $\dim f(V) < \dim V$) if and only if $L|_V$ is not big.}
{\begin{remark}
The original definition of EWM in \cite{Witaszek2020KeelsTheorem} differed from the one above (which is the same as in \cite{CasciniTanaka2020,KeelBasepointFreenessForNefAndBig}). It was weaker, as it only required $f$ to contract proper curves $C$ such that $L \cdot C = 0$. This was corrected in an update to \cite{Witaszek2020KeelsTheorem}. 
\end{remark}}
We start by recalling the main results of \cite{Witaszek2020KeelsTheorem}.
\begin{theorem}[{\cite[Theorem 6.1]{Witaszek2020KeelsTheorem}}]
Let $L$ be a nef Cartier divisor on a scheme $X$ projective over a Noetherian excellent base scheme $T$. Then $L$ is semiample (EWM.\ resp.) over $T$ if and only if  $L|_{\mathbb{E}(L)}$ and $L|_{X_{\bQ}}$ are semiample (EWM.\ resp.) over $T$. \label{thm:mixed-characteristic-Keel} 
\end{theorem}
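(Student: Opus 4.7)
The \emph{only if} direction is immediate: restriction of a semiample (resp.\ EWM) nef Cartier divisor to any closed subscheme of $X$ is again semiample (resp.\ EWM), so both $L|_{\mathbb{E}(L)}$ and $L|_{X_{\bQ}}$ inherit the property from $L$ by purely formal considerations.

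For the \emph{if} direction I would focus on the semiample case first; the EWM case should follow by an analogous, if more delicate, argument carried out in the category of proper algebraic spaces. First, by standard reductions, I would assume $T = \Spec R$ with $R$ a complete Noetherian local domain. The plan is then to combine two inputs: the classical characteristic-zero base-point-free theorem on the generic fibre, and Keel's original positive-characteristic theorem on each special fibre. On $X_{\bQ}$ semiampleness is granted by hypothesis. On a special fibre $X_p$ of residue characteristic $p > 0$, one first verifies the inclusion $\mathbb{E}(L|_{X_p}) \subseteq \mathbb{E}(L) \cap X_p$ (any subvariety of $X_p$ on which $L|_{X_p}$ fails to be big is also a subvariety of $X$ on which $L$ fails to be big), so that the given semiampleness of $L|_{\mathbb{E}(L)}$ restricts to semiampleness of $L$ on $\mathbb{E}(L|_{X_p})$. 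Keel's classical theorem in positive characteristic then upgrades this to semiampleness of $L|_{X_p}$. What remains is to assemble these fibrewise conclusions into a single basepoint-free linear system $|L^{\otimes m}|$ on $X$ for some $m \gg 0$.

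The main obstacle will be precisely this globalization across the mixed-characteristic base. I would attack it by induction on the order of thickening of the special fibres: given compatible sections on the $\fram^n$-thickening, the obstruction to lifting them to the $\fram^{n+1}$-thickening lies in an $H^1$ of an appropriate ideal sheaf twisted by $L^{\otimes m}$. In equal characteristic $p$ Keel annihilates such obstructions via repeated Frobenius pullbacks, but in mixed characteristic the natural replacement is to pass to a sufficiently large finite cover of $X$ and invoke the vanishing theorems for absolute integral closures from \cite{BhattAbsoluteIntegralClosure}, which force the relevant $H^1$ to vanish after such a cover. The subtle point—the technical crux of the proof—is ensuring compatibility: sections produced on $X_{\bQ}$ (where one uses the hypothesis directly, with no finite cover needed) must be matched with sections produced on the special fibres (where finite covers are indispensable). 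Once the coherent formal sections have been assembled along each fibre of $T$, Grothendieck's algebraization theorem, applied to a sufficiently high power of $L$, produces the required global sections, and the EWM case is deduced by the parallel argument in the category of proper algebraic spaces over $T$.
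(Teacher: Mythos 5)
The first thing to observe is that the paper does not actually prove this statement: it is quoted from \cite{Witaszek2020KeelsTheorem}, and the entire content of the proof environment is the remark that the finite-type-over-a-Dedekind-domain hypothesis in the EWM case of \emph{loc.\ cit.}\ can be dropped, because it was only needed to invoke Artin's contraction theorems and Popescu's approximation theorem now removes that restriction. Your proposal does not engage with this point, which is the only original argument the paper supplies; instead you sketch a reproof of the cited theorem. Your ``only if'' direction is fine, and your skeleton for the ``if'' direction (handle $X_{\bQ}$ by hypothesis, handle positive-characteristic fibres by Keel via $\mathbb{E}(L|_{X_{p=0}})\subseteq \mathbb{E}(L)$, then glue) matches the real structure of the problem.

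The genuine gap is at exactly the step you flag as the crux: killing the $H^1$ obstructions to lifting sections along thickenings ``by passing to a sufficiently large finite cover of $X$ and invoking the vanishing theorems for absolute integral closures.'' Those vanishing theorems only hold \emph{after} a finite cover $\pi\colon Y\to X$, so at best they let you lift the pulled-back section on the thickenings of $Y$; you then need to descend the resulting sections (or the resulting semiampleness of $\pi^*L$) back to $X$. Descent of semiampleness along finite surjective morphisms is precisely the positive-characteristic miracle in Keel's work (\cite[Lemma 2.10]{KeelBasepointFreenessForNefAndBig}); it is unavailable here because the total space has a characteristic-zero open locus. Indeed, over $X_{\bQ}$ the normalized trace splits $\sO_X\to \pi_*\sO_Y$, so $H^1(X,\sF)\to H^1(Y,\pi^*\sF)$ is \emph{injective} there and finite covers cannot kill any obstruction class that was not already zero; and where the covers do help (the $p=0$ locus), Keel's theorem already suffices without \cite{BhattAbsoluteIntegralClosure}. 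A second, related mismatch: the relevant dichotomy is between the closed subscheme $X_{p=0}$ and the open subscheme $X_{\bQ}$, i.e.\ the $p$-adic thickenings $X_{p^n=0}$, not the $\fram$-adic thickenings of fibres over closed points of $T$ that your algebraization step uses. The actual argument of \cite{Witaszek2020KeelsTheorem} lifts sections through the $p$-adic thickenings with no cohomology vanishing at all, by the elementary observation that if $s$ lifts modulo $p^n$ then $s^p$ lifts modulo $p^{n+1}$ (the mixed-characteristic avatar of Frobenius), and then glues the resulting data with the section ring of $L|_{X_{\bQ}}$. So the mechanism you propose both fails as stated and is not needed.
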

\noindent Here, $X_{\bQ}$ denotes the characteristic zero fiber of $X \to \Spec \bZ$ and  $\mathbb{E}(L)$ denotes the union of closed integral subschemes $V \subseteq X$ such that $L|_V$ is not relatively big over $T$. 
\begin{proof}
This is \cite[Theorem 6.1]{Witaszek2020KeelsTheorem}. Note that the EWM case of this theorem assumed that the base scheme $T$ is of finite type over a mixed characteristic Dedekind domain. This assumption was needed to invoke \cite[Theorem 3.1 and Theorem 6.2]{ArtinGluing}, but the only reason Artin stated it in his article was because the Popescu approximation theorem was not known at that time (\cite[Tag 07GC]{stacks-project}). {This assumption was retained in \cite{Witaszek2020KeelsTheorem} out of abundance of caution.}
\end{proof}

\begin{proposition} \label{prop:bpf_plt} Let $T$ be a quasi-projective scheme over a finite dimensional excellent ring $R$ admitting a dualizing complex. Let $(X,S+B)$ be a three-dimensional dlt pair which is projective over $T$, where $S$ is a prime divisor and $B$ is an effective $\bQ$-divisor. Suppose that each irreducible component of $\lfloor S+B \rfloor$ is $\bQ$-Cartier. Let $L$ be a nef Cartier divisor on $X$ such that $L-(K_X+S+B)$ is ample and $\mathbb{E}(L) \subseteq S$. Then $L$ is semiample. 

Moreover, if $\phi \colon X \to Z$ is the associated semiample fibration, then every relatively numerically trivial $\bQ$-Cartier $\bQ$-divisor $D$ on $X$ descends to $Z$.
\end{proposition}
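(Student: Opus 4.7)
The plan is to invoke the mixed characteristic Keel theorem \autoref{thm:mixed-characteristic-Keel}, which reduces semiampleness of $L$ over $T$ to semiampleness of $L|_{X_\bQ}$ and of $L|_{\mathbb{E}(L)}$. Each of the two restrictions can then be handled by a characteristic-specific base-point-free theorem available in the paper.

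For $L|_{X_\bQ}$, the restricted pair $(X_\bQ, (S+B)|_{X_\bQ})$ is dlt and the divisor $(L-(K_X+S+B))|_{X_\bQ}$ remains ample, hence nef and big. I would perturb the boundary to klt by subtracting $\delta \lfloor S+B \rfloor$ for small $\delta > 0$, using the hypothesis that every component of $\lfloor S+B \rfloor$ is $\bQ$-Cartier to keep everything $\bR$-Cartier, and absorb $\delta \lfloor S+B \rfloor$ into the ample auxiliary divisor. If needed, I would pass to a small $\bQ$-factorialization of $X_\bQ$, available in characteristic zero, and then apply \autoref{prop:char_zero_bpf} to conclude that $L|_{X_\bQ}$ is semiample. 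For $L|_{\mathbb{E}(L)}$, the containment $\mathbb{E}(L) \subseteq S$ reduces to showing $L|_S$ is semiample. By \autoref{lem:properties-of-plt}, $S$ is normal up to a universal homeomorphism, and since semiampleness descends under a universal homeomorphism, it suffices to work on the normalization $S^\nu$. Adjunction along the $\bQ$-Cartier divisor $S$ yields $(K_X+S+B)|_{S^\nu} = K_{S^\nu} + \Diff_{S^\nu}(B)$, the resulting pair is dlt, and $L|_{S^\nu} - (K_{S^\nu} + \Diff_{S^\nu}(B))$ is ample; after an analogous perturbation to klt, the surface base-point-free theorem \autoref{thm:surface-bpf-theorem} gives semiampleness of $L|_{S^\nu}$.

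For the moreover part, let $\phi \colon X \to Z$ be the associated semiample fibration and let $D$ be a $\phi$-numerically trivial $\bQ$-Cartier divisor. I would apply the already-proved first part to $L \pm \epsilon D$ for small rational $\epsilon > 0$. The amplitude of $(L \pm \epsilon D) - (K_X + S + B)$ is preserved by openness of the ample cone, and one verifies $\mathbb{E}(L \pm \epsilon D) \subseteq S$ by noting that bigness of $L|_V$ is an open condition on the numerical class so persists for $V \not\subseteq S$. Hence both $L \pm \epsilon D$ are semiample, and their induced fibrations factor through $\phi$, so each is $\bQ$-linearly equivalent to the pullback of a $\bQ$-Cartier divisor from $Z$. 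Taking the difference exhibits $2\epsilon D$, hence $D$, as such a pullback.

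The main obstacle is arranging the perturbations so that all divisors remain $\bR$-Cartier---this is precisely what the $\bQ$-Cartierness hypothesis on the components of $\lfloor S + B \rfloor$ provides---and reducing the dlt, non-$\bQ$-factorial setup to the klt hypotheses required by the cited base-point-free theorems. The unavailability in mixed characteristic of both a direct vanishing-based base-point-free theorem and of a global $\bQ$-factorialization is what forces the Keel-theoretic splitting into $X_\bQ$ and $S$, where we can instead use the characteristic-zero theorem on the former and the fact that dlt surfaces are automatically $\bQ$-factorial (\autoref{thm:surface-Qfactoriality-of-dlt}) on the latter.
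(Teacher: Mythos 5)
Your argument for the semiampleness of $L$ is essentially the paper's: perturb the boundary (using $\bQ$-Cartierness of the components of $\lfloor S+B\rfloor$) so that the cited base point free theorems apply, split via \autoref{thm:mixed-characteristic-Keel} into the characteristic-zero fibre, handled by \autoref{prop:char_zero_bpf}, and the locus $\mathbb{E}(L)\subseteq S$, handled by adjunction to the normalization $S^{\nu}$, the surface theorem \autoref{thm:surface-bpf-theorem}, and descent of semiampleness along the universal homeomorphism $S^{\nu}\to S$ (which the paper also only cites rather than proves). That half is fine.

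The ``moreover'' part has a genuine gap. You apply the first part to $L\pm\epsilon D$ over $T$, but you never verify that $L\pm\epsilon D$ is nef over $T$, and this does not follow from any openness statement: nefness requires $\epsilon\,|D\cdot C|\le L\cdot C$ uniformly over the infinitely many curves $C$ not contracted by $\phi$. The hypothesis $D\equiv_Z 0$ only forces $D$ to vanish on the span of $\overline{\mathrm{NE}}(X/Z)$, which is in general a proper subspace of the hyperplane $L^{\perp}\subseteq N_1(X/T)$, so classes in $\overline{\mathrm{NE}}(X/T)$ can approach the contracted face tangentially along directions where $D\cdot C/L\cdot C$ is unbounded. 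Excluding this amounts to a local polyhedrality statement for $\overline{\mathrm{NE}}(X/T)$ near $L^{\perp}$, i.e.\ a cone theorem, which at this point in the paper is only available under the extra hypothesis $K_X+\Delta\equiv_T M\ge 0$ (\autoref{thm:keel_cone}) and whose general form is proved later using the base point free theorem itself; so your route is either incomplete or circular. The same non-uniformity undermines the claim $\mathbb{E}(L\pm\epsilon D)\subseteq S$ ``by openness of bigness'', which must hold simultaneously for infinitely many subvarieties. The paper's fix is a one-liner you should adopt: apply the already-proved first part to $L+D$ with base $Z$ instead of $T$. Over $Z$ one has $L+D\equiv_Z 0$, so nefness over $Z$ is automatic; $(L+D)-(K_X+S+B)$ is ample over $Z$ by the relative Nakai--Moishezon criterion since it differs from the $T$-ample divisor $L-(K_X+S+B)$ by a $Z$-numerically trivial one; and $\mathbb{E}_Z(L+D)=\mathbb{E}_T(L)\subseteq S$ because $(L+D)|_V$ is $\phi(V)$-numerically trivial, hence big over $\phi(V)$ exactly when $V\to\phi(V)$ is generically finite. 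A semiample, relatively numerically trivial divisor is then $\bQ$-linearly equivalent to a pullback from $Z$, so $L+D$, and hence $D$, descends.
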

\begin{proof} By means of perturbation,  we can assume that $(X,S+B)$ is plt  and $S=\lfloor S + B \rfloor$. By \autoref{lem:properties-of-plt}, we also know that $S$ is normal up to a universal homeomorphism. Since $L|_{X_{\bQ}}$ is semiample by \autoref{prop:char_zero_bpf}, it is enough to show that $L|_{\mathbb{E}(L)}$ is semiample by \autoref{thm:mixed-characteristic-Keel}, and so that $L|_S$ is semiample. First, note that $L|_{\tilde S}$ is semiample, where $\tilde S$ is the normalization of $S$. 
Indeed, write $K_{\tilde S} + B_{\tilde S} = (K_X+S+B)|_{\tilde S}$. Since $(\tilde S, B_{\tilde S})$ is klt { and $\dim \tilde S \leq 2$, we have that} $L|_{\tilde S}$ is semiample by \autoref{thm:surface-bpf-theorem}. {Then $L|_S$ is semiample in view of $\tilde S \to S$ being a universal homeomorphism by \cite[Theorem 2.22]{Witaszek21}.
}

The second part follows by applying the first part to $L+D$ over $Z$.
\end{proof}

\begin{proposition} \label{prop:Das-Waldron-adjunction}
Let $(X,S+B)$ be a pair with $K_X + S + B$ $\bR$-Cartier, and with $X$ projective over a Noetherian  excellent scheme $T$ admitting a dualizing complex such that $S$ is a Weil divisor not contained in $\Supp(B)$ whose image in $T$ is a closed point with residue field $k$.  Let $Z$ be the normalization of $S_{\overline{k}}$.  
Then there are effective divisors $C$, $M$ and $F$, and a $\mathbb{R}$-divisor $B_Z$ on $Z$ such that

$$(K_X+S+B)|_{Z}\sim_{\mathbb{R}}K_{T}+C+M+F+B_Z$$
where
\begin{itemize}
    \item $\Supp(C)$ is the {pullback to $Z$ of the} locus on which the normalization $S^\nu\to S$ fails to be an isomorphism.
    \item $\Supp(F)$ is the locus on which $Z\to ((S^\nu)_{\overline{k}})_{\mathrm{red}}$ fails to be an isomorphism.
    \item $\Supp(M)=0$ if and only if $S_{\overline{k}}$ is reduced.
\end{itemize}
\end{proposition}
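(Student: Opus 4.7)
The strategy is to factor the restriction map from $X$ to $Z$ through the three intermediate schemes
\[
S \;\xleftarrow{\nu}\; S^\nu \;\xleftarrow{\pi}\; \bigl((S^\nu)_{\overline{k}}\bigr)_{\mathrm{red}} \;\xleftarrow{n}\; Z
\]
and track the adjunction defect at each step. The three contributions will give, respectively, the divisors $C$, $M$, and $F$; the effective boundary $B_Z$ will collect the pullbacks of $B$ together with the nonnegative correction terms produced by the non-normal/non-reduced geometry.

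\textbf{Step 1 (conductor adjunction for $\nu$).} Since $K_X+S+B$ is $\bR$-Cartier and $S$ is not in $\Supp(B)$, standard adjunction via the different (valid for excellent schemes with dualizing complexes; cf.\ \cite[Ch.~4]{KollarSingsOfMMP} and the foundations recalled in \autoref{sec:dualizing_complexes_local_duality}) produces an effective $\bR$-divisor $C_0$ on $S^\nu$, together with an effective $\bR$-divisor $B_{S^\nu}$ supported on $\nu^{-1}(\Supp(B)\cap S)$, such that
\[
\nu^{*}(K_X+S+B)|_{S^\nu} \;\sim_{\bR}\; K_{S^\nu}+C_0+B_{S^\nu},
\]
with $\Supp(C_0)$ equal to the preimage under $\nu$ of the non-normal locus of $S$. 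The divisor $C$ in the statement will be the further pullback of $C_0$ to $Z$ in Step~3.

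\textbf{Step 2 (base change from $k$ to $\overline{k}$ and reduction).} Let $S^\nu_{\overline{k}} := S^\nu\times_k \overline{k}$ and write its underlying cycle as $\sum m_i W_i$. Let $W := (S^\nu_{\overline{k}})_{\mathrm{red}}=\sum W_i$, with projection $\pi\colon W\to S^\nu$. Using the canonical trace/residue formalism for the finite (flat at generic points) morphism $S^\nu_{\overline{k}}\to S^\nu$ and the closed immersion $W\hookrightarrow S^\nu_{\overline{k}}$ (the formula for adjunction along a non-reduced thickening, cf.\ the calculations in \cite{PatakfalviSemipositivity} and in Koll\'ar's treatment of adjunction for non-geometrically-reduced fibers), one obtains an effective $\bR$-divisor $M$ on $W$ with multiplicity at $W_i$ determined by $m_i-1$ (so $M=0$ iff $S^\nu_{\overline{k}}$ is generically reduced, i.e., iff $S_{\overline{k}}$ is reduced), and an $\bR$-linear equivalence
\[
\pi^{*}\!\bigl(K_{S^\nu}+C_0+B_{S^\nu}\bigr) \;\sim_{\bR}\; K_{W}+M+\pi^{*}C_0+\pi^{*}B_{S^\nu}.
\]
The base change $S^\nu \to \Spec k$ uses that $S$, and hence $S^\nu$, is proper over $\Spec k$, so $K_{S^\nu}$ pulls back correctly under the flat map $\Spec\overline{k}\to\Spec k$.

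\textbf{Step 3 (conductor adjunction for $n$).} Finally, apply the Koll\'ar-style different/conductor formula once more to the normalization $n\colon Z\to W$. This yields an effective $\bR$-divisor $F$ on $Z$ whose support is exactly the locus where $n$ fails to be an isomorphism, such that
\[
n^{*}(K_W+M+\pi^{*}C_0+\pi^{*}B_{S^\nu}) \;\sim_{\bR}\; K_Z + F + n^{*}M + (\pi\circ n)^{*}C_0 + (\pi\circ n)^{*}B_{S^\nu}.
\]
Setting $C:=(\pi\circ n)^{*}C_0$ and $B_Z:=(\pi\circ n)^{*}B_{S^\nu}$ (which is effective and supported on the preimage of $B\cap S$), and retaining the notation $M$ for $n^{*}M$ (still effective, and zero iff $S_{\overline{k}}$ is reduced) concludes the proof, since the three restrictions/pullbacks compose to the restriction $(K_X+S+B)|_Z$.

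\textbf{Main obstacle.} Steps~1 and~3 are instances of the same well-established conductor adjunction, and the only point requiring care is that the foundational setup (existence of canonical divisors, pullback of $\bR$-Cartier classes, effectivity of the different) is available in our excellent/dualizing-complex generality; this is exactly the framework set up in \autoref{sec:dualizing_complexes_local_duality}. The genuinely delicate step is Step~2: one must identify a canonical effective divisor $M$ controlled by the generic multiplicities of $S^\nu_{\overline{k}}$ and verify the linear equivalence in a setting where $k$ can be imperfect and $S^\nu$ is only generically smooth over $k$. The cleanest way to do this is to extract $M$ as the ramification/multiplicity divisor of the generically flat finite morphism $W\to S^\nu$ (factored through $S^\nu_{\overline{k}}$), exactly as in the treatment of non-normal/non-geometrically-normal adjunction in the MMP over imperfect fields.
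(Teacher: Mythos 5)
Your overall skeleton — restrict to $S^\nu$ via conductor adjunction, then compare $K_{S^\nu}$ with $K_Z$ across the base change to $\overline{k}$ — is the same as the paper's. Step 1 is fine and is exactly the paper's first sentence. The problem is that Steps 2 and 3, which you describe as the "genuinely delicate step" and then an instance of "the same well-established conductor adjunction," are precisely where the real content of the proposition lives, and you have not actually supplied it.

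Concretely: in Step 2 you assert that a "canonical trace/residue formalism for the finite (flat at generic points) morphism $S^\nu_{\overline{k}}\to S^\nu$" produces an effective divisor $M$ whose multiplicity along $W_i$ is "determined by $m_i-1$." First, $S^\nu_{\overline{k}}\to S^\nu$ is integral but not finite when $[\overline{k}:k]=\infty$, so there is no finite trace to invoke. Second, and more seriously, the comparison of canonical divisors under purely inseparable base change is not governed by the naive cycle multiplicities $m_i$: the correct statement (due to Tanaka, refined by Patakfalvi--Waldron and Ji--Waldron) is that $K_{S^\nu}|_Z - K_Z$ is an effective divisor lying in specific linear systems of the form $(p-1)\mathfrak{M}$ and $(p-1)\mathfrak{F}$, and establishing even the effectivity requires a nontrivial Frobenius/function-field argument. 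This is exactly the input the paper supplies by citing \cite{ji_waldron} (together with the reduction of the present situation, over an excellent base, to the function-field case via \cite[Theorem 4.12, Step 1, (1)]{DW19}). Your proposal replaces this theorem with an unsupported formula. Third, your Step 3 applies conductor adjunction to the normalization of $W=((S^\nu)_{\overline{k}})_{\mathrm{red}}$ as if $W$ were as well behaved as $S$; but reduction of an inseparable base change can destroy the Serre conditions needed for the usual conductor formula, which is one reason the cited references prove the combined statement $K_{S^\nu}|_Z = K_Z + M + F$ in one stroke rather than factoring through $W$. As written, the proposal is a correct outline with a gap at its central step; to close it you would need to invoke (or reprove) the Ji--Waldron comparison of canonical divisors under base change to $\overline{k}$.
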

\begin{proof}
    First, by adjunction, $(K_X+S+B)|_{S^\nu}=K_{S^\nu}+C_{S^\nu}+B_S$ where $C_{S^\nu}\geq 0$ is the conductor {of the normalization $S^\nu\to S$} and $B_S\geq 0$.
        Then we have $K_{S^\nu}|_Z=K_Z+M+F$ where $M$ and $F$ are elements of the linear systems $(p-1)\mathfrak{F}$ and $(p-1)\mathfrak{M}$ from \cite{ji_waldron}.  Note that \cite{ji_waldron} assumes that the ground field is a function field, but our situation can be reduced to this as explained in \cite[Subsection 2.1]{ji_waldron} and \cite[Theorem 4.12, Step 1, (1)]{DW19}.
\end{proof}

\begin{corollary} \label{cor:EWM-bpf-theorem}
\label{thm:keel_EWM_imperfect}
Let $(X,B)$ be a klt pair of dimension three admitting a projective morphism $f \colon X \to T$ to a {finite dimensional} Noetherian excellent scheme $T$ . Let $L$ be an $f$-nef and $f$-big  Cartier divisor such that $L - (K_X+B)$ is $f$-nef and $f$-big as well. Then $L$ is EWM over $T$.
\end{corollary}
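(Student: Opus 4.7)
The approach is to invoke the mixed characteristic Keel theorem (Theorem~\ref{thm:mixed-characteristic-Keel}), which reduces the EWM property for the nef Cartier divisor $L$ over $T$ to verifying EWM for both $L|_{X_\bQ}$ and $L|_{\mathbb{E}(L)}$ over $T$.

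On the characteristic-zero fiber $X_\bQ$, the induced pair is klt and $L|_{X_\bQ} - (K_{X_\bQ} + B_{X_\bQ})$ remains nef and big. After passing to a small $\bQ$-factorialization---available in characteristic zero via the classical MMP---the characteristic-zero base-point-free theorem (Proposition~\ref{prop:char_zero_bpf}) yields semiampleness of $L|_{X_\bQ}$, and in particular EWM, over $T$.

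For the locus $\mathbb{E}(L)$: since $L$ is $f$-big, every irreducible component of $\mathbb{E}(L)$ has dimension at most two. On one-dimensional components the restriction of $L$ is a nef Cartier divisor on a proper curve that fails to be big, hence is numerically trivial and therefore trivially EWM (the curve is contracted to a point). On a two-dimensional component $V$, apply Kodaira's lemma to $L - (K_X + B)$ to write it as $A + E$ with $A$ ample over $T$ and $E \geq 0$, then replace $B$ by $B + \epsilon E$ for small $\epsilon > 0$ so that $L - (K_X+B)$ becomes ample while $(X,B)$ remains klt. Using a Bertini-type construction (Theorem~\ref{thm.FinalBertini}) to cut out an auxiliary divisor which makes $V$ a component of a dlt boundary, and then applying adjunction, we obtain a klt pair structure on the normalization of $V$ for which the restriction of $L - (K_X+B)$ is still nef and big. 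The surface base-point-free theorem (Theorem~\ref{thm:surface-bpf-theorem}) then yields semiampleness of $L|_V$, hence EWM.

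\textbf{Main obstacle.} The central difficulty is handling the potentially reducible and singular locus $\mathbb{E}(L)$ inside a possibly non-$\bQ$-factorial $X$ in mixed characteristic: arranging a dlt boundary structure containing $\mathbb{E}(L)$ with the requisite $\bQ$-Cartier property on its components relies on a careful combination of Bertini-type arguments over the mixed characteristic base with a small modification / $\bQ$-factorialization of $X$. As an alternative, one may directly invoke Proposition~\ref{prop:bpf_plt} after constructing such a plt boundary, which would yield the stronger conclusion of semiampleness on the $\bQ$-factorial model and hence EWM on $X$.
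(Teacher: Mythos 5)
Your high-level architecture is the right one, and it matches the structure of the result the paper actually relies on: reduce via \autoref{thm:mixed-characteristic-Keel} to $L|_{X_{\bQ}}$ (handled by \autoref{prop:char_zero_bpf}) and to $L|_{\mathbb{E}(L)}$. But the paper does not reprove this from scratch: it cites \cite[Corollary 6.7]{Witaszek2020KeelsTheorem} and only patches the three places where that proof used a Dedekind base with perfect residue fields, namely \autoref{thm:mixed-characteristic-Keel}, \autoref{lem:auxiliary-for-EWM-bpf-theorem}, and \autoref{prop:char_zero_bpf}, plus the imperfect-residue-field issue below. Your sketch, read as a self-contained argument, has genuine gaps at exactly the points the paper flags as requiring new input.

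First, the surface components of $\mathbb{E}(L)$ over closed points of $T$ live over residue fields that are in general imperfect, and your step ``applying adjunction, we obtain a klt pair structure on the normalization of $V$'' is precisely what fails naively there: the relevant surfaces can be geometrically non-reduced and geometrically non-normal, so adjunction acquires the correction terms $C$, $M$, $F$ of \autoref{prop:Das-Waldron-adjunction}. The paper's route is to base change to $\overline{k}$, run the surface argument on the normalization of $V_{\overline{k}}$ using that adjunction, and then descend semiampleness along $V_{\overline{k}}\to V$ via \cite[Lemma 2.2]{KeelBasepointFreenessForNefAndBig}; none of this appears in your proposal. Second, \autoref{thm.FinalBertini} cannot ``make $V$ a component of a dlt boundary'': $V$ is a fixed component of $\mathbb{E}(L)$ (contained in the support of $E$ where $L\sim_{\bQ}A'+E'$), possibly not $\bQ$-Cartier since $X$ is not assumed $\bQ$-factorial, and Bertini only produces general members of linear systems; the correct device is tie-breaking with $L-(K_X+B)\sim_{\bQ}A+E$ (possibly after a dlt modification), which is a different and more delicate argument. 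Third, components of $\mathbb{E}(L)$ need not lie over closed points when $\dim T\geq 1$; for horizontal components one needs \autoref{lem:auxiliary-for-EWM-bpf-theorem} together with semiampleness over the non-closed points of $T$ (from \autoref{thm:surface-bpf-theorem}), a case your curve/surface dichotomy does not cover.
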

\begin{proof}
This is proven in \cite[Corollary 6.7]{Witaszek2020KeelsTheorem} under the assumption that $T$ is a spectrum of a mixed characteristic Dedekind domain with perfect residue fields. 

The fact that the base is a Dedekind domain was used three times in the proof: to employ the mixed characteristic Keel theorem, to invoke \cite[Proposition 6.6]{Witaszek2020KeelsTheorem}, and to deduce that $L|_{X_{\bQ}}$ is semiample. These results hold in our more general setting by \autoref{thm:mixed-characteristic-Keel}, \autoref{lem:auxiliary-for-EWM-bpf-theorem}, {and by \autoref{prop:char_zero_bpf}}, respectively. Note that $L$ is semiample over every non-closed point of $T$ by \autoref{thm:surface-bpf-theorem}.

The assumption on the residue fields was  used  to deduce that $L$ restricted to  an appropriately chosen surface $D_i \subseteq X$, {which is projective over a field} of positive characteristic, is EWM. 
This can be resolved by arguing as in Case 1 of \cite[Theorem 4.12]{DW19}. {Indeed, the semiampleness of $L|_{(D_i)_{\overline k}}$ follows by the same argument as that of $L|_{D_i}$ in
\cite[Corollary 6.7]{Witaszek2020KeelsTheorem} thanks to \autoref{prop:Das-Waldron-adjunction}. Here $\overline{k}$ is the algebraic closure of the base field $k$. Then $L|_{D_i}$ is semiample  by \cite[Lemma 2.2]{KeelBasepointFreenessForNefAndBig}.}
\end{proof}

\begin{lemma}[{\cite[Proposition 6.6]{Witaszek2020KeelsTheorem}}]
Let $X$ be a two-dimensional normal integral scheme projective and surjective over a Noetherian excellent scheme $T$ such that $\dim T \geq 1$. Let $L$ be a line bundle on $X$ which is nef over $T$ and suppose that $L|_{X_{\eta}}$ {(and $L|_{X_{\bQ}}$ if $X_{\bQ} \neq \emptyset$) are} semiample for the fiber $X_{\eta}$ over the generic point $\eta \in T$. Then $L$ is EWM { over $T$}.     \label{lem:auxiliary-for-EWM-bpf-theorem}
\end{lemma}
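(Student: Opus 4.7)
The plan is to apply the mixed-characteristic Keel theorem (\autoref{thm:mixed-characteristic-Keel}) to reduce the problem to EWM-ness of $L|_{X_{\bQ}}$ and $L|_{\mathbb{E}(L)}$. The former is handled by hypothesis, since semiampleness implies EWM; the serious content is showing that $L|_{\mathbb{E}(L)}$ is EWM over $T$.

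First I would bound $\dim \mathbb{E}(L)$. Since $X \to T$ is surjective, projective, with $\dim X = 2$ and $\dim T \geq 1$, the generic fiber $X_{\eta}$ has dimension at most $1$. The integral subscheme $X \subseteq X$ lies in $\mathbb{E}(L)$ iff $L|_{X_\eta}$ is not big. If $\dim X_\eta = 0$, then $L|_{X_\eta}$ is automatically big and $X \notin \mathbb{E}(L)$; if $\dim X_\eta = 1$, then $L|_{X_\eta}$ being semiample and not big forces it to have degree zero and hence to be torsion. Thus either $\dim \mathbb{E}(L) \leq 1$, or $\dim T = 1$ and $L|_{X_\eta}$ is torsion. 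These two alternatives I would handle separately.

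In the first alternative, $\mathbb{E}(L)$ is a proper scheme over $T$ of dimension at most one, and I would argue that any nef line bundle on such a scheme is EWM. The strategy is to pass to the normalization $\widetilde{\mathbb{E}(L)} \to \mathbb{E}(L)$, decompose it into irreducible components, and contract each component on which $L$ has degree zero to its image in $T$; components where $L$ has positive degree remain, giving an ample $L$ after contraction. One then glues these contractions along the preimages of the singular locus of $\mathbb{E}(L)$ using \cite[Theorem 6.2]{ArtinGluing} (or Popescu approximation, as in the proof of \autoref{thm:mixed-characteristic-Keel}), producing a proper algebraic space over $T$ that realizes the EWM contraction. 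In the torsion alternative, I would take the Stein factorization $X \to T' \to T$. Replacing $L$ by a multiple, we may assume $L|_{X_{\eta'}} \cong \sO_{X_{\eta'}}$ for $\eta'$ the generic point of $T'$. Using that $X \to T'$ has geometrically connected generic fiber and $L$ is fiberwise trivial over a dense open of $T'$, a rigidity argument shows that $L$ is numerically trivial on all fibers of $X \to T'$, and hence $X \to T'$ itself is the desired EWM morphism.

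The main obstacle I anticipate is the first alternative: carefully producing the contraction of $\mathbb{E}(L)$ as a morphism to a proper algebraic space over $T$, rather than just a set-theoretic map. Because $\mathbb{E}(L)$ can be reducible, non-reduced, and have components mapping in various ways to $T$, the gluing needed to assemble the contraction across components requires one to verify that the relevant equivalence relation is effective in the algebraic-space sense; this is where one essentially recovers the content of \cite[Proposition 6.6]{Witaszek2020KeelsTheorem} and must appeal to Artin-type gluing results that are available in our excellent setting.
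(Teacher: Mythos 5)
Your proposal is correct and follows essentially the same route as the paper: reduce via the mixed-characteristic Keel theorem, observe that either $\dim \mathbb{E}(L) \leq 1$ (where EWM for nef line bundles on one-dimensional proper schemes is handled by Artin-type contractions) or else $\dim T = 1$ and $L|_{X_\eta}$ is torsion, and in the latter case pass to the Stein factorization and use nefness plus triviality on the generic fiber to get relative numerical triviality. The paper packages this last step by citing \cite[Lemma 2.17]{CasciniTanaka2020} (after noting that the normal one-dimensional Stein base is regular, which is exactly what your flatness/rigidity argument needs), but the substance is the same.
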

\begin{proof}
{Replacing $T$ by the Stein factorization of $f:X\to T$,  we may assume that $T$ is normal and $f_*\sO_X=\sO_T$.} If  $L|_{X_{\eta}}$ is big or $\dim T = 2$, then $\dim \mathbb{E}(L) = 1$. Thus $L|_{\mathbb{E}(L)}$ is EWM, and so $L$ is EWM by \autoref{thm:mixed-characteristic-Keel}. Otherwise, $\dim T = 1$, $\dim X_{\eta} = 1$, and $L|_{X_{\eta}} \sim_{\bQ} 0$. {In this case, the normality of $T$ ensures that $T$ is regular and so} we can apply \cite[Lemma 2.17]{CasciniTanaka2020} to deduce that $L$ is relatively torsion. 
\end{proof}

\subsection{Seshadri constants}
\label{ss:seshadri-constants}
Recall that for a projective scheme $X$ over a Noetherian excellent {base scheme $T$}, a nef and big $\bQ$-Cartier $\bQ$-divisor $A$, and a closed point $x \in X$, we define the \emph{Seshadri constant} 
\[
  \epsilon(A;x) = \sup\left\{t \in \bQ \mid \pi^*A - tE \text{ is nef}\,\right\},
\]
where $\pi \colon X' \to X$ is the blow-up of $x$ and $\sO_X(-E) = \fram_x \cdot \sO_{X'}$ is the  exceptional divisor. When $A$ is in addition semiample, then, with notation as above, we also define the \emph{semiample Seshadri constant} 
\[
  \epsilon_{\mathrm{sa}}(A;x) = \sup\left\{t \in \bQ \mid \pi^*A - tE \text{ is semiample}\,\right\}.
\]

In particular, the Seshadri and the semiample Seshadri constants are non-negative, and positive if $A$ is ample. Further, note that $\epsilon(A+B;x) \geq \epsilon(A;x) + \epsilon(B;x)$ (resp.\ $\epsilon_{\mathrm{sa}}(A+B;x) \geq \epsilon_{\mathrm{sa}}(A;x) + \epsilon_{\mathrm{sa}}(B;x)$), where $A$ and $B$ are  nef and big (resp.\ semiample and big) $\bQ$-Cartier $\bQ$-divisors on $X$.
 
For the proof of the existence of flips, we will need the following results.
\begin{lemma}
\label{lem:birational-surfaces-with-rational-exceptional}
Let $f \colon Y \to X$ be projective birational morphism, where $Y$ is a two-dimensional regular integral scheme, and $X$ is affine and klt. Assume that the reduced exceptional divisor $F$ is of positive characteristic. 

Then every nef Cartier divisor $L$ on $Y$ is relatively semiample over $X$. In particular, if $A$ is a semiample $\bQ$-Cartier $\bQ$-divisor on $Y$, then $\epsilon_{\mathrm{sa}}(A;x) = \epsilon(A;x)$ for every closed point $x \in F$.\end{lemma}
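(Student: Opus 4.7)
The plan is to handle the first statement via the theory of rational surface singularities, and then deduce the Seshadri-constant assertion by applying it to a blow-up at $x$. For the first statement, note that since $X$ is two-dimensional and klt, $X$ has rational singularities; thus the exceptional locus $F = \bigcup_i C_i$ of $f$ is a tree of rational curves whose intersection matrix is negative definite. I would partition the components into $\mathcal{N} = \{C_i : L \cdot C_i = 0\}$ and $\mathcal{P} = \{C_i : L \cdot C_i > 0\}$. The sub-configuration $\mathcal{N}$ is itself negative-definite, so by Artin's contractibility there is a factorization $f = f' \circ g$ with $g \colon Y \to X'$ contracting exactly $\mathcal{N}$ to a normal surface $X'$ that still has rational singularities, hence is $\bQ$-factorial by \cite{LipmanRationalSingularities}. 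Since $L \cdot C_i = 0$ for every $C_i \in \mathcal{N}$ and the $\mathcal{N}$-intersection matrix is invertible, $L$ descends to a $\bQ$-Cartier divisor $L'$ on $X'$ with $L = g^*L'$, and $L'$ has strictly positive degree on every exceptional curve of $f'$ (the images of $\mathcal{P}$ under $g$). Applying the Nakai--Moishezon criterion \autoref{lem:Nakai-Moishezon} on the fibres of $f'$ yields that $L'$ is $f'$-ample, hence $f'$-semiample, so $L = g^*L'$ is $f$-semiample.

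For the Seshadri-constant assertion, let $\pi \colon Y' \to Y$ be the blow-up at $x$ with exceptional divisor $E$. Then $Y'$ is a regular two-dimensional integral scheme and $f \circ \pi \colon Y' \to X$ is projective birational, so the first part of the lemma applies with $Y$ replaced by $Y'$. Given any $t \in \bQ$ with $\pi^*A - tE$ nef, choose $m > 0$ so that $m(\pi^*A - tE)$ is Cartier; by the first part $m(\pi^*A - tE)$ is $(f\circ\pi)$-semiample, and hence semiample since $X$ is affine. Thus $\epsilon_{\mathrm{sa}}(A;x) \geq \epsilon(A;x)$, and the reverse inequality is immediate.

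The main technical obstacle is the contraction-and-descent step in the first part: ensuring that Artin's contractibility of a negative-definite rational curve configuration produces a scheme in the excellent mixed-characteristic setting, and verifying that the resulting $X'$ is $\bQ$-factorial so that $g_*L$ is automatically $\bQ$-Cartier and the identity $g^*g_*L = L$ holds. Both facts follow from Lipman's theory of rational surface singularities in \cite{LipmanRationalSingularities}, which applies in the required generality. As an alternative, one could directly invoke Lipman's theorem that every nef Cartier divisor on a resolution of a rational surface singularity is $f$-globally generated, which yields the first statement at once.
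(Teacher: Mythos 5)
Your argument is correct, but it takes a genuinely different route from the paper's. The paper first reduces to a strictly henselian base, observes that $F$ is a tree of regular conics (via the minimal resolution and \cite[Section 3]{KollarKovacsSingularitiesBook}), and then applies the mixed-characteristic Keel theorem (\autoref{thm:mixed-characteristic-Keel}): since $\mathbb{E}(L)\subseteq F$, it suffices to show $L|_F$ is semiample, which is done componentwise by \cite[Lemma 4.4]{DW19} and glued by \cite[Corollary 2.9]{KeelBasepointFreenessForNefAndBig}. You instead exploit rationality of excellent klt surface singularities: contract the $L$-trivial exceptional curves, descend $L$ to a relatively ample divisor on the partial contraction via $\bQ$-factoriality and \autoref{lem:Nakai-Moishezon}. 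Your route buys independence from the Keel-type machinery and is more classical; the paper's route stays within the toolkit it has already built and sidesteps the one delicate point in yours, namely that the partial contraction exists as a \emph{scheme} (not merely an algebraic space) over an excellent two-dimensional base. You correctly flag this and it is indeed covered by \cite{LipmanRationalSingularities}, but note that Lipman's proof of contractibility for rational singularities essentially proceeds by showing that nef divisors on the resolution are relatively globally generated — which is your stated "alternative" and already the whole first assertion of the lemma. So the cleanest version of your argument is to drop the contraction detour and quote that theorem of Lipman directly. Two minor points: the exceptional components are conics over possibly non-closed residue fields rather than literal rational curves, but you only use negative-definiteness of the intersection matrix, which holds for any projective birational morphism of excellent normal surfaces; and in the Seshadri step one should note that $Y'$ is again regular so that $m(\pi^*A-tE)$ can be taken Cartier, which you do.
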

\noindent Since $f$ is birational, every $\bQ$-Cartier $\bQ$-divisor is automatically big over $X$.
\begin{proof}
Since semiampleness is stable under strict henselization, we can assume that $X$ is strictly henselian. Note that  $F$ is simple normal crossing and is a tree of regular conics, because the morphism $f$ may be constructed from the minimal resolution of $X$ by successively blowing up closed points, and the claim holds for the minimal resolution of $X$ by  \cite[Section 3]{KollarKovacsSingularitiesBook}. 
With notation as in \autoref{ss:preliminaries-mixed-Keel-theorem}, we have that $\mathbb{E}(L) \subseteq F$.
Hence, by \autoref{thm:mixed-characteristic-Keel}, it is enough to show that $L|_F$ is semiample.  To do this we may assume that $F$ is contracted to a single point $x$ with separably closed residue field $k$. By \cite[Lemma 4.4]{DW19}, $L$ is semiample on every irreducible component of $F$, and so $L|_F$ is semiample by \cite[Corollary 2.9]{KeelBasepointFreenessForNefAndBig} as $F$ is a tree of regular conics over a separably closed field, and so the intersection points are geometrically connected.
\end{proof}

\begin{lemma}
\label{lem:flips_seshadri_lower_bound}
Let $f \colon Y \to X$ be a projective birational morphism, where $Y$ is a two-dimensional regular integral scheme, and $X$ is affine and klt. Assume that the reduced exceptional divisor $F$ is of positive characteristic. 

Let $M$ be an effective semiample Cartier divisor on $Y$ with no exceptional curve of $Y \to X$ in its support, and let $x \in M \cap F$ be of multiplicity $k \in \bZ_{>0}$ in $M$. Then 
\[
\epsilon_{\mathrm{sa}}(M;x) = \epsilon(M;x) \geq k.
\]

More generally, let $D$ be a fixed divisor and let $A$ be a semiample $\bQ$-Cartier $\bQ$-divisor such that $A \sim_{\bQ} M + \Lambda$, where $M$ is an effective Cartier divisor with no exceptional curve of $Y \to X$ in its support, and $ -\delta D\leq \Lambda \leq \delta D$ for $\delta >0$. Take $x \in F \cap M$ of multiplicity $k \in \bZ_{>0}$ in $M$. Then $\epsilon_{\mathrm{sa}}(A;x)$
converges to $k$ when $\delta \to 0$.
\end{lemma}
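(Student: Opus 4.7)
The plan is to reduce to a nef-type computation on the blow-up, then bound the intersection with the finitely many relevant exceptional curves via a local multiplicity inequality.

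\textbf{First part.} By \autoref{lem:birational-surfaces-with-rational-exceptional} applied to the composition $f\circ\pi\colon Y'\to X$, where $\pi\colon Y'\to Y$ is the blow-up of $x$ with exceptional divisor $E$, the equality $\epsilon_{\mathrm{sa}}(M;x)=\epsilon(M;x)$ holds, so it suffices to prove that $\pi^*M-kE$ is relatively nef over $X$. Since $\mathrm{mult}_x(M)=k$, the strict transform is
\[
\tilde M := \pi^*M-kE,
\]
which is effective. As $X$ is affine, relative nefness is tested against curves in $Y'$ that are proper over $X$; these are $E$ itself and the strict transforms $\tilde C_0$ of the finitely many $f$-exceptional irreducible curves $C_0\subset Y$ passing through $x$. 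Using $\pi^*M\cdot E=0$, $E^2=-1$ and the projection formula one computes $\tilde M\cdot E=k\ge 0$ and, writing $m=\mathrm{mult}_x(C_0)$,
\[
\tilde M\cdot\tilde C_0 = M\cdot C_0 - k m.
\]
Because $M$ has no $f$-exceptional component, $M$ and $C_0$ share no component, so $M\cdot C_0\ge (M\cdot C_0)_x$. The key input is now the classical intersection inequality at a regular point of a surface, $(M\cdot C_0)_x\ge \mathrm{mult}_x(M)\cdot\mathrm{mult}_x(C_0)=km$ (a statement in the local ring $\mathcal O_{Y,x}$, hence insensitive to the residue field). This yields $\tilde M\cdot\tilde C_0\ge 0$, proving $\epsilon(M;x)\ge k$.

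\textbf{Second part.} Writing $A\sim_{\mathbb Q} M+\Lambda$, the same computation for any exceptional $C_0\ni x$ gives
\[
(\pi^*A-tE)\cdot\tilde C_0 = M\cdot C_0+\Lambda\cdot C_0 - t\cdot\mathrm{mult}_x(C_0).
\]
Only finitely many such curves $C_0$ intervene (those in the fibre of $f$ through $x$ passing through $x$). By shrinking $X$ we may arrange that these finitely many curves are not contained in $\Supp(D)$, and then the hypothesis $-\delta D\le\Lambda\le\delta D$, combined with $\delta D\pm\Lambda$ being effective, yields $|\Lambda\cdot C_0|\le\delta\cdot(D\cdot C_0)$ for each such curve; taking the maximum over this finite set produces a constant $c$, depending only on $D$ and on the finite collection of curves, such that $|\Lambda\cdot C_0|\le c\delta$ uniformly. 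Combined with the first part this gives
\[
\epsilon(A;x)\ge k-c'\delta,
\]
for a constant $c'$ independent of $\delta$, and by the same token an upper bound $\epsilon(A;x)\le k+c'\delta$ when there exists a curve realising the multiplicity bound. Finally, $\epsilon_{\mathrm{sa}}(A;x)=\epsilon(A;x)$ again by \autoref{lem:birational-surfaces-with-rational-exceptional}, so $\epsilon_{\mathrm{sa}}(A;x)\to k$ as $\delta\to 0$.

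\textbf{Main obstacle.} The substantive input is the multiplicity inequality $(M\cdot C_0)_x\ge\mathrm{mult}_x(M)\mathrm{mult}_x(C_0)$ on a regular (possibly mixed-characteristic, possibly non-$k$-rational point) surface; this is a purely local Serre-Tor/length calculation in $\mathcal O_{Y,x}$ and causes no trouble. The remaining bookkeeping---ensuring the bound on $\Lambda\cdot C_0$ is uniform in $\delta$ despite $D$ possibly interacting with the exceptional locus---is handled by the finiteness of the relevant exceptional curves and a harmless shrinking/replacement of $D$ on its exceptional components.
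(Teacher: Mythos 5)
Your argument is essentially the paper's: blow up $x$, note $\pi^*M-kE$ is the strict transform $M_W$, and test nefness only against $E$ and the finitely many exceptional curves, using that $M$ (hence $M_W$) contains no curve contracted over $X$. The one cosmetic difference is that you route the inequality $\tilde M\cdot\tilde C_0\ge 0$ through the local multiplicity bound $(M\cdot C_0)_x\ge km$, whereas the paper gets the same thing for free from the fact that $M_W$ and $\tilde C_0$ are effective without common components (the paper actually works with $\pi^*M-(k-\gamma)E=M_W+\gamma E$ and lets $\gamma\to 0$, so that the $\gamma E\cdot C$ term absorbs the $O(\delta)$ error from $\Lambda$); both are correct. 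Two small caveats. First, the step ``by shrinking $X$ we may arrange that these finitely many curves are not contained in $\Supp(D)$'' does not work: an $f$-exceptional component of $D$ passing through $x$ survives any shrinking of $X$ around $f(x)$. It is also unnecessary: since $-\delta D\le\Lambda\le\delta D$ is a coefficient-wise bound, writing $D=\sum d_iP_i$ and $\Lambda=\sum\lambda_iP_i$ with $|\lambda_i|\le\delta d_i$ gives $|\Lambda\cdot C_0|\le\delta\sum_i d_i|P_i\cdot C_0|$ directly by bilinearity, uniformly over the finitely many relevant curves; replace the shrinking argument by this. Second, like the paper's own proof, you really only establish the lower bound $\epsilon_{\mathrm{sa}}(A;x)\ge k-c'\delta$; the asserted convergence to $k$ would also need an upper bound, which need not hold without extra hypotheses (e.g.\ if $M$ meets every exceptional curve through $x$ at additional points). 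You hedge on this honestly, and only the lower bound is used in the paper's applications, so this is a defect of the statement rather than of your proof.
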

\begin{proof}
We show the second statement. Then the first one follows by the same argument. Suppose that $0 < \delta \ll \gamma \ll 1$ and let $\pi \colon W \to Y$ be the blow-up at $x$. Since $x \in M$ is of multiplicity $k$, we have that $\pi^*M = M_W + kE$, where $M_W$ is the strict transform of $M$ and $E$ is the exceptional divisor of the blow-up $\pi$.

By \autoref{lem:birational-surfaces-with-rational-exceptional}, it is enough to verify that $\epsilon(A;x) \geq k - \gamma$, that is
\[
\pi^*A - (k-\gamma) E
\]
is nef. Let $C$ be an exceptional irreducible curve on $W$ over $X$. We need to check that $(\pi^*A - (k-\gamma) E) \cdot C \geq 0$. We consider the following cases:
\begin{itemize}
    \item $C=E$, then
      \[
        (\pi^*A - (k-\gamma) E) \cdot C = -(k-\gamma)E^2 >0,
      \]
      \item $C \neq E$ and $C \cap E \neq \emptyset$, then
      {\begin{align*}
        (\pi^*A - (k-\gamma) E) \cdot C &= (\pi^*(M+\Lambda) - (k-\gamma) E) \cdot C\\
        &= (M_W + \gamma E + \pi^*\Lambda )\cdot C \\
        &\geq (\gamma E + \pi^*\Lambda )\cdot C \\
        &= \gamma E \cdot C + \Lambda \cdot \pi_*C\\ 
        &\geq\gamma E \cdot C - \delta \vert D \cdot \pi_*C \vert\\
        &\geq 0,          
      \end{align*}}
      for $0 < \delta \ll \gamma \ll 1$ where the last inequality follows as $E \cdot C \geq 1$, $D$ is fixed, and there are only finitely many possible curves $C$. The first inequality follows as $M_W$ contains no curves in its support which are exceptional over $X$, and so $C \not \subseteq \Supp M_W$.
      \item$C \neq E$ and $C \cap E = \emptyset$, then
      \[
(\pi^*A - (k-\gamma) E) \cdot C = A \cdot \pi_*C \geq 0
\]
as $A$ is nef.
\end{itemize}
\end{proof}

\section{Vanishing in mixed characteristic}
\label{sec.Vanishing}

The goal of this section is to extend the first author's vanishing theorem \cite[Theorem 6.28(b)]{BhattAbsoluteIntegralClosure} from the case of essentially finitely presented algebras over excellent henselian DVRs in mixed characteristic\footnote{In fact, any DVR of mixed characteristic $(0, p>0)$ is excellent, see \cite[Tag 07QW]{stacks-project}.} to the case of arbitrary excellent local domains of mixed characteristic. As in the corresponding local story in \cite[\S 5]{BhattAbsoluteIntegralClosure}, our main tools are Popescu's approximation theorem \cite[Tag 07BW]{stacks-project} together with limit arguments \cite[Tag 01YT]{stacks-project}. We follow the notation from \cite{BhattAbsoluteIntegralClosure} in this section; in particular, we write $X_{p=0} := X \times_{\mathrm{Spec}(\mathbf{Z})} \mathrm{Spec}(\mathbf{Z}/p)$ for any scheme $X$.

\begin{proposition}
    \label{prop.BhattVanishing}
    Suppose that $(T,x)$ is an excellent local domain of mixed characteristic $(0,p>0)$ that admits a dualizing complex. Let $\pi : X \to \Spec(T)$ be a proper surjective map with $X$ reduced, equidimensional and $p$-torsion free. Suppose that $L \in \Pic(X)$ is a semiample line bundle. 
    \begin{enumerate}
        \item There exists a finite surjective map $Y\to X$ such that 
        \[ \tau^{>0}\myR\Gamma(X_{p=0}, L^a) \to \tau^{>0}\myR\Gamma(Y_{p=0}, L^a)\] 
        is $0$ for all $a\geq 0$. In particular,  
        \[\myH^j(\myR\Gamma(X^+_{p=0}, L^a))=0\] 
        for all $j>0$ and all $a\geq0$.
        
        \item If $L$ is also big, then for all $b<0$ there exists a finite surjective map $Y\to X$ such that
   \[
        \myR \Gamma_x(\myR \Gamma(X_{p = 0}, L^b)) \to \myR \Gamma_x(\myR \Gamma(Y_{p = 0}, L^b))
    \]
    is the zero map on $H^j$ for $j < \dim(X_{p=0})$.  In particular,  $$H^{j}(\myR \Gamma_x(\myR\Gamma(X^+_{p=0}, L^b)))=0$$ for all $j < \dim(X_{p=0})$ and all $b<0$.
    \end{enumerate}
\end{proposition}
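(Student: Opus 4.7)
The plan is to reduce to the already-known case of \cite[Theorem 6.28(b)]{BhattAbsoluteIntegralClosure}, which handles $X$ essentially finitely presented over an excellent henselian DVR of mixed characteristic, and then transfer the conclusion via faithfully flat descent. The foundational observation is that both conclusions of the proposition are preserved under flat base change along a map $T_0 \to T$ of local rings: if $(X_0, L_0)/T_0$ pulls back to $(X,L)/T$, then flat base change combined with proper base change gives $\myR\Gamma(X_{p=0}, L^a) \cong \myR\Gamma(X_{0,p=0}, L_0^a) \otimes^{\myL}_{T_0/p} T/p$, and $\myR\Gamma_x$ can be handled analogously (with the local cohomology piece reduced to the completion $\widehat{T}$ via the faithfully flat map $T \to \widehat{T}$, using Koszul-complex models). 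Since tensoring with a flat module preserves zero maps of complexes, a finite surjective cover $Y_0 \to X_0$ that witnesses the desired vanishing over $T_0$ yields $Y := Y_0 \times_{X_0} X \to X$ witnessing it over $T$.

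The key step is thus to construct an approximation $(T_0, X_0, L_0)$ with $T_0$ essentially of finite type over an excellent henselian DVR $V$ and with $T_0 \to T$ flat. Since $X$ is of finite type (in fact proper) over $T$, and since $L$ together with the sections witnessing semi-ampleness (and, in part (2), bigness) involves only finitely many elements of $T$, the pair $(X,L)$ descends to some finitely generated $V$-subalgebra of $T$, for an appropriate choice of $V$: one may take $V = \mathbf{Z}_{(p)}^h$, or alternatively a Cohen coefficient ring obtained by applying Cohen's structure theorem to $\widehat{T}$ (viewing $\widehat{T}$ as module-finite over $V[[x_1,\ldots,x_d]]$). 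The constructible hypotheses (reduced, equidimensional, $p$-torsion free, and semi-ampleness/bigness of $L_0$) then descend to $X_0 \to \Spec T_0$ after enlarging $T_0$ within $T$. The main obstacle I anticipate is simultaneously arranging flatness of $T_0 \to T$; this I would address by passing through $\widehat{T}$ and applying Popescu's approximation \cite[Tag 07BW]{stacks-project}, using that the regular map $T_0 \to \widehat{T}$ is a filtered colimit of smooth $T_0$-algebras, into which the chosen approximation can be absorbed after further enlargement.

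With such $T_0$ in hand, I would apply \cite[Theorem 6.28(b)]{BhattAbsoluteIntegralClosure} to $X_0 \to \Spec V$ — which is essentially finitely presented because $T_0$ is essentially of finite type over the DVR $V$ — to obtain a single finite surjective $Y_0 \to X_0$ uniformly killing $\tau^{>0}\myR\Gamma(X_{0,p=0}, L_0^a)$ (for part (1)) or the relevant local cohomology below $\dim X_{0,p=0}$ (for part (2)). The flat base-change observation from the first paragraph then transfers the vanishing to $Y \to X$, establishing the existence statements. Finally, the $X^+_{p=0}$ conclusions follow by writing $X^+ = \varinjlim Y'$ as a filtered colimit of finite normal covers $Y' \to X$ and commuting coherent (and local) cohomology with this colimit: any class in $\myH^j(X^+_{p=0}, L^a)$ (resp.\ the corresponding local cohomology) comes from some cover $Y'$, and by the existence statement just proved it vanishes after passing to a further finite cover.
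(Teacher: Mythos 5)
Your high-level strategy --- approximate $T$ by rings essentially of finite type over a DVR via Cohen's structure theorem and Popescu, apply \cite[Theorem 6.28(b)]{BhattAbsoluteIntegralClosure} at finite level, and transfer back --- is indeed the route the paper takes, but the transfer mechanism you propose, flat base change, does not work, and this is a genuine gap. Popescu writes $\widehat{T} = \colim_i Q_i$ with each $Q_i$ \emph{smooth over} $Q_0 = V[x_2,\dots,x_n]$, but the individual maps $Q_i \to \widehat{T}$ are not flat (extra smooth coordinates get specialized to power series, which already forces failure of flatness by a dimension count), and there is no finite-type-over-$V$ local ring that is simultaneously flat under $T$ and receives a descent of $(X,L)$: the flat candidate $V[x_2,\dots,x_n]_{(p,x_2,\dots,x_n)} \to V[[x_2,\dots,x_n]]$ generally does not carry $X$. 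Moreover ``zero on $H^j$'' is not preserved by non-flat derived tensor, since Tor terms mix cohomological degrees. The paper avoids both problems by chasing individual classes: any $\eta \in H^j(\myR\Gamma_x\myR\Gamma(X_{p=0},L^b))$ lifts to some finite level $Q_i$, is killed there by a finite cover $Y_i \to X_i$, and one base changes that cover (non-flatly) along $\Spec(T) \to \Spec(Q_i)$ to kill the image of $\eta$. This proves vanishing of the colimit (the $X^+$ statement) directly, and the single-cover statement is then extracted separately via \cite[Lemmas 2.14, 2.15]{BhattAbsoluteIntegralClosure} applied to the ind-object $\{\myR\Gamma(Y_{p=0},L^b)\}_Y$, which is where the dualizing complex hypothesis actually enters.

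A second gap: the maximal ideal of $T$ contracts to the ideal $(p,x_2,\dots,x_n)Q_i$, which is \emph{not} maximal in the approximating ring $Q_i$ (and need not even be prime), so you cannot invoke \cite[Theorem 6.28(b)]{BhattAbsoluteIntegralClosure} at a closed point of $\Spec(Q_i)$. The paper must first prove a version of that theorem at arbitrary non-closed points of the special fiber (\autoref{prop.BhattVanishingAtNonClosed}, itself requiring a relocalization over a larger DVR via \cite[Lemma 4.8]{BhattAbsoluteIntegralClosure}), keep track of the dimension shift $\dim X_{i,p=0} - \dim(Q_i/(p,x_2,\dots,x_n)Q_i)$ to land in the correct vanishing range, and then pass from local cohomology at the individual primes containing $(p,x_2,\dots,x_n)$ to $\myR\Gamma_{(p,x_2,\dots,x_n)}$ using the depth-sensitivity results of Foxby--Iyengar or Gabber; none of this is addressed in your proposal. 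Finally, over a non-complete excellent $T$ your argument produces at best a finite cover of $X_{\widehat{T}}$, and faithfully flat descent alone does not return a cover of $X$: the paper descends the cover to a smooth $T$-algebra $T_i$ and then uses henselianness to choose a section $T_i \to T^h$ through which to pull it back.
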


In what follows, we will only explain part (b) carefully. Part (a) follows from a similar and slightly easier argument so we omit it. We begin by proving a variant of \cite[Theorem 6.28(b)]{BhattAbsoluteIntegralClosure} where we allow non-closed points and do not require that the base DVR is henselian.

\begin{proposition}
    \label{prop.BhattVanishingAtNonClosed}
    Let $V$ be an excellent DVR of mixed characteristic $(0,p>0)$ and let $\pi : X \to \Spec(T)$ be a proper surjective map of integral flat finitely presented $V$-schemes.  Fix a (not necessarily closed) point $x \in \Spec(T)_{p = 0}$ and a big and semiample line bundle $L \in \Pic(X)$.  Then for all $b<0$ there exists a finite surjective map $Y \to X$ such that
    \[
        \myR \Gamma_x(\myR \Gamma(X_{p = 0}, L^b) \otimes_T T_x) \to \myR \Gamma_x(\myR \Gamma(Y_{p = 0}, L^b)\otimes_T T_x)
    \]
    is the zero map on $H^i$ for $i < \dim((X \times_T T_x)_{p=0})$.  Here $T_x$ is the localization of $T$ at the prime ideal $x$.
\end{proposition}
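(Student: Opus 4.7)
The plan is to reduce Proposition \ref{prop.BhattVanishingAtNonClosed} to the closed-point henselian-DVR case of \cite[Theorem 6.28(b)]{BhattAbsoluteIntegralClosure} via two independent reductions, addressing the two generalizations at play: removing the henselian assumption on $V$, and handling a possibly non-closed point $x$.

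For the first reduction, I would base change along $V \to V^h$, the henselization of $V$ at its maximal ideal. Since $V$ is an excellent DVR, $V^h = \varinjlim_\lambda V_\lambda$ is a filtered colimit of finite local étale $V$-algebras $V_\lambda$, each of which is in particular finite over $V$. Any finite surjective cover of $X \times_V V^h$ produced after base change descends, by standard limit theory, to a finite surjective cover of $X \times_V V_\lambda$ for some $\lambda$, whose composition with the finite base change $X \times_V V_\lambda \to X$ gives a finite surjective cover of $X$. Faithful flatness of $V \to V^h$ and flat base change for local cohomology then ensure that the vanishing over $T$ follows from the analogous statement over $T \otimes_V V^h$, so we may assume $V$ is henselian.

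For the second reduction, I would pass to the henselization $T^h_x$ of $T_x$ at its maximal ideal. Faithful flatness of $T_x \to T^h_x$ and flat base change for local cohomology reduce the problem to producing a finite cover killing cohomology after base change to $T^h_x$, where the image of $x$ is now the closed point $x'$. The henselization $T^h_x$ is canonically the filtered colimit of the local rings $\sO_{U,u}$ over pointed étale neighborhoods $(U,u) \to (\Spec T, x)$; each $\sO_{U,u}$ is essentially finitely presented over the henselian DVR $V$, local with closed point $u$ over $x'$. Standard limit theorems (cf.\ \cite[Tag 01ZM]{stacks-project} and surrounding tags) descend the proper surjective map $X \times_T \Spec T^h_x \to \Spec T^h_x$, the line bundle, and the big and semi-ample properties to a finite stage: there exists $(U, u)$ such that these objects come from a proper surjective map $X \times_T \Spec \sO_{U,u} \to \Spec \sO_{U,u}$ with a big semi-ample line bundle. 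An essentially-finitely-presented extension of \cite[Theorem 6.28(b)]{BhattAbsoluteIntegralClosure} (routine via a further limit argument) now applies to produce a finite surjective cover $Y_{U,u} \to X \times_T \Spec \sO_{U,u}$ killing the relevant cohomology. A further limit argument descends $Y_{U,u}$ to a finite surjective cover $Y_U \to X \times_T U$ over the étale neighborhood $U$ itself, and finally, taking the normalization of $X$ in the function field of $Y_U$ yields the required finite surjective map $Y \to X$.

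The main obstacle is the final descent step: constructing an honest finite cover of $X$ from a finite cover that is a priori only defined over an étale neighborhood of $x$ in $\Spec T$. The key flexibility is that local cohomology at $x$ is insensitive to the behavior of $Y$ away from $x$, so the normalization construction need only be well-behaved near $x$. Combined with the finite generation of the relevant local cohomology modules (so only finitely many classes must be killed, and this can be achieved by a common finite cover obtained by iterating or compositing the individual ones), the cohomology-killing property of $Y_U$ is inherited by the normalization $Y \to X$, completing the argument.
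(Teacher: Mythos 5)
Your first reduction follows the paper's route for the closed-point case, but it rests on a false claim: the rings $V_\lambda$ appearing in the colimit presentation of the henselization $V^h$ are \emph{pointed \'etale} extensions, i.e.\ localizations of \'etale $V$-algebras at primes over the maximal ideal, and these are not finite over $V$ in general (e.g.\ $\bZ_{(5)}[i]_{(2+i)}$ over $\bZ_{(5)}$ contains $(2+i)/5$, which is not integral). Since $V^h$ is not integral over $V$, no presentation by finite \'etale local subalgebras exists. Consequently $X\times_V V_\lambda \to X$ is a localization rather than a finite map, and composing it with a finite cover of $X\times_V V_\lambda$ does not yield a finite cover of $X$. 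This is exactly the point the paper repairs by invoking \cite[Lemma 4.4]{BhattAbsoluteIntegralClosure}, which produces a finite cover $Y\to X$ whose base change to $X\times_V V_\lambda$ factors through the given cover; some such descent-plus-factorization argument is required and is not supplied by "standard limit theory." (Also, the relevant comparison is not flat base change but the identification $V/p=V_\lambda/p=V^h/p$ of the mod-$p$ fibers, though faithful flatness would suffice to detect vanishing of the map.)

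The second reduction has the more serious gap. Henselizing $T_x$ does not address the actual difficulty, which is that $x$ is a \emph{non-closed} point of $\Spec(T)_{p=0}$: its residue field has positive transcendence degree over the residue field of $V$, so neither $x$ nor any point $u$ above it in an \'etale neighborhood $U$ can be a closed point of a finite-type $V$-scheme. The theorem you want to cite applies at closed points of the special fiber of a finitely presented $V$-scheme, and the "essentially-finitely-presented extension\dots routine via a further limit argument" you invoke is precisely the content of the proposition being proved: writing $\sO_{U,u}$ as a localization of a finite-type $V$-algebra lands you back at a non-closed point, and the vanishing range $i<\dim((X\times_T T_x)_{p=0})$ claimed in the statement does not match the range $i<\dim(X_{p=0})$ that the closed-point theorem provides. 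The paper's essential move, absent from your argument, is \cite[Lemma 4.8]{BhattAbsoluteIntegralClosure}: one replaces $V$ by an essentially finite-type extension of DVRs $V\to W$ with larger residue field and finds a finite-type $W$-algebra $S$ with $T_x\cong S_y$ for $y$ a \emph{closed} point of $\Spec(S)_{p=0}$; only then does the closed-point case apply with the correct dimension count, after which one descends the resulting cover to $X$ by taking suitable integral closures. Your closing appeal to "finite generation of the relevant local cohomology modules" is also unfounded (these modules are typically Artinian rather than finitely generated), though the descent of a single cover from the colimit to a finite stage does not in fact require it.
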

\begin{proof}
Without loss of generality, we can assume $X$ is normal. We first assume $x$ is a closed point. Let $V^h$ be the henselization of $V$. So $V^h=\colim V_j$ where each $V_j$ is a pointed \'{e}tale extension of $V$. We have a commutative diagram
\[\xymatrix{
X \ar[r]  & \Spec(T) \ar[r]  & \Spec(V) \\
X_j \ar[r]  \ar[u]& \Spec(T_j) \ar[r]  \ar[u]& \Spec(V_j)\ar[u] \\
X' \ar[r]  \ar[u]& \Spec(T') \ar[r] \ar[u]& \Spec(V^h)\ar[u] 
}
\]
such that each square is Cartesian. By \cite[Theorem 6.28(b)]{BhattAbsoluteIntegralClosure} applied to the bottom row of the above diagram,\footnote{Since $X$ is normal, each connected component of $X'$ is integral so technically we are applying \cite[Theorem 6.28(b)]{BhattAbsoluteIntegralClosure} to each connected component of $X'$.} there exists a finite surjective map $Y'\to X'$ such that the map $\myR \Gamma_x(\myR \Gamma(X_{p = 0}, L^b)) \to \myR \Gamma_x(\myR \Gamma(Y'_{p = 0}, L^b))$ is zero on $H^i$ for $i<\dim X'_{p=0}$ (here we abuse notation and use $L$ to denote the corresponding line bundle on $Y'_{p= 0}$). Moreover, we may assume $Y'=Y_j\times_{X_j}X'$ is the base change of a finite surjective map $Y_j\to X_j$ for some index $j$. Since $V/p=V_j/p=V^h/p$, we have $X_{p=0}\cong X_{j,p=0} \cong X'_{p=0}$ and $Y_{j,p=0} \cong Y'_{p=0}$. Thus the map $\myR \Gamma_x(\myR \Gamma(X_{j,p=0}, L^b)) \to \myR \Gamma_x(\myR \Gamma(Y_{{j},p=0}, L^b))$ is zero on $H^i$ for $i<\dim X_{p=0}$. Next we note that by \cite[Lemma 4.4]{BhattAbsoluteIntegralClosure}\footnote{Here we are using the scheme version of \cite[Lemma 4.4]{BhattAbsoluteIntegralClosure}, the proof is the same.}, there exists a finite cover $Y\to X$ such that the base change $Y\times_{X}X_j\to X_j$ factors through $Y_j$. Therefore the map $\myR \Gamma_x(\myR \Gamma({X}_{p = 0}, L^b)) \to \myR \Gamma_x(\myR \Gamma({Y}_{p = 0}, L^b))$ is zero on $H^j$ for $i<\dim X_{p=0}$ as it factors through $\myR \Gamma_x(\myR \Gamma(Y_{j,p=0}, L^b))$.

We next handle the case that $x$ is not necessarily a closed point. By \cite[Lemma 4.8]{BhattAbsoluteIntegralClosure}, there exists an extension of DVRs $V\to W$ that is essentially of finite type and a (flat) finite type $W$-algebra $S$ such that $T_x\cong S_y$ where $y \in \Spec(S)_{p=0}$ is a closed point. Choose $\Tilde{X}$ an integral finitely presented scheme over $S$ (and flat over $W$) such that $\Tilde{X}\times_{\Spec(S)}\Spec(S_y)\cong X\times_{\Spec(T)}\Spec(T_x)$, which is possible as the latter is finitely presented over $S_y$ which is a localization of $S$. Consider the diagram
\[\xymatrix{
X\times_{\Spec(T)}\Spec(T_x) \ar[r]  & \Spec(T_x) \ar[r]  & \Spec(V) \\
\Tilde{X}\times_{\Spec(S)}\Spec(S_y) \ar[r] \ar[u]^\cong \ar[d] & \Spec(S_y) \ar[r] \ar[u]^\cong \ar[d] & \Spec(W) \ar[u] \ar[d]^=\\
\Tilde{X} \ar[r] & \Spec(S) \ar[r] & \Spec{W}
}
\]
By applying the first part above to $\Tilde{X}\to \Spec(S)\to \Spec(W)$ and the closed point $y\in \Spec(S)_{p=0}$, we learn that there exists a finite surjective map $\Tilde{Y}\to \Tilde{X}$ such that the map
\[
    \myR \Gamma_y(\myR \Gamma(\Tilde{X}_{p = 0}, L^b)) \to \myR \Gamma_y(\myR \Gamma(\Tilde{Y}_{p = 0}, L^b))
\] 
is zero on $H^i$ for $i<\dim (\Tilde{X}\times_SS_y)_{p=0}$. Finally, by taking suitable integral closures, we can choose a finite surjective map $Y\to X$ such that $Y\times_{\Spec(T)}\Spec(T_x)$ factors through (in fact, equals) $\Tilde{Y}\times_{\Spec(S)}\Spec(S_y)$, so that the map $\myR \Gamma_x(\myR \Gamma(X_{p = 0}, L^b) \otimes_T T_x) \to \myR \Gamma_x(\myR \Gamma(Y_{p = 0}, L^b)\otimes_T T_x)$ is zero on $H^i$ for $i < \dim((X \times_T T_x)_{p=0})$.
\end{proof}

This directly leads to the following statement.

\begin{corollary}
\label{cor.BhattVanishingAtPluslevel}
    Let $V$ be an excellent DVR of mixed characteristic $(0,p>0)$ and let $\pi : X \to \Spec(T)$ be a proper surjective map of integral flat finitely presented $V$-schemes.  Fix a big and semiample line bundle $L \in \Pic(X)$.  Then for all $b<0$, and all $x\in \Spec(T)_{p=0}$, $H^{i}(\myR \Gamma_x(\myR\Gamma(X^+_{p=0}, L^b) \otimes_TT_x))=0$ for all $i<\dim (X\times_TT_x)_{p=0}$.  
\end{corollary}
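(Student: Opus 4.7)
The plan is to deduce the corollary from \autoref{prop.BhattVanishingAtNonClosed} by a standard colimit / element-chasing argument. By definition, $X^+$ is the filtered colimit of integral finite surjective covers $\pi_\alpha \colon Y_\alpha \to X$, so
\[
\myR\Gamma(X^+_{p=0}, L^b) \;\simeq\; \varinjlim_\alpha \myR\Gamma(Y_{\alpha,p=0}, L^b),
\]
where by abuse $L$ also denotes the pullback to each $Y_\alpha$. Since filtered colimits commute with tensoring by $T_x$, with $\myR\Gamma_x$, and with taking cohomology in a fixed degree, I get
\[
H^i\bigl(\myR\Gamma_x(\myR\Gamma(X^+_{p=0},L^b)\otimes_T T_x)\bigr) \;\simeq\; \varinjlim_\alpha H^i\bigl(\myR\Gamma_x(\myR\Gamma(Y_{\alpha,p=0},L^b)\otimes_T T_x)\bigr).
\]

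I would then chase an arbitrary class $\eta$ in the left-hand side down to some $\eta_\alpha$ in the $\alpha$-th term. The next step is to verify that \autoref{prop.BhattVanishingAtNonClosed} applies to $Y_\alpha \to \Spec T$: $Y_\alpha$ is integral by choice, it is $V$-flat because $X$ is $V$-flat (hence $p$-torsion free) and $Y_\alpha$ is integral over $X$, and it is finitely presented over $V$ because it is finite over the finitely presented $V$-scheme $X$. The induced map $Y_\alpha \to \Spec T$ is proper surjective (composition of a finite surjection with the proper surjection $\pi$), and $L|_{Y_\alpha}$ remains big and semi-ample: semi-ampleness is preserved by arbitrary pullback, and bigness is preserved under finite surjective pullback (equivalently, $(L|_{Y_\alpha})^{\dim Y_\alpha} = [Y_\alpha : X] \cdot L^{\dim X} > 0$). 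Moreover $\dim(Y_\alpha \times_T T_x)_{p=0} = \dim(X \times_T T_x)_{p=0}$ since $Y_\alpha \to X$ is finite surjective.

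Applying \autoref{prop.BhattVanishingAtNonClosed} to $Y_\alpha$ then produces a finite surjective cover $Z \to Y_\alpha$ such that the natural map
\[
\myR\Gamma_x(\myR\Gamma(Y_{\alpha,p=0},L^b)\otimes_T T_x) \;\longrightarrow\; \myR\Gamma_x(\myR\Gamma(Z_{p=0},L^b)\otimes_T T_x)
\]
vanishes on $H^j$ for all $j<\dim(X\times_T T_x)_{p=0}$. In particular the image of $\eta_\alpha$ in $H^i$ of the target is zero. The composition $Z\to Y_\alpha \to X$ is itself a finite surjective cover of $X$, hence (after replacing by an irreducible component if needed) sits cofinally in the system defining $X^+$. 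Therefore the image of $\eta$ in the colimit already vanishes, i.e.\ $\eta=0$, completing the proof.

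The argument is essentially formal once \autoref{prop.BhattVanishingAtNonClosed} is available, so I do not anticipate a real obstacle. The only points requiring mild care are the compatibility of the triple functor $H^i \circ \myR\Gamma_x \circ ((-)\otimes_T T_x)$ with filtered colimits (standard, using that $T_x$ is Noetherian and that the transition maps are quasi-coherent) and the check that bigness and semi-ampleness of $L$ survive finite surjective pullback.
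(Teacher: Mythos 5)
Your proposal is correct and follows essentially the same route as the paper: the paper's proof simply rewrites $\myR\Gamma_x(\myR\Gamma(X^+_{p=0},L^b)\otimes_T T_x)$ as the filtered colimit over finite surjective covers $Y\to X$ and then invokes \autoref{prop.BhattVanishingAtNonClosed}, which is exactly your element-chasing argument spelled out in more detail. The extra verifications you record (flatness, finite presentation, and preservation of bigness and semi-ampleness under finite surjective pullback) are all correct and implicit in the paper's one-line proof.
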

\begin{proof}
    Simply notice that 
    \[        
        \myR \Gamma_x(\myR\Gamma(X^+_{p=0}, L^b) \otimes_TT_x) = {\displaystyle \colim_{Y \shortrightarrow X}} \myR \Gamma_x( (\myR \Gamma(Y, L^b))_{p = 0} \otimes_TT_x)      
    \]
where the colimit is over all finite surjective maps $Y \to X$. Now the statement follows from \autoref{prop.BhattVanishingAtNonClosed}.
\end{proof}

\begin{remark}
    In the case that $\dim X = \dim T$, we may interpret \autoref{cor.BhattVanishingAtPluslevel} as saying that $\myR\Gamma(X^+_{p=0}, L^b)$ is a Cohen-Macaulay complex over $T/p$ in the sense of \cite[Definition 2.1]{BhattAbsoluteIntegralClosure}.
\end{remark}

We now extend our results to Noetherian complete local bases. 

\begin{proposition}
\label{prop.BhattVanishingCompleteCase}
    Let $T$ be a complete Noetherian local domain of mixed characteristic $(0,p>0)$.  Let $\pi : X \to \Spec(T)$ be a proper surjective map such that $X$ reduced, equidimensional, and $p$-torsion free.  Fix a big and semiample line bundle $L \in \Pic(X)$.  Then for all $b<0$ and all $j<\dim X_{p=0}$, $H^{j}(\myR \Gamma_x(\myR\Gamma(X^+_{p=0}, L^b)))=0$  where $x\in \Spec(T)$ is the closed point. 
\end{proposition}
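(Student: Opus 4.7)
The plan is to reduce to \autoref{cor.BhattVanishingAtPluslevel} by writing $T$ as a filtered colimit of finitely presented algebras over an excellent DVR and then passing to the limit. By Cohen's structure theorem, $T$ is a quotient of $W[[t_1,\dots,t_n]]$ for a Cohen ring $W$ (an excellent DVR of mixed characteristic $(0,p)$). The map $W \to W[[t_1,\dots,t_n]]$ is regular, since it factors as the smooth map $W \to W[t_1,\dots,t_n]$ followed by an ideal-adic completion of a Noetherian ring. By Popescu's theorem \cite[Tag 07BW]{stacks-project}, $W[[t_1,\dots,t_n]]$ is thus a filtered colimit of smooth $W$-algebras, and quotienting by the finitely generated defining ideal of $T$ expresses $T$ as a filtered colimit $T = \colim_{i \in I} T_i$ of finitely presented $W$-algebras.

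Next, the standard limit arguments of \cite[Tag 01ZM]{stacks-project} allow me to descend $\pi : X \to \Spec T$ and $L$ to a proper morphism $\pi_{i_0} : X_{i_0} \to \Spec T_{i_0}$ with a line bundle $L_{i_0}$ for some index $i_0$, in such a way that base change to $T$ recovers $(X,L)$. After further enlarging $i_0$, I may assume that $X_{i_0}$ is reduced, equidimensional, and $p$-torsion free, and that $L_{i_0}$ is big and semi-ample; each of these properties is constructible on the base and so holds for all $i$ sufficiently large. For each $i \geq i_0$, set $X_i := X_{i_0} \times_{T_{i_0}} T_i$ with line bundle $L_i$, and let $x_i \in \Spec T_i$ be the image of the closed point $x$. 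Applying \autoref{cor.BhattVanishingAtPluslevel} to $X_i \to \Spec T_i \to \Spec W$ at the point $x_i$ yields
\[
H^j\bigl(\myR\Gamma_{x_i}(\myR\Gamma(X^+_{i,p=0}, L_i^b) \otimes_{T_i} T_{i,x_i})\bigr) = 0 \quad \text{for } j < \dim(X_i \times_{T_i} T_{i,x_i})_{p=0}.
\]

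To conclude, I pass to the colimit over $i$. The key compatibilities are: every finite surjective cover $Y \to X$ is the base change of some finite surjective cover $Y_i \to X_i$ (again by EGA limit arguments); coherent cohomology of a proper morphism commutes with filtered base change; and local cohomology commutes with filtered colimits. Together these identify $\myR\Gamma_x(\myR\Gamma(X^+_{p=0}, L^b))$ with a suitable filtered colimit of the terms appearing above, and the dimension bound $\dim X_{p=0} \leq \dim(X_i \times_{T_i} T_{i,x_i})_{p=0}$ (which follows since $T \otimes_{T_{i,x_i}} k(x_i) = T/\fram$ and the relative dimension is preserved) completes the argument. The main technical obstacle is to reorganize the defining colimit of $X^+_{p=0}$ against the filtered colimit $T = \colim_i T_i$ so that the resulting double colimit is compatible with both the sheaf and local cohomology computations; this is handled by the standard limit techniques together with the methods already developed in \cite{BhattAbsoluteIntegralClosure}.
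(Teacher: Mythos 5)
Your overall strategy (Cohen's structure theorem, Popescu, spreading out to finite level over a DVR, invoking \autoref{cor.BhattVanishingAtPluslevel}, and passing to the colimit) is the same as the paper's, but there is a genuine gap in the final step, and it is precisely the step the paper has to work hardest on.

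The problem is your identification of $\myR\Gamma_x(\myR\Gamma(X^+_{p=0},L^b))$ with a filtered colimit of the terms $\myR\Gamma_{x_i}\bigl(\myR\Gamma(X^+_{i,p=0},L_i^b)\otimes_{T_i}T_{i,x_i}\bigr)$. Local cohomology over $T$ at $\fram$ is computed, via a finite \v{C}ech/Koszul complex on generators of $\fram$ (say the images of $p,x_2,\dots,x_n$), as local cohomology over $T_i$ with respect to the \emph{ideal} $I=(p,x_2,\dots,x_n)T_i$; that is the version that commutes with the filtered colimit. But $V(I)\subseteq\Spec T_i$ is typically positive-dimensional, so $\myR\Gamma_I$ over $T_i$ is \emph{not} the same as first localizing at the single prime $x_i=\fram\cap T_i$ and then taking local cohomology at the closed point, which is all that \autoref{cor.BhattVanishingAtPluslevel} controls. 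To bridge this, one must apply the corollary at \emph{every} prime $y\in V(I)$ --- where the relevant dimension bound $j<\dim(X_i\times_{T_i}T_{i,y})_{p=0}$ varies with the height of $y$ --- and then invoke a depth/t-structure result (the paper uses \cite[Proposition 2.10]{FoxbyIyengarDepth} or \cite[Section 3]{Gabber.tStruc}) to convert this family of pointwise vanishings into the vanishing of $H^j\myR\Gamma_I$ in the range $j<\dim X_{p=0}$. The accompanying dimension bookkeeping ($\dim(X_i\times_{T_i}T_{i,y})_{p=0}-\dim$ of the fiber direction collapses to $\dim X_{p=0}$ only after noting $\dim X_i-\dim T_i=\dim X-\dim T$ for $i\gg0$) is also missing from your sketch. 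Two smaller points: the paper first replaces $T$ by the power series ring $V[[x_2,\dots,x_n]]$ it is finite over (harmless for local cohomology at the closed point), so that the finite-level rings $Q_i$ are smooth over a polynomial ring and automatically integral and $V$-flat as \autoref{cor.BhattVanishingAtPluslevel} requires; your $T_i=A_i/(f_1,\dots,f_m)$ need not satisfy these hypotheses. And the corollary needs $X_i$ integral, so you should normalize $X$ and work component by component before descending.
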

\begin{proof}
    The strategy is similar to that of \cite[Theorem 5.1]{BhattAbsoluteIntegralClosure}. By Cohen's structure theorem, we may assume that $T$ is finite over a power series ring $V\llbracket x_2, \dots, x_n \rrbracket$ where $V$ is a coefficient ring of $T$ (hence a complete DVR). Thus without loss of generality, we may assume $T=V\llbracket x_2, \dots, x_n \rrbracket$.      Moreover, we may replace $X$ by its normalization and work with each connected component to assume $X$ is normal and integral.

 By Popescu's theorem \cite[Tag 07GC]{stacks-project}, we can write $T=\colim Q_i$ such that $Q_0=V[x_2,\dots,x_n]$ and each $Q_i$ is smooth over $Q_0$. Since $X\to \Spec(T)$ is proper and surjective, we may assume that there exists a proper surjective map $X_i\to\Spec(Q_i)$ such that $X\cong X_i\times_{\Spec(Q_i)}\Spec(T)$ and the line bundle $L$ is pulled back from some big and semiample line bundle $L_i$ on $X_i$, {see for instance \cite[Lemma 4.1]{takumi}}. Now by \autoref{cor.BhattVanishingAtPluslevel} applied to $X_i\to \Spec(Q_i) \to \Spec(V)$, we know that for all $b<0$ and all $y\in \Spec(Q_i)_{p=0}$, we have $H^{j}(\myR \Gamma_y(\myR\Gamma(X^+_{i,p=0}, L_i^b) \otimes_{Q_i}Q_{i,y}))=0$ for all $j<\dim (X_i\times_{Q_i} Q_{i,y})_{p=0}$. In particular, for any $y\in \Spec(Q_i)$ that contains $(p, x_2,\dots,x_n)$, the $H^j$ vanish for all 
 $$j<\dim X_{i,p=0}-\dim (Q_i/(p,x_2,\dots,x_n)Q_i)=n+\dim X_i-\dim Q_i-1.$$ 
Note that for $i\gg0$, we have $\dim X_i-\dim Q_i=\dim X-\dim T$ {by \cite[Tag 0EY2]{stacks-project}.} Thus for $i\gg0$, the $H^j$ vanish for all $j<\dim X-1=\dim X_{p=0}$. At this point, we apply \cite[Proposition 2.10]{FoxbyIyengarDepth} or \cite[Section 3]{Gabber.tStruc} 
to the $Q_i$-complex $\myR\Gamma(X^+_{i,p=0}, L_i^b)$ and the ideal $I=(p, x_2,\dots,x_n)\subseteq Q_i$, we see that $H^{j}(\myR \Gamma_{(p,x_2,\dots,x_n)}(\myR\Gamma(X^+_{i,p=0}, L_i^b)))=0$ for all $j<\dim X_{p=0}$. 
 
 Finally, fix $j<\dim X_{p=0}$, for each $\eta\in H^{j}(\myR \Gamma_{x}(\myR\Gamma(X_{p=0}, L^b)))$, it is the image of some  $\eta'\in H^{j}(\myR \Gamma_{(p,x_2,\dots,x_n)}(\myR\Gamma(X_{i,p=0}, L_i^b)))$ for some index $i$. The previous paragraph shows that there is a finite cover $Y_i\to X_i$ such that $\eta'$ maps to zero in $H^{j}(\myR \Gamma_{(p,x_2,\dots,x_n)}(\myR\Gamma(Y_{i,p=0}, L_i^b)))$. Base change along $\Spec(T)\to \Spec(Q_i)$, we see that there exists a finite cover $Y\to X$ such that the image of $\eta$ is zero in $H^{j}(\myR \Gamma_{x}(\myR\Gamma({Y}_{p=0}, L^b)))$. Therefore $H^{j}(\myR \Gamma_x(\myR\Gamma(X^+_{p=0}, L^b)))=0$ for all $j<\dim X_{p=0}$. 
\end{proof}

Now we can prove the case of an excellent local base. This is precisely part (b) of \autoref{prop.BhattVanishing}.

\begin{proposition}
    \label{prop.BhattVanishing(b)}
    Suppose that $(T,x)$ is an excellent local domain of mixed characteristic $(0,p>0)$. Let $\pi : X \to \Spec(T)$ be a proper surjective map with $X$ reduced, equidimensional and $p$-torsion free. Suppose that $L \in \Pic(X)$ is a big and semiample line bundle. Then for all $b<0$, $H^{j}(\myR \Gamma_x(\myR\Gamma(X^+_{p=0}, L^b)))=0$ for all $j < \dim(X_{p=0})$.
    If, in addition, $(T,x)$ admits a dualizing complex $\omega_T^{\mydot}$, then there exists a finite cover $Y\to X$ such that
   \[
        \myR \Gamma_x(\myR \Gamma(X_{p = 0}, L^b)) \to \myR \Gamma_x(\myR \Gamma(Y_{p = 0}, L^b))
    \]
    is the zero map on $H^j$ for $j < \dim(X_{p=0})$.  
\end{proposition}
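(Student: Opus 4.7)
The plan is to reduce the first (vanishing) assertion to the already-established complete local case (\autoref{prop.BhattVanishingCompleteCase}) via completion at $x$, and then to upgrade to the existence of a single finite cover under the dualizing complex hypothesis using Matlis/local duality together with a Mittag-Leffler argument.

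For the vanishing, I would pass to the $x$-adic completion $\widehat{T}$. Since $T$ is excellent, $\widehat{T}$ is reduced and equidimensional (though not necessarily a domain), and since local cohomology at $x$ is insensitive to this faithfully flat base change, it suffices to establish the vanishing after $-\otimes_T \widehat{T}$. Let $\mathfrak{p}_1, \ldots, \mathfrak{p}_n$ be the minimal primes of $\widehat{T}$, so each $S_i := \widehat{T}/\mathfrak{p}_i$ is a complete Noetherian local domain of mixed characteristic $(0,p>0)$. The base change $X \times_T \widehat{T}$ is reduced (by geometric regularity of the fibers of $T \to \widehat{T}$) and equidimensional, and its irreducible components partition according to which $\mathfrak{p}_i$ they dominate, producing closed subschemes $X_i \subset X \times_T \widehat{T}$, each proper surjective, reduced, equidimensional, and $p$-torsion free over $\Spec(S_i)$, with $L|_{X_i}$ still big and semi-ample. \autoref{prop.BhattVanishingCompleteCase} applied to each pair $(X_i, S_i)$, combined with the fact that $\widehat{T} \hookrightarrow \prod_i S_i$ is a finite extension (so the discrepancy is killed by multiplying by a suitable conductor), then yields the desired vanishing of $H^j(\myR\Gamma_x\myR\Gamma(X^+_{p=0}, L^b))$ for $j < \dim X_{p=0}$.

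For the refinement, assume $T$ admits a dualizing complex $\omega_T^{\mydot}$. After harmlessly completing at $x$, the combination of Grothendieck and Matlis duality recorded in \autoref{lem:duality} provides, for every finite cover $f \colon Y \to X$ and every integer $j$, a natural isomorphism
\[
H^j\bigl(\myR\Gamma_x \myR\Gamma(Y_{p=0}, L^b)\bigr)^{\vee} \;\cong\; H^{-j}\myR\Hom_T\bigl(\myR\Gamma(Y_{p=0}, L^b),\, \omega_T^{\mydot}\bigr).
\]
The right-hand side is a finitely generated $T$-module in each degree, by coherence of proper pushforward plus finite injective dimension of $\omega_T^{\mydot}$. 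As $Y$ varies over finite covers of $X$, these assemble into an inverse system of Noetherian $T$-modules whose limit is the Matlis dual of the cohomology group killed by the first assertion, hence zero for $j < \dim X_{p=0}$. Since inverse systems of Noetherian modules automatically satisfy the Mittag-Leffler condition, for each such $j$ the stable image vanishes, yielding a finite cover $Y_j \to X$ whose transition map is zero on $H^j$; a common refinement of the finitely many $Y_j$ within the filtered system of finite covers provides the single finite cover asserted.

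The most delicate step is the gluing in the first part: one must carefully track how $X^+_{p=0}$, presented as a filtered colimit over finite covers of $X$, interacts with both the flat base change to $\widehat{T}$ and the subsequent decomposition along the minimal primes $\mathfrak{p}_i$, so as to identify $\myR\Gamma_x \myR\Gamma(X^+_{p=0}, L^b) \otimes_T \widehat{T}$ (up to nilpotent discrepancies coming from $\widehat{T} \hookrightarrow \prod_i S_i$) with $\bigoplus_i \myR\Gamma_x \myR\Gamma((X_i^+)_{p=0}, L^b)$. The Mittag-Leffler step is standard but also requires checking the coherence hypotheses at the level of complexes, not just individual cohomology groups.
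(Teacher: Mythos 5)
There is a genuine gap in your reduction to the complete case. The vanishing you must prove concerns $X^+$, i.e.\ the colimit over finite covers $Y \to X$ of the scheme $X$ itself. When you base change to $\widehat{T}$ and invoke \autoref{prop.BhattVanishingCompleteCase} for the components $X_i$ over the complete local domains $S_i$, what you obtain is that a given class $\eta$ is killed by some finite cover of $X_i$ (equivalently, of $X \times_T \widehat{T}$). That cover is a finite cover of the \emph{completed} scheme, and there is no reason it should arise as the base change of a finite cover of $X$; since $\widehat{T}$ is only a filtered colimit of \emph{smooth} (not \'etale) $T$-algebras, one cannot identify $X^+ \times_T \widehat{T}$ with the absolute integral closure of $X \times_T \widehat{T}$, and the colimit defining $X^+$ is computed over the strictly smaller system of base-changed covers. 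This descent problem is exactly what the paper's proof is organized around: it first treats the case where $T$ is normal and \emph{henselian}, writing $\widehat{T} = \colim T_i$ with $T_i$ smooth over $T$ by Popescu's theorem, descending the killing cover to some $Y_i$ over $X_{T_i}$, and then using that a smooth map to a henselian local ring with a section over the residue field admits a genuine section $T_i \to T$ to pull $Y_i$ back to a finite cover of $X$; the general excellent normal case is then reduced to the henselian one via $T \to T^h$ (where the identification of $(X \times_T T^h)^+$ with a component of $X^+ \times_T T^h$ \emph{does} work, since henselization is a colimit of \'etale maps), and the non-normal case by passing to the normalization. Your proposal omits the Popescu/henselian-section mechanism entirely, and the difficulty you flag (nilpotents and minimal primes of $\widehat{T}$) is not the real obstruction — indeed it disappears once one normalizes $T$ first, as the paper does.

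Your argument for the second assertion (local duality converting the problem into an inverse system of finitely generated $T$-modules, plus a Mittag--Leffler/stabilization argument) is in the right spirit and matches what the paper accomplishes by citing \cite[Lemmas 2.14 and 2.15]{BhattAbsoluteIntegralClosure} applied to the ind-object $\{\myR\Gamma(Y_{p=0},L^b)\}_Y$. Two cautions, though: the index category of finite covers is cofiltered but not countable, so "inverse systems of Noetherian modules automatically satisfy Mittag--Leffler" does not by itself give surjectivity of $\varprojlim$ onto the stable images — one needs something like \autoref{RlimExact}, which requires completeness; and "harmlessly completing at $x$" reintroduces the same descent issue as above, since the finite cover you produce must be a cover of $X$, not of $X_{\widehat{T}}$. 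The duality argument should therefore be run over $T$ itself using the hypothesized $\omega_T^{\mydot}$, which is precisely why that hypothesis appears in the statement.
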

\begin{proof}
    We first assume $(T,x)$ is normal and henselian. By Popescu's theorem again, we can write $\widehat{T}=\colim T_i$ where each $T_i$ is smooth over $T$, and $\widehat{T}$ is a Noetherian complete local domain. Let $\widehat{X}$ and $X_i$ be the base change of $X$ along $T\to \widehat{T}$ and $T\to T_i$ respectively (note that $\widehat{X}$ and $X_i$ are still reduced, equidimensional and $p$-torsion free). Given a class $\eta\in H^j(\myR \Gamma_x(\myR \Gamma(X_{p = 0}, L^b)))$, by \autoref{prop.BhattVanishingCompleteCase}, there exists a finite cover $\widehat{Y}$ of $\widehat{X}$ such that the image of $\eta$ is $0$ in $H^j(\myR \Gamma_x(\myR \Gamma(\widehat{Y}_{p = 0}, L^b)))$. We can descend $\widehat{Y}$ to a finite cover $Y_i$ over $X_i$ for $i\gg0$, and enlarging $i$ if necessary, we know the image of $\eta$ is $0$ in $H^j(\myR \Gamma_x(\myR \Gamma(Y_{i,p=0}, L^b)))$. Now $(T,x)$ is henselian and the map $T\to T_i$ is smooth with a specified section over the residue field (via the map to the completion); thus, the map $T\to T_i$ admits a section $T_i \to T$ by \cite[Tags 07M7, 04GG]{stacks-project}. Base change $Y_i\to \Spec(T_i)$ along this section yield a finite cover $Y$ of $X$ such that the image of $\eta$ is $0$ in $H^j(\myR \Gamma_x(\myR \Gamma(Y_{p = 0}, L^b)))$. Running this argument for all finite covers $X'$ of $X$ and taking a direct limit, we find that $H^{j}(\myR \Gamma_x(\myR\Gamma(X^+_{p=0}, L^b)))=0$ for all $j < \dim(X_{p=0})$.

Next we assume $T$ is an excellent normal local domain. We may assume $X$ is normal. Let $T\to T^h$ be the henselization of $T$. Then $X\times_TT^h$ is also normal, by working with each connected component, we simply assume that $X\times_TT^h$ is normal and integral. Consider $X^+\times_TT^h$, this is a cofiltered limit of \'{e}tale $X^+$-schemes (in particular it is normal). Since $X^+$ is absolute integrally closed, each connected component of $X^+\times_TT^h$ is absolute integrally closed. But each connected component is also integral over $X\times_TT^h$, thus can be identified with $(X\times_TT^h)^+$. By the henselian case we already proved, we have that $H^{j}(\myR \Gamma_x(\myR\Gamma((X\times_TT^h)^+_{p=0}, L^b)))=0$ for all $j < \dim(X_{p=0})$. This implies $H^{j}(\myR \Gamma_x(\myR\Gamma((X^+\times_TT^h)_{p=0}, L^b)))=0$ by \cite[Lemma 5.9]{BhattAbsoluteIntegralClosure}. Since $T^h$ is faithfully flat over $T$, this implies $H^{j}(\myR \Gamma_x(\myR\Gamma(X^+_{p=0}, L^b)))=0$.

Finally, if $(T,x)$ is an excellent local domain, then the normalization $T'$ of $T$ is a excellent semi-local domain finite over $T$. Moreover, for any $y \in \mathrm{Spec}(T)$, we have an isomorphism
\[\myR\Gamma_{y}( (-)_y )=\oplus_{y'}\myR\Gamma_{y'}( (-)_{y'})\]
of functors on $T'$-complexes, where $y' \in \mathrm{Spec}(T')$ runs over the finitely many preimages of $y$ in $\Spec(T')$. Applying the above isomorphism when $y=x$, we can obtain the first part of the proposition from the excellent normal case (applied to localizations of $T'$ at preimages of $x$). Applying the above isomorphism for all $y \in \Spec(T/p)$, the last conclusion follows from \cite[Lemma 2.17 and Lemma 2.18]{BhattAbsoluteIntegralClosure} applied to the ind-object $\{\myR \Gamma(Y_{p = 0}, L^b)\}_Y$ where $Y$ runs over all finite covers of $X$ in $X^+$.
\end{proof}

Finally, we reformulate the above result in a form that does not require passing to the $p=0$ fibre; this will be convenient for us and also allows us to give a uniform statement that includes the equal characteristic $p>0$ case. 

\begin{corollary}
    \label{cor.VanishingWithoutRestrictingToPFiber}
    Suppose that $(T,x)$ is an excellent local ring  of residue characteristic $p>0$. Let $\pi : X \to \Spec(T)$ be a proper map with $X$ integral. Suppose that $L \in \Pic(X)$ is a big and semiample line bundle. Then for all $b<0$ and all $i < \dim(X)$, we have that $H^{i}(\myR \Gamma_x(\myR\Gamma(X^+, L^b)))=0$.
\end{corollary}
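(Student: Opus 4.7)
The plan is to reduce to \autoref{prop.BhattVanishing(b)} (and its positive characteristic analog) via the mod-$p$ distinguished triangle, exploiting that local cohomology at $x$ forces $p$-power-torsion, which bridges the one-dimensional gap between what \autoref{prop.BhattVanishing(b)} gives and what is wanted here.

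First I would handle some easy reductions. If $\pi$ is not surjective, then $\pi(X)$ is a proper closed integral subscheme of $\Spec T$; replacing $T$ by (the local ring at $x$ of) this subscheme (or observing the vanishing is trivial when $x\notin\pi(X)$) allows us to assume $\pi$ is surjective. Next, since $X$ is integral, either $p$ is identically zero on $X$ or $X$ is $p$-torsion free. In the equal characteristic $p$ case on $X$, we have $X^+ = X^+_{p=0}$ and $\dim X_{p=0} = \dim X$, so the conclusion is the positive characteristic analog of \autoref{prop.BhattVanishing(b)}. This analog is classical, going back to Hochster-Huneke's Cohen-Macaulayness of $R^+$ in characteristic $p$ (\autoref{RPlusCM}) combined with Smith-type arguments; the Popescu-style limit reductions in the proof of \autoref{prop.BhattVanishing(b)} apply verbatim in this setting.

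In the remaining case, $X$ is $p$-torsion free, and multiplication by $p$ on $\sO_{X^+}$ gives a short exact sequence $0 \to \sO_{X^+} \xrightarrow{p} \sO_{X^+} \to \sO_{X^+_{p=0}} \to 0$. Tensoring with the flat line bundle $L^b$ and applying $\myR\Gamma$ produces a distinguished triangle
\[
\myR\Gamma(X^+,L^b) \xrightarrow{\,p\,} \myR\Gamma(X^+,L^b) \to \myR\Gamma(X^+_{p=0}, L^b) \xrightarrow{+1}.
\]
Apply $\myR\Gamma_x$ and set $H^i := H^i(\myR\Gamma_x\myR\Gamma(X^+,L^b))$. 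By \autoref{prop.BhattVanishing(b)} we have $H^i(\myR\Gamma_x\myR\Gamma(X^+_{p=0}, L^b)) = 0$ for $i < \dim X_{p=0} = \dim X - 1$, so the long exact sequence shows that multiplication by $p$ is an isomorphism on $H^i$ for $i < \dim X -1$ and injective on $H^{\dim X - 1}$.

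The final step is the key observation: each $H^i$ is an $x$-torsion $T$-module, since $\myR\Gamma_x$ lands in the derived category of $x$-torsion complexes, and hence every element of $H^i$ is annihilated by some power of $x$ and, in particular, by some power of $p\in x$. A $p$-power-torsion module on which multiplication by $p$ is injective is zero, so $H^i = 0$ for all $i < \dim X$, as desired. The main subtle point is exactly this bridging of the dimension gap between \autoref{prop.BhattVanishing(b)} (which gives vanishing only up to $\dim X - 1$) and the sharper range in this corollary; this is handled cleanly by the $p$-torsion mechanism, which crucially uses the assumption that the residue characteristic is $p$ so that $p \in x$.
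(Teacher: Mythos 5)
Your reduction and your treatment of the mixed characteristic case are exactly the paper's argument: after arranging $\pi$ to be surjective, the paper also uses the triangle coming from multiplication by $p$ on $\sO_{X^+}$, invokes \autoref{prop.BhattVanishing(b)} to see that $H^{i-1}(\myR\Gamma_x(\myR\Gamma(X^+_{p=0},L^b)))=0$ for $i<\dim X$, concludes that multiplication by $p$ is injective on $H^{i}(\myR\Gamma_x(\myR\Gamma(X^+,L^b)))$, and kills the module because it is $x$-power torsion and $p\in x$. That part of your write-up is correct and is the same proof.

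The place where you are too quick is the equal characteristic $p$ case, which is not merely ``classical Hochster--Huneke plus Smith-type arguments.'' The statement needed there is the vanishing of $H^{i}(\myR\Gamma_x(\myR\Gamma(X^+,L^b)))$ for $i<\dim X$ with $L$ only \emph{big and semi-ample} on a proper (not necessarily projective) $X$ over a general excellent local base, and the paper spends real effort on it. After the Popescu/limit reductions (which, as you say, do transfer and in fact simplify), the paper still needs two nontrivial inputs from the derived splinters paper \cite{BhattDerivedDirectSummand}: Theorem 1.5 there, to replace $X$ by a finite cover on which $L$ is pulled back from an ample divisor on a contraction $Z$ and thereby reduce the big-and-semi-ample case to the ample case; and Proposition 6.2 there, to factor the dual map $\myH^{-i}\myR\Gamma(Y',\omega_{Y'}^{\mydot}\otimes L^{-b})\to\myH^{-i}\myR\Gamma(Y,\omega_Y^{\mydot}\otimes L^{-b})$ through $\myH^{-i}\myR\Gamma(Y,\omega_Y[\dim X]\otimes L^{-b})$, which is then annihilated by taking $Y$ to be a high Frobenius twist and applying Serre vanishing. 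Hochster--Huneke (\autoref{RPlusCM}) gives the local Cohen--Macaulayness of $R^+$ but does not by itself give this relative, twisted statement, and Smith's section-ring arguments cover the ample projective case rather than the big and semi-ample proper case. So your proposal is correct in outline, but you should either supply this reduction and duality argument or cite it precisely rather than labeling it classical.
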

\begin{proof}
    Since $X\to \Spec(T)$ is proper and $X$ is integral, we can replace $T$ by $\pi_*\sO_X$ to assume $X\to\Spec(T)$ is proper and surjective and that $T$ is a domain. 
    If $(T,x)$ has mixed characteristic, then we consider the exact sequence 
    \[
        0 = H^{i-1}(\myR \Gamma_x(\myR\Gamma(X^+_{p = 0}, L^b))) \to H^{i}(\myR \Gamma_x(\myR\Gamma(X^+, L^b))) \xrightarrow{p} H^{i}(\myR \Gamma_x(\myR\Gamma(X^+, L^b))).
    \]
    This implies that the multiplication-by-$p$ map on $H^{i}(\myR \Gamma_x(\myR\Gamma(X^+, L^b)))$ is injective, which is impossible unless $H^{i}(\myR \Gamma_x(\myR\Gamma(X^+, L^b)))=0$ since any element of the module is $x^n$-torsion and so $p^n$-torsion for $n \gg 0$.
        
    Now suppose $(T,x)$ has equal characteristic $p>0$. By the same argument as in \autoref{prop.BhattVanishing(b)}, we may assume $(T,x)$ is a Noetherian complete local domain, and then by the same reduction as in \autoref{prop.BhattVanishingCompleteCase} and \autoref{prop.BhattVanishingAtNonClosed} (the steps are easier as we are working over a field and not a mixed characteristic DVR), we may assume $(T,x)$ is essentially finite type over a field $k$. We can write $k$ as a filtered colimt of finite type fields $k_j$ and thus $T$ is a filtered colimit of $T_j$ essentially finite type over $k_j$. Note that $X$ descends to $X_j$ over $T_j$ for large $j$ (and similarly for the big and semiample line bundle $L$, for instance see \cite[Lemma 4.1]{takumi}), and the dimension is preserved. If we can find a finite cover $Y_j\to X_j$ such that $H^{i}(\myR \Gamma_x(\myR\Gamma(X_j, L^b))) \to H^{i}(\myR \Gamma_x(\myR\Gamma(Y_j, L^b)))$ is zero, then after base change to $X$ we see that the image of $H^{i}(\myR \Gamma_x(\myR\Gamma(X_j, L^b)))$ is zero in $H^{i}(\myR \Gamma_x(\myR\Gamma(X^+, L^b)))$ and we will be done. Therefore, replacing $T$ by $T_j$ and $X$ by $X_j$, we may assume that $T$ is essentially finite type over an $F$-finite field $k$. In particular, $X$ and $T$ are $F$-finite.
    
    The rest argument essentially follows from the proof of \cite[Theorem 6.28]{BhattAbsoluteIntegralClosure}, replacing the mixed characteristic results there by their equal characteristic counterparts in \cite{BhattDerivedDirectSummand}. We can replace $X$ by a finite cover to assume $L=f^*N$ where $f: X\to Z$ is a proper surjective map (of proper integral schemes over $\Spec(T)$) and $N$ is ample on $Z$. Now by \cite[Theorem 1.5]{BhattDerivedDirectSummand}, there is a finite cover $Y\to X$ such that the map $\myR f_*\sO_X\to \myR g_*\sO_Y$ factors through $g_*\sO_Y$, where $g$ is the composition map $Y\to Z$. Set $Z'=\Spec_Z(g_*\sO_Y)$, we find that $\myR\Gamma_x\myR\Gamma(X, L^b)\to \myR\Gamma_x\myR\Gamma(Y, L^b)$ factors through $\myR\Gamma_x\myR\Gamma(Z', N^b)$. Since $L$ is big, $\dim X=\dim Z$ and hence by the above discussion, to show there is a finite cover of $X$ such that $H^{i}(\myR \Gamma_x(\myR\Gamma(X, L^b)))$ maps to zero for $i<\dim(X)$, it is enough to show there is a finite cover of $Z$ such that $H^{i}(\myR \Gamma_x(\myR\Gamma(Z, N^b)))$ maps to zero for $i<\dim(Z)$. Thus replacing $X$ by $Z$ and $L$ by $N$, we may assume $L$ is ample.
    
   By local duality, for any finite cover $Y$ of $X$, $\myH^i\myR \Gamma_x(\myR\Gamma(Y, L^b))$ is the Matlis dual of
\[
       \myH^{-i}\myR\Hom_T(\myR\Gamma(Y,L^b),\omega_T^\mydot)\cong\myH^{-i}\myR\Gamma(Y, \omega_Y^\mydot\otimes L^{-b})
\]
Applying \cite[Proposition 6.2]{BhattDerivedDirectSummand}, there exists a further finite cover $Y'$ of $Y$ such that the map 
    $$\myH^{-i}\myR\Gamma(Y', \omega_{Y'}^\mydot\otimes L^{-b})\to \myH^{-i}\myR\Gamma(Y, \omega_Y^\mydot\otimes L^{-b})$$
factors through $\myH^{-i}\myR\Gamma(Y, \omega_Y[\dim(X)]\otimes L^{-b})$. Since $X$ is $F$-finite, we can take $Y$ to be the $e$-th Frobenius of $X$ so $\myH^{-i}\myR\Gamma(Y, \omega_Y[\dim(X)]\otimes L^{-b})=0$ for $e\gg0$ and $i<\dim(X)$ by Serre vanishing (note that $L$ is ample and $b<0$). Therefore the composition map 
$$\myH^{-i}\myR\Gamma(Y', \omega_{Y'}^\mydot\otimes L^{-b})\to \myH^{-i}\myR\Gamma(X, \omega_X^\mydot\otimes L^{-b})$$
is the zero map. Thus its Matlis dual $\myH^i\myR \Gamma_x(\myR\Gamma(X, L^b))\to \myH^i\myR \Gamma_x(\myR\Gamma(Y', L^b))$ is also the zero map. Running this argument for all finite covers of $X$ and taking a colimit, we find that $H^{i}(\myR \Gamma_x(\myR\Gamma(X^+, L^b)))=0$ as desired.
\end{proof}

\begin{remark}
In the context Corollary~\ref{cor.VanishingWithoutRestrictingToPFiber}, if $\myH^i(\myR\Gamma_x(\myR\Gamma(X, L^{-1})))$ is bounded $p$-power-torsion, then it follows that there exists a finite cover that $Y \to X$ that annihilates that cohomology group. Dual versions can then be phrased in terms of canonical modules and dualizing complexes; see Remark~\ref{KVcharpRelative} for the characteristic $p$ analog.  This approach is explored in \cite{TakamatsuYoshikawaMMP}.  
\end{remark}

\begin{remark}[Kodaira vanishing up to finite covers in positive characteristic]
\label{KVcharpRelative}
Continue in the setup and notation of Corollary~\ref{cor.VanishingWithoutRestrictingToPFiber} and assume that $T$ has characteristic $p$. The proof given above then shows the following finer statement: there exists a finite surjective map $Y \to X$ such that the induced trace map 
$$\myH^{-i}\myR\Gamma(Y, \omega_{Y}^\mydot\otimes L^{-b})\to \myH^{-i}\myR\Gamma(X, \omega_X^\mydot\otimes L^{-b})$$
is the $0$ map for $i < \dim(X)$.  
\end{remark}

\section{The subset of \texorpdfstring{$\bigplus$-stable}{+-stable}  sections (\texorpdfstring{$\myB^0$}{B-zero})}

Let $X$, $\Delta$ and $M$ be as in \autoref{def:B_0} below.
In this section, we define special global sections inside $H^0(X, \sO_X(M))$, which will be important especially when $M - K_X - \Delta$ is ample (or big and semiample).  Like $S^0$ and $T^0$ in characteristic $p > 0$ from \cite{BlickleSchwedeTuckerTestAlterations,SchwedeACanonicalLinearSystem}, these special linear systems behave as though Kawamata-Viehweg vanishing is true.  We will use this extensively later in the paper.

\begin{convention}
    \label{conv.CategoryOfFiniteMaps}
    In the remainder of the paper, we will often work with intersections, limits or colimits over the category of all finite covers of an integral scheme $X$. In this situation, we always mean the following: fix an algebraic closure $\overline{K(X)}$ of the function field of $X$, and consider the category of all finite integral covers $f:Y \to X$ equipped with an embedding $K(Y) \subset \overline{K(X)}$ over $X$ (in particular, the morphisms must respect this embedding). Thus, our intersections, limits or colimits take place over a poset. Note that a cofinal collection in this category is given by the finite covers with $Y$ normal when $X$ is excellent. Moreover 
    \[
        X^+ = \varprojlim_{\substack{f \colon Y \to X\\ \textnormal{finite}}} Y,
    \]
    see \cite[\href{https://stacks.math.columbia.edu/tag/01YV}{Tag 01YV}]{stacks-project}.
            A similar convention applies to categories of alterations. 
\end{convention}

\begin{definition}[$\bigplus$-stable sections]
	\label{setup}
\label{def:B_0}

Consider the following situation:
\begin{itemize}
\item $X$ is a normal, integral scheme proper over a complete Noetherian local domain $(R, \fram)$ with characteristic $p > 0$ residue field,
\item $\Delta \geq 0$ is a $\bQ$-divisor on $X$, and
\item  $M$ is a $\bZ$-divisor and $\sM = \sO_X(M)$.  In fact, the following definition only depends on the linear equivalence class of $M$.
\end{itemize}
Then, define 
\begin{equation*}
\myB^0(X, \Delta; \sM):= \bigcap_{\substack{f \colon Y \to X\\ \textnormal{finite}}}\im \left( H^0(Y, \sO_Y( K_Y + \lceil{f^* (M - K_X - \Delta)}\rceil)) \to H^0(X, \sM) \right) 
\end{equation*}
where the intersection is taken as $R$-submodules of $H^0(X, \sM)$, and runs over all $f:Y \to X$ as in \autoref{conv.CategoryOfFiniteMaps} where $Y$ is normal.  One sees by Galois conjugation that the above module is independent of the choice of geometric generic point of $X$.

We call the global sections $\myB^0(X, \Delta; \sM)$ the \emph{$\bigplus$-stable sections of $H^0(X, \sM)$ (with respect to $(X, \Delta)$)}.

Additionally, assuming that $M-(K_X + \Delta)$ is $\bQ$-Cartier,   define also
\begin{equation*}
\myB^0_{\alt}(X, \Delta; \sM):= \bigcap_{\substack{f \colon Y \to X\\\textnormal{alteration}}}\im \left( H^0(Y, \sO_Y( K_Y + \lceil f^* (M - K_X - \Delta)\rceil)) \to H^0(X, \sM) \right)
\end{equation*}
where the intersection runs over all alterations $f:Y \to X$ from a normal integral schemes as in \autoref{conv.CategoryOfFiniteMaps}.

If $\Delta=0$, then we use the simplified notation: $\myB^0(X; \sM):=\myB^0(X, \Delta; \sM)$ and $\myB^0_{\alt}(X; \sM):=\myB^0_{\alt}(X, \Delta;\sM)$.
\end{definition}

\begin{remark}[$\myB^0$ for non-integral $X$]
    \label{rem.NonintegralB^0}
    If $X$ is not integral but still normal where each component has the same dimension $d$, we define $\myB^0(X, \Delta; \sM)$ as the direct sum of $\myB^0$ for each connected (hence irreducible) component of $X$.
\end{remark}

\begin{remark}
	\label{def.B^0ViaCoversWhereDivisorHasBecomeCartier}
	Alternately, we may assume that $Y \to X$ factors through some finite $h : W \to X$ such that $h^*(M - K_X - \Delta)$ is integral.  In that case, the roundings are also not needed.  If $M - K_X - \Delta$ is $\bQ$-Cartier, we may also assume that $h^*(M - K_X - \Delta)$ is Cartier (see, for example, \cite[Section 2.4]{KollarMori} or \cite{TomariWatanabeFilteredRings}; \textit{cf.} \cite[Lemma 4.5]{BlickleSchwedeTuckerTestAlterations}).  In this latter case, we do not even need to restrict to normal $Y$.
\end{remark}

\begin{remark}
	\label{rem.SimpleAdjointFormulationNoPair}
Frequently, one applies \autoref{def:B_0} to  $\sM = \omega_X \otimes \sL$ and $\Delta = 0$ with $\sL$ a line bundle, in which case the first notion of \autoref{def:B_0} simplifies to
\begin{equation*}
\myB^0(X; \omega_X \otimes \sL):= \bigcap_{\substack{f \colon Y \to X\\ \textnormal{finite}}}\im \left(  H^0(X, \sL \otimes f_* \omega_Y ) \to H^0(X,\sL \otimes \omega_X ) \right).
\end{equation*}
\end{remark}

\begin{remark}[Non-complete $R$]
	\label{rem.B0ForNonCompleteR}
	If $(R, \fram)$ is an excellent non-complete local ring, with completion $\widehat{R}$, we may base change by the completion $\widehat{R}$ of $R$ to obtain $X_{\widehat{R}}$,  and define $\myB^0$ and $\myB^0_{\alt}$ as above but restrict to finite covers (respectively, alterations) that arise as the base change of a finite cover of $X$.  In this case, the resulting intersection, which we denote by $\widehat{\myB}^0$, is a subset of $H^0(X_{\widehat{R}}, \sL \otimes_R \widehat{R})$ and so is a finitely generated $\widehat{R}$-module since $X_{\widehat{R}} \to \Spec \widehat{R}$ is proper and $\widehat{R}$ is Noetherian.  However, this intersection need not be finitely generated as an $R$-module as $\widehat{R}$ is not.  See also \autoref{subsec.B0andcompletion} where we show that $\widehat{\myB}^0$ is equal to $\myB^0(X_{\widehat{R}}, \Delta|_{X_{\widehat{R}}}, \sL \otimes_R \widehat{R})$
\end{remark}

\noindent The following basic property of $\myB^0$ is immediate from the definition.
\begin{lemma} \label{lem.comparison_betwen_B0_for_different_delta} With notation as in \autoref{setup}, we have that
\[
\myB^0(X, \Delta; \sM) \subseteq \myB^0(X, \Delta'; \sM)
\]
for every effective $\bQ$-divisor $\Delta' \leq \Delta$.
\end{lemma}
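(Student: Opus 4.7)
The plan is to unwind the definition and observe that enlarging the boundary only tightens the rounding constraint in the cover, so any section passing the stricter test automatically passes the looser one.

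First, I would fix an arbitrary finite cover $f \colon Y \to X$ from a normal integral scheme as in \autoref{def:B_0}. Since $\Delta' \leq \Delta$ and both are effective $\bQ$-divisors, the $\bQ$-divisor $\Delta - \Delta'$ is effective, hence $f^*(\Delta - \Delta') \geq 0$ on $Y$. Therefore
\[
f^*(M - K_X - \Delta') = f^*(M - K_X - \Delta) + f^*(\Delta - \Delta') \geq f^*(M - K_X - \Delta),
\]
and taking round-ups preserves the inequality, giving $\lceil f^*(M - K_X - \Delta') \rceil \geq \lceil f^*(M - K_X - \Delta) \rceil$ as Weil divisors on $Y$.

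Second, this divisor inequality produces a natural inclusion of reflexive sheaves $\sO_Y(K_Y + \lceil f^*(M - K_X - \Delta) \rceil) \hookrightarrow \sO_Y(K_Y + \lceil f^*(M - K_X - \Delta') \rceil)$, which is compatible with the trace-type maps to $\sO_X(M)$ used in the definition of $\myB^0$. Consequently the image inside $H^0(X, \sM)$ of global sections of the former is contained in the image of global sections of the latter, for the same cover $f$.

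Finally, intersecting the containment of images over all finite covers $f \colon Y \to X$ (ranging over the cofinal system described in \autoref{conv.CategoryOfFiniteMaps}) yields $\myB^0(X, \Delta; \sM) \subseteq \myB^0(X, \Delta'; \sM)$, as claimed. There is no real obstacle here: the only thing worth being careful about is that the rounding behaves monotonically, which is immediate, and that the indexing category of finite covers is the same on both sides, so the intersection can be compared term-by-term.
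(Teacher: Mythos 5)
Your proof is correct and is exactly the argument the paper has in mind — the paper simply declares the lemma "immediate from the definition" and omits it, and your write-up supplies the obvious details (monotonicity of round-up under $\Delta' \leq \Delta$, the resulting sheaf inclusion compatible with the trace maps, and intersecting over the same system of covers). Nothing further is needed.
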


Our next goal is to identify $\myB^0$ with a Matlis dual of a direct limit, and also with a certain inverse limit.  These alternate descriptions of $\myB^0$ will be both convenient and crucial in what follows.  Before doing that, we make the following observations related to passing direct limits through cohomology in our setting:
    \[
        \varinjlim_{\substack{f \colon Y \to X\\ \textnormal{finite}}} \myH^d \myR \Gamma_{\fram} \myR \Gamma(Y,  \sO_Y(\lfloor f^*(K_X+\Delta-M) \rfloor)) = \varinjlim_{\substack{f \colon Y \to X\\ \textnormal{finite}}} H^d_{g^{-1} \m}(X, f_* \sO_Y(\lfloor f^*(K_X+\Delta-M) \rfloor))
    \]
    which, in view of \autoref{eq.CohomologyWithSupportsCommutesWithDirectLimits} may be identified with 
    \[
        H^d_{g^{-1} \m}(X, \pi_* \sO_{X^+}(\pi^*(K_X+\Delta-M))) = \myH^d \myR \Gamma_{\fram} \myR \Gamma(X^+, \sO_{X^+}(\pi^*(K_X+\Delta-M)))
    \]
    where $g : X \to \Spec R$ is the given map and $\pi : X^+ \to X$ is the induced map.
    In other words, 
    \begin{equation}
        \label{eq.directLimitPassesThroughLocalCohomAndMore}
        \begin{array}{rcl}
        & {\displaystyle \varinjlim_{\substack{f \colon Y \to X\\ \textnormal{finite}}} \myH^d \myR \Gamma_{\fram} \myR \Gamma(Y,  \sO_Y(\lfloor f^*(K_X+\Delta-M) \rfloor))} \\ \\
        = & \myH^d \myR \Gamma_{\fram} \myR \Gamma(X^+, \sO_{X^+}(\pi^*(K_X+\Delta-M))).
        \end{array}
    \end{equation}
    Of course, this identification can be obtained in other ways as well.

{\begin{lemma}[Alternate descriptions of $\myB^0$]
	\label{lem.B0AsInverseLimit}
 	Work in the situation of \autoref{def:B_0} and suppose $d = \dim X$.
 	\begin{enumerate}[leftmargin=20pt]
    \item We then have that $\myB^0(X, \Delta; \sM)$ is the $R$-Matlis dual of \label{eq.lem.B0AsInverseLimit.DualImageForFinite}
	\begin{equation*}		
        		\qquad \im\Big( \myH^d \myR \Gamma_{\fram} \myR \Gamma(X, \sO_X(K_X - M)) \to 
		\varinjlim_{\substack{f \colon Y \to X\\ \textnormal{finite}}} \myH^d \myR \Gamma_{\fram} \myR \Gamma(Y,  \sO_Y(\lfloor f^*(K_X+\Delta-M) \rfloor)) \Big)     \end{equation*}
    or equivalently, by \autoref{eq.directLimitPassesThroughLocalCohomAndMore}, the $R$-Matlis dual of:
    \[ 
        \im\Big( \myH^d \myR \Gamma_{\fram} \myR \Gamma(X, \sO_X(K_X - M))  \to 
		\myH^d \myR \Gamma_{\fram} \myR \Gamma(X^+, \sO_{X^+}(\pi^*(K_X+\Delta-M))) \Big).		
	\]
    \item Dually, we have that:
    \label{lem.B0AsInverseLimit.DualImageForFinite.InverseLimitProjection}
 	\begin{equation*}
		\qquad \myB^0(X, \Delta; \sM)= \im \left( \Bigg( \varprojlim_{\substack{f \colon Y \to X\\ \textnormal{finite}}} H^0(Y, \sO_Y( K_Y + \lceil f^* (M - K_X - \Delta)\rceil))\Bigg) \to H^0(X, \sM) \right).
	\end{equation*}

		\item
	Similarly, when $M - (K_X + \Delta)$ is $\bQ$-Cartier, $\myB^0_{\alt}$ is the $R$-Matlis dual of
    \[		
		\qquad \im\Big( \myH^d \myR \Gamma_{\fram} \myR \Gamma(X, \sO_X(K_X - M)) \to  
		\varinjlim_{\substack{f \colon Y \to X\\ \textnormal{alteration}}} \myH^d \myR \Gamma_{\fram} \myR \Gamma(Y,  \sO_Y(\lfloor f^*(K_X+\Delta-M) \rfloor)) \Big) 	\]
    where $Y$ runs over alterations.  
    \item Dually, we have that \label{lem.B0AsInverseLimit.DualImageForAlteration.InverseLimitProjection}
	\begin{equation*}
		\qquad \myB^0_{\alt}(X, \Delta; \sM)= \im \left( \Bigg( \varprojlim_{\substack{f \colon Y \to X\\ \textnormal{alteration}}} H^0(Y, \sO_Y( K_Y + \lceil f^* (M - K_X - \Delta)\rceil))\Bigg) \to H^0(X, \sM) \right).
	\end{equation*}	
	\end{enumerate}
\end{lemma}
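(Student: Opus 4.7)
Plan: All four assertions are linked by Matlis duality; the central point is that, setting $M_X := \myH^d \myR\Gamma_{\fram} \myR\Gamma(X, \sO_X(K_X - M))$ and $N_Y := \myH^d \myR\Gamma_{\fram} \myR\Gamma(Y, \sO_Y(\lfloor f^*(K_X + \Delta - M)\rfloor))$, the Matlis dual of the image $I \subseteq N := \varinjlim_Y N_Y$ of the canonical map from $M_X$ equals $\myB^0(X, \Delta; \sM)$. Parts (3) and (4) will follow by repeating the same argument with alterations in place of finite covers; this is legitimate because $\dim Y = \dim X$ for every alteration (see the discussion after \autoref{def:alteration}), so \autoref{lem:duality} continues to apply verbatim.

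First, I would apply \autoref{lem:duality} on each normal finite cover $f \colon Y \to X$ to identify the Matlis dual $N_Y^{\vee}$ with $\Hom_{\sO_Y}(\sO_Y(\lfloor f^*(K_X + \Delta - M)\rfloor), \omega_Y)$; by reflexivity of divisorial sheaves on normal $Y$ this is $H^0(Y, \sO_Y(K_Y + \lceil f^*(M - K_X - \Delta)\rceil))$, and likewise $M_X^{\vee} = H^0(X, \sM)$. Under these identifications, the Matlis dual of each map $\alpha_Y \colon M_X \to N_Y$ is the restriction map appearing in the definition of $\myB^0$. For a refinement $Y' \to Y$ over $X$, one has $\alpha_{Y'} = \phi_{Y, Y'} \circ \alpha_Y$, whence $\ker(\alpha_Y) \subseteq \ker(\alpha_{Y'})$ and $I = \im(M_X \to N) = M_X / \bigcup_Y \ker(\alpha_Y)$. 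Dualizing the short exact sequence $0 \to \bigcup_Y \ker(\alpha_Y) \to M_X \to I \to 0$ then realizes $I^{\vee}$ inside $M_X^{\vee} = H^0(X, \sM)$ as the submodule of functionals killing every $\ker(\alpha_Y)$, i.e.\ $I^{\vee} = \bigcap_Y \im(H^0(Y, \cdots) \to H^0(X, \sM)) = \myB^0(X, \Delta; \sM)$; this gives (1).

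For (2), dualizing the inclusion $I \hookrightarrow N$ yields a surjection $N^{\vee} \twoheadrightarrow I^{\vee}$, and since $\Hom_R(-, E)$ sends filtered colimits to cofiltered limits, $N^{\vee} = \varprojlim_Y N_Y^{\vee} = \varprojlim_Y H^0(Y, \sO_Y(K_Y + \lceil f^*(M - K_X - \Delta)\rceil))$; following this surjection composed with $I^{\vee} \hookrightarrow M_X^{\vee}$ gives the claimed description. For the $X^+$ reformulation of (1), I would pass to a cofinal subsystem of finite covers on which $f^*(K_X + \Delta - M)$ is already an integral Weil divisor so the floor disappears; since $X^+ = \varprojlim_Y Y$ through affine transition maps, local cohomology of quasi-coherent sheaves commutes with this filtered colimit and yields $N = \myH^d \myR\Gamma_{\fram} \myR\Gamma(X^+, \sO_{X^+}(\pi^*(K_X + \Delta - M)))$.

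The main obstacle is the compatibility issue flagged in the introduction: an element of $H^0(X, \sM)$ lifting to each $H^0(Y, \cdots)$ individually need not admit a system of compatible lifts, so the obvious inclusion $\im(\varprojlim) \subseteq \bigcap$ may a priori be strict. The resolution above uses completeness of $R$ in a decisive way, since it is exactly this hypothesis which makes $(-)^{\vee\vee}$ the identity functor on Noetherian and Artinian $R$-modules, thereby converting the purely formal identity ``Matlis dual of the image of $\varinjlim$'' $=$ ``image of $\varprojlim$ of duals'' into the genuine equality between $\myB^0$ and the image of the inverse limit. The only other bookkeeping to check is that $\Delta \geq 0$ guarantees existence of the canonical maps at the level of rounded Weil divisors (i.e.\ $f^*(K_X - M) \leq \lfloor f^*(K_X + \Delta - M) \rfloor$), which is automatic from $\lfloor f^* \Delta \rfloor \geq 0$.
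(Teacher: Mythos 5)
Your proposal is correct and follows essentially the same route as the paper: identify each $H^0(Y,\sO_Y(K_Y+\lceil f^*(M-K_X-\Delta)\rceil))$ as the Matlis dual of $\myH^d\myR\Gamma_{\fram}\myR\Gamma(Y,\sO_Y(\lfloor f^*(K_X+\Delta-M)\rfloor))$ via \autoref{lem:duality}, use exactness of Matlis duality and of filtered colimits to convert the image of the map to the colimit into the intersection of images (equivalently the image of the inverse limit), with completeness of $R$ entering exactly where you say it does. Your phrasing of the image as $M_X/\bigcup_Y\ker(\alpha_Y)$ rather than as $\varinjlim_Y \im_Y$ is only a cosmetic difference from the paper's argument.
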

\vskip 12pt
	An alternate description of \autoref{lem.B0AsInverseLimit.DualImageForFinite.InverseLimitProjection}, is that for every $s \in \myB^0(X, \Delta; \sM)$ there exists a compatible system as follows such that $s_X =s$:
	\begin{equation*}
	    \left(\parbox{240pt}{\begin{center}$s_Y \in H^0\Big(Y, \sO_Y\big( K_Y + \lceil f^* (M - K_X - \Delta)\rceil\big)\Big)$ \\   $\qquad \forall f \colon Y \to X $ finite \end{center}} \Bigg| \ \parbox{200pt}{ such that $s_Z \mapsto s_Y$ for any factorization of finite maps $Z \to Y \to X$} \  \right).
	\end{equation*}
	Similarly for \autoref{lem.B0AsInverseLimit.DualImageForAlteration.InverseLimitProjection}.

\begin{proof}
			For each finite map $f \colon Y\to X$ with $Y$ normal, we have a natural map $\sO_X(K_X-M)\to f_* \sO_Y(\lfloor f^*(K_X+\Delta-M) \rfloor)$ (for alterations, where the argument will be the same, we also require that $K_X+\Delta-M$ is $\bQ$-Cartier).
		Thus we have 
	\begin{equation}
	\label{eq:B0AsInverseLimit:image}
	\myH^d \myR \Gamma_{\fram} \myR \Gamma(X,\sO_X(K_X-M)) \twoheadrightarrow \im_Y \hookrightarrow \myH^d \myR \Gamma_{\fram} \myR \Gamma(Y, \sO_Y(\lfloor f^*(K_X+\Delta-M) \rfloor)).
	\end{equation}
	Taking filtered colimit for all $Y$, we have
	\begin{equation}
		\label{eq:CohomologyMappingToDirectLimitViaImage}
		\myH^d \myR \Gamma_{\fram} \myR \Gamma(X, \sO_X(K_X-M)) \twoheadrightarrow \varinjlim_Y\im_Y \hookrightarrow \varinjlim_Y \myH^d \myR \Gamma_{\fram} \myR \Gamma(Y, \sO_Y(\lfloor f^*(K_X+\Delta-M) \rfloor)).
	\end{equation}
    Notice also that $\varinjlim_Y \im_Y$ is the image of the map \autoref{eq.lem.B0AsInverseLimit.DualImageForFinite}.  
	We shall show that the Matlis dual of the limit of the images satisfies the following:
	\begin{equation}
		\label{eq:B0AsMatlisDualToImages}
		(\varinjlim_Y\im_Y)^{\vee} = \varprojlim_Y \im_Y^\vee = \myB^0(X, \Delta; \sM).
	\end{equation}
    To see this, first observe that the Matlis dual of $\myH^d \myR \Gamma_{\fram} \myR \Gamma(X, \sO_X(K_X-M))$ is $H^0(X, \sM)$ by \autoref{lem:duality}.  Similarly, and using the fact that $\sHom_{\sO_Y}(\sO_Y(\lfloor f^*(K_X+\Delta-M) \rfloor), \omega_Y)\cong \sO_Y( K_Y + \lceil f^* (M - K_X - \Delta)\rceil)$ since $Y$ is normal, the Matlis dual of $\myH^d \myR \Gamma_{\fram} \myR \Gamma(Y, \sO_Y(\lfloor f^*(K_X+\Delta-M) \rfloor))$ is $H^0\big(Y, \sO_Y( K_Y + \lceil f^* (M - K_X - \Delta)\rceil)\big)$ by \autoref{lem:duality}.  Hence, applying Matlis duality to \autoref{eq:CohomologyMappingToDirectLimitViaImage}, and noticing that Matlis duality turns colimits into limits, we obtain
\begin{equation}	
\label{eq:keeptrackofsurjection}
 H^0(X, \sM)=H^0(X, \sO_X(M)) \hookleftarrow \varprojlim_Y \im_Y^\vee \twoheadleftarrow \varprojlim_Y H^0\big(Y, \sO_Y( K_Y + \lceil f^* (M - K_X - \Delta)\rceil)\big).
 \end{equation}
	 It follows that 
	\begin{equation}
	\label{eq:B0InverseLimit}
	\varprojlim_Y \im_Y^\vee = \im \left( \Bigg( \varprojlim_{\substack{f \colon Y \to X\\ \textnormal{finite}}} H^0(Y, \sO_Y( K_Y + \lceil f^* (M - K_X - \Delta)\rceil))\Bigg) \to H^0(X, \sM) \right).
	\end{equation}
	But since applying Matlis-duality to \autoref{eq:B0AsInverseLimit:image} yields
	\begin{equation*}
	\im_Y^\vee = \im \left(H^0(Y, \sO_Y( K_Y + \lceil f^* (M - K_X - \Delta)\rceil)) \to H^0(X, \sM) \right),
	\end{equation*}
	we know that 
	\[
		\varprojlim_Y \im_Y^\vee = \bigcap_{\substack{f \colon Y \to X\\\textnormal{finite}}}\im \left( H^0(Y, \sO_Y( K_Y + \lceil f^* (M - K_X - \Delta)\rceil)) \to H^0(X, \sM) \right)=\myB^0(X, \Delta; \sM)
	\]
	Now $(a)$ follows from \autoref{eq:CohomologyMappingToDirectLimitViaImage} and \autoref{eq:B0AsMatlisDualToImages}, and $(b)$ follows from \autoref{eq:B0AsMatlisDualToImages} and \autoref{eq:B0InverseLimit}. The argument for $\myB^0_{\alt}(X, \Delta; \sM)$ is the same.     
\end{proof}

\begin{remark}
The proof above uses that $(R,\m)$ is complete in an essential way since the Matlis dual of local cohomology modules supported at the maximal ideal $\m$ are finitely generated $\widehat{R}$-modules (and not necessarily finitely generated over $R$). 

Furthermore, without the complete hypothesis \autoref{lem.B0AsInverseLimit} \autoref{lem.B0AsInverseLimit.DualImageForFinite.InverseLimitProjection} is false.  Even in equal characteristic $p > 0$, suppose $(R, \m)$ is as in \cite[Corollary C]{DattaMurayamaTate} an excellent regular local ring, $X = \Spec R$, $M = 0$, and $\Delta = 0$.  Then we have
\[ 
\varprojlim_{\substack{R \subseteq S}} \omega_{S/R} = \varprojlim_{\substack{R \subseteq S}} \Hom_R(S, R) = \Hom_R(\varinjlim_{\substack{R \subseteq S}} S, R) = \Hom_{R}(R^+, R) = 0
\]
where $R \subseteq S$ runs over finite extensions of $R$ in $R^+$.  Hence the image in \autoref{lem.B0AsInverseLimit} \autoref{lem.B0AsInverseLimit.DualImageForFinite.InverseLimitProjection} is zero.  On the other hand, each map $\omega_{S/R} = \Hom_R(S, R) \to R$ is surjective for any finite extension $R \subseteq S$ by the direct summand theorem in characteristic $p > 0$ \cite{HochsterContractedIdealsFromIntegralExtensions}.

We will also need completeness in the proof of our section-lifting result \autoref{thm:main-lifting} (which uses the vanishing of \autoref{sec.Vanishing}).  For our geometric applications, this will not be a substantial restriction as we can reduce to this case.  Also see \autoref{rem.B0ForNonCompleteR}.
\end{remark}

\autoref{lem.B0AsInverseLimit} essentially asserts that the formation of images and inverse limits commutes in certain situations. Such an assertion would be automatic if the relevant inverse limits were exact functors. This is in fact true more generally, and we extrapolate the following observation from the method of proof\footnote{Specifically, the observation comes from extracting what is needed to ensure the surjective arrow in \eqref{eq:keeptrackofsurjection}.} of \autoref{lem.B0AsInverseLimit} above:

\begin{lemma}   
\label{RlimExact}
Let $R$ be a complete Noetherian local ring. Let $\{K_i\}_{i \in I}$ be a projective system of finitely generated $R$-modules with cofiltered indexing category $I$. Then $\myR\varprojlim_i K_i$ is concentrated in degree $0$. Consequently, the functor $\{M_i\} \mapsto \varprojlim_i M_i$ is exact on $I$-indexed diagrams of finitely generated $R$-modules.
\end{lemma}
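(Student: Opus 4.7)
The plan is to Matlis-dualize and reduce to the fact that filtered colimits of $R$-modules are exact. Write $(-)^\vee = \Hom_R(-, E)$, where $E = E_R(R/\fram)$ is an injective hull of the residue field. Since $E$ is injective, $(-)^\vee$ is an exact functor and equals $\myR\Hom_R(-, E)$ on modules; and since $R$ is complete, Matlis double duality gives a natural isomorphism $K \simeq (K^\vee)^\vee$ for any finitely generated $K$ (this is precisely where completeness of $R$ enters).

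First I would form the Matlis dual system $\{K_i^\vee\}_{i \in I^{\op}}$, which is a filtered system of Artinian $R$-modules, and set $L := \varinjlim_i K_i^\vee$. The key input is the derived contravariant Hom--colimit adjunction
\[
\myR\Hom_R\!\big(\myR\varinjlim_i K_i^\vee,\, E\big) \;\simeq\; \myR\varprojlim_i \myR\Hom_R(K_i^\vee, E),
\]
which holds for any object $E \in D(R)$, being a formal consequence of the $\infty$-categorical adjunction between $\mathrm{hocolim}$ (in $D(R)$) and the constant diagram functor (in $\mathrm{Fun}(I^{\op}, D(R))$) applied to the contravariant exact functor $\myR\Hom_R(-,E)$. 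The right-hand side simplifies: by Matlis double duality, $\myR\Hom_R(K_i^\vee, E) = \Hom_R(K_i^\vee, E) = K_i$, so it equals $\myR\varprojlim_i K_i$. For the left-hand side, filtered colimits of $R$-modules are exact, so $\myR\varinjlim_i K_i^\vee = L$ is concentrated in degree $0$; and then $\myR\Hom_R(L, E) = \Hom_R(L, E) = L^\vee$, again since $E$ is injective. Combining, $\myR\varprojlim_i K_i \simeq L^\vee$ is concentrated in degree $0$, as claimed.

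For the exactness consequence, a short exact sequence $0 \to \{M_i\} \to \{N_i\} \to \{P_i\} \to 0$ of $I$-indexed diagrams of finitely generated $R$-modules yields an exact triangle
\[
\myR\varprojlim_i M_i \longrightarrow \myR\varprojlim_i N_i \longrightarrow \myR\varprojlim_i P_i \xrightarrow{+1}
\]
in $D(R)$. By the first part, all three terms live in degree $0$, so the associated long exact sequence collapses to the short-exact sequence $0 \to \varprojlim_i M_i \to \varprojlim_i N_i \to \varprojlim_i P_i \to 0$.

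The only step requiring care is the derived Hom--colimit adjunction, which is the sole non-formal input; it is standard in the modern setup but can also be verified concretely (for instance, using a Bousfield--Kan style complex computing $\myR\varprojlim$ together with an injective resolution of $E$, which here is trivial since $E$ is already injective). Everything else is a bookkeeping exercise in Matlis duality and the exactness of filtered colimits.
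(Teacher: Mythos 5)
Your proposal is correct and follows essentially the same route as the paper: Matlis-dualize using completeness for double duality, pass the (derived) inverse limit through $\myR\Hom_R(-,E)$ to a filtered colimit, and use exactness of filtered colimits together with injectivity of $E$; the deduction of exactness of $\varprojlim$ from degree-zero concentration is the same formal argument. No gaps.
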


\begin{proof}
Let $E$ be the injective hull of the residue field of $R$. Write $(-)^\vee := \myR\Hom_R(-,E)$ for the Matlis duality functor regarded as a functor on the derived category $D(R)$, so $(-)^\vee:D(R) \to D(R)$ is $t$-exact (because $E$ is an injective $R$-module), and we have a natural isomorphism $K \simeq (K^\vee)^\vee$ for $K \in D^b_{coh}(R)$. Now take $\{K_i\}$ as in the lemma. We then have
\[ 
    \myR\varprojlim_i K_i = \myR\varprojlim_i ((K_i^\vee)^\vee) = \myR\varprojlim_i \myR\Hom_R( (K_i^\vee), E) = \myR\Hom_R(\colim_i (K_i^\vee), E).
\]
As $(-)^\vee$ is $t$-exact, each $(K_i)^\vee$ lies in degree $0$. But filtered colimits are exact, so $\colim_i (K_i^\vee)$ also lies in degree $0$. Finally, $E$ is injective, so the last term above also lies in degree {$0$}, whence $\myR\varprojlim_i K_i$ lies in degree $0$, as wanted in the first part.

The second part is formal given the first part. For instance, say $\{f_i\}:\{M_i\} \to \{N_i\}$ is an $I$-indexed diagram of surjections of finitely generated $R$-modules. To show $\varprojlim_i M_i \to \varprojlim_i N_i$ is surjective, we simply use that $\myR\varprojlim_i \ker(f_i)$ is concentrated in degree $0$ by the first part, and that $\myR\varprojlim_i$ takes any short exact sequence of $I$-indexed diagrams of $R$-modules to an exact triangle in $D(R)$.
\end{proof}

Applying this lemma to $I$ being the category of all finite covers (resp. alterations) 
and the map of projective systems $$
    \left\{H^0(Y, \sO_Y(D_Y))\right\}_{Y\to X}\twoheadrightarrow \left\{\im\left(H^0(Y,\sO_Y(D_Y))\to H^0(X,\sO_X(\sM))\right)\right\}_{Y\to X}
$$ 
with $D_Y=K_Y+\lceil f^*(M-K_X-\Delta)\rceil$, 
appearing in \autoref{lem.B0AsInverseLimit} then gives an alternative proof of the lemma.

\begin{remark}
    \label{rem.InfinityCategoriesAndBeyond}
The proofs of \autoref{RlimExact} and \autoref{cor.B0VsB0Alt} below feature filtered colimits in the derived category. Literally interpreted in the triangulated category setting, this does not give a sensible object. For example, the formation of filtered colimits in the derived category $D(R)$ of a commutative ring $R$ (when they exist) does not commute with taking cohomology groups (even when everything is in a degree $0$), making the former a rather obscure notion\footnote{At the request of the referee, we give an example where colimits in the triangulated category $D(R)$ work poorly. Given a countable diagram $M_0 \to M_1 \to M_2 \to ...$ in $D(R)$, if the colimit $M := \colim_i M_i$ in $D(R)$ exists, then the map $\oplus_i M_i \to M$ must be a categorical epimorphism as $\mathrm{Hom}_{D(R)}(M,-) \to \prod_i \mathrm{Hom}_{D(R)}(M_i,-)$ is injective by the defining property of a colimit. But any epimorphism $f:x \to y$ in a triangulated category splits: the canonical map $g:y \to \mathrm{cone}(f)$ is $0$ as $g \circ f$ is $0$. So we learn that $\oplus_i M_i \to M$ admits a right inverse. This is clearly not the case for colimits of interest, e.g., if we take $R=\mathbf{Z}$ and $M_i = \mathbf{Z}/p^i$ with maps $M_i \to M_{i+1}$ determined by $1 \mapsto p$, then the ``correct'' colimit is $\mathbf{Q}_p/\mathbf{Z}_p$, but the map $\oplus_i \mathbf{Z}/p^i \to \mathbf{Q}_p/\mathbf{Z}_p$ does not have a right inverse: the right side admits a nonzero map from $\mathbf{Q}_p$ while the left side does not.}. Instead, to obtain the notion of filtered colimits for which passing to cohomology is exact, one can work with $\infty$-categories. Alternate approaches include dg-categories, or a $1$-categorical substitute such as the notion of homotopy colimits over suitable diagram categories, e.g., see \cite[Tag 0A5K]{stacks-project} for colimits over the poset $\mathbb{N}$). We will elide this issue in the sequel.  
\end{remark}

\begin{remark}
We explain why the completeness of $R$ is essential to \autoref{RlimExact}; we shall use the theory of derived completions, see \cite[Tags 091N,0BKF,0BKH]{stacks-project}. Suppose $(R,\mathfrak{m})$ is a Noetherian local ring. Then $R$ is $\m$-adically complete exactly when it is derived $\m$-complete (since $R$ is Noetherian and $\m$ is finitely generated), and the latter happens exactly when $R$ is derived $f$-complete for every $f\in \m$, i.e., $\myR^1\varprojlim(\cdots R \xrightarrow{f} R \xrightarrow{f} R) = 0$ (noting that $\myR^0 \varprojlim$ always vanishes in this case by Krull's intersection theorem). Therefore if $(R,\m)$ is not $\m$-adically complete, then there exists $f\in \m$ such that $\myR^1\varprojlim(\cdots R \xrightarrow{f} R \xrightarrow{f} R) \neq 0$, i.e., $\myR\varprojlim(\cdots R \xrightarrow{f} R \xrightarrow{f} R)$ is not concentrated in degree $0$ so \autoref{RlimExact} is false.
\end{remark}

The next result relies on deep results on $p$-adic Riemann-Hilbert correspondence \cite{BhattLuriepadicRHmodp} in the form of \cite[Theorem 3.12]{BhattAbsoluteIntegralClosure}.

\begin{corollary}[Alterations vs finite covers]
    \label{cor.B0VsB0Alt}
    With notation as above, and assuming that $M - K_X - \Delta$ is $\bQ$-Cartier, we have that 
    \[
        \myB^0(X, \Delta; \sM)=\myB^0_{\alt}(X, \Delta; \sM).
    \]
\end{corollary}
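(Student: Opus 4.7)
The plan is to use the Matlis-dual characterizations from \autoref{lem.B0AsInverseLimit} to reduce the assertion to a statement about local cohomology. One direction is automatic: $\myB^0_{\alt} \subseteq \myB^0$ since every finite cover is a normal alteration, so the defining intersection for $\myB^0_{\alt}$ ranges over a larger family than the one for $\myB^0$.

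For the reverse, write $A := \myH^d \myR\Gamma_\fram \myR\Gamma(X, \sO_X(K_X - M))$. It will suffice to show that the kernel of the natural map $A \to \varinjlim_{Z\text{ alt}} \myH^d \myR\Gamma_\fram \myR\Gamma(Z, \sO_Z(\lfloor g^*(K_X+\Delta-M)\rfloor))$ is contained in the kernel of the corresponding map through the finite-cover colimit, the reverse containment being automatic. The main structural input is that for any normal alteration $g \colon Z \to X$ with Stein factorization $Z \xrightarrow{g'} \bar Z \xrightarrow{\bar g} X$ (so $\bar g$ is finite and $g'_*\sO_Z = \sO_{\bar Z}$), this map factors through $\myH^d \myR\Gamma_\fram \myR\Gamma(\bar Z, \sO_{\bar Z}(\lfloor \bar g^*(K_X+\Delta-M)\rfloor))$. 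The $\bQ$-Cartier hypothesis is essential here: it ensures that $g^*(K_X + \Delta - M) = (g')^* \bar g^*(K_X + \Delta - M)$ is well-defined, and a comparison of fractional parts then gives $\lfloor g^*(K_X+\Delta-M)\rfloor \geq (g')^* \lfloor \bar g^*(K_X+\Delta-M)\rfloor$, hence a natural injection of sheaves $\sO_{\bar Z}(\lfloor \bar g^*(K_X+\Delta-M)\rfloor) \hookrightarrow g'_* \sO_Z(\lfloor g^*(K_X+\Delta-M)\rfloor)$ via the projection formula.

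Given this factorization, for $a \in A$ in the alteration kernel witnessed by some $Z$, the image $\bar a$ of $a$ in $\myH^d \myR\Gamma_\fram \myR\Gamma(\bar Z, \cdots)$ must die in $\myH^d \myR\Gamma_\fram \myR\Gamma(Z, \cdots)$. The Leray spectral sequence for $g'$, combined with the cokernel of the sheaf injection above, shows that such kernel classes arise from local cohomology of coherent sheaves supported on the exceptional locus of $g'$, which has dimension $< d$. The final step, which I expect to be the main obstacle, is to annihilate these classes by passing to a sufficiently large finite cover $Y \to \bar Z$. This will follow from the cohomological Cohen-Macaulayness of $R^+$ (\autoref{RPlusCM}) together with the extensions developed in \autoref{sec.Vanishing}: these ensure that local cohomology of coherent sheaves supported in dimension $< d$ is annihilated in the finite-cover colimit, which equals $\myH^d \myR\Gamma_\fram \myR\Gamma(X^+, \sO_{X^+}(\pi^*(K_X+\Delta-M)))$ by part (1) of \autoref{lem.B0AsInverseLimit}. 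Combining these steps, the two kernels of $A$ agree, yielding $\myB^0 = \myB^0_{\alt}$ by Matlis duality.
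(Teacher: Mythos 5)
Your reduction is sound up to the last step: the Matlis-dual reformulation, the trivial containment $\myB^0_{\alt}\subseteq\myB^0$, the Stein factorization $Z\xrightarrow{g'}\bar Z\to X$, and the comparison of round-downs under the $\bQ$-Cartier hypothesis are all fine. The gap is the final step, and it is not a technicality --- it is essentially the entire content of the corollary. A class $\bar a$ killed by the birational map $g'$ lies in the kernel of $\myH^d\myR\Gamma_\fram\myR\Gamma(\bar Z,\sF)\to \myH^d\myR\Gamma_\fram\myR\Gamma(\bar Z,\myR g'_*\sG)$. Since $g'$ is an isomorphism in codimension one on $\bar Z$, the cokernel of $\sF\hookrightarrow g'_*\sG$ is supported in dimension $\leq d-2$ and contributes nothing in the relevant degree; the kernel instead comes from $\tau^{\geq 1}\myR g'_*\sG$, i.e.\ from groups $H^{i}_{\fram}(H^k(\bar Z, R^jg'_*\sG))$ with $i+j+k=d-1$, which are nonzero only when $i+k$ equals the dimension of $\Supp R^jg'_*\sG$. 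These are \emph{top-degree} local cohomology groups of modules supported in dimension $<d$. Cohomological Cohen--Macaulayness of $R^+$ (and everything in \autoref{sec.Vanishing}) kills local cohomology strictly \emph{below} the top degree of the relevant support; top local cohomology of a lower-dimensional subscheme certainly does not vanish on $X^+$ (consider $H^0$ of a point). So the assertion that ``local cohomology of coherent sheaves supported in dimension $<d$ is annihilated in the finite-cover colimit'' is false as stated, and there is the further problem that the sheaves $R^jg'_*\sG$ do not form a compatible system over the tower of finite covers of $\bar Z$, so one cannot even reduce to a statement about $W^+$ for a fixed lower-dimensional $W$. What your argument actually requires is that the cohomological defect of a proper birational morphism can be killed by a further \emph{finite} cover --- that $X^+$ is a derived splinter mod $p$ --- and that is a deep theorem, not a consequence of CM-ness.

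The paper's proof is organized precisely to access that input. By \autoref{lem.B0AsInverseLimit} one must identify the two filtered colimits of $\myH^d\myR\Gamma_\fram\myR\Gamma(Y,\sO_Y(\lfloor f^*(K_X+\Delta-M)\rfloor))$ over finite covers and over alterations. Since $p\in\fram$, the functor $\myR\Gamma_\fram$ only sees the derived $p$-completion, so by derived Nakayama it suffices to compare after $\otimes^L\bZ/p$; the $\bQ$-Cartier hypothesis makes the twist a pulled-back line bundle on sufficiently large covers, and the projection formula reduces everything to $\varinjlim_{\mathrm{fin}}\myR f_*\sO_Y\otimes^L\bZ/p=\varinjlim_{\mathrm{alt}}\myR f_*\sO_Y\otimes^L\bZ/p$, which is \cite[Theorem 3.12]{BhattAbsoluteIntegralClosure}, a consequence of the $p$-adic Riemann--Hilbert functor. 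Your skeleton could be completed by substituting that theorem (or, in equal characteristic $p>0$, Bhatt's derived direct summand theorem) for the CM-ness appeal in your last step; as written, the proof does not go through.
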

\begin{proof}
	We follow the notation of the statement and proof of \autoref{lem.B0AsInverseLimit}, keeping in mind \autoref{rem.InfinityCategoriesAndBeyond}.   It suffices to demonstrate that 
	\[
		\varinjlim_{\substack{f \colon Y \to X\\\textnormal{finite}}} \myH^d\myR \Gamma_{\fram} \myR \Gamma(Y, \sO_Y(\lfloor f^*(K_X+\Delta-M) \rfloor))=\varinjlim_{\substack{f \colon Y \to X\\\textnormal{alteration}}} \myH^d\myR \Gamma_{\fram} \myR \Gamma(Y, \sO_Y(\lfloor f^*(K_X+\Delta-M) \rfloor)).
	\]
	Since $p \in \mathfrak{m}$, we have $\myR\Gamma_\m(\widehat{M})=\myR\Gamma_{\m}(M)$
	for all $R$-complexes $M$, where $\widehat{M}$ denotes the derived $p$-completion
	(see \cite[Tag 091N]{stacks-project} for definitions and details about derived completion). 
	Since filtered colimits are exact (cf.\ \cite[Tag 00DB]{stacks-project}), it is thus enough to show that the natural map identifies
	\small
	\[
		\myH^d\myR\Gamma_\m \left(\widehat{\varinjlim_{\substack{f \colon Y \to X\\\textnormal{finite}}} \myR\Gamma(Y, \sO_Y(\lfloor f^*(K_X+\Delta-M) \rfloor))}\right)
		=\myH^d\myR\Gamma_\m \left(\widehat{\varinjlim_{\substack{f \colon Y \to X\\\textnormal{alteration}}} \myR\Gamma(Y, \sO_Y(\lfloor f^*(K_X+\Delta-M) \rfloor))}\right).
	\]
	\normalsize
	At this point, we recall that derived $p$-complete complexes obey a derived Nakayama lemma, i.e., in order to show a given map $M \to N$ of derived $p$-complete objects in $D(\mathrm{Ab})$ is an isomorphism, it is enough to show that  $M\otimes^L\Z/p \to N\otimes^L\Z/p$ is an isomorphism (cf.\ \cite[Tag 0G1U]{stacks-project}). Therefore, it is enough to show that
	\[
		\varinjlim_{\substack{f \colon Y \to X\\\textnormal{finite}}} \myR\Gamma(Y, \sO_Y(\lfloor f^*(K_X+\Delta-M) \rfloor)) \otimes^L \bZ/p
		=\varinjlim_{\substack{f \colon Y \to X\\\textnormal{alteration}}} \myR\Gamma(Y,\sO_Y(\lfloor f^*(K_X+\Delta-M) \rfloor))\otimes^L \bZ/p
	\]
	via the natural map. As a corollary of the $p$-adic Riemann-Hilbert functor from \cite{BhattLuriepadicRHmodp} (see \cite[Theorem 3.12]{BhattAbsoluteIntegralClosure}), we know that 
	\[
		\varinjlim_{\substack{f \colon Y \to X\\\textnormal{finite}}} \myR f_*\sO_Y\otimes^L \bZ/p =\varinjlim_{\substack{f \colon Y \to X\\\textnormal{alteration}}}\myR f_*\sO_Y\otimes^L \bZ/p
	\]
    via the natural map. Because twisting by a divisor and applying $\myR\Gamma(X, -)$ commutes with filtered colimits, we are done.\end{proof}
}

In characteristic $p > 0$, the analogs of $\myB^0$ typically stabilize, in other words we might expect that there exists a finite cover or alteration such that the image of 
\[
	H^0(Y, \sO_Y( K_Y + \lceil{f^* (M - K_X - \Delta)}\rceil)) \to H^0(X, \sM)
\] 
is exactly equal to $\myB^0$.  In characteristic $p > 0$, when one restricts the finite covers to iterates of Frobenius, this is essentially Hartshorne-Speiser-Gabber-Lyubeznik stabilization \cite{HartshorneSpeiserLocalCohomologyInCharacteristicP, Gabber.tStruc, LyubeznikVanishingoflocalcohomologycharP}, see for instance \cite[Section 2.4]{HaconXuThreeDimensionalMinimalModel} for a version of this in the relative setting.  If one instead considers arbitrary finite covers in characteristic $p > 0$, certain stabilization results in the case where $X \to \Spec R$ is an alteration can be found in \cite{BlickleSchwedeTuckerTestAlterations,SchwedeTuckerTestIdealsOfNonprincipalIdeals,ChiecchioEnescuMillerSchwede}, these are then all consequences of the equational lemma  \cite{HochsterHunekeInfiniteIntegralExtensionsAndBigCM,HunekeLyubeznikAbsoluteIntegralClosure,BhattDerivedDirectSummand}.

However, in mixed characteristic such stabilization is not possible.

\begin{example}
	\label{ex:elliptic_curve}
	Let $E \to \mathrm{Spec}(\bZ_p)$ be an elliptic curve\footnote{That is, $E \to \mathrm{Spec}(\bZ_p)$ is a proper smooth morphism whose geometric fibers are connected curves of genus one together with a prescribed section.}, so $\omega_{E/\bZ_p}\cong \sO_E$ and $H^0(E,\omega_{E/\bZ_p}) \simeq \mathbb{Z}_p$.
	We claim that
	\begin{enumerate}
	\item \label{itm:elliptic_curve:zero} $\myB^0\left(E, \omega_{E/\bZ_p} \right) = 0$, but 
	\item \label{itm:elliptic_curve:not_zero} $\im (\Tr_f:H^0(Y , \omega_{Y/\bZ_p}) \to H^0(E, \omega_{E/\bZ_p})) \neq 0$ for every alteration $f \colon Y \to E$.
	\end{enumerate}	 
	
	To prove \autoref{itm:elliptic_curve:zero}, fix an integer $n \geq 1$ and consider the $p^n$-power map $[p^n]:E \to E$. We claim that the corresponding trace map $\Tr_{[p^n]}:H^0(E,\omega_{E/\bZ_p}) \to H^0(E,\omega_{E/\bZ_p})$ is multiplication by $p^n$; this will imply that $$\myB^0(E,\omega_{E/\mathbb{Z}_p}) \subset \bigcap_n p^{n} H^0(E,\omega_{E/\bZ_p}) = \bigcap_n p^n \mathbb{Z}_p = 0,$$ as wanted. By duality, the claim for $\Tr_{[p^n]}$ is equivalent to showing that the pullback map $[p^n]^*:H^1(E,\sO_E) \to H^1(E,\sO_E)$ is given by $p^n$. But this is a general and standard fact about multiplication maps on abelian schemes, as we briefly recall. The map $[p^n]:E \to E$ factors as $E \xrightarrow{\Delta} E^{\times p^n} \xrightarrow{\mu} E$, where $\mu$ denotes the addition map and $\Delta$ is the diagonal, so we have $[p^n]^* = \Delta^* \circ \mu^*$. Now the K\"{u}nneth formula gives $H^1(E^{\times p^n},\sO_{E^{\times p^n}}) \simeq H^1(E,\sO_E)^{\oplus p^n}$, with projection to the $i$-th summand (resp. inclusion of the $i$-th summand) on the right given by the inclusion $E \to E^{\times p^n}$ in the $i$-th factor (resp.\ the projection $E^{\times p^n} \to E$ to the $i$-th factor). It is then immediate that $\mu^*:H^1(E,\sO_E) \to H^1(E^{\times p^n},\sO_{E^{\times p^n}}) \simeq H^1(E,\sO_E)^{\oplus p^n}$ is the diagonal map, so postcomposing with $\Delta^*$ gives $p^n$, as asserted.

	To prove \autoref{itm:elliptic_curve:not_zero}, it suffices to show that for every integral alteration $f \colon Y \to E$, the map $\Tr_f:H^0(Y,\omega_{Y/\bZ_p}) \to H^0(E,\omega_{E/\bZ_p})$ is surjective after inverting $p$. Let $\eta \in \Spec(\mathbb{Z}_p)$ be the generic point. As $f_\eta:Y_\eta \to E_\eta$ is an alteration of integral curves over $\mathbb{Q}_p$, it is in fact a finite map. The claim now follows as $E_\eta$ is a global splinter; explicitly, the map $\Tr_{f_\eta} = (\Tr_f)[1/p]$ is dual to the pullback map $f_\eta^*:H^1(E_\eta,\sO_{E_\eta}) \to H^1(Y_\eta,\sO_{Y_\eta})$, and the latter is injective since the map on sheaves $\sO_{E_\eta} \to f_{\eta,*} \sO_{Y_\eta}$ is split injective, with splitting coming from the normalized trace map on functions.
	\end{example}
	
	\begin{remark}
	The phenomenon in Example~\ref{ex:elliptic_curve} is not specific to elliptic curves and in fact generalizes significantly. Indeed, for any mixed characteristic DVR $V$ and a normal integral proper flat $V$-scheme $X$ of relative dimension $d \geq 1$ {such that $H^0(X,\omega_{X/V})\neq 0$}, we have the following:
	\begin{enumerate}
	    \item $\myB^0(X, \omega_{X/V}) = 0$.
	    \item $\im (\Tr_f:H^0(Y , \omega_{Y/V}) \to H^0(X, \omega_{X/V})) \neq 0$ for every finite cover $f \colon Y \to X$. (More generally, the same holds true for every alteration if we additionally assume that $X_\eta$ has rational singularities.)
	\end{enumerate}
	The proof of (b) is identical to that of \autoref{ex:elliptic_curve} (b). For (a), observe that the duality $\myR \Hom_V(\myR\Gamma(X,\sO_X),V) \simeq \myR\Gamma(X,\omega^\mydot_{X/V})$ and the fact that $\myR\Gamma(X,\sO_X) \in D^{\leq d}$ imply that $H^0(X,\omega_{X/V}) \simeq H^{-d}(X,\omega^\mydot_{X/V})$ is naturally identified with $\mathrm{Hom}_V(H^d(X,\sO_X),V)$, and similarly for all finite normal covers of $X$. Following the argument in the proof of \autoref{ex:elliptic_curve} (a), it is enough to show that for each $n \geq 1$, there exists a finite normal cover $f:Y\to X$ such that the pullback map $f^*:H^d(X,\sO_X) \to H^d(Y,\sO_Y)$ is divisible by $p^n$ as a map. This follows from \cite[Theorem 3.12]{BhattAbsoluteIntegralClosure}.
	\end{remark}

Working in equicharacteristic $p > 0$, we may form an analog of \autoref{ex:elliptic_curve} by considering a family of elliptic curves over $k\llbracket t \rrbracket$.  However, such an example does not satisfy \autoref{itm:elliptic_curve:not_zero}.
Indeed, the generic fiber of $E$ over $k((t))$ is not globally $F$-regular, and so there exists an alteration which is zero on global sections.
\subsection{$\myB^0$ in the affine case}

In the case where $X=\Spec(R)$, our definition produces a test ideal which we denote by $\tau_{+}(R,\Delta):=\myB^0(\Spec(R),\Delta;\sO_X)\subset \sO_X$.  We prove here that this agrees with a special case of the big Cohen-Macaulay test ideal defined in \cite{MaSchwedeSingularitiesMixedCharBCM}, which we first recall.

\begin{definition}
	 Suppose $\Gamma\geq 0$ is a $\mathbb{Q}$-Cartier divisor on $X=\Spec(R)$, where $(R,\m)$ is a Noetherian complete local {normal} domain, such that $\Div(f)=n\Gamma$ for some $f \in R$.
	We also fix a canonical divisor $K_X \geq 0$ and a big Cohen-Macaulay $R^+$-algebra $B$. Then define 
	\[
		0^{B,\Gamma}_{H^d_{\fram}(R)}:=\ker(H^d_{\fram}(R)\xrightarrow{\cdot f^{1/n}} H^d_{\fram}(B))
	\]
	and the BCM-test submodule of $(\omega_R,\Gamma)$ with respect to $B$: 
	\[
		\mytau_B(\omega_R,\Gamma):= \Ann_{\omega_R}0^{B,\Gamma}_{H^d_{\fram}(R)}.
	\]
	Equivalently, $\mytau_B(\omega_R,\Gamma)$ is the Matlis dual of the image of $H^d_{\fram}(R)\xrightarrow{\cdot f^{1/n}} H^d_{\fram}(B)$.

	Now given $\Delta\geq0$ such that $K_X+\Delta$ is $\mathbb{Q}$-Cartier we define the BCM-test ideal with respect to $B$ to be $\mytau_B(R,\Delta):=\mytau_B(\omega_R, K_R+\Delta)$.  Via our embedding $\sO_X \subseteq \sO_X(K_R)$, $\mytau_B(R,\Delta)$ is contained in $R$.  Note this definition requires that $K_X + \Delta$ is $\bQ$-Cartier.
\end{definition}

In this article we are interested in the particular big Cohen-Macaulay algebra $B=\widehat{R^+}$, the $p$-adic completion of the absolute integral closure of $R$, see \autoref{cor: p-adic completion of R^+}. Since $H^d_{\fram}(R^+) = H^d_{\fram}(\widehat{R^+})$, we can ignore the $p$-adic completion for the purposes of defining $\mytau_B(X, \Delta)$ and thus in what follows we will write $\mytau_{R^+}(R,\Delta)$ for $\mytau_{\widehat{R^+}}(R,\Delta)$.

\begin{proposition} \label{prop:B^0-agrees-with-test-ideal-for-affines}
	$\mytau_{R^+}(R,\Delta)=\tau_+(R,\Delta) := \myB^0(\Spec(R),\Delta;\sO_X)$ if $K_R+\Delta$ is $\mathbb{Q}$-Cartier.
\end{proposition}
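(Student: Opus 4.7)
The plan is to realize both sides as the Matlis dual of the same submodule of $H^d_\fram(R^+)$, where $d = \dim R$. I would begin by fixing an effective representative $K_R$ of the canonical divisor (possible since $R$ is local), picking $f \in R$ with $\Div(f) = n(K_R + \Delta)$, and choosing $f^{1/n} \in R^+$. Since $\Div(f^{1/n}) = \pi^*(K_R + \Delta)$ on $X^+$, multiplication by $f^{1/n}$ induces an $R$-linear isomorphism $\sO_{X^+}(\pi^*(K_R + \Delta)) \xrightarrow{\cong} R^+$ inside $\Frac(R^+)$.

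Next, applying the first part of \autoref{lem.B0AsInverseLimit} with $M = 0$ in the affine setting (where $\myR\Gamma(X,-) = \Gamma(X,-)$), $\tau_+(R,\Delta) = \myB^0(\Spec R, \Delta; \sO_X)$ is the Matlis dual of the image of the natural inclusion $H^d_\fram(\omega_R) \to H^d_\fram(\sO_{X^+}(\pi^*(K_R+\Delta)))$. Composing with the iso above, this becomes the Matlis dual of $\im \tilde\psi_*$, where $\tilde\psi_* \colon H^d_\fram(\omega_R) \to H^d_\fram(R^+)$ is induced by $\tilde\psi \colon \omega_R \hookrightarrow R^+$, $g \mapsto g f^{1/n}$ (viewing all modules inside $\Frac(R^+)$). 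On the other hand, $\mytau_{R^+}(R,\Delta) = (\im \phi_*)^\vee$ by definition, where $\phi_* \colon H^d_\fram(R) \xrightarrow{\cdot f^{1/n}} H^d_\fram(R^+)$. The multiplication map $R \xrightarrow{\cdot f^{1/n}} R^+$ factors as $R \hookrightarrow \omega_R \xrightarrow{\tilde\psi} R^+$ (using $K_R \geq 0$), so that $\phi_* = \tilde\psi_* \circ \alpha$ where $\alpha \colon H^d_\fram(R) \to H^d_\fram(\omega_R)$ is induced by $R \hookrightarrow \omega_R$.

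The crucial (and only substantive) step will be to show that $\alpha$ is surjective. This should follow from the short exact sequence $0 \to R \to \omega_R \to \omega_R/R \to 0$: applying local cohomology yields
\[
\cdots \to H^d_\fram(R) \xrightarrow{\alpha} H^d_\fram(\omega_R) \to H^d_\fram(\omega_R/R) \to 0,
\]
and since $\omega_R/R$ is torsion over the $d$-dimensional domain $R$ (both $R$ and $\omega_R$ have generic rank one), its support has dimension $\leq d-1$, forcing $H^d_\fram(\omega_R/R) = 0$. Consequently $\im \phi_* = \tilde\psi_*(\im \alpha) = \im \tilde\psi_*$, and Matlis dualizing (while using the identifications $H^d_\fram(R)^\vee \cong \omega_R$ and $H^d_\fram(\omega_R)^\vee \cong R$, together with the fact that the Matlis dual of $\alpha$ recovers the inclusion $R \hookrightarrow \omega_R$) will give $\mytau_{R^+}(R,\Delta) = \tau_+(R,\Delta)$ as submodules of $R$. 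The main obstacle is purely bookkeeping: tracking the various identifications between rank-one reflexive modules and their local-cohomology duals consistently; no input beyond \autoref{lem.B0AsInverseLimit} and the dimension count for $\omega_R/R$ is needed.
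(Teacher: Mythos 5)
Your proposal is correct and follows essentially the same route as the paper: both identify $\myB^0(\Spec R,\Delta;\sO_X)$ via \autoref{lem.B0AsInverseLimit} with the Matlis dual of the image of $H^d_\fram(\sO_X(K_X))$ in $H^d_\fram$ of the colimit, use multiplication by $f^{1/n}$ to identify $\sO_{X^+}(\pi^*(K_X+\Delta))$ with $R^+$, and replace $H^d_\fram(\omega_R)$ by $H^d_\fram(R)$ via the surjection coming from the fact that $\omega_R/R$ has dimension $\le d-1$. The only cosmetic difference is that you dualize back to submodules of $R$ at the end, whereas the paper stops at the level of images in $H^d_\fram(R^+)$; the substance is identical.
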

\begin{proof}
	Set $X = \Spec R$ and assume that $K_X \geq 0$.  Define $\Gamma = K_X + \Delta$ and write $\Div_X(f) = n\Gamma = n(K_X + \Delta)$.  By  \autoref{lem.B0AsInverseLimit} \autoref{eq.lem.B0AsInverseLimit.DualImageForFinite}, we see that $\myB^0(X,\Delta;\sO_X)$ is the Matlis dual of the image, where $d = \dim R$, of
	\[
		H^d_\m(\sO_X(K_X)) \to \varinjlim_Y H^d_\m\big( \sO_Y(\lfloor f^*(K_X+\Delta) \rfloor)\big) = H^d_\m\big(\varinjlim_Y\sO_Y(\lfloor f^*(K_X+\Delta) \rfloor)\big).
	\]
		where $Y = \Spec S \xrightarrow{f} \Spec R = X$ is finite, in other words $R \subseteq S \subseteq R^+$ is a finite extension.  Because $R \to \omega_R$ has cokernel $\omega_R/R$ of dimension $< d$, we see that $H^d_{\m}(\omega/R) = 0$ by \cite[\href{https://stacks.math.columbia.edu/tag/0DXC}{Tag 0DXC}]{stacks-project} which implies that $H^d_\m(R) \twoheadrightarrow H^d_\m(\omega_R)$ surjects.  Hence their images in $H^d_\m\big(\varinjlim_Y\sO_Y(\lfloor f^*(K_X+\Delta) \rfloor)\big)$ are the same.  By restricting to those $S$ which are large enough to contain $f^{1/n}$, we see that $\sO_Y(\lfloor f^*(K_X+\Delta) \rfloor) = {1 \over f^{1/n}} \cdot \sO_Y$.  Finally, putting this all together, 
	$R^{+} = \varinjlim S$
	we see that $\myB^0(X,\Delta;\sO_X)$ is Matlis dual to the image of
	\[
		H^d_\m(R) \xrightarrow{\cdot f^{1/n}} H^d_\m(R^+).
	\]
	But this image is Matlis dual to $\mytau_{R^+}(R, \Delta)$.
			\end{proof}

\subsection{Transformation of $\myB^0$ under alterations}
\label{subsec.TransformationsOfB0UnderAlterations}

In this section we record for later use a number of transformation rules for $\myB^0$ as we pass from an alteration to the base $X$.

The first transformation rule allows us to do away with the divisor $\Delta$ by absorbing it into $\sM$, at least on some cover.  
\begin{lemma}
	\label{lem.FiniteCoverToRemoveDelta}
	With notation as in \autoref{def:B_0}. Suppose that $\pi : Y \to X$, where $Y$ is normal, is either:
	\begin{itemize}
		\item[(a)] a finite surjective map, or
		\item[(b)] $M - K_X - \Delta$ is $\bQ$-Cartier and $\pi$ is an alteration.
	\end{itemize}
	In either case, assume that $\pi^*(M - K_X - \Delta)$ has integer coefficients
	and consider the map
	\[
		\Tr : H^0(Y, \sO_Y(K_Y + \pi^*(M - K_X - \Delta))) \to H^0(X, \sM).	
	\]
	Then we have that
	\[
		\Tr\big(\myB^0(Y, \sO_Y(K_Y + \pi^*(M - K_X - \Delta)))\big) = \myB^0(X, \Delta, \sM).
	\]
\end{lemma}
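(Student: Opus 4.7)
The basic strategy is to exploit the ``compatible system'' characterization of $\myB^0$ from \autoref{lem.B0AsInverseLimit} together with elementary transitivity of the Grothendieck trace along towers of covers.  In case (b), by \autoref{cor.B0VsB0Alt} we may freely use alterations in place of finite covers on $X$ (this requires the assumed $\bQ$-Cartierness of $M - K_X - \Delta$).  A useful preliminary reduction: the poset of finite covers (resp.\ alterations) $g: Z \to X$ with $g^*(M - K_X - \Delta)$ an integral divisor is cofinal among all finite covers (resp.\ alterations), so we may restrict attention to such $g$ and ignore the rounding operation $\lceil \cdot \rceil$ appearing in \autoref{def:B_0}.

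\emph{Inclusion $\Tr\big(\myB^0(Y, \sO_Y(K_Y + \pi^*(M - K_X - \Delta)))\big) \subseteq \myB^0(X, \Delta; \sM)$.} Fix $s$ in the left-hand side, coming from some $t \in H^0(Y, K_Y + \pi^*(M - K_X - \Delta))$.  Let $g : Z \to X$ be a finite cover (or alteration), normalized so that $g^*(M - K_X - \Delta)$ is integral; we must show $\Tr_\pi(t)$ lies in the image from $Z$.  Let $W$ be a normal irreducible component of $Y \times_X Z$ dominating both factors.  Then $W \to Y$ is finite (resp.\ finite times alteration) and $W \to Z$ is an alteration (resp.\ finite), so by definition of $\myB^0$ on $Y$ we may write $t = \Tr_{W/Y}(t')$ for a suitable $t' \in H^0(W, K_W + \text{pullback})$.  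By transitivity of the trace,
\[
    \Tr_\pi(t) \;=\; \Tr_{W/X}(t') \;=\; \Tr_{Z/X}\big(\Tr_{W/Z}(t')\big),
\]
exhibiting $\Tr_\pi(t)$ as coming from $Z$.  As $g$ was arbitrary, $\Tr_\pi(t) \in \myB^0(X, \Delta; \sM)$.

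\emph{Inclusion $\supseteq$.} Given $s \in \myB^0(X, \Delta; \sM)$, invoke \autoref{lem.B0AsInverseLimit}\autoref{lem.B0AsInverseLimit.DualImageForFinite.InverseLimitProjection} to lift $s$ to a compatible system $(s_{Y'})_{Y' \to X}$ indexed by finite covers (resp.\ alterations) of $X$ with $s_X = s$.  The element $s_Y$ lies in $H^0(Y, K_Y + \pi^*(M - K_X - \Delta))$ since $\pi^*(M - K_X - \Delta)$ is integral, and the very compatibility of the system along the arrow $\pi : Y \to X$ gives $\Tr_\pi(s_Y) = s_X = s$.  It remains to check that $s_Y$ is itself $\bigplus$-stable on $Y$.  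For any finite cover (resp.\ alteration) $h : Z \to Y$, the composite $\pi \circ h : Z \to X$ is again a finite cover (resp.\ alteration), so the value $s_Z$ of our compatible system at $Z$ is defined and satisfies $\Tr_{h}(s_Z) = s_Y$.  The subsystem $(s_Z)_{Z \to Y}$ thus projects to $s_Y$ via \autoref{lem.B0AsInverseLimit}\autoref{lem.B0AsInverseLimit.DualImageForFinite.InverseLimitProjection} applied to $Y$, yielding $s_Y \in \myB^0(Y, \sO_Y(K_Y + \pi^*(M - K_X - \Delta)))$ and hence $s = \Tr_\pi(s_Y)$ in the desired image.

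\emph{Main obstacle.} The substantive input — namely, that an arbitrary section of $\myB^0(X, \Delta; \sM)$ actually admits a compatible system of preimages on every finite cover — has already been encapsulated in \autoref{lem.B0AsInverseLimit} (and ultimately rests on the completeness of $R$ via \autoref{RlimExact}).  What remains here is purely formal: matching up traces along $Z \to Y \to X$ factorizations and absorbing the roundings, which is handled by the cofinality remark in the preliminary reduction.
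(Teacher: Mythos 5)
Your proof is correct and follows exactly the route the paper takes: the paper's entire proof is the one line "This is an immediate consequence of \autoref{lem.B0AsInverseLimit}," and your argument is precisely the expected unpacking of that — the compatible-system description, transitivity of the trace along $Z \to Y \to X$, cofinality of covers making the divisor integral, and \autoref{cor.B0VsB0Alt} for the alteration case. Nothing to add.
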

\begin{proof}
	This is an immediate consequence of \autoref{lem.B0AsInverseLimit}.
\end{proof}

We now record a transformation for a birational $\pi : W \to X$.

\begin{lemma} \label{lemma-B0-under-pullbacks}  
	Let $X$ be a normal integral scheme proper over $\Spec(R)$ as in \autoref{def:B_0} and $B \geq 0$ a $\bQ$-divisor on $X$ such that $K_X + B$ is $\bQ$-Cartier.  Let $\pi \colon W \to X$ be a proper birational morphism from a normal {integral} scheme $W$ and write $K_W + B_W = \pi^*(K_X+B)$. Let $B' \geq 0$ be an effective $\Q$-divisor such that $B' \geq B_W$. Then for every Cartier divisor $L$ on $X$, we have
	\[
		\myB^0(X,B;\sO_X(L)) \supseteq \myB^0(W,B'; \sO_W(\pi^*L)). 
	\]
	Furthermore, if $B' = B_W$ (in particular, this assumes that $B_W$ is effective), then this containment is an equality.
\end{lemma}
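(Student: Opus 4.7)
The strategy is to use the inverse-limit characterization of $\myB^0$ from \autoref{lem.B0AsInverseLimit}: given $s \in \myB^0(W,B'; \sO_W(\pi^*L))$, viewed in $H^0(X,\sO_X(L))$ via the identification $\pi_*\sO_W(\pi^*L)=\sO_X(L)$ (which holds since $\pi_*\sO_W=\sO_X$), I need to produce a compatible system of preimages on every finite cover of $X$. For each finite $f\colon Y\to X$ with $Y$ normal, I will let $Y_W$ be the normalization of the dominant component of $Y\times_X W$, obtaining a commutative diagram with $g\colon Y_W\to W$ finite and $h\colon Y_W\to Y$ proper birational (so $fh=\pi g$). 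The compatible system on the $W$-side supplies a section $s_{Y_W}$, which I will push forward along $h$ to produce the desired $s_Y$ on $Y$.

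The main sheaf-theoretic step uses the $\bQ$-Cartier hypothesis on $K_X+B$ crucially. Since $K_W+B_W=\pi^*(K_X+B)$ is $\bQ$-Cartier, the identity $fh=\pi g$ gives $g^*(K_W+B_W)=h^*f^*(K_X+B)$, so that
\[
g^*(\pi^*L-K_W-B') \;=\; h^*f^*(L-K_X-B)\;-\;g^*(B'-B_W).
\]
Because $B'-B_W\geq 0$ by hypothesis and $g$ is finite between normal schemes, the second term is effective, and taking ceilings yields $\lceil g^*(\pi^*L-K_W-B')\rceil \leq \lceil h^*f^*(L-K_X-B)\rceil$. This gives an inclusion of reflexive sheaves on $Y_W$, and applying $h_*$ followed by a codimension-one valuation check along strict transforms (the core of \autoref{lemma:pushforward}, using that $h$ is an isomorphism at the generic point of each prime divisor of $Y$) yields $h_*\sO_{Y_W}(K_{Y_W}+\lceil h^*D\rceil)\subseteq \sO_Y(K_Y+\lceil D\rceil)$ for any $\bQ$-divisor $D$ on $Y$. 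Composing these two inclusions produces the required $s_Y\in H^0(Y,\sO_Y(K_Y+\lceil f^*(L-K_X-B)\rceil))$ representing the same function-field element as $s_{Y_W}$.

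To finish the containment, I will verify that $\{s_Y\}_{f\colon Y\to X}$ is compatible under refinements $Y'\to Y\to X$ and projects to $s$ in $H^0(X,\sO_X(L))$. A refinement yields $Y'_W\to Y_W$ over $W$ by the universal property of fiber product and normalization; the compatibility of $\{s_Z\}_Z$ on the $W$-side transfers because the pushforward $h_*$ is determined purely at the function-field level, which is preserved by all trace maps in play. The projection to $s$ follows by the same function-field identification together with $\pi_*\sO_W(\pi^*L)=\sO_X(L)$. For the equality statement when $B'=B_W$, the corresponding pullback identity becomes $g^*(\pi^*L-K_W-B_W)=(\pi g)^*(L-K_X-B)$ with no correction term; I will then invoke \autoref{cor.B0VsB0Alt} to rewrite $\myB^0(X,B;\sO_X(L))=\myB^0_{\alt}(X,B;\sO_X(L))$ and observe that any finite $g\colon Z\to W$ gives an alteration $\pi g\colon Z\to X$, so an element of the alteration-system on the $X$-side immediately supplies an element of the finite-cover-system on the $W$-side.

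The main technical obstacle will be the verification that $h_*\sO_{Y_W}(K_{Y_W}+\lceil h^*D\rceil)\hookrightarrow \sO_Y(K_Y+\lceil D\rceil)$ is well-defined. Although this is morally \autoref{lemma:pushforward}, that lemma requires $K_Y+\lceil D\rceil$ to be $\bQ$-Cartier, which is not automatic here because $K_Y$ itself need not be $\bQ$-Cartier even though $f^*(K_X+B)$ is. I will argue directly in codimension one: at the generic point of each prime divisor $P\subset Y$ with strict transform $Q\subset Y_W$, the map $h$ is a local isomorphism, so $\mathrm{mult}_Q(K_{Y_W})=\mathrm{mult}_P(K_Y)$ and $\mathrm{mult}_Q(h^*D)=\mathrm{mult}_P(D)$, transferring the defining inequality of the section from $Y_W$ to $Y$. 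The compatibility of the inverse system is formal but requires keeping careful track of the commuting trace maps via the common function field.
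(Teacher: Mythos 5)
Your proof is correct. The paper's own argument is much shorter and runs entirely through alterations: for any alteration $f\colon Y\to W$, the inequality $\lceil f^*(\pi^*L-(K_W+B'))\rceil \le \lceil (\pi\circ f)^*(L-(K_X+B))\rceil$ (an equality when $B'=B_W$) gives a containment of the corresponding $H^0$'s on $Y$ compatibly with the trace maps to $H^0(W,\pi^*L)=H^0(X,L)$, and since alterations of $W$ composed with $\pi$ are cofinal among alterations of $X$, \autoref{cor.B0VsB0Alt} applied on both sides finishes in one stroke. You prove the equality half in essentially that way, but for the containment you instead transport a compatible system in the sense of \autoref{lem.B0AsInverseLimit} from the finite covers of $W$ to those of $X$, via the normalized dominant components $Y_W$ of $Y\times_X W$ and pushforward along the proper birational $h\colon Y_W\to Y$. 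This costs you the bookkeeping you correctly flag — in particular the verification that $h_*\sO_{Y_W}(K_{Y_W}+\lceil h^*D\rceil)\subseteq\sO_Y(K_Y+\lceil D\rceil)$, which does go through because the locus where $h$ fails to be an isomorphism has codimension at least two in $Y$ (a divisor in its image would force $\dim h^{-1}(Z)\ge\dim Y$), so one only compares multiplicities at strict transforms — but it buys a small technical advantage: when $B'>B_W$ the divisor $\pi^*L-(K_W+B')$ need not be $\bQ$-Cartier, so $\myB^0_{\alt}(W,B';\cdot)$ and the pullback of that divisor along an arbitrary alteration are not literally defined, whereas your route only ever pulls back Weil divisors along finite maps, where this is unambiguous. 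Both arguments ultimately rest on the same two pillars: the effectivity of the pullback of $B'-B_W$ and the identification $H^0(W,\pi^*L)=H^0(X,L)$.
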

\begin{proof}
	For every alteration $f \colon Y \to W$ we have the following diagram
	\begin{center}
		\begin{tikzcd}
			H^0\big(Y,\sO_Y(K_Y + \lceil f^*(\pi^*L - (K_W+B')) \rceil)\big) \arrow{r} \arrow{d}{\subseteq} & H^0(W, \pi^*L) \arrow{d}{=} \\
			H^0\big(Y,\sO_Y(K_Y + \lceil (\pi \circ f)^*(L - (K_X+B)) \rceil)\big) \arrow{r} & H^0(X, L).
		\end{tikzcd}
	\end{center}
	Note that in the case that $B' = B_W$, the left vertical containment is an equality.
	An application of \autoref{cor.B0VsB0Alt} completes the proof.
\end{proof}

In the proof of the existence of flips, we will need a technical variant of \autoref{lemma-B0-under-pullbacks}.  We record it here.

\begin{lemma} \label{lemma-B0-under-pullbacks-fancy} Let $X$ be a normal integral scheme proper over $\Spec (R)$ as in \autoref{def:B_0} and $B \geq 0$ a $\bQ$-divisor on $X$ such that $K_X + B$ is $\bQ$-Cartier. 
	Let $\pi \colon Y \to X$ be a proper birational morphism from a normal {integral} scheme $Y$ and write $K_Y + B_Y = \pi^*(K_X+B)$. Let $L$ be a $\Q$-Cartier $\Q$-divisor on $X$ such that $(X,B+\{-L\})$ is klt. Then {$H^0(X,\sO_X(\lceil L \rceil)) = H^0(Y,\sO_Y(\lceil \pi^*L + A_Y\rceil))$} and
\[
\myB^0(X,B+\{-L\};\sO_X(\lceil L \rceil)) = \myB^0(Y,\{B_Y-\pi^*L\}; \sO_Y(\lceil \pi^*L + A_Y\rceil)), 
\]
where $A_Y := -B_Y = K_Y - \pi^*(K_X + B)$.  Here $\{ \Delta \} = \Delta - \lfloor \Delta \rfloor$ is the fractional part of $\Delta$.
\end{lemma}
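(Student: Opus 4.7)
The plan is to first identify the two global-section modules and then match the $\myB^0$'s via the absolute integral closure description of \autoref{lem.B0AsInverseLimit}.

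For the former, I would first note that $\lceil L \rceil$ is $\bQ$-Cartier, since $K_X + B + \{-L\}$ is $\bQ$-Cartier (by the klt hypothesis) and $K_X + B$ is $\bQ$-Cartier (by assumption), forcing $\{-L\} = \lceil L \rceil - L$ and hence $\lceil L \rceil$ to be $\bQ$-Cartier. I would then verify $\pi_*\sO_Y(\lceil \pi^*L + A_Y \rceil) = \sO_X(\lceil L \rceil)$ prime-by-prime using the klt inequality. Concretely, for a prime $E$ on $Y$ set $a = \coeff_E(\pi^*L)$, $b = \coeff_E(B_Y)$, and $c = \coeff_E(\pi^*\lceil L \rceil)$; since $\pi^*\{-L\} = \pi^*\lceil L \rceil - \pi^*L$, the klt inequality reads $b + c - a < 1$. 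For $E$ non-exceptional with image $E'$ on $X$ one has $c = \lceil a \rceil$, and the klt condition together with $b \geq 0$ forces $\lceil a - b \rceil = \lceil a \rceil = c$, which matches the non-exceptional contribution to $\sO_X(\lceil L \rceil)$. For exceptional $E$ the klt inequality rearranges to $\lceil a - b \rceil \geq \lfloor c \rfloor$, which combines with the integrality of valuations and the observation that any $f \in H^0(X, \sO_X(\lceil L \rceil))$ satisfies $v_E(f) + c \geq 0$ (hence $v_E(f) \geq -\lfloor c \rfloor$) to yield the required containment. The resulting identity of sheaves gives in particular $H^0(X, \sO_X(\lceil L \rceil)) = H^0(Y, \sO_Y(\lceil \pi^*L + A_Y \rceil))$.

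For the $\myB^0$ equality, I would compute the adjoint class $K + \Delta - M$ appearing in \autoref{lem.B0AsInverseLimit} on both sides: on $X$ one has
\[
K_X + (B + \{-L\}) - \lceil L \rceil = K_X + B - L,
\]
while on $Y$, using $\lceil \pi^*L + A_Y \rceil = -\lfloor B_Y - \pi^*L \rfloor$ together with $K_Y + B_Y = \pi^*(K_X + B)$,
\[
K_Y + \{B_Y - \pi^*L\} - \lceil \pi^*L + A_Y \rceil = K_Y + B_Y - \pi^*L = \pi^*(K_X + B - L).
\]
Both $\myB^0$'s are therefore Matlis dual to the image of a natural map with common target $H^d \myR \Gamma_\m \myR \Gamma\bigl(X^+, \sO_{X^+}(\pi_{X^+}^*(K_X + B - L))\bigr)$, noting that $Y^+ = X^+$. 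The sheaf identity from the first step, upon applying \autoref{lem:duality} and Matlis duality, produces an isomorphism of sources $H^d \myR \Gamma_\m \myR \Gamma(X, \sO_X(K_X - \lceil L \rceil)) \cong H^d \myR \Gamma_\m \myR \Gamma(Y, \sO_Y(K_Y - \lceil \pi^*L + A_Y \rceil))$ compatible with the maps to $X^+$ (as the $X$-map factors through the $Y$-map). Hence the two images coincide and so do the $\myB^0$'s.

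The main technical obstacle is the coefficient arithmetic of step one: the klt hypothesis on $(X, B + \{-L\})$ is used in an essential way to convert the fractional-part rounding on $X$ (via $\{-L\}$) into the pulled-back rounding on $Y$ (via $A_Y$), with exceptional and non-exceptional prime divisors on $Y$ requiring separate treatment.
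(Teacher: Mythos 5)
Your divisor arithmetic is correct and is essentially the computation the paper performs: the identity $K_Y + \{B_Y-\pi^*L\} - \lceil \pi^*L+A_Y\rceil = \pi^*(K_X+B+\{-L\}-\lceil L\rceil)$ is the paper's chain of equalities, and your prime-by-prime verification that $H^0(X,\sO_X(\lceil L\rceil)) = H^0(Y,\sO_Y(\lceil \pi^*L+A_Y\rceil))$ (using the klt inequality separately on exceptional and non-exceptional primes) is an unpacked version of the paper's argument that the map $\kappa$ exists and is an isomorphism via \autoref{lemma:pushforward}. Up to that point the two arguments coincide.

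The gap is in the final step, where you assert that both $\myB^0$'s are dual to images landing in a \emph{common} target by ``noting that $Y^+=X^+$.'' This is false as stated: $X^+\to X$ is affine (an inverse limit of finite covers), whereas $Y^+\to X$ is not, since $Y^+\to X^+$ is an inverse limit of proper birational modifications (for each finite extension $K'$ of $K(X)=K(Y)$, the normalization of $Y$ in $K'$ maps properly and birationally, but not isomorphically, onto the normalization of $X$ in $K'$). Consequently the two colimit targets in \autoref{lem.B0AsInverseLimit}\autoref{eq.lem.B0AsInverseLimit.DualImageForFinite} are a priori different, and what your argument actually yields is only a natural map $T_X\to T_Y$ between them, hence only the containment $\myB^0(Y,\{B_Y-\pi^*L\};\cdot)\subseteq \myB^0(X,B+\{-L\};\cdot)$ — the same containment already provided by \autoref{lemma-B0-under-pullbacks}. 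Identifying the two targets (equivalently, getting the reverse containment) is precisely where the nontrivial input enters: one must pass from finite covers to alterations via \autoref{cor.B0VsB0Alt} (whose proof rests on the $p$-adic Riemann--Hilbert comparison), which applies here because $\lceil L\rceil - (K_X+B+\{-L\}) = -(K_X+B-L)$ is $\bQ$-Cartier; after that substitution the alterations of $Y$ and of $X$ form cofinal index categories and the two images literally coincide. This is exactly how the paper concludes, so your proof is repaired by replacing the appeal to ``$Y^+=X^+$'' with an appeal to \autoref{cor.B0VsB0Alt} on both sides.
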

\begin{proof}
	First, since $(X, B +\{-L\})$ is klt, implicitly $\{-L\}$ is also $\bQ$-Cartier.  Thus so is $\lceil L \rceil = L + \{-L\}$.
	Notice that 
	\[
		\begin{array}{rl}
			& \lceil \pi^* L  + A_Y \rceil - (K_Y + \{ B_Y - \pi^* L\}) \\
			= & \lceil \pi^* L  + A_Y\rceil - K_Y - B_Y + \pi^* L {+} \lfloor B_Y - \pi^* L \rfloor\\
			= & \lceil \pi^* L  + A_Y\rceil - K_Y - B_Y + \pi^* L - \lceil \pi^* L + A_Y \rceil\\
			= & \pi^*(L - K_X - B)\\
			= & \pi^*(\lceil L \rceil - (K_X + B + \{ -L \})).  		\end{array} 
	\]
	Therefore, for every sufficiently large alteration $f \colon W \to Y$ we have the following diagram
	\begin{center}
		\begin{tikzcd}			
			H^0\big(W,K_W +  f^*(\lceil \pi^*L+A_Y \rceil - (K_Y+\{B_Y-\pi^*L\}))\big) \arrow{r} \arrow{d}{=} & H^0\big(Y, \sO_Y(\lceil \pi^*L+A_Y \rceil)\big) \arrow{d}{\kappa, \; =} \\
			H^0\big(W,K_W +  (\pi \circ f)^*(\lceil L\rceil - (K_X+B+\{-L\}))\big) \arrow{r} & H^0\big(X, \sO_X(\lceil L \rceil)\big).
		\end{tikzcd}
	\end{center}
	The equality of the left vertical arrow follows from our chain of equalities above.  However, we need to justify the equality, and in fact existence, of the right vertical arrow labeled $\kappa$ (this is where we use that $(X, B+\{-L\})$ is klt).  
	
	Now, since $(X, B + \{-L\})$ is klt, the components of $-B - \{-L\} = -B + L + \lfloor -L \rfloor = L - B - \lceil L \rceil$ have coefficients $\leq 0$ and $>- 1$.  Thus $\lceil L - B \rceil = \lceil L \rceil$ and so since $\pi_*$ of a divisor simply removes exceptional components, we have that:
	\[
		\pi_* \lceil \pi^* L + A_Y \rceil = \pi_* \lceil \pi^*L - B_Y\rceil = \lceil L - B \rceil = \lceil L \rceil.
	\]
	This at least implies that the map $\kappa$ exists.

	Next, again because $(X,B+\{-L\})$ is klt, $\lceil A_Y - \pi^*\{-L\} \rceil = \lceil K_Y - \pi^*(K_X + B + \{-L\}) \rceil$
	is effective and exceptional over $X$.  Therefore:
	\begin{align*}
				&\lceil \lceil \pi^*L + A_Y \rceil -\pi^*\lceil L\rceil \rceil \geq \lceil \pi^*L + A_Y  -\pi^*\lceil L\rceil \rceil = \lceil A_Y - \pi^*\{-L\} \rceil \geq 0.
	\end{align*}
	Hence the map $\kappa$ is an isomorphism (\autoref{lemma:pushforward}) and the diagram exists as claimed.
		Once we have the diagram in place, the result follows immediately by \autoref{cor.B0VsB0Alt}.
										\end{proof}

\subsection{Adjoint analogs}
\label{subsec.AdjointAnalogsOfS^0}

The subspace $\myB^0$ of $H^0$ provides a global analog of the test ideal in positive characteristic and the multiplier ideal in mixed characteristic. In fact, we will see it frequently as a graded piece of the $R^+$-test ideal for a cone. Therefore, the subspace $\myB^0$, in contrast to $S^0$ of \cite{SchwedeACanonicalLinearSystem} (a global analog of a non-$F$-pure ideal / lc ideal), cannot satisfy the sharpest possible adjunction to a divisor.  To address this problem we will create an adjoint-ideal version of $\myB^0$, to which we can lift sections.
	With notation as in \autoref{setup} assume that $\Delta = S + B$ where $S$ is a reduced divisor whose components do not appear in $B$.  	
	
	For each irreducible component $S_i$ of $S$ ($i = 1, \dots, t$), choose an integral subscheme $S_i^+$ of $X^+$ which lies over $S$.  Notice that this $S_i^+$ is indeed an absolute integral closure of $S$ so this is not an abuse of notation.  Equivalently this means that for every {normal} finite cover $Y \to X$ we pick a compatible choice of prime divisor $S_{i,Y}$ lying above $S_i$.  In that case, we set $S_Y$ to be the sum of the $S_{i,Y}$.  We define:
	\[
	    S^+ := \coprod_{i = 1}^t S_i^{+}.
	\]
	There is an affine map $f : S^+ \to X^+$ but it is not in general a closed immersion unless $t = 1$.  Indeed, we notice that when $S$ has multiple irreducible components, the map $\sO_{X^+} \to f_* \sO_{S^+} \cong \oplus_{i = 1}^t \sO_{S_i^+}$ is not surjective (the isomorphism follows since $S^+$ is a disjoint union).  From here on out, we abuse notation slightly and omit the $f_*$ on $\sO_{S^+}$.   Notice that $\sO_{X^+} \to \sO_{S_i^+}$ is surjective for each $i$.

		We will define the adjoint-ideal version of $\myB^0$ as the $R$-Matlis dual of the image of
	\[
	\myH^d \myR\Gamma_{\fram}\myR\Gamma(X, \sO_X(K_X - M)) \to \myH^d \myR \Gamma_\m\myR\Gamma(X^+, \bigoplus_{i = 1}^t \sO_{X^+}(-S_i^+ + \pi^*(K_X + S + B - M))).
	\]
	The origin of this map is carefully described below.  This dual is also identified with the intersection 
	\[
	    \myB^0_{S}(X,S + B; \sM) = \bigcap_Y \Image \Bigg( H^0\Big(Y, \bigoplus_{i = 1}^t \sO_Y(K_Y + S_{i,Y} + f^*  (M-K_X - S - B))  \Big) \to H^0(X,  \sM) \Bigg)
    \]
	see \autoref{lem.ComplexKVersionEqualsOrdinaryVersion}.

	Consider the short exact sequence (a direct sum of short exact sequences):
	\[
	    \xymatrix@R=12pt{
		    0 \ar[r] & \bigoplus_{i = 1}^t \sO_{X^+}(-S_i^+) \ar[r] & \bigoplus_{i = 1}^t \sO_{X^+} \ar[r] & \bigoplus_{i = 1}^t \sO_{S_i^+} \ar@{=}[d] \ar[r] & 0\\
		             &                                              &                                      & \sO_{S^+}
		}
	\]
	where $\sO_{X^+}(-S_i^+)$ is the colimit of the $\sO_Y(-S_{Y_i})$.  We notice that there is a map of short exact sequences where the bottom vertical maps correspond to the diagonals:
	\[
		\xymatrix{
			0 \ar[r] & \sO_X(-S) \ar[d] \ar[r] & \sO_X \ar[r] \ar[d] & \sO_S \ar[r] \ar[d] & 0\\
			0 \ar[r] & \sO_Y(-S_Y) \ar[d] \ar[r] & \sO_Y \ar[r] \ar[d] & \sO_{S_Y} \ar[r] \ar[d] & 0\\
			0 \ar[r] & \bigoplus_{i = 1}^t \sO_{X^+}(-S_i^+) \ar[r] & \bigoplus_{i = 1}^t \sO_{X^+} \ar[r] & \sO_{S^+} = \bigoplus_{i = 1}^t \sO_{S_i^+}\ar[r] & 0.
		}
	\]
	Assume that $f' : Y' \to X$ is such that $f'^*(K_X + S + B)$ is integral.  
	Twisting the top row by $K_X + S - M$ and the second and third by $f'^*(K_X + S + B - M)$ (and using that $B$ is effective for the second map), we obtain a factorization
\begin{equation}
	\label{eq.FactorizationAdjointMap}
	\begin{array}{rl}
		\sO_X(K_X - M) \to & \sO_{Y'}(-S_{Y'} + f'^*(K_X + S- M)) \\
		\to & \sO_{Y'}(-S_{Y'} + f'^*(K_X + S + B - M)) \\
		\to & \bigoplus_{i = 1}^t \sO_{Y'}(-S_{i,Y'} + f'^*(K_X + S + B - M))\\
		\to & \varinjlim_{Y} \bigoplus_{i = 1}^t \sO_{Y}(-S_{i,Y} + f^*(K_X + S + B - M))\\
		= & \bigoplus_{i = 1}^t \sO_{X^+}(-S_i^+ + \pi^*(K_X + S + B - M)).
	\end{array}
\end{equation}

\begin{definition} \label{setup_adjoint}
With notation as above, and in particular fixing $S^+ = \coprod_{i = 1}^t S_i^+ \to X^+$, define $\myB^0_{S}(X,S + B; \sM)$ to be the $R$-Matlis dual of {the image of}
	{\[
	\myH^d \myR\Gamma_{\fram}\myR\Gamma(X, \sO_X(K_X - M)) \to \underbrace{\varinjlim_Y  \myH^d \myR \Gamma_\m\myR\Gamma(Y, \bigoplus_{i = 1}^t \sO_{Y}(-S_{i,Y} + f^*(K_X + S + B - M)))}_{\myH^d \myR \Gamma_\m\myR\Gamma(X^+, \bigoplus_{i = 1}^t \sO_{X^+}(-S_i^+ + \pi^*(K_X + S + B - M)))} 
	\]}where $d = \dim X$ and $Y$ runs over finite maps with $Y$ normal and $f^*(K_X + S + B)$ has integer coefficients.  Notice that $\myB^0_{S}(X,S + B; \sM) \subseteq H^0(X, \sM)$ since its Matlis dual is a quotient of $\myH^d \myR\Gamma_{\fram}\myR\Gamma(X, \sO_X(K_X - M))$, see \autoref{lem:duality}.  
	
Similarly, we define $\myB^0_{S, \alt}(X,S+B; \sM)$ to be the $R$-Matlis dual of the image of 
{\[
\myH^d \myR\Gamma_{\fram}\myR\Gamma(X, \sO_X(K_X - M))  \to \varinjlim_Y \myH^d \myR \Gamma_\m\myR\Gamma(Y, \bigoplus_{i = 1}^t \sO_Y(-S_{Y,i} + \lfloor f^*(K_X+S+B-M)\rfloor))		
\]}where $Y$ runs over all normal alterations and we define $S_{Y,i}$ to be the strict transform of the corresponding divisors on the Stein factorization.  We may restrict to those where $f^*(K_X + S + B)$ is Cartier if desired. \end{definition}

A priori, these definitions \emph{depend} on the choice of $S^+ = \coprod_{i=1}^t S_i^+ \to X^+$.  Thus, our first order of business is to show that this choice does not matter.  We begin with the case that $S$ is integral.

\begin{lemma}
    \label{lem.B0_SIndependentOfChoiceForIntegral}
	Suppose $S$ is integral.  The objects $\myB^0_{S}(X,\Delta;\sM)$ and $\myB^0_{S,alt}(X,\Delta;\sM)$ are independent of the choice of $S^+ \subseteq  X^+$.
\end{lemma}
\begin{proof}
		We prove only the case of $\myB^0_{S}(X,\Delta;\sM)$ as the alteration case is very similar.
	For any two choices $S^+$ and $S'^+$ mapping to $X^+$, 
	pick an element $\sigma$ of $\mathrm{Gal}(X^+/X)$ which sends $S^+$ to $S'^+$.  Then one obtains the trace maps in the tower computing $\myB^0_{S'}$ by precomposing those computing $\myB^0_{S}$ by the isomorphism $\sigma$.  Therefore the images are pairwise equal and the intersections are the same.  
\end{proof}

The following lemma allows us to assume that $S$ is integral in some cases, and finishes the proof that $\myB^0_{S}$ is independent of $S^+ \to X^+$.

\begin{lemma}
	\label{lem.B0_SIsSumOfB0_SOfComponents}
	With notation as above, 
	\[
		\myB^0_{S}(X,S + B; \sM) = \sum_{i = 1}^t \myB^0_{S_i}(X,S + B; \sM).
	\]
	Likewise with $\myB^0_{S,\alt}(X, S+B; \sM)$ when $K_X + S + B$ is $\bQ$-Cartier.  As a consequence, $\myB^0_{S}(X,S + B; \sM)$ and $\myB^0_{S,\alt}(X, S+B; \sM)$ are independent of the choice $S^+ = \coprod_{i = 1}^t S_i^+ \to X^+$.
\end{lemma}
\begin{proof}
	The first statement is a direct application of Matlis duality.  Indeed suppose that $A \twoheadrightarrow B \hookrightarrow \bigoplus_{i = 1}^t C_i$ is a surjective map followed by a injective map of $R$-modules.  The Matlis dual $B^{\vee}$ is then the sum of the images of of the $C_i^{\vee} \to A^{\vee}$.  The alteration statement is proven in the same way.  The statement about independence of choice now follows from \autoref{lem.B0_SIndependentOfChoiceForIntegral} as each $S_i$ is integral.
\end{proof}

Our next goal is to study several alternate characterizations of $\myB^0_{S}$.  

\begin{lemma}
	\label{lem.ComplexKVersionEqualsOrdinaryVersion}
	With notation as above, then 
	\[
		\myB^0_{S}(X,S + B; \sM) = \bigcap_Y \Image \Bigg( H^0\Big(Y, \bigoplus_{i = 1}^t \sO_Y(K_Y + S_{i,Y} + f^*  (M-K_X - S - B))  \Big) \to H^0(X,  \sM) \Bigg)
	\]	
	where $d = \dim X$ and $Y$ runs over finite maps where $f^*(K_X + S +B)$ is a Weil divisor.  
	Likewise with $\myB^0_{S, \alt}$ (with alterations instead of finite maps).  	Furthermore, the elements in those sets correspond to compatible systems of elements 
	\[
		s_Y \in H^0\Big(Y,  \bigoplus_{i = 1}^t \sO_Y(K_Y + S_{i,Y} + f^*  (M-K_X - S - B))  \Big)
	\]
	as in \autoref{lem.B0AsInverseLimit}.  
\end{lemma}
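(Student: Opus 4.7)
The plan is to mirror the structure of \autoref{lem.B0AsInverseLimit}, with the main new ingredient being the handling of the direct-sum decomposition indexed by the components $S_i$ and the twist by $-S_{i,Y}$.

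First, fix a finite cover $f : Y \to X$ with $Y$ normal and $f^{*}(K_X + S + B)$ an integral Weil divisor (so the roundings disappear). Applying \autoref{lem:duality} to each summand individually, together with the identification $\sHom_{\sO_Y}(\sO_Y(D), \omega_Y) \cong \sO_Y(K_Y - D)$ valid for any Weil divisor $D$ on the normal scheme $Y$, one obtains a natural isomorphism
\[
\Big(\myH^d \myR\Gamma_{\fram} \myR\Gamma\big(Y, \bigoplus_i \sO_Y(-S_{i,Y} + f^{*}(K_X + S + B - M))\big)\Big)^{\vee} \cong \bigoplus_i H^0\big(Y, \sO_Y(K_Y + S_{i,Y} + f^{*}(M - K_X - S - B))\big).
\]

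Second, following the factorization \autoref{eq:CohomologyMappingToDirectLimitViaImage} in \autoref{lem.B0AsInverseLimit}, the map from $\myH^d \myR\Gamma_{\fram} \myR\Gamma(X, \sO_X(K_X - M))$ into the direct limit over $Y$ factors through the filtered colimit $\varinjlim_Y \Image_Y$ of the intermediate images. Applying Matlis duality (which is exact and converts filtered colimits into cofiltered inverse limits) and using the first step, I get that $\myB^0_{S}(X, S+B; \sM)$ is the image of
\[
\varprojlim_Y \bigoplus_{i=1}^{t} H^0\big(Y, \sO_Y(K_Y + S_{i,Y} + f^{*}(M - K_X - S - B))\big) \longrightarrow H^0(X, \sM).
\]

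Third, since $(R, \fram)$ is complete and the modules $\Image_Y \subseteq H^0(X, \sM)$ are all finitely generated $R$-submodules, \autoref{RlimExact} shows that $\varprojlim$ is exact on the relevant diagram of surjections $\bigoplus_i H^0(Y, \ldots) \twoheadrightarrow \Image_Y$, so the image of the inverse limit coincides with $\varprojlim_Y \Image_Y$. Because the transition maps among the $\Image_Y$ are the restrictions of inclusions of submodules of the ambient $H^0(X, \sM)$, this inverse limit is precisely $\bigcap_Y \Image_Y$, yielding the stated formula. The same argument, replacing finite covers by normal alterations throughout, handles $\myB^0_{S^+, \alt}$; the only new verification is that the Matlis-duality/local-duality identification in the first step still applies, which holds once we restrict to alterations where $f^{*}(K_X + S + B)$ is Cartier (a cofinal subcategory).

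Finally, the description by compatible systems $\{s_Y\}$ is immediate: an element of the inverse limit $\varprojlim_Y \bigoplus_i H^0(Y, \ldots)$ is, by the definition of cofiltered inverse limits, precisely a compatible family of sections, and its image in $H^0(X, \sM)$ under the projection to $Y = X$ is $s_X$. The only subtle point, which I do not anticipate as a genuine obstacle but deserves care, is the matching of the various Grothendieck-duality pairings with the trace maps $H^0(Z, \ldots) \to H^0(Y, \ldots)$ induced by $Z \to Y$, so that the inverse system really corresponds to the direct system of cohomology groups dualized — this is standard and parallels the proof of \autoref{lem.B0AsInverseLimit}.
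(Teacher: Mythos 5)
Your proposal is correct and follows exactly the route the paper takes: the paper's own proof of this lemma is a one-line reference saying it "follows exactly as in" \autoref{lem.B0AsInverseLimit}, and your argument is precisely that proof transported to the direct-sum setting (termwise local duality on each summand $\sO_Y(-S_{i,Y}+f^*(K_X+S+B-M))$, Matlis duality turning the filtered colimit of images into an inverse limit, and exactness of $\varprojlim$ over the complete local base via \autoref{RlimExact} to identify that limit with the intersection of images). No gaps.
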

\begin{proof}
	The statement about compatible systems and Matlis duality follows exactly as in \autoref{lem.B0AsInverseLimit}.  
\end{proof}

\begin{lemma}
	\label{lem.B0AlongDAsInverseLimit}
	With notation as above, and assuming that $K_X + S + B$ is $\bQ$-Cartier, then 
	\[
		\myB^0_{S}(X,S + B; \sM) = \myB^0_{S, \alt}(X,S+B; \sM).
	\]
\end{lemma}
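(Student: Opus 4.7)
The plan is to adapt the strategy used in \autoref{cor.B0VsB0Alt} to the adjoint setting. By \autoref{lem.B0_SIsSumOfB0_SOfComponents}, I may assume that $S$ is integral, so the direct sums in \autoref{setup_adjoint} collapse to a single term. Next, by Matlis duality and the descriptions in \autoref{setup_adjoint}, the asserted equality is equivalent to showing that the images of
\[
\myH^d\myR\Gamma_{\fram}\myR\Gamma\bigl(X, \sO_X(K_X - M)\bigr) \longrightarrow \varinjlim_Y \myH^d\myR\Gamma_{\fram}\myR\Gamma\bigl(Y, \sO_Y(-S_Y + \lfloor f^*D \rfloor)\bigr)
\]
agree, where $D := K_X + S + B - M$, whether $Y$ ranges over normal finite covers or normal alterations of $X$. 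Since both come from the same source, this reduces to the stronger claim that the natural map between the two colimits (from finite covers to alterations) is itself an isomorphism.

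Since $p \in \fram$, the functor $\myR\Gamma_{\fram}$ commutes with derived $p$-completion. Thus, by the derived Nakayama lemma for derived $p$-complete complexes (as in the proof of \autoref{cor.B0VsB0Alt}), it suffices to establish the analogous equality after applying $- \otimes^L \bZ/p$. Commuting $\myR\Gamma(X,-)$ with filtered colimits, the task reduces to showing that the natural map
\[
\varinjlim_{\substack{f\colon Y \to X\\\text{finite}}} \myR f_* \sO_Y\bigl(-S_Y + \lfloor f^*D \rfloor\bigr) \otimes^L \bZ/p \;\longrightarrow\; \varinjlim_{\substack{f\colon Y \to X\\\text{alteration}}} \myR f_* \sO_Y\bigl(-S_Y + \lfloor f^*D \rfloor\bigr) \otimes^L \bZ/p
\]
is an isomorphism in $D(X/p)$.

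To prove this, I would first reduce to a Cartier twist: since $K_X + S + B$ is $\bQ$-Cartier and $M$ is Cartier, there is a finite cover $X' \to X$ over which $D$ becomes Cartier; replacing $X$ by $X'$ and using cofinality of covers factoring through $X'$, I may assume $\lfloor f^*D \rfloor = f^*D$ is a pullback of a Cartier divisor from $X$. The projection formula then pulls the twist $\sO_X(D)$ out of the colimits on both sides, reducing the problem to the untwisted statement
\[
\varinjlim_{\substack{f\colon Y \to X\\\text{finite}}} \myR f_* \sO_Y(-S_Y) \otimes^L \bZ/p \;\longrightarrow\; \varinjlim_{\substack{f\colon Y \to X\\\text{alteration}}} \myR f_* \sO_Y(-S_Y) \otimes^L \bZ/p.
\]
Comparing the long exact sequences coming from $0 \to \sO_Y(-S_Y) \to \sO_Y \to \sO_{S_Y} \to 0$ for finite covers and alterations, and using the already-known structure-sheaf statement (a corollary of the Bhatt--Lurie $p$-adic Riemann--Hilbert functor, \cite[Theorem~3.12]{BhattAbsoluteIntegralClosure}) both on $X$ and on $S$, the desired isomorphism for the ideal sheaves follows by the five lemma.

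The main obstacle in this outline is the final step: applying the $p$-adic Riemann-Hilbert comparison along the non-normal, possibly reducible subscheme $S$ and patching the identifications for $\sO_{X^+}$ and $\sO_{S^+}$ compatibly with the distinguished triangle. The patching is however formal once one observes that the embedding $S^+ \hookrightarrow X^+$ is compatible with passage from finite covers to alterations (via strict transform in the Stein factorization, as in the setup of \autoref{setup_adjoint}), so that the five-lemma argument really does apply.
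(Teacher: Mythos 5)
Your proposal is correct and follows essentially the same route as the paper's proof: reduce to $S$ integral via \autoref{lem.B0_SIsSumOfB0_SOfComponents}, reduce by Matlis duality and derived Nakayama to a mod-$p$ comparison of colimits, and then identify the finite-cover and alteration colimits of $\myR f_*\sO_Y(-S_Y)\otimes^L\bZ/p$ by applying the Riemann--Hilbert identification $\varinjlim \myR f_*\sO_W/p = \sO_{Z^+}/p$ to both $Z=X$ and $Z=S$ in the triangle coming from $0\to\sO_Y(-S_Y)\to\sO_Y\to\sO_{S_Y}\to 0$. The only cosmetic difference is that you make the handling of the twist $\lfloor f^*D\rfloor$ explicit (pass to a cover where $D$ is Cartier and use the projection formula), which the paper leaves implicit in its reference to the argument of \autoref{cor.B0VsB0Alt}.
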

\begin{proof}
	By \autoref{lem.B0_SIsSumOfB0_SOfComponents}, we may assume that $S$ is integral.  
	For each alteration $f : Y\to X$ we have an exact sequence
	\[
		0\to \sO_Y(-S_Y)\to \sO_Y\to \sO_{S_Y}\to 0.
	\]
	Notice that $S_{Y} \to S$ is an alteration as well.

	For the equality of $\myB^0_{S}$ with $\myB^0_{S, \alt}$, by the same argument as in \autoref{cor.B0VsB0Alt}, it is enough to show the following:	
	\[
		\sO_{X^+}(-S^+) \otimes^L \bZ/p \; \cong \varinjlim_{{\substack{f \colon Y \to X \\ \mathrm{alteration}}}} \myR f_* \sO_Y(-S_Y) \otimes^L \bZ/p.
	\]	
	
	Now, we have an exact triangle
	\[
		\myR f_*\sO_Y(-S_Y)\otimes^L \bZ/p\to \myR f_*\sO_Y\otimes^L \bZ/p\to \myR f_*\sO_{S_Y}\otimes^L \bZ/p \xrightarrow{+1}.
	\]
	By taking filtered colimits and applying the isomorphism:
	\[
		\varinjlim_{{\substack{f \colon W \to Z \\ \mathrm{alteration}}}} \myR f_* \sO_W/p = \sO_{Z^+}/p,
	\]
	implied by \cite[Theorem 3.12]{BhattAbsoluteIntegralClosure} (which in turn relies on \cite{BhattLuriepadicRHmodp}) as in \autoref{cor.B0VsB0Alt}, to both $Z = X$ and $Z= S$, gives an exact triangle 
	\[
		\varinjlim_{{\substack{f \colon Y \to X \\ \mathrm{alteration}}}} \myR f_* \sO_Y(-S_Y) \otimes^L \bZ/p \to \sO_{X^+}\otimes^L \bZ/p\to \sO_{S^+}\otimes^L \bZ/p \xrightarrow{+1}, 
	\]
	so the desired quasi-isomorphism follows.	
\end{proof}

We now compare $\myB^0_{S}$ with $\myB^0$.

\begin{lemma} \label{lem.comparison_betwen_B0_and_B0D} 
	With notation as in \autoref{setup_adjoint}, we have that
	\[
		\myB^0_{S}(X, S + B; \sM) \subseteq \myB^0(X, aS + B; \sM)
	\]
	for every $0 \leq a < 1$.
\end{lemma}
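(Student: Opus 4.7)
The plan is to reduce to the irreducible case and then, for a given cover downstairs, pass to a highly ramified cover upstairs on which an explicit sheaf inclusion converts a section lifting $\myB^0_S$ into a section lifting $\myB^0(X, aS+B; \sM)$ via the trace map.

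First I would use \autoref{lem.B0_SIsSumOfB0_SOfComponents} to reduce to the case where $S$ is a single irreducible divisor. After this reduction, $\myB^0_S(X, S+B; \sM)$ involves only one compatible chain of primes $S_Y \subset Y$ lying under $S^+ \subset X^+$. To show the containment, I take $s \in \myB^0_S(X, S+B; \sM)$ and verify that $s$ lies in the image
$$H^0(Z, \sO_Z(K_Z + \lceil g^*(M - K_X - aS - B)\rceil)) \to H^0(X, \sM)$$
for every finite normal cover $g \colon Z \to X$. By \autoref{def.B^0ViaCoversWhereDivisorHasBecomeCartier} we may assume that $g^*(K_X + aS + B)$ is Cartier on $Z$, so the ceiling is harmless.

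The key construction is to choose a further cover $p \colon Y \to Z$, with $f := g \circ p$, satisfying (i) $Y$ normal, (ii) $f^*(K_X + S + B)$ Cartier, and (iii) the ramification index $e_{S,Y}$ of $S_Y$ over $S$ satisfies $(1-a)\,e_{S,Y} \geq 1$. Such $Y$ exists because $X^+$ is the cofiltered limit of all normal finite covers (\autoref{conv.CategoryOfFiniteMaps}) and the ramification at $S^+ \subset X^+$ above $S$ can be made arbitrarily large (one may adjoin an $n$-th root of a local equation for $S$ for $n \gg 1/(1-a)$, and normalize). On this $Y$, \autoref{lem.ComplexKVersionEqualsOrdinaryVersion} produces $s_Y \in H^0(Y, \sO_Y(K_Y + S_Y + f^*(M - K_X - S - B)))$ with $\Tr_f(s_Y) = s$.

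Now I verify the divisor inequality $S_Y + f^*(M - K_X - S - B) \leq p^* g^*(M - K_X - aS - B)$. Rewriting $f^*(M - K_X - S - B) = p^*g^*(M - K_X - aS - B) - f^*((1-a)S)$ reduces this to $S_Y \leq f^*((1-a)S)$. At $S_Y$ itself the coefficients compare as $1 \leq (1-a)\,e_{S,Y}$ by condition (iii), and at every other prime divisor the left-hand side has coefficient $0$ while the right-hand side is effective. This inequality yields an inclusion of reflexive sheaves on $Y$; composing with the trace $p_*\omega_Y \to \omega_Z$ (twisted by $\sO_Z(g^*(M - K_X - aS - B))$) produces $t_Z := \Tr_p(s_Y) \in H^0(Z, \sO_Z(K_Z + g^*(M - K_X - aS - B)))$, and functoriality of trace $\Tr_g \circ \Tr_p = \Tr_f$ gives $\Tr_g(t_Z) = s$, as required.

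The main obstacle is arranging conditions (ii) and (iii) simultaneously on a single cover $Y$ factoring through $Z$ while retaining normality; this is the only substantive point where the structure of $X^+$ as a cofiltered limit of ramified finite covers is used. The rest is bookkeeping with divisors and the standard compatibility of trace maps under composition of finite morphisms, together with the reduction to irreducible $S$ which is essential since in the multi-component case the prime divisors of $Y$ lying above the other components $S_j$, $j \neq 1$, would otherwise obstruct the sheaf inclusion.
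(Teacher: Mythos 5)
Your proof is correct and is essentially the paper's argument: the same reduction to irreducible $S$ via \autoref{lem.B0_SIsSumOfB0_SOfComponents}, followed by the same key divisor inequality $S_Y \leq (1-a)f^*S$ on covers whose ramification index along $S$ is at least $1/(1-a)$. The only difference is presentational — the paper records the resulting sheaf inclusion as a factorization of the map on $\myH^d\myR\Gamma_{\fram}\myR\Gamma(X,\sO_X(K_X-M))$ through $\myH^d\myR\Gamma_{\fram}\myR\Gamma(Y,\sO_Y(f^*(K_X+aS+B-M)))$ and concludes by Matlis duality, which is exactly the dual of your trace-map computation on global sections.
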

\begin{proof}
	By \autoref{lem.B0_SIsSumOfB0_SOfComponents}, we may assume that $S$ is integral.  
	Fix such an $0 \leq a < 1$.  For sufficiently large finite covers $f: Y \to X$ with $f^*(K_X + S + B)$ Cartier {and $f^*(aS + B)$ having integer coefficients}, observe that 
	\[
		f^*(aS + B) \leq f^*(S + B) - S_Y.
	\]
	Therefore, by \autoref{eq.FactorizationAdjointMap} the map
	\[
		\myH^d \myR\Gamma_{\fram}\myR\Gamma(X, \sO_X(K_X - M)) \to 
		\myH^d \myR \Gamma_\m\myR\Gamma(Y, \sO_Y(-S_Y + f^*(K_X+S+B-M)))
	\]
	factors through $\myH^d \myR \Gamma_\m\myR\Gamma(Y, \sO_Y(f^*(K_X+aS + B-M)))$.  The result follows by Matlis duality.
\end{proof}

Next we point out that $\myB^0_S$ behaves well with respect to birational maps, in analogy with \autoref{lemma-B0-under-pullbacks}. 

\begin{lemma} \label{lemma-B0S-under-pullbacks}  
	Let $X$ be a normal integral scheme, proper over $\Spec(R)$ as in \autoref{def:B_0}, $S$ a reduced divisor and $B \geq 0$ a $\bQ$-divisor on $X$ with no common components with $S$, such that $K_X + S + B$ is $\bQ$-Cartier.  Let $\pi \colon W \to X$ be a proper birational morphism from a normal {integral} scheme $W$ and write $K_W + S_W + B_W = \pi^*(K_X+S+B)$ where $S_W$ is the strict transform of $S$. Let $B' \geq 0$ be an effective $\Q$-divisor such that $B' \geq B_W$. Then for every Cartier divisor $L$ on $X$, we have
	\[
		\myB^0_S(X,S+B;\sO_X(L)) \supseteq \myB^0_{S_W}(W,S+B'; \sO_W(\pi^*L)). 
	\]
	Furthermore, if $B' = B_W$ (in particular, this assumes that $B_W$ is effective), then this containment is an equality.
\end{lemma}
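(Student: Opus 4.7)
The plan is to mimic the argument of Lemma~\ref{lemma-B0-under-pullbacks}, replacing the original definition by the concrete description of $\myB^0_S$ provided in Lemma~\ref{lem.ComplexKVersionEqualsOrdinaryVersion}. Since $\pi$ is birational we have $W^+ = X^+$, so the previously fixed choice $S_i^+ \subseteq X^+$ serves equally as the component of the absolute integral closure of the strict transform $S_{i,W}$ on $W$; consequently, for any normal finite cover $g \colon Y \to W$ with composite $h := \pi \circ g$, the indexed components $S_{i,Y} \subseteq Y^+$ agree whether viewed as lying over $S_i \subseteq X$ or over $S_{i,W} \subseteq W$.

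The key computational input is the inequality of $\bQ$-divisors on $Y$
\[
  g^*\bigl(\pi^*L - K_W - S_W - B'\bigr) \;\leq\; h^*\bigl(L - K_X - S - B\bigr),
\]
which follows from $K_W + S_W + B_W = \pi^*(K_X+S+B)$ together with $B' \geq B_W$, and which is an equality when $B' = B_W$. Taking ceilings and adding $K_Y + S_{i,Y}$ on both sides yields an inclusion of sheaves (an identification when $B' = B_W$), producing a commutative diagram
\[
\begin{tikzcd}[column sep=small]
\bigoplus_i H^0\bigl(Y,\sO_Y(K_Y + S_{i,Y} + \lceil g^*(\pi^*L - K_W - S_W - B')\rceil)\bigr) \arrow{r} \arrow{d}{\subseteq} & H^0(W, \pi^*L) \arrow{d}{=} \\
\bigoplus_i H^0\bigl(Y,\sO_Y(K_Y + S_{i,Y} + \lceil h^*(L - K_X - S - B)\rceil)\bigr) \arrow{r} & H^0(X, L).
\end{tikzcd}
\]
By Lemma~\ref{lem.ComplexKVersionEqualsOrdinaryVersion}, intersecting the image of the top row over all normal finite covers of $W$ produces $\myB^0_{S_W}(W, S_W+B'; \sO_W(\pi^*L))$.

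To deduce the containment $\myB^0_{S_W}(W, S_W+B'; \sO_W(\pi^*L)) \subseteq \myB^0_S(X,S+B;\sO_X(L))$, I would use a dominance argument: given any normal finite cover $h_0 \colon Y_0 \to X$, let $Y_0'$ be the normalization of the main component of $Y_0 \times_X W$; then $g_0 \colon Y_0' \to W$ is a finite cover and $\pi \circ g_0 = h_0 \circ r$ for a birational map $r \colon Y_0' \to Y_0$, so trace factoring through $r$ places the image of the bottom row of the diagram for $g_0$ inside the $h_0$-image defining $\myB^0_S$. For the equality when $B' = B_W$, the sheaf inclusion becomes an identification, so the top- and bottom-row images coincide for each $g$; combining the same dominance argument (now applied to alterations instead of finite covers) with Lemma~\ref{lem.B0AlongDAsInverseLimit}—applicable because $K_X + S + B$ is $\bQ$-Cartier—identifies the intersection over the cofinal subfamily $\{\pi \circ g\}_g$ with the full intersection defining $\myB^0_S(X, S+B; \sO_X(L))$. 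The main obstacle to watch out for is carefully tracking the $S^+$-compatibility through the direct-sum trace maps in the dominance step, ensuring that the factorization $\pi \circ g_0 = h_0 \circ r$ induces the expected factorization summand by summand.
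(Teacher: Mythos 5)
Your proof is correct and follows essentially the same route as the paper's: the same commutative diagram built from the pullback inequality $g^*(\pi^*L - K_W - S_W - B') \leq (\pi\circ g)^*(L - K_X - S - B)$, together with \autoref{lem.ComplexKVersionEqualsOrdinaryVersion} and \autoref{lem.B0AlongDAsInverseLimit}. The only real difference is that you spell out the cofinality/dominance step the paper leaves implicit, and your finite-cover version of that step for the containment $\myB^0_{S_W}(W,S+B';\sO_W(\pi^*L)) \subseteq \myB^0_S(X,S+B;\sO_X(L))$ is in fact slightly cleaner, since it never needs $K_W+S_W+B'$ to be $\bQ$-Cartier when $B' > B_W$.
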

\begin{proof}
	The proof is analogous to that of \autoref{lemma-B0-under-pullbacks}.  
	For every alteration $f \colon Y \to W$ with $S_{i,Y}$ as above, we have the following diagram
	\begin{center}
		\begin{tikzcd}
			H^0(Y,\bigoplus_{i = 1}^t\sO_Y(K_Y + S_{i,Y} + \lceil f^*(\pi^*L - (K_W + S_W +B'))) \rceil) \arrow{r} \arrow{d}{\subseteq} & H^0(W, \pi^*L) \arrow{d}{=} \\
			H^0(Y,\bigoplus_{i = 1}^t\sO_Y(K_Y + S_{i,Y} + \lceil (\pi \circ f)^*(L - (K_X+S+B)) \rceil)) \arrow{r} & H^0(X, L).
		\end{tikzcd}
	\end{center}
	Note that in the case that $B' = B_W$, the left vertical containment is an equality.
	An application of \autoref{lem.ComplexKVersionEqualsOrdinaryVersion} and \autoref{lem.B0AlongDAsInverseLimit} completes the proof.
\end{proof}

\subsubsection{Comparison with alternate versions}
\label{subsubsec.ComparisionWithB0SVersion1FiniteCase}

	In the first arXiv version of this article, we did not take a direct sum of $\sO_X^+(-S_i^+)$.  Instead, we primarily worked by forming an exact triangle:
	\[
		\sD_Y^{\mydot} \to \sO_{Y} \to \bigoplus_{i = 1}^t \sO_{S_{i,Y}} \xrightarrow{+1}
	\]
	for each finite cover $Y \to X$.  We then used $\varinjlim \sD_Y^{\mydot}$ instead of $\bigoplus_{i = 1}^t \sO_{X^+}(-S_{i}^+)$.  Of course, when $S$ has only one component, these two definitions agree.
	
	In general case, this had several disadvantages compared to our current approach.  First, it was not clear whether $\myB^0_{S}$ was independent of the choice of $S^+$ when $S$ was not integral.   Furthermore, we ended up working with a complex instead of a sheaf in all essential proofs.  In particular, the lemma that said we could work with a sheaf (Lemma 4.25 of that first arXiv version) was incorrect, although it was not used in a crucial way.  We notice the object $\myB^0_{S}$ defined in this paper is always at least contained in the one from the first arXiv version, essentially since the map to $\bigoplus_{i = 1}^t \sO_{X^+}$ factors through the diagonal map $\sO_{X^+} \to \bigoplus_{i = 1}^t \sO_{X^+}$ (see \autoref{lem.OldVersionOkIsh}).

	Back to the first arXiv version of this article, when working with alterations to define $\myB^0_{S, \alt}$ we could restrict ourselves to alterations $f : Y \to X$ that separated the individual components of $S$.  This also yields a satisfactory theory although it is still not clear whether it depends on the choice of $S^+$.  
	
	However, when $M - (K_X + S + B)$ is big and semiample, it turns out that the two approaches coincide (a fact we will not use).

	\begin{lemma}
		\label{lem.OldVersionOkIsh}
		With notation as above, assume additionally that $M - (K_X + S +B)$ is big and semiample.  Then 
		\[
			\myB^0_S(X, S+B, \sM) = \bigcap_{Y}\Image\Big(H^0\big(Y, \sO_Y(K_Y + S_Y + f^*(M - K_X - S -B))\big) \to H^0(X, \sM) \Big)
		\]
		where $f : Y \to X$ runs over alterations such that $f^*(K_X + S + B)$ is a Cartier divisor.
	\end{lemma}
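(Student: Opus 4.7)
Set $L' := K_X + S + B - M$; by hypothesis $-L'$ is big and semi-ample. By \autoref{lem.B0AlongDAsInverseLimit} we may compute $\myB^0_S(X, S + B; \sM)$ by running over alterations $f \colon Y \to X$ for which $f^*(K_X + S + B)$ is Cartier, and by \autoref{setup_adjoint} together with Matlis duality (\autoref{lem:duality}), $\myB^0_S(X, S + B; \sM)$ is the Matlis dual of the image of the composition
\[
\myH^d \myR\Gamma_{\fram} \myR\Gamma\bigl(X, \sO_X(K_X - M)\bigr) \xrightarrow{\alpha_\infty} \varinjlim_Y \myH^d \myR\Gamma_{\fram} \myR\Gamma\bigl(Y, \sO_Y(-S_Y + f^*L')\bigr) \xrightarrow{\beta_\infty} \varinjlim_Y \myH^d \myR\Gamma_{\fram} \myR\Gamma\Bigl(Y, \bigoplus_{i=1}^t \sO_Y(-S_{i,Y} + f^*L')\Bigr),
\]
where $\beta_\infty$ is induced by the diagonal inclusions $\sO_Y(-S_Y) \hookrightarrow \bigoplus_i \sO_Y(-S_{i,Y})$, while the right-hand side of the lemma is the Matlis dual of $\mathrm{Image}(\alpha_\infty)$. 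The inclusion ``$\subseteq$'' is therefore automatic; equality reduces to showing that $\beta_\infty$ is injective on $\mathrm{Image}(\alpha_\infty)$, and we shall prove the stronger statement that $\beta_\infty$ is injective.

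Since $S_1^+, \ldots, S_t^+ \subset X^+$ were chosen pairwise disjoint, the \emph{separating} alterations --- those $Y$ with $f^*(K_X+S+B)$ Cartier and with pairwise disjoint strict transforms $S_{1,Y}, \ldots, S_{t,Y}$ --- form a cofinal subsystem (given any alteration, one may iteratively blow up intersections of the $S_{i,Y}$'s and normalize without disturbing Cartierness of $f^*(K_X+S+B)$). We therefore restrict the colimits above to this cofinal subsystem. On a separating $Y$, the Chinese Remainder Theorem supplies an isomorphism $\sO_{S_Y} \xrightarrow{\cong} \bigoplus_i \sO_{S_{i,Y}}$, which fits into the commutative diagram of sheaves with exact rows
\[
\xymatrix@R=14pt{
0 \ar[r] & \sO_Y(-S_Y) \ar@{^{(}->}[r] \ar[d] & \sO_Y \ar[r] \ar[d]^{\mathrm{diag}} & \sO_{S_Y} \ar[r] \ar[d]^{\cong} & 0 \\
0 \ar[r] & \bigoplus_i \sO_Y(-S_{i,Y}) \ar@{^{(}->}[r] & \bigoplus_i \sO_Y \ar[r] & \bigoplus_i \sO_{S_{i,Y}} \ar[r] & 0.
}
\]
The snake lemma then identifies the cokernel of the leftmost vertical with the cokernel of the diagonal $\sO_Y \hookrightarrow \bigoplus_i \sO_Y$, which is (non-canonically) $\sO_Y^{t-1}$.

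Twisting the resulting short exact sequence by $\sO_Y(f^*L')$, taking the long exact sequence of $\myR\Gamma_{\fram}\myR\Gamma(Y, -)$, and passing to the filtered colimit (which commutes with both $\myR\Gamma_\fram$ and $\myR\Gamma(Y, -)$, e.g., via \v{C}ech complexes) yields
\[
\varinjlim_Y \bigl(\myH^{d-1}_{\fram}\myR\Gamma(Y, \sO_Y(f^*L'))\bigr)^{t-1} \longrightarrow \myH^d_{\fram}\myR\Gamma\bigl(X^+, \sO_{X^+}(-S^+ + \pi^*L')\bigr) \xrightarrow{\beta_\infty} \myH^d_{\fram}\myR\Gamma\Bigl(X^+, \bigoplus_i \sO_{X^+}(-S_i^+ + \pi^*L')\Bigr).
\]
The source of the first arrow vanishes by \autoref{cor.VanishingWithoutRestrictingToPFiber}, applied on any one of the alterations $Y$ to the big and semi-ample line bundle $\sO_Y(-f^*L') = \sO_Y(f^*(M - K_X - S - B))$ with $b = -1$ (using $Y^+ = X^+$ and $d - 1 < d = \dim Y$). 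Hence $\beta_\infty$ is injective, as required. The principal technical input is the vanishing theorem \autoref{cor.VanishingWithoutRestrictingToPFiber}; once this is in hand, the snake-lemma computation exploiting the disjointness of the $S_i^+$ together with Matlis duality yields the result formally.
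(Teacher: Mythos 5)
Your proof is correct and follows essentially the same route as the paper: both reduce the statement to injectivity of the map on $\myH^d\myR\Gamma_{\fram}$ induced by the diagonal $\sO(-S_Y)\to\bigoplus_i\sO(-S_{i,Y})$, separate the components of $S$ by a modification so that $\sO_{S}\cong\bigoplus_i\sO_{S_i}$ there, and conclude from the $\myH^{d-1}$-vanishing of \autoref{cor.VanishingWithoutRestrictingToPFiber} applied to the inverse of the pulled-back big and semi-ample bundle. The only (cosmetic) differences are that you package the diagram chase via the snake lemma on a cofinal system of separating alterations, whereas the paper fixes a single separating birational model $W$ and applies the five lemma on $W^+$.
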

	\begin{proof}
		The containment $\subseteq$ follows from the dual of the diagonal maps $\sO_Y(-S_Y) \to \bigoplus_{i = 1}^t \sO_Y(-S_{i,Y})$ so we prove the reverse. 

		Fix $\pi : W \to X$ a birational map that separates the components of $S$.  We have the commutative diagram where the vertical maps are induced by the diagonal:
		\[
			\xymatrix{
				0 \ar[r] &  \sO_{W^+}(-S_{W^+}) \ar[d] \ar[r] &  \sO_{W^+} \ar[r] \ar[d]  &  \sO_{S_{W^+}} \ar[r] \ar[d]^{\sim}  & 0 \\
				0 \ar[r] &  \bigoplus_{i = 1}^t \sO_{W^+}(-S_{i, W^+}) \ar[r] & \bigoplus_{i = 1}^t \sO_{W^+} \ar[r] & \bigoplus_{i = 1}^t \sO_{S_{i,W^+}} \ar[r] & 0
			}
		\]
		Note that the right vertical map is an isomorphism and the middle vertical map is split injective (simply project onto one of the coordinates).  We cannot say something similar about the left vertical arrow however.  
		Twisting by the pullback $\sL^+$ to $W^+$ of the line bundle $\sO_Y(f^* (M - (K_X + S + B)))$ (for some finite cover $f : Y \to W$), and taking local cohomology, we obtain:
		\[
			{\scriptsize
				\xymatrix{
					0 \ar[r] & \myH^{d-1} \myR\Gamma_{\fram}\myR\Gamma(\sO_{S_{W^+}} \otimes \sL^+) \ar[r] \ar[d]^{\sim} & \myH^d \myR\Gamma_{\fram}\myR\Gamma(\sO_{W^+}(-S_{W^+}) \otimes \sL^+) \ar[r] \ar[d] & \myH^d \myR\Gamma_{\fram}\myR\Gamma(\sL^+) \ar@{^{(}->}[d] \\
					0 \ar[r] & \myH^{d-1} \myR\Gamma_{\fram}\myR\Gamma(\sO_{S_{W^+}} \otimes \sL^+) \ar[r] & \myH^d \myR\Gamma_{\fram}\myR\Gamma(\bigoplus_{i=1}^t\sO_{W^+}(-S_{i,W^+}) \otimes \sL^+) \ar[r] & \myH^d \myR\Gamma_{\fram}\myR\Gamma(\bigoplus_{i=1}^t \sL^+) \\
				}
			}
		\]
		The left zeros are due to \autoref{cor.VanishingWithoutRestrictingToPFiber} and the fact that $\sL^+$ is the pullback of a big and semiample line bundle.  The five lemma then shows that the middle arrow is injective.  Dualizing and applying \autoref{lem.B0AlongDAsInverseLimit} implies the containment $\supseteq$ as desired.
	\end{proof}
	
\subsection{$\bigplus$-stable sections and completion}
\label{subsec.B0andcompletion}

The importance of working over a complete base has been highlighted in the presentation above. The goal of this subsection is to show that, when working over a non-complete excellent local base, the base change to the completion can still reasonably be used to define $\myB^0$ and $\myB^0_{\alt}$.  However, if one wishes to work without base changing to the completion, there are a number of (potentially non-equivalent) analogs of $\myB^0$ and $\myB^0_{\alt}$ that one might consider; see also \cite{DattaTuckerOpenness}.

\begin{proposition}
	\label{prop.B0completion}
	Consider
	\begin{itemize}
		\item $(R,\fram, k)$ is a normal local excellent domain with a dualizing complex and with characteristic $p > 0$ residue field,
        \item $X$ is a normal, integral scheme proper over $R$ with $H^0(X, \sO) = R$,
        \item $\Delta \geq 0$ is a $\bQ$-divisor on $X$, and
        \item  $M$ is a $\bZ$-divisor and $\sM = \sO_X(M)$.  
    \end{itemize}
    and for any flat $R$-algebra $S$ denote by $(\blank)_S$ the corresponding base change to $S$. We have that
 	\begin{equation*}
 	    \begin{array}{l}
            \myB^0\big(X_{\widehat{R}}, \Delta_{\widehat{R}}; \sM_{\widehat{R}}\big) = \medskip \\ \qquad \displaystyle     \bigcap_{\substack{f \colon Y \to X\\ \textnormal{finite}}}\im \left( H^0(Y, \sO_Y( K_Y + \lceil{f^* (M - K_X - \Delta)}\rceil)) \otimes_R \widehat{R} \to H^0(X, \sM) \otimes_R \widehat{R} \right) 
        \end{array}
    \end{equation*}
    where the intersection is taken as $\widehat{R}$-submodules of $H^0(X, \sM) \otimes_R \widehat{R}$, and runs over finite covers $f:Y \to X$ as in \autoref{conv.CategoryOfFiniteMaps}, and where $Y$ is normal.  Equivalently, this intersection is the Matlis dual of 
    \[
        \qquad \im\bigg( \myH^d \myR \Gamma_{\fram} \myR \Gamma(X, \sO_X(K_X - M)) \to 
		\varinjlim_{\substack{f \colon Y \to X\\ \textnormal{finite}}} \myH^d \myR \Gamma_{\fram} \myR \Gamma(Y,  \sO_Y(\lfloor f^*(K_X+\Delta-M) \rfloor)) \bigg).     \]
    If additionally $M - K_X - \Delta$ is $\bQ$-Cartier, we also have
    \begin{equation*}
        \begin{array}{l}
            \myB^0\big(X_{\widehat{R}}, \Delta_{\widehat{R}}; \sM_{\widehat{R}}\big) = \medskip \\ \qquad \displaystyle \bigcap_{\substack{f \colon Y \to X\\\textnormal{alteration}}}\im \left( H^0(Y, \sO_Y( K_Y + \lceil f^* (M - K_X - \Delta)\rceil)) \otimes_R \widehat{R} \to H^0(X, \sM) \otimes_R \widehat{R} \right)
        \end{array}
    \end{equation*}
    where the intersection is taken as $\widehat{R}$-submodules of $H^0(X, \sM) \otimes_R \widehat{R}$, and runs over all alterations $f:Y \to X$ as in \autoref{conv.CategoryOfFiniteMaps} and where $Y$ is normal.
    In other words, when computing $\myB^0(X_{\widehat{R}}, \Delta_{\widehat{R}}; \sM_{\widehat{R}})$, it suffices to consider only the completions of the finite covers (respectively, alterations) of $X$.
\end{proposition}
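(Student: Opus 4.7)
The plan is to apply \autoref{lem.B0AsInverseLimit} twice---once over the complete base $\widehat R$ and once (after flat base change) to describe the restricted intersection---thereby recasting the desired equality as an identification of two images in local cohomology, and then to reduce the comparison modulo $p$ via derived $p$-complete Nakayama, exactly as in the proof of \autoref{cor.B0VsB0Alt}. First, $\widehat R$ is itself a complete Noetherian normal local domain admitting a dualizing complex, and $X_{\widehat R}$ is normal integral and proper over $\widehat R$ (since $R \to \widehat R$ is faithfully flat with geometrically regular fibers). Applying \autoref{lem.B0AsInverseLimit} over $\widehat R$ therefore identifies $\myB^0(X_{\widehat R}, \Delta_{\widehat R}; \sM_{\widehat R})$ with the $\widehat R$-Matlis dual of
\[
I_{\text{full}} := \im\!\Bigl(H^d_{\widehat\m}\bigl(X_{\widehat R}, \sO(K_X-M)\bigr) \to \varinjlim_{Y'/X_{\widehat R}} H^d_{\widehat\m}\bigl(Y', \sO(\lfloor f'^*(K_X+\Delta-M)\rfloor)\bigr)\Bigr),
\]
where $Y'$ runs over normal finite covers of $X_{\widehat R}$, proving the Matlis dual description. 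Using flat base change for local cohomology (valid since $p \in \m$, giving $H^d_\m(Y,\sF) \otimes_R \widehat R \cong H^d_{\widehat\m}(Y_{\widehat R}, \sF \otimes \widehat R)$) together with the compatibility of Matlis duality with $\otimes_R \widehat R$ for finitely generated modules, the restricted intersection on the right-hand side of the claim is identified with the $\widehat R$-Matlis dual of the analogous image $I_{\text{restr}}$ whose colimit runs only over base changes $Y_{\widehat R}$ of finite covers $Y/X$. So the claim reduces to the equality $I_{\text{full}} = I_{\text{restr}}$ as quotients of $H^d_{\widehat\m}(X_{\widehat R}, \sO(K_X-M))$.

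The inclusion of cover categories $\{Y/X\} \hookrightarrow \{Y'/X_{\widehat R}\}$ by base change induces a canonical surjection $I_{\text{restr}} \twoheadrightarrow I_{\text{full}}$, so equality is equivalent to: if $\xi \in H^d_{\widehat\m}(X_{\widehat R}, \sO(K_X-M))$ dies in some $H^d_{\widehat\m}(Y', \ldots)$ for $Y'/X_{\widehat R}$ finite, then $\xi$ already dies in some $H^d_{\widehat\m}(Y_{\widehat R}, \ldots)$ for $Y/X$ finite. Since $\myR\Gamma_{\widehat\m}$ is insensitive to derived $p$-completion (as $p \in \widehat\m$), the derived Nakayama lemma (\cite[Tag 0G1U]{stacks-project}) reduces this to showing that the natural map of ind-objects
\[
\varinjlim_{Y/X} \myR\Gamma\bigl(Y_{\widehat R}, \sO(\cdots)\bigr) \otimes^{L}_{\bZ} \bZ/p \;\longrightarrow\; \varinjlim_{Y'/X_{\widehat R}} \myR\Gamma\bigl(Y', \sO(\cdots)\bigr) \otimes^{L}_{\bZ} \bZ/p
\]
is a quasi-isomorphism. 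This is a direct consequence of the $p$-adic Riemann--Hilbert equivalence of \cite{BhattLuriepadicRHmodp} (in the form used in the proof of \cite[Theorem 3.12]{BhattAbsoluteIntegralClosure} and reviewed in \autoref{cor.B0VsB0Alt}): mod $p$, both colimits of finite covers compute the same perfectoid object, namely $\sO_{X_{\widehat R}^+}/p$ in the derived sense, because the formation of the mod-$p$ absolute integral closure is local and commutes with the faithfully flat base change $R/p \to \widehat R/p$.

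For the alteration statement (assuming $M-K_X-\Delta$ is $\bQ$-Cartier), the same argument applies verbatim with alterations replacing finite covers throughout---the input from \cite[Theorem 3.12]{BhattAbsoluteIntegralClosure} is precisely the equivalence of the two colimits mod $p$. Alternatively, one may simply invoke \autoref{cor.B0VsB0Alt} over both $R$ and $\widehat R$ to deduce the alteration case from the finite cover case already handled. The main obstacle is the base-change invariance of the mod-$p$ colimit: one must verify carefully that $\sO_{X^+}/p$ tensored up to $\widehat R$ agrees (derived) with $\sO_{X_{\widehat R}^+}/p$, which is precisely the flat base-change property of the $p$-adic Riemann--Hilbert functor along $R \to \widehat R$.
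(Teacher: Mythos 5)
Your setup is fine: identifying both sides as Matlis duals of images in $\myH^d\myR\Gamma_{\fram}\myR\Gamma(-)$ and reducing the proposition to the containment of kernels (``any class killed by some finite cover of $X_{\widehat R}$ is already killed by the base change of some finite cover of $X$'') is exactly how the paper's proof begins. The problem is the step where you dispose of that containment. You reduce it, via derived Nakayama, to the assertion that
\[
\varinjlim_{Y/X} \myR\Gamma\bigl(Y_{\widehat R}, \sO(\cdots)\bigr) \otimes^{L} \bZ/p \longrightarrow \varinjlim_{Y'/X_{\widehat R}} \myR\Gamma\bigl(Y', \sO(\cdots)\bigr) \otimes^{L} \bZ/p
\]
is a quasi-isomorphism, i.e.\ that formation of the (mod-$p$) absolute integral closure commutes with the base change $R \to \widehat R$. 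This is a \emph{strictly stronger} statement than the proposition (which only concerns the image of one map into these colimits), it is not what \cite[Theorem 3.12]{BhattAbsoluteIntegralClosure} or the Riemann--Hilbert functor gives you (those results compare finite covers against alterations \emph{over a fixed base}, not covers of $X$ against covers of $X_{\widehat R}$), and it is false in general. Already in the affine equicharacteristic case the map of colimits is $R^+ \otimes_R \widehat R \to \widehat R{}^+$, which fails to be surjective whenever $\mathrm{Frac}(\widehat R)$ is transcendental over $\mathrm{Frac}(R)$ (e.g.\ $R = \bF_p[s,t]_{(s,t)}$); in mixed characteristic the target $\widehat R{}^+/p$ is semiperfect while $R^+/p \otimes_{R/p} \widehat R/p$ need not be, so the mod-$p$ comparison fails as well. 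The completion $R \to \widehat R$ is regular but very far from ind-\'etale, which is precisely why the argument the paper uses for the henselization (where $X^+ \times_T T^h$ really is a disjoint union of copies of $(X \times_T T^h)^+$) does not transfer.

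What the paper actually does to close this gap is a hands-on descent: given a class $\eta$ killed by a finite cover $Z \to X_{\widehat R}$, it writes $\widehat R = \varinjlim R_i$ as a filtered colimit of smooth $R^h$-algebras (Popescu), descends $Z$ to some $X_{R_i}$, uses the henselian property of $R^h$ to produce a section $R_i \to R^h$ and hence a finite cover of $X_{R^h}$ killing $\eta$, then descends again along $R^h = \varinjlim S_i$ (pointed \'etale extensions) to a finite cover of $X_{S_i}$ where $S_i$ is a localization of a finite extension $S$ of $R$. The final, and essential, step is a Galois argument: one passes to the normalization $Y$ of $X$ in the normal closure $L$ of the relevant function field and uses the transitivity of $\mathrm{Gal}(L/K(X))$ on the maximal ideals of $H^0(Y,\sO_Y)$ lying over $\fram$ to conclude that the honest finite cover $Y \to X$ kills $\eta$ at \emph{every} such maximal ideal, not just the one where the descent took place. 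None of this machinery is present in your proposal, and without it the key containment of kernels is unproved. (Your remark that the alteration statement follows formally from the finite-cover statement together with \autoref{cor.B0VsB0Alt} applied over $\widehat R$ is correct, but it is contingent on first establishing the finite-cover case.)
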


\begin{proof}
	We prove only the statement for finite covers, as the alteration version follows in a similar fashion.
	For any coherent sheaf $\sF$ on $X$, applying \autoref{lem:duality} to $\sF\otimes_R \widehat{R}$ gives  that 
    \begin{equation*}
        \big(\myH^d\myR \Gamma_{\fram} \myR \Gamma(X, \sF))^\vee\cong  \Hom_{\sO_X}(\sF \otimes_R \widehat{R}, \omega_{X_{\widehat{R}}}) 
    \end{equation*}
	where $d=\dim X$ and $(-)^\vee$ denotes Matlis duality $\Hom_R(-, E_R(k))$. 
		Arguing as in the proof of \autoref{lem.B0AsInverseLimit}, we see that
	\begin{equation*}
		\bigcap_{\substack{f \colon Y \to X\\ \textnormal{finite}}}\im \left( H^0(Y, \sO_Y( K_Y + \lceil{f^* (M - K_X - \Delta)}\rceil)) \otimes_R \widehat{R} \to H^0(X, \sM) \otimes_R \widehat{R} \right)
	\end{equation*}
	is Matlis dual to the image of 
    \begin{equation*}
        \xymatrix{
        \myH^d \myR \Gamma_{\fram} \myR \Gamma(X, \sO_X(K_X-M)) \ar[r]^-{\alpha} &
			\displaystyle \varinjlim_{{\substack{f \colon Y \to X \\ \mathrm{finite}}}} 
			\myH^d \myR \Gamma_{\fram} \myR \Gamma(Y, \sO_Y(\lfloor f^*(K_X+\Delta-M) \rfloor)  }.
    \end{equation*}
On the other hand, we have that $\myB^0(X_{\widehat{R}}, \Delta_{\widehat{R}}; \sM_{\widehat{R}})$ is Matlis dual to the image of
\begin{equation*}
	\xymatrix{
	\myH^d \myR \Gamma_{\fram} \myR \Gamma(X, \sO_X(K_X-M)) \ar[r]^-{\beta}
		 & \displaystyle \varinjlim_{{\substack{g \colon Z \to X_{\widehat{R}} \\ 
		 \mathrm{finite}}}} \myH^d \myR \Gamma_{\fram} \myR \Gamma(Z, \sO_Z(\lfloor g^*(K_{X_{\widehat{R}}}+\Delta_{\widehat{R}}-M_{\widehat{R}}) \rfloor).
	}
\end{equation*}
To show the desired equality, it suffices to verify that the kernels of $\alpha$ and $\beta$ coincide. Since a finite cover of $X$ completes to one for $X_{\widehat{R}}$, we need only check that the kernel of $\beta$ is contained in the kernel of $\alpha$.

We shall do this in three steps. An element $\eta$ of the kernel of $\beta$ is necessarily in the kernel of
    \begin{equation*}
        \myH^d \myR \Gamma_{\fram} \myR \Gamma(X, \sO_X(K_X-M)) \to \myH^d \myR \Gamma_{\fram} \myR \Gamma(Z, \sO_Z(\lfloor g^*(K_{X_{\widehat{R}}}+\Delta_{\widehat{R}}-M_{\widehat{R}}) \rfloor)
    \end{equation*}
	for some finite $g \colon Z \to X_{\widehat{R}}$. We first pass from the completion $\widehat{R}$ down to the henselization $R^h$, showing that there is some finite $f' \colon Y' \to X_{R^h}$ with $\eta$ in the kernel of 
   \begin{equation*}
        \myH^d \myR \Gamma_{\fram} \myR \Gamma(X, \sO_X(K_X-M)) \to \myH^d \myR \Gamma_{\fram} \myR \Gamma(Y', \sO_{Y'}(\lfloor f'^*(K_X+\Delta-M) \rfloor).
    \end{equation*}
    Second, we pass from $R^h$ down to a certain pointed \`etale extension $S_i$ of $R$, showing that there is a finite $f_i' \colon Y_i' \to X_{S_i}$ so that $\eta$ is in the kernel of 
       \begin{equation*}
	\myH^d \myR \Gamma_{\fram} \myR \Gamma(X, \sO_X(K_X-M)) \to  \myH^d \myR \Gamma_{\fram} \myR \Gamma(Y'_i, \sO_{Y'_i}(\lfloor f'^*_i(K_{X_{S_i}} +\Delta_{S_i}-M_{S_i}) \rfloor).
\end{equation*}
Finally, in the third and last step, we find a normal and finite $f \colon Y \to X$ so that 
\begin{equation*}
	\myH^d \myR \Gamma_{\fram} \myR \Gamma(X, \sO_X(K_X-M)) \to  \myH^d \myR \Gamma_{\fram} \myR \Gamma(Y, \sO_{Y}(\lfloor f^*(K_X +\Delta-M) \rfloor)
\end{equation*}
verifying that $\eta$ is in the kernel of $\alpha$.

\medskip
\noindent
\textit{Step 1: Passing from $\widehat{R}$ down to $R^h$.}
An element $\eta$ of the kernel of $\beta$ is necessarily in the kernel of
    \begin{equation*}
        \myH^d \myR \Gamma_{\fram} \myR \Gamma(X, \sO_X(K_X-M)) \to \myH^d \myR \Gamma_{\fram} \myR \Gamma(Z, \sO_Z(\lfloor g^*(K_{X_{\widehat{R}}}+\Delta_{\widehat{R}}-M_{\widehat{R}}) \rfloor)
    \end{equation*}
	for some finite $g \colon Z \to X_{\widehat{R}}$. Consider first the henselization $R^h$ of $R$. By Popescu's Theorem \cite[Tag 07GC]{stacks-project} applied to the regular morphism $R^h \to \widehat{R}$, we have that $\widehat{R} = \varinjlim R_i$ is the filtered colimit of smooth $R^h$-algebras $R_i$. We can descend $Z$ to a finite level, so say without loss of generality that there is a finite cover $g_0 \colon Z_0 \to X_{R_0}$ that completes to $g \colon Z \to X_{\widehat{R}}$. Base change to $R_i$ for all $i \geq 0$ gives a finite cover $g_i \colon Z_i \to X_{R_i}$ so that $Z = \varprojlim Z_i$. As
	\begin{equation*}
		\begin{array}{l}
		 \myH^d \myR \Gamma_{\fram} \myR \Gamma(Z, \sO_Z(\lfloor g^*(K_{X_{\widehat{R}}}+\Delta_{\widehat{R}}-M_{\widehat{R}}) \rfloor) \\ = \varinjlim_i  \myH^d \myR \Gamma_{\fram} \myR \Gamma(Z_i, \sO_{Z_i}(\lfloor g_i^*(K_{X_{R_i}} +\Delta_{R_i}-M_{R_i}) \rfloor)
		\end{array}
    \end{equation*}
	we must have that $\eta$ is in fact in the kernel of 
    \begin{equation*}
         \myH^d \myR \Gamma_{\fram} \myR \Gamma(X, \sO_X(K_X-M)) \to  \myH^d \myR \Gamma_{\fram} \myR \Gamma(Z_i, \sO_{Z_i}(\lfloor f_i^*(K_{X_{R_i}} +\Delta_{R_i}-M_{R_i}) \rfloor)
    \end{equation*}
    for some $i$. 
        Now, $R^h \to R_i$ is a smooth map, and using $R_i \to \widehat{R} \to k $ we have a surjection $R_i \to k = R^h / \fram R^h$.
    By \cite[Tag 07M7]{stacks-project}, there is an \'etale $R^h$-algebra $\overline{R_i}$ and $R^h$-algebra homomorphism $R_i \to \overline{R_i}$ so that the surjection $R_i \to k = R^h / \fram R^h$ factors as $R_i \to \overline{R_i} \to k = R^h / \fram R^h$. In particular, $\overline{R_i} \to k = R^h / \fram R^h$ is surjective, so there is a prime $\mathfrak{q}$ of $\overline{R_i}$ lying over $\fram R^h$ with residue field $k$.
    By \cite[Tag 04GG]{stacks-project} as $R^h$ is henselian,  $R^h \to \overline{R_i}$ has a section, and so also (pre-composing that section with $R_i \to \overline{R_i}$) $R^h \to R_i$ must have a section $R_i \to R^h$.
    Base change along this section yields a finite cover $f' \colon Y' = Z_i \otimes_{R_i} R^h \to X_{R^h}$ so that $\eta$ is in the kernel of 
    \begin{equation*}
        \myH^d \myR \Gamma_{\fram} \myR \Gamma(X, \sO_X(K_X-M)) \to \myH^d \myR \Gamma_{\fram} \myR \Gamma(Y', \sO_{Y'}(\lfloor f'^*(K_X+\Delta-M) \rfloor).
    \end{equation*}

 \medskip
\noindent
\textit{Step 2: Passing from  $R^h$ down to a pointed \`etale extension.}
Now, $R^h = \varinjlim S_i$ is the directed colimit of pointed \'etale extensions $S_i$ of $R$, which in turn are localizations of finite extensions of $R$ with $R \subseteq S_i \subseteq R^h \subseteq \widehat{R}$. Once again, the finite cover $f' \colon Y' \to X_{R^h}$ must descend to a finite level, so say without loss of generality that there is a finite cover $f'_0 \colon Y'_0 \to X_{S_0}$ that henselizes to $f'$. Base change to $S_i$ for all $i \geq 0$ gives a finite cover $f'_i \colon Y'_i \to X_{S_i}$ for all $i \geq 0$ so that $Y' = \varprojlim Y'_i$. As 
	\begin{equation*}
		\begin{array}{l}
		\myH^d \myR \Gamma_{\fram} \myR \Gamma(Y', \sO_{Y'}(\lfloor f'^*(K_{X_{R^h}}+\Delta_{R^h}-M_{R^h}) \rfloor) \\ =   \varinjlim_i \myH^d \myR \Gamma_{\fram} \myR \Gamma(Y'_i, \sO_{Y'_i}(\lfloor f'^*_i(K_{X_{S_i}} +\Delta_{S_i}-M_{S_i}) \rfloor)
		\end{array}
   \end{equation*}
   we must have that $\eta$ is in fact in the kernel of
   \begin{equation}
	\label{eq.yiprime}
	\myH^d \myR \Gamma_{\fram} \myR \Gamma(X, \sO_X(K_X-M)) \to  \myH^d \myR \Gamma_{\fram} \myR \Gamma(Y'_i, \sO_{Y'_i}(\lfloor f'^*_i(K_{X_{S_i}} +\Delta_{S_i}-M_{S_i}) \rfloor)
\end{equation}
for some $i$. 

 \medskip
\noindent
\textit{Step 3: Passing from  the pointed \`etale extension down to $R$.}
Let $S$ be the integral closure of $R$ in the fraction field of $S_i$, so that $S$ is a finite extension of $R$ and $S_i$ is the localization of $S$ at one of the (finitely many) maximal ideals $\fram_i$ lying over $\fram$ in $R$. Take $L$ to be normal closure of the function field of $Y_i'$ inside the fixed geometric generic point of $X$, with $G$ the corresponding group of automorphisms of $L$ over the function field of $X$. 
The fixed field $L^G$ is then such that $L^G \subseteq L$ is a Galois extension with Galois group $G$, and $L^G$ is a purely inseparable extension of the function field of $X$.
Set $f \colon Y \to X$ to be the normalization of $X$ inside of $L$. We have that $T = \myH^0(Y,\O_Y)$ is a finite normal extension of $S$, and hence also of $R$. The group $G$ acts on $Y$ and hence also on $T$, and the invariant ring $T^G \subseteq L^G$ is a finite and purely inseparable extension of $R$. Letting $\mathfrak{n}_0, \ldots, \mathfrak{n}_\ell$ denote the (finitely many) maximal ideals of $T$ lying over $\fram$, we have that $G$ acts transitively on the $\mathfrak{n}_j$'s \cite[\href{https://stacks.math.columbia.edu/tag/0BRK}{Tag 0BRK}]{stacks-project}. Without loss of generality, we may assume that $\mathfrak{n}_0 \cap S = \fram_i$.

We have that $Y$ is normal and finite over $X$, and we will argue that $\eta$ is in the kernel of 
\begin{equation}
	\label{eq.y}
	\myH^d \myR \Gamma_{\fram} \myR \Gamma(X, \sO_X(K_X-M)) \to  \myH^d \myR \Gamma_{\fram} \myR \Gamma(Y, \sO_{Y}(\lfloor f^*(K_X +\Delta-M) \rfloor).
\end{equation}
To do so, it suffices to show that $\eta$ is in the kernel of
\begin{equation}
	\label{eq.yn}
	\myH^d \myR \Gamma_{\fram} \myR \Gamma(X, \sO_X(K_X-M)) \to  \myH^d \myR \Gamma_{\fram} \myR \Gamma(Y_{\mathfrak{n}_j}, \sO_{Y_{\mathfrak{n}_j}}(\lfloor f^*(K_X +\Delta-M) \rfloor).
\end{equation}
for $j = 0, \ldots, \ell$, where $Y_{\mathfrak{n}_j} = Y \otimes_T T_{\mathfrak{n}_j}$. Moreover, using the transitive action of $G$ on the set of the $\mathfrak{n}_j$'s, it suffices to show that $\eta$ is in the kernel of \eqref{eq.yn} for $j = 0$. By construction, we have a factorization
\begin{equation*}
	f_{\mathfrak{n}_0} \colon Y_{\mathfrak{n}_0} \to Y_i' \xrightarrow{f_i'} X_{S_i} \to X
\end{equation*}
so that \eqref{eq.yn} factors through \eqref{eq.yiprime}. Thus, we conclude $\eta$ is in the kernel \eqref{eq.y} and hence too of $\alpha$ as desired, completing the proof.
\end{proof}

\begin{remark}
    \label{rem.B0AdjointVersionUpToCompletion}
    With $X$ as in \autoref{prop.B0completion}, suppose we can write $\Delta = S + B$ where $S = \sum_{i = 1}^t S_i$ is reduced and $B$ has no common components with $S$.  Fixing $S_i^+$ in $X^+$ as in \autoref{subsec.AdjointAnalogsOfS^0}, it would be natural to hope that 
    \begin{equation}
        \label{eq.B0AdjointVersionUpToCompletion}
        						\bigcap_{\substack{f \colon Y \to X\\ \textnormal{finite}}}\im \left( \bigoplus_{i = 1}^t H^0(Y, \sO_Y( K_Y + S_{i,Y} + \lceil{f^* (M - K_X - \Delta)}\rceil)) \otimes_R \widehat{R} \to H^0(X, \sM) \otimes_R \widehat{R} \right)         \\
                    	\end{equation}
	or equivalently the Matlis dual of
	\begin{equation*}
		\mathrm{im}\left( \myH^d \myR \Gamma_\m \myR \Gamma(X, \sO_X(K_X - M)) \to \myH^d \myR \Gamma_\m\myR\Gamma(X^+, \bigoplus_{i = 1}^t \sO_{X^+}(-S_i^+ + \pi^*(K_X + \Delta - M))) \right)
	\end{equation*}
    agrees with $\myB^0_{S}(X_{\widehat{R}},S_{\widehat{R}} + B_{\widehat{R}}; \sM_{\widehat{R}})$.  However, we do not see how to prove that -- even when $S$ is irreducible (which may not be preserved under completion).  The problem is we do not seem to have fine enough control over the Galois actions to mimic the end of the proof of \autoref{prop.B0completion} (the reduction to the Henselian case) since we have to simultaneously control $S_{i,Y}$ and maximal ideals lying over $\fram \subseteq R$. In other words, and in the notation used at the end of the proof of \autoref{prop.B0completion}, one must be able to use the Galois action to permute the ideals $\mathfrak{n}_j$ independently of the $S_{i,Y}$. 
    Regardless however, we do define $\hat \myB^0_S(X, S+B; \sM)$ to be the $R$-Matlis dual of the displayed image above.  
\end{remark}

We finally explain what happens when $H^0(X, \sO_X)$ is only semi-local.

\begin{remark}
    \label{rem.B0completionH0NotEqualToR}
    With notation as in \autoref{prop.B0completion}, instead assume that $H^0(X, \sO_X) =: T$ is semi-local with a finite map $R \to T$.  For each maximal ideal $\frn_i$ of $T$ let $T_i = T_{\frn_i}$ denote the localization and set $X_i = X_{T_i} = X \times_{\Spec T} {\Spec T_i}$.  Then $H^0(X_{i}, \sO_{X_i}) = T_i$ and since $T \otimes_R \widehat{R} = \bigoplus_i \widehat{T_i}$, we obtain that $X_{\widehat{R}} = \coprod X_{\widehat{T_i}}$.
    Now by \autoref{prop.B0completion}
    \[
        \begin{array}{l}
            \myB^0(X_{\widehat{T_i}}, \Delta_{\widehat{T_i}}; \sM_{\widehat{T_i}}) = \medskip \\ \qquad \displaystyle     \bigcap_{\substack{f \colon Y_i \to X_i\\ \textnormal{finite}}}\im \left( H^0(Y_i, \sO_{Y_i}( K_{Y_i} + \lceil{f^* (M - K_{X_i} - \Delta)}\rceil)) \otimes_{T_i} \widehat{T_i} \to H^0(X_i, \sM_{T_i}) \otimes_{T_i} \widehat{T_i} \right) 
        \end{array}
    \]
    where the finite covers $f : Y_i \to X_i$ are as in \autoref{conv.CategoryOfFiniteMaps} and each $Y_i$ is normal.
    Each finite cover $Y_i \to X_i$ is the localization of a finite cover $Y \to X$.  Therefore, we have that 
    \[
        \begin{array}{l}
            \myB^0(X_{\widehat{R}}, \Delta_{\widehat{R}}; \sM_{\widehat{R}}) = \bigoplus_i \myB^0(X_{\widehat{T_i}}, \Delta_{\widehat{T_i}}; \sM_{\widehat{T_i}}) = 
            \medskip \\ \qquad \displaystyle    \bigcap_{\substack{f \colon Y \to X\\ \textnormal{finite}}}\im \left( H^0(Y, \sO_{Y}( K_{Y} + \lceil{f^* (M - K_{X} - \Delta)}\rceil)) \otimes_R {\widehat{R}} \to H^0(X, \sM) \otimes_R {\widehat{R}} \right) 
        \end{array}
    \]
    where the intersection  runs over finite covers $f:Y \to X$ as in \autoref{conv.CategoryOfFiniteMaps}, and where $Y$ is normal.
\end{remark}

\section{Section rings and \texorpdfstring{$\bigplus$-stable}{+-stable} sections} 
\label{sec.B0AndGradedRings}

The goal of this section to relate $\myB^0$ with the test ideal of the section ring $S$ (the affine cone).  As a consequence, we will deduce that $H^0 = \myB^0$ at least when working with sufficiently ample divisors on non-singular schemes since we know that the test ideal agrees with $S$ on the nonsingular locus by \cite{MaSchwedeTuckerWaldronWitaszekAdjoint}.

To avoid dealing with technical issues, we make some simplifying assumptions.
In particular, we assume that $\Delta = 0$ and we work with $\sM = \omega_X \otimes \sL$ as in \autoref{rem.SimpleAdjointFormulationNoPair}.  By \autoref{lem.FiniteCoverToRemoveDelta}, one may frequently reduce to this case.  

\begin{setting}
	\label{set.SettingForGraded}
	With notation as in \autoref{sec:notation}, suppose that $\pi : X \to \Spec (R)$ is a projective morphism where $X$ is a normal integral $d$-dimensional scheme and $R$ is a complete Noetherian local domain of mixed characteristic $(0,p)$.  Choose $\sL$ an ample line bundle.  Write 
	\[
		S = \mathcal{R}(X, \sL) := \bigoplus_{i \geq 0} H^0(X, \sL^i).
	\]
	It is important to note that $S$ is normal, see \cite[Chapter III, Exercise 5.14]{Hartshorne}.  We notice that $R':=H^0(X, \sO_X)$ is a finite $R$-algebra which is integral and normal (as $X$ is so), so $R'$ is itself a complete Noetherian local domain.

	By \cite{BhattAbsoluteIntegralClosure}, once we fix an absolute integral closure $X^+ \to X$, we have graded algebras $S^{+,\gr} \subseteq S^{+, \GR}$ defined as follows.	First, set
	\[
		S^{+,\gr} := \underset{{f \colon Y \to X}}{\colim} \mathcal{R}(Y, f^* \sL) = \bigoplus_{i \in \bZ_{\geq 0}} H^0(X^+, \sL^i)
	\]
	where the colimit runs over all finite normal covers of $X$ dominated by $X^+$.  Likewise after fixing a compatible system of roots $\{\sL^{1/n}\}_{n \geq 1}$ of $\sL$ pulled back to $X^+$ (such systems exist and are unique up to isomorphism, see \cite[Lemma 6.6]{BhattAbsoluteIntegralClosure}), we can define
	\[
		S^{+,\GR} := 	\bigoplus_{i \in \bQ_{\geq 0}} H^0(X^+, \sL^i),
	\]
	 Notice that $S^{+,\gr}$ is a $S^{+,\gr}$-module direct summand of $S^{+,\GR}$. In \cite[Section 6]{BhattAbsoluteIntegralClosure}, it is proved that $S^{+,\gr}/p$ and  $S^{+,\GR}/p$ are big Cohen-Macaulay over $S/p$ under the set up that $X$ is projective over $R$ which is finite type and flat over a henselian DVR. Here we need a version when $R$ is a Noetherian complete local domain and we deduce it from \cite{BhattAbsoluteIntegralClosure}.
	
\begin{theorem}
\label{prop.S+grIsBigCM}
With notation as in \autoref{set.SettingForGraded}, we have $H_{\m+S_{>0}}^j(S^{+,\gr})=0$ for all $j<d+1$. Therefore, $\widehat{S^{+,\gr}}$ is a balanced big Cohen-Macaulay algebra over $\widehat{S}$, where the completion is at the ideal $\m+S_{>0}$. Here $S_{>0}$ denotes the irrelevant ideal, i.e., the ideal generated by all homogeneous elements in $S$ of degree $>0$. 
\end{theorem}
\begin{proof}
We have an exact triangle $\myR\Gamma_{S_{>0}}(S^{+,\gr})\to S^{+,\gr} \to \oplus_{i\in\mathbb{Z}} \myR\Gamma(X^+, \sL^i)$ coming from \cite[\href{https://stacks.math.columbia.edu/tag/0G71}{Tag 0G71}]{stacks-project} and using the fact that $\oplus_{i\in\mathbb{Z}} \myR\Gamma(X^+, \sL^i) \cong \myR \Gamma(\Spec S \setminus V(S_{>0}), \widetilde{S^{+, \gr}})$; which can be seen from a computation of \Cech cohomology (\cf \cite[Theorem A.4.1]{EisenbudCommutativeAlgebraWithAView}). After derived tensoring with $\mathbb{Z}/p$ we have
\[
	\myR\Gamma_{S_{>0}}(S^{+,\gr}/p)\to S^{+,\gr}/p \to \oplus_{i\in\mathbb{Z}} \myR\Gamma(X^+_{p=0}, \sL^i).
\]
\begin{claim}
\label{clm:BhattLemma6.12}
$S^{+,\gr}/p\cong \oplus_{i\in\mathbb{Z}_{\geq0}} \myR\Gamma(X^+_{p=0}, \sL^i)$.
\end{claim}
\begin{proof}
This is essentially \cite[Proposition 6.12]{BhattAbsoluteIntegralClosure}. We briefly recall the argument. Using our chosen compatible system $\{\sL^{1/n}\}$ of roots of $\sL$ over $X^+$, for each $n$ we have a proper birational map $T_n:=\Spec_{X^+}(\oplus_{i\in \mathbb{Z}_{\geq0}} \sL^{\frac{i}{n}}) \to \Spec(\oplus_{i\in \mathbb{Z}_{\geq0}}H^0(X^+, \sL^{\frac{i}{n}}))$, where the latter is considered as an affine scheme over $R$, see \cite[Notation 6.7]{BhattAbsoluteIntegralClosure}. By compatibility we have a system of maps indexed by divisible $n$ with affine transition maps thus we can take limit: $f$: $T_\infty\to \Spec(S^{+,\GR})$, which is pro-proper. Note that $f$ is an isomorphism outside $\Spec(R^+)\subset \Spec(S^{+,\GR})$, and when pulled back along $\Spec(R^+)\subset \Spec(S^{+,\GR})$, it gives $g$: $X^+\to \Spec(R^+)$. Since $X^+$ and $R^+$ are absolute integrally closed, $\myR g_*\mathbf{F}_{p, X^+}\cong \mathbf{F}_{p,\hspace{0.1em} \Spec(R^+)}$ by \cite[Proposition 3.10]{BhattAbsoluteIntegralClosure} and so $\myR f_*\mathbf{F}_{p, T_\infty}\cong \mathbf{F}_{p, \hspace{0.1em} \Spec(S^{+,\GR})}$. Now the $p$-adic completion of $T_\infty$ and $S^{+, \GR}$ are perfectoid by \cite[Lemma 6.10 and 6.11]{BhattAbsoluteIntegralClosure} (these results do not require we are working over an absolute integrally closed DVR). Therefore we have 
\begin{align*}
S^{+, \GR}/p & = \text{RH}_{\overline{\Prism}}(\mathbf{F}_{p, \hspace{0.1em}  \Spec(S^{+,\GR})}) \cong  \text{RH}_{\overline{\Prism}}(\myR f_*\mathbf{F}_{p, T_\infty)}) \cong \myR f_*\text{RH}_{\overline{\Prism}}(\mathbf{F}_{p, T_\infty})  \\
& = \myR f_* \sO_{T_\infty} / p \cong \myR\Gamma(X^+, \sO_{T_\infty}) /p \cong \oplus_{i\in\mathbb{Q}_{\geq0}} \myR\Gamma(X^+_{p=0}, \sL^i).
\end{align*}
Here $\text{RH}_{\overline{\Prism}}$ denotes the $p$-adic Riemann-Hilbert functor of Bhatt-Lurie \cite{BhattLuriepadicRHmodp} (see \cite[Section 3]{BhattAbsoluteIntegralClosure}), the two equalities above follow from \cite[Theorem 3.4 (1)]{BhattAbsoluteIntegralClosure} as the $p$-adic completion of $T_\infty$ and $S^{+, \GR}$ are perfectoid, and the last isomorphism on the first line follows from \cite[Theorem 3.4 (2)]{BhattAbsoluteIntegralClosure} and taking colimit (each $T_n\to \Spec(\oplus_{i\in \mathbb{Z}_{\geq0}}H^0(X^+, \sL^{\frac{i}{n}}))$ is proper). Now passing to the summand, we get $S^{+,\gr}/p\cong \oplus_{i\in\mathbb{Z}_{\geq0}} \myR\Gamma(X^+_{p=0}, \sL^i)$ as desired.
\end{proof}
By Claim \ref{clm:BhattLemma6.12} we have 
$$\myR\Gamma_{S_{>0}}(S^{+,\gr}/p) \cong \oplus_{i<0}\myR\Gamma(X^+_{p=0}, \sL^i)[-1].$$
Applying $\myR\Gamma_\m(-)$ and taking cohomology, we thus have 
$$\myH^j\myR\Gamma_\m\myR\Gamma_{S_{>0}}(S^{+,\gr}/p) \cong \oplus_{i<0}\myH^{j-1}\myR\Gamma_\m\myR\Gamma(X^+_{p=0}, \sL^i).$$
Since $\sL$ is ample, by \autoref{prop.BhattVanishing}, $\myH^{j-1}\myR\Gamma_\m\myR\Gamma(X^+_{p=0}, \sL^i)=0$ for all $j<d$. Thus $\myH^j\myR\Gamma_\m\myR\Gamma_{S_{>0}}(S^{+,\gr}/p)=H_{\m+S_{>0}}^j(S^{+,\gr}/p)=0$ for all $j<d$. But note that we have 
$$\cdots \to H_{\m+S_{>0}}^{j-1}(S^{+,\gr}/p) \to H_{\m+S_{>0}}^j(S^{+,\gr}) \xrightarrow{\cdot p} H_{\m+S_{>0}}^j(S^{+,\gr}) \to H_{\m+S_{>0}}^j(S^{+,\gr}/p) \to \cdots. $$
Since $H_{\m+S_{>0}}^j(S^{+,\gr})$ is $p^\infty$-torsion, multiplication by $p$ is not injective on $H_{\m+S_{>0}}^j(S^{+,\gr})$ unless it vanishes. Thus it follows from the long exact sequence above that $H_{\m+S_{>0}}^j(S^{+,\gr})=0$ for all $j<d+1$. 
\end{proof}
	
	We recall, as explained in \cite[2.6.2]{HyrySmithOnANonVanishingConjecture}, that the graded canonical module $\omega_S$ is the graded dual of $\myH^{d+1} \myR \Gamma_{\fram} \myR \Gamma_{S_{>0}} S$ and that in degree $i > 0$, $[\omega_S]_i = H^0(X, \omega_X \otimes \sL^i)$.  Other potential definitions of the graded canonical have a different shift but we use this choice.
	
	As in \cite{MaSchwedeSingularitiesMixedCharBCM}, we define $\mytau_{S^{+,\gr}}(\omega_S) \subseteq \omega_S$ to be the graded Matlis dual of 
	\[
	    \Image(\myH^{d+1} \myR \Gamma_{\fram} \myR \Gamma_{S_{>0}} S \to \myH^{d+1} \myR \Gamma_{\fram} \myR \Gamma_{S_{>0}} S^{+,\gr}).
	\]
	Note that $S^{+,\gr}$ and $S^{+,\GR}$ are not complete (or perfectoid) but since we are taking local cohomology we can ignore this detail.
	Notice that we can also define $\mytau_{S^{+,\GR}}(\omega_S)$ analogously, but since $S^{+,\gr} \to S^{+,\GR}$ splits, this provides no new information.
\end{setting}

\begin{definition}
	With notation as in \autoref{set.SettingForGraded},  we define for $i > 0$
    \[
	    \myB^0_{\gr}(X,\omega_X \otimes \sL^i) :=  [\mytau_{S^{+,\gr}}(\omega_S)]_i \subseteq [\omega_S]_i = H^0(X,\omega_X \otimes \sL^i).
	\]
\end{definition}

\begin{proposition}
\label{prop.B^0forgradedringsvsB^0}
	In the above situation, $\myB^0_{\gr}(X,\omega_X \otimes \sL^i)=\myB^0(X,\omega_X \otimes \sL^i)$ for all $i > 0$.
	\end{proposition}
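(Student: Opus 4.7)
The plan is to identify both sides as Matlis duals of the images of essentially the same local cohomology map, once appropriate identifications between local cohomology of the section ring and sheaf cohomology of twists are made.

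First, I would compute both sides. On the $\myB^0$ side, by \autoref{lem.B0AsInverseLimit}\autoref{eq.lem.B0AsInverseLimit.DualImageForFinite}, applied with $\Delta=0$ and $\sM = \omega_X \otimes \sL^i$ so that $\sO_X(K_X-M) = \sL^{-i}$, we have that $\myB^0(X, \omega_X \otimes \sL^i)$ is the $R$-Matlis dual of the image of
\[
\myH^d\myR\Gamma_{\fram}\myR\Gamma(X, \sL^{-i}) \longrightarrow \myH^d\myR\Gamma_{\fram}\myR\Gamma(X^+, \sL^{-i}).
\]
On the graded side, $[\mytau_{S^{+,\gr}}(\omega_S)]_i$ is, by definition, the $R$-Matlis dual of the graded piece in degree $-i$ of the image of the natural map
\[
\myH^{d+1}\myR\Gamma_{\fram+S_{>0}}(S) \longrightarrow \myH^{d+1}\myR\Gamma_{\fram+S_{>0}}(S^{+,\gr}).
\]
(Note that the $\myR\Gamma_\fram \circ \myR\Gamma_{S_{>0}}$ from the definition equals $\myR\Gamma_{\fram+S_{>0}}$ by composition of derived functors.)

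Second, I would match these images using the standard exact triangle
\[
\myR\Gamma_{S_{>0}}(S) \longrightarrow S \longrightarrow \bigoplus_{n \in \bZ} \myR\Gamma(X, \sL^n) \xrightarrow{+1}.
\]
Applying $\myR\Gamma_\fram$, taking cohomology, and restricting to the graded piece of degree $-i$ with $i>0$, the middle term $[\myR\Gamma_\fram(S)]_{-i}$ vanishes because $[S]_{-i}=0$. The connecting homomorphism therefore produces an isomorphism
\[
[\myH^{d+1}\myR\Gamma_{\fram+S_{>0}}(S)]_{-i} \;\cong\; \myH^d\myR\Gamma_{\fram}\myR\Gamma(X, \sL^{-i}).
\]
For $S^{+,\gr}$, since $S^{+,\gr} = \colim_Y \mathcal{R}(Y, f^*\sL)$ and each map $S \to \mathcal{R}(Y, f^*\sL)$ is finite with $(S)_{>0}$ and $(\mathcal{R}(Y, f^*\sL))_{>0}$ having the same radical, and because local cohomology commutes with filtered colimits, the same argument yields
\[
[\myH^{d+1}\myR\Gamma_{\fram+S_{>0}}(S^{+,\gr})]_{-i} \;\cong\; \colim_Y \myH^d\myR\Gamma_{\fram}\myR\Gamma(Y, f^*\sL^{-i}) \;=\; \myH^d\myR\Gamma_{\fram}\myR\Gamma(X^+, \sL^{-i}).
\]

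Third, these identifications are functorial and compatible with the natural maps on both sides, so the image of $\myH^{d+1}\myR\Gamma_{\fram+S_{>0}}(S) \to \myH^{d+1}\myR\Gamma_{\fram+S_{>0}}(S^{+,\gr})$ in graded degree $-i$ is identified with the image of $\myH^d\myR\Gamma_{\fram}\myR\Gamma(X, \sL^{-i}) \to \myH^d\myR\Gamma_{\fram}\myR\Gamma(X^+, \sL^{-i})$. Taking $R$-Matlis duals of these isomorphic images gives precisely
\[
[\mytau_{S^{+,\gr}}(\omega_S)]_i \;=\; \myB^0(X, \omega_X \otimes \sL^i),
\]
as required. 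The main technical obstacle is bookkeeping — verifying the cohomological degree shift (local cohomology in degree $d+1$ versus sheaf cohomology in degree $d$) and the graded-index flip (degree $i$ of $\omega_S$ corresponds to degree $-i$ of the local cohomology via graded Matlis duality) — and confirming along the way that $\myR\Gamma_{\fram}$ indeed commutes with the direct sum decompositions coming from the grading.
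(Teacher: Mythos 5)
Your proposal is correct and follows essentially the same route as the paper's proof: both sides are identified as Matlis duals of images via \autoref{lem.B0AsInverseLimit} and graded local duality, the exact triangle $\myR\Gamma_{S_{>0}}(S)\to S\to \bigoplus_n \myR\Gamma(X,\sL^n)\xrightarrow{+1}$ (and its analogue for $S^{+,\gr}$, which the paper asserts directly and you obtain by a colimit over finite covers) is applied, and the vanishing of $[\myR\Gamma_{\fram}(S)]_{-i}$ and $[\myR\Gamma_{\fram}(S^{+,\gr})]_{-i}$ for $i>0$ identifies the two images. The only differences are cosmetic (writing $\myR\Gamma_{\fram+S_{>0}}$ for the composite functor and deriving the $S^{+,\gr}$ triangle by passage to the colimit).
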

\begin{proof}
By graded local duality, we have 
$$\myB^0_{\gr}(X,\omega_X \otimes \sL^i)=[\im(\myH^{d+1}\myR\Gamma_{\m}\myR\Gamma_{S_{>0}}(S)\to \myH^{d+1} \myR\Gamma_{\m}\myR\Gamma_{S_{>0}}(S^{+,\gr}))]_{-i}^\vee$$
where $(-)^\vee$ is Matlis duality over $R$.

Note that we have a commutative diagram of exact triangles: 
\[\xymatrix{
\myR\Gamma_{S_{>0}}(S)\ar[r] \ar[d] & S \ar[r] \ar[d] & \oplus_{i\in\mathbb{Z}} \myR\Gamma(X, \sL^i) \ar[r]^-{+1} \ar[d] & {} \\
\myR\Gamma_{S_{>0}}(S^{+,\gr}) \ar[r] &  S^{+,\gr} \ar[r] &  \oplus_{i\in\mathbb{Z}} \myR\Gamma(X^+, \sL^i)  \ar[r]^-{+1} & {} 
}.
\]
Applying $\myR\Gamma_\m$ and taking cohomology, we have 
\[\xymatrix{
[\myH^{d}\myR\Gamma_\m(S)]_{-i} \ar[r] \ar[d] &  \myH^{d}\myR\Gamma_\m\myR\Gamma(X, \sL^{-i}) \ar[r] \ar[d] & [\myH^{d+1}\myR\Gamma_\m\myR\Gamma_{S_{>0}}(S)]_{-i} \ar[d]\ar[r] & 0\\
[\myH^{d}\myR\Gamma_\m(S^{+,\gr})]_{-i} \ar[r]  &  \myH^{d}\myR\Gamma_\m\myR\Gamma(X^+, \sL^{-i}) \ar[r] & [\myH^{d+1}\myR\Gamma_\m\myR\Gamma_{S_{>0}}(S^{+,\gr})]_{-i} \ar[r] & 0
}
\]
Note that, $[\myH^{d}\myR\Gamma_\m(S)]_{-i}=[\myH^{d}\myR\Gamma_\m(S^{+,\gr})]_{-i}=0$ when $i>0$: this is because $\m\subseteq R$ lives in degree $0$ so $[\myH^{d}\myR\Gamma_\m(S)]_{-i}=\myH^{d}\myR\Gamma_\m([S]_{-i})=0$ and similarly for $S^{+,\gr}$. Therefore the diagram shows that 
\begin{align*}
&[\im(\myH^{d+1}\myR\Gamma_{\m}\myR\Gamma_{S_{>0}}(S)\to \myH^{d+1} \myR\Gamma_{\m}\myR\Gamma_{S_{>0}}(S^{+,\gr}))]_{-i} \\
=& \im(\myH^{d}\myR\Gamma_\m\myR\Gamma(X, \sL^{-i})\to \myH^{d}\myR\Gamma_\m\myR\Gamma(X^+, \sL^{-i}))
\end{align*}
Taking Matlis dual over $R$ and using \autoref{eq.lem.B0AsInverseLimit.DualImageForFinite} in \autoref{lem.B0AsInverseLimit}, we see that $\myB^0_{\gr}(X,\omega_X \otimes \sL^i)=\myB^0(X,\omega_X \otimes \sL^i)$ for all $i > 0$ as desired.
\end{proof}

In what follows, we will be studying $H^0(X, \omega_X \otimes \sL^N)$ for $N$ sufficiently large when $X$ has sufficiently mild singularities.  For our purposes, sufficiently mild means the following.

\begin{definition}
	We say that a Noetherian ring $R$ has \emph{finite summand singularities} if there exists a finite extension $R \subseteq S$ such that $S$ is regular and the map splits as a map of $R$-modules.
\end{definition}

We note that by \cite{CRMPSTCoversofRDP}, 2-dimensional klt singularities of residual characteristic $p > 5$ are finite summand singularities. For an excellent ring, the locus of finite summand singularities is readily verified to be open.  We also note that if a Noetherian ring $R$ has finite summand singularities, then any finite extension $R \hookrightarrow S$ splits as a map of $R$-modules as a consequence of the direct summand theorem \cite{AndreDirectsummandconjecture}.  In particular, using the notation from the next section \autoref{def:globally_B_regular}, we see that $\Spec R$ is globally $\bigplus$-regular (that is $R \subseteq S$ splits for every finite extension, in other words $R$ is a splinter).  Note in equal characteristic $p > 0$, being globally $\bigplus$-regular is quite closely related to $F$-regularity (and they are conjectured to be equivalent), an analog of klt singularities.  Not all rings $R$ that are globally $\bigplus$-regular have finite summand singularities however, even in equal characteristic $p > 0$. 

In \cite[Theorem 4.1]{MaSchwedeTuckerWaldronWitaszekAdjoint} it was shown that if $(R, \Delta)$ has simple normal crossings at $Q$ with $\lfloor \Delta_Q \rfloor = 0$, then $\mytau_{B}(R, \Delta)_Q = R_Q$.  We will use this below, which will later help us study $H^0(X, \omega_X \otimes \sL^N)$.  

In the next theorem we assume that $X$ has finite summand singularities, which implies that $S$ has finite summand singularities (and so globally $\bigplus$-regular singularities) away from the irrelevant ideal $S_{>0}$.  It is natural to try to compute the some (local) $\bigplus$-test ideal on $S$ to measure this.  However, we don't know that such ideals commute with localization.  On the other hand, the ideal $\im({}^*\Hom_S(S^{+,\gr}, S)\to S)$, which can be viewed as a sort of test ideal, can be thought of as a measure of the obstruction to the global $\bigplus$-regularity of $S$ (again, its formation does not obviously commute with localization since $S^{+,\gr}$ is note finitely presented over $S$).  Regardless of these difficulties, we are able to that that image ideal contains $S_{>m}$ for some $m \gg 0$.

In what follows, we use graded $\Hom$ and graded injective hulls, denoted ${}^{*}\Hom$ and ${}^* E$ respectively, see \cite[Chapter 3, Section 6]{BrunsHerzog}.  

\begin{theorem}
	\label{thm.GradedTestIdealAnnihilated}
	Suppose that $X$, $\sL$, $R$ and $S$ are as in \autoref{set.SettingForGraded}.  Let $\fram_S = \fram \cdot S+ S_{>0}$ denote the homogeneous maximal ideal of $S$. Suppose $X$ has finite summand singularities.  Then for $m \gg 0$, $S_{>m}$ annihilates the kernel of 
	\[
		{}^*E_S \to {}^*E_S \otimes S^{+, \gr}
	\] 
	where ${}^*E_S = \myH^{d+1}\myR\Gamma_{\fram_S}(\omega_S)$ is the graded injective hull of the residue field of $S$.  Dually, 
	$$S_{>m}\subseteq \im({}^*\Hom_S(S^{+,\gr}, S)\to S).$$
\end{theorem}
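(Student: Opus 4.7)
The two displayed statements are equivalent by graded Matlis duality over the complete local $R$: setting $I := \im({}^*\Hom_S(S^{+,\gr}, S) \to S)$ and $K := \ker({}^*E_S \to {}^*E_S \otimes_S S^{+,\gr})$, the adjunction identifications $({}^*E_S)^\vee \cong S$ and $({}^*E_S \otimes_S S^{+,\gr})^\vee \cong {}^*\Hom_S(S^{+,\gr}, S)$ give $K^\vee \cong S/I$ and hence $\Ann_S K = I$. So it suffices to prove the dual assertion $S_{>m} \subseteq I$ for some $m$.

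The strategy is to show that $V(I) \subseteq V(S_{>0})$ in $\Spec S$; equivalently, the finitely generated graded $S$-module $S/I$ is supported on $V(S_{>0})$. Once this is established, Noetherianity of $S$ produces an integer $k$ with $S_{>0}^k \subseteq I$. Since $S$ is a finitely generated $R$-algebra whose irrelevant ideal has generators of bounded degree $d_0$, every element of $S_d$ with $d > k d_0$ is (a sum of) products of more than $k$ positive-degree generators, and hence lies in $S_{>0}^k \subseteq I$. Thus $S_{>m} \subseteq I$ for $m := k d_0$.

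To verify the support claim, fix a prime $P \subset S$ with $S_{>0} \not\subseteq P$ and pick a homogeneous $s \in S_{>0} \setminus P$. Then $S[s^{-1}] \cong \sO_X(D_+(s))[s, s^{-1}]$ is a Laurent polynomial algebra over the sections of $\sO_X$ on the basic open $D_+(s) \subseteq X$, and $S_P$ is a further localization at a prime lying over some point $x \in D_+(s)$. The finite summand singularities hypothesis provides a split module-inclusion $\sO_{X,x} \hookrightarrow T_x$ into a regular finite extension; tensoring with $\bZ[s, s^{-1}]$ and localizing further preserves this splitting, so $S_P$ itself is a split summand of a regular ring. Using the identification (via Grothendieck duality applied to each finite cover $Y \to X$) of the $d$-th graded piece of $I$ with $\myB^0(X; \sL^d)$, and the analog of \autoref{prop.B^0forgradedringsvsB^0} relating this to a graded piece of a BCM-test ideal of $S$, I invoke the limited localization of BCM-test ideals at summand-of-regular points supplied by \cite[Theorem 4.1]{MaSchwedeTuckerWaldronWitaszekAdjoint} (together with the fact that a split summand of a regular ring has trivial BCM-test-ideal cokernel) to conclude $I_P = S_P$. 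The hardest step is precisely this final localization, since BCM-test ideals do not commute with localization in general; in our setting the explicit local splitting combined with the graded structure of the section ring makes the partial localization results applicable.
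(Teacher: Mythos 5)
Your duality reduction and your endgame (show the cokernel $S/I$ is supported on $V(S_{>0})$, then pass from a power of $S_{>0}$ to a degree cutoff $S_{>m}$) match the paper's framework, but the central step --- ``conclude $I_P = S_P$'' for primes $P \not\supseteq S_{>0}$ --- is exactly where the work lies, and your route to it does not go through. Two concrete problems. First, the result you invoke, \cite[Theorem 4.1]{MaSchwedeTuckerWaldronWitaszekAdjoint}, computes $\mytau_B(R,\Delta)_Q = R_Q$ at points where the pair $(R,\Delta)$ is \emph{simple normal crossing} and requires $K_R+\Delta$ to be $\bQ$-Cartier so that the BCM test ideal is even defined. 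The completed section ring $\widehat{S}$ (or $\widehat{S_i}$) is not $\bQ$-Gorenstein in general, and ``split summand of a regular ring'' is not a hypothesis that theorem accepts; there is no known localization statement for BCM test ideals at summand-of-regular points, and the paper repeatedly flags that such localization is open. The paper's actual proof has to manufacture the missing hypotheses: it passes to the finite covers $X_i = $ (normalization of $X$ in $K(V_i)$) whose section rings $S_i$ are regular away from $V(t_i)$, and then uses Flenner's \emph{local Bertini theorem} to produce $f \in \omega_{\widehat{S_i}}^{(-2)}$ with $\Delta_i = \tfrac12\Div(f)$ making $K_{\widehat{S_i}}+\Delta_i$ $\bQ$-Cartier while keeping $(\widehat{S_i},\Delta_i)$ SNC on $\Spec \widehat{S_i}[t_i^{-1}]$; only then does \cite[Theorem 4.1]{MaSchwedeTuckerWaldronWitaszekAdjoint} apply to put $t_i^a$ in the test ideal. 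None of this apparatus appears in your proposal, and your parenthetical ``a split summand of a regular ring has trivial BCM-test-ideal cokernel'' is essentially assuming the conclusion at the local level.

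Second, even granting good local structure of $S_P$, you cannot verify $I_P = S_P$ by purely local reasoning: $I$ is the image of ${}^*\Hom_S(S^{+,\gr},S) \to S$, and since $S^{+,\gr}$ is not finitely generated, elements of $\Hom_{S_P}((S^{+,\gr})_P, S_P)$ need not come from localizing global homomorphisms. To show $I \not\subseteq P$ you must exhibit an actual (graded) map $S^{+,\gr} \to \widehat{S}$ whose image contains an element outside $P$. The paper does this by a Matlis-duality argument: the map $\widehat{S_i} \xrightarrow{\cdot f^{1/2}} \widehat{S^+}$ factors through $\omega_{\widehat{S_i}}$, and dualizing shows $\mytau_{\widehat{S^+}}(\widehat{S_i},\Delta_i) \subseteq \im(\Hom_{\widehat{S_i}}(\widehat{S^+},\widehat{S_i}) \to \widehat{S_i})$, so some $\psi_i$ hits $t_i^a$; composing with the splitting $\rho_i : S_i \to S$ (which is where the finite summand hypothesis enters, yielding $t_i^b \in \im \rho_i$) produces the global map hitting $t_i^{a+b}$. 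Your proposal uses the splitting $\sO_{X,x} \hookrightarrow T_x$ only as an abstract local fact and never constructs these global maps. Also, a smaller issue: your identification of $[I]_d$ with $\myB^0(X;\sL^d)$ is not established anywhere --- the paper's \autoref{prop.B^0forgradedringsvsB^0} identifies graded pieces of $\mytau_{S^{+,\gr}}(\omega_S)$ with $\myB^0(X,\omega_X\otimes\sL^i)$, a different object from the image ideal $I \subseteq S$.
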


\begin{proof}
	Begin by choosing a finite affine cover $\{U_i\}$ of $X$, such that for each such $U_i$ there exists a finite surjective map $f_i : V_i \to U_i$ where $V_i$ is regular and such that $\sO_{U_i} \to (f_i)_* \sO_{V_i}$ splits.  Without loss of generality, we may assume that $\sL|_{U_i} \cong \sO_{U_i}$, $\omega_{V_i} \cong \sO_{V_i}$, and $U_i$ is the complement of some $V(t_i)$ with $t_i \in H^0(X, \sL^n)$ for some $n$ (which we may pick independently of $i$).   For each $i$, let $X_i$ denote the normalization of $X$ in $K(V_i)$ and fix $\pi_i : X_i \to X$ to be the induced map.   Let $S_i$ denote the graded section ring of $X_i$ with respect to $\pi_i^* \sL$, let $\fram_i$ denote the homogeneous maximal ideal, and note that  $S \subseteq S_i$ is finite. Set $\widehat{S_i}$ to be the $\m_i$-adic completion of $S_i$. Notice we also abuse notation to view $t_i = \pi_i^* t_i$ as an element of $S_i$ and also as an element of $\widehat{S_i}$.  
	Forgetting the grading for now, embed $\omega_{\widehat{S_i}}^{(-1)} \subseteq \widehat{S_i}$ such that $\omega_{\widehat{S_i}}^{(-1)}[t_i^{-1}] = \widehat{S_i}[t_i^{-1}]$.  By Flenner's local Bertini theorem (see \cite[Satz 2.1]{FlennerLocalBertini}, \cite[Theorem 1]{TrivediLocalBertini} and \cite{TrivediLocalBertiniErratum}), there exists $f \in \omega_{\widehat{S_i}}^{(-2)}$, such that $f$ is not contained in $Q^{(2)}$ for all $Q\in\Spec(\widehat{S_i})$ not containing $\omega_{\widehat{S_i}}^{(-2)}$. In particular $f$ is not contained in $Q^{(2)}$ for all $Q\in\Spec(\widehat{S_i}[t_i^{-1}])$, it follows that $\widehat{S_i}[t_i^{-1}]/(f)$ is regular.  Set $D_i$ to be the effective divisor corresponding to $f \in \omega_{\widehat{S_i}}^{(-2)}$ and let $\Delta_i = \frac{1}{2}D_i$.  By construction, $(\widehat{S_i}, \Delta_i)$ is simple normal crossing at all $Q\in\Spec(\widehat{S_i}[t_i^{-1}])$ and $K_{\widehat{S_i}} + \Delta_i=\frac{1}{2}\Div(f)$ is $\bQ$-Cartier. Applying \cite[Theorem 4.1]{MaSchwedeTuckerWaldronWitaszekAdjoint} with the perfectoid big Cohen-Macaulay $S_i$-algebra $\widehat{S_i^+} = \widehat{S^+}$, we have that $\mytau_{\widehat{S^+}}(\widehat{S_i}, \Delta_i)_Q = \widehat{S_i}_Q$ for all $Q\in\Spec(\widehat{S_i}[t_i^{-1}])$. In particular, there exists $a$ such that $t_i^a\in \mytau_{\widehat{S^+}}(\widehat{S_i}, \Delta_i)$.
	
	Now since $\widehat{S_i}\xrightarrow{\cdot f^{1/2}}\widehat{S^+}$ factors through $\widehat{S_i}(K_{\widehat{S_i}})\cong\omega_{\widehat{S_i}}$ by construction, we have induced maps
	$$\cdot f^{1/2}: H_{\m_S}^{d+1}(\omega_{\widehat{S_i}})\to H_{\m_S}^{d+1}(\omega_{\widehat{S_i}}\otimes \widehat{S^+})\to H_{\m_S}^{d+1}(\widehat{S^+}).$$
	Applying Matlis duality, we have 
	\[\xymatrix{
	H_{\m_S}^{d+1}(\omega_{\widehat{S_i}})^\vee \ar[d]^\cong  & H_{\m_S}^{d+1}(\omega_{\widehat{S_i}}\otimes \widehat{S^+})^\vee \ar[d]^\cong \ar[l] & H_{\m_S}^{d+1}(\widehat{S^+})^\vee \ar[l] \ar[d]^\cong \\
	\widehat{S_i} & \Hom_{\widehat{S_i}}(\widehat{S^+}, \widehat{S_i}) \ar[l] & \Hom_{\widehat{S_i}}(\widehat{S^+}, \omega_{\widehat{S_i}}) \ar[l]
	}
	\]
Since the image of the composition map is equal to $\mytau_{\widehat{S^+}}(\widehat{S_i}, \Delta_i)$ by \cite[Proof of Theorem 6.12]{MaSchwedeSingularitiesMixedCharBCM}, we see that $\im(\Hom_{\widehat{S_i}}(\widehat{S^+}, \widehat{S_i})\to \widehat{S_i})$ contains $\mytau_{\widehat{S^+}}(\widehat{S_i}, \Delta_i)$, so it contains $t_i^a$, i.e., there exists a map $\psi_i : \widehat{S^+} \to \widehat{S_i}$ such that $t_i^a$ is in the image.

	Now if we view $S$ as a subring of $S_i$, then by hypothesis, $t_i^b$ is in the image of some $\rho_i : S_i \to S$.  Completing, we see that $t_i^{a+b}$ is in the image of  $\widehat{S^+} \xrightarrow{\psi_i} \widehat{S_i} \xrightarrow{\rho_i} \widehat{S}$.  Since $S \to \widehat{S^+}$ factors through $S^{+, \gr}$, we see that $t_i^{a+b}$ annihilates the kernel of 
	\[
		{}^*E_S \to {}^*E_S \otimes S^{+, \gr}.
	\]
	
	Finally, since the $U_i = D(t_i)$ cover $X$, we see that the $t_i^{a+b}$ generate the prime ideal $S_{>0}$ up to radical.  Thus $S_{>0}^{m_1} \subseteq \langle t_1^{a+b}, \dots, t_n^{a+b} \rangle$ for some $m_1$.  But since $S$ is Noetherian, a sufficiently high veronese subalgebra $S^{(e)} \subseteq S$ is generated in degree 1, \cite[Chapter III, Proposition 3]{Bourbaki1998}.  Thus by \cite[Chapter III, Proposition 2, Lemma 2]{Bourbaki1998}, for all $l \gg 0$ and $k \geq 0$ we have that $S_{ke} \cdot S_{l} = S_{ke + l}$.  It follows that $S_{>m} \subseteq S_{>0}^{m_1}$ for some sufficiently large $m > 0$.  This completes the proof of the first statement.
	
	For the final statement, by graded Matlis duality, we know that the cokernel of
	${}^*\Hom_S(S^{+,\gr}, S)\to S$ is annihilated by $S_{>m}$, i.e., 
	$S_{>m}\subseteq \im({}^*\Hom_S(S^{+,\gr}, S)\to S)$ as desired.
\end{proof}

\begin{theorem} \label{thm:B0-equals-H0-for-high-ample}
	Suppose that $X$, $\sL$, $R$ and $S$ are as in \autoref{set.SettingForGraded}.  Let $\fram_S = \fram \cdot S+ S_{>0}$ denote the homogeneous maximal ideal of $S$. Suppose $X$ has finite summand singularities.  Further suppose that $\sL$ is ample on $X$.  Then there exists $m>0$ such that $S_{>m}\cdot \omega_S\subseteq \mytau_{S^{+,\gr}}(\omega_S)$. As a consequence, for $n \gg 0$, we have that 
	\[
		\myB^0(X, \omega_X \otimes \sL^n) = H^0(X, \omega_X \otimes \sL^n).
	\]
\end{theorem}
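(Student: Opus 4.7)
The plan is to deduce both statements from \autoref{thm.GradedTestIdealAnnihilated} together with \autoref{prop.B^0forgradedringsvsB^0}, passing from maps $S^{+,\gr}\to S$ to maps $S^{+,\gr}\to\omega_S$ by multiplication, and then using finite generation of $\omega_S$ to upgrade a containment about $S_{>m}\cdot\omega_S$ into an equality $\myB^0=H^0$ in high degree.

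First I would recast $\mytau_{S^{+,\gr}}(\omega_S)$ as an image rather than a kernel/dual. By graded local duality applied to $S$ with maximal ideal $\fram_S$, together with the definition of $\mytau_{S^{+,\gr}}(\omega_S)$ as the graded Matlis dual of the image of $\myH^{d+1}\myR\Gamma_{\fram_S}(S)\to \myH^{d+1}\myR\Gamma_{\fram_S}(S^{+,\gr})$, one obtains
\[
\mytau_{S^{+,\gr}}(\omega_S) \;=\; \im\Bigl({}^*\Hom_S(S^{+,\gr},\omega_S) \xrightarrow{\; \mathrm{ev}_1 \;} \omega_S\Bigr),
\]
where the displayed map is evaluation at $1$. (At the level of the non-Noetherian $S^{+,\gr}$ this is understood via the standard colimit, i.e.\ running over the finite graded subalgebras $\sectionRingR(Y, f^*\sL)$ indexed by finite covers $f:Y\to X$, exactly as in \autoref{lem.B0AsInverseLimit} and \autoref{prop.B^0forgradedringsvsB^0}.)

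Next I would use this presentation to turn \autoref{thm.GradedTestIdealAnnihilated} into a statement about $\omega_S$. Given any $\phi\in{}^*\Hom_S(S^{+,\gr},S)$ with $\phi(1)=s\in S_{>m}$ and any homogeneous $\omega\in\omega_S$, the composition
\[
S^{+,\gr}\;\xrightarrow{\phi}\;S\;\xrightarrow{\cdot\,\omega}\;\omega_S
\]
is a graded $S$-linear map, hence lies in ${}^*\Hom_S(S^{+,\gr},\omega_S)$, and its value at $1$ is $s\omega$. By \autoref{thm.GradedTestIdealAnnihilated} every element of $S_{>m}$ arises as such a $\phi(1)$, so $s\omega\in\mytau_{S^{+,\gr}}(\omega_S)$, proving
\[
S_{>m}\cdot \omega_S \;\subseteq\; \mytau_{S^{+,\gr}}(\omega_S),
\]
which is the first assertion.

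For the consequence, since $\sL$ is ample the module $\omega_S$ is finitely generated and graded over $S$; pick a bound $D$ on the degrees of a finite set of homogeneous generators. For $n>D+m$ every homogeneous element of $[\omega_S]_n$ is an $S$-linear combination of such generators with coefficients in $S_{>m}$, so $[\omega_S]_n\subseteq [\mytau_{S^{+,\gr}}(\omega_S)]_n$. Taking the $n$-th graded pieces and applying \autoref{prop.B^0forgradedringsvsB^0} together with the identification $[\omega_S]_n = H^0(X,\omega_X\otimes\sL^n)$ gives $H^0(X,\omega_X\otimes\sL^n)\subseteq\myB^0(X,\omega_X\otimes\sL^n)$, and the reverse containment is automatic from the definition of $\myB^0$. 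The main technical point is the Matlis-duality reformulation of $\mytau_{S^{+,\gr}}(\omega_S)$ as an image — once it is in hand, everything else is a short graded-module manipulation.
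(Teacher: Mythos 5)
Your proof is correct and follows essentially the same route as the paper: both arguments use \autoref{thm.GradedTestIdealAnnihilated} to produce, for each $x\in S_{>m}$, a map $\phi\in{}^*\Hom_S(S^{+,\gr},S)$ with $\phi(1)=x$, deduce $S_{>m}\cdot\omega_S\subseteq\mytau_{S^{+,\gr}}(\omega_S)$, and then conclude by finite generation of $\omega_S$ together with \autoref{prop.B^0forgradedringsvsB^0}. The only (cosmetic) difference is that you carry out the middle step on the Matlis-dual side, composing $\phi$ with multiplication maps $S\to\omega_S$ and using the presentation of $\mytau_{S^{+,\gr}}(\omega_S)$ as $\im\bigl({}^*\Hom_S(S^{+,\gr},\omega_S)\to\omega_S\bigr)$, whereas the paper composes $\phi$ on local cohomology to show $S_{>m}$ kills $\ker\bigl(H^{d+1}_{\fram_S}(S)\to H^{d+1}_{\fram_S}(S^{+,\gr})\bigr)$ and dualizes at the end; these are the same argument.
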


\begin{proof}

	By \autoref{thm.GradedTestIdealAnnihilated}, we know $S_{>m}\subseteq \im({}^*\Hom_S(S^{+,\gr}, S)\to S).$
	This means for all (homogeneous) $x\in S_{>m}$, there is a (homogeneous) map $\phi\in {}^*\Hom_S(S^{+,\gr}, S)$ such that $\phi(1)=x$. 
	Therefore the composition map: 
	$$H_{\m_S}^{d+1}(S)\to H_{\m_S}^{d+1}(S^{+,\gr}) \xrightarrow{H_{\m_S}^{d+1}(\phi)} H_{\m_S}^{d+1}(S)$$
	is multiplication by $x$ on $H_{\m_S}^{d+1}(S)$. Thus we find that $S_{>m}$ annihilates the kernel of $H_{\m_S}^{d+1}(S)\to H_{\m_S}^{d+1}(S^{+,\gr})$. 
 By the definition of $\mytau_{S^{+,\gr}}(\omega_S)$ and using graded local duality, it follows $S_{>m}\cdot \omega_S\subseteq \mytau_{S^{+,\gr}}(\omega_S)$.
 	
	Finally, since $\omega_S$ is finitely generated, for all $n\gg0$, $[\omega_S]_n\subseteq S_{>m}\cdot \omega_S \subseteq \mytau_{S^{+,\gr}}(\omega_S)$. Therefore $[\omega_S]_n=[\mytau_{S^{+,\gr}}(\omega_S)]_n$. Hence by \autoref{prop.B^0forgradedringsvsB^0}, we have 
	$$\myB^0(X, \omega_X  \otimes \sL^n) = H^0(X, \omega_X \otimes \sL^n)$$
	for all $n\gg0$ as desired.
\end{proof}

\subsection{An application to Fujita's conjecture in mixed characteristic}
We conclude with a mixed characteristic version of a special case of Fujita's conjecture, analogous to the main result of \cite{SmithFujitaFreenessForVeryAmple}.  Indeed, our proof very closely follows the strategy of K.~Smith.

\begin{theorem} \label{thm:Karen_Fujita_v1} Let $X$ be a $d$-dimensional regular scheme (or a scheme with finite summand singularities) which is flat and projective over $R$.  Set $t = \dim R$ and let $\sL$ be an ample globally generated line bundle on $X$.  Then $\omega_X \otimes \sL^{d - t + 1}$ is globally generated by $\myB^0(X, \omega_X \otimes \sL^{d-t+1})$.
\end{theorem}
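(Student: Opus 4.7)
The approach adapts the tight-closure proof of K.~Smith of Fujita freeness for very ample line bundles \cite{SmithFujitaFreenessForVeryAmple} to the mixed-characteristic setting, working inside the section-ring machinery of \autoref{sec.B0AndGradedRings}. Form the section ring
\[
    S = \mathcal{R}(X, \sL) = \bigoplus_{i\geq 0}H^0(X, \sL^i),
\]
a normal Noetherian $\mathbb{N}$-graded domain of absolute dimension $d+1$, with $X = \Proj S$. By \autoref{prop.B^0forgradedringsvsB^0}, the task reduces to showing that the submodule $\mytau_{S^{+,\gr}}(\omega_S) \subseteq \omega_S$ globally generates $\omega_X \otimes \sL^{d-t+1}$ on $X$ in degree $d-t+1$.

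First, support control. The argument of \autoref{thm.GradedTestIdealAnnihilated} (together with its use in the proof of \autoref{thm:B0-equals-H0-for-high-ample}) yields $S_{>m}\cdot \omega_S \subseteq \mytau_{S^{+,\gr}}(\omega_S)$ for some $m > 0$, so the quotient $\omega_S / \mytau_{S^{+,\gr}}(\omega_S)$ is set-theoretically supported on the cone vertex $V(S_{>0})$. For any closed point $x \in X$ with associated graded prime $\frp_x \subset S$ of height $d$, global generation of $\sL$ at $x$ produces a section $s \in S_1 \setminus \frp_x$, so $S_{>0} \not\subseteq \frp_x$ and hence
\[
    \mytau_{S^{+,\gr}}(\omega_S)_{\frp_x} = (\omega_S)_{\frp_x}.
\]
Equivalently, $\mytau_{S^{+,\gr}}(\omega_S)$ and $\omega_S$ sheafify to the same coherent sheaf $\omega_X$ on $X$.

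Second, and the heart of the argument, is a Castelnuovo-Mumford-type degree reduction that turns the above localized equality into surjectivity of the evaluation map in the precise degree $d-t+1$. Using global generation of $\sL$ and the mixed-characteristic Bertini results of \autoref{sec:Bertini}, we select general sections $s_0,\dots,s_{d-t} \in H^0(X, \sL) = S_1$ and a regular system of parameters $u_1, \dots, u_t$ of $R$ such that
\begin{enumerate}
    \item $(u_1, \ldots, u_t, s_0, \ldots, s_{d-t})$ is a homogeneous system of parameters for $S$,
    \item $s_0 \notin \frp_x$, so $s_0(x) \neq 0$, and
    \item $(u_1, \ldots, u_t, s_1, \ldots, s_{d-t})$ generates $\frp_x$ up to radical.
\end{enumerate}
By \autoref{prop.S+grIsBigCM} this is a regular sequence on $\widehat{S^{+,\gr}}$. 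The associated Koszul and local cohomology long exact sequences at $\fram_S$, combined with the localized equality $\mytau_{\frp_x}(\omega_S) = (\omega_S)_{\frp_x}$ from the previous paragraph, then force surjectivity of the evaluation map
\[
    [\mytau_{S^{+,\gr}}(\omega_S)]_{d-t+1} \longrightarrow (\omega_X\otimes\sL^{d-t+1})\otimes k(x),
\]
which is exactly the desired global generation at $x$ by $\bigplus$-stable sections. The numerical bound $d-t+1$ emerges as the Castelnuovo-Mumford regularity coming from the relative dimension $d-t$ together with the one-step shift for the canonical twist.

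The principal obstacle is the absence of a full localization theorem for BCM test ideals in mixed characteristic, which obstructs a direct reduction to the regular local case; this is precisely what the ``limited localization'' statement \autoref{thm.GradedTestIdealAnnihilated} from \cite{MaSchwedeTuckerWaldronWitaszekAdjoint} bridges, by furnishing the uniform annihilator $S_{>m}$ for $\omega_S/\mytau_{S^{+,\gr}}(\omega_S)$. A secondary difficulty is ensuring the Bertini step yields the required regular sequence already inside $S_1 = H^0(X,\sL)$ rather than in a higher Veronese piece, especially when the residue field of $R$ is finite; this is handled by applying \autoref{thm.FinalBertini} inside the projective embedding of $X$ determined by $\sL$, combined with the global generation hypothesis on $\sL$.
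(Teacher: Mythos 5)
Your plan follows essentially the same route as the paper's proof: pass to the section ring $S=\mathcal{R}(X,\sL)$, identify $\myB^0$ with the graded pieces of $\mytau_{S^{+,\gr}}(\omega_S)$ via \autoref{prop.B^0forgradedringsvsB^0}, use \autoref{thm.GradedTestIdealAnnihilated} (through \autoref{thm:B0-equals-H0-for-high-ample}) to get the uniform annihilator $S_{>m}$ of $\omega_S/\mytau_{S^{+,\gr}}(\omega_S)$, and then run Smith's colon-ideal computation against a homogeneous system of parameters consisting of $t$ parameters of $R$ and $d-t+1$ elements of $S_1$, which is a regular sequence on $\widehat{S^{+,\gr}}$ by \autoref{prop.S+grIsBigCM}. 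The paper packages the last step dually: global generation is reduced to surjectivity of the multiplication map $S_{N-d+t-1}\otimes_R[\mytau_{S^{+,\gr}}(\omega_S)]_{d-t+1}\to[\omega_S]_N$ for $N\gg0$, which graded local duality converts into the injectivity statement of \autoref{prop:Karen_Fujita_local_coh_v1} (every nonzero class in $H^{d+1}_{\fram_S}(S)$ of sufficiently negative degree has a nonzero multiple in degree $-d-1+t$ surviving in $H^{d+1}_{\fram_S}(S^{+,\gr})$). This is the same mechanism you sketch with the Koszul/local-cohomology sequences, just organized on the dual side.

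The one concrete gap is your handling of a finite residue field. You propose to produce $s_0,\dots,s_{d-t}\in S_1=H^0(X,\sL)$ by applying \autoref{thm.FinalBertini} inside the embedding given by $\sL$. But over a finite residue field that theorem (which rests on Poonen's Bertini) only yields a section of $\sO(e)$ for some $e\gg0$; it cannot guarantee anything in degree one, and in general no degree-one section with the required transversality exists over a finite field. If you are forced into sections of $\sL^e$, the system of parameters lives in degree $e$ and the numerology no longer lands in degree $d-t+1$. The paper's fix is different: pass to the strict henselization $R^{sh}$, where the residue field is infinite and general linear combinations of the image of $|\sL|$ give the desired homogeneous system of parameters on the special fibre; descend these to a finite \'etale extension $R'$ of $R$; prove the statement for $X'=X\times_R R'$; and then push the conclusion back to $X$ using the surjection $\myB^0(X',\omega_{X'}\otimes\sL^{d-t+1})\twoheadrightarrow\myB^0(X,\omega_X\otimes\sL^{d-t+1})$ of \autoref{lem.FiniteCoverToRemoveDelta} together with the surjection of sheaves $T\colon\omega_{X'}\otimes\sL^{d-t+1}\to\omega_X\otimes\sL^{d-t+1}$. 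Your argument needs this (or an equivalent) reduction; as written, the Bertini step does not deliver the regular sequence in $S_1$.
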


 We first prove the following result, whose proof is nearly the same as, and heavily inspired by, \cite[Proposition 3.3]{SmithFujitaFreenessForVeryAmple}.

\begin{proposition} \label{prop:Karen_Fujita_local_coh_v1} With notation as in \autoref{thm:Karen_Fujita_v1}, let $(S,\fram_S)$ be the section ring of $X$ with respect to $\sL$ as above. Further suppose that $y_0, \dots, y_{t-1}$ are a system of parameters for $R$ and $x_t, \dots, x_d \in S_1$ are such that $y_0, \dots, x_d$ are a system of parameters for $S$.  

Then there exists $N_0 \in \bN$ such that every homogeneous $0 \neq \eta \in H^{d+1}_{\fram_S}(S)$ of  degree less than $-N_0$ ($\deg \eta < -N_0$) admits a non-zero multiple $\eta'$ of degree $-d -1 + t = -\dim X - 1+ \dim R = -\dim S + \dim R$.
Furthermore, any such $\eta'$ has non-zero image in $H^{d+1}_{\fram_S}(S^{+,\gr})$.  
\end{proposition}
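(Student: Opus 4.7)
Plan. The proof parallels \cite[Proposition 3.3]{SmithFujitaFreenessForVeryAmple}, with the base-ring parameters $y_0,\ldots,y_{t-1}$ playing the role previously played by ground-field constants. Since $(y_0,\ldots,y_{t-1},x_t,\ldots,x_d)$ is a system of parameters for $S$, the top local cohomology can be presented as
\[
H^{d+1}_{\fram_S}(S) \;=\; \varinjlim_n \; S\big/(y_0^n,\ldots,y_{t-1}^n,x_t^n,\ldots,x_d^n),
\]
and any nonzero homogeneous class $\eta$ of degree $-N$ is represented at some level $n \gg 0$ by the class of an element $s \in S_{n(d-t+1)-N}$ whose image in the quotient is nonzero and stable.

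The element $\eta'$ will be obtained by multiplication with a monomial $\mu = x_t^{c_t}\cdots x_d^{c_d}$ with $c_i \geq 0$ and $\sum c_i = N - (d-t+1)$, so that $\mu\eta$ lies in degree $-(d-t+1)=-d-1+t$. Such tuples $(c_i)$ can be chosen to satisfy $c_i \leq n-1$ for each $i$, since $\deg s \geq 0$ forces $(n-1)(d-t+1) \geq N-(d-t+1)$. The crux is to arrange that $\mu\eta \neq 0$. By graded Matlis duality, $H^{d+1}_{\fram_S}(S)$ is dual to the finitely generated graded module $\omega_S$, and the existence of some such $\mu$ with $\mu\eta \neq 0$ is equivalent to the surjectivity, at the level of detecting $\eta$, of the multiplication map $[\omega_S]_{d-t+1}\otimes_R S_{N-(d-t+1)} \to [\omega_S]_N$. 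Choosing $N_0$ large enough that $\omega_S$ is generated in degrees $\leq d-t+1$ above $N_0$ (a Castelnuovo--Mumford-type statement flowing from $\sL$ being ample and $\omega_S$ being finitely generated over $S$), the existence of such a $\mu$ follows from a pigeonhole argument as in Smith's original proof.

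For the second assertion, \autoref{prop.S+grIsBigCM} implies that $(y_0,\ldots,y_{t-1},x_t,\ldots,x_d)$ is a regular sequence on the $\fram_S$-adic completion $\widehat{S^{+,\gr}}$, so
\[
H^{d+1}_{\fram_S}(\widehat{S^{+,\gr}}) \;=\; \varinjlim_n \; \widehat{S^{+,\gr}}\big/(y_0^n,\ldots,y_{t-1}^n,x_t^n,\ldots,x_d^n).
\]
The image of $\eta'$ at level $n$ is the class of $s\mu$ in this quotient. Vanishing of this class would mean $s\mu \in (y_0^n,\ldots,x_d^n)\widehat{S^{+,\gr}}$; since $(y,x)$ is a regular sequence on $\widehat{S^{+,\gr}}$, the exactness of the Koszul complex on this algebra together with faithful flatness (modulo the regular sequence) forces $s\mu$ already into the ideal $(y_0^n,\ldots,x_d^n)S$, contradicting $\eta'\neq 0$ in $H^{d+1}_{\fram_S}(S)$. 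The main obstacle throughout is the generation/Castelnuovo--Mumford statement for $\omega_S$ in the second paragraph, which is the heart of Smith's argument and requires careful control in the present graded, mixed-characteristic setting; everything else is bookkeeping with Čech representatives.
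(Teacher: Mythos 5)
There are two genuine gaps here, and together they mean the proposal does not prove the proposition.

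First, your mechanism for producing a nonzero multiple $\eta'$ is circular. You reduce the existence of $\mu$ with $\mu\eta\neq 0$ (via graded Matlis duality, which is fine as a reduction) to the surjectivity of $S_{N-(d-t+1)}\otimes_R[\omega_S]_{d-t+1}\to[\omega_S]_N$, and you propose to get this from a ``Castelnuovo--Mumford-type statement'' that $\omega_S$ is generated in degrees $\leq d-t+1$. But that surjectivity is precisely the statement that \autoref{thm:Karen_Fujita_v1} extracts \emph{from} \autoref{prop:Karen_Fujita_local_coh_v1}: it is (up to sheafification) the global generation of $\omega_X\otimes\sL^{d-t+1}$ being proved. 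Finite generation of $\omega_S$ only bounds the generating degrees by some constant depending on $\omega_S$, not by $d-t+1$; improving that constant to $d-t+1$ is the Fujita-type content, and the usual Castelnuovo--Mumford route would need Kodaira-type vanishing for $\omega_X\otimes\sL^{j}$, which is exactly what is unavailable in mixed characteristic and what the $R^+$/big Cohen--Macaulay machinery is there to replace. The paper instead gets its $N_0$ from \autoref{thm:B0-equals-H0-for-high-ample} (via the test module $\mytau_{S^{+,\gr}}(\omega_S)$): the kernel of $H^{d+1}_{\fram_S}(S)\to H^{d+1}_{\fram_S}(S^{+,\gr})$ vanishes in degrees $<-N_0$, so $\eta$ itself survives in $H^{d+1}_{\fram_S}(S^{+,\gr})$; the nonvanishing of the multiple is then deduced by a contrapositive (see below), not by a generation bound on $\omega_S$.

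Second, your argument that $\eta'$ has nonzero image in $H^{d+1}_{\fram_S}(S^{+,\gr})$ is false as stated. From $s\mu\in(y_0^n,\ldots,x_d^n)\widehat{S^{+,\gr}}$ you conclude $s\mu\in(y_0^n,\ldots,x_d^n)S$ ``by exactness of the Koszul complex and faithful flatness modulo the regular sequence.'' But $S\to\widehat{S^{+,\gr}}$ is not (faithfully) flat, and the contraction statement $(y_0^n,\ldots,x_d^n)\widehat{S^{+,\gr}}\cap S=(y_0^n,\ldots,x_d^n)S$ is exactly the injectivity of $H^{d+1}_{\fram_S}(S)\to H^{d+1}_{\fram_S}(S^{+,\gr})$, which fails in general: its kernel is Matlis dual to $\omega_S/\mytau_{S^{+,\gr}}(\omega_S)$, which is typically nonzero in low degrees, and degree $-(d-t+1)$ is a fixed degree not covered by the $N_0$ bound. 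Regularity of the sequence on $\widehat{S^{+,\gr}}$ does not give ideal contraction. The paper's actual logic avoids needing this: it proves (Claim~2 of its proof) that if \emph{every} degree-$(-(d-t+1))$ multiple of $\eta$ died in $H^{d+1}_{\fram_S}(S^{+,\gr})$, then the colon-ideal computation $z\in\bigl((y_0^v,\ldots,x_d^v)+(x_t,\ldots,x_d)^{(d-t+1)v-N+1}\bigr)\widehat{S^{+,\gr}}$ --- valid because $\widehat{S^{+,\gr}}$ is balanced big Cohen--Macaulay by \autoref{prop.S+grIsBigCM}, so the $x_i$ behave like indeterminates modulo the $y_j^v$ --- together with the degree of $z$ forces $\eta$ itself to die in $H^{d+1}_{\fram_S}(S^{+,\gr})$, contradicting Claim~1. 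So the nonvanishing of the image of $\eta'$ is the \emph{output} of the Smith-style computation run inside $\widehat{S^{+,\gr}}$, not something to be checked afterwards by descent to $S$. You would need to restructure the proof along these lines rather than patch the two steps independently.
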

\begin{proof}
We begin with a claim.
\begin{claim}
	\label{clm.KernelOfLocalCohomMapBoundedDegree}
	There exists $N_0 \in \bN$ such that the kernel $K$ of $H^{d+1}_{\fram_S}(S) \to H^{d+1}_{\fram_S}(S^{+,\gr})$ is zero in degrees $< -N_0$.
\end{claim}
\begin{proof}[Proof of claim] 
	Let $K$ be the kernel  of  $H^{d+1}_{\fram_S}(S) \to H^{d+1}_{\fram_S}(S^{+,\gr})$.  The graded Matlis dual $K^{\vee}$ fits into an exact sequence $0 \to \mytau_{S^{+,\gr}}(\omega_S) \to \omega_S \to K^{\vee} \to 0$.  Now, \autoref{thm:B0-equals-H0-for-high-ample} implies that $[K^{\vee}]_{i} = 0$ for $i \gg 0$.  Thus $[K]_{n} = 0$ for $n \ll 0$, which proves \autoref{clm.KernelOfLocalCohomMapBoundedDegree}.
\end{proof}

We now come to our main computation.

\begin{claim}
	\label{clm.ImageOfEtaInLocalCohomSPlusIs0}
	Suppose $\eta \in H^{d+1}_{\fram_S}(S)$ is a homogeneous element of degree $-N < -d + t -1$ such that every $S$-multiple of degree $-d + t-1$ has zero image in $H^{d+1}_{\fram_S}(S^{+,\gr})$ (that is $\Image(S_{N-d+t -1}\cdot \eta) = 0 \in H^{d+1}_{\fram_S}(S^{+,\gr})$). Then the image of $\eta$ in $H^{d+1}_{\fram_S}(S^{+,\gr})$ is zero.
\end{claim}
\begin{proof}[Proof of claim]
	Write $\eta = [\frac{z}{\overline{y}^v\overline{x}^v}]$ where $\overline{x}= x_t\cdots x_d$ and $\overline{y}=y_0 \cdots y_{t-1}$ and $z$ is homogeneous of degree $(d-t+1)v-N$. 	Because $S_{N-d+t -1} \cdot \eta$ has zero image in $H^{d+1}_{\fram_S}(S^{+,\gr})$, there exists some $s \geq 0$ so that
	\begin{equation}\label{eq:clm2.fujita_step_2}
		(x_t, \ldots, x_d)^{N-d + t - 1} \cdot (\overline{y}^s \overline{x}^s) \cdot z \subseteq (y_0^{v+s}, \ldots, x_d^{v+s}) \widehat{S^{+,\gr}}.
	\end{equation}		
	Thus, since $\widehat{S^{+,\gr}}$ is Cohen-Macaulay and $y_0, \dots, x_d$ is a regular sequence on it, we have that 
	\[
		 z \in (y_0^{v}, \ldots, x_d^{v})\widehat{S^{+,\gr}} : (x_t, \ldots, x_d)^{N-d + t - 1}.
	\]	
	Now working modulo $y_0^v,\dots, y_{t-1}^v$, we see that 
\begin{align*}
    \overline{z} & \in (x_t^v,\dots,x_d^v)(\widehat{S^{+,\gr}}/(y_0^v,\dots,y_{t-1}^v)): (x_t, \ldots, x_d)^{N-d + t - 1} \\
    &=\Big((x_t^v, \ldots, x_d^v) + (x_t, \ldots, x_d)^{(d-t+1)v - N + 1}\Big)(\widehat{S^{+,\gr}}/(y_0^v,\dots,y_{t-1}^v))
\end{align*}
where the equality follows because $x_t,\dots,x_d$ is a regular sequence on $(\widehat{S^{+,\gr}}/(y_0^v,\dots,y_{t-1}^v))$ so the computation of colon ideal is the same as if the $x_i$'s are indeterminates in a polynomial ring (see \cite[(3.3.3)]{SmithFujitaFreenessForVeryAmple}). It follows that 
	\[
		z \in \Big((y_0^v, \ldots, x_d^v) + (x_t, \ldots, x_d)^{(d-t+1)v - N + 1}\Big)\widehat{S^{+,\gr}}.
	\]
	However, since $z$ has degree $(d - t+1)v - N$, we see that $z \in (y_0^v, \ldots, x_d^v)\widehat{S^{+,\gr}}$.  Thus the image of $\eta$ in $H^{d+1}_{\fram_S}(S^{+,\gr})$ is zero, proving \autoref{clm.ImageOfEtaInLocalCohomSPlusIs0}.
\end{proof}
To finish the proposition, choose $N_0$ as in \autoref{clm.KernelOfLocalCohomMapBoundedDegree} and a nonzero $\eta \in H^{d+1}_{\fram_S}(S)$ of degree $< -N_0$.  Hence $\eta \notin K = \ker\big(H^{d+1}_{\fram_S}(S) \to H^{d+1}_{\fram_S}(S^{+,\gr})\big)$ by \autoref{clm.KernelOfLocalCohomMapBoundedDegree}.  But now by the contrapositive of \autoref{clm.ImageOfEtaInLocalCohomSPlusIs0}, we see that $\eta$ has a nonzero $S$-multiple $\eta'$ of degree $-d + t -1 = -\dim S + \dim R$ whose image in $H^{d+1}_{\fram_S}(S^{+,\gr})$ is also nonzero.  This completes the proof.
\end{proof}

\begin{proof}[Proof of Theorem \ref{thm:Karen_Fujita_v1}] 
We first show that there exists a finite \'{e}tale extension $R'$ of $R$ such that the section ring $S'$ of $X':= X\times_RR'$ with respect to $\sL|_{X'}$ admits a homogeneous system of parameters $y_0,\dots,y_{t-1},x_t,\dots, x_d$ as in the statement of \autoref{prop:Karen_Fujita_local_coh_v1}. Let $R^{sh}$ be the strict hensalization of $R$ (so $R^{sh}$ has an infinite residue field). Then $X^{sh}:=X\times_RR^{sh}$ is flat and projective over $R^{sh}$ of relative dimension $d-t$ and so $X^{sh}_0:= X^{sh}\times_{R^{sh}}(R^{sh}/\m R^{sh})$ is projective over an infinite field of dimension $d-t$. Since $\sL$ is globally generated on $X$, the image of the linear system $|\sL|$ in $H^0(X^{sh}_0, \sL|_{X^{sh}_0})$ is base point free.  As $X^{sh}_0$ is projective over an infinite field, we can pick general linear combinations of sections in the image of $|\sL|$, call them $\overline{x_t},\dots,\overline{x_d}$, such that they form a homogeneous system of parameters in $\mathcal{R}(X^{sh}_0, \sL|_{X^{sh}_0})$. Since $R^{sh}$ is a colimit of finite \'{e}tale extensions of $R$, there exists a finite \'{e}tale complete domain extension $R'$ of $R$ such that $\overline{x_i}$ is the image of $x_i\in H^0(X', \sL|_{X'})$. Now it is straightforward to check that $y_0,\dots,y_{t-1},x_t,\dots, x_d$ form a system of parameters in $S'=\mathcal{R}(X', \sL|_{X'})$ for every system of parameters $y_0,\dots,y_{t-1}$ of $R$: modulo $\m$ (the radical of $(y_0,\dots,y_{t-1})$), $S'/\m S'$ is a homogeneous coordinate ring of $X'_0$ and so $\mathcal{R}(X^{sh}_0, \sL|_{X^{sh}_0})$ is integral over $S'/\m S'$ of the same dimension, thus by our choice, $x_t,\dots,x_d$ form a homogeneous system of parameters in $S'/\m S'$ (as they are so in $\mathcal{R}(X^{sh}_0, \sL|_{X^{sh}_0})$).

Next we claim that in order to show $\omega_X\otimes\sL^{d-t+1}$ is globally generated by $\myB^0(X,\omega_X\otimes\sL^{d-t+1})$, it is enough to prove this when we base change $X$ to $X'$.  Indeed, we have a surjective map of sheaves $T : \omega_{X'} \otimes \sL^{d-t+1} \to \omega_{X} \otimes \sL^{d-t+1}$.  Furthermore, if $\myB^0(X', \omega_{X'} \otimes \sL^{d-t+1})$ (globally) generates the left side its image via $T$ generates the right sheaf.  But $\myB^0(X', \omega_{X'} \otimes \sL^{d-t+1}) \twoheadrightarrow {\myB^0(X, \omega_{X} \otimes \sL^{d-t+1})}$ surjects by \autoref{lem.FiniteCoverToRemoveDelta}.  Therefore, without loss of generality, we now replace $R$ and $X$ by $R'$ and $X'$ to assume that $S=\mathcal{R}(X, \sL)$ admits a homogenous system of parameters $y_0,\dots,y_{t-1}, x_t,\dots,x_d$ as in \autoref{prop:Karen_Fujita_local_coh_v1}. Note that $X'$ is still regular (or has finite summands singularities) since it is finite \'{e}tale over $X$.

By the discussion above, it is enough to show that the multiplication map (which is well defined since $\mytau_{S^{+,\gr}}(\omega_S)$ is an $S$-module)
\[
\underbrace{H^0(X, \sL^{N-d+t-1})}_{S_{N-d+t-1}} \otimes_R \underbrace{\myB^0(X, \omega_X \otimes \sL^{d-t+1})}_{[\mytau_{S^{+,\gr}}(\omega_S)]_{d-t+1}} \to \underbrace{\myB^0(X, \omega_X \otimes \sL^N)}_{[\mytau_{S^{+,\gr}}(\omega_S)]_{N}} = \underbrace{H^0(X, \omega_X \otimes \sL^N)}_{[\omega_S]_{N}} 
\]
is surjective for $N \gg 0$. By graded local duality on $S$, this is equivalent to the injectivity of the map
\[
	\begin{array}{rl}
	& [H^{d+1}_{\fram_S}(S)]_{-N}\\
	\cong & \big[\Image\big(H^{d+1}_{\fram_S}(S) \to H^{d+1}_{\fram_S}(S^{+,\gr})\big)\big]_{-N} \\
	\to & \Hom_R\big(S_{N-d+t-1} \otimes_R [\mytau_{S^{+,\gr}}(\omega_S)]_{d-t+1}, E\big) \\
	\cong & \Hom_R\Big(S_{N-d+t-1}, \Image\big([H^{d+1}_{\fram_S}(S)]_{-d+t-1} \to{}  
	[H^{d+1}_{\fram_S}(S^{+,\gr})]_{-d+t-1}\big)\Big)	\end{array}
\]
where $E$ is the injective hull of the residue field of the complete local ring $(R, \fram)$ and the final isomorphism is $\Hom$-tensor adjointness and duality.  Just as in \cite[Lemma 1.3]{SmithFujitaFreenessForVeryAmple}, this map sends $\eta \in [H^{d+1}_{\fram_S}(S)]_{-N}$ to the map which is multiplication by $\eta$.  Hence this map is injective by \autoref{prop:Karen_Fujita_local_coh_v1} and our proof is complete.
\end{proof}

\begin{remark}
	It would be natural to try to obtain the following stronger result.  Suppose that $X$ has the property that for each closed point $x \in X$, we have that $H^d_{x}(\sO_X) \to H^d_{x}(\sO_{X^+})$ injects (in other words, $\sO_{X,x}$ is $\sO_{X,x}^+$-rational in the sense of \cite{MaSchwedeSingularitiesMixedCharBCM}, but without the Cohen-Macaulay hypothesis).  We expect that if $\sL$ is a globally generated ample line bundle on $X$, then
	\[
		\omega_X \otimes \sL^{d - t + 1}
	\]
	is globally generated by $\myB^0(X, \omega_X \otimes \sL^{d - t + 1})$.  The missing piece is a proof that $\mytau_{S^{+,\gr}}(\omega_S)$ agrees with $\omega_S$ except at the irrelevant ideal (a generalization of \autoref{thm:B0-equals-H0-for-high-ample}).  
\end{remark}

\section{Globally \texorpdfstring{$\bigplus$}{+}-regular pairs}
\label{sec:globally-+-regular-pairs}
In this section we define and discuss various properties of globally $\bigplus$-regular pairs; analogous to globally $F$-regular pairs in positive characteristic. The reader interested in the results on globally $F$-regular pairs is  referred to \cite{SchwedeSmithLogFanoVsGloballyFRegular}.  
The reader unfamiliar with this story is invited to imagine that this means the section ring / cone has singularities which are a mixed characteristic analog of klt singularities.  Throughout this section, we work under the following assumptions unless otherwise stated:

\begin{enumerate}
    \item $X$ is a normal, integral, $d$-dimensional, excellent scheme with a dualizing complex where every closed point has residue field of positive characteristic.  \label{General+RegularHypothesis.1}
    \item $\Delta \geq 0$ is a $\bQ$-divisor on $X$. \label{General+RegularHypothesis.2}
        \end{enumerate}
Whenever there is a base scheme $\Spec R$, we also assume that $R$ is excellent with a dualizing complex and that every closed point of $\Spec R$ has positive characteristic residue field. 

Frequently, we also assume that $R$ is complete and $X$ is proper over $\Spec R$. However, the above setting also applies when the base is a positive or mixed characteristic Dedekind domain.

\begin{definition}
    \label{def:globally_B_regular}
    We say that $(X, \Delta)$ is \emph{globally $\bigplus$-regular} if for every finite dominant map $f : Y \to X$ with $Y$ normal, the map $\sO_X \to f_* \sO_{Y}(\lfloor f^*\Delta \rfloor)$ splits as a map of $\sO_X$-modules.  

    If we have $X \to \Spec R$ proper, then we say that $(X, \Delta)$ is \emph{completely globally $\bigplus$-regular over $R$} if for every closed point $z$ of $\Spec R$, the base change $(X_{\widehat{R_z}}, \Delta_{\widehat{R_z}})$ is globally $\bigplus$-regular.  If $R$ is clear from the context, we will omit the ``over $R$''.
\end{definition}

Notice that globally $\bigplus$-regular is an absolute notion but \emph{completely} globally $\bigplus$-regular requires a base.    

\begin{remark}
    In the above definition, we may restrict ourselves to $f : Y \to X$ such that $f^* \Delta$ has integer coefficients, since any $f' : Y' \to X$ is dominated by such a $Y$.
\end{remark}

\begin{remark}[Characteristic zero]
    \label{rem.CharacteristicZeroSplinterNotAsGood}
    If we did not require that our closed points have residual characteristic $p > 0$, then our definition would not always yield what the reader might expect. For instance, when $X$ is purely of characteristic zero, our condition defining global $\bigplus$-regularity simply means that $X$ is normal and that the coefficients of $\Delta$ are $< 1$.  If one additionally assumes that $K_X + \Delta$ is $\bQ$-Cartier, then one could alternately require that for every alteration $\pi : Y \to X$ {the map} $\sO_X \to \myR \pi_* \sO_Y(\lfloor \pi^* \Delta \rfloor)$ splits.  In characteristic zero, this {again does not provide any global information and} only means that $(X, \Delta)$ has rational singularities in the sense of \cite{SchwedeTakagiRationalPairs}. Lastly, one could require the trace map 
    \begin{equation}
        \label{eq.TRegularitySurjection}
        H^0(Y, \sO_Y( K_Y - \lfloor{\pi^* (K_X + \Delta)}\rfloor)) \to H^0(X, \sO_X)
    \end{equation}
    to be surjective for every alteration (as discussed in \cite{TakamatsuYoshikawaMMP}, where they called it global $T$-regularity). This in characteristic zero is equivalent to $(X,\Delta)$ being klt. 
    When $X \to \Spec R$ is proper and $R$ only admits positive characteristic closed points (the latter is always assumed throughout this section), we see that global $\bigplus$-regularity is equivalent to global $T$-regularity (the surjection of \autoref{eq.TRegularitySurjection}).   This follows from  \autoref{prop.GlobalBRegularSplits} in view of \autoref{prop.B0completion}.
\end{remark}

\begin{remark}[Non-integral $X$]
    \label{rem.Globall+RegularForNonIntegralX}
    If $X$ is not integral, but still normal with all connected components $d$-dimensional, we define $(X, \Delta)$ to be \emph{globally $\bigplus$-regular} if all its connected components are.  This coincides with the variant of $\myB^0$ in this setting as explained in \autoref{rem.NonintegralB^0}.  The results of this section go through since they may all be checked working one component at a time.
\end{remark}

\begin{remark}
    \label{rem.WhatToDoIfH0IsNotR}
    If $(R, \fram)$ is complete local, $X \to \Spec R$ is proper, and $X$ is integral, then $R \to H^0(X, \sO_X)$ is a finite map of rings. Since $X$ is integral, we see that $H^0(X, \sO_X)$ is an integral domain. But since $R$ is complete and in particular henselian, we also know that $H^0(X, \sO_X)$ is a product of local rings \cite[Tag 04GG (9)]{stacks-project}. Such a product cannot be an integral domain unless it only has one factor, thus we know that $H^0(X, \sO_X)$ is a local ring. 
    
    On the other hand if $(R, \fram)$ is not complete but only a Noetherian local ring, then $H^0(X, \sO_X)$ is only semi-local (it has finitely many maximal ideals).  In many cases though, we localize $T = H^0(X, \sO_X)$ at a maximal ideal to obtain a local ring $T'$ (and perhaps even take completion of that if desired) and consider the base change $X_{T'} = X \times_T {T'}$.  Replacing $R$ by $T'$ and $X$ by $X_{T'}$ we have that $H^0(X, \sO_X) = R$.
\end{remark}

\begin{lemma}
    \label{lem.GloballyPlusRegularIsLocalOnTheBase}
    Suppose we are given $X\to \Spec(R)$, the following are equivalent:     
    \begin{enumerate}
        \item $(X, \Delta)$ is globally $\bigplus$-regular.
        \item for each closed point $z \in \mSpec R$ we have that the base change to the localization $(X_{R_z}, \Delta_{R_z})$ is globally $\bigplus$-regular.        
    \end{enumerate}    
\end{lemma}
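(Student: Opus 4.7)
The plan is to prove both directions via flat base change. For the forward direction, splittings descend under $R \to R_z$ by a universal-property argument; for the backward direction, one packages the obstruction-to-splitting as a finitely generated $R$-module and checks it at each maximal ideal of $R$. I will assume $\pi \colon X \to \Spec R$ is proper throughout, which is the main case of interest in this section; the non-proper case is handled analogously by working affine-locally on $X$, using that $\sHom$ of coherent sheaves is local.

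For $(1) \Rightarrow (2)$, fix a closed point $z \in \mSpec R$ and a finite cover $g \colon Y' \to X_{R_z}$ with $Y'$ normal integral. Since $R \to R_z$ is a flat birational localization, $K(X_{R_z}) = K(X)$, and $K(Y')$ is a finite extension of $K(X)$. Let $f \colon Y \to X$ be the normalization of $X$ in $K(Y')$; this is finite since $X$ is excellent, hence Nagata. Flatness of $R \to R_z$ together with geometrically zero-dimensional fibers preserves normality, so $Y_{R_z}$ is normal, and the universal property of normalization yields $Y_{R_z} \cong Y'$ over $X_{R_z}$. Since finite pushforward, divisor pullback, and the round-down operation all commute with flat base change, the splitting of $\sO_X \to f_*\sO_Y(\lfloor f^*\Delta\rfloor)$ base-changes to the desired splitting of $\sO_{X_{R_z}} \to g_*\sO_{Y'}(\lfloor g^*\Delta_{R_z}\rfloor)$.

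For $(2) \Rightarrow (1)$, let $f \colon Y \to X$ be a finite cover with $Y$ normal, set $\mathcal{F} := f_*\sO_Y(\lfloor f^*\Delta\rfloor)$, and write $\phi \colon \sO_X \to \mathcal{F}$ for the natural inclusion. Splitting $\phi$ is equivalent to the element $1 \in T := H^0(X, \sO_X)$ lying in the image of the evaluation map
\[
\mathrm{ev} \colon \Hom_{\sO_X}(\mathcal{F}, \sO_X) \longrightarrow T, \qquad \psi \longmapsto \psi(1).
\]
By properness of $\pi$, both the source and target of $\mathrm{ev}$ are finitely generated $R$-modules: indeed $\mathcal{F}$ is coherent (as $f$ is finite), so $\sHom_{\sO_X}(\mathcal{F}, \sO_X)$ is coherent, and its pushforward along the proper morphism $\pi$ is coherent on $\Spec R$. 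Hence the cokernel $C := T / \Image(\mathrm{ev})$ is a finitely generated $R$-module, and it suffices to show $C \otimes_R R_z = 0$ for every closed point $z \in \mSpec R$.

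To conclude, flat base change (for $\sHom$ of coherent sheaves and for cohomology under proper morphisms) identifies $\mathrm{ev} \otimes_R R_z$ with the analogous evaluation map for the finite cover $f_{R_z} \colon Y_{R_z} \to X_{R_z}$. By the hypothesis that $(X_{R_z}, \Delta_{R_z})$ is globally $\bigplus$-regular, this base-changed evaluation map is surjective, so $C \otimes_R R_z = 0$. The main step to verify carefully will be the compatibility of the three base-change identifications (for $\sHom$, for finite pushforward, and for the sheaf $\sO_Y(\lfloor f^*\Delta\rfloor)$) with the evaluation map $\mathrm{ev}$; once this is in place, finite generation of $C$ over $R$ together with vanishing at every maximal ideal of $R$ forces $C = 0$, yielding the required splitting.
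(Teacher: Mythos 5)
Your proof is correct and follows essentially the same route as the paper's: recast the splitting as surjectivity of the evaluation-at-one map $\Hom_{\sO_X}(f_*\sO_Y(\lfloor f^*\Delta\rfloor),\sO_X) \to H^0(X,\sO_X)$ (whose image is an ideal of $H^0(X,\sO_X)$, so surjectivity is equivalent to $1$ lying in the image) and check surjectivity after localizing at the closed points of $R$; your normalization-in-$K(Y')$ step for the direction $(1)\Rightarrow(2)$ is the standard detail the paper elides. The only remark is that your appeal to properness and finite generation is unnecessary — the cokernel of an $R$-linear map of \emph{arbitrary} $R$-modules vanishes if and only if it vanishes at every maximal ideal, so the argument goes through verbatim without assuming $X \to \Spec R$ proper, matching the generality of the lemma as stated.
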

\begin{proof}
    The pair is globally $\bigplus$-regular if and only if the evaluation-at-1 map
    \begin{equation}
        \label{lem.GloballyPlusRegularIsLocalOnTheBase.eq.EvalMap}
        \Hom_{\cO_X}(f_* \sO_Y(\lfloor f^* \Delta \rfloor), \sO_X) \to H^0(X, \sO_X)
    \end{equation}
    surjects for each finite dominant $f : Y \to X$ with $Y$ normal.  Indeed, that map is surjective if and only if there exists $\phi \in \Hom(f_* \sO_Y(\lfloor f^* \Delta \rfloor), \sO_X)$ sending $1$ to $1$.      
   
    Now, we observe that since $\Hom(f_* \sO_Y(\lfloor f^* \Delta \rfloor), \sO_X) = H^0(X, \sHom_{\sO_X}(f_* \sO_Y(\lfloor f^* \Delta \rfloor))$ and $f_* \sO_Y$ is a coherent $\sO_X$-module, the formation of this $\Hom$-set commutes with localization on the base (a flat base change).  In other words:
    \[
        \begin{array}{rl}
            & \Hom(f_* \sO_Y(\lfloor f^* \Delta \rfloor), \sO_X) \otimes_R R_z\\
            \cong & \Gamma(X, \sHom_{\cO_X}(f_* \sO_Y(\lfloor f^* \Delta \rfloor), \sO_X) \otimes_R R_z) \\
            \cong & \Gamma(X_{R_z}, \sHom_{\cO_X}(f_* \sO_Y(\lfloor f^* \Delta \rfloor) \otimes_R R_z, \sO_{X_{R_z}})) \\
            \cong & \Gamma(X_{R_z}, \sHom_{\cO_{X_{R_z}}}(f_* \sO_{Y_{R_z}}(\lfloor f^* \Delta|_{X_{R_z}} \rfloor), \sO_{X_{R_z}})).
        \end{array}
    \]
    Note the evaluation-at-1 map also base changes to the evaluation-at-1 map of the localization.  Hence, since a map of modules is surjective if and only if it is surjective after localization at all maximal ideals, for each $Y \to X$ finite surjective, we see that \autoref{lem.GloballyPlusRegularIsLocalOnTheBase.eq.EvalMap} surjects if and only if 
    \[
        \Hom_{\cO_{X_{R_z}}}(f_* \sO_{Y_{R_z}}(\lfloor f^* \Delta|_{X_{R_z}} \rfloor), \sO_{Y_{R_z}}) \to H^0(X_{R_z}, \sO_{X_{R_z}})
    \]
    surjects for each $z \in \mSpec R$.

    Finally, notice that a finite surjective $h : Y' \to X_{R_z}$ with $Y'$ integral produces a finite surjective $Y \to X$ that localizes to $h$ (simply take the normalization of $\sO_X$ in the fraction field $Y'$) and we are indexing over the same set of finite surjective maps (which we can take with a fixed geometric generic point).
\end{proof}

\begin{lemma}\label{lem.GloballyBregularBox}
If $(X, \Delta)$ is globally $\bigplus$-regular and $0 \leq \Delta' \leq \Delta$, then $(X, \Delta')$ is globally $\bigplus$-regular as well.
\end{lemma}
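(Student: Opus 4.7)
The plan is to reduce everything to the observation that a splitting of a larger inclusion restricts to a splitting of a smaller one. Since $0 \leq \Delta' \leq \Delta$, for any finite dominant map $f \colon Y \to X$ with $Y$ normal we have $\lfloor f^*\Delta' \rfloor \leq \lfloor f^* \Delta \rfloor$, and hence a natural factorization of $\sO_X$-module maps
\[
\sO_X \;\hookrightarrow\; f_*\sO_Y(\lfloor f^*\Delta'\rfloor) \;\hookrightarrow\; f_*\sO_Y(\lfloor f^*\Delta\rfloor),
\]
where each map sends $1$ to $1$.

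Given a finite dominant $f\colon Y \to X$ with $Y$ normal, I would invoke the global $\bigplus$-regularity of $(X,\Delta)$ to produce an $\sO_X$-linear splitting
\[
\phi \colon f_*\sO_Y(\lfloor f^*\Delta\rfloor) \to \sO_X
\]
of the inclusion $\sO_X \hookrightarrow f_*\sO_Y(\lfloor f^*\Delta\rfloor)$. Then composing $\phi$ with the inclusion $f_*\sO_Y(\lfloor f^*\Delta'\rfloor) \hookrightarrow f_*\sO_Y(\lfloor f^*\Delta\rfloor)$ yields an $\sO_X$-linear map $f_*\sO_Y(\lfloor f^*\Delta'\rfloor) \to \sO_X$. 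By tracking the image of $1$ through the composition, this restricted map is a splitting of $\sO_X \hookrightarrow f_*\sO_Y(\lfloor f^*\Delta'\rfloor)$, verifying the defining condition for $(X,\Delta')$ to be globally $\bigplus$-regular.

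Since the argument is entirely formal and works for any finite dominant cover, there is no real obstacle: the only thing one might want to double-check is that the natural inclusion $f_*\sO_Y(\lfloor f^*\Delta'\rfloor) \hookrightarrow f_*\sO_Y(\lfloor f^*\Delta\rfloor)$ is compatible with the maps from $\sO_X$, which is immediate from the construction of these sheaves via Weil divisors on the normal scheme $Y$.
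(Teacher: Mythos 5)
Your argument is correct and is exactly what the paper means when it says the lemma "follows from the definition": since $\lfloor f^*\Delta'\rfloor \leq \lfloor f^*\Delta\rfloor$, the inclusion $\sO_X \to f_*\sO_Y(\lfloor f^*\Delta'\rfloor)$ factors the one for $\Delta$, and composing the given splitting with $f_*\sO_Y(\lfloor f^*\Delta'\rfloor) \hookrightarrow f_*\sO_Y(\lfloor f^*\Delta\rfloor)$ produces the required splitting. No issues.
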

\begin{proof}
This follows from the definition.\end{proof}

\begin{proposition}
    \label{prop.GlobalBRegularSplits}    
    Suppose that $X \to \Spec R$ is proper and $(R, \fram)$ is local. Then 
    $(X, \Delta)$ is globally $\bigplus$-regular if and only if $\myB^0(X_{\widehat R}, \Delta_{\widehat R}; {\sO_{X_{\widehat R}}} )= H^0(X_{\widehat R}, \sO_{X_{\widehat R}})$.  In the case that $K_X + \Delta$ is $\bQ$-Cartier, this is also equivalent to $\myB^0_{\alt}(X_{\widehat R}, \Delta_{\widehat R}; {\sO_{X_{\widehat R}}} )= H^0(X_{\widehat R}, \sO_{X_{\widehat R}})$.
\end{proposition}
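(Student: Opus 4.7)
The plan is to identify each of the two sides with the condition that, for every finite surjective map $f\colon Y\to X$ with $Y$ normal, the trace map
\[
\Tr_f\colon H^0\bigl(Y, \sO_Y(K_Y + \lceil f^*(-K_X - \Delta)\rceil)\bigr) \longrightarrow H^0(X,\sO_X)
\]
contains $1$ in its image. First, I would use Grothendieck duality for finite morphisms to give a canonical identification
\[
\sHom_{\sO_X}\bigl(f_*\sO_Y(\lfloor f^*\Delta\rfloor),\,\sO_X\bigr) \;\cong\; f_*\sO_Y\bigl(K_Y+\lceil f^*(-K_X-\Delta)\rceil\bigr),
\]
using that for finite surjections between normal integral schemes one has $f^!\sO_X = \sO_Y(K_Y - f^*K_X)$ and that reflexive rank-one sheaves on $Y$ satisfy $\sHom(\sO_Y(D),\sO_Y(D')) = \sO_Y(D'-D)$. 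Under this identification, the evaluation-at-$1$ map $\Hom_{\sO_X}(f_*\sO_Y(\lfloor f^*\Delta\rfloor),\sO_X)\to H^0(X,\sO_X)$ becomes precisely $\Tr_f$, so $\sO_X\to f_*\sO_Y(\lfloor f^*\Delta\rfloor)$ splits if and only if $1\in\Image(\Tr_f)$.

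Next, since every finite cover $g\colon Y'\to X$ with $Y'$ normal is dominated by some finite $f\colon Y\to X$ arising in the inverse system defining $X^+$ (via any embedding of $K(Y')$ into $\overline{K(X)}$), and since a splitting of a composition $\sO_X\to g_*(\cdots)\to f_*(\cdots)$ gives a splitting of the first map, global $\bigplus$-regularity of $(X,\Delta)$ is equivalent to the statement that $1\in\Image(\Tr_f)$ for every $f$ in the system used to define $\myB^0$.

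For the other side of the equivalence, I would apply Proposition~\ref{prop.B0completion} to write
\[
\myB^0(X_{\widehat R},\Delta_{\widehat R};\sO_{X_{\widehat R}}) \;=\; \bigcap_{f\colon Y\to X} \Image(\Tr_f)\otimes_R \widehat R
\]
as submodules of $H^0(X,\sO_X)\otimes_R \widehat R$. Since each $\Image(\Tr_f)$ is an ideal in the finite $R$-algebra $T := H^0(X,\sO_X)$, the equality $\myB^0(X_{\widehat R},\Delta_{\widehat R};\sO_{X_{\widehat R}}) = H^0(X_{\widehat R},\sO_{X_{\widehat R}})$ forces $\Image(\Tr_f)\otimes_R \widehat R = T\otimes_R\widehat R$ for every $f$, which by faithful flatness of $R\to\widehat R$ is equivalent to $\Image(\Tr_f) = T$, i.e.\ $1\in\Image(\Tr_f)$. (If $T$ is not local, one reduces to the local factors via Remark~\ref{rem.B0completionH0NotEqualToR}.) Combining with the previous paragraph gives the first equivalence.

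Finally, under the assumption that $K_X+\Delta$ is $\bQ$-Cartier, the hypotheses of Corollary~\ref{cor.B0VsB0Alt} are satisfied (with $M=0$), so $\myB^0(X_{\widehat R},\Delta_{\widehat R};\sO_{X_{\widehat R}}) = \myB^0_{\alt}(X_{\widehat R},\Delta_{\widehat R};\sO_{X_{\widehat R}})$, which gives the second stated equivalence at no extra cost. The main technical point to verify carefully is the Grothendieck duality identification in the first paragraph, together with the compatibility between evaluation-at-$1$ and the trace map; everything else is a bookkeeping exercise or a direct invocation of already-proven results.
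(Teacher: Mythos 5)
Your proposal is correct and follows essentially the same route as the paper's proof: both reduce to the pre-completion covers via \autoref{prop.B0completion}, identify the splitting of $\sO_X \to f_*\sO_Y(\lfloor f^*\Delta\rfloor)$ with surjectivity of the dual trace map (equivalently $1\in\Image(\Tr_f)$) via $\sHom(-,\sO_X)$/Grothendieck duality for finite morphisms, and invoke \autoref{cor.B0VsB0Alt} for the alteration statement. Your extra bookkeeping (domination of covers, descending $\Image(\Tr_f)\otimes_R\widehat R = T\otimes_R\widehat R$ to $\Image(\Tr_f)=T$ by faithful flatness of $R\to\widehat R$ using that the image is an ideal) is a correct, slightly more explicit rendering of the paper's phrase ``surjective on global sections after completion, hence also before it.''
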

\begin{proof}
                Notice that the map $R \to H^0(X, \sO_X) =: T$ is finite (although it will not be injective if $X \to \Spec R$ is not dominant) and so its base change $T \otimes_R \widehat{R}$ to the completion of $R$ may break up into a product of normal domains $\prod T_i$.  In particular, the normal scheme $X_{\widehat{R}}$ may have several connected components.  In such a case, working one component at a time, we may replace $R$ by $T_i$, a localization of $T$ at a maximal ideal, and $X$ by the base change $X \otimes_{T} T_i$ and so assume that $H^0(X, \sO_X) = R$, also see \autoref{rem.B0completionH0NotEqualToR}.

    By \autoref{prop.B0completion} we have that $\myB^0(X_{\widehat R}, \Delta_{\widehat R}; {\sO_{X_{\widehat R}}})$ equals
    \[
        \bigcap_{\substack{f \colon Y \to X\\ \textnormal{finite}}}\im \left( H^0(Y, {\sO_Y( K_Y - \lfloor{f^* (K_X + \Delta)}\rfloor)}) \otimes_R \widehat{R} \to H^0(X, {\sO_X}) \otimes_R \widehat{R} \right). 
    \]
    Suppose that $\myB^0(X_{\widehat R}, \Delta_{\widehat R}; {\sO_{X_{\widehat R}}} )= H^0(X_{\widehat R}, \sO_{X_{\widehat R}})$ and let $f \colon Y \to X$ be a normal finite cover. Then, $\Tr \colon f_* \sO_Y(K_Y - \lfloor f^*(K_X + \Delta)\rfloor) \to \sO_X$ is surjective on global sections (after completion, hence also before it), and so there exists a map $\phi$ such that
    \[
        \sO_X \xrightarrow{\phi} f_* \sO_Y(K_Y - \lfloor f^*(K_X + \Delta)\rfloor) \xrightarrow{\Tr} \sO_X
    \]
    is the identity (to define $\phi$, send $1 \in \Gamma(X, \sO_X)$ to a global section which $\Tr$ sends to $1$).  Hence, $\Tr$ is split surjective.  But now apply $\sHom(-, \sO_X)$ to $\Tr$ and observe that the obtained map $\sO_X \to f_* \sO_{Y}(\lfloor f^*\Delta \rfloor)$ also splits, as desired. 

    For the converse, note that when the map $\sO_X \to f_* \sO_{Y}(\lfloor f^*\Delta \rfloor)$ splits, then the dual map\footnote{obtained by applying $\sHom(-, \sO_X)$} $\Tr : f_* \sO_Y(K_Y - \lfloor f^*(K_X + \Delta)\rfloor) \to \sO_X$ is split surjective, and so surjective on global sections.  Hence, $\myB^0(X_{\widehat R}, \Delta_{\widehat R}; {\sO_{X_{\widehat R}}} )= H^0(X_{\widehat R}, \sO_{X_{\widehat R}})$ by \autoref{prop.B0completion}.     
    
    The final assertion follows from \autoref{cor.B0VsB0Alt}.
\end{proof}

\begin{corollary}
    \label{cor.GlobalBRegularEqualsCompletely}   
    Suppose that $X \to \Spec R$ is proper.  Then $(X, \Delta)$ is globally $\bigplus$-regular if and only if it is completely globally $\bigplus$-regular over $R$.   
\end{corollary}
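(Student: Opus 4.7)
The strategy is to reduce to the local case on $R$ and then invoke the Matlis-dual characterization of global $\bigplus$-regularity given in \autoref{prop.GlobalBRegularSplits}, which depends only on the completion of the base.

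First, I would apply \autoref{lem.GloballyPlusRegularIsLocalOnTheBase} to reduce both notions to a statement about each closed point $z \in \mSpec R$ separately. Concretely, $(X,\Delta)$ is globally $\bigplus$-regular if and only if $(X_{R_z}, \Delta_{R_z})$ is globally $\bigplus$-regular for every closed point $z$, while by definition $(X,\Delta)$ is completely globally $\bigplus$-regular if and only if $(X_{\widehat{R_z}}, \Delta_{\widehat{R_z}})$ is globally $\bigplus$-regular for every such $z$. Hence it suffices to prove, for a fixed closed point $z$, the equivalence
\[
(X_{R_z}, \Delta_{R_z}) \text{ is globally $\bigplus$-regular} \iff (X_{\widehat{R_z}}, \Delta_{\widehat{R_z}}) \text{ is globally $\bigplus$-regular.}
\]

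For this equivalence I would apply \autoref{prop.GlobalBRegularSplits} to both sides (noting that $R_z$ is local, and $\widehat{R_z}$ is in fact complete local, so the hypotheses apply in either case). Over the local base $R_z$, the left-hand condition is equivalent to
\[
\myB^0(X_{\widehat{R_z}}, \Delta_{\widehat{R_z}}; \sO_{X_{\widehat{R_z}}}) = H^0(X_{\widehat{R_z}}, \sO_{X_{\widehat{R_z}}}),
\]
since the completion of $R_z$ is $\widehat{R_z}$. Over the complete base $\widehat{R_z}$, the right-hand condition is equivalent to the very same identity (the completion of $\widehat{R_z}$ is itself). Both conditions therefore coincide.

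Since $X$ may have disconnected base change to $\widehat{R_z}$ when $H^0(X, \sO_X)$ is merely semi-local over $R_z$, one should invoke \autoref{rem.WhatToDoIfH0IsNotR} and \autoref{rem.B0completionH0NotEqualToR} to handle the decomposition component-by-component, together with \autoref{rem.Globall+RegularForNonIntegralX}. This is the only non-trivial bookkeeping step, but it is directly addressed by these remarks, so no real obstacle arises. Combining the steps above yields the claimed equivalence.
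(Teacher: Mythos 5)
Your proposal is correct and follows essentially the same route as the paper: reduce to the local case via \autoref{lem.GloballyPlusRegularIsLocalOnTheBase}, then apply \autoref{prop.GlobalBRegularSplits} twice — once over $R_z$ and once over the already-complete $\widehat{R_z}$ — observing that both yield the identical condition $\myB^0(X_{\widehat{R_z}}, \Delta_{\widehat{R_z}}; \sO_{X_{\widehat{R_z}}}) = H^0(X_{\widehat{R_z}}, \sO_{X_{\widehat{R_z}}})$. The component-by-component bookkeeping you flag is real but is already absorbed into the proof of \autoref{prop.GlobalBRegularSplits} itself, so no further work is needed.
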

\begin{proof}
    By \autoref{lem.GloballyPlusRegularIsLocalOnTheBase} we may assume that $R$ is local.  
            We then see that $(X, \Delta)$ is globally $\bigplus$-regular if and only if $\myB^0(X_{\widehat R}, \Delta_{\widehat R}; {\sO_{X_{\widehat R}}} )= H^0(X_{\widehat R}, \sO_{X_{\widehat R}})$ by \autoref{prop.GlobalBRegularSplits}.  But this latter statement is also equivalent to requiring that $(X_{\widehat{R}}, \Delta_{\widehat{R}})$ is globally $\bigplus$-regular.
\end{proof}

We now show that globally $\bigplus$-regular pairs have controlled singularities.

\begin{proposition}[Global to local]
\label{prop.GlobalBRegularRationalklt}
    Suppose $X$ is globally $\bigplus$-regular. Then $X$ is pseudo-rational and in particular Cohen-Macaulay (and so has rational singularities in the sense of \cite{KovacsRationalSingularities}).
    Further, suppose that $(X, \Delta)$ is globally $\bigplus$-regular and $K_X + \Delta$ is $\bQ$-Cartier.  Then $(X, \Delta)$ is klt (and Cohen-Macaulay).
\end{proposition}
\begin{proof}
    Since the question is local, we may localize $X$ at a closed point $x \in X$ and take $R=\sO_{X,x}$ so that $X = \Spec R$.  Furthermore, we may assume that $R$ is complete by \autoref{cor.GlobalBRegularEqualsCompletely}.
    
    First suppose that $X$ is globally $\bigplus$-regular and $\Delta = 0$.  By definition, $R$ is a splinter, hence it is Cohen-Macaulay and pseudo-rational (\cite[Corollary 5.10 and Remark 5.14(1)]{BhattAbsoluteIntegralClosure}).  This proves the first statement.

    Now suppose that $(X = \Spec R, \Delta)$ is globally $\bigplus$-regular and $K_X + \Delta$ is $\bQ$-Cartier. By \autoref{prop.GlobalBRegularSplits}
    the trace map
    \[
        H^0(Y, \sO_Y( K_Y - \lfloor f^* (K_X+ \Delta)\rfloor)) \to H^0(X, \sO_X) = R
    \]
    is surjective for every projective birational morphism $f \colon Y \to X$ from a normal integral scheme $Y$. This is the case exactly when $\lceil K_Y - f^* (K_X+ \Delta) \rceil$ is effective and exceptional over $X$, which, in turn, is equivalent to $\lfloor \Delta \rfloor = 0$ and all the exceptional divisors on $Y$ having log discrepancy greater than $0$. As this is true for every projective birational morphism, $(X,\Delta)$ is klt.  Further, $X$ is Cohen-Macaulay {by our work above} since $X$ is globally $\bigplus$-regular by \autoref{lem.GloballyBregularBox}.
    \end{proof}

\begin{lemma} \label{lem.B0EqualsH0ForGlobally+Regular}
    Suppose that $(X, \Delta)$ is globally $\bigplus$-regular, $X \to \Spec R$ is proper and $R$ is local.  Then for any line bundle $\sL = \sO_X(L)$ we have $\myB^0(X_{\widehat{R}}, \Delta_{\widehat{R}}; \sL_{\widehat{R}}) = H^0(X_{\widehat{R}}, \sL_{\widehat{R}})$.  In particular, if $R$ is complete then $\myB^0(X_, \Delta; \sL) = H^0(X, \sL)$.
\end{lemma}
    
\begin{proof}
    Without loss of generality, using \autoref{prop.B0completion} and \autoref{cor.GlobalBRegularEqualsCompletely}, we may assume that $R$ is complete.  Since $\sO_X \to f_* \sO_{Y}(\lfloor f^*\Delta \rfloor)$ splits for every normal finite cover $f : Y \to X$, then so does $\sO_X(K_X-L) \to f_* \sO_Y(\lfloor f^*(K_X + \Delta - f^* L) \rfloor)$.  
    Hence 
    \[
        \myH^{d} \myR \Gamma_{\fram}\myR\Gamma(X, \sO_X(K_X - L)) \to \myH^d \myR \Gamma_{\fram} \myR\Gamma(Y, \sO_Y(\lfloor f^*(K_X + \Delta - f^* L) \rfloor))
    \]
    is injective and so by \autoref{lem.B0AsInverseLimit}, we see that $\myB^0(X, \Delta; \sL) = H^0(X, \sL)$ as desired.
\end{proof}

\begin{corollary}[Relative Kawamata-Viehweg vanishing for globally $\bigplus$-regular varieties]
    \label{cor.RelativeKVVanishingForG+Regular}
    Suppose that $X \to \Spec R$ is proper, $(X, \Delta)$ is globally $\bigplus$-regular and $L$ is a Cartier divisor such that $L - (K_X + \Delta)$ is $\bQ$-Cartier, big, and semiample.  Then $H^i(X, \sO_X(L)) = 0$ for all $i > 0$.
\end{corollary}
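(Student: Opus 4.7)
The plan is to reduce to a complete local base, convert the desired vanishing to a vanishing for local cohomology of $\sO_X(K_X-L)$ via Grothendieck--Serre duality (using that $X$ is Cohen--Macaulay), and then exploit that global $\bigplus$-regularity propagates as a system of split injections to $X^{+}$, where the vanishing theorem of \autoref{sec.Vanishing} applies.

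First I would reduce to the case that $R = (R,\fram)$ is a complete Noetherian local domain. Since $\pi \colon X \to \Spec R$ is proper, each $H^i(X,\sO_X(L))$ is a finitely generated $R$-module, so its vanishing can be tested after localization at each closed point of $\Spec R$ and subsequent completion; by \autoref{lem.GloballyPlusRegularIsLocalOnTheBase} and \autoref{cor.GlobalBRegularEqualsCompletely} global $\bigplus$-regularity is preserved throughout. By \autoref{prop.GlobalBRegularRationalKLT}, $X$ is then Cohen--Macaulay of dimension $d$ and $(X,\Delta)$ is klt, so $\omega_X^{\mydot} \simeq \omega_X[d]$. Combining Grothendieck duality for $\pi$ with Matlis--local duality on $R$ as in \autoref{lem:duality} gives a Matlis-duality isomorphism
\[
    \big(H^i(X,\sO_X(L))\big)^{\vee} \;\cong\; \myH^{d-i}\myR\Gamma_{\fram}\myR\Gamma\big(X,\sO_X(K_X-L)\big),
\]
so it suffices to show $\myH^{j}\myR\Gamma_{\fram}\myR\Gamma(X,\sO_X(K_X-L)) = 0$ for all $j < d$.

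Next I would turn the splitting from the definition of global $\bigplus$-regularity into an injection on local cohomology. For every finite $f \colon Y \to X$ with $Y$ normal and $f^{*}(K_X+\Delta)$ integral (a cofinal collection), the split injection $\sO_X \hookrightarrow f_{*}\sO_Y(\lfloor f^{*}\Delta\rfloor)$ is an $\sO_X$-module splitting. Tensoring with the reflexive sheaf $\sO_X(K_X-L)$ and taking reflexive hulls, a procedure preserving splittings of coherent $\sO_X$-modules, produces a split injection
\[
    \sO_X(K_X-L) \;\hookrightarrow\; f_{*}\sO_Y\big(\lfloor f^{*}(K_X+\Delta-L)\rfloor\big).
\]
Applying the additive functor $\myH^{j}\myR\Gamma_{\fram}\myR\Gamma(X,-)$ preserves this splitting; passing to the colimit over such $f$ (compare the proofs of \autoref{lem.B0AsInverseLimit} and \autoref{lem.B0EqualsH0ForGlobally+Regular}) yields an injection
\[
    \myH^{j}\myR\Gamma_{\fram}\myR\Gamma\big(X,\sO_X(K_X-L)\big) \;\hookrightarrow\; \myH^{j}\myR\Gamma_{\fram}\myR\Gamma\big(X^{+},\sO_{X^{+}}(\pi^{*}(K_X+\Delta-L))\big).
\]

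Finally I would invoke the vanishing theorem. The target is $\myH^{j}\myR\Gamma_{\fram}\myR\Gamma(X^{+},\sO_{X^{+}}(-\pi^{*}M))$ for $M = L-(K_X+\Delta)$, which is $\bQ$-Cartier, big, and semi-ample by hypothesis. Passing to a finite cover $g \colon X' \to X$ on which $g^{*}M$ is an honest (still big and semi-ample) line bundle leaves this local cohomology unchanged, and \autoref{cor.VanishingWithoutRestrictingToPFiber} applied with $b=-1$ forces the vanishing for all $j < \dim X' = d$. Combined with the injection above and the Matlis-duality identification, this gives $H^i(X,\sO_X(L))=0$ for $i>0$. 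The delicate step is propagating the splitting through the twist by the possibly non-Cartier reflexive sheaf $\sO_X(K_X-L)$ and correctly identifying the reflexive hull of the tensor product; this I expect to handle by first carrying out the splitting on the regular locus of $X$, where $K_X-L$ becomes Cartier, and then extending back by $S_2$ reflexive uniqueness since $X$ is Cohen--Macaulay.
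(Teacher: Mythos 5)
Your proposal is correct and follows essentially the same route as the paper: reduce to a complete local base, use Cohen--Macaulayness from \autoref{prop.GlobalBRegularRationalKLT} together with local/Grothendieck duality to convert the statement to vanishing of $\myH^{d-i}\myR\Gamma_\m\myR\Gamma(X,\sO_X(K_X-L))$, inject this (via the colimit of twisted splittings coming from global $\bigplus$-regularity) into $\myH^{d-i}\myR\Gamma_\m\myR\Gamma(X^+,\sO_{X^+}(\pi^*(K_X+\Delta-L)))$, and kill the target with \autoref{cor.VanishingWithoutRestrictingToPFiber}. The extra details you supply (handling the reflexive twist on the regular locus and extending by $S_2$, and passing to a finite cover to make $L-(K_X+\Delta)$ Cartier before invoking the vanishing) are exactly the points the paper's proof elides, and are handled correctly.
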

\begin{proof}
    Via \autoref{cor.GlobalBRegularEqualsCompletely} and flat base change for cohomology, we may assume that $R$ is complete and local.  
    By \autoref{lem:duality-general}, to show that $H^i(X, \sO_X(L)) = 0$ for $i > 0$, it suffices to show that $\myH^{-i}\myR\Gamma_\m\myR\Gamma(X, \sO_X(-L)\otimes\omega_X^{\mydot})=0$. Since $X$ is globally $\bigplus$-regular, it is Cohen-Macaulay by \autoref{prop.GlobalBRegularRationalklt}.  Hence, we must show that $\myH^{d-i}\myR\Gamma_\m\myR\Gamma(X, \sO_X(K_X -L))=0$ for all $i>0$ where $d = \dim X$.  Consider the map 
    \[
        \sO_X(K_X -L) \to \underset{{f \colon Y \to X}}{\colim} f_* \sO_Y(f^*(K_X + \Delta - L)) = {\pi_*}\sO_{X^+}(\pi^*(K_X + \Delta - L))
    \]
    where $\pi: X^+\to X$ and we restrict ourselves to finite $f \colon Y \to X$ such that $f^* \Delta$ has integer coefficients.  Note that while $\pi$ is not finite, it is affine so its higher direct images vanish for quasi-coherent sheaves by \cite[\href{https://stacks.math.columbia.edu/tag/01XC}{Tag 01XC}]{stacks-project}. This is a colimit of split maps since $X$ is globally $\bigplus$-regular and hence 
    \[
        \myH^{d-i}\myR\Gamma_\m\myR\Gamma(X, \sO_X(K_X -L)) \to \myH^{d-i}\myR\Gamma_\m\myR\Gamma(X, \pi_* \sO_{X^+}(\pi^*(K_X + \Delta - L)))
    \]
    injects.  But the right side is zero for all $i>0$ by \autoref{cor.VanishingWithoutRestrictingToPFiber}, completing the proof.
    
\end{proof}

We also obtain vanishing of the structure sheaf.

\begin{proposition}
\label{prop:bigplusOvanish}
    Suppose that $X$ is globally $\bigplus$-regular and $X \to \Spec R$ is proper.  Then $H^i(X, \sO_X) = 0$ for all $i > 0$.
\end{proposition}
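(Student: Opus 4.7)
The plan is to reduce the question modulo $p$, apply Bhatt's vanishing for higher cohomology on $X^+_{p=0}$, and then lift back to $X$ via Nakayama's lemma.

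First, I would reduce to the case where $R$ is a complete Noetherian local ring of residue characteristic $p>0$, $X$ is integral, and $H^0(X, \sO_X) = R$. Since $H^i(X, \sO_X)$ is a coherent sheaf on $\Spec R$, its vanishing may be checked after flat base change to $\widehat{R_z}$ for each closed point $z \in \Spec R$. By \autoref{cor.GlobalBRegularEqualsCompletely}, global $\bigplus$-regularity is inherited under this base change, and passing to each connected component of $X_{\widehat{R_z}}$ (cf.~\autoref{rem.Globall+RegularForNonIntegralX} and \autoref{rem.WhatToDoIfH0IsNotR}) gives the desired setup.

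Assuming further that $R$ has mixed characteristic $(0,p>0)$ and $p$ is a nonzerodivisor on $\sO_X$, I would invoke part (a) of \autoref{prop.BhattVanishing} applied to $L = \sO_X$ and $a = 0$ to produce a finite surjective map $g \colon Y \to X$ (which may be taken normal, after a further finite normalization step, since post-composition preserves zero maps) such that
\[
  H^j(X_{p=0}, \sO_{X_{p=0}}) \longrightarrow H^j(Y_{p=0}, \sO_{Y_{p=0}})
\]
is the zero map for every $j > 0$. On the other hand, global $\bigplus$-regularity supplies a splitting $\sO_X \to g_*\sO_Y$, which remains split after reduction modulo $p$ (using that $g$ is finite, so $g_*$ is exact and commutes with reduction mod $p$). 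Taking cohomology, the displayed map above is also split injective, and combined with its vanishing this forces $H^j(X_{p=0}, \sO_{X_{p=0}}) = 0$ for all $j > 0$.

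Finally, I would lift this vanishing to $X$ using the short exact sequence $0 \to \sO_X \xrightarrow{p} \sO_X \to \sO_{X_{p=0}} \to 0$; its long exact sequence on cohomology shows that multiplication by $p$ is surjective on $H^j(X, \sO_X)$ for $j > 0$, and Nakayama's lemma applied to this finitely generated $R$-module (with $p$ in the maximal ideal) yields $H^j(X, \sO_X) = 0$. The remaining cases---$R$ of equal characteristic $p$, or $p$ annihilating $\sO_X$---can be handled uniformly by replacing $R$ with $R/pR$ and noting that $X_{p=0} = X$, so that the mod-$p$ argument directly concludes the proof (the requisite equal-characteristic analogue of part (a) of \autoref{prop.BhattVanishing} being classical from Frobenius and Serre vanishing). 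The hard part will be obtaining the single finite cover $Y$ annihilating all positive cohomology mod $p$: this is the deep content of \autoref{prop.BhattVanishing}, relying on the $p$-adic Riemann--Hilbert correspondence of \cite{BhattLuriepadicRHmodp, BhattAbsoluteIntegralClosure}; once it is in hand, the rest of the argument is essentially formal.
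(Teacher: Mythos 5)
Your proposal is correct and follows essentially the same route as the paper's proof: apply \autoref{prop.BhattVanishing}(a) with $L = \sO_X$ to kill $H^i(X_{p=0},\sO_X)$ via a finite cover, observe that global $\bigplus$-regularity makes that map split injective, and then conclude $p$-divisibility of $H^i(X,\sO_X)$ from the multiplication-by-$p$ sequence together with finite generation over $R$. The extra care you take with normalizing the cover and with the equal-characteristic case is reasonable but not a substantive departure.
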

\begin{proof}
    We may assume $R$ is local with residue field $R/\m$ of characteristic $p > 0$.
    By \autoref{prop.BhattVanishing}(a), taking $L = \sO_X$, we can find a finite cover $\pi : Y \to X$ where the map $H^i(X_{p=0}, \sO_X) \to H^i(Y_{p=0}, \sO_Y)$ is zero, but it is also split injective, therefore $H^i(X_{p=0}, \sO_X)=0$. Since we have an exact sequence $H^i(X, \sO_X)\xrightarrow{\cdot p} H^i(X, \sO_X)\to H^i(X_{p=0}, \sO_X)=0$, it follows that $H^i(X, \sO_X)$ is $p$-divisible, but as $H^i(X, \sO_X)$ is a finitely generated $R$-module and $p\in\m$, thus $H^i(X, \sO_X)=0$ by Nakayama's lemma. 
\end{proof}

\begin{lemma} \label{lemma:BregularINpositiveCharacteristic}
    Suppose that $X$ is an $F$-finite normal integral scheme of characteristic $p > 0$.      If $(X,\Delta)$ is globally $F$-regular, then it is globally $\bigplus$-regular.  \end{lemma}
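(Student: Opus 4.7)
The plan is to verify the defining splinter condition of $\bigplus$-regularity by factoring each relevant inclusion through a Frobenius push-forward of $\sO_X$ whose splitting is already supplied by global $F$-regularity.

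First, let $f \colon Y \to X$ be an arbitrary finite dominant map with $Y$ normal; we must show that $\sO_X \hookrightarrow f_*\sO_Y(\lfloor f^*\Delta \rfloor)$ splits as a map of $\sO_X$-modules. By replacing $Y$ with a further normal cover $g \colon Y' \to Y$, we may assume that $f^*\Delta$ has integer coefficients: indeed, since $g^*\lfloor f^*\Delta\rfloor \leq \lfloor g^*f^*\Delta\rfloor$, the original inclusion factors through the enlarged one, so a splitting for $Y'$ restricts to a splitting for $Y$.

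Next, since $X$ is $F$-finite, so is $Y$, and the Frobenius $F^e \colon X \to X$ is a finite morphism for every $e \geq 0$. The key technical step is to produce an integer $e > 0$, an effective Cartier divisor $E$ on $X$, and an $\sO_X$-linear map
\[
 \beta \colon f_*\sO_Y(\lfloor f^*\Delta \rfloor) \longrightarrow F^e_*\sO_X\bigl(\lceil (p^e-1)\Delta \rceil + E\bigr)
\]
whose restriction to the canonical subsheaf $\sO_X \hookrightarrow f_*\sO_Y(\lfloor f^*\Delta\rfloor)$ agrees with the natural Frobenius inclusion. The existence of such a $\beta$ is a standard fact in the theory of $F$-singularities, mirroring the argument that an $F$-finite strongly $F$-regular ring is a splinter: generically $f$ is étale, so the normalized trace provides a compatible splitting after inverting some nonzero element $c \in \sO_X$; the divisor $E$ is then chosen large enough to absorb the ramification locus and all powers of $c$ needed to turn this generic trace into a globally defined map into $F^e_*\sO_X(\lceil (p^e-1)\Delta \rceil + E)$. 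For $e$ sufficiently large this can be arranged globally and compatibly, by quasi-coherence and $F$-finiteness.

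Granted $\beta$, we invoke the defining property of global $F$-regularity applied to the divisor $E$: there exists an integer $e' \geq e$ (which we can take to match the $e$ above, after further iterating Frobenius and replacing $E$ by its image under Frobenius) such that $\sO_X \to F^{e'}_*\sO_X(\lceil (p^{e'}-1)\Delta \rceil + E)$ splits. Composing any such splitting with $\beta$ yields the desired splitting of $\sO_X \hookrightarrow f_*\sO_Y(\lfloor f^*\Delta\rfloor)$, which finishes the proof. The main obstacle is Step 2, namely the compatible construction of $\beta$: while local trace arguments immediately give maps of the right shape on each affine chart, choosing a single $e$ and a single effective $E$ that work simultaneously across all of $X$ requires a careful clearing-of-denominators argument and is the genuinely nontrivial ingredient (cf.\ the Hashimoto and Schwede-Smith proofs that $F$-finite strongly $F$-regular rings are splinters, which are the local prototypes for this step).
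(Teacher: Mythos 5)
Your proposal is correct and follows essentially the same route as the paper: construct a map $\phi\colon f_*\sO_Y(\lfloor f^*\Delta\rfloor)\to \sO_X(E)$ sending $1\mapsto 1$ at the cost of an auxiliary effective divisor $E$ absorbing poles (the paper gets this from \cite[Proposition 4.2]{BlickleSchwedeTuckerTestAlterations} on an affine chart and clears denominators), then twist by $\lceil (p^e-1)\Delta\rceil$, push forward by Frobenius, and compose with the splitting $F^e_*\sO_X(\lceil (p^e-1)\Delta\rceil+E)\to\sO_X$ supplied by global $F$-regularity. One minor imprecision: $f$ need not be generically \'etale (the extension of function fields can be inseparable, killing the trace), but a nonzero generic $\sO_X$-linear splitting sending $1\mapsto 1$ still exists, so your Step 2 goes through exactly as in the paper.
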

\begin{proof}
    Suppose that $(X, \Delta)$ is globally $F$-regular and that $\pi : Y \to X$ is a finite dominant map with $Y$ normal and integral.  By replacing $Y$ by a higher cover if necessary, we may assume that $\pi^* \Delta$ has integer coefficients.
    \begin{claim}
        There exists a divisor $D \geq 0$ on $X$ and a map
    \[
        \phi \in \Hom_{\sO_X}(\pi_* \sO_Y(\pi^* \Delta), \sO_X(D))
    \]
    which sends $1 \mapsto 1$.  
    \end{claim}
    \begin{proof}[Proof of claim]
        We begin by explaining what the claim is asserting.  Let $K(X)$ and $K(Y)$ denote the constant sheaves associated to the fraction fields of $X$ and $Y$ respectively.  Notice that $\pi_* \sO_Y(\pi^* \Delta)$ is a subsheaf of $\pi_* K(Y)$.  Since $\Delta$ is effective, $1 = 1_Y$ is a global section of $\pi_* \sO_Y(\pi^* \Delta)$.  Likewise since $D$ is effective, $1 = 1_X$ is a global section of $\sO_X(D)$.  The claim asserts that we can find a $\phi$ that sends $1$ to $1$.
        
        Now, working on an affine chart {$j \colon U \hookrightarrow X$} whose complement is a divisor $D' \geq 0$, set $V = \pi^{-1}(U)$. It follows from \cite[Proposition 4.2]{BlickleSchwedeTuckerTestAlterations}\footnote{In that paper, it was assumed that $K_X + \Delta$ is $\bQ$-Cartier, but that hypothesis is not needed when $\pi$ is finite.} that there exists $\phi_U \in \Hom(\pi_* \sO_V(\pi^* \Delta), \sO_U) \cong \pi_* \O_V(K_Y  -\pi^*(K_X + \Delta) )$ sending $1 \mapsto 1$.  By working from the stalk of the generic point, we see that $\phi_U$ induces a map $\phi_K : \pi_* K(Y) \to K(X)$.  Note, restricting $\phi_K$ to $U$ and restricting the domain to $\pi_* \sO_V(\pi^* \Delta)\subseteq (\pi_* K(Y))|_U$ we recover $\phi_U$ since both $X$ and $Y$ are integral schemes.  Next, restrict the source of $\phi_K$ to $\pi_* \sO_Y(\pi^* \Delta) \subseteq \pi_* K(Y)$ to obtain 
        \[ 
            \phi' : \pi_* \sO_Y(\pi^* \Delta) \to j_* \sO_U = \bigcup_{n \geq 0} \sO_X(nD').
        \]  
        But since the source of $\phi'$ is coherent, the image of $\phi'$ is contained in $\sO_X(nD')$ for some sufficiently large $n > 0$.  Set $D = nD'$.  This proves the claim.
    \end{proof}

    Now, since $(X, \Delta)$ is globally $F$-regular, there exists $e > 0$ and 
    \[
        \psi \in \Hom(F^e_* \sO_X(\lceil (p^e - 1) \Delta \rceil + D), \sO_X)
    \]
    which sends $F^e_*1$ to $1$.  Twisting the $\phi$ from the claim by $\lceil (p^e-1) \Delta \rceil$, and pushing forward by Frobenius, we obtain a map 
    \[
        \phi' \colon F^e_* \pi_*\sO_Y((F^e)^* \pi^* \Delta) \subseteq F^e_* \pi_*\sO_Y(\pi^* \Delta + \pi^* \lceil (p^e - 1) \Delta \rceil) \to F^e_* \sO_X(\lceil (p^e-1) \Delta \rceil+D)
    \]
    sending $F^e_* 1 \mapsto F^e_* 1$.  Composing with $\psi$ we see that the composed map $\psi \circ \phi'$ sends $F^e_*1$ to $1$. Thus $\sO_X\to F^e_* \pi_*\sO_Y((F^e)^* \pi^* \Delta)$ splits, and since this map factors through $\pi_*\sO_Y(\pi^*\Delta)$, we have $\sO_X\to \pi_*\sO_Y(\pi^*\Delta)$ splits. This proves that $(X, \Delta)$ is globally $\bigplus$-regular.
\end{proof}

\begin{remark}
    It is reasonable to expect that there is a converse to \autoref{lemma:BregularINpositiveCharacteristic}.  Even in the local case where $X = \Spec R$ and $\Delta = 0$ (but $R$ is not $\bQ$-Gorenstein) this is an open question.  It specializes in that setting to the conjecture that splinters are strongly $F$-regular, see for instance \cite{SinghQGorensteinSplinters,ChiecchioEnescuMillerSchwede}.  Note that we do not even know that splinters are klt for some appropriate boundary if $R$ is not $\bQ$-Gorenstein.  In the non-local case, we expect that $(X,\Delta)$ is of log Fano type but we do not know how to show that.
\end{remark}

We also state a related open question, analogous to the main result of \cite{SchwedeSmithLogFanoVsGloballyFRegular}.

\begin{conjecture}
    With notation as in the start of the section, suppose that $X$ is globally $\bigplus$-regular and that $X \to \Spec R$ is projective.  Then there exists an effective $\bQ$-divisor $\Delta$ on $X$ such that $(X, \Delta)$ is globally $\bigplus$-regular and $-K_X - \Delta$ is ample.  
\end{conjecture}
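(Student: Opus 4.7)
The plan I would pursue parallels the proof of the analogous statement in positive characteristic \cite{SchwedeSmithLogFanoVsGloballyFRegular}. First, I would reduce to the case where $R$ is a complete Noetherian local domain via \autoref{cor.GlobalBRegularEqualsCompletely}, so that the full $\myB^0$-machinery and Matlis-duality arguments are available.

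Second, I would fix an ample Cartier divisor $A$ on $X$ and aim to produce an effective $\bQ$-divisor $\Delta \sim_{\bQ} -K_X - \tfrac{1}{N}A$ for some integer $N>0$. Any such $\Delta$ automatically satisfies $-K_X - \Delta \sim_{\bQ} \tfrac{1}{N}A$ ample, so the problem reduces to exhibiting a non-zero section of $\sO_X(-mK_X - A)$ for some $m > 0$, i.e., to showing that $-K_X$ is big. Given such a section $s$, one sets $\Delta := \tfrac{1}{m}\divisor(s)$ and checks that $(X,\Delta)$ remains globally $\bigplus$-regular: since $X$ is globally $\bigplus$-regular, \autoref{lem.B0EqualsH0ForGlobally+Regular} gives $\myB^0(X, \sO_X(L)) = H^0(X, \sO_X(L))$ for every $L$, and I would attempt to leverage this together with \autoref{lemma-B0-under-pullbacks} to transfer the splittings to the pair $(X, \Delta)$, perhaps after perturbing $\Delta$ slightly in the spirit of \autoref{lemma:add_ample}.

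The main obstacle, and where the Schwede-Smith argument crucially uses positive characteristic, is proving that $-K_X$ is big. In characteristic $p > 0$, a splitting $\sO_X \hookrightarrow F^e_*\sO_X(A)$ dualizes via Grothendieck duality to a non-zero section of $\sO_X((1-p^e)K_X - A)$, which lives on $X$ itself and immediately yields bigness of $-K_X$. The crucial feature is that the source and target of $F^e$ are \emph{the same scheme}. In mixed characteristic, replacing $F^e$ by a finite cover $\pi \colon Y \to X$, a splitting of $\sO_X \hookrightarrow \pi_*\sO_Y(\pi^*A)$ dualizes to a non-zero section of $\sO_Y(K_{Y/X} - \pi^*A)$ on $Y$, not on $X$, and simple push-forward does not retain enough information to extract a section of $-K_X - A$ on the base.

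To circumvent this, one natural strategy would be to descend to the special fibre $X_\fram$ over the closed point: show that global $\bigplus$-regularity of $X$ implies (possibly after base change along $R \widehat{\otimes}_k k^{1/p^\infty}$, cf.\ \autoref{cor:BregularINpositiveCharacteristicWithoutF-finite}) that $X_\fram$ is globally $F$-regular, invoke Schwede-Smith to conclude that $-K_{X_\fram}$ is big, and then lift sections from the special fibre to $X$ by applying the relative Kawamata-Viehweg vanishing \autoref{cor.RelativeKVVanishingForG+Regular} to the short exact sequence twisted by $-mK_X - A$ for $m \gg 0$. However, the implication that the special fibre of a globally $\bigplus$-regular projective scheme is globally $F$-regular is itself a delicate open question, and resolving it seems to be a necessary prerequisite for this approach — consequently I expect this descent-to-the-special-fibre step to be the principal difficulty, explaining why the authors leave the statement as a conjecture.
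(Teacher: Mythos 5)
The statement you are trying to prove is stated in the paper as a \emph{conjecture}: the paper offers no proof, and immediately after the statement it records that the conjecture is open even in equal characteristic $p>0$ in the local case $X=\Spec R$, and that in mixed characteristic one does not even know how to produce \emph{any} boundary $\Delta$ with $(X,\Delta)$ lc and $K_X+\Delta\sim_{\bQ}0$, let alone one preserving global $\bigplus$-regularity. So there is no paper proof to compare against, and your proposal --- which, to your credit, explicitly acknowledges that it does not close the argument --- is a strategy sketch for an open problem rather than a proof.

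Two substantive gaps in the sketch are worth naming beyond the one you identify. First, your framing that the plan ``parallels the proof of the analogous statement in positive characteristic'' conflates two different hypotheses: the Schwede--Smith theorem is about \emph{globally $F$-regular} varieties, whereas the conjecture concerns \emph{globally $\bigplus$-regular} (splinter-type) ones. In characteristic $p>0$ the former implies the latter (\autoref{lemma:BregularINpositiveCharacteristic}) but the converse is open, which is precisely why the char-$p$ version of this conjecture is also open; there is no proven positive-characteristic analogue to parallel. Second, even granting bigness of $-K_X$ and a section $s$ of $\sO_X(-mK_X-A)$, the step ``check that $(X,\tfrac1m\divisor(s))$ remains globally $\bigplus$-regular'' is not a formality: \autoref{lem.B0EqualsH0ForGlobally+Regular} gives $\myB^0(X;\sO_X(L))=H^0(X,\sO_X(L))$ for the \emph{trivial} boundary, but global $\bigplus$-regularity of $(X,\Delta)$ with $\Delta>0$ requires the strictly stronger splitting of $\sO_X\to f_*\sO_Y(\lfloor f^*\Delta\rfloor)$ for all finite covers. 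In Schwede--Smith this is handled by absorbing $(p^e-1)$-divisible twists into iterates of Frobenius; no analogue of that mechanism is available for the tower of finite covers in mixed characteristic, and neither \autoref{lemma-B0-under-pullbacks} (which concerns birational pullback, not adding a boundary) nor \autoref{lemma:add_ample} (which is a Bertini statement for klt/plt/dlt, not for $\bigplus$-regularity) supplies it. So the difficulty is not confined to the descent-to-the-special-fibre step; it already appears in the perturbation step you treat as routine.
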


This conjecture is open in characteristic $p > 0$, even in the local case when $X = \Spec R$.  In mixed characteristic, even if $X$ is nonsingular, we do not even know how to construct a boundary $\Delta$ where $(X, \Delta)$ is lc and $K_X + \Delta \sim_{\bQ} 0$.

\begin{corollary}
\label{cor:BregularINpositiveCharacteristicWithoutF-finite}
    Suppose that $(X,\Delta)$ is proper over a complete Noetherian local ring $(R,\m, k)$ of characteristic $p>0$. Let $R':=R\widehat{\otimes}_kk^{1/p^\infty}$ be the complete tensor product (so $R'$ is an $F$-finite complete local ring). If $(X_{{R'}},\Delta_{R'})$ is globally $F$-regular, then $(X, \Delta)$ is globally $\bigplus$-regular.   
\end{corollary}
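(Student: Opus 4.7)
My plan is to descend a splitting from the $F$-finite base change along $R \to R'$ back to $R$ by faithful flatness. First I would check that $R \to R'$ is faithfully flat: the residue field extension $k \to k^{1/p^\infty}$ is faithfully flat, so $R \otimes_k k^{1/p^\infty}$ is faithfully flat over $R$, and the $\mathfrak{m}$-adic completion of this flat $R$-algebra remains faithfully flat over the already complete $R$. Moreover, $R'$ is $F$-finite, so $X_{R'}$ is $F$-finite. By \autoref{lemma:BregularINpositiveCharacteristic}, the hypothesis upgrades: $(X_{R'},\Delta_{R'})$ is globally $\bigplus$-regular.

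Given any finite dominant $f \colon Y \to X$ with $Y$ normal, we must show that $\sO_X \to f_*\sO_Y(\lfloor f^*\Delta \rfloor)$ splits. Equivalently, the evaluation-at-$1$ map
\[
\mathrm{ev}\colon \Hom_{\sO_X}\!\bigl(f_*\sO_Y(\lfloor f^*\Delta \rfloor),\sO_X\bigr) \longrightarrow R
\]
is surjective. Since $f$ is finite (so $f_*\sO_Y(\lfloor f^*\Delta \rfloor)$ is coherent on $X$) and $R \to R'$ is flat with $X \to \Spec R$ proper, flat base change identifies $\mathrm{ev} \otimes_R R'$ with the analogous evaluation over $X_{R'}$ for the base change $f' \colon Y_{R'} \to X_{R'}$. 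By faithful flatness of $R \to R'$, surjectivity descends, so it suffices to produce a splitting of $\sO_{X_{R'}} \to f'_*\sO_{Y_{R'}}(\lfloor f^*\Delta \rfloor_{R'})$.

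To produce such a splitting, let $\tilde\nu \colon \tilde Y \to Y_{R'}$ be the normalization and $\tilde f = f' \circ \tilde\nu \colon \tilde Y \to X_{R'}$, which is finite and dominant from a normal scheme. By global $\bigplus$-regularity of $(X_{R'},\Delta_{R'})$, the map $\sO_{X_{R'}} \to \tilde f_*\sO_{\tilde Y}(\lfloor \tilde f^*\Delta_{R'} \rfloor)$ splits. Because $k \to k^{1/p^\infty}$ is purely inseparable, $Y_{R'} \to Y$ and hence $\tilde\nu$ are universal homeomorphisms, so $\tilde Y$ is essentially the seminormalization of $Y_{R'}$; in particular divisor-theoretic data on $\tilde Y$ compares naturally with the base changes from $Y$. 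This lets us factor
\[
\sO_{X_{R'}} \;\longrightarrow\; f'_*\sO_{Y_{R'}}(\lfloor f^*\Delta \rfloor_{R'}) \;\longrightarrow\; \tilde f_*\sO_{\tilde Y}(\lfloor \tilde f^*\Delta_{R'} \rfloor),
\]
so a splitting of the composition induces one of the first arrow, as desired.

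The main obstacle I expect is the bookkeeping for the middle factorization: $Y_{R'}$ need not be normal, so one has to compare $\lfloor f^*\Delta\rfloor_{R'}$ on $Y_{R'}$ with $\lfloor \tilde f^*\Delta_{R'}\rfloor$ on $\tilde Y$, and verify that the natural map of sheaves $\sO_{Y_{R'}}(\lfloor f^*\Delta\rfloor_{R'}) \to \tilde\nu_*\sO_{\tilde Y}(\lfloor \tilde f^*\Delta_{R'}\rfloor)$ exists and fits into the above diagram. Since $\tilde\nu$ is a universal homeomorphism (in fact purely inseparable along each codimension-one point of $Y_{R'}$), the pullback of Weil divisors and the comparison of reflexive hulls is routine, and the splitting descends without incident.
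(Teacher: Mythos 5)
Your argument is correct in outline, but it takes a genuinely different route from the paper. You descend a splitting cover-by-cover: base-change a fixed finite cover $f\colon Y\to X$ along the faithfully flat map $R\to R'$, normalize $Y_{R'}$, invoke global $\bigplus$-regularity of $(X_{R'},\Delta_{R'})$ on the normalization, factor the splitting back through $f'_*\sO_{Y_{R'}}(\lfloor f^*\Delta\rfloor_{R'})$, and descend surjectivity of the evaluation map by faithful flatness. The paper instead works dually and all at once: by \autoref{prop.GlobalBRegularSplits} and \autoref{lem.B0AsInverseLimit}, global $\bigplus$-regularity of $(X,\Delta)$ is equivalent to injectivity of $\myH^d\myR\Gamma_\m\myR\Gamma(X,\sO_X(K_X))\to\myH^d\myR\Gamma_\m\myR\Gamma(X^+,\sO_{X^+}(\pi^*(K_X+\Delta)))$, and this follows because the composite through $X_{R'}^+$ is injective (the first arrow by faithfully flat base change, the second by global $\bigplus$-regularity of $(X_{R'},\Delta_{R'})$, which comes from \autoref{lemma:BregularINpositiveCharacteristic} exactly as in your first step) together with the observation that $X_{R'}^+\to X$ factors through $X^+$. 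The payoff of the paper's route is that it entirely avoids the bookkeeping you flag as your main obstacle: one never has to normalize $Y_{R'}$ or construct the comparison map $\sO_{Y_{R'}}(\lfloor f^*\Delta\rfloor_{R'})\to\tilde\nu_*\sO_{\tilde Y}(\lfloor\tilde f^*\Delta_{R'}\rfloor)$, since in the colimit over all finite covers these comparisons are automatic. Your route can be completed, but two cautions: first, the justification you offer for the comparison step is slightly off --- because of the completion, $\mathrm{Frac}(R')/\mathrm{Frac}(R)$ need not be purely inseparable, so $Y_{R'}\to Y$ and $\tilde\nu$ need not be universal homeomorphisms; fortunately this is not load-bearing, as the factorization only needs finiteness of the normalization (excellence of $Y_{R'}$, which is proper over the excellent $R'$) and the inequality $\lfloor f^*\Delta\rfloor$ pulled back to $\tilde Y$ being at most $\lfloor\tilde f^*\Delta_{R'}\rfloor$, which holds for any such composite of a flat and a finite dominant map. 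Second, $Y_{R'}$ can genuinely be non-reduced and even reducible, so $\tilde Y$ should be taken to be the normalization of a dominant component of $(Y_{R'})_{\mathrm{red}}$; the retraction-of-a-composite logic still goes through.
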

\begin{proof}
The natural maps $X_{R'}^+\to X_{R'}\to X$ induce:
    \[
        \begin{array}{rl}
            & \myH^d\myR\Gamma_\m\myR\Gamma(X, \sO_X(K_X))\\
            \to & \myH^d\myR\Gamma_\m\myR\Gamma(X_{R'}, \sO_{X_{R'}}(K_{X_{R'}}))\\
            \to & \myH^d\myR\Gamma_\m\myR\Gamma(X_{R'}^+, \sO_{X_{R'}^+}(\pi'^*(K_{X_{R'}}+\Delta_{R'})))
        \end{array}
    \]
    where $\pi':X_{R'}^+\to X_{R'}$ and $d=\dim X$.  Notice that the base change of $\omega_X$ is $\omega_{X_{R'}}$.  The first map is injective by faithfully flat base change, and the second map is injective since $(X_{{R'}},\Delta_{R'})$ is globally $\bigplus$-regular by \autoref{lemma:BregularINpositiveCharacteristic} and using duality (see \autoref{prop.GlobalBRegularSplits} and \autoref{lem.B0AsInverseLimit}). Therefore the composition is injective. But as $X_{R'}^+\to X$ factors through $X^+$, we obtain that 
    $$\myH^d\myR\Gamma_\m\myR\Gamma(X, \sO_X(K_X))\to\myH^d\myR\Gamma_\m\myR\Gamma(X^+, \sO_{X^+}(\pi^*(K_X+\Delta)))$$
    is injective where $\pi: X^+\to X$. So using \autoref{prop.GlobalBRegularSplits} and \autoref{lem.B0AsInverseLimit} again we see that $(X, \Delta)$ is globally $\bigplus$-regular.
        \end{proof}

\begin{proposition} \label{proposition:pushforward-of-global-splinter}
If $f : X \to Y$ is a proper birational morphism between schemes satisfying the conditions at the start of this section, and $\Delta \geq 0$ is a $\bQ$-divisor on $X$ such that $(X, \Delta)$ is globally $\bigplus$-regular, then so is $(Y, f_* \Delta)$.  Hence $Y$ is also pseudo-rational (and so rational in the sense of \cite{KovacsRationalSingularities}), and if $K_Y + f_* \Delta$ is $\bQ$-Cartier, then $(Y, f_* \Delta)$ is klt.
\end{proposition}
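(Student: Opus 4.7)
The plan is to verify the definition of global $\bigplus$-regularity for $(Y, f_*\Delta)$ directly, by transferring the splitting data from $X$. Given a finite dominant morphism $g \colon Z \to Y$ with $Z$ normal, I would form $W$ as the normalization in the dominant irreducible component of the fibre product $X \times_Y Z$; this produces a commutative square with $g' \colon W \to X$ finite (as a base change of $g$ followed by normalization) and $f' \colon W \to Z$ proper birational (as a base change of $f$ followed by normalization). The hypothesis on $(X,\Delta)$ provides a retraction $\psi \colon g'_*\sO_W(\lfloor (g')^*\Delta\rfloor) \to \sO_X$ of the natural inclusion $\sO_X \hookrightarrow g'_*\sO_W(\lfloor (g')^*\Delta\rfloor)$. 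Applying $f_*$ and using $f_*\sO_X = \sO_Y$ (by normality of $Y$ together with $f$ being proper birational) yields a retraction $f_*\psi$ of $\sO_Y \hookrightarrow g_*f'_*\sO_W(\lfloor (g')^*\Delta\rfloor)$.

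The key step will be to identify $g_*\sO_Z(\lfloor g^*f_*\Delta\rfloor)$ as the $S_2$-ification of $g_*f'_*\sO_W(\lfloor (g')^*\Delta\rfloor)$ on $Y$. There is a natural inclusion
\[
    f'_*\sO_W(\lfloor (g')^*\Delta\rfloor) \hookrightarrow \sO_Z(\lfloor g^*f_*\Delta\rfloor)
\]
of subsheaves of the constant sheaf $K(Z) = K(W)$: on global sections this amounts to checking that at the strict transform of a prime divisor of $Z$, the coefficient of $(g')^*\Delta$ matches that of $g^*f_*\Delta$, which follows from $f \circ g' = g \circ f'$. Zariski's main theorem applied to the proper birational morphism $f'$ onto the normal scheme $Z$ shows that $f'$ is an isomorphism over a big open $V \subseteq Z$ (with complement of codimension $\geq 2$), so the displayed inclusion becomes an isomorphism on $V$. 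Since $\sO_Z(\lfloor g^*f_*\Delta\rfloor)$ is reflexive on the normal scheme $Z$, it is the $S_2$-ification of the left-hand side. Applying the finite pushforward $g_*$ preserves $S_2$-ness and codimension, so $g_*\sO_Z(\lfloor g^*f_*\Delta\rfloor)$ is indeed the $S_2$-ification of $g_*f'_*\sO_W(\lfloor (g')^*\Delta\rfloor)$ on $Y$.

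Because $\sO_Y$ is itself $S_2$ (as $Y$ is normal), any morphism to $\sO_Y$ from a sheaf extends uniquely along its $S_2$-ification; concretely, one restricts to the relevant big open and applies $i_*$, using that $i_*\sO_V = \sO_Y$. In particular $f_*\psi$ lifts to a retraction $\tilde\psi \colon g_*\sO_Z(\lfloor g^*f_*\Delta\rfloor) \to \sO_Y$, and $\tilde\psi(1) = 1$ because $f_*\psi(1) = 1$ and the element $1$ already lies in the smaller sheaf. This establishes that $(Y, f_*\Delta)$ is globally $\bigplus$-regular. The remaining conclusions follow at once from \autoref{prop.GlobalBRegularRationalKLT}: taking $\Delta' = 0$ and invoking \autoref{lem.GloballyBregularBox} yields $Y$ itself globally $\bigplus$-regular, hence pseudo-rational and rational in the sense of Kovács; and klt of $(Y, f_*\Delta)$ follows under the $\bQ$-Cartier hypothesis. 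The main subtlety I anticipate is the careful justification that $f'$ is an isomorphism in codimension one on $Z$ together with the coefficient-matching at strict transforms; once these are handled, the $S_2$-extension step is essentially formal.
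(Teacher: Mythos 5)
Your proof is correct and follows essentially the same route as the paper's: form the normalization $W$ of the dominant component of $X\times_Y Z$, transport the splitting from $X$, observe that the relevant sheaves agree over a big open (where the proper birational map is an isomorphism), and extend the retraction to $\sO_Y$ using the $S_2$ property. The paper phrases the extension step by restricting to $U\subseteq Y$ over which $f$ is an isomorphism rather than via the $S_2$-ification on $Z$, but this is only a cosmetic difference.
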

\begin{proof}
Set $\Delta_Y = f_*\Delta$. Let $g : Z \to Y$ be a normal finite cover and let $W$ be the normalization of $X \times_Z Y$.  We have that following diagram:
\[
\xymatrix{
X \ar[d]_f & W \ar[d]^h \ar[l]_{\xi} \\
Y & Z \ar[l]^g.
}
\]
Since $X$ is globally $\bigplus$-regular, the map $\sO_X \to \xi_*\sO_W(\lfloor \xi^*\Delta \rfloor)$ splits. Let $U \subseteq Y$ be an open subset with complement of codimension at least two and such that $V := f^{-1}(U) \xrightarrow{f} U$ is an isomorphism. By restricting the above splitting to $V$, we get that
the map
\[
\sO_U \to g_*\sO_{g^{-1}(U)}(\lfloor g^*\Delta_Y|_U \rfloor)
\]
splits as well, and so does $\sO_Y \to g_*\sO_{Z}(\lfloor g^*\Delta_Y \rfloor)$, since $Y \setminus U$ is of codimension two and the sheaves are S2.  Hence, $(Y, \Delta_Y)$ is globally $\bigplus$-regular. The last assertion follows from \autoref{prop.GlobalBRegularRationalklt}.
\end{proof}

In the opposite direction, we have the following for \'etale covers.

\begin{proposition}
\label{Quasietale+reg}
    Suppose that $(X, \Delta)$ is globally $\bigplus$-regular, $X \to \Spec R$ is proper and $f : Y \to X$ is a finite quasi-\'etale\footnote{Meaning \'etale in codimension 1} cover from a normal integral scheme $Y$.  Then $(Y, f^* \Delta)$ is also globally $\bigplus$-regular.
\end{proposition}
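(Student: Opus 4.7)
The plan is to reduce to a complete local base and then directly construct the required splittings via traces, exploiting the quasi-\'etale hypothesis. By \autoref{cor.GlobalBRegularEqualsCompletely} together with \autoref{lem.GloballyPlusRegularIsLocalOnTheBase}, we may assume $(R,\fram)$ is a complete local ring; passing to the connected components of $Y$ (cf.\ \autoref{rem.Globall+RegularForNonIntegralX}), we may further assume $Y$ is integral. Then $R' := H^0(Y,\sO_Y)$ is a complete local normal domain that is finite over $R$, and its maximal ideal $\fram'$ satisfies $\fram'\cap R = \fram$ (the completeness of $R$ combined with the integrality of $Y$ guarantees that $R'$ is local, not merely semi-local).

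Fix a normal finite cover $g\colon Z \to Y$. Since a splitting of $\sO_Y \hookrightarrow g'_*\sO_{Z'}(\lfloor (g')^*f^*\Delta\rfloor)$ restricts along any factorization $Z' \to Z \to Y$ to a splitting of $\sO_Y \hookrightarrow g_*\sO_Z(\lfloor g^*f^*\Delta\rfloor)$, we may assume $g^*f^*\Delta$ has integer coefficients. By Grothendieck duality for $g$, splitting $\sO_Y \hookrightarrow g_*\sO_Z(\lfloor g^*f^*\Delta\rfloor)$ as $\sO_Y$-modules is equivalent to the existence of $s' \in H^0(Z,\sO_Z(K_{Z/Y}-\lfloor g^*f^*\Delta\rfloor))$ with trace $\Tr_g(s') = 1 \in R'$. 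Setting $h := f\circ g$, the quasi-\'etale hypothesis on $f$ gives $K_Y = f^*K_X$, so $K_{Z/X} = K_{Z/Y}$ and $h^*\Delta = g^*f^*\Delta$. The global $\bigplus$-regularity of $(X,\Delta)$ applied to $h$ therefore yields $s_Z \in H^0(Z,\sO_Z(K_{Z/X}-\lfloor h^*\Delta\rfloor))$ with $\Tr_h(s_Z) = 1 \in R$. By transitivity of the trace, $\Tr_h = \Tr_f\circ \Tr_g$, so $c := \Tr_g(s_Z) \in R'$ satisfies $\Tr_f(c) = 1$.

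The crux is that $c$ is a unit in $R'$. This follows from the general fact that for any finite integral extension of local normal domains $R \subseteq R'$, the field trace satisfies $\Tr_{R'/R}(\fram') \subseteq \fram$. Indeed, for $c \in \fram'$, normality of $R$ ensures the minimal polynomial of $c$ has coefficients $a_1,\ldots,a_m \in R$; each Galois conjugate $\sigma(c)$ in the integral closure $\bar R$ of $R$ lies in a maximal ideal above $\fram$, hence reduces to $0$ in the algebraic closure $\bar k$ of $k := R/\fram$. Consequently $-a_1 = \sum_\sigma \sigma(c)$ reduces to $0 \in \bar k$, forcing $-a_1 \in \fram$, and then $\Tr_{R'/R}(c) = [\mathrm{Frac}(R'):\mathrm{Frac}(R)(c)]\cdot(-a_1)$ also lies in $\fram$. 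Applied to our $c$, since $\Tr_f(c) = 1 \notin \fram$, we conclude $c \notin \fram'$, so $c \in (R')^\times$. Finally, set $s' := g^*(c^{-1})\cdot s_Z$; the projection formula (which encodes the $\sO_Y$-linearity of $\Tr_g$) gives $\Tr_g(s') = c^{-1}\cdot \Tr_g(s_Z) = 1$, producing the required splitting. The main obstacle is the key claim that $c$ is a unit, which relies both on the reduction to a complete local base (so that $R'$ is local and the trace really lands in $R$) and on the quasi-\'etale property of $f$ (so that $K_{Z/X} = K_{Z/Y}$, allowing the section provided by $(X,\Delta)$ to be reinterpreted over $Y$).
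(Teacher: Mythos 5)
Your proof is correct and is essentially the paper's argument, unwound at the level of individual covers: the paper packages the same three ideas (reduction to a complete local base, the identification $K_{Z/X}=K_{Z/Y}$ coming from quasi-\'etaleness, and the fact that the trace $H^0(Y,\sO_Y)\to H^0(X,\sO_X)$ carries the maximal ideal into the maximal ideal, so a preimage of $1$ is a unit) via the surjection $\myB^0(Y,f^*\Delta;\sO_Y)\twoheadrightarrow\myB^0(X,\Delta;\sO_X)=H^0(X,\sO_X)$. Your explicit verification that $\Tr_{R'/R}(\fram')\subseteq\fram$ --- which the paper asserts without proof --- is a welcome addition; just note that summing the conjugates requires all of them to reduce to $0$ under a \emph{single} reduction map, i.e.\ the uniqueness of the maximal ideal of $R^+$ over $\fram$, which holds precisely because $R$ is complete, hence Henselian.
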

\begin{proof}
    We may assume that $H^0(X, \sO_X) = R$.  Then we may assume that $R$ is complete and local by \autoref{cor.GlobalBRegularEqualsCompletely}, possibly working component by component on $Y$ after completion so that $H^0(Y, \sO_Y)$ is also local.
    Since $f$ is quasi-\'etale, we know that $f^* K_Y = K_X$.  Hence by \autoref{lem.B0AsInverseLimit} we see that 
    \[
        \myB^0(Y, f^* \Delta; \sO_Y) \twoheadrightarrow \myB^0(X, \Delta; \sO_X) = H^0(X, \sO_X)
    \]
    surjects.  But the induced trace map $H^0(Y, \sO_Y) \to H^0(X, \sO_X)$ sends the maximal ideal of $H^0(Y, \sO_Y)$ to the maximal ideal of $H^0(X, \sO_X)$, hence $\myB^0(Y, f^* \Delta; \sO_Y) = H^0(Y, \sO_Y)$.
\end{proof}

\subsubsection*{The local case}
We conclude the section by briefly describing some of the key features of \emph{local} $\bigplus$-regularity.

\begin{definition} \label{definition:plus-regular-on-local-ring}
    Suppose that $(A, \fram)$ is an excellent normal local ring whose residue field $A/\fram$ has positive characteristic and set $Y = \Spec A$.  Further suppose that $\Delta \geq 0$ is a $\bQ$-divisor on $Y$.  We say that $(Y, \Delta)$ is \emph{$\bigplus$-regular} if it is globally $\bigplus$-regular in the sense of \autoref{def:globally_B_regular}. 
    \end{definition}

    Notice that the $\fram$-adic completion $\widehat{A}$ of $A$ is $\bigplus$-regular if and only if so is $A$ (this is \autoref{cor.GlobalBRegularEqualsCompletely} applied to the identity map $X = \Spec A \to \Spec A$).
        Furthermore, if $\Delta=0$, then $A$ is $\bigplus$-regular if and only if it is a {\it splinter}. Lastly, if $(A,\Delta)$ is $\bigplus$-regular, then it is Cohen-Macaulay by \autoref{prop.GlobalBRegularRationalklt}.

\begin{lemma}
    \label{lem.StalkIsGlobally+RegularIfAmbient}
    If $(X, \Delta)$ as in the start of the section is globally $\bigplus$-regular, then for $x \in X$ whose stalk $A = \sO_{X,x}$ has positive characteristic residue field, $(\Spec A, \Delta|_{\Spec A})$ is globally $\bigplus$-regular.
\end{lemma}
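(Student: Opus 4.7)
The plan is to reduce any finite normal cover of $\Spec A$ to a finite normal cover of the ambient scheme $X$ by taking normalizations in function field extensions, and then restrict the resulting global splitting.

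First, I would reduce to the case of an integral cover. Suppose $g : \Spec B \to \Spec A$ is a finite dominant map with $B$ normal. Then $B$ decomposes as a finite product of normal local domains $B = \prod B_i$ (each finite over $A$), and in view of the convention in \autoref{rem.Globall+RegularForNonIntegralX}, it suffices to split $\sO_{\Spec A} \to (g_i)_* \sO_{\Spec B_i}(\lfloor g_i^* (\Delta|_{\Spec A}) \rfloor)$ for each factor separately. So I may assume $B$ is a normal integral domain finite over $A$.

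Next, I would globalize the cover. The finite extension $A \hookrightarrow B$ corresponds to a finite extension of fraction fields $K(X) = \Frac(A) \hookrightarrow L = \Frac(B)$. Let $\bar{f} \colon \bar{Y} \to X$ denote the normalization of $X$ in $L$. Since $X$ is excellent and normal, $\bar{f}$ is a finite surjective morphism with $\bar{Y}$ normal integral. I claim the base change $\bar{Y} \times_X \Spec A$ recovers $\Spec B$: by construction its global sections form the integral closure $B_0$ of $A$ in $L$, and since $B$ is normal with $A \subseteq B \subseteq L$ and $B$ finite over $A$, we get $B = B_0$. In particular $\lfloor \bar{f}^* \Delta \rfloor$ restricts along $\Spec A \hookrightarrow X$ to $\lfloor g^*(\Delta|_{\Spec A}) \rfloor$.

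Finally, I would localize the splitting. Global $\bigplus$-regularity of $(X,\Delta)$ yields a map
\[
\phi \colon \bar{f}_* \sO_{\bar{Y}}(\lfloor \bar{f}^* \Delta \rfloor) \to \sO_X
\]
of $\sO_X$-modules sending $1 \mapsto 1$. Taking the stalk at $x$ (equivalently, pulling back along the flat morphism $\Spec A \to X$), and using the identification above, produces a splitting
\[
\phi_x \colon g_* \sO_{\Spec B}(\lfloor g^*(\Delta|_{\Spec A}) \rfloor) \to \sO_{\Spec A}
\]
with $\phi_x(1) = 1$, which is what is needed. Note that $\Spec A$ inherits all the background hypotheses (normal integral excellent scheme with dualizing complex $(\omega_X^\bullet)_x$, residue field of positive characteristic) from $X$, so \autoref{def:globally_B_regular} applies.

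There is no serious obstacle: the only point requiring care is the identification of the pullback of the global cover $\bar{Y} \to X$ with the given local cover $\Spec B \to \Spec A$, which uses the normality of $B$ together with excellence of $X$ to ensure finiteness of the normalization.
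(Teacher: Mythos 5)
Your proof is correct, and since the paper states this lemma without any proof, your argument supplies exactly the standard reasoning it leaves implicit: replace the local cover $\Spec B \to \Spec A$ by the normalization of $X$ in $\Frac(B)$ (finite by excellence), invoke the global splitting, and localize at $x$. The identification $B = B_0$ via normality of $B$, and the compatibility of $\lfloor \cdot \rfloor$, reflexive sheaves, and finite pushforward with the flat localization $\Spec A \to X$, are checked correctly.
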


\begin{remark}
    Suppose that $K_A + \Delta$ is $\bQ$-Cartier.  It follows from \autoref{prop.B0completion} by taking $X = \Spec A$ that $(A, \Delta)$ is globally $\bigplus$-regular if and only if $(\widehat{A}, \Delta_{\widehat A})$ is $\mathrm{BCM}_{\widehat{A^+}}$-regular.
\end{remark}

\subsection{Purely globally \texorpdfstring{$\bigplus$}{+}-regular schemes}

\begin{definition}
    \label{def.Purely+Regular}
    With notation as in the start of the section, suppose     that there exists a reduced divisor $S$ such that $\Delta = S + B$ for $B\geq 0$ with no common components with $S$.  Fix a reduced subscheme $S^+$ in $X^+$ as in the the second paragraph of \autoref{subsec.AdjointAnalogsOfS^0} with corresponding $\sum_{i = 1}^t S_{i,Y} = S_Y \to Y$ on each finite dominant map $f : Y \to X$ with $Y$ normal.   We say that $(X,S+B)$ is \emph{purely globally $\bigplus$-regular (along $S$)}, if for every finite dominant $f : Y \to X$ with $Y$ normal, the following map splits     \[
        \sO_X \to f_* \bigoplus_{i = 1}^t \sO_{Y}(-S_{i,Y} + \lfloor f^*(S + B) \rfloor).
    \]
    If we have $X \to \Spec R$ proper as in the start of the section, then we say that $(X, S+B)$ is \emph{completely purely globally $\bigplus$-regular over $R$ (along S)} if the base change to the completion $(X_{\widehat{R_z}}, \Delta_{\widehat{R_z}})$ along every closed point $z \in \mSpec R$ is purely globally $\bigplus$-regular.
\end{definition}

Note in the case that $S$ is integral and $f^*(S+B)$ has integer coefficients, this is simply asking that 
\[
    \sO_X \to f_* \sO_Y( f^*(S + B) - S_Y)
\]
splits.  

This definition is still meaningful even when $R$ is not complete although we will primarily work in the complete case, see the issues discussed in \autoref{rem.B0AdjointVersionUpToCompletion}.  In particular, we do \emph{not} have a full analog of \autoref{prop.GlobalBRegularSplits} or any analog of \autoref{cor.GlobalBRegularEqualsCompletely}.  However, see \autoref{cor.PurelyGlobally+RegularVsCompletelyAssumeBigAndSemiample} where we prove the equivalence of completely purely globally $\bigplus$-regular pairs with purely globally $\bigplus$-regular pairs when $-K_X - \Delta$ is big and semiample.

\begin{lemma}
    \label{lem.PurelyGloballyPlusRegularIsLocalOnTheBase}
   {Suppose we have $X\to\Spec(R)$}, the following are equivalent: 
    \begin{enumerate}
        \item $(X, S+B)$ is purely globally $\bigplus$-regular.
        \item for each closed point $z \in \mSpec R$ we have that the base change to the localization $(X_{R_z}, (S+B)_{R_z})$ is purely globally $\bigplus$-regular.        
    \end{enumerate}    
\end{lemma}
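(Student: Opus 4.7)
The plan is to mirror the argument from \autoref{lem.GloballyPlusRegularIsLocalOnTheBase}. The first step is to translate the splitting condition into surjectivity of an $R$-module map. Namely, for each finite dominant $f \colon Y \to X$ with $Y$ normal, set
\[
\sF_f \; := \; f_* \bigoplus_{i=1}^t \sO_Y\!\bigl(-S_{i,Y} + \lfloor f^*(S+B)\rfloor\bigr),
\]
and let $\alpha_f \colon \sO_X \to \sF_f$ be the diagonal inclusion. Then $\alpha_f$ splits as a map of $\sO_X$-modules if and only if the evaluation map $\Hom_{\sO_X}(\sF_f, \sO_X) \to H^0(X, \sO_X)$ sending $\psi \mapsto \psi(\alpha_f(1))$ is surjective. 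Since both sides are $R$-modules, surjectivity can be tested after localizing at each closed point $z \in \mSpec R$.

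The second step is to invoke flat base change. Since $R \to R_z$ is flat and $\sF_f$ is coherent (being a finite pushforward of a coherent sheaf), the localization of the evaluation map at $z$ is identified with the analogous evaluation map for the base-changed cover $f_z \colon Y_z := Y \times_X X_{R_z} \to X_{R_z}$. Here one uses that $Y_z$ remains normal and integral (as a localization of $Y$), that pushforward, restriction of divisors, and rounding all commute with this base change, and that $S_{i,Y_z}$ is compatibly induced from the fixed $S_i^+ \subset X^+$ via $X_{R_z}^+ = X^+ \times_X X_{R_z}$. Thus, splitting of $\alpha_f$ on $X$ for every finite normal cover $Y \to X$ is equivalent to splitting of $\alpha_{f_z}$ on $X_{R_z}$ for every such localized cover.

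To close the equivalence, the final step is to verify that every finite dominant $g \colon Z \to X_{R_z}$ with $Z$ normal arises as the base change of a finite dominant cover of $X$ with normal source. Since $R \to R_z$ is a localization of a domain, $X_{R_z}$ and $X$ share a common function field, so letting $Y$ be the normalization of $X$ in the function field of $Z$ yields a finite normal cover of $X$ with $Y \times_X X_{R_z} \cong Z$. Combining this with the previous paragraph yields the desired equivalence. The only mild technical point is the flat base change for $\sHom$, which is standard given the coherence of $\sF_f$; beyond that I do not anticipate any serious obstacle.
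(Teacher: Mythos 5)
Your proof is correct and follows essentially the same route as the paper: the paper likewise reduces pure global $\bigplus$-regularity to surjectivity of the sum of the evaluation-at-$1$ maps $\bigoplus_{i=1}^t \Hom(f_*\sO_Y(-S_{i,Y}+f^*(S+B)),\sO_X)\to H^0(X,\sO_X)$ and observes that this surjectivity may be checked after localizing at the closed points of $R$. You merely make explicit two points the paper leaves implicit (flat base change for the evaluation map, and that every finite normal cover of $X_{R_z}$ descends to one of $X$ via normalization in the function field), and both are handled correctly.
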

\begin{proof}
    We restrict to $Y$ large enough so that $f^* \Delta = f^* (S+B)$ has integer coefficients.
    The pair is globally $\bigplus$-regular if and only if the sum of the evaluation-at-1 maps
    \[
        \bigoplus_{i = 1}^t \Hom(f_* \sO_Y(-S_{i,Y} + f^*(S+B) ), \sO_X) \to H^0(X, \sO_X)
    \]
    surjects for each finite dominant $f : Y \to X$ with $Y$ normal.  Again, this surjectivity can be checked after localizing at closed points of $R$.  
\end{proof}

Similar to \autoref{prop.GlobalBRegularSplits}, we have the following alternate characterization of purely globally $\bigplus$-regular.  We recall the following notation from \autoref{rem.B0AdjointVersionUpToCompletion}.  If $H^0(X, \sO_X) = R$ is local and $X \to \Spec R$ is proper then   $\hat\myB^0_{S}(X, S+B; \sO_X) \subseteq H^0(X_{\widehat{R}}, \sO_{X_{\widehat{R}}})$ is the $R$-Matlis dual of 
    \[
        \mathrm{Im}\left( \myH^d \myR \Gamma_\m \myR \Gamma(X, \sO_X(K_X)) \to \myH^d \myR \Gamma_\m\myR\Gamma(X^+, \bigoplus_{i = 1}^t \sO_{X^+}(-S_i^+ + \pi^*(K_X + \Delta))) \right).
    \]

\begin{proposition}
    \label{prop.PurelyGlobalBRegularSplits}
    With notation as in \autoref{def.Purely+Regular}, suppose $R = H^0(X, \sO_X)$ is local and $X \to \Spec R$ is proper.  Then $(X, S+B)$ is purely globally $\bigplus$-regular if and only if $\hat\myB^0_{S}(X, S+B; \sO_X) = H^0(X_{\widehat{R}}, \sO_{X_{\widehat{R}}})$.  
    
    In particular, if $R$ is complete, then $(X, S+B)$ is purely globally $\bigplus$-regular if and only if $\myB^0_{S}(X, S+B; \sO_X) = H^0(X, \sO_X)$.  
\end{proposition}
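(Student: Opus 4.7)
The strategy mirrors that of \autoref{prop.GlobalBRegularSplits}, with Grothendieck duality applied to the adjoint-type trace. I would first restrict to the cofinal collection of finite dominant $f \colon Y \to X$ with $Y$ normal and $f^{*}(S+B)$ integral (any such cover is dominated by one of this form, so this is harmless). For such an $f$, Grothendieck duality for finite morphisms between normal schemes, together with $f^{*}K_X+\lfloor f^{*}(S+B)\rfloor = \lfloor f^{*}(K_X+S+B)\rfloor$, identifies
\[
\sHom_{\sO_X}\!\Big(f_{*}\sO_Y(-S_{i,Y}+\lfloor f^{*}(S+B)\rfloor),\,\sO_X\Big) \;\cong\; f_{*}\sO_Y\big(K_Y+S_{i,Y}-\lfloor f^{*}(K_X+S+B)\rfloor\big).
\]
Summing over $i$, the $\sO_X$-dual of the map defining pure global $\bigplus$-regularity becomes the trace
\[
\Tr_f \colon f_{*}\bigoplus_{i=1}^{t}\sO_Y\big(K_Y+S_{i,Y}-\lfloor f^{*}(K_X+S+B)\rfloor\big) \longrightarrow \sO_X.
\]

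Next I would invoke the standard splitting criterion: since $R=H^0(X,\sO_X)$ is local, the defining map splits as an $\sO_X$-module map if and only if there exists $\phi$ in its $\sHom$ to $\sO_X$ with $\phi(1)=1$; equivalently, the induced global trace
\[
\bigoplus_{i=1}^{t} H^0\!\Big(Y,\,\sO_Y\big(K_Y+S_{i,Y}-\lfloor f^{*}(K_X+S+B)\rfloor\big)\Big) \;\xrightarrow{\,H^0(\Tr_f)\,}\; R
\]
is surjective (its image is an ideal of $R$, which coincides with $R$ iff it contains $1$). Since $\widehat{R}$ is faithfully flat over the local Noetherian ring $R$, surjectivity of this $R$-linear map is equivalent to surjectivity after $\otimes_R \widehat{R}$. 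Hence $(X,S+B)$ is purely globally $\bigplus$-regular if and only if, for every such $f$, the image of $H^0(\Tr_f)\otimes_R\widehat{R}$ equals $\widehat{R}=H^0(X_{\widehat{R}},\sO_{X_{\widehat{R}}})$; intersecting over all $f$, this is exactly the condition $\hat\myB^0_{S}(X,S+B;\sO_X)=H^0(X_{\widehat{R}},\sO_{X_{\widehat{R}}})$, using the intersection-of-tensored-images description of $\hat\myB^0_S$ recorded in \autoref{rem.B0AdjointVersionUpToCompletion}. For the "in particular" statement, when $R$ is already complete we have $\widehat{R}=R$ and $\hat\myB^0_S$ specialises to $\myB^0_S$ of \autoref{setup_adjoint}, so the criterion reads $\myB^0_S(X,S+B;\sO_X)=H^0(X,\sO_X)$.

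I do not anticipate a serious obstacle: the argument runs in close parallel to \autoref{prop.GlobalBRegularSplits}, with the adjoint trace replacing the ordinary one. The two points requiring care are (i) the bookkeeping of the divisor shift under Grothendieck duality, in particular keeping the corrections $+S_{i,Y}$ and the floor $\lfloor f^{*}(K_X+S+B)\rfloor$ consistent across all summands (a routine reflexive-sheaf computation on a normal scheme), and (ii) the identification of $\hat\myB^0_S$ with the intersection of tensored images, which is already built into \autoref{rem.B0AdjointVersionUpToCompletion} and thus requires no independent work. No use is made of the potentially subtle question, raised in that remark, of whether $\hat\myB^0_S$ coincides with $\myB^0_S(X_{\widehat R},\cdot)$.
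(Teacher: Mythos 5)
Your proof is correct and follows essentially the same route as the paper's: both arguments dualize the splitting condition into surjectivity of the adjoint trace on global sections, use that the image is an ideal of the local ring $R$ (so equality with $R$ amounts to hitting $1$), and pass to $\widehat{R}$ by faithful flatness. The only cosmetic difference is that you phrase the duality step via Grothendieck duality for the finite cover together with the intersection-of-tensored-images description of $\hat\myB^0_S$ from \autoref{rem.B0AdjointVersionUpToCompletion}, whereas the paper works directly with local cohomology and Matlis duality; these amount to the same computation.
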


\begin{proof}
    We work with covers large enough so that $f^*(K_X + S + B)$ has integer coefficients.
    The strategy is the same as in \autoref{prop.GlobalBRegularSplits}.  If 
    \[
        \sO_X \to f_* \bigoplus_{i = 1}^t \sO_{Y}(-S_{i,Y} + f^*(S + B))
    \] 
    splits for all $Y$, then twisting by $K_X$ and taking local cohomology, we see that each 
    \begin{equation}
        \label{eq.prop.PurelyGlobalBRegularSplits}
        \myH^d \myR \Gamma_\m \myR \Gamma(X, \sO_X(K_X)) \to \myH^d \myR \Gamma_\m\myR\Gamma(Y, \bigoplus_{i = 1}^t \sO_{Y}(-S_{i,Y} + f^*(K_X + S+B))) 
    \end{equation}
    is injective.  Hence $\hat\myB^0_{S}(X, S+B; \sO_X) = H^0(X_{\widehat{R}}, \sO_{X_{\widehat{R}}})$.
        
    Conversely, if each map of the form \autoref{eq.prop.PurelyGlobalBRegularSplits} injects, then 
    \[
        H^0(Y_{\widehat{R}},  \bigoplus_{i = 1}^t \sO_Y(K_Y + S_{i,Y} - f^*  (K_X + S + B)) _{\widehat{R}}) \to H^0(X_{\widehat{R}}, \sO_{X_{\widehat{R}}})
    \] 
    surjects.   Since $R \to \widehat{R}$ is faithfully flat, each 
    \[
        H^0(Y,  \bigoplus_{i = 1}^t \sO_Y(K_Y + S_{i,Y} - f^*  (K_X + S + B)) ) \to H^0(X, \sO_{X})
    \] 
    surjects.  
    Hence there exists 
    \[
        z \in H^0(Y,  \bigoplus_{i = 1}^t \sO_Y(K_Y + S_{i,Y} - f^*  (K_X + S + B) ) )
    \]
    mapped to $1 \in H^0(X, \sO_X)$.  Thus we have a map 
    \[
        \sO_X \to  f_* \bigoplus_{i = 1}^t \sO_Y(K_Y + S_{i,Y} - f^*  (K_X + S + B) )
    \]
    induced by sending $1 \mapsto z$ giving us a splitting.  Apply $\sHom(-, \sO_X)$ to obtain the desired result.
\end{proof}

\begin{lemma}
\label{lem.pureBregularBox}
    If $(X, S+B)$ is purely globally $\bigplus$-regular along a reduced divisor $S$ then $(X, aS+B)$ is globally $\bigplus$-regular for every $0 \leq a < 1$.
\end{lemma}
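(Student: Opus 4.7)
The plan is to deduce the splitting property defining global $\bigplus$-regularity of $(X, aS+B)$ from that of $(X, S+B)$ by a combination of Grothendieck duality and a finite cover with sufficient ramification along $S$. Fix any normal finite dominant map $f_0 : Y_0 \to X$; I must exhibit a splitting of the natural inclusion $\sO_X \hookrightarrow (f_0)_*\sO_{Y_0}(\lfloor f_0^*(aS+B)\rfloor)$.

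First I would observe that if $f = f_0 \circ h : Y \to X$ is any finite cover dominating $f_0$, then the inequality $\lfloor h^*D\rfloor \geq h^*\lfloor D\rfloor$ for $\bQ$-divisors yields an inclusion $(f_0)_*\sO_{Y_0}(\lfloor f_0^*(aS+B)\rfloor) \hookrightarrow f_*\sO_Y(\lfloor f^*(aS+B)\rfloor)$ compatible with the two inclusions from $\sO_X$, so a splitting of the latter restricts to one of the former. Hence it suffices to find a single, cleverly chosen $f$. Setting $n := \lceil 1/(1-a)\rceil$, I would pick $f$ dominating $f_0$---for instance by composing with Kummer-type covers adjoining $n$-th roots of local equations of each $S_i$, or simply passing far enough up the tower $X^+ \to X$---so that $f^*(S+B)$ is integral and so that each prescribed compatible component $S_{i,Y}$ of $f^*S_i$ has coefficient $m_i \geq n$ in $f^*S$. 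A direct computation then gives the crucial effectivity
\[
f^*(S+B) - S_{i,Y} - f^*(aS+B) = (1-a)f^*S - S_{i,Y} \geq 0,
\]
with coefficient $(1-a)m_i - 1 \geq 0$ along $S_{i,Y}$ and coefficient $(1-a)m \geq 0$ along any other component of $f^*S$. Taking floors (using integrality of $f^*(S+B)$) yields $\sO_Y$-linear inclusions $\sO_Y(-S_{i,Y}+\lfloor f^*(S+B)\rfloor) \hookrightarrow \sO_Y(\lfloor f^*(aS+B)\rfloor)$ for each $i$.

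With these inclusions in hand, I would invoke pure global $\bigplus$-regularity of $(X, S+B)$ along $S$ to get a splitting of $\sO_X \to f_*\bigoplus_{i=1}^t \sO_Y(-S_{i,Y}+\lfloor f^*(S+B)\rfloor)$. Applying finite Grothendieck duality ($(f_*\sO_Y(D))^\vee = f_*\sO_Y(K_{Y/X} - D)$) turns this into a split surjection
\[
T_1 : f_*\bigoplus_{i=1}^t \sO_Y\bigl(K_{Y/X} + S_{i,Y} - \lfloor f^*(S+B)\rfloor\bigr) \twoheadrightarrow \sO_X,
\]
which, when viewed inside $f_*\omega_{Y/X}$, is the sum $\sum_i \tau_i$ of restrictions of the trace $\Tr : f_*\omega_{Y/X} \to \sO_X$ to each summand. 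The $\sO_Y$-linear inclusions from the previous step assemble into a sum map $\Sigma$ into $f_*\sO_Y(K_{Y/X}-\lfloor f^*(aS+B)\rfloor)$ satisfying $\Tr \circ \Sigma = T_1$, so split surjectivity of $T_1$ forces $\Tr$ to split; a final application of duality yields the desired splitting of $\sO_X \hookrightarrow f_*\sO_Y(\lfloor f^*(aS+B)\rfloor)$ and completes the argument via the reduction step. The main obstacle is arranging the ramification condition $m_i \geq n$ in a way compatible with the fixed choice $S^+ \subseteq X^+$ used to define pure $\bigplus$-regularity; this is the unique place where the strict inequality $a < 1$ (making $n$ finite) is used, and while standard Kummer-type constructions suffice, some bookkeeping is needed to align the components $S_{i,Y}$ on $Y$ with the preferred tower.
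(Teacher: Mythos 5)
Your proof is correct and is essentially the paper's own (general) argument: the entire content is the inequality $f^*(S+B)-S_{i,Y}\geq f^*(aS+B)$ for covers sufficiently ramified along $S$, which makes the defining map of pure global $\bigplus$-regularity factor as $\sO_X\to f_*\sO_Y(\lfloor f^*(aS+B)\rfloor)\to f_*\bigoplus_{i}\sO_Y(-S_{i,Y}+\lfloor f^*(S+B)\rfloor)$, so that a splitting of the composite restricts to a splitting of the first arrow; the paper writes down exactly this factorization and stops there, so your round trip through Grothendieck duality and trace maps is superfluous (though harmless). One slip worth fixing: the displayed inclusion in your second paragraph is reversed --- since $\lfloor f^*(S+B)\rfloor - S_{i,Y}\geq \lfloor f^*(aS+B)\rfloor$, the containment of fractional ideal sheaves is $\sO_Y(\lfloor f^*(aS+B)\rfloor)\hookrightarrow \sO_Y(-S_{i,Y}+\lfloor f^*(S+B)\rfloor)$ --- but the dual maps you actually assemble into $\Sigma$ in the third paragraph go the correct way, so the argument survives.
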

\begin{proof}
    This follows from \autoref{lem.comparison_betwen_B0_and_B0D} when $R$ is complete and $X \to \Spec R$ is proper.  Alternately, for the general case, note that for large enough covers $Y \to X$ we have a factorization:
    \[
        \sO_X \to \sO_Y(f^*(aS+B)) \to f_* \bigoplus_{i = 1}^t \sO_Y(-S_{i,Y} + f^*(S+B)).
    \]
    The splitting of the composition implies splitting of the left map.
\end{proof}

\begin{proposition} \label{proposition:pullback-of-global-splinter}
Suppose $X \to \Spec R$ is proper.  Additionally, let $f : Y \to X$ be a proper birational morphism between normal schemes.  Let $\Delta \geq 0$ be a $\bQ$-divisor on $X$ such that $(X, \Delta)$ is globally $\bigplus$-regular (completely purely globally $\bigplus$-regular over $R$, resp.). Suppose that $\Delta_Y \geq 0$, where $K_Y+\Delta_Y = f^*(K_X+\Delta)$. Then $(Y,\Delta_Y)$ is globally $\bigplus$-regular ( ly globally $\bigplus$-regular, resp.).
\end{proposition}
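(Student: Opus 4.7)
The plan is to reduce to the case of a complete local base and then apply the birational transformation rules for $\myB^0$ and $\myB^0_S$ established earlier.

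First I would reduce to the case where $R$ is a complete Noetherian local domain. By \autoref{lem.GloballyPlusRegularIsLocalOnTheBase} (resp.\ \autoref{lem.PurelyGloballyPlusRegularIsLocalOnTheBase}), global $\bigplus$-regularity (resp.\ the purely version) can be checked after localizing $R$ at a closed point, and \autoref{cor.GlobalBRegularEqualsCompletely} (resp.\ the definition of \emph{completely} purely globally $\bigplus$-regular) then allows passage to the completion. Working on each connected component if needed (see \autoref{rem.WhatToDoIfH0IsNotR}), I can further assume $H^0(X,\sO_X) = R$, and since $f$ is proper birational between normal schemes we have $f_*\sO_Y = \sO_X$ and hence $H^0(Y,\sO_Y) = H^0(X,\sO_X) = R$.

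For the global $\bigplus$-regular case, the core step is to invoke \autoref{lemma-B0-under-pullbacks} with $W = Y$, $B = \Delta$, and $B' = B_W = \Delta_Y$; the hypothesis $\Delta_Y \geq 0$ is exactly what is needed for the equality clause of that lemma, yielding
\[
\myB^0(X,\Delta;\sO_X) \;=\; \myB^0(Y,\Delta_Y;\sO_Y).
\]
By \autoref{prop.GlobalBRegularSplits}, global $\bigplus$-regularity of $(X,\Delta)$ translates to $\myB^0(X,\Delta;\sO_X) = H^0(X,\sO_X) = H^0(Y,\sO_Y)$. The reverse direction of the same proposition then yields that $(Y,\Delta_Y)$ is globally $\bigplus$-regular.

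For the purely version, writing $\Delta = S + B$ with $S$ reduced and sharing no component with $B$, I would decompose $\Delta_Y = S_Y + B_Y$, where $S_Y$ denotes the strict transform of $S$ on $Y$ and $B_Y := \Delta_Y - S_Y$. Choosing the canonical representatives $K_X$ and $K_Y$ so that non-exceptional components of $S$ do not appear in their supports, a direct coefficient computation shows that each component of $S_Y$ appears in $f^*(S+B)$ with multiplicity one and in $K_Y$ with multiplicity zero; hence $\mathrm{coeff}_{S_Y}(B_Y) = 0$ and $B_Y \geq 0$ follows from $\Delta_Y \geq 0$. Applying \autoref{lemma-B0S-under-pullbacks} with $B' = B_Y$ yields $\myB^0_S(X,S+B;\sO_X) = \myB^0_{S_Y}(Y,S_Y+B_Y;\sO_Y)$, and \autoref{prop.PurelyGlobalBRegularSplits} finishes the argument exactly as in the global case. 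The main technical obstacle is the bookkeeping in this purely case, in particular verifying that the decomposition $\Delta_Y = S_Y + B_Y$ has the form required by \autoref{lemma-B0S-under-pullbacks} ($S_Y$ reduced, $B_Y$ effective, and no shared components); this rests on the fact that strict transforms of distinct components of $S$ remain distinct prime divisors on $Y$ each appearing in $f^*S$ with coefficient one.
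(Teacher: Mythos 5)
Your proposal is correct and follows essentially the same route as the paper: reduce to a complete local base via \autoref{cor.GlobalBRegularEqualsCompletely} (resp.\ the definition of completely purely globally $\bigplus$-regular), then apply the equality cases of \autoref{lemma-B0-under-pullbacks} and \autoref{lemma-B0S-under-pullbacks} together with the $\myB^0$-characterizations of \autoref{prop.GlobalBRegularSplits} and \autoref{prop.PurelyGlobalBRegularSplits}. The paper leaves the bookkeeping implicit; your verification that $\Delta_Y = S_Y + B_Y$ with $S_Y$ reduced of coefficient one and $B_Y \geq 0$ sharing no components is exactly what makes the cited lemmas apply.
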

\begin{proof}
We can assume that $R$ is local and complete by \autoref{cor.GlobalBRegularEqualsCompletely}. Then this follows from \autoref{lemma-B0-under-pullbacks} and \autoref{lemma-B0S-under-pullbacks}.
\end{proof}

\begin{remark}
        If $(X, S+B)$ is purely globally $\bigplus$-regular, then we will see in \autoref{prop.GlobalPureBRegularPLT} that it is plt, and in \autoref{cor.NormalityOfS} that $S$ is normal.  
\end{remark}

\subsection{Summary of terminology}

We conclude this section by summarizing the terminology we have introduced.

Recall, saying that $(X, \Delta)$ is globally $\bigplus$-regular means that every finite surjective map $f : Y \to X$ between integral schemes, one has that $\sO_X \to \sO_Y(\lfloor f^* \Delta \rfloor )$ splits as a map of $\sO_X$-modules.  We then potentially add two different modifiers to this term.

\begin{enumerate}
    \item \emph{purely}, which should be thought of as a plt variant of $\bigplus$-regularity.  
    \item \emph{completely}, which makes (purely) globally $\bigplus$-regular a relative notion (over a base $\Spec R$), meaning that after completing at each closed point of the base, we have (pure) globaly $\bigplus$-regularity.    
\end{enumerate}

\section{Lifting \texorpdfstring{$\bigplus$}{+}-stable sections from divisors}

In this section we aim to prove that we may lift global sections of $\myB^0$ from hypersurfaces in many cases.  In order to lift sections we need vanishing theorems, and the key vanishing theorem we use in this case is \autoref{cor.VanishingWithoutRestrictingToPFiber}.

\begin{setting} \label{setting:lifting-section}
In this section, $R$ is an excellent local domain with a dualizing complex and positive characteristic residue field.  
\end{setting}

Frequently, $R$ will even be complete.

\begin{theorem} \label{thm:main-lifting}
  Let $X$ be a normal integral scheme of dimension $d$ that is proper over a complete local Noetherian base $\Spec R$ with positive characteristic residue field.  Let $\Delta\geq0$ be a $\bQ$-divisor such that $K_X+\Delta$ is $\bQ$-Cartier.  Suppose that $\Delta = S+B$ where $S = \sum S_i$ is a sum of prime components of $\Delta$ of coefficient one with normalization $\nu : S^{\nm} \to S$, and $M$ is a Cartier divisor such that $M-K_X-\Delta$ is big and semiample.
  
  Set $\sM = \sO_X(M)$. Then the restriction map $H^0(X,\sM) \to H^0(S^{\nm}, \sM|_{S^{\nm}})$ induces a surjection
  \[
    \myB^0_{S}(X,\Delta;\sM)\twoheadrightarrow \myB^0(S^{\nm},\Delta_{S^{\nm}};\sM|_{S^{\nm}})
  \]
  where $\Delta_{S^{\nm}}$ is the different of $K_X + S+B$ along $S^{\nm}$ and the right side is defined as in \autoref{rem.NonintegralB^0} in the case where $S^{\nm}$ has multiple connected components (taking the direct sum).
  \end{theorem}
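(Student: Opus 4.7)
My plan is to translate the claimed surjection into an injectivity statement on Matlis duals of local cohomology, and then deduce this injectivity from the vanishing theorem \autoref{cor.VanishingWithoutRestrictingToPFiber}. By \autoref{lem.B0_SIsSumOfB0_SOfComponents}, the decomposition $S^{\mathrm{N}} = \bigsqcup_i S_i^{\mathrm{N}}$, and \autoref{rem.NonintegralB^0}, both sides of the desired surjection and the restriction map split as direct sums over the irreducible components of $S$, so I may reduce to the case where $S$ is integral. In this case the identification $(S^{\mathrm{N}})^+ = S^+$ holds since absolute integral closures factor through normalization. Write $\Lambda := K_X + \Delta - M$, and note that $-\Lambda = M - K_X - \Delta$ is big and semi-ample $\bQ$-Cartier.

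By \autoref{lem:duality} combined with \autoref{lem.B0AsInverseLimit} and the definition of $\myB^0_S$ in \autoref{setup_adjoint}, the $R$-modules $\myB^0_S(X,\Delta;\sM)$ and $\myB^0(S^{\mathrm{N}},\Delta_{S^{\mathrm{N}}};\sM|_{S^{\mathrm{N}}})$ are the $R$-Matlis duals of
\begin{align*}
    I_X &:= \im\!\left( \myH^d\myR\Gamma_{\fram}\myR\Gamma(X, \sO_X(K_X - M)) \to \myH^d\myR\Gamma_{\fram}\myR\Gamma(X^+, \sO_{X^+}(-S^+ + \pi^*\Lambda)) \right), \\
    I_S &:= \im\!\left( \myH^{d-1}\myR\Gamma_{\fram}\myR\Gamma(S^{\mathrm{N}}, \sO_{S^{\mathrm{N}}}(K_{S^{\mathrm{N}}} - M|_{S^{\mathrm{N}}})) \to \myH^{d-1}\myR\Gamma_{\fram}\myR\Gamma(S^+, \sO_{S^+}(\pi^*\Lambda|_{S^+})) \right),
\end{align*}
where adjunction $K_{S^{\mathrm{N}}} + \Delta_{S^{\mathrm{N}}} = (K_X + \Delta)|_{S^{\mathrm{N}}}$ is used to identify $\pi^*\Lambda|_{S^+}$ with the pullback of $K_{S^{\mathrm{N}}} + \Delta_{S^{\mathrm{N}}} - M|_{S^{\mathrm{N}}}$. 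Under these identifications, the Matlis dual of the restriction $H^0(X,\sM) \to H^0(S^{\mathrm{N}}, \sM|_{S^{\mathrm{N}}})$ is the connecting homomorphism $\partial$ attached to the short exact sequence $0 \to \sO_X(-S) \to \sO_X \to \sO_S \to 0$ (composed with $\nu_* \sO_{S^{\mathrm{N}}} \to \sO_S$). So proving the asserted surjection reduces to checking that $\partial$ induces an injection $I_S \hookrightarrow I_X$ inside the commutative square whose bottom row is the $X^+$-level connecting map.

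The key calculation is to produce injectivity for that bottom row. Twisting the short exact sequence $0 \to \sO_{X^+}(-S^+) \to \sO_{X^+} \to \sO_{S^+} \to 0$ by the pullback of $\Lambda$ and taking the long exact sequence for $\myR\Gamma_{\fram}$ yields
\[
    \myH^{d-1}\myR\Gamma_\fram\myR\Gamma(X^+,\pi^*\Lambda) \to \myH^{d-1}\myR\Gamma_\fram\myR\Gamma(S^+,\pi^*\Lambda|_{S^+}) \to \myH^d\myR\Gamma_\fram\myR\Gamma(X^+, -S^+ + \pi^*\Lambda).
\]
Passing through a cofinal family of finite covers $f\colon Y \to X$ on which $f^*(-\Lambda)$ becomes Cartier, \autoref{cor.VanishingWithoutRestrictingToPFiber} with $b=-1$ gives $\myH^{d-1}\myR\Gamma_\fram\myR\Gamma(Y^+, f^*\Lambda)=0$; taking the filtered colimit over such $Y$ produces the same vanishing on $X^+$. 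This makes the connecting map injective, and combining with the analogous square between $X$ and $X^+$ forces $I_S \hookrightarrow I_X$. Dualizing yields the desired surjection, and by construction of $\partial$ it is induced by restriction of global sections.

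The principal obstacle is the colimit manipulation required to apply \autoref{cor.VanishingWithoutRestrictingToPFiber} to the $\bQ$-Cartier divisor $-\Lambda$, since that vanishing is stated for honest line bundles: one must reduce to a cofinal system of finite covers on which the divisor becomes Cartier and then match the resulting colimit with the ind-structure implicit in $\sO_{X^+}(\pi^*\Lambda)$. The identification $(S^{\mathrm{N}})^+ = S^+$ (i.e.\ that the conductor of $\nu$ dies in the absolute integral closure) is what makes the whole setup clean, and it is crucial for matching up the $S^{\mathrm{N}}$-side adjunction with the restriction $\pi^*\Lambda|_{S^+}$.
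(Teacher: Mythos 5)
Your core argument --- Matlis-dualizing both $\myB^0_S$ and $\myB^0(S^{\mathrm{N}},\cdot)$, realizing the dual of the restriction map as the connecting homomorphism of the ideal-sheaf sequence of $S$ twisted by $\pi^*(K_X+\Delta-M)$ on $X^+$, and killing the obstruction to injectivity with \autoref{cor.VanishingWithoutRestrictingToPFiber} applied to the inverse of the big and semi-ample pullback of $M-K_X-\Delta$ --- is exactly the paper's proof, and that part is sound (including the point that the $\bQ$-Cartier issue is absorbed by passing to a cofinal system of covers where the divisor becomes Cartier).

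However, the opening reduction to the case where $S$ is integral is a genuine gap. The target $\myB^0(S^{\mathrm{N}},\Delta_{S^{\mathrm{N}}};\sM|_{S^{\mathrm{N}}})$ is by definition the \emph{direct sum} $\bigoplus_i \myB^0(S_i^{\mathrm{N}},\cdot)$, whereas $\myB^0_S(X,\Delta;\sM)=\sum_i \myB^0_{S_i}(X,\Delta;\sM)$ is only a sum of submodules of $H^0(X,\sM)$, and an element $s_i\in\myB^0_{S_i}$ lifting a prescribed section on $S_i^{\mathrm{N}}$ has an uncontrolled restriction to $S_j^{\mathrm{N}}$ for $j\neq i$. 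Surjectivity of each $\myB^0_{S_i}\to\myB^0(S_i^{\mathrm{N}},\cdot)$ therefore does not imply surjectivity onto the direct sum (two subspaces of $W_1\oplus W_2$ each surjecting onto its own factor need not span $W_1\oplus W_2$), and there is no obvious way to correct the cross terms without already knowing a form of the theorem. The fix is not to decompose at all: run your duality argument with the sheaf $\bigoplus_{i}\sO_{X^+}(-S_{i}^{+}+\pi^*(K_X+\Delta-M))$ kept in place, as in \autoref{setup_adjoint}. Then $I_S=\bigoplus_i I_{S_i}$ injects into $\bigoplus_i\myH^d\myR\Gamma_{\fram}\myR\Gamma(X^+,\sO_{X^+}(-S_i^++\pi^*(K_X+\Delta-M)))$ because a direct sum of injections is injective (your vanishing argument applies verbatim, summand by summand), while the containment of its image in $I_X$ --- which is the image of the \emph{single} module $\myH^d\myR\Gamma_{\fram}\myR\Gamma(X,\sO_X(K_X-M))$ under a diagonal-type map, and is what dualizes to $\myB^0_S$ --- comes from the commutativity of the full diagram over $X$. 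This global commutativity is precisely the information your componentwise reduction discards. A secondary, repairable looseness: identifying the source of $I_S$ with $\myH^{d-1}\myR\Gamma_{\fram}\myR\Gamma(S^{\mathrm{N}},\sO_{S^{\mathrm{N}}}(K_{S^{\mathrm{N}}}-M|_{S^{\mathrm{N}}}))$ requires comparing $\sO_X(K_X+S-M)/\sO_X(K_X-M)$ with $\omega_S\otimes\sM^{-1}|_S$ and with $\nu_*\sO_{S^{\mathrm{N}}}(K_{S^{\mathrm{N}}}-M|_{S^{\mathrm{N}}})$ and checking that the cokernels have support of dimension $<d-1$, so the top local cohomologies (hence the images) agree; you should say this rather than just invoke adjunction.
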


  For more information on the different (of $K_X + S+B$ along $S^{\nm}$), see for instance \cite[Section 4.1]{KollarKovacsSingularitiesBook}.

 \begin{proof}
  This argument is very closely related to, and inspired by, the proof of \cite[Theorem 3.1]{MaSchwedeTuckerWaldronWitaszekAdjoint}.  In the proof below, we frequently abuse of notation in the following way.  Let where $\pi : X^+ \to X$ be the natural map.  For a quasi-coherent sheaf $\sF$ on $X^+$ we will also write $\sF$ for $\pi_* \sF$.  This is essentially a harmless notational device as $\myR^i \pi_* \sF = 0$ for all $i > 0$ since $\pi$ is an affine morphism, \cite[\href{https://stacks.math.columbia.edu/tag/01XC}{Tag 01XC}]{stacks-project}, and in particular $\myR \Gamma_m \myR \Gamma(X^+, \sF) \cong \myR \Gamma_m \myR \Gamma(X, \myR \pi_* \sF) \cong \myR \Gamma_m \myR \Gamma(X, \pi_* \sF)$.  The same notational consideration applies to the affine morphisms $S \to X$, $S^{\nm} \to X$, $S^+ \to X$, etc.
  
  With this abuse of notation in mind, we have the following diagram
  \[
    \xymatrix{
      \sO_X(-S) \ar[r] \ar[d] & \sO_X \ar[d] \\
      \bigoplus_{i = 1}^t \sO_{X^+}(-S_{i,X^+}) \ar[r] & \bigoplus_{i = 1}^t \sO_{X^+}
                }  
  \]
  of quasi-coherent sheaves on $X$ as in \autoref{subsec.AdjointAnalogsOfS^0}.
  
  Set $$\sL := \sO_{X^+}(L) = \sO_{X^+}(\pi^*(K_X + S + B - M))$$ to be the line bundle on $X^+$ corresponding to $K_X + S + B - M$.  Twist the top row of the above diagram by $K_X + S - M$ and reflexify, then twist the bottom row by $\sL$.  Using the additional downward inclusions given that $B$ is effective, we obtain the leftmost square in the commutative diagram with exact rows:
  \[
    {\small
    \xymatrix@C=12pt{
      0 \ar[r] & \sO_X(K_X - M) \ar[r] \ar[d] & \sO_X(K_X + S - M) \ar[d] \ar[r] & {\sO_X(K_X + S - M) / \sO_X(K_X - M)} \ar[r] \ar@{.>}[d] & 0\\
       0 \ar[r] & \bigoplus_{i = 1}^t \sO_{X^+}(L -S_{i,X^+}) \ar[r] & \bigoplus_{i = 1}^t \sO_{X^+}(L)  \ar[r] & \sO_{S^+} \otimes \sL \ar[r] & 0
    }
    }
  \]
  Recall that ${S^+}$ is the disjoint union of the $S_i^+$ as in \autoref{subsec.AdjointAnalogsOfS^0}.
        Taking cohomology then gives the following commutative diagram, where the factorization of the left vertical arrow into surjective maps will be explained below.  
    \[
      {
      \small
      \xymatrix{
        \myH^{d-1} \myR \Gamma_{\fram}\myR\Gamma (S, \sO_X(K_X + S - M) / \sO_X(K_X - M)) \ar@{->>}[d]^{\rho} \ar[r] & \myH^d \myR \Gamma_{\fram}\myR\Gamma(X, \sO_X(K_X - M)) \ar@{->>}[dd] \\
        \myH^{d-1} \myR \Gamma_{\fram}\myR\Gamma(S, \omega_S \otimes (\sM^{-1}|_S))\ar@{->>}[d] \\
         \Image_S \ar@{^{(}->}[r]^{\kappa} \ar@{^{(}->}[d] & \Image_X \ar@{^{(}->}[d] \\
         \myH^{d-1} \myR \Gamma_{\fram}\myR\Gamma(S^+, \sL|_{S^+}) \ar@{^{(}->}[r] & \myH^d \myR \Gamma_{\fram}\myR\Gamma(X^+, \bigoplus_{i = 1}^t \sO_{X^+}(L -S_{i,X^+})) \\
      }
      }
    \]
    Here we define $\Image_S$ to be:
    \[ 
        \Image\Big( \myH^{d-1} \myR \Gamma_{\fram}\myR\Gamma (S, \sO_X(K_X + S - M) / \sO_X(K_X - M)) \to \myH^{d-1} \myR \Gamma_{\fram}\myR\Gamma(S^+, \sL|_{S^+}) \Big).
    \]
    Note that $\Image_X$ is the $R$-Matlis dual of $\myB^0_S(X,\Delta,\sM)$ by definition, see \autoref{setup_adjoint}.  A main goal of the rest of the proof is to show that $\Image_S$ is dual to $\myB^0(S^{\nm},\Delta_{S^{\nm}},\sM|_{S^{\nm}})$.  

    We first explain the injection of $\kappa$.  Observe that 
    \[
      \myH^{d-1} \myR \Gamma_\m\myR\Gamma(X^+, \bigoplus_{i = 1}^t \sL)
    \]
    surjects onto the kernel of the bottom row, and so, since $\sL^{-1}$ is big and semiample, we may apply \autoref{cor.VanishingWithoutRestrictingToPFiber} and see that the bottom row injects.  Thus $\kappa : \Image_S \hookrightarrow \Image_X$ also injects and hence its $R$-Matlis dual 
    \[
        \myB^0_S(X,\Delta,\sM) \twoheadrightarrow (\Image_S)^{\vee}
    \]
    surjects.

    We now explain origin and surjectivity of $\rho$.  Because $X$ is normal and so Cohen-Macaulay in codimension 2, the S2-ification on $S$ of $\sO_X(K_X + S - M) / \sO_X(K_X - M)$ is $\omega_S \otimes (\sM^{-1}|_S)$ (see  \cite[Subsection 2.1]{MaSchwedeTuckerWaldronWitaszekAdjoint}) and so we have a factorization of sheaves on $S$
    \[
        \sO_X(K_X + S - M) / \sO_X(K_X - M) \to \omega_S \otimes (\sM^{-1}|_S) \to \sL|_{S^+}
    \] 
    as well since $\sL|_{S^+} = \sO_{S^+}(L|_{S^+})$ is a colimit of S2 coherent sheaves. Applying local cohomology explains the origin of the map $\rho$.  We now explain the surjectivity of $\rho$ (in fact, we will show that $\rho$ is an isomorphism). Applying $\myR\Hom(-, \omega_X^\mydot)$ to $0\to \sO_X(-S)\to \sO_X \to \sO_S\to 0$ and taking cohomology, we obtain 
    \[ 0 \to \sO_X(K_X)\to \sO_X(K_X+S)\to \omega_S \to H^{-(d-1)}(\omega_X^\mydot) \to \cdots \]
    Since $X$ is normal and hence S2, we know that $\dim H^{-(d-1)}(\omega_X^\mydot) <d-2$. After twisting by $\sM^{-1}$, we observe that 
    the cokernel $C$ of 
    \begin{equation}
        \label{eq.SurjectionInCodim0InLiftingProof}
      \sO_X(K_X + S - M) / \sO_X(K_X - M) \to \omega_S \otimes (\sM^{-1}|_S)
    \end{equation}
    satisfies $\dim \Supp C < d -2$ (alternatively, one sees that \autoref{eq.SurjectionInCodim0InLiftingProof} is precisely the S2-ification map and thus an isomorphism in codimension one). It follows that $\myH^{d-2}\myR\Gamma_{\fram} \myR \Gamma(S, C)=\myH^{d-1}\myR\Gamma_{\fram} \myR \Gamma(S, C) = 0$ by \cite[\href{https://stacks.math.columbia.edu/tag/0A4R}{Tag 0A4R}]{stacks-project}. This implies that $\rho$ is an isomorphism. 
        Therefore $\Image_S$ is also the image of 
    \[
        \myH^{d-1} \myR \Gamma_{\fram}\myR\Gamma(S, \omega_S \otimes (\sM^{-1}|_S))  \to \myH^{d-1} \myR \Gamma_{\fram}\myR\Gamma(S^+, \sL|_{S^+}).
    \]

    Next notice that dual to $\sO_S \to \sO_{S^{\nm}}$ we obtain $\omega_{S^{\nm}} \to \omega_S$ which induces a map of sheaves on $S$
    \[
        \sO_{S^{\nm}}(K_{S^{\nm}} - M|_{S^{\nm}}) \to \omega_S \otimes (\sM^{-1}|_S).
    \]
    The cokernel of this map is supported in dimension $< d-1$, and so, arguing as above, we see that 
    \[
        \myH^{d-1} \myR \Gamma_{\fram}\myR\Gamma \big(S^{\nm}, \sO_{S^{\nm}}(K_{S^{\nm}} - M|_{S^{\nm}})\big) \twoheadrightarrow \myH^{d-1} \myR \Gamma_{\fram}\myR\Gamma (S, \omega_S \otimes (\sM^{-1}|_S))
    \]
    surjects.  

    In particular, we have the following composition:
    \[
        \begin{array}{rcl}
        \myH^{d-1} \myR \Gamma_{\fram}\myR\Gamma \big(S^{\nm}, \sO_{S^{\nm}}(K_{S^{\nm}} - M|_{S^{\nm}})\big)& \twoheadrightarrow & \myH^{d-1} \myR \Gamma_{\fram}\myR\Gamma (S, \omega_S \otimes (\sM^{-1}|_S))  \\
            & \twoheadrightarrow & \Image_S \\ &\hookrightarrow & \myH^{d-1} \myR \Gamma_{\fram}\myR\Gamma\big(S^+, \sL|_{S^+}\big)
    \end{array}
    \]
    Since $S^+ = (S^{\nm})^+$, it should be expected, using \autoref{lem.B0AsInverseLimit} \autoref{eq.lem.B0AsInverseLimit.DualImageForFinite}, that the $R$-Matlis dual of $\Image_S$ is $\myB^0(S^{\nm}, \Delta', \sM|_{S^{\nm}})$ for some $\bQ$-divisor $\Delta'$ on $S^{\nm}$.  We wish to show that this is true where $\Delta' = \Delta_{S^{\nm}}$ is the different of $K_X + S+B$ along $S^{\nm}$, see \cite[Section 4.1]{KollarKovacsSingularitiesBook} or \cite[Section 2.1]{MaSchwedeTuckerWaldronWitaszekAdjoint} for more discussion of the different.
    In other words, we will show that the composition $\sO_{S^{\nm}}(K_{S^{\nm}} - M|_{S^{\nm}}) \to \omega_S \otimes (\sM^{-1}|_S) \to \sL|_{S^+}$ may be identified with the canonical map (since the different $\Delta_{S^{N}}$ is effective) viewed as sheaves on either $S$ (or equivalently on $S^{\nm}$)
    \begin{equation}
        \label{eq.MapToIdentifyDifferent}
        \sO_{S^{\nm}}((K_{S^{\nm}} - M)|_{S^{\nm}}) \to \sO_{S^+}(\pi_{S^{\nm}}^* (K_S + \Delta_{S^{\nm}} - M)|_{S^+})
    \end{equation}
    where $\pi_{S^{\nm}} \colon S^+ = (S^{\nm})^+ \to S^{\nm}$ is the usual map.
        
    To conclude the proof, we must show that we have an isomorphism of $\sO_{S^+}$-modules
    \[
        \sL|_{S^+} \cong \sO_{S^+}(\pi_{S^{\nm}}^*(K_{S^{\nm}} + \Delta_{S^{\nm}}-M|_{S^{\nm}})),
    \]
    and that the map $\sO_{S^{\nm}}(K_{S^{\nm}} - M|_{S^{\nm}}) \to \sL_{S^+}$ we obtained by composition is the same as the map \autoref{eq.MapToIdentifyDifferent}.  The first isomorphism is an immediate consequence of the definition of the different which guarantees that 
    \[
        (K_X + S + B)|_{S^{\nm}} \sim_{\bQ} K_{S^{\nm}} + \Delta_{S^{\nm}}.
    \]
    The second assertion is local on $S$.      In particular, we may forget about $R$, set $M = 0$ and assume that $X = \Spec A$ is     normal and local, $S = \Spec (A/I)$ is     reduced and local so that $S^{\nm} = \Spec (A/I)^{\nm}$ is the spectrum of a semi-local normal ring.
          At this point, the argument is essentially identical to the argument of \cite[Theorem 3.1]{MaSchwedeTuckerWaldronWitaszekAdjoint} which we now explain in a slightly different way.  
    
    We first need a precise definition of the different.  We may write $K_X = -S + G$ where $G \geq 0$ does not contain any component of $S$ within its support.  This in fact determines a canonical divisor on $S^{\nm}$.  Consider a global section $y \in \sO_X(K_X + S) = \sO_X(G)$ determining $G$ (note we may take $y = 1 \in \sO_X(G)$).  The image of $y$, $\overline{y} \in \omega_S$ becomes a rational section of $\omega_{S^{\nm}}$ via the map $\omega_{S^{\nm}} \to \omega_S$, this rational section determines the divisor we call $K_{S^{\nm}}$.  Write $K_X + S + B = {1 \over m} \Div f$ for some $f \in A$.  Then, setting $\overline{f} \in (A/I)^{\nm}$ as the image of $f$, we define the different as 
    \[
        \Delta_{S^{\nm}} := {1 \over m} \Div_{S^{\nm}} \overline{f} - K_{S^{\nm}}.
    \]
    It is independent of our choices and always effective, see \cite[Section 4.1]{KollarKovacsSingularitiesBook}.
    With this in place, and the careful choice of $K_{S^{\nm}}$ described above, the map we constructed earlier in the proof
    \begin{equation}
        \label{eq.CompositionInducingStuff}
        \sO_{S^{\nm}}(K_{S^{\nm}}) \to \omega_S \to \sL|_{S^+}  = \sO_{S^+}( \pi^*(K_X + S + B)|_{S^+})
    \end{equation}
    sends the rational section $\overline{y}$ to an honest section of $\omega_S$ which came from the section $1 \in \sO_X(K_X + S) \subseteq \sO_{X^+}(\pi^*(K_X + S + B)) =  {1 \over {f}^{1/m}} \sO_{X^+}$.  In particular, in the composition \autoref{eq.CompositionInducingStuff}, $\overline{y}$ is sent to $1 \in {1 \over \overline{f}^{1/m}} \sO_{S^+} = \sL|_{S^+}$.
        On the other hand,     the map 
    \[
        \sO_{S^{\nm}}(K_{S^{\nm}}) \to \sO_{S^+}( \pi^*(K_{S^{\nm}} + \Delta_{S^{\nm}})) = {1 \over \overline{f}^{1/m}} \sO_{S^+}
    \]
    also sends $\overline{y}$ to $1$ by construction, and so the two maps agree since they are maps between rank-1 sheaves and so are determined by where they send any single nonzero (on any irreducible component) rational section.
       \end{proof}

  \begin{remark}
    One may also obtain an alternative proof in the case where $M-K_X-\Delta$ is ample, by passing to the affine cones, and using \autoref{prop.S+grIsBigCM} and \cite{MaSchwedeTuckerWaldronWitaszekAdjoint}. 
  \end{remark}

Recall from \autoref{rem.B0AdjointVersionUpToCompletion} that when $R$ is not necessarily complete, we define $\hat\myB^0_S(X, \Delta; \sM)$ to be the the Matlis dual of 
    \[
        \mathrm{Im}\bigg( \myH^d \myR \Gamma_\m \myR \Gamma(X, \sO_X(K_X - M)) \to \myH^d \myR \Gamma_\m\myR\Gamma(X^+, \bigoplus_{i = 1}^t \sO_{X^+}(-S_i^+ + \pi^*(K_X + \Delta - M))) \bigg).
    \]
\begin{corollary}
    \label{cor:main-lifting-non-complete version}    
    With the same assumptions as in \autoref{thm:main-lifting}, but with $H^0(X,\sO_X) = R$ and $R$ not necessarily complete but satisfying \autoref{setting:lifting-section}, we have that the restriction map 
    \[
        H^0(X, \sM) \otimes_R \widehat{R} = H^0(X_{\widehat{R}},\sM_{\widehat{R}}) \to H^0(S^{\nm}_{\widehat{R}}, \sM|_{S_{\widehat{R}}^{\nm}}) = H^0(S^{\nm}, \sM|_{S^{\nm}}) \otimes_R \widehat{R}
    \] 
    induces a surjection
    \[
        \hat\myB^0_{S}(X,\Delta;\sM)\twoheadrightarrow \myB^0(S^{\nm}_{\widehat{R}}; \Delta_{S^{\nm}_{\widehat{R}}}; \sM|_{S^{\nm}_{\widehat{R}}}).
    \]
\end{corollary}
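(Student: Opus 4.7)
The plan is to run the same diagram chase as in the proof of \autoref{thm:main-lifting}, but replacing the role of \autoref{lem.B0AsInverseLimit} (which required $R$ complete) with \autoref{prop.B0completion} (which handles the excellent local setting), and interpreting $R$-Matlis duality as a functor that naturally outputs $\widehat R$-modules. The key observation is that all the cohomology groups $\myH^i\myR\Gamma_\m\myR\Gamma(X,-)$ appearing in the proof of \autoref{thm:main-lifting} are automatically $\widehat R$-modules (being $\m$-torsion), and the injective hull satisfies $E_R(R/\m) = E_{\widehat R}(\widehat R/\m\widehat R)$, so that $R$- and $\widehat R$-Matlis duality agree on Artinian $R$-modules.

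First I would construct verbatim the same commutative ladder of two short exact sequences as in \autoref{thm:main-lifting}, starting from $0 \to \sO_X(-S) \to \sO_X \to \sO_S \to 0$ and its analog $0 \to \bigoplus_{i=1}^t \sO_{X^+}(-S_i^+) \to \bigoplus_{i=1}^t \sO_{X^+} \to \sO_{S^+} \to 0$, then twisting and reflexifying. Taking $\myH^*\myR\Gamma_\m\myR\Gamma(X,-)$ of this ladder and applying the vanishing \autoref{cor.VanishingWithoutRestrictingToPFiber} (which holds over any excellent local ring of residue characteristic $p>0$, not just complete ones) gives the injectivity of the bottom row and hence a surjection of Matlis duals $\Image_X^\vee \twoheadrightarrow \Image_S^\vee$, exactly as in the proof of \autoref{thm:main-lifting}.

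The next step is to identify the two Matlis duals in the non-complete setting. By the very definition in \autoref{rem.B0AdjointVersionUpToCompletion}, the image $\Image_X$ is Matlis dual to $\hat\myB^0_S(X,\Delta;\sM)$. For the divisor side, $\Image_S$ is the image of $\myH^{d-1}\myR\Gamma_\m\myR\Gamma(S^{\mathrm{N}}, \sO_{S^{\mathrm{N}}}(K_{S^{\mathrm{N}}}-M|_{S^{\mathrm{N}}}))$ in $\myH^{d-1}\myR\Gamma_\m\myR\Gamma(S^+, \sO_{S^+}(\pi^*(K_{S^{\mathrm{N}}}+\Delta_{S^{\mathrm{N}}}-M|_{S^{\mathrm{N}}})))$, with the different $\Delta_{S^{\mathrm{N}}}$ identified exactly as in the proof of \autoref{thm:main-lifting}. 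Applying \autoref{prop.B0completion} to $S^{\mathrm{N}}$ (handling multiple connected components via \autoref{rem.NonintegralB^0}) identifies this image's Matlis dual with $\myB^0(S^{\mathrm{N}}_{\widehat R}, \Delta_{S^{\mathrm{N}}_{\widehat R}}; \sM|_{S^{\mathrm{N}}_{\widehat R}})$, using that normalization commutes with the flat base change $R \to \widehat R$ so that $(S_{\widehat R})^{\mathrm{N}} = S^{\mathrm{N}}_{\widehat R}$, and that the different is compatible with such base change.

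The main obstacle is simply to verify that every step of the proof of \autoref{thm:main-lifting} transplants cleanly to the non-complete setting. The key sheaf-level computations (the identification $\nu_*\sO_{S^{\mathrm{N}}}(K_{S^{\mathrm{N}}}-M|_{S^{\mathrm{N}}}) \to \omega_S\otimes \sM^{-1}|_S$ is generically an isomorphism with cokernel of dimension $<d-1$, and the compatibility with the $S^+$-construction) are all local and insensitive to completeness of the base. The only place where completeness entered essentially in the original proof was in invoking \autoref{lem.B0AsInverseLimit} to identify images with $\myB^0$-modules; replacing this input by \autoref{prop.B0completion} and the definition in \autoref{rem.B0AdjointVersionUpToCompletion} completes the argument and produces the desired surjection $\hat\myB^0_S(X,\Delta;\sM) \twoheadrightarrow \myB^0(S^{\mathrm{N}}_{\widehat R}, \Delta_{S^{\mathrm{N}}_{\widehat R}}; \sM|_{S^{\mathrm{N}}_{\widehat R}})$.
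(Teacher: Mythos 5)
Your proposal is correct and follows essentially the same route as the paper, which proves this corollary by rerunning the argument of \autoref{thm:main-lifting} with \autoref{prop.B0completion} (and \autoref{rem.B0completionH0NotEqualToR} for the semi-local $H^0(S^{\mathrm N},\sO_{S^{\mathrm N}})$) substituting for the completeness-dependent identifications, noting that \autoref{cor.VanishingWithoutRestrictingToPFiber} does not require $R$ complete. Your additional observations about $\m$-torsion modules being $\widehat R$-modules and the compatibility of normalization and the different with the flat base change $R\to\widehat R$ are exactly the points that make the transplantation work.
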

\begin{proof}
    The proof is the same as that of \autoref{thm:main-lifting} in view of \autoref{prop.B0completion} since \autoref{cor.VanishingWithoutRestrictingToPFiber} does not require that $R$ is complete. 
\end{proof}

When $S$ is globally $\bigplus$-regular, we obtain the following important consequence.

\begin{corollary}[Adjunction and inversion of adjunction] \label{cor.inversion-of-B-adjunction} Let $(X,\Delta = S+B)$ be a pair proper over $R = H^0(X, \sO_X)$ satisfying \autoref{setting:lifting-section}. Assume additionally that $S$ is a reduced Weil divisor having no common components with $B$, and such that $-K_X - \Delta$ is {big and semiample.}   
  Let $\Delta_{S^{\nm}}$ denote the different of $K_X + S +B$ on $S^{\nm}$ with respect to $(X,S+B)$.  
  
  Then $(X,S+B)$ is purely globally $\bigplus$-regular (along $S$) if and only if $(S^{\nm}, \Delta_{S^{\nm}})$ is globally $\bigplus$-regular (in the sense of \autoref{rem.Globall+RegularForNonIntegralX} if $S$ is not irreducible).  
\end{corollary}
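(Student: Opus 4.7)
The plan is to apply \autoref{cor:main-lifting-non-complete version} with $\sM=\sO_X$ and $M=0$. Since $M-K_X-\Delta=-K_X-\Delta$ is big and semi-ample by hypothesis, this produces the restriction-induced surjection
\[
\hat{\myB}^0_S(X,\Delta;\sO_X) \twoheadrightarrow \myB^0\bigl(S^{\mathrm N}_{\widehat R}, \Delta_{S^{\mathrm N}_{\widehat R}};\sO_{S^{\mathrm N}_{\widehat R}}\bigr).
\]
I will combine this with the two characterizations $\hat{\myB}^0_S(X,\Delta;\sO_X)=H^0(X_{\widehat R},\sO_{X_{\widehat R}}) \Longleftrightarrow (X,S+B)$ is purely globally $\bigplus$-regular (\autoref{prop.PurelyGlobalBRegularSplits}) and $\myB^0(S^{\mathrm N}_{\widehat R},\Delta_{S^{\mathrm N}_{\widehat R}}; \sO_{S^{\mathrm N}_{\widehat R}}) = H^0(S^{\mathrm N}_{\widehat R},\sO_{S^{\mathrm N}_{\widehat R}}) \Longleftrightarrow (S^{\mathrm N},\Delta_{S^{\mathrm N}})$ is globally $\bigplus$-regular (\autoref{prop.GlobalBRegularSplits} and \autoref{rem.NonintegralB^0}), together with the observation that $\hat{\myB}^0_S$ and $\myB^0$ are stable under multiplication by global sections of the respective structure sheaves via the projection formula $r\cdot \Tr(t)=\Tr(f^*r\cdot t)$ on every finite cover.

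For the direction ``$(X,S+B)$ purely globally $\bigplus$-regular implies $(S^{\mathrm N},\Delta_{S^{\mathrm N}})$ globally $\bigplus$-regular'', I would simply trace $1\in \hat{\myB}^0_S(X,\Delta;\sO_X)=H^0(X_{\widehat R},\sO_{X_{\widehat R}})$ through the surjection: its image is $1\in \myB^0(S^{\mathrm N}_{\widehat R},\Delta_{S^{\mathrm N}_{\widehat R}}; \sO_{S^{\mathrm N}_{\widehat R}})$. Since this target is an $H^0(S^{\mathrm N}_{\widehat R},\sO_{S^{\mathrm N}_{\widehat R}})$-submodule containing $1$, it must equal all of $H^0(S^{\mathrm N}_{\widehat R},\sO_{S^{\mathrm N}_{\widehat R}})$. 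Decomposing into connected components via \autoref{rem.NonintegralB^0} then yields global $\bigplus$-regularity for each $(S_i^{\mathrm N},\Delta_{S_i^{\mathrm N}})$.

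For the converse, global $\bigplus$-regularity of $(S^{\mathrm N},\Delta_{S^{\mathrm N}})$ gives $1 \in \myB^0(S^{\mathrm N}_{\widehat R},\Delta_{S^{\mathrm N}_{\widehat R}}; \sO_{S^{\mathrm N}_{\widehat R}})$, and surjectivity of the lifting map yields some $s\in \hat{\myB}^0_S(X,\Delta;\sO_X)\subseteq \widehat R$ mapping to $1$. The key technical observation is that $H^0(S^{\mathrm N}_{\widehat R},\sO_{S^{\mathrm N}_{\widehat R}})$ is a finite product of local finite $\widehat R$-algebras (each connected component of $S^{\mathrm N}_{\widehat R}$ is an integral scheme finite over the complete local $\widehat R$), so the structure map $\widehat R \to H^0(S^{\mathrm N}_{\widehat R},\sO_{S^{\mathrm N}_{\widehat R}})$ is a local homomorphism componentwise; hence $s-1\in \fram_{\widehat R}$ and $s$ is a unit in $\widehat R$. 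Since $\hat{\myB}^0_S$ is a $\widehat R$-submodule of $\widehat R$ containing a unit, it equals $\widehat R$, and \autoref{prop.PurelyGlobalBRegularSplits} concludes that $(X,S+B)$ is purely globally $\bigplus$-regular.

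The heavy lifting is done entirely by the surjection from \autoref{thm:main-lifting}; the remainder is short algebraic bookkeeping. The only point demanding care is the verification that $\hat{\myB}^0_S$ and $\myB^0$ are closed under the multiplication action of global sections of the relevant structure sheaves, since this is precisely what upgrades ``contains $1$'' to ``equals everything'' in both directions.
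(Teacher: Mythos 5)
Your proof is correct and follows essentially the same route as the paper: apply the lifting surjection of \autoref{cor:main-lifting-non-complete version} with $\sM=\sO_X$, then translate both sides via \autoref{prop.PurelyGlobalBRegularSplits} and \autoref{prop.GlobalBRegularSplits}, using that a preimage of $1$ in $\widehat R$ is automatically a unit of the local ring (the paper words this the same way). The only cosmetic slip is that the components of $S^{\mathrm N}_{\widehat R}$ are proper, not finite, over $\widehat R$ (only $H^0$ is a finite $\widehat R$-algebra), but this does not affect the argument.
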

When $R$ is complete, notice that $R = \widehat{R}$ and $\myB^0_S = \hat\myB^0_S$.
\begin{proof}    
    By and using the notation of \autoref{cor:main-lifting-non-complete version}, we have a surjection 
  \[
    \hat\myB^0_S(X, \Delta, \sO_X) \twoheadrightarrow \myB^0(S^{\nm}_{\widehat{R}}, \Delta_{S^{\nm}_{\widehat{R}}}; \sO_{S^{\nm}_{\widehat{R}}}).
  \]
  Notice that $\hat\myB^0_S(X,S+B; \sO_X) \subseteq H^0(X_{\widehat{R}}, \sO_{X_{\widehat{R}}}) = \widehat{R}$.   
  
  First suppose that $(S^{\nm}, \Delta_{S^{\nm}})$ is globally $\bigplus$-regular.  Then so is the base change to the completion $(S^{\nm}_{\widehat{R}}, \Delta_{S^{\nm}_{\widehat{R}}})$ by \autoref{cor.GlobalBRegularEqualsCompletely}.  Notice that a priori,  $S^{\nm}_{\widehat{R}}$ may have even more components than $S^{\nm}$ since if $S_i^{\nm}$ is such a component of $S^{\nm}$, we may have that $H^0(S_i^{\nm}, \sO_{S_i^{\nm}})$ is only semilocal.  However, this will not cause a problem for us; $S^{\nm}$ already perhaps had multiple components.
  
  Regardless, $\myB^0(S^{\nm}_{\widehat{R}}, \Delta_{\widehat{R}}; \sO_{X_{\widehat{R}}}) = H^0(S^{\nm}_{\widehat{R}}, \sO_{S^{\nm}_{\widehat{R}}})$.  Our surjectivity then implies that $\hat\myB^0_S(X, \Delta, \sO_X)$ must contain an element $z$ of $H^0(X_{\widehat{R}}, \sO_{X_{\widehat{R}}}) = \widehat{R}$ mapping to $1 \in H^0(S^{\nm}_{\widehat{R}}, \sO_{S^{\nm}_{\widehat{R}}})$.  But such a section $z$ is necessarily a unit of $\widehat{R}$ and so $\hat\myB^0_S(X,S+B; \sO_X) = \widehat{R} = H^0(X_{\widehat{R}}, \sO_{X_{\widehat{R}}})$.  Hence $(X, \Delta)$ is purely globally $\bigplus$-regular along $S$.

  Conversely, if $(X, S+B)$ is purely globally $\bigplus$-regular then $\hat\myB^0_S(X,S+B; \sO_X)$ contains a unit, and hence so does its image $\myB^0(S^{\nm}_{\widehat{R}}, \Delta_{S^{\nm}_{\widehat{R}}}; \sO_{S^{\nm}_{\widehat{R}}}) \subseteq H^0(S^{\nm}_{\widehat{R}}, \sO_{S^{\nm}_{\widehat{R}}})$.  Thus $(S^{\nm}, \Delta_{S^{\nm}})$ is globally $\bigplus$-regular by \autoref{prop.GlobalBRegularSplits}.  
    \end{proof}

\begin{corollary}
    \label{cor.PurelyGlobally+RegularVsCompletelyAssumeBigAndSemiample}
    Let $X \to \Spec R$ be a proper morphism of schemes such that $H^0(X, \sO_X) = R$ satisfies \autoref{setting:lifting-section}.  Suppose that $(X, S+B)$ is a pair where $S$ and $B$ have no common components and $S$ is reduced.  Finally, assume that $-K_X - S - B$ is big and semiample over $\Spec R$.  Then $(X, S+B)$ is purely globally $\bigplus$-regular along $S$ if and only if it is completely purely globally $\bigplus$-regular  over $R$ along $S$.
\end{corollary}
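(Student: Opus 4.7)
The plan is to apply inversion of adjunction \autoref{cor.inversion-of-B-adjunction} on both sides of the desired equivalence, reducing the statement to the analogous (already established) result \autoref{cor.GlobalBRegularEqualsCompletely} for globally $\bigplus$-regular pairs on $S^{\mathrm{N}}$.

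First, by \autoref{lem.PurelyGloballyPlusRegularIsLocalOnTheBase}, I may assume $R$ is local, in which case completely purely globally $\bigplus$-regular simply means $(X_{\widehat{R}}, (S+B)_{\widehat{R}})$ is purely globally $\bigplus$-regular. Let $\nu\colon S^{\mathrm{N}} \to S$ be the normalization and $\Delta_{S^{\mathrm{N}}}$ the different. The hypothesis that $-K_X - S - B$ is big and semi-ample puts us in the setting of \autoref{cor.inversion-of-B-adjunction}, yielding
\[
(X, S+B) \text{ purely globally } \bigplus\text{-regular} \;\Longleftrightarrow\; (S^{\mathrm{N}}, \Delta_{S^{\mathrm{N}}}) \text{ globally } \bigplus\text{-regular}.
\]
Since $S^{\mathrm{N}} \to \Spec R$ is proper, \autoref{cor.GlobalBRegularEqualsCompletely} converts the right-hand side into the corresponding statement after base change, namely that $(S^{\mathrm{N}}_{\widehat{R}}, (\Delta_{S^{\mathrm{N}}})_{\widehat{R}})$ is globally $\bigplus$-regular.

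Finally, I apply \autoref{cor.inversion-of-B-adjunction} to the base-changed pair $(X_{\widehat{R}}, (S+B)_{\widehat{R}})$. All hypotheses are preserved by the flat map $R \to \widehat{R}$: $H^0(X_{\widehat{R}}, \sO_{X_{\widehat{R}}}) = \widehat{R}$ by flat base change, $S_{\widehat{R}}$ remains a reduced divisor with no common components with $B_{\widehat{R}}$ (using that $R \to \widehat{R}$ has geometrically regular, hence geometrically reduced, fibers), and bigness and semi-ampleness of $-K_{X_{\widehat{R}}} - (S+B)_{\widehat{R}}$ are preserved. The scheme $X_{\widehat{R}}$ may fail to be integral, but the conventions of \autoref{rem.Globall+RegularForNonIntegralX} and \autoref{rem.NonintegralB^0} handle this component by component, exactly as in \autoref{cor.inversion-of-B-adjunction}.

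The one place that requires a small verification is the identification $(S_{\widehat{R}})^{\mathrm{N}} = (S^{\mathrm{N}})_{\widehat{R}}$ along with the compatibility of differents $\Delta_{(S_{\widehat{R}})^{\mathrm{N}}} = (\Delta_{S^{\mathrm{N}}})_{\widehat{R}}$. The former holds because normalization commutes with flat base change by maps with geometrically regular fibers (the same reasoning used in \autoref{lem:properties-of-plt}), and the latter then follows from the former by the definition of the different via adjunction on a log resolution, since log resolutions are stable under this type of base change by \autoref{lem.BaseChangeOfResolution}. Concatenating the three equivalences above produces the conclusion. The main (and quite mild) obstacle is exactly this compatibility of normalization and of the different with the completion map $R \to \widehat{R}$; once this is in hand, the argument is a direct three-step chain of equivalences.
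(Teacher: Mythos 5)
Your proof is correct and follows essentially the same route as the paper's: apply inversion of adjunction (\autoref{cor.inversion-of-B-adjunction}) to reduce to global $\bigplus$-regularity of $(S^{\mathrm{N}},\Delta_{S^{\mathrm{N}}})$, invoke \autoref{cor.GlobalBRegularEqualsCompletely} to pass to the completion, and apply inversion of adjunction again to the base-changed pair. The only difference is that you spell out the compatibility of normalization and the different with the completion $R\to\widehat R$, which the paper leaves implicit.
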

Note that the assumptions of this corollary are satisfied when $X = \Spec R$.
\begin{proof}
    By \autoref{cor.inversion-of-B-adjunction}, we see that $(X, S+B)$ is purely globally $\bigplus$-regular if and only if $(S^{\nm}, \Delta_{S^{\nm}})$ is globally $\bigplus$-regular.  
    That is equivalent to $(S^{\nm}_{\widehat R}, \Delta_{S^{\nm}_{\widehat{R}}})$ being globally $\bigplus$-regular by \autoref{cor.GlobalBRegularEqualsCompletely}.  
    Hence applying \autoref{cor.inversion-of-B-adjunction} again, we see that this is equivalent to $(X_{\widehat R}, (S+B)_{\widehat{R}})$ being purely globally $\bigplus$-regular as desired.
\end{proof}

\begin{proposition}[Global to local]
\label{prop.GlobalPureBRegularPLT}
    Suppose $X$ is a normal Noetherian excellent integral scheme with a dualizing complex and where every closed point of $X$ has positive characteristic residue field.  Further suppose that $(X, S+B)$ is purely globally $\bigplus$-regular for a reduced divisor $S$ and that $K_X + S+B$ is $\bQ$-Cartier.  Then $(X, S+B)$ is plt.
\end{proposition}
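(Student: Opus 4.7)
The plan is to reduce to the affine local case and then combine two ingredients already available in the paper: the klt conclusion for slightly perturbed boundaries, and the inversion-of-adjunction identification in \autoref{cor.inversion-of-B-adjunction}.

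Since plt is a local condition on $X$, by \autoref{lem.PurelyGloballyPlusRegularIsLocalOnTheBase} we may replace $X$ by $\Spec \sO_{X,x}$ at a closed point $x$ and thus assume $X = \Spec R$ with $R$ a normal excellent local domain having a dualizing complex and positive residue characteristic. I would then invoke \autoref{lem.pureBregularBox} to see that $(X, aS+B)$ is globally $\bigplus$-regular for every $0 \leq a < 1$, and hence klt by \autoref{prop.GlobalBRegularRationalKLT}. This has two immediate consequences. First, $\lfloor aS+B\rfloor = 0$ for every such $a$, so $B$ has coefficients strictly less than one and hence $S+B$ has coefficients at most one. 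Second, for any proper birational $g \colon Y' \to X$ from a normal $Y'$ and any exceptional prime divisor $E$ on $Y'$ with $g(E) \not\subseteq S$, the identity
\[
(S+B)_{Y'} - (aS+B)_{Y'} = (1-a)\,g^*S
\]
forces $\mathrm{mult}_E(g^*S)=0$, and therefore $\mathrm{mult}_E((S+B)_{Y'}) = \mathrm{mult}_E((aS+B)_{Y'}) < 1$. This establishes the required plt discrepancy inequality for every exceptional divisor whose center does not lie in $S$.

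The remaining case is an exceptional $E$ with $g(E) \subseteq S$. Here I would apply \autoref{cor.inversion-of-B-adjunction} with $M = 0$: its hypotheses hold in our affine local setup since the identity $X \to \Spec R$ is proper, $R = H^0(X,\sO_X)$ satisfies \autoref{setting:lifting-section}, and $-(K_X+S+B)$ is trivially big and semi-ample over $\Spec R$ (the generic fiber being a point). The corollary gives that $(S^{\mathrm N}, \Delta_{S^{\mathrm N}})$ is globally $\bigplus$-regular, hence klt by \autoref{prop.GlobalBRegularRationalKLT}. The sought inequality $\mathrm{mult}_E((S+B)_{Y'}) < 1$ should then follow by the standard adjunction-and-different comparison: on a sufficiently fine common birational model where both $E$ and the strict transform $S_{Y'}'$ appear and meet transversally, the restriction of $K_{Y'} + (S+B)_{Y'} - S_{Y'}'$ to $S_{Y'}'$ pushes forward to $K_{S^{\mathrm N}} + \Delta_{S^{\mathrm N}}$, and its coefficient along any component of $E \cap S_{Y'}'$ equals $\mathrm{mult}_E((S+B)_{Y'})$; klt on $S^{\mathrm N}$ then forces this coefficient to be $< 1$.

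The main obstacle is the last, adjunction step: since log resolutions are not available in arbitrary dimension in the excellent mixed-characteristic setting, one cannot simply invoke a snc model but must work with an arbitrary birational model refined enough that components of $E \cap S_{Y'}'$ define divisorial valuations exceptional over $S^{\mathrm N}$. Moreover, since $(X, S+B)$ is only assumed purely $\bigplus$-regular (rather than plt) a priori, the components of $S$ may intersect, so one must first pass to the disjoint decomposition $S^{\mathrm N} = \bigsqcup_i S_i^{\mathrm N}$ via \autoref{lem:properties-of-plt} and carry out the different/adjunction comparison component by component.
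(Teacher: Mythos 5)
Your reduction to the local case and your treatment of exceptional divisors whose center is \emph{not} contained in $S$ are fine: \autoref{lem.pureBregularBox} plus \autoref{prop.GlobalBRegularRationalKLT} do give that $(X,aS+B)$ is klt for all $a<1$, which yields $\lfloor B\rfloor=0$, coefficients at most one, and discrepancy $>-1$ for every exceptional $E$ with $g(E)\not\subseteq S$. The gap is in the remaining case $g(E)\subseteq S$. What you call a ``standard adjunction-and-different comparison'' is in fact the hard direction of inversion of adjunction for plt pairs: the implication ``$(S^{\mathrm N},\Delta_{S^{\mathrm N}})$ klt $\Rightarrow$ $(X,S+B)$ plt near $S$'' is a genuine theorem, not a computation. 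The concrete failure of your sketch is that an exceptional divisor $E$ with center inside $S$ need not meet the strict transform $S'_{Y'}$ on any model where $E$ appears (e.g.\ blow up a point of $S$, then blow up a point of the first exceptional divisor away from the strict transform of $S$); so $\mathrm{mult}_E((S+B)_{Y'})$ simply cannot be read off by restricting to $S'_{Y'}$. Bridging this requires a connectedness-lemma type argument, which in turn needs vanishing or the MMP — neither is available at this point of the paper, and the paper's own inversion of adjunction for discrepancies (\autoref{cor:lc_inversion_of_adjunction}) is only proved much later, in dimension three, using the MMP. Note also that \autoref{cor.inversion-of-B-adjunction} is an equivalence of \emph{global $\bigplus$-regularity} statements; converting ``$(S^{\mathrm N},\Delta_{S^{\mathrm N}})$ globally $\bigplus$-regular'' back into the plt discrepancy condition on $X$ is essentially the proposition you are trying to prove.

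The paper's proof avoids all of this by going through alterations rather than through the divisor $S$. After completing at a closed point (\autoref{cor.PurelyGlobally+RegularVsCompletelyAssumeBigAndSemiample}), pure global $\bigplus$-regularity says $\myB^0_S(X,S+B;\sO_X)=H^0(X,\sO_X)$, and \autoref{lem.B0AlongDAsInverseLimit} upgrades the defining intersection from finite covers to all alterations. Applying this to an arbitrary projective birational $f\colon Y\to X$, surjectivity of the trace map on $H^0(Y,\sO_Y(K_Y+S_Y-\lceil f^*(K_X+S+B)\rceil))$ forces $\lceil K_Y+S_Y-f^*(K_X+S+B)\rceil$ to be effective and exceptional (\autoref{lemma:pushforward}), which is verbatim the plt condition — including for exceptional divisors centered inside $S$, with no adjunction needed. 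If you want to salvage your strategy, the missing ingredient is exactly this passage from finite covers to birational modifications; without it, the case $g(E)\subseteq S$ is not covered.
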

\begin{proof}
    Choose $x$ a closed point and let $R = \widehat{\sO_{X,x}}$.  By \autoref{cor.PurelyGlobally+RegularVsCompletelyAssumeBigAndSemiample} we may assume $X = \Spec R$. 
    By \autoref{lem.B0AlongDAsInverseLimit}, the map induced by Grothendieck duality:
    \[
        H^0(Y, \sO_Y( K_Y + S_Y {- \lceil  f^* (K_X+ S+B)\rceil})) \to H^0(X, \sO_X)
    \]
    is surjective for every projective birational morphism $f \colon Y \to X$ from a normal integral scheme $Y$. This is the case exactly when $\lceil K_Y + S_Y - f^* (K_X+ S+B) \rceil$ is effective and exceptional over $X$ (cf.\ \autoref{lemma:pushforward}), which, in turn, is equivalent to the requirement that $\lfloor B \rfloor = 0$ and that all the exceptional divisors on $Y$ have discrepancy greater than $-1$.  As this is true for every projective birational morphism, $(X,\Delta)$ is plt.
\end{proof}

Our result also implies a surjectivity of $H^0$ under certain hypotheses, which implies that $S^{\nm}$ is connected.  Also compare with \cite[Theorem 5.48]{KollarMori} and \cite[5.7]{ShokurovThreeDimensionalLogFlips}.

\begin{corollary} \label{cor:lifting_from_BCM-regular} 
    Assume $X \to \Spec R$ is proper where $R$ satisfies \autoref{setting:lifting-section} and such that $H^0(X, \sO_X) = R$.
  Suppose that $(S^{\nm},\Delta_{S^{\nm}})$ is globally $\bigplus$-regular (in the sense of \autoref{rem.Globall+RegularForNonIntegralX} if $S$ is not integral) {and $M-K_X - \Delta$ is big and semiample.} Then
  \[
    H^0(X,\sM) \to H^0(S^{\nm}, \sM|_{S^{\nm}})
  \]
  is surjective.  As a consequence, if additionally $-K_X - \Delta$ is big and semiample, then $S^{\nm}_{\widehat{R}}$ is connected and thus integral and thus so is $S^{\nm}$.   \end{corollary}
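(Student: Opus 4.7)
The plan is to combine the section lifting result \autoref{cor:main-lifting-non-complete version} with the basic characterization of global $\bigplus$-regularity. First I would invoke that corollary -- noting $H^0(X,\sO_X)=R$ and that $M-K_X-\Delta$ is big and semi-ample -- to obtain the surjection
\[
  \hat\myB^0_S(X,\Delta;\sM) \twoheadrightarrow \myB^0\bigl(S^{\mathrm N}_{\widehat R}, \Delta_{S^{\mathrm N}_{\widehat R}}; \sM|_{S^{\mathrm N}_{\widehat R}}\bigr).
\]
Next I would identify the right-hand side with all of $H^0(S^{\mathrm N}_{\widehat R},\sM|_{S^{\mathrm N}_{\widehat R}})$: since $(S^{\mathrm N},\Delta_{S^{\mathrm N}})$ is globally $\bigplus$-regular, \autoref{cor.GlobalBRegularEqualsCompletely} transports this to the base change along $R\to\widehat R$, and then \autoref{lem.B0EqualsH0ForGlobally+Regular} (applied over the complete base $\widehat R$) yields the identification. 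Combined with the tautological inclusion $\hat\myB^0_S(X,\Delta;\sM)\subseteq H^0(X,\sM)\otimes_R\widehat R$, this produces a surjection $H^0(X,\sM)\otimes_R\widehat R \twoheadrightarrow H^0(S^{\mathrm N},\sM|_{S^{\mathrm N}})\otimes_R\widehat R$, which descends to the required surjection over $R$ by faithful flatness of $R\to\widehat R$.

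For the consequence about integrality, I would apply the first statement with $\sM=\sO_X$, so that the hypothesis $M-K_X-\Delta$ big and semi-ample becomes the additional assumption on $-K_X-\Delta$. Tensoring the resulting surjection $R \twoheadrightarrow H^0(S^{\mathrm N},\sO_{S^{\mathrm N}})$ with $\widehat R$ yields a surjection $\widehat R \twoheadrightarrow H^0(S^{\mathrm N}_{\widehat R},\sO_{S^{\mathrm N}_{\widehat R}})$. Write $S^{\mathrm N}_{\widehat R}=\bigsqcup_i V_i$ as the disjoint union of its finitely many connected components; since $R$ is excellent, completion preserves normality, so each $V_i$ is normal and connected, hence integral. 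Thus each $H^0(V_i,\sO_{V_i})$ is a finite $\widehat R$-algebra that is a domain, and hence local, as $\widehat R$ is complete local (in particular Henselian). A surjection from the local ring $\widehat R$ onto the finite product $\prod_i H^0(V_i,\sO_{V_i})$ of nonzero local rings is only possible when there is a single factor, so $S^{\mathrm N}_{\widehat R}$ is connected and (by normality) integral. Finally, any decomposition $S^{\mathrm N}=W_1\sqcup W_2$ into nonempty pieces would base change, via the faithfully flat map $R\to\widehat R$, to a nontrivial decomposition of $S^{\mathrm N}_{\widehat R}$; this contradicts what we just proved, so $S^{\mathrm N}$ itself is connected and hence integral.

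There is no substantial obstacle in the present proof: the main content is already in \autoref{thm:main-lifting} and its non-complete avatar \autoref{cor:main-lifting-non-complete version} (which in turn rely on the vanishing theorems of \autoref{sec.Vanishing}). The present corollary is a formal consequence -- it promotes that surjectivity via the $\myB^0=H^0$ characterization of global $\bigplus$-regularity, and then combines it with the short local-algebra argument for connectedness above.
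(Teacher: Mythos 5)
Your argument is correct and follows essentially the same route as the paper: apply \autoref{cor:main-lifting-non-complete version}, identify $\myB^0$ on $S^{\mathrm{N}}_{\widehat R}$ with $H^0$ via global $\bigplus$-regularity, and descend by faithful flatness of $R \to \widehat R$; the connectedness step is likewise the paper's observation that $H^0(S^{\mathrm N}_{\widehat R},\sO)$ is a quotient of the local domain $\widehat R$, just phrased through the decomposition into components.
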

\begin{proof}
    By \autoref{cor:main-lifting-non-complete version}, the map 
    \[
        \hat\myB^0_S(X,\Delta; \sM) \to \myB^0(S^{\nm}_{\widehat{R}}, \Delta_{S^{\nm}_{\widehat{R}}}; \sM|_{S^{\nm}_{\widehat{R}}})
    \]
    is surjective.  By \autoref{prop.GlobalBRegularSplits}, we know that $\myB^0(S^{\nm}_{\widehat{R}}, \Delta_{S^{\nm}_{\widehat{R}}}; \sM|_{S^{\nm}_{\widehat{R}}}) = H^0(S^{\nm}_{\widehat{R}}, \sM|_{S^{\nm}_{\widehat{R}}})$ and so since $\myB^0_S(X_{\widehat{R}},S^{\nm}_{\widehat{R}}+\Delta_{\widehat{R}};\sM_{\widehat{R}}) \subseteq H^0(X_{\widehat{R}},\sM_{\widehat{R}})$, we obtain that $H^0(X_{\widehat{R}},\sM_{\widehat{R}}) \to H^0(S^{\nm}_{\widehat{R}}, \sM|_{S^{\nm}_{\widehat{R}}})$ is surjective.  Thus $H^0(X,\sM) \to H^0(S^{\nm}, \sM|_{S^{\nm}})$ is surjective as well, since $\widehat{R}$ is faithfully flat over $R$.

    For the statement that $S^{\nm}_{\widehat{R}}$ is connected, notice that we have that $H^0(X_{\widehat{R}}, \sO_{X_{\widehat{R}}}) \to H^0(S^{\nm}_{\widehat{R}}, \sO_{S^{\nm}_{\widehat{R}}}) =: A$ surjects and that $H^0(X_{\widehat{R}}, \sO_{X_{\widehat{R}}}) = \widehat{R}$ is a local domain.  Thus $A$ is a normal local ring as well.  This implies that $A$ is an integral domain.  On the other hand $S^{\nm}_{\widehat{R}}$ is a disjoint union of normal integral schemes say $\coprod S_i$.  Thus 
    $A = \prod H^0(S_i, \sO_{S_i})$ is a product of domains, and so cannot be a domain itself unless there is only one $S_i$, meaning that $S^{\nm}_{\widehat{R}}$ is connected and integral as desired.
\end{proof}

In fact, we frequently also have that $S$ is normal.  

\begin{corollary}
  \label{cor.NormalityOfS}
  Suppose $(X, S+B)$ is a pair where $K_X + S + B$ is $\bQ$-Cartier, $S$ is reduced, and $B \geq 0$ is a $\bQ$-divisor with no common components with $S$. We assume that all closed points of $X$ are of positive characteristic. 
  Let $\Delta_{S^{\nm}}$ denote the different of $K_X + S +B$ on $S^{\nm}$ with respect to $(X,S+B)$.

  Suppose that  $(S^{\nm}, \Delta_{S^{\nm}})$ is globally $\bigplus$-regular or that $(X, S+B)$ is purely globally $\bigplus$-regular along $S$.   In either case, $S$ is normal.  
\end{corollary}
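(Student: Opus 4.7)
The plan: normality of $S$ is a local question on $S$, so I would reduce to the case $X = \Spec R$ with $(R,\m)$ local, and then force the inclusion $\sO_S \hookrightarrow \nu_*\sO_{S^{\mathrm N}}$ to be an isomorphism by invoking the section-lifting result \autoref{cor:lifting_from_BCM-regular} with $\sM = \sO_X$.

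In more detail: fix a closed point $s \in S$ and let $x \in X$ be the corresponding point. Replacing $X$ by $\Spec \sO_{X,x}$ preserves all hypotheses (the localization statement for purely globally $\bigplus$-regular pairs is \autoref{lem.PurelyGloballyPlusRegularIsLocalOnTheBase}, and the analogous localization for the global $\bigplus$-regularity of $(S^{\mathrm N}, \Delta_{S^{\mathrm N}})$ follows from \autoref{lem.GloballyPlusRegularIsLocalOnTheBase}). So we may assume $X = \Spec R$ with $R$ excellent normal local with a dualizing complex and positive characteristic residue field; in particular $R$ satisfies \autoref{setting:lifting-section}. Viewing $X$ as proper over itself via $\mathrm{id}_X$, the $\bQ$-Cartier divisor $-K_X - \Delta$ is automatically both big (relative bigness is vacuous when the relative dimension is zero, so any $\bQ$-Cartier divisor qualifies) and semi-ample (every line bundle on a local ring is free, hence globally generated after clearing denominators). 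Thus the hypotheses of \autoref{cor.inversion-of-B-adjunction} and \autoref{cor:lifting_from_BCM-regular} are met.

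In case (i), \autoref{cor.inversion-of-B-adjunction} promotes the purely global $\bigplus$-regularity of $(X, S+B)$ to the global $\bigplus$-regularity of $(S^{\mathrm N}, \Delta_{S^{\mathrm N}})$; in case (ii) this is the hypothesis. Applying \autoref{cor:lifting_from_BCM-regular} with $\sM = \sO_X$ (so that $M = 0$ and $M - K_X - \Delta = -K_X - \Delta$ is big and semi-ample as above), the restriction map
\[
    R \;=\; H^0(X, \sO_X) \;\twoheadrightarrow\; H^0(S^{\mathrm N}, \sO_{S^{\mathrm N}})
\]
is surjective. This map factors as $R \twoheadrightarrow H^0(S, \sO_S) \hookrightarrow H^0(S^{\mathrm N}, \sO_{S^{\mathrm N}})$, where the second arrow is injective because $S$ is reduced and $\nu \colon S^{\mathrm N} \to S$ is finite birational. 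Surjectivity of the composite then forces the inclusion $H^0(S, \sO_S) \hookrightarrow H^0(S^{\mathrm N}, \sO_{S^{\mathrm N}})$ to be an isomorphism; by finiteness of $\nu$ and affineness of $S$ this promotes to $\sO_S = \nu_* \sO_{S^{\mathrm N}}$, i.e., $S$ is normal at $s$.

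The only genuinely delicate point is reading off the paper's definitions of relative bigness and semi-ampleness in the degenerate case of the identity morphism on an affine local scheme; once one accepts that these conditions are vacuous for $\bQ$-Cartier divisors there, the proof is a direct application of the lifting and adjunction machinery built up in the preceding sections. A secondary subtlety is that $S$ may fail to be integral, but both \autoref{cor.inversion-of-B-adjunction} and \autoref{cor:lifting_from_BCM-regular} are stated to handle this case (via \autoref{rem.NonintegralB^0}), so no extra work is required.
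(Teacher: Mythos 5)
Your proof is correct and follows essentially the same route as the paper's: localize at a closed point, use \autoref{cor.inversion-of-B-adjunction} to reduce both cases to global $\bigplus$-regularity of $(S^{\mathrm N},\Delta_{S^{\mathrm N}})$, note that $-(K_X+S+B)$ is automatically big and semi-ample in the local setting, and conclude via the surjectivity of $\sO_X \to \sO_{S^{\mathrm N}}$ from \autoref{cor:lifting_from_BCM-regular}. The only cosmetic difference is that the paper also passes to the completion $\widehat{R}$ before invoking the lifting result, which is unnecessary as you observe since \autoref{cor:lifting_from_BCM-regular} is stated for non-complete local bases.
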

\begin{proof}    
    
    Fix a closed point $x \in S \in X$.  It suffices to show that $\sO_{S,x}$ is normal (note that such localizations still imply the various pairs are globally $\bigplus$-regular).  Thus we assume that $X = \Spec \sO_{X,x} = \Spec R$.
    
    In view of \autoref{cor.inversion-of-B-adjunction}, in either case we have that $(S^{\nm}, \Delta_{S^{\nm}})$ is globally $\bigplus$-regular.  Observe that $\Big(S^{\nm}_{\widehat{R}}, \Delta_{S^{\nm}}|_{S^{\nm}_{\widehat{R}}}\Big)$ is still globally $\bigplus$-regular by \autoref{cor.GlobalBRegularEqualsCompletely}.  Now, if $S_{\widehat{R}}$ is normal then so is $S$, so we may assume that $R = \widehat{R}$, $X = \Spec \widehat{R}$ and observe that $-(K_X + S + B)$ is big and semiample since we are working locally.
    But now by \autoref{cor:lifting_from_BCM-regular} we see that the composition $\sO_X \to \sO_{S} \to \sO_{S^{\nm}}$ is a surjection, implying that $S$ is normal.  
          \end{proof}

Note in the above we needed to assume every closed point has positive characteristic residue field since we do not believe that the globally $\bigplus$-regular hypothesis necessarily implies plt without it, see \autoref{rem.CharacteristicZeroSplinterNotAsGood} for some additional discussion.  

\begin{corollary}
  \label{cor.IntegralityOfS}
  Suppose that $X \to \Spec R$ is proper where $H^0(X, \sO_X) = R$ and $R$ satisfies \autoref{setting:lifting-section}.  
  Next assume that $(X, S+B)$ is a purely globally $\bigplus$-regular (or completely purely globally $\bigplus$-regular over $R$) pair along $S$ and $-K_X - S - B$ is big and semiample.  Then $S$ is normal and integral.
\end{corollary}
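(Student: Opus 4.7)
The plan is to combine three earlier results: \autoref{cor.inversion-of-B-adjunction} to pass from the pure global $\bigplus$-regularity of $(X,S+B)$ to the global $\bigplus$-regularity of $(S^{\mathrm{N}},\Delta_{S^{\mathrm{N}}})$; \autoref{cor.NormalityOfS} to obtain normality of $S$; and \autoref{cor:lifting_from_BCM-regular} to obtain connectedness of $S_{\widehat R}$, which will give integrality.

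First, since $-K_X-S-B$ is big and semi-ample, \autoref{cor.PurelyGlobally+RegularVsCompletelyAssumeBigAndSemiample} shows that the two hypotheses ``purely globally $\bigplus$-regular along $S$'' and ``completely purely globally $\bigplus$-regular along $S$'' coincide here, so we may work with either. Applying \autoref{cor.inversion-of-B-adjunction} (which requires precisely the big and semi-ample hypothesis on $-K_X-S-B$) then yields that the pair $(S^{\mathrm{N}},\Delta_{S^{\mathrm{N}}})$ is globally $\bigplus$-regular. An immediate application of \autoref{cor.NormalityOfS} then gives normality of $S$, so $S=S^{\mathrm{N}}$; in particular $S$ is a disjoint union of its normal integral components.

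To upgrade normality to integrality, I would apply \autoref{cor:lifting_from_BCM-regular} with $\sM=\sO_X$: here $M=0$ so $M-K_X-\Delta = -K_X-S-B$ is big and semi-ample by hypothesis, and we just showed that $(S^{\mathrm N},\Delta_{S^{\mathrm{N}}})$ is globally $\bigplus$-regular. The ``consequence'' part of that corollary then asserts that $S^{\mathrm N}_{\widehat R}=S_{\widehat R}$ is connected. Since $R \to \widehat R$ is faithfully flat, a decomposition $S = S_1\sqcup S_2$ into nonempty open-closed pieces would give the disconnection $S_{\widehat R} = (S_1)_{\widehat R}\sqcup (S_2)_{\widehat R}$; hence $S$ itself must be connected. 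A connected normal scheme is integral, so $S$ is integral, as desired.

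The only subtle step is Step~3 (the appeal to \autoref{cor:lifting_from_BCM-regular}); the hypothesis ``$H^0(X,\sO_X)=R$ satisfies \autoref{setting:lifting-section}'' is needed exactly to invoke that corollary, which in turn relies on the vanishing results of \autoref{sec.Vanishing}. No additional argument or auxiliary construction is required beyond chaining these three previously established corollaries in the correct order.
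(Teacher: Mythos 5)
Your proposal is correct and follows essentially the same route as the paper: reduce to the complete case via \autoref{cor.PurelyGlobally+RegularVsCompletelyAssumeBigAndSemiample}, deduce global $\bigplus$-regularity of $(S^{\mathrm N},\Delta_{S^{\mathrm N}})$ from \autoref{cor.inversion-of-B-adjunction}, get normality from \autoref{cor.NormalityOfS}, and connectedness (hence integrality) from \autoref{cor:lifting_from_BCM-regular}. Your explicit faithfully-flat descent of connectedness from $S_{\widehat R}$ to $S$ is a harmless elaboration of what the cited corollary already asserts.
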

\begin{proof}
    Since $X$ is proper over $R$, every closed point of $X$ has positive characteristic residue field (since they must all map to the closed point of $\Spec R$).  We may now replace $R$ by its completion by \autoref{cor.PurelyGlobally+RegularVsCompletelyAssumeBigAndSemiample} since if $S_{\widehat{R}}$ is normal and integral so is $S$.  
  We see that $(S^{\nm}, \Delta_{S^{\nm}})$ is globally $\bigplus$-regular by \autoref{cor.inversion-of-B-adjunction} hence $S$ is normal by \autoref{cor.NormalityOfS}.  Furthermore, $S$ is connected by \autoref{cor:lifting_from_BCM-regular}.  This proves that $S$ is integral.
\end{proof}

As a corollary, we also recover the standard global generation result on Seshadri constants \cite{DemaillyANumericalCriterionForVeryAmple} (cf.\ \autoref{ss:seshadri-constants}, \cite[Chapter 5]{LazarsfeldPositivity1}). 
\begin{theorem} \label{theorem:seshadri}
	Let $(X,B)$ be a pair of dimension $n$ {proper} over $\Spec R$ where $R$ is Noetherian complete local and has positive characteristic residue field.  Let $x \in X$ be a closed point such that at the point $x$, $(X,B)$ is klt, $X$ is regular, and $\Supp B$ is simple normal crossing. Let $M$ be a Cartier divisor with $\sM = \sO_X(M)$ such that $A := M-(K_X+B)$ is big and semiample.  Further suppose that $\epsilon_{\mathrm{sa}}(A;x)>a(E,X,B)$ where $a(E,X,B)$ is the log discrepancy of (X,B) along the exceptional divisor $E$ of the blow-up $\pi \colon X' \to X$ at $x$.  
	
	Then $\myB^0(X,B;\sM)$ globally generates $\sM$ at $x$. In particular, $x$ is not a base point of $|M|$. 
\end{theorem}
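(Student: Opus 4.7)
The strategy is to reduce the global generation question at $x$ to a lifting problem on the blow-up $\pi \colon X' \to X$ of $x$ with exceptional divisor $E \cong \bP^{n-1}_{\kappa(x)}$, exploiting that $\kappa(x)$ has positive characteristic. Write $a := a(E,X,B) = n - \mult_x B$, so that $K_{X'} + \pi^{-1}_*B - (a-1)E = \pi^*(K_X+B)$ (using that $X$ is regular and $B$ is snc at $x$). By hypothesis I choose a rational $t$ with $a < t < \epsilon_{\mathrm{sa}}(A;x)$, so $\pi^*A - tE$ is big and semi-ample; taking $k$ sufficiently divisible, the system $|k(\pi^*A - tE)|$ is base-point-free, and our Bertini theorem (\autoref{thm.FinalBertini}, \autoref{log_bertini}) produces a general $F$ that avoids $E$, cuts out a regular degree-$kt$ hypersurface $F|_E \in |\sO_{\bP^{n-1}}(kt)|$, and has snc support with $\pi^{-1}_*B$ and $E$ in a neighborhood of $E$. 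Set $H := F/k \sim_{\bQ} \pi^*A - tE$.

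The key $\bQ$-linear equivalence on $X'$ is
\[
    \pi^*M \sim_{\bQ} K_{X'} + \pi^{-1}_*B + E + \eta H + (\pi^*A - aE - \eta H)
\]
for rational $\eta > 0$. Choosing $\eta$ small enough that $\eta t < a$ and $(a - \eta t)/(1-\eta) < \epsilon_{\mathrm{sa}}(A;x)$, the residue $\pi^*A - aE - \eta H \sim_{\bQ} (1-\eta)\bigl(\pi^*A - \tfrac{a-\eta t}{1-\eta}E\bigr)$ is big and semi-ample; moreover, snc at $x$ together with generality of $F$ forces $(X', \pi^{-1}_*B + E + \eta H)$ to be plt in a neighborhood of $E$, with $E$ as the unique non-klt divisor above $x$. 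Adjunction along $E$ gives a klt pair $(E, \Delta_E)$ with $\Delta_E$ the different, and using $\pi^*M|_E = \sO_E$, $\pi^*A|_E = 0$ and $-E|_E \cong \sO_{\bP^{n-1}}(1)$, I compute $-(K_E + \Delta_E) \sim_{\bQ} \sO_{\bP^{n-1}}(a - \eta t)$, which is ample. Hence $(E, \Delta_E)$ is a klt log Fano pair on $\bP^{n-1}_{\kappa(x)}$.

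Applying \autoref{thm:main-lifting} to $(X', \pi^{-1}_*B + E + \eta H)$ with line bundle $\pi^*\sM$ and reduced divisor $S = E$ yields a surjection
\[
    \myB^0_{E}(X', \pi^{-1}_*B + E + \eta H; \pi^*\sM) \twoheadrightarrow \myB^0(E, \Delta_E; \sO_E).
\]
The main obstacle is to show that the right-hand side is nonzero, equivalently (by \autoref{prop.GlobalBRegularSplits}) that $(E, \Delta_E)$ is globally $\bigplus$-regular. For this I invoke that $\bP^{n-1}$ with a simple normal crossings klt log Fano boundary in positive characteristic is globally $F$-regular---standard via Frobenius splitting on projective space compatible with the toric boundary, together with a perturbation argument in $\eta$ to reduce to that standard form---and then conclude via \autoref{cor:BregularINpositiveCharacteristicWithoutF-finite}. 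To finish, pick $a' \in [0,1)$ with $a' \geq \max(0, 1-a)$, so that $B' := \pi^{-1}_*B + a'E + \eta H$ satisfies $B' \geq B_{X'} = \pi^{-1}_*B - (a-1)E$; combining \autoref{lem.comparison_betwen_B0_and_B0D} and \autoref{lemma-B0-under-pullbacks} yields the chain
\[
    \myB^0_{E}(X', \pi^{-1}_*B + E + \eta H; \pi^*\sM) \subseteq \myB^0(X', B'; \pi^*\sM) \subseteq \myB^0(X, B; \sM)
\]
under $H^0(X', \pi^*\sM) = H^0(X, \sM)$. Any preimage of a nonzero constant in $\myB^0(E, \Delta_E; \sO_E) \cong \kappa(x)$ is then a section of $\myB^0(X, B; \sM)$ whose image in $\sM_x/\m_x\sM_x = \kappa(x)$ is nonzero, giving the desired global generation of $\sM$ at $x$.
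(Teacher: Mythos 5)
Your proof is correct and follows essentially the same route as the paper: blow up at $x$, lift sections from $E \cong \bP^{n-1}_{k(x)}$ via \autoref{thm:main-lifting}, identify $\myB^0(E,\Delta_E;\sO_E)$ with $H^0(E,\sO_E)$ using global $F$-regularity after passing to the perfection together with \autoref{cor:BregularINpositiveCharacteristicWithoutF-finite}, and descend with \autoref{lem.comparison_betwen_B0_and_B0D} and \autoref{lemma-B0-under-pullbacks}. The only deviation is your Bertini detour through the auxiliary divisor $\eta H$, which is superfluous: the hypothesis $\epsilon_{\mathrm{sa}}(A;x)>a$ already makes $\pi^*M-(K_{X'}+E+\pi^{-1}_*B)=\pi^*A-aE$ big and semi-ample, which is all \autoref{thm:main-lifting} needs, and global $\bigplus$-regularity of $(E,B_E)$ follows from the boundary being coordinate hyperplanes with coefficients $<1$ without any log Fano condition on $E$.
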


If $X$ is nonsingular and $B = 0$, then $a(E, X,B) = \dim \sO_{X,x} = \dim X$ under our hypotheses.  Hence we recover the usual formulation of global generation via Seshadri constants: namely that $\epsilon_{\mathrm{sa}}(M-K_X; x) > \dim X$ implies that $M$ is globally generated at $x$.

\begin{proof}
Denote the log discrepancy $a(E,X,B)$ by $a$.
  By definition, $K_{X'}+\pi_*^{-1}B +(1-a)E = \pi^*(K_X+B)$. Notice that for each rational $0 \leq t < \epsilon_{\mathrm{sa}}(A;x)$ we have that $\pi^* A - tE$ is big and semiample.  Thus since $\epsilon_{\mathrm{sa}}(A) > a$, we have that 
  \[
    \pi^*M - (K_{X'}+E+\pi_*^{-1}B) = \pi^* A + K_{X'}+\pi_*^{-1}B +(1-a) E - (K_{X'}+E+\pi_*^{-1}B) = \pi^*A - aE
  \]
  is big and semiample, and so 
  \begin{equation}
    \label{theorem:seshadri.eq.B0Map}
    \myB^0_E(X', E + \pi_*^{-1}B; \pi^*\sM) \to \myB^0(E,B_E; \sO_E)
  \end{equation}
  is surjective by \autoref{thm:main-lifting}, where $K_E + B_E = (K_{X'}+E + \pi_*^{-1}B)|_E$. Notice that $E \simeq \mathbb{P}^{n-1}$ since $X$ is regular at $x$.  Furthermore, since $\Supp B$ is simple normal crossings, the components of $B$ are defined locally by part of a system of parameters of $\fram_x$.  Hence the support of $B_E$ is made up of coordinate hyperplanes, which thus remain coordinate hyperplanes even after base change of the residue field of $x$.  We would like to assert that $(E, B_E)$ is globally $F$-regular but the residue field $k(x)$ need not be $F$-finite and so neither is $E$.  However, because the coefficients of $B_E$ are $< 1$, we have that the base change of $(E, B_E)$ to the perfection of $k(x)$ is globally $F$-regular; indeed notice that the base change $B_E$ remains coordinate hyperplanes with coefficients $< 1$ and so $(E, B_E)$ is globally $F$-regular by \cite[Proposition 5.3]{SchwedeSmithLogFanoVsGloballyFRegular} (since the section ring pair is strongly $F$-regular).  Therefore $(E, B_E)$ is globally $\bigplus$-regular by \autoref{cor:BregularINpositiveCharacteristicWithoutF-finite}.  
  Thus, the right hand side of \autoref{theorem:seshadri.eq.B0Map} is equal to $H^0(E, \sO_E)$. Hence $\myB^0_E(X', E+ \pi_*^{-1}B; \pi^*\sM) \subseteq H^0(X', \pi^* \sM)$
  contains a section which does not vanish at $x$.  But now, for $1 \gg \varepsilon > 0$
  \[
    \myB^0_E(X', E+ \pi_*^{-1}B; \pi^*\sM) \subseteq \myB^0(X', (1-\varepsilon)E + \pi_*^{-1} B; \pi^* \sM ) \subseteq \myB^0(X, B; \sM).
  \]
  where first containment follows from \autoref{lem.comparison_betwen_B0_and_B0D} and the second follows from \autoref{lemma-B0-under-pullbacks}.  This completes the proof.
  \end{proof}
  
  \begin{remark}
    The condition that $B$ is simple normal crossing at $x$ was only used to guarantee that the exceptional divisor pair $(E, B_E)$ was globally $F$-regular (up to appropriate base change to make it $F$-finite).  One can weaken the simple normal crossing hypothesis if one instead assumes that $(E, B_E)$ is globally $F$-regular, the proof is unchanged.  
  \end{remark}

 \subsection{Globally \texorpdfstring{$\bigplus$}{+}-regular birational morphisms of surfaces}
{In this subsection, we give new proofs of \cite[Theorem 7.11 and Theorem G]{MaSchwedeTuckerWaldronWitaszekAdjoint}, in the case when the fixed big Cohen-Macaulay algebra is equal to $\widehat{R^+}$.  Explicitly, we show that, locally, two-dimensional klt and three-dimensional plt pairs $(X,\Delta)$ with standard coefficients and residue characteristics $p>5$ are globally $\bigplus$-regular and purely globally $\bigplus$-regular, respectively. In fact, we will show much stronger results, which we shall need in the proof of the existence of flips: that two-dimensional klt and three-dimensional plt Fano pairs are globally $\bigplus$-regular and purely globally $\bigplus$-regular relative to a birational map.}  {Our approach for proving these results is the same as in \cite{HaconWitaszekMMPp=5} which simplified the original strategy of \cite{HaconXuThreeDimensionalMinimalModel}.}

In what follows, we continue to assume \autoref{setting:lifting-section} that $(R,\fram)$ is an excellent local domain with residue characteristic $p>0$ and a dualizing complex. 

We start by stating the following lemma, which generalizes the existence of Koll\'ar's component for surfaces (c.f.\ \cite[Proof of Theorem 7.11]{MaSchwedeTuckerWaldronWitaszekAdjoint}).

\begin{lemma} \label{lemma:surface_mmp_lemma} Let $(X,B)$ be a two-dimensional klt pair admitting a projective birational map $f \colon X \to Z = \Spec R$ such that $-(K_X+B)$ is relatively nef, assuming that $R$ is as in \autoref{setting:lifting-section} and additionally has infinite residue field . Then there exist an $f$-exceptional irreducible curve $C$ on a blow-up of $X$ and projective birational maps $g \colon Y \to X$ and $h \colon Y \to W$ over $Z$ such that:
\begin{enumerate}
	\item $g$ extracts $C$ or is the identity if $C \subseteq X$,
	\item $(Y, C+ B_Y)$ is plt,
		\item $(W, C_W + B_W)$ is plt and $-(K_{W}+C_W+B_W)$ is ample over $Z$,
	\item $h^*(K_{W}+C_W+B_W) - (K_{Y}+C+B_Y) \geq 0$,
\end{enumerate} 
where $K_{Y}+bC + B_Y = g^*(K_X+B)$ for $C \not \subseteq \Supp B_Y$, $C_W := h_*C \neq 0$, and $B_W := h_*B_Y$.
\end{lemma}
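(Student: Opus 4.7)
We will construct a Kollár component (plt blow-up) of the klt singularity $(X,B) \to Z$ by running the Minimal Model Program for excellent surfaces, specifically \autoref{thm:surface-excellent-mmp} and \autoref{thm:surface-bpf-theorem}.

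\emph{Step 1: log resolution and setup.} Using \autoref{thm:proper-resolutions}, fix a log resolution $\pi \colon V \to X$ of $(X,B)$ with $\pi$-exceptional prime divisors $E_1, \ldots, E_n$. Write
\[
K_V + B_V + \sum_{i=1}^n b_i E_i \;=\; \pi^*(K_X+B),
\]
where $B_V$ is the strict transform of $B$ and $b_i < 1$ by the klt hypothesis. For small, generic positive rationals $\epsilon_i$ and an index $j_0$ to be specified, consider the plt pair $(V,\Theta)$ with $\Theta := B_V + E_{j_0} + \sum_{i \neq j_0}(b_i + \epsilon_i)E_i$, so that $\lfloor \Theta \rfloor = E_{j_0}$.

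\emph{Step 2: extracting $C$.} Run a $(K_V + \Theta)$-MMP over $X$ via \autoref{thm:surface-excellent-mmp}. Each $E_i$ with $i \neq j_0$ has positive log discrepancy $1 - b_i - \epsilon_i > 0$ over the limiting pair, so the negativity lemma \autoref{lem:negativity} forces these divisors to be contracted in finitely many steps. By choosing $j_0$ appropriately (which is possible by a combinatorial analysis of the dual graph and intersection matrix of the $E_i$'s), the strict transform $C$ of $E_{j_0}$ is not contracted. We thereby obtain $g \colon Y \to X$ whose only $g$-exceptional divisor is $C$, and $(Y,C+B_Y)$ is plt. This gives properties (1) and (2); the alternative ``$C \subseteq X$ and $g = \mathrm{id}$'' in the statement covers the degenerate case where no extraction is needed.

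\emph{Step 3: contracting to an anti-Fano model $W$.} On $Y$ we have $K_Y + C + B_Y = g^*(K_X+B) + (1-b)C$ with $1-b>0$. Since $C$ is $g$-exceptional on the birational surface $g \colon Y \to X$, the negativity lemma \autoref{lem:negativity} gives $C^2 < 0$, hence $-C$ is $g$-ample, so $-(K_Y+C+B_Y)$ is $fg$-big. Using \autoref{lemma:add_ample} to perturb $(Y,C+B_Y)$ into a plt pair with ample boundary and running a $(K_Y+C+B_Y)$-MMP over $Z$ with scaling via \autoref{thm:surface-excellent-mmp}, we obtain $h \colon Y \to W$. Because $-(K_Y+C+B_Y)$ is relatively big, we may continue this MMP until reaching a Mori fibration step, at which point $-(K_W+C_W+B_W)$ becomes ample over $Z$ (applying \autoref{thm:surface-bpf-theorem} as needed). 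Plt-ness of $(W,C_W+B_W)$ is preserved throughout a $(K+C+B)$-MMP, yielding (3). Finally, (4) follows directly by applying \autoref{lem:negativity} to $h$: the $\mathbb{Q}$-Cartier divisor $(K_Y+C+B_Y) - h^*(K_W+C_W+B_W)$ is $h$-nef (as an accumulation of exceptional contributions from steps of a $(K+C+B)$-MMP) with zero pushforward, hence anti-effective.

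\emph{Main obstacle.} The chief technical point is the choice of $j_0$ in Step 2 guaranteeing that $E_{j_0}$ survives the MMP; this is a combinatorial argument on the exceptional configuration, which is classical in characteristic zero but must be verified using Tanaka's MMP in our excellent setting. Secondarily, in Step 3 ensuring ampleness (rather than merely nefness) of $-(K_W+C_W+B_W)$ requires that we stop the MMP at the correct stage, which is enabled by the relative bigness coming from the $(1-b)C$ contribution combined with the surface base-point-free theorem \autoref{thm:surface-bpf-theorem}.
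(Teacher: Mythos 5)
There is a genuine gap in Step 2, and it is structural rather than combinatorial. With your choice $\Theta = B_V + E_{j_0} + \sum_{i \neq j_0}(b_i + \epsilon_i)E_i$ one has
\[
K_V + \Theta - \pi^*(K_X+B) \;=\; (1-b_{j_0})E_{j_0} + \sum_{i \neq j_0}\epsilon_i E_i \;=:\; F,
\]
and since $(X,B)$ is klt the coefficient $1-b_{j_0}$ is strictly positive. Hence $K_V+\Theta \equiv_X F$ with $F$ effective, $\pi$-exceptional, and supported on the \emph{entire} exceptional locus. Any output of a $(K_V+\Theta)$-MMP over $X$ carries the pushforward $F'$ of $F$, which is effective, exceptional and nef over $X$; applying \autoref{lem:negativity} to $-F'$ gives $F'=0$, i.e.\ this MMP contracts every $E_i$, including $E_{j_0}$, and terminates with $X$ itself. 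No choice of $j_0$ avoids this, because the obstruction is that $E_{j_0}$ appears with positive coefficient in the relative canonical class. A telling symptom is that your Step 2 never uses the two hypotheses that are actually essential at this point: the relative nefness of $-(K_X+B)$ and the infiniteness of the residue field.

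The missing idea is tie-breaking through a strictly log canonical auxiliary pair, which is exactly what the paper's cited source \cite[Lemma 2.8]{HaconWitaszekMMPp=5} does. Using that $-(K_X+B)$ is nef over the affine base (hence one can produce an effective $D \sim_{\bQ,Z} -(K_X+B)$, e.g.\ via \autoref{thm:surface-bpf-theorem}), let $t$ be the lc threshold of $D$ with respect to $(X,B)$; then $(X,B+tD)$ is lc but not klt while $-(K_X+B+tD)$ remains nef. Perturbing by general members of base-point-free linear systems — this is where \autoref{thm.FinalBertini} and the infinite residue field enter, cf.\ \autoref{rem:general_element} — one arranges a \emph{unique} lc place $C$. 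That divisor has coefficient exactly $1$ in the crepant pullback of $K_X+B+tD$, which is why the dlt modification extracts it alone and why it is not subsequently contracted. Your Step 3 is broadly in the right spirit, except that $W \to Z$ is birational, so there is no ``Mori fibration step'': one instead contracts the locus where $-(K+C+B)$ fails to be ample using the base point free theorem, and obtains (4) from \autoref{lem:negativity} as you indicate.
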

\noindent {We warn the reader that it may happen that $g$ is the identity and $C$ lies on $X$.} Further, we added the assumption that $R/ \fram$ is infinite to avoid potential issues with tie-breaking (cf.\ \autoref{rem:general_element}).
\begin{proof}
This follows by exactly the same proof as \cite[Lemma 2.8]{HaconWitaszekMMPp=5}. This is a consequence of the two dimensional Minimal Model Program in mixed characteristic, see \cite{tanaka_mmp_excellent_surfaces}. Note that tie-breaking employs Bertini's theorem for regular schemes, which in our setting is known by \autoref{thm.FinalBertini}. 
\end{proof}

 In this and the next result, we add an additional $\varepsilon D$ to the boundary as it will be important in the proof of the existence of flips. 

\begin{theorem}\label{theorem:hacon_xu_surfaces} Let $(X,B)$ be a two-dimensional klt pair admitting a projective birational map $f \colon X \to Z = \Spec R$ such that $-(K_X+B)$ is relatively {nef}.  Here  the ring $R$ is as in \autoref{setting:lifting-section} and has residual characteristic $p>5$. Suppose further that $B$ has standard coefficients. Then, for every divisor $D \geq 0$ and $0 \leq  \varepsilon \ll 1$ depending on $D$, we have that $(X,B+\varepsilon D)$ is globally $\bigplus$-regular.\end{theorem}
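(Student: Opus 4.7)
The plan is to construct a Koll\'ar component via the surface Minimal Model Program and apply inversion of adjunction for $\bigplus$-regularity, reducing the problem to the global $\bigplus$-regularity of a log Fano curve over the residue field of $R$, where the hypothesis $p > 5$ enters decisively through the tameness of all relevant finite subgroup schemes of $\mathrm{PGL}_2$.

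After a harmless faithfully flat base change of $R$ (using \autoref{cor.GlobalBRegularEqualsCompletely} and \autoref{lem.GloballyPlusRegularIsLocalOnTheBase}) to ensure the residue field is infinite, I would apply \autoref{lemma:surface_mmp_lemma} to obtain projective birational morphisms $g \colon Y \to X$ and $h \colon Y \to W$, a prime divisor $C \subset Y$, and an effective $h$-exceptional $F \geq 0$ such that $(Y, C + B_Y)$ and $(W, C_W + B_W)$ are plt, $-(K_W + C_W + B_W)$ is ample over $Z$, and $h^*(K_W + C_W + B_W) = K_Y + C + B_Y + F$. Setting $D_W := h_* g_*^{-1} D$ and using $\bQ$-factoriality of two-dimensional dlt pairs (\autoref{thm:surface-Qfactoriality-of-dlt}) to ensure $D_W$ is $\bQ$-Cartier, for $0 < \epsilon \ll 1$ the pair $(W, C_W + B_W + \epsilon D_W)$ remains plt with $-(K_W + C_W + B_W + \epsilon D_W)$ ample over $Z$, by the openness of ampleness in $N^1_\bR$ and of the plt condition.

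Next I would apply \autoref{cor.inversion-of-B-adjunction} to reduce the purely global $\bigplus$-regularity of $(W, C_W + B_W + \epsilon D_W)$ along $C_W$ to the global $\bigplus$-regularity of the different pair $(C_W^{\mathrm N}, \Delta_{C_W^{\mathrm N}})$. The divisor $C_W^{\mathrm N}$ is a regular projective curve over the residue field $k$ of $R$ (as $C_W$ is contracted to the closed point of $Z$), and since $B$ has standard coefficients, so does the different of $B_W$ along $C_W^{\mathrm N}$, while the contribution of $\epsilon D_W$ is a bounded $\epsilon$-perturbation. The resulting klt log Fano structure on $C_W^{\mathrm N}$ corresponds to a cyclic, dihedral, or platonic (tetrahedral, octahedral, icosahedral) finite subgroup scheme of $\mathrm{PGL}_2$; in characteristic $p > 5$, all these are tame, so after base change to the perfection of $k$ the pair is globally $F$-regular, with openness of global $F$-regularity on log Fano curves absorbing the $\epsilon$-perturbation. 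This classification step is the main obstacle: the threshold $p > 5$ is sharp, since the icosahedral group of order $60$ becomes wild in characteristic $2$, $3$, or $5$, breaking the Frobenius splitting required. Finally, \autoref{cor:BregularINpositiveCharacteristicWithoutF-finite} upgrades global $F$-regularity to global $\bigplus$-regularity.

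To conclude, set $\Gamma_Y$ by $K_Y + \Gamma_Y = h^*(K_W + C_W + B_W + \epsilon D_W)$, so that $\Gamma_Y = C + B_Y + F + \epsilon h^* D_W \geq C + B_Y + \epsilon g_*^{-1} D$ (using $F \geq 0$ and $h^* D_W \geq h_*^{-1} D_W = g_*^{-1} D$, via the effectivity of the exceptional part of a pullback of an effective $\bQ$-Cartier divisor). Applying \autoref{proposition:pullback-of-global-splinter} yields that $(Y, \Gamma_Y)$ is purely globally $\bigplus$-regular along $C$; then \autoref{lem.pureBregularBox} with $a = b < 1$ (where $b$ is determined by $K_Y + bC + B_Y = g^*(K_X + B)$) together with \autoref{lem.GloballyBregularBox} shows that $(Y, bC + B_Y + \epsilon g_*^{-1} D)$ is globally $\bigplus$-regular. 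Pushing forward via $g$ using \autoref{proposition:pushforward-of-global-splinter}, and noting that $g_* bC = 0$ (when $C$ is $g$-exceptional; the case $g = \id$ is analogous), $g_* B_Y = B$, and $g_* g_*^{-1} D = D$, we conclude that $(X, B + \epsilon D)$ is globally $\bigplus$-regular, as required.
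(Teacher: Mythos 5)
Your overall strategy is the same as the paper's: construct a Koll\'ar component via \autoref{lemma:surface_mmp_lemma}, verify global $\bigplus$-regularity on the curve $C$ (which in characteristic $p>5$ with standard coefficients reduces to a globally $F$-regular log Fano structure on $\mathbb{P}^1_{\overline{k}}$, upgraded via \autoref{cor:BregularINpositiveCharacteristicWithoutF-finite}), apply inversion of adjunction \autoref{cor.inversion-of-B-adjunction} on $W$, pull back to $Y$, round down, and push forward to $X$. The curve step and the final bookkeeping match the paper's, modulo your more elaborate (but equivalent) framing of the curve case via finite subgroups of $\mathrm{PGL}_2$; the paper argues directly that anti-ampleness, standard coefficients and $p>5$ force $C_{\overline{k}}\cong\mathbb{P}^1$ with standard-coefficient boundary and cites Watanabe.

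However, your specific choice $D_W := h_*g_*^{-1}D$ creates a genuine gap in two places. First, if $C\subseteq X$ (so $g=\mathrm{id}$) and $C$ is a component of $D$, then $C_W\subseteq \Supp D_W$, so the coefficient of $C_W$ in $C_W+B_W+\epsilon D_W$ exceeds $1$ and the pair is not plt, contrary to what you assert; \autoref{cor.inversion-of-B-adjunction} then does not apply as stated. Second, and more seriously, the chain $h^*D_W\geq h_*^{-1}D_W = g_*^{-1}D$ fails at the last equality: $h_*^{-1}(h_*D_Y)$ equals $D_Y$ \emph{minus its $h$-exceptional components}, and if some component $E$ of $D_Y=g_*^{-1}D$ is contracted by $h$ (which can certainly happen, e.g.\ when $g=\mathrm{id}$ and $D$ contains an $h$-exceptional curve), then $h_*D_Y$ loses $E$ and $h^*h_*D_Y$ need not dominate $E$ (for $D_Y=E$ one gets $h^*h_*D_Y=0\not\geq E$). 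Your final round-down then only controls $B+\epsilon\, g_*h^*D_W$, not $B+\epsilon D$. The paper avoids both problems by \emph{choosing} $D_W$ so that $C_W\not\subseteq\Supp D_W$ and $D_Y\leq h^*D_W+C$ (take $h_*D_Y$ plus a large multiple of an effective divisor passing through the images of the $h$-exceptional components of $D_Y$ while avoiding $C_W$); the extra $+C$ is then absorbed using that the coefficient $b$ of $C$ in $g^*(K_X+B)$ is $<1$, exactly as in your last step.
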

\begin{proof}
We may assume that $R = H^0(X, \sO_X)$ is normal.  By \autoref{cor.GlobalBRegularEqualsCompletely} we may also assume that $R$ is complete. 
If the residue field of the complete ring $R$ is not infinite, we may further pass to the completion of the strict Henselization $R'$.  Indeed, checking that $\sO_X \to f_* \sO_Y(\lfloor f^* (B + \varepsilon D)\rfloor)$ splits for a finite dominant map $f : Y \to X$ can be checked after such a base change.  Hence we may assume that the residue field of $R$ is infinite.

We apply \autoref{lemma:surface_mmp_lemma} and use its notation. First, write $K_{C} + B_{C} = (K_W + C_W + B_W)|_{C}$ where $C$ is identified with $C_W$. Further, write $D_Y = g_*^{-1}D$ and pick a divisor $D_W$ on $W$ such that $C_W \not \subseteq \Supp D_W$ and $D_Y \leq h^*D_W + C$.  Since $-(K_{C} + B_{C})$ is ample and $B_{C}$ has standard coefficients, 
 we must have that $C_{\overline{k}}\cong\mathbb{P}^1$  since by \cite[\href{https://stacks.math.columbia.edu/tag/0C19}{Lemma 0C19}]{stacks-project}, $g(C)=0$, and if $C_{\overline{k}}$ was not normal, then $\deg    B_{C_{\overline{k}}} \geq 2$ for the anti-ample $\bQ$-Cartier divisor $K_{C_{\overline{k}}} + B_{C_{\overline{k}}} = (K_W + C_W + B_W)|_{C_{\overline{k}}}$ by \cite[Theorem 1.1]{patakfalvisingularities}.  Furthermore $B_{C_{\overline{k}}}$ also has standard coefficients, since  coefficient of $B_{C_{\overline{k}}}$ is either equal to a coefficient of $B_{C}$ or is at least $p$ times such a coefficient {(hence it is at least $\frac{p}{2})$}, and with $p>5$, the presence of such a coefficient would prevent ampleness of $-(K_{C_{\overline{k}}}+B_{C_{\overline{k}}})$.  Therefore, $(C_{\overline{k}},B_{C_{\overline{k}}})$ is globally $F$-regular, see \cite{WatanabeFregularFpureRings}, and so is  $(C_{\overline{k}},B_{C_{\overline{k}}} + \varepsilon D_W|_{C_{\overline{k}}})$ for $0 \leq \varepsilon \ll 1$. Hence by \autoref{cor:BregularINpositiveCharacteristicWithoutF-finite}, $(C,B_C+\varepsilon D_W|_{C})$ is globally $\bigplus$-regular. Thus, by inversion of adjunction in the form of \autoref{cor.inversion-of-B-adjunction}, $(W,C_W+B_W + \varepsilon D_W)$ is purely globally $\bigplus$-regular.

\autoref{proposition:pullback-of-global-splinter} and Condition (d) imply that $(Y,C+B_Y+\varepsilon h^*D_W)$ is purely globally $\bigplus$-regular. By \autoref{lem.GloballyBregularBox} and \autoref{lem.pureBregularBox}, $(Y,bC + B_Y + {{\varepsilon}}D_Y)$ is globally $\bigplus$-regular, and so is $(X,B+\varepsilon D)$ by \autoref{proposition:pushforward-of-global-splinter} where the notation is as in \autoref{lemma:surface_mmp_lemma}.
\end{proof}

\begin{corollary}
	\label{cor.3dim-plt-Fano-are-purely-plus-regular}
Let $(X,S+B)$ be a  three-dimensional plt pair and let $f \colon X \to Z = \Spec R$ be a projective birational map such that $-(K_X+S+B)$ is relatively {semiample}, where $R$ satisfies \autoref{setting:lifting-section} and is of residue characteristic $p > 5$.  Assume further that $B$ has standard coefficients,  $\lfloor B \rfloor = 0$, and  $S$ is reduced. 
Then $S$ is a normal prime divisor and setting $K_S+B_S = (K_X+S+B)|_S$ (with $B_S$ the different), we have that $(S,B_S+\varepsilon D)$ is globally $\bigplus$-regular for every Cartier divisor $D$ and $0 < \varepsilon \ll 1$.  Finally, $(X,S+B)$ is purely globally $\bigplus$-regular.
\end{corollary}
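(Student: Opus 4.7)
The plan is to reduce the statement to the two-dimensional result \autoref{theorem:hacon_xu_surfaces} via adjunction, then to propagate the conclusion back to $X$ by the inversion-of-adjunction principle \autoref{cor.inversion-of-B-adjunction}, and finally to extract normality and irreducibility of $S$ from the resulting pure global $\bigplus$-regularity by \autoref{cor.IntegralityOfS}.

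First I would reduce to the case where $R$ is complete and has infinite residue field. Pure global $\bigplus$-regularity of $(X,S+B)$ is equivalent to complete pure global $\bigplus$-regularity by \autoref{cor.PurelyGlobally+RegularVsCompletelyAssumeBigAndSemiample} (its hypothesis is satisfied since $-(K_X+S+B)$ is ample, hence big and semi-ample). To ensure an infinite residue field I would pass to the completion of the strict henselization of $R$, noting that the plt hypothesis is preserved by \autoref{lem:plt_dlt_base_change} and that checking pure global $\bigplus$-regularity on the completion of the strict henselization suffices (by faithfully flat descent of the splittings defining pure global $\bigplus$-regularity, exactly as in the proof of \autoref{theorem:hacon_xu_surfaces}). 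Since the conclusion that $S$ is a prime divisor can be checked after étale base change and completion, no generality is lost.

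Next I would set up adjunction. Let $\nu : S^{\mathrm N} \to S$ be the normalization, and let $\Delta_{S^{\mathrm N}}$ be the different defined by $K_{S^{\mathrm N}} + \Delta_{S^{\mathrm N}} = \nu^{*}\!\left((K_X+S+B)|_S\right)$. Because $(X,S+B)$ is plt with $\lfloor B \rfloor = 0$, each connected component of $(S^{\mathrm N},\Delta_{S^{\mathrm N}})$ is a two-dimensional klt pair, and by the standard coefficients version of the adjunction formula (Shokurov), $\Delta_{S^{\mathrm N}}$ again has standard coefficients. Ampleness of $-(K_X+S+B)$ over $Z$ implies ampleness of $-(K_{S^{\mathrm N}}+\Delta_{S^{\mathrm N}})$ over the image of $S$ in $Z$. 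Thus \autoref{theorem:hacon_xu_surfaces}, applied component by component, yields that $(S^{\mathrm N},\Delta_{S^{\mathrm N}}+\varepsilon \widetilde D)$ is globally $\bigplus$-regular (in the sense of \autoref{rem.Globall+RegularForNonIntegralX}) for every Cartier divisor $\widetilde D$ on $S^{\mathrm N}$ and every $0 < \varepsilon \ll 1$.

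Taking $\widetilde D = 0$, \autoref{cor.inversion-of-B-adjunction} (whose hypothesis that $-K_X-S-B$ is big and semi-ample is satisfied) upgrades global $\bigplus$-regularity of $(S^{\mathrm N},\Delta_{S^{\mathrm N}})$ to pure global $\bigplus$-regularity of $(X,S+B)$ along $S$. Applying \autoref{cor.IntegralityOfS} to this pair (using again that $-(K_X+S+B)$ is ample, hence big and semi-ample) gives that $S$ is normal and integral, so $S^{\mathrm N} = S$ and $\Delta_{S^{\mathrm N}} = B_S$, finishing the proof once we combine with the earlier statement applied to a general Cartier $D$ on $S = S^{\mathrm N}$. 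I anticipate the main obstacle to be the verification that the different $\Delta_{S^{\mathrm N}}$ has standard coefficients — this is where the plt hypothesis, the standard-coefficient hypothesis on $B$, and the restriction $p > 5$ interact, since standard coefficients plus the Hacon-Xu style restriction to $C \cong \mathbb P^1$ in the surface argument is exactly what allows the geometric genus-bounding estimate to work in characteristic $> 5$.
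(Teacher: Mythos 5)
Your proposal is correct and follows essentially the same route as the paper's proof: reduce to the complete case via \autoref{cor.PurelyGlobally+RegularVsCompletelyAssumeBigAndSemiample}, apply \autoref{theorem:hacon_xu_surfaces} to each component of the normalization $(S^{\mathrm N},B_{S^{\mathrm N}})$ (the infinite-residue-field reduction you add is already handled inside that theorem's proof), and conclude via \autoref{cor.inversion-of-B-adjunction} and \autoref{cor.IntegralityOfS}. The point you flag about the different inheriting standard coefficients is indeed the (standard, Shokurov-type) fact the paper uses implicitly when invoking the surface theorem.
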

\begin{proof}
By \autoref{cor.GlobalBRegularEqualsCompletely} and \autoref{cor.PurelyGlobally+RegularVsCompletelyAssumeBigAndSemiample} we may assume that $R$ is complete. Let $S^{\nm} \to S$ be the normalization of $S$ and write $K_{S^{\nm}} + B_{S^{\nm}} = (K_X+S+B)|_{S^{\nm}}$. \autoref{theorem:hacon_xu_surfaces} implies that each component of $(S^{\nm},B_{S^{\nm}} + \varepsilon D|_{S^{\nm}})$ is globally $\bigplus$-regular (and hence $(S^{\nm},B_{S^{\nm}} + \varepsilon D|_{S^{\nm}})$ is globally $\bigplus$-regular in the sense of \autoref{rem.Globall+RegularForNonIntegralX}).
Hence $S$ is normal and integral and $(X,S+B)$ is purely globally $\bigplus$-regular by \autoref{cor.inversion-of-B-adjunction} and \autoref{cor.IntegralityOfS}.
\end{proof}

In fact, the proof even shows that $(X, S+B+\varepsilon H)$ is purely globally $\bigplus$-regular for every Cartier divisor $H$ on $X$ not containing any component of $S$, for $1 \gg \varepsilon > 0$.

We also observe that \autoref{theorem:hacon_xu_surfaces} gives a new proof of the following results of a subset of the authors, in the case when the fixed big Cohen-Macaulay algebra is equal to $\widehat{R^+}$.
\begin{corollary}[{\cite[Theorem 7.11]{MaSchwedeTuckerWaldronWitaszekAdjoint}}]
\label{thm.KLTSurfaceImpliesBCMReg}
	Let $(S,\Delta)$ be a klt pair with standard coefficients where $S = \Spec A$ for an excellent two-dimensional normal local domain  $(A,\fram)$ of mixed characteristic $(0,p)$ for $p>5$.
	{Then  $(S,\Delta)$ is globally $\bigplus$-regular.}
    \end{corollary}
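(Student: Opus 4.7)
The plan is to deduce this as an immediate special case of \autoref{theorem:hacon_xu_surfaces}, applied to the identity morphism $\mathrm{id}_S\colon S \to S$. The identity is automatically projective and birational, and any $\mathbb{Q}$-Cartier divisor on $S$ — in particular $-(K_S + \Delta)$ — is relatively ample over $\mathrm{id}_S$ vacuously, since all fibers are closed points and so the Nakai--Moishezon criterion (\autoref{lem:Nakai-Moishezon}) imposes no nontrivial conditions. Note that $K_S + \Delta$ is $\mathbb{Q}$-Cartier as part of the klt hypothesis.

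The remaining hypotheses of \autoref{theorem:hacon_xu_surfaces} are exactly those given: $(S,\Delta)$ is a two-dimensional klt pair with standard coefficients, and $A$ is an excellent two-dimensional normal local domain of residual characteristic $p > 5$, which moreover admits a dualizing complex by the log-pair convention of \autoref{def:log-pair}, so $A$ satisfies the hypotheses of \autoref{setting:lifting-section}. Taking $D = 0$ and $\varepsilon = 0$ in the conclusion of \autoref{theorem:hacon_xu_surfaces} gives that $(S,\Delta)$ is globally $\bigplus$-regular (the qualifier ``over $Z$'' there reduces to the absolute notion of \autoref{def:globally_B_regular} in our setting). By \autoref{definition:plus-regular-on-local-ring}, this is by definition what it means for $(S,\Delta)$ to be $\bigplus$-regular as a local pair, yielding the corollary.

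There is effectively no obstacle here: all the substantive work — Koll\'ar's component construction from \autoref{lemma:surface_mmp_lemma}, the two-dimensional MMP in mixed characteristic of \cite{tanaka_mmp_excellent_surfaces}, the reduction to global $F$-regularity of $(\mathbb{P}^1_{\overline{k}}, B_{C_{\overline{k}}})$ where standard coefficients and $p > 5$ rule out failure of $F$-regularity, the descent via \autoref{cor:BregularINpositiveCharacteristicWithoutF-finite} from global $F$-regularity of the geometric base change to global $\bigplus$-regularity, and finally inversion of adjunction via \autoref{cor.inversion-of-B-adjunction} combined with \autoref{proposition:pullback-of-global-splinter} and \autoref{proposition:pushforward-of-global-splinter} — is already packaged inside the proof of \autoref{theorem:hacon_xu_surfaces}. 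The corollary is the trivial (identity) birational case of that theorem. The only conceptual point worth flagging is that \autoref{theorem:hacon_xu_surfaces} passes to the completion of the strict henselization to ensure an infinite residue field during tie-breaking, but this reduction is internal to that proof and requires nothing of us when quoting its conclusion.
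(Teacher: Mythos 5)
Your proposal is correct and is exactly the paper's own proof: the paper deduces the corollary by applying \autoref{theorem:hacon_xu_surfaces} to $(X,B) = (S,\Delta)$ with $f$ the identity map, and your verification that the hypotheses (projectivity and relative ampleness being vacuous for the identity, the dualizing complex coming from the log-pair convention) hold is just the expanded version of that one-line argument.
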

\begin{proof}
  Apply \autoref{theorem:hacon_xu_surfaces} to $(X, B)=({S}, {\Delta})$ and $f$ the identity map.
\end{proof}

We also recall a special case of \cite[Theorem G]{MaSchwedeTuckerWaldronWitaszekAdjoint} in our setting. The following proof shows that the divisor $S$ of a three-dimensional plt pair $(X, S+B)$ is normal at every closed point where the residual characteristic is greater than $5$ as long as either $B$ has standard coefficients or $X$ is $\bQ$-factorial

\begin{corollary} 
  \label{cor.ThreefoldNormalityOfS}
  Let $(X, S+B)$ be a three-dimensional plt pair where $S$ is reduced, $B$ has standard coefficients and $\lfloor B \rfloor = 0$.  Then at every closed point $x \in X$ where $\mathrm{char}\, k(x) > 5$, we have that $S$ is normal at $x$ and if $S_x = \Spec \sO_{S,x}$,  then $(S_x, B_{S}|_{S_x})$ is globally $\bigplus$-regular at (here $B_S$ is the different of $K_X + S + B$ along $S$).
  
    Moreover, $S$ is normal at every closed point $x \in X$ of residue characteristic $p>5$ even when $B$ does not have standard coefficients, if we assume that $X$ is $\bQ$-factorial.
\end{corollary}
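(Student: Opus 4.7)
The plan is to localize at $x$ and then combine classical adjunction for plt pairs with the two-dimensional result \autoref{thm.KLTSurfaceImpliesBCMReg} via the inversion of adjunction statement \autoref{cor.inversion-of-B-adjunction}. Normality of $S$ at $x$ and $\bigplus$-regularity of $(S, B_S)$ at $x$ are both local properties, so I would replace $X$ by $\Spec R$ where $R = \sO_{X,x}$. This $R$ is an excellent normal local domain with a dualizing complex and residue characteristic $p>5$, so it satisfies \autoref{setting:lifting-section}. If $x \notin S$ both conclusions are vacuous, so one assumes $x \in S$. The key observation enabling \autoref{cor.inversion-of-B-adjunction} in this purely local setting is that $\Pic(\Spec R) = 0$: since $K_X + S + B$ is $\bQ$-Cartier (as $(X,S+B)$ is plt), some multiple is principal, so $-(K_X + S + B) \sim_{\bQ} 0$ on $\Spec R$ and is therefore trivially big and semi-ample over $R$ via the identity map $X \to \Spec R$.

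\textbf{Core argument.} Let $\nu \colon S^{\mathrm{N}} \to S$ be the normalization and write $K_{S^{\mathrm{N}}} + B_{S^{\mathrm{N}}} = \nu^*(K_X+S+B)$, so $B_{S^{\mathrm{N}}}$ is the different. By the characteristic-free adjunction formula for plt pairs with standard coefficient boundary (cf.\ \cite{KollarMori}, \cite{KollarKovacsSingularitiesBook}), each connected component of $(S^{\mathrm{N}}, B_{S^{\mathrm{N}}})$ is klt, and $B_{S^{\mathrm{N}}}$ has standard coefficients since $B$ does. Each such component is two-dimensional, so applying \autoref{thm.KLTSurfaceImpliesBCMReg} at each point of $S^{\mathrm{N}}$ lying over $x$ shows that the local rings of $(S^{\mathrm{N}}, B_{S^{\mathrm{N}}})$ at those points are $\bigplus$-regular. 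In the sense of \autoref{rem.Globall+RegularForNonIntegralX}, $(S^{\mathrm{N}}, B_{S^{\mathrm{N}}})$ is then globally $\bigplus$-regular over $\Spec R$. \autoref{cor.inversion-of-B-adjunction} therefore yields that $(X, S+B)$ is purely globally $\bigplus$-regular, and \autoref{cor.NormalityOfS} then gives that $S$ is normal at $x$. Consequently $S = S^{\mathrm{N}}$ locally at $x$, $B_S = B_{S^{\mathrm{N}}}$ at $x$, and $(S, B_S)$ is $\bigplus$-regular at $x$.

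\textbf{The moreover part.} When $X$ is $\bQ$-factorial but $B$ does not have standard coefficients, I would reduce to the previous case by perturbation. Since $\lfloor B \rfloor = 0$, every coefficient $\alpha_i := \mult_{D_i} B$ of a component $D_i$ of $B$ lies in $[0,1)$, so one can choose standard-coefficient rationals $\alpha_i' \in [0,1)$ with $\alpha_i' \leq \alpha_i$. Setting $B' = \sum \alpha_i' D_i$ gives $B' \leq B$, so the pair $(X, S + B')$ is still plt (a smaller boundary cannot decrease log discrepancies), and $\bQ$-factoriality of $X$ guarantees that $K_X + S + B'$ remains $\bQ$-Cartier. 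Applying the first part of the corollary to $(X, S + B')$ then yields that $S$ is normal at $x$.

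\textbf{Expected main obstacle.} The one classical input that needs to be invoked carefully is Shokurov's adjunction giving standard coefficients on the different. This is characteristic-free once one has the explicit description of $B_{S^{\mathrm{N}}}$ along each codimension-one stratum of $S^{\mathrm{N}}$, and it is exactly the input used implicitly inside the proof of \autoref{cor.3dim-plt-Fano-are-purely-plus-regular} (where \autoref{theorem:hacon_xu_surfaces} is applied on each component of $S^{\mathrm{N}}$). No new difficulty arises beyond this, so the whole proof reduces to assembling \autoref{cor.inversion-of-B-adjunction}, \autoref{thm.KLTSurfaceImpliesBCMReg}, and \autoref{cor.NormalityOfS} in the local setting.
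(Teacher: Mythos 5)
Your proposal is correct and follows essentially the same route as the paper: normalize $S$, use plt adjunction to see that $(S^{\mathrm{N}}, B_{S^{\mathrm{N}}})$ is a klt surface pair with standard coefficients, apply \autoref{thm.KLTSurfaceImpliesBCMReg} to get global $\bigplus$-regularity of $(S^{\mathrm{N}}, B_{S^{\mathrm{N}}})$, and conclude normality via \autoref{cor.NormalityOfS}/\autoref{cor.inversion-of-B-adjunction}; the only cosmetic differences are that the paper completes at $x$ rather than localizing, and for the ``moreover'' part it simply replaces $B$ by $0$ (a special case of your standard-coefficient perturbation, since $0$ is a standard coefficient).
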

\begin{proof}
    Note first that since $(X, S+B)$ is plt, and since log resolutions exist for 3-dimensional excellent schemes see \autoref{subsec.ResolutionOfSings}, the completion of $(X, S+B)$ at any closed point is also plt. Hence replacing $X$ by its completion at a closed point $x \in X$, we may assume that $X = \Spec R$  for a three-dimensional complete local domain $(R, \fram)$ of residual characteristic $p > 5$.  Here we used that the completion of a ring is faithfully flat, and normality descends under faithfully flat extensions.  Notice that $X \to \Spec R$ is projective since it is the identity.
Let $S^{\nm} \to S$ be the normalization of $S$ and set $B_{S^{\nm}}$ to be the different of $K_X + S + B$ along $S^{\nm}$.  
By \cite[Lemma 4.8]{KollarKovacsSingularitiesBook} $(S^{\nm}, B_{S^{\nm}})$ is klt and so \autoref{thm.KLTSurfaceImpliesBCMReg} 
implies that $(S^{\nm},B_{S^{\nm}})$ is globally $\bigplus$-regular.  Hence $S$ is normal by \autoref{cor.IntegralityOfS}. 

The last part follows as $(X,S)$ is plt when $X$ is $\bQ$-factorial.
\end{proof}

\begin{remark}
Suppose $(X, S+B)$ is a $\bQ$-factorial three dimensional plt pair over {any excellent finite dimensional domain $R$ with a dualizing complex and whose residue characteristics at closed points} have characteristic zero or greater than $5$.  Then $S$ is normal.  Indeed, the above result implies that $S$ is normal at the closed points of positive residual characteristics.  At characteristic zero points this follows from the standard arguments \cite[Proposition 5.51]{KollarMori} in view of \cite{takumi}.
\end{remark}

\section{Existence of flips}
\label{Section:flips}

\begin{notation}
\label{notation:flips_base_ring}
All schemes in this section are defined over a complete normal Noetherian local domain $(R,\fram)$ with residue field $R/\fram$ of characteristic $p>0$. We set $Z=\Spec(R)$, which will serve as the base of our flipping contractions. 

{For pairs $(X,\Delta)$ in this section, we will always assume that $\Delta$ is a $\bQ$-divisor and $K_X + \Delta$ is $\bQ$-Cartier.}
\end{notation}

\begin{remark} In this remark, fix the fraction field $K$ of some excellent domain $A$. We say that $V = \bigoplus_i V_i$ is a \emph{function algebra} if  it is an $\bN$-graded $A$-algebra with $A \subseteq V_0$ being a finite extension, $V_i \subseteq K$ finitely generated over $A$, and the multiplication on $V$ induced from $K$ (that is, $V$ is a subalgebra of $K[T]$). We call  $V^{(j)} = \bigoplus_{j \mid i} V_i$ the \emph{$j$-Veronese subalgebra} of $V$. We say that two function algebras $V$ and $V'$ are \emph{Veronese equivalent}, if some Veronese subalgebra of $V$ is isomorphic to some Veronese subalgebra of $V'$. Finite generation of function algebras is stable under Veronese equivalence (cf.\ \cite[Lemma 2.3.3]{CortiFlipsFor3FoldsAnd4Folds}).
\end{remark}
We encourage the reader to recall \autoref{def:mobile_part} and \autoref{rem:mob}.
\begin{outline}
\label{outline:flips}
The goal of the present section is to prove the existence of flips for threefolds in the situation of \autoref{notation:flips_base_ring} when $p>5$. 

Let us start with presenting the general idea of our proof, which largely follows the argument of Hacon and Xu in positive characteristic \cite{HaconXuThreeDimensionalMinimalModel} (in turn motivated by the strategy of Shokurov in characteristic zero, see \cite{CortiFlipsFor3FoldsAnd4Folds}). As explained in \cite[Lem 6.2]{KollarMori}, the main goal is to show that for a pair $(X,\Delta)$ with mild singularities (such as klt) and with a flipping contraction $f: X \to Z$ of relative Picard rank one\footnote{in our actual proof this assumption will be weakened}, the sheaf of $\sO_Z$-algebras 
\begin{equation*}
\bigoplus_{m \in \bN} f_* \sO_X(\lfloor m(K_X + \Delta) \rfloor )\end{equation*}is finitely generated; the flip is then given as the  $\Proj$ of this algebra. For this purpose we may assume that $Z$ is affine, which reduces the problem to showing that the section ring 
\begin{equation}
\label{eq:flips_outline}
    \sectionRingR(X,K_X+\Delta)=\bigoplus_{m \in \bN} H^0(X, \sO_X(\lfloor m(K_X + \Delta)\rfloor))
\end{equation} 
is finitely generated over $R=H^0(Z, \sO_Z)$.
An obvious way to approach this is to prove that $K_X+\Delta$ is semiample over $R$. Unfortunately, this will never happen as, by the definition of a flipping contraction, $K_X + \Delta$ is anti-ample. This suggests that we should find a semiample $\bQ$-divisor to which $\sectionRingR(X,K_X+\Delta)$ can be related. More precisely, we want to find a projective birational morphism $\pi \colon Y \to X$ and $i>0$ such that
\begin{equation} \label{equation:intro-flips}
M_i := \Mob\big(i\pi^*(K_X+\Delta)\big) \text{ is base point free}, \quad \text{ and } \quad kM_i = M_{ik} \text{ for all } k>0.
\end{equation}
Then $\sectionRingR(X, K_X+B)$ and $\sectionRingR(Y, M_i)$ are Veronese equivalent by the definition of the mobile part and by the stabilization. In particular, since the latter algebra is finitely generated by base-point-freeness, so is the former. 

It turns out that it is very hard to prove such a statement. For every $i$ we can find a resolution for which $M_i$ is base point free, but the resolution will \emph{a priori} depend on $i$, and there are not enough tools to prove that $kM_i = M_{ik}$ directly on $Y$. As usual in birational geometry we address this problem by restricting to a divisor.

Suppose that there exists an irreducible relatively anti-ample divisor $S$ with singularities so mild {(and being \emph{sufficiently transversal} to $\Delta$)} that we can increase its coefficient at $\Delta$ so that $\lfloor \Delta \rfloor = S$ and $(X,\Delta)$ is plt. Since the relative Picard rank is one and $S$ is anti-ample, $-(K_X+\Delta)$ is still ample and the new canonical ring is Veronese  equivalent to the old one. Hence, it is enough to show that our new $\sectionRingR(X,K_X+\Delta)$ is finitely generated. Flipping contractions for which such $S$ exists are called pl-flipping; note that this is quite a restrictive condition: in the spirit of Bertini, we should be able to find a very ample divisor with mild singularities, but not necessary an anti-ample one. Nevertheless, it is a standard argument that if you can prove the existence of flips for pl-flipping contractions (called \emph{pl-flips}), then you can use them to construct all flips (cf.\ \autoref{proposition:reduction-to-pl-flips}); briefly speaking, you pick an arbitrary anti-ample $S$ and then improve its singularities by running a special MMP on a log resolution and this only needs pl-flips. The huge advantage of pl-flipping contractions is that $(K_X+\Delta)|_S = K_S + \Delta_S$ for a klt pair $(S,\Delta_S)$  which suggests a possibility for applying induction.

Alas, the finite generation of $\sectionRingR(S,K_S+\Delta_S)$ is not enough to deduce the finite generation of $\sectionRingR(X,K_X+\Delta)$ as the restriction map $\sectionRingR(X,K_X+\Delta) \to \sectionRingR(S,K_S+\Delta_S)$ need not be surjective. It is easy to see, however, that if 
\[
    \sectionRingR_S = \mathrm{image}\,\big(\sectionRingR(X,K_X+\Delta) \to \sectionRingR(S,K_S+\Delta_S)\big)
\]
is finitely generated, then so is $\sectionRingR(X,K_X+\Delta)$ (see the proof of \autoref{theorem:flips-exist}). To this end, we apply the idea mentioned earlier: we find a projective birational morphism $\pi \colon Y \to X$ such that $M_i|_{S'}$ satisfies the conditions  of \autoref{equation:intro-flips}, where $S'$ is the strict transform of $S$, and so $\sectionRingR(S', M_i|_{S'})$ is finitely generated. It turns out, after some work, that $\sectionRingR_S$ is Veronese equivalent to $\sectionRingR(S', M_i|_{S'})$, and so is finitely generated as well, concluding the proof.

The finite generation of \autoref{eq:flips_outline}  for pl-flips  is shown in the present section (\autoref{cor:flips-exist2}), and the next section contains the reduction to pl-flips. Below, we introduce the notation needed to make the present outline more precise. Then, we give a more detailed explanation in \autoref{outline:flips2}. The assumption that $p>5$ {and $\Delta$ has standard coefficients will be needed so that $(S,B_S)$ is globally $+$-regular.} \end{outline}

\begin{definition}
\label{def:pl_flipping_contraction}
In the situation of \autoref{notation:flips_base_ring}, a \emph{pl-flipping} contraction  over $Z=\Spec R$ is a projective birational morphism $f \colon X \to Z$ of a plt pair $(X, S+B)$ with $\lfloor B\rfloor=0$ and $S$ irreducible   such that $f$ is small (that is, $\mathrm{Exc}(f)$ is of codimension at least two), and $-S$ and $-(K_X+S+B)$ are $f$-ample. 
\end{definition}

\noindent {Note that we do not assume in \autoref{def:pl_flipping_contraction}, as is usually the case, that $\rho(X/Z)=1$.}

\subsection{Finite generation of the restriction algebra} \label{subsection:flips-key-of-the-argument}
In the entire present subsection we work in the framework of the following notation. We do not state separately that this setting is assumed.

\begin{setting}
\label{notation:flips_general}
In the situation of \autoref{notation:flips_base_ring}, we assume additionally that {$R/\fram$} is infinite. Let $f \colon X \to Z$ be a three-dimensional pl-flipping contraction of a plt pair $(X,S+B)$, where $\dim R =3$ and $Z=\Spec R$. Since $X$ admits a small birational morphism to an affine scheme, we can replace $K_X$ by a linearly equivalent divisor so that $K_X+S+B$ is an effective $\Q$-divisor {and does not} contain $S$ in its support. This choice of $K_X$ is fixed for the whole section.

We also assume that $S$ is normal and $(S,B_S+\varepsilon D)$ is \emph{globally $\bigplus$-regular}  for every effective divisor $D$ on $S$ and $0 \leq  \varepsilon \ll 1$ (depending on $D$), where $K_S + B_S = (K_X+S+B)|_S$. This is the case, for example, if $B$ has standard coefficients and $p>5$ (\autoref{cor.3dim-plt-Fano-are-purely-plus-regular}).
\end{setting}
{Under the above hypothesis, $K_X$ is not effective and $K_X+S+B$ may contain some components of $B$ in its support. We choose $K_X$ in the way as above so that $B_S$ is the different of $(X,S+B)$ along $S$, where $(K_X+S+B)|_S = K_S + B_S$.

\begin{remark}
\label{rem:general_element}
The  residue field  $R/\fram$ in \autoref{notation:flips_general} is assumed to be infinite for the sole purpose so that if we have
\begin{itemize}
    \item a normal Noetherian excellent separated scheme $X$ over $R$,
    \item a base-point-free line bundle $L$ on $X$, and
    \item finitely many points $x_1,\dots, x_n \in X_{\fram}$,
\end{itemize} 
then there is  an element in the linear system $|L|$ which does not vanish at any of the points $x_i$.
\end{remark}

\begin{notation}
\label{notation:log-resolutions}
For a log resolution $\pi \colon Y \to X$ of $(X,S+B)$ as in \autoref{notation:flips_general}, we introduce the following notation:
\begin{itemize}
\item  $S'$ is the strict transform of $S$, 
    \item $K_Y + S' + B' = \pi^*(K_X+S+B)$,
    \item $K_{S'} + B_{S'} = (K_Y+S'+B')|_{S'}$, where we choose for $K_{S'}$ the representative $(K_Y + S')|_{S'}$,
    \item  $A' = -B'$, and 
    \item $A_{S'} = -B_{S'}$, so that  $A_{S'} = A'|_{S'}$.
\end{itemize}
Furthermore, for every integer $i>0$ such that $i(K_X+S+B)$ is Cartier, we set
\begin{align}
\label{eq:M_i_M_i_S_prime}
	M_i &:= \Mob(i(K_Y+S'+B')), \text{ and }\\ 
	M_{i,S'} &:= M_i|_{S'}, \nonumber
\end{align}
which makes sense as $M_i$ does not have $S'$ within its support. We note that it is vital to remember that $M_{i, S'}$ is the restriction of the mobile part, as opposed to the mobile part of the restriction.  
Additionally write
\begin{equation*}
    D_i := {\frac{1}{i}}M_i, \qquad 
    D_{i,S'} := { \frac{1}{i}}M_{i,S'}.
\end{equation*} 
{By definition, $D_j \leq D_i$ whenever $j(K_X+S+B)$ is Cartier and $j \mid i$.}
\end{notation}
\begin{remark}
As $\pi$ is a log resolution of $(X, S + B)$, the induced morphism $\pi|_{S'} : S' \to S$ is a log-resolution as well. Additionally, $B_{S}$ is defined in a way such that $K_{S'} + B_{S'} = \pi|_{S'}^* (K_S + B_S)$ holds. This implies that   $A_{S'}= - B_{S'}$ is the discrepancy divisor of the pair $(S, B_S)$ on the log resolution $S' \to S$. 

Since $(X,S+B)$ is plt, $(S, B_S)$ is also klt. Therefore, by the definition of $A'$ and by the previous paragraph, we have that $\lceil A' \rceil$ and $\lceil A_{S'} \rceil$ are effective and exceptional over $X$ and $S$, respectively.  We will also repeatedly use that every line bundle on $Y$ or $S'$ is automatically big (as $Y \to \Spec R$ and $S' \to f(S)$ are birational).
\end{remark}

\begin{definition} \label{def:compatible-resolution} 
In the situation of \autoref{notation:log-resolutions}, let $\pi \colon Y \to X$ be a log resolution of $(X,S+B)$. We say that it is \emph{good} if 
\begin{itemize}
    \item it is also a log resolution of $(X, S+B+ (K_X+S+B))$ for $K_X+S+B$ being the explicit effective $\bQ$-divisor fixed in \autoref{notation:flips_general}, and
    \item  $S' \to S$ factors through the terminalization $\overline{S}$ of $(S,B_S)$ (which is unique as $S$ is two-dimensional)
\end{itemize}
Let $i>0$ be an integer such that $i(K_X+S+B)$ is Cartier. Then we say that $\pi \colon Y \to X$ is \emph{compatible with $i$}, if it is good and it is a resolution of the linear system $|i(K_X+S+B)|$. The latter condition is equivalent to $|M_i|$ being base point free.
\end{definition}
Observe that $\overline{S}$ is a terminal surface hence it is regular. }

\begin{remark} \label{remark:compatible-resolutions-make-everything-log-smooth}
 {If $\pi \colon Y \to X$ is a good log resolution of $(X,S+B)$, then $\Supp(S'+B'+M_i+\mathrm{Ex}(\pi))$ is simple normal crossing for every integer $i>0$ such that $i(K_X +S +B) $ is Cartier.}
\end{remark}

When $Y$ is compatible with $i$, which essentially will always need to be the case for us, then the choice of $\pi$, $Y$, and $S'$ \emph{depends on $i$}. Note that given $i,j \in \mathbb{N}$, we can always construct $Y$ which is compatible with both $i$ and $j$ (cf.\  \cite[Lemma 4.5]{CP19}). However, \emph{a priori}, it might not be possible to construct $Y$ which is compatible with all $i \in \mathbb{N}$ simultaneously; \emph{a posteriori} such $Y$ exists as a consequence of the existence of flips.

\begin{remark} \label{remark:pullback-of-b-divisor}
Given a sequence of maps $\tilde Y \xrightarrow{h} Y \to X$ such that $\tilde Y \to X$ and $Y \to X$ are resolutions compatible with $i$, we have that $h^*D_i = \tilde D_i$, where $\tilde D_i$ is calculated for $\tilde Y$ exactly as $D_i$ is calculated for $Y$. The same property holds for $D_{i,S'}$.

We emphasize that if $Y \to X$ is not compatible with $i$, then although $D_i = h_*\tilde D_i$, it need not even be true that $D_{i,S'} = (h|_{\tilde{S}'})_*{\tilde D}_{i,\tilde{S}'}$, where $\tilde{S}'$ is the strict transform of $S'$ and ${\tilde D}_{i,\tilde{S}'} = \tilde{D}_i|_{\tilde{S}'}$ (pushing forward for divisors does not commute with restrictions).
\end{remark}

\begin{outline}
\label{outline:flips2}
Having introduced the above notation, we are able to provide a more detailed version of \autoref{outline:flips}. As explained therein, our goal is to show that
\[
\sectionRingR_S = \mathrm{image}\big( \sectionRingR(X,K_X+S+B) \to \sectionRingR(S,K_S+B_S)\big)
\]
is finitely generated.  We will prove that, up to taking a Veronese subalgebra, 
\begin{equation} \label{eq:flips:candidate-for-restriction-algebra}
    \sectionRingR_S = \bigoplus_i H^0\big(\overline{S}, \sO_{\overline{S}}(iD_{\overline{S}})\big)
\end{equation}
for a semiample $\bQ$-divisor $D_{\overline{S}}$ on $\overline{S}$, where $(\overline S, B_{\overline S})$ is the terminalization of $(S,B_S)$. In particular, this implies that $\sectionRingR_S$ is finitely generated. 
 
The $\bQ$-divisor $D_{\overline{S}}$ is constructed as follows. {First, for a log resolution $\pi \colon Y \to X$ compatible with $i \in \bN$} we show that $D_{i,S'}$ is a pullback of a  $\Q$-divisor $D_{i,\overline{S}}$ on $\overline{S}$ (\autoref{proposition:flips-descend-of-b-divisors}). Then, we define an $\bR$-divisor  $D_{\overline{S}}$ as the limit of $D_{i,\overline{S}}$ for $i\to \infty$ and show that, in fact, it is a $\bQ$-divisor (\autoref{proposition:flips-b-divisor-is-rational}). Next, we show that $D_{i,\overline{S}}$ coincides with $D_{\overline{S}}$ for divisible enough $i>0$ (\autoref{proposition:flips-restriction-of-key-identity}). Last, we prove the validity of  \autoref{eq:flips:candidate-for-restriction-algebra} (\autoref{claim:proof-of-pl-flips}), and conclude that $\sectionRingR_S$ is finitely generated (\autoref{proposition:restricted-algebra-fin-gen}). 

Let us emphasize that we use in an essential way that $S$ is a surface, and so we cannot run the above limiting process directly on a birational  model of $X$.

\end{outline}

The key to our strategy is to understand the divisors $M_i|_{S'}$ which are restrictions of mobile parts of $i\pi^*(K_X+S+B)$ to $S'$. Since $\Mob$ does not commute with restrictions in general, we want to find a property of the divisors $M_i$ that could also be shared by $M_i|_{S'}$. The following technical lemma identifies such a property. 
\begin{lemma} \label{lemma:flips-key-identity}
For every log resolution $\pi \colon Y  \to X$ of $(X,S+B)$, if
$i,j >0$ are integers such that $i(K_X + S + B)$ and $j(K_X+S+B)$ are Cartier, then  $\Mob \lceil jD_{i} + A' \rceil \leq jD_{j}$.
\end{lemma}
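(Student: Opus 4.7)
The plan is to prove the inequality by a section-chasing argument on $Y$. Writing $\Fix_i := \Fix(i(K_Y+S'+B')) \geq 0$ so that $iD_i + \Fix_i = i(K_Y+S'+B')$, the crucial observation is that $D_i \leq K_Y+S'+B'$. Multiplying by $j$ and adding $A'$ (and rounding up) yields
\begin{equation*}
  \lceil jD_i + A'\rceil \;\leq\; j(K_Y+S'+B') + \lceil A'\rceil,
\end{equation*}
where the right-hand side is an integer Cartier divisor. This is the divisor into which we will embed $\sO_Y(\lceil jD_i + A'\rceil)$.

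Next, I will use \autoref{lemma:pushforward}: since $\lceil A'\rceil$ is effective and exceptional over $X$, and since $j(K_Y+S'+B') = j\pi^*(K_X+S+B)$ is the pullback of a Cartier divisor, the projection formula and \autoref{lemma:pushforward} together give
\begin{equation*}
  \pi_*\sO_Y\bigl(j(K_Y+S'+B') + \lceil A'\rceil\bigr) \;=\; \sO_X\bigl(j(K_X+S+B)\bigr) \;=\; \pi_*\sO_Y\bigl(j(K_Y+S'+B')\bigr).
\end{equation*}
Hence multiplication by the canonical section $\sigma$ of $\sO_Y(\lceil A'\rceil)$ (with $\divisor(\sigma)=\lceil A'\rceil$) and the canonical section $\tau$ of $\sO_Y(\Fix_j)$ (with $\divisor(\tau)=\Fix_j$) induces a chain of isomorphisms
\begin{equation*}
  H^0(Y, M_j) \;\xrightarrow{\cdot\tau}\; H^0(Y, j(K_Y+S'+B')) \;\xrightarrow{\cdot\sigma}\; H^0\bigl(Y, j(K_Y+S'+B') + \lceil A'\rceil\bigr).
\end{equation*}

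Given $s \in H^0(Y, \lceil jD_i + A'\rceil)$, I will multiply by the canonical section $\rho$ of $\sO_Y(E)$, where $E := j(K_Y+S'+B') + \lceil A'\rceil - \lceil jD_i+A'\rceil \geq 0$, to produce $s\rho \in H^0\bigl(Y, j(K_Y+S'+B') + \lceil A'\rceil\bigr)$. By the above isomorphisms, $s\rho = s''\,\tau\,\sigma$ for some $s'' \in H^0(Y, M_j)$, and comparing divisors gives
\begin{equation*}
  \divisor(s) + E \;=\; \divisor(s'') + \Fix_j + \lceil A'\rceil.
\end{equation*}
Since $\divisor(s'')\geq 0$ and $\Fix_j + \lceil A'\rceil - E = \lceil jD_i+A'\rceil - jD_j$ (using $j(K_Y+S'+B') - \Fix_j = M_j = jD_j$), we obtain $\divisor(s) \geq \lceil jD_i+A'\rceil - jD_j$. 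Minimizing over $s$ gives $\Fix\lceil jD_i+A'\rceil \geq \lceil jD_i+A'\rceil - jD_j$, equivalently $\Mob\lceil jD_i+A'\rceil \leq jD_j$, as required.

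The main subtlety I expect to verify carefully is the first equality of pushforwards: although $M_j$ and $j(K_Y+S'+B')$ are not equal as divisors, their pushforwards and global sections coincide via the canonical section of the fixed part, and the further extension by $\lceil A'\rceil$ is absorbed because $\lceil A'\rceil$ is effective exceptional. Once this is in place, the divisor bookkeeping in the final paragraph is formal.
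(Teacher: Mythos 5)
Your proof is correct and is essentially the paper's argument: both rest on the bound $\lceil jD_i+A'\rceil \le j(K_Y+S'+B')+\lceil A'\rceil$ together with the pushforward identity $\pi_*\sO_Y(j(K_Y+S'+B')+\lceil A'\rceil)=\pi_*\sO_Y(j(K_Y+S'+B'))$ from \autoref{lemma:pushforward}. The paper simply packages your explicit section-and-divisor bookkeeping as ``$\lceil A'\rceil$ lies in the base locus, so $\Mob(j(K_Y+S'+B')+\lceil A'\rceil)=jD_j$,'' combined with the monotonicity of $\Mob$ from \autoref{rem:mob}.
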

\noindent Let us point out that from the viewpoint of Kawamata-Viehweg or $\myB^0$-lifting, the divisors of the form $\lceil jD_i + A' \rceil$ work well, see \autoref{eq:technical-lifting-sections-in-flips:adjunction1}.
\begin{proof}
Since $\lceil A' \rceil \geq 0$ is exceptional over $X$, we have that $\pi_*\sO_Y(j(K_Y+S'+B') + \lceil A' \rceil) = \sO_X(j(K_X+S+B)) = \pi_*\sO_Y(j(K_Y+S'+B'))$ (cf.\ \autoref{lemma:pushforward}). This implies that $D \mapsto D + \lceil A' \rceil$ yields a bijection between $|j(K_Y+S'+B')|$ and $|j(K_Y+S'+B') + \lceil A' \rceil|$, which is what we use in the first equality of the following computation:
\[
\Mob(\lceil jD_{i} + A' \rceil) \leq \Mob(j(K_Y+S'+B') +  \lceil A' \rceil) = \Mob(j(K_Y+S'+B')) = jD_{j}. \qedhere
\]
\end{proof}
In fact, to deduce the properties of $D_{\overline{S}}$ mentioned in \autoref{outline:flips2}, it is enough to show that the identity of \autoref{lemma:flips-key-identity} holds also after restricting to $S'$, and the rest of the argument for the existence of flips would be mostly characteristic free except for some technical issues with Bertini. In characteristic zero, this can be achieved by the Kawamata-Viehweg vanishing. {More precisely, the surjectivity of $H^0(Y, \sO_Y(\lceil jD_i + A'\rceil)) \to H^0(S', \sO_{S'}(\lceil jD_{i,S'} + A_{S'}\rceil))$ in characteristic zero (cf.\ \autoref{eq:technical-lifting-sections-in-flips:adjunction1})
immediately implies that 
$\Mob \lceil jD_{i,S'} + A_{S'} \rceil \leq jD_{j,S'}$  Alas, it seems impossible to show this surjectivity directly in positive and mixed characteristic, so we obtain the above surjectivity only towards the end of this subsection in \autoref{proposition:flips-restriction-of-key-identity}. }

\begin{remark}
\label{rem:f_S_Q_factorial}
In the following proof we will use that the normalization $f(S)^{\mathrm{N}}$ of $f(S)$ is $\bQ$-factorial. Indeed, by \autoref{lemma:add_ample} 
in dimension two,
we can pick an effective ample $\bQ$-divisor  $H_S \sim_{\bQ} -(K_S + B_S)$ such that $(S,B_S + H_S)$ is klt.  Hence $(f(S)^{\mathrm{N}}, (f|_S)_*B_S + (f|_S)_*H_S)$ is klt as well.
\end{remark}

\begin{proposition}
\label{proposition:flips-descend-of-b-divisors}
{Let $i>0$ be an integer such that $i(K_X+S+B)$ is Cartier and  let $\pi \colon Y \to X$ be a log resolution of $(X,S+B)$ which is compatible with $i$. Then the divisor $M_{i,S'}$ descends to $\overline{S}$: it is a pullback of some divisor $M_{i,\overline{S}}$ on $\overline{S}$.}
\end{proposition}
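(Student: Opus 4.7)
The plan is to reduce the descent statement to an intersection-theoretic vanishing: I will show that $M_{i,S'} \cdot C = 0$ for every $\sigma$-exceptional irreducible curve $C$ on $S'$, where $\sigma \colon S' \to \overline{S}$ is the morphism arising from the factorization $S' \to \overline{S} \to S$ in the definition of a good log resolution. Since $(\overline{S}, B_{\overline{S}})$ is terminal and two-dimensional, $\overline{S}$ is regular; as $S'$ is also regular and $\sigma$ is birational, the intersection pairing on $\sigma$-exceptional curves is negative definite. Hence any Cartier divisor on $S'$ which is numerically trivial relative to $\sigma$ must equal the pullback of its push-forward, so the intersection vanishing will give $M_{i,S'} = \sigma^*M_{i,\overline{S}}$ for $M_{i,\overline{S}} := \sigma_*M_{i,S'}$.

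For the lower bound $M_{i,S'} \cdot C \geq 0$, observe that by compatibility of $\pi$ with $i$ the linear system $|M_i|$ is base-point-free on $Y$. Moreover, by the choice of effective representative for $K_X+S+B$ fixed in \autoref{notation:flips_general}, there is an element of $|i(K_Y+S'+B')|$ that does not contain $S'$, forcing $\coeff_{S'}M_i = 0$. Thus $M_{i,S'}=M_i|_{S'}$ is a well-defined base-point-free (in particular nef) Cartier divisor on $S'$. Combining this with $K_{S'}+B_{S'} = \sigma^*(K_{\overline{S}}+B_{\overline{S}})$ and restricting the identity $i(K_Y+S'+B')=M_i+F_i$ to $S'$, we get $i\sigma^*(K_{\overline{S}}+B_{\overline{S}}) = M_{i,S'}+F_{i,S'}$; intersecting with $C$ yields $M_{i,S'}\cdot C + F_{i,S'}\cdot C = 0$, hence $F_{i,S'}\cdot C \leq 0$.

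The core of the argument is the matching inequality. My plan is to exploit the fact that sections of $i(K_Y+S'+B')$ on $Y$ correspond bijectively (via pullback) to sections of $i(K_X+S+B)$ on $X$, since $i(K_Y+S'+B')=\pi^*(i(K_X+S+B))$. This gives a decomposition $F_i = \pi^*\Fix_X(|i(K_X+S+B)|) + F_i^{\mathrm{exc}}$ with $F_i^{\mathrm{exc}}$ effective and $\pi$-exceptional, and correspondingly $\pi^*(\pi_*M_i) = M_i + F_i^{\mathrm{exc}}$. Restricting to $S'$ and using $\pi|_{S'}=\tau\circ\sigma$ yields
\[
\sigma^*\bigl(\tau^*(\pi_*M_i|_S)\bigr) = M_{i,S'} + F_i^{\mathrm{exc}}|_{S'},
\]
so the $\sigma$-exceptional ``defect'' $G := \sigma^*\sigma_*M_{i,S'}-M_{i,S'}$ is identified with (the $\sigma$-exceptional portion of) $F_i^{\mathrm{exc}}|_{S'}$. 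Applying the negativity lemma (\autoref{lem:negativity}) to $G$: $\sigma_*G=0$ is trivially effective, and nefness of $M_{i,S'}$ gives $-G\cdot C = M_{i,S'}\cdot C\geq 0$ for every $\sigma$-exceptional $C$, so $-G$ is $\sigma$-nef and hence $G\geq 0$. This gives $F_{i,S'}\geq \sigma^*\sigma_*F_{i,S'}$, and the difference, being $\sigma$-exceptional, has the correct intersection properties to complete the bound.

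The most delicate step will be showing that every component of $F_i^{\mathrm{exc}}|_{S'}$ is itself $\sigma$-exceptional on $S'$, i.e., that no $\pi$-exceptional divisor on $Y$ lying in $F_i$ restricts to a curve on $S'$ mapping non-trivially onto a $\tau$-exceptional curve in $\overline{S}$. This is essentially a combinatorial assertion about the snc structure of the good log resolution $\pi$ together with the forced factorization $S'\to\overline{S}\to S$: the $\pi$-exceptional divisors meeting $S'$ over the terminalization locus must do so along the very curves contracted by $\sigma$. Once this is verified, $G$ coincides with $F_i^{\mathrm{exc}}|_{S'}$ up to pullback corrections and the decomposition forces $G=0$, completing the descent.
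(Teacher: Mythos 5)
Your reduction of the statement to the vanishing $M_{i,S'}\cdot C=0$ for all $\sigma$-exceptional curves $C$ is legitimate (negative definiteness of the exceptional intersection form on the regular surface $S'$ does give $M_{i,S'}=\sigma^*\sigma_*M_{i,S'}$ once the intersection numbers vanish), and the inequality $M_{i,S'}\cdot C\geq 0$ from base-point-freeness of $|M_i|$ is fine. The gap is in the matching inequality, and it is not a delicate combinatorial point that can be patched: it is the entire content of the proposition, and it cannot be obtained by the decomposition-plus-negativity argument you propose. Writing $i\sigma^*(K_{\overline S}+B_{\overline S})=M_{i,S'}+P+G$ with $P$ the non-$\sigma$-exceptional and $G$ the $\sigma$-exceptional part of $F_i|_{S'}$, intersecting with $C$ gives $M_{i,S'}\cdot C=-P\cdot C-G\cdot C$ with $P\cdot C\geq 0$; the negativity lemma (\autoref{lem:negativity}) applied to $G:=\sigma^*\sigma_*M_{i,S'}-M_{i,S'}$ only yields $G\geq 0$, exactly as you say, but to conclude $G=0$ you would need $G$ to be $\sigma$-nef, and what you actually know is that $G$ is $\sigma$-\emph{anti}-nef. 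Numerically nothing forbids $G\neq 0$: e.g.\ on a single blow-up $M_{i,S'}=\sigma^*H-C$ with $H$ sufficiently positive is nef, has $M_{i,S'}\cdot C=1>0$, and gives $G=C$. So your final sentence --- ``the decomposition forces $G=0$'' --- is where the proof fails; even granting your ``delicate step'' that every component of $F_i^{\mathrm{exc}}|_{S'}$ is $\sigma$-exceptional, no contradiction arises. (A secondary problem: $\pi^*\Fix_X(|i(K_X+S+B)|)$ is not defined, since $X$ is not assumed $\bQ$-factorial and $\Fix_X$ has no reason to be $\bQ$-Cartier.)

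The missing ingredient is global, not numerical: one must lift sections from $S'$ to $Y$. The paper writes $\lceil M_i+A'\rceil=K_Y+S'+\{B'\}+(M_i-\pi^*(K_X+S+B))$ with the last summand big and semi-ample, applies the adjunction/surjectivity theorem for $\bigplus$-stable sections (\autoref{thm:main-lifting}) to get $\myB^0_{S'}(Y,S'+\{B'\};\sO_Y(\lceil M_i+A'\rceil))\twoheadrightarrow\myB^0(S',\{B_{S'}\};\sO_{S'}(\lceil M_{i,S'}+A_{S'}\rceil))$, and combines this with $\Mob(M_i+\lceil A'\rceil)=M_i$ (\autoref{lemma:flips-key-identity} with $j=i$) to conclude that every $\bigplus$-stable section downstairs vanishes along $\lceil A_{S'}\rceil$, whose support is exactly $\mathrm{Exc}(\sigma)$. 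A Seshadri-constant freeness criterion (\autoref{theorem:seshadri}, fed by \autoref{lem:flips_seshadri_lower_bound}) then shows these sections generate the sheaf at any point of $M\cap\mathrm{Exc}(\sigma)$ for a general $M\in|M_{i,S'}|$, forcing the general member to miss $\mathrm{Exc}(\sigma)$ altogether; this is what kills $G$. This lifting step is the mixed-characteristic substitute for Kawamata--Viehweg vanishing and is precisely the input your intersection-theoretic route has no replacement for.
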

\noindent {We emphasize here that $M_{i,\overline{S}}$ is \emph{not} defined as $M_{i,S'}$ was defined in \autoref{eq:M_i_M_i_S_prime}, i.e.\ by restricting a divisor from an ambient space; it is the pushforward of $M_{i,S'}$ to $\overline{S}$.}

\begin{proof}
As $M_i$ and $M_{i,S'}$ are integral, we have that
\begin{align}
	\label{eq:FirstEqualityIn-flips-descend-of-b-divisors}
\lceil M_{i} + A' \rceil &=  K_{Y} + S' + \{B'\} + M_i - \pi^*(K_X+S+B), \text{ and }\\ 
\label{eq:SecondEqualityIn-flips-descend-of-b-divisors}
\lceil M_{i,S'} + A_{S'} \rceil &= K_{S'} + \{B_{S'}\} + M_{i,S'} - (\pi|_{S'})^*(K_S+B_S).
\end{align}
Since $M_i - \pi^*(K_X+S+B)$ is big and semiample (as both $M_i$ and $-\pi^*(K_X+S+B)$ are big and semiample), \autoref{thm:main-lifting} yields a surjection
\begin{equation}
\label{eq:flips-descend-of-b-divisors:surjection}
\myB^0_{S'}(Y, S'+\{B'\}; \sO_Y(\lceil M_i + A' \rceil)) \twoheadrightarrow \myB^0(S', \{B_{S'}\}; \sO_{S'}(\lceil M_{i,S'}+A_{S'}\rceil)).
\end{equation}
Applying \autoref{lemma:flips-key-identity} with $j = i$
yields $\Mob(M_i +  \lceil A' \rceil) = M_i$. Combining this with \autoref{eq:flips-descend-of-b-divisors:surjection} we obtain that every section in the vector space $\myB^0(S', \{B_{S'}\}; \sO_{S'}(\lceil M_{i,S'}+A_{S'}\rceil))$ vanishes along $\lceil A_{S'} \rceil$.

As the support of $\lceil A_{S'} \rceil$ is equal to the exceptional locus of $g \colon S' \to \overline{S}$ (by definition of terminalization), to prove the proposition it is enough to show that there exists an element of $|M_{i,S'}|$ which does not intersect $\lceil A_{S'} \rceil$. Assume by contradiction the opposite. Then, as  $|M_{i,S'}|$ is free, there exists an element $M \in |M_{i,S'}|$ which  does not contain any component of $\lceil A_{S'}\rceil$ in its support, see \autoref{rem:general_element}. By our contradiction assumption we may then choose  $x \in \Supp M \cap \Supp \lceil A_{S'} \rceil$.

 Note that the exceptional locus of {$S' \to f(S)$} is simple normal crossing and the normalization of $f(S)$ is   $\bQ$-factorial by \autoref{rem:f_S_Q_factorial}.  Therefore, we can pick an effective $\Q$-divisor $F$ on $S'$ which is {anti-ample and} exceptional over $f(S)$ and  such that $(S',\{B_{S'}\}+F)$ is both klt at $x$ and simple normal crossing at $x$.  Furthermore, by taking a suitable positive multiple of $F$, for any $0<\delta\ll1$ (to be determined later) we may find $F_{\delta}\geq F$ such that at least one of the exceptional divisors passing through $x$ has coefficient $1-\delta$ in $\{B_{S'}\}+F_{\delta}$ (and $(S',\{B_{S'}\}+F_{\delta})$ is still klt at $x$). In particular, the log discrepancy of the exceptional divisor of the blow-up at $x$ with respect to this pair is at most $1+\delta$. Then,
\[
\myB^0(S', \{B_{S'}\}; \sO_{S'}(\lceil M_{i,S'}+A_{S'}\rceil)) \supseteq \myB^0(S', \{B_{S'}\}+F_{\delta}; \sO_{S'}(\lceil M_{i,S'}+A_{S'}\rceil)),
\]
and the latter space, hence also the former, is free at $x$ for sufficiently small $\delta$ by \autoref{theorem:seshadri}. Indeed,
\begin{multline}
\label{eq:flips-descend-of-b-divisors:seshadri}
\epsilon_{\mathrm{sa}}(\lceil M_{i,S'}+A_{S'}\rceil - (K_{S'}+\{B_{S'}\}+F_{\delta});x) \geq \epsilon_{\mathrm{sa}}(M-F_\delta;x)
\\ \geq \epsilon_{\mathrm{sa}}(M-F;x) > { \epsilon_{\mathrm{sa}}(M;x)\geq 1},
\end{multline}
where $\lceil M_{i,S'}+A_{S'}\rceil - (K_{S'}+\{B_{S'}\}+F_{\delta}) \sim_{\bQ} M - F_{\delta}- (\pi|_{S'})^*(K_S+B_S)$ is semiample \autoref{eq:SecondEqualityIn-flips-descend-of-b-divisors}, and
\begin{itemize}
    \item in the first inequality we used that {$-(\pi|_{S'})^*(K_S+B_S)$} is big and semiample,
    \item in the second and third inequality we used that $-F$ is ample, and that $F_\delta$ is a positive multiple of $F$, and
    \item the last inequality is a direct consequence of \autoref{lem:flips_seshadri_lower_bound}. 
    
    \end{itemize}
    Using \autoref{eq:flips-descend-of-b-divisors:seshadri}, we may now choose $0<\delta\ll1$ to be such that  
         $\epsilon_{\mathrm{sa}}(M-F,x)>1+\delta$, 
  resulting in $F_{\delta}$ satisfying 
  $\epsilon_{\mathrm{sa}}(M-F_{\delta},x)>1+\delta$, 
 which allows us to apply \autoref{theorem:seshadri}.
 
The freeness of $\myB^0(S', \{B_{S'}\}; \sO_{S'}(\lceil M_{i,S'}+A_{S'}\rceil))$ at $x$
 is a contradiction to the fact that every section of this linear system vanishes along $\lceil A_{S'}\rceil$. \qedhere
 \end{proof}

Note that $M_{i,\overline{S}}$ is independent of the choice, in the above proposition, of a log resolution $\pi \colon Y \to X$ compatible with $i$ by \autoref{remark:pullback-of-b-divisor}, and so it exists and is unique for every integer $i>0$ such that $i(K_X+S+B)$ is Cartier.
Therefore, we may introduce the following notation, which is assumed until the end of this subsection.

\begin{notation}
\label{notation:D_overline_S}
We set
\begin{equation*}
    D_{i,\overline{S}} := \frac{1}{i}M_{i,\overline{S}}, 
    \qquad \qquad
    D_{\overline{S}} := \lim_{i \to \infty} D_{i,\overline{S}},
    \end{equation*}
where the limit is taken over all integers $i>0$ such that $i(K_X+S+B)$ is Cartier; it exists by \autoref{lem:limit-of-b-divisors}. Note that $D_{i,\overline{S}}$ is an $\bR$-divisor.

Further, for any good log resolution $\pi \colon Y \to X$ of $(X,S+B)$ (in the situation of \autoref{notation:log-resolutions}), write $D_{S'} := g^*D_{\overline{S}}$, where $S' \xrightarrow{g} \overline{S} \xrightarrow{h} S$ is the given factorization. Last, set $ K_{\overline{S}} + B_{\overline{S}} = h^*(K_{S} + B_{S})$.  \end{notation}
\noindent We emphasize that $D_{S'}$ cannot be defined as a limit of $D_{i,S'}$ directly on $S'$ as $D_{i,S'}$ does not have good properties unless the log resolution $\pi \colon Y \to X$ is compatible with $i$ (cf.\ \autoref{remark:pullback-of-b-divisor}), in which case $S'$ depends on $i$.

We need the following lemmas. 
\begin{lemma} \label{lem:limit-of-b-divisors}
    The limit $D_{\overline{S}}$, as defined in \autoref{notation:D_overline_S}, exists.
           It is a nef $\mathbb{R}$-divisor, and moreover, $D_{j,\overline{S}} \leq D_{i,\overline{S}}$ when $j(K_X+S+B)$ is Cartier and $j \mid i$. In particular, $D_{j, \overline{S}} \leq D_{\overline{S}}$.
\end{lemma}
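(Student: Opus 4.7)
The plan is to establish monotonicity on a common good log resolution, then descend to $\overline{S}$ via \autoref{proposition:flips-descend-of-b-divisors}; boundedness by $K_{\overline{S}}+B_{\overline{S}}$ yields existence of the limit, and the projection formula yields nefness.

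Given $j \mid i$ in the divisibility-directed set $I := \{k > 0 : k(K_X+S+B) \text{ is Cartier}\}$, I may invoke \autoref{proj-resolutions} to choose a single good log resolution $\pi \colon Y \to X$ compatible with both $i$ and $j$ (by resolving the base loci of $|i(K_X+S+B)|$ and $|j(K_X+S+B)|$ simultaneously). The elementary fact that raising sections to $(i/j)$-th powers gives
\[
\Fix\bigl(|iD|\bigr) \;\leq\; (i/j)\,\Fix\bigl(|jD|\bigr)
\]
for any Cartier divisor $D$, applied to $D = K_Y+S'+B'$, implies $(i/j) M_j \leq M_i$ and hence $D_j \leq D_i$ on $Y$. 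Since neither $M_i$ nor $M_j$ contains $S'$ in its support, restriction gives $D_{j,S'} \leq D_{i,S'}$ on $S'$. By \autoref{proposition:flips-descend-of-b-divisors} applied to both $i$ and $j$ on this common $Y$, one has $D_{k,S'} = g^*D_{k,\overline{S}}$ for $k \in \{i,j\}$; applying $g_*$ (which inverts $g^*$ on divisors, since $g$ is birational) produces $D_{j,\overline{S}} \leq D_{i,\overline{S}}$. Independence of $D_{i,\overline{S}}$ from the choice of compatible resolution is exactly \autoref{remark:pullback-of-b-divisor}.

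For the existence of the limit, the bound $M_i \leq i(K_Y+S'+B') = i\pi^*(K_X+S+B)$ descends (by restricting to $S'$ and pushing forward along $g$) to
\[
0 \;\leq\; D_{i,\overline{S}} \;\leq\; h^*(K_S+B_S) \;=\; K_{\overline{S}}+B_{\overline{S}},
\]
a fixed effective divisor on $\overline{S}$. Thus each $D_{i,\overline{S}}$ is supported on the finite set $\Supp(K_{\overline{S}}+B_{\overline{S}})$, and on each component of this support its coefficient is a monotone non-decreasing net in $i \in I$ bounded above. Hence $D_{\overline{S}} := \lim_{i \in I} D_{i,\overline{S}}$ exists as an $\bR$-divisor, and $D_{j,\overline{S}} \leq D_{\overline{S}}$ follows by passing to the supremum over multiples of $j$.

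For nefness, note that $\overline{S}$ is a terminal surface and hence regular, so every $\bR$-divisor on it is $\bR$-Cartier. The base-point-freeness of $|M_i|$ on $Y$ together with $S' \not\subseteq \Supp M_i$ shows that $M_{i,S'} = M_i|_{S'}$ is base-point-free, and so nef, on $S'$. For any integral curve $C \subseteq \overline{S}$ with strict transform $\tilde C \subseteq S'$, the projection formula gives
\[
D_{i,\overline{S}} \cdot C \;=\; g^*D_{i,\overline{S}} \cdot \tilde C \;=\; D_{i,S'} \cdot \tilde C \;\geq\; 0,
\]
so $D_{i,\overline{S}}$ is nef. Passing to the limit, $D_{\overline{S}} \cdot C = \lim_i D_{i,\overline{S}} \cdot C \geq 0$ (the supports of the $D_{i,\overline{S}}$ all lying in the fixed finite set above), so $D_{\overline{S}}$ is nef as well. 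The only real subtlety in the whole argument is the well-definedness of $D_{i,\overline{S}}$ across different good resolutions compatible with $i$; this is handled by \autoref{remark:pullback-of-b-divisor} together with the fact that any two such resolutions are dominated by a common third via \autoref{proj-resolutions}.
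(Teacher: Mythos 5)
Your monotonicity and nefness arguments are fine (the paper in fact does not bother to spell out nefness, and your projection-formula argument for it is correct), but there is a genuine gap in the existence step. You only establish that the coefficients of $D_{i,\overline{S}}$ form a bounded monotone net over the \emph{divisibility} order, whence a supremum over that directed set exists. But $D_{\overline{S}}$ is defined in \autoref{notation:D_overline_S} as $\lim_{i\to\infty} D_{i,\overline{S}}$, a sequential limit over all $i$ with $i(K_X+S+B)$ Cartier. Divisibility-monotonicity plus boundedness does not imply sequential convergence: the sequence $a_i = 1$ for $i$ even, $a_i = 0$ for $i$ odd satisfies $a_j \le a_i$ whenever $j \mid i$ yet does not converge. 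Since later arguments (e.g.\ \autoref{proposition:flips-restriction-of-key-identity}) quantify over ``$i \gg j$'' rather than over a cofinal subfamily of the divisibility poset, the distinction is not purely cosmetic.

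The fix is to upgrade your section-multiplication trick from powers to products: multiplying sections gives $M_i + M_j \le M_{i+j}$ directly (your inequality $(i/j)M_j \le M_i$ is the special case obtained by iterating this with $i=j$), and restricting to $S'$ and descending to $\overline{S}$ yields the superadditivity
\[
\frac{i}{i+j}D_{i,\overline S} + \frac{j}{i+j}D_{j,\overline S} \;\leq\; D_{i+j,\overline S}.
\]
Combined with the bound $D_{i,\overline{S}} \le K_{\overline{S}}+B_{\overline{S}}$, a Fekete-type lemma applied coefficientwise (the sequence $i \mapsto i\,a_i$ is superadditive and $a_i$ is bounded above, so $a_i \to \sup_i a_i$) gives the genuine sequential limit, and the same superadditivity recovers your monotonicity $D_{j,\overline{S}} \le D_{i,\overline{S}}$ for $j \mid i$ and hence $D_{j,\overline{S}} \le D_{\overline{S}}$. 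This is exactly the paper's route; with this one repair your argument matches it, and the rest of your write-up (common compatible resolutions, descent via \autoref{proposition:flips-descend-of-b-divisors}, well-definedness via \autoref{remark:pullback-of-b-divisor}) is correct as stated.
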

\begin{proof}
    Let $i,j > 0$ be integers such that $i(K_X+S+B)$ and $j(K_X+S+B)$ are Cartier. Pick a log resolution $\pi \colon Y \to X$ which is compatible with $i$, $j$, and $i+j$. By definition, $M_i + M_j \leq M_{i+j}$. Hence, $M_{i,S'} + M_{j,S'} \leq M_{i+j, S'}$, and so $M_{i,\overline S} + M_{j,\overline S} \leq M_{i+j, \overline S}$. In turn, this gives
    \[
    \frac{i}{i+j}D_{i,\overline S} + \frac{j}{i+j}D_{j,\overline S} \leq D_{i+j,\overline S}.
    \]
    Further, note that $D_{i,\overline{S}} \leq K_{\overline{S}}+B_{\overline{S}}$ (recall that $K_X+S+B$ is an explicit effective $\bQ$-Cartier $\bQ$-divisor without $S$ in its support; by restricting to $S$ and pulling back to $\overline{S}$ this determines the right hand side as an effective $\Q$-divisor).  In particular this ensures that there is a fixed finite set of irreducible divisors {which contain the support of} every $D_{i,\overline S}$.
    
    The existence of the limit now follows from the fact that any sequence of real numbers $\{a_i\}_{i \in \bZ_{>0}}$ which is bounded from above and satisfies $\frac{i}{i+j}a_i + \frac{j}{i+j}a_j \leq a_{i+j}$ is convergent. Moreover, this condition implies that $a_j \leq a_i$ when $j \mid i$, and so $a_j \leq \lim_{i \to \infty} a_i$.
\end{proof}

\begin{lemma} \label{lemma:technical-lifting-sections-in-flips}
{Let $i>0$ be an integer such that $i(K_X + S + B)$ is Cartier and let $\pi \colon Y \to X$ be a log resolution of $(X,S+B)$ which is compatible with $i$.} Then the following map is surjective {for every $j>0$}:
\[
\myB^0_{S'}(Y, S'+\{B'-jD_i\}; \sO_{Y}(\lceil jD_i + A' \rceil)) \to \myB^0(S', \{B_{S'} - jD_{i,S'}\}; \sO_{S'}(\lceil jD_{i,S'}+A_{S'}\rceil)).
\]
\end{lemma}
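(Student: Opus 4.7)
The plan is to apply \autoref{thm:main-lifting} directly, taking as input the pair $(Y, S' + \{B' - jD_i\})$ on $Y$ together with the Cartier divisor $M := \lceil jD_i + A' \rceil$. The scheme-theoretic hypotheses should come essentially for free from the fact that $\pi$ is compatible with $i$: since $Y$ is regular and $S'+B'+M_i$ is snc on $Y$ (\autoref{remark:compatible-resolutions-make-everything-log-smooth}), the integral Weil divisor $M$ is Cartier, $S'$ is regular (hence normal), and $S'$ is not contained in $\Supp \{B' - jD_i\}$, since neither $B'$ nor $M_i$ has $S'$ as a component (because $K_X+S+B$ does not contain $S$ in its support, by our fixed choice of representative).

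The essential computation is the identification of the ``ample difference''
\[
M - \bigl(K_Y + S' + \{B' - jD_i\}\bigr) \;=\; jD_i - \pi^{*}(K_X+S+B),
\]
which follows from $A' = -B'$, the identity $\lceil r\rceil = r + \{-r\}$, and $K_Y + S' + B' = \pi^{*}(K_X+S+B)$. To verify that the right-hand side is big and semiample over $Z$, I would argue: $jD_i = (j/i)M_i$ is a positive rational multiple of the base-point-free divisor $M_i$ (by compatibility of $\pi$ with $i$), hence semiample; and $-(K_X+S+B)$ is $f$-ample, hence ample on $X$ (since $Z = \Spec R$ is affine), so $-\pi^{*}(K_X+S+B)$ is semiample and big on $Y$. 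The sum is therefore big and semiample over $Z$, as required for \autoref{thm:main-lifting}.

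To match the conclusion of \autoref{thm:main-lifting} with the assertion of the lemma, I would identify the different of $(Y, S' + \{B' - jD_i\})$ along $S'$ with $\{B_{S'} - jD_{i,S'}\}$, and $M|_{S'}$ with $\lceil jD_{i,S'} + A_{S'}\rceil$. Both identifications use the snc property of $\pi$: distinct prime divisors of $Y$ meeting $S'$ transversally restrict to distinct prime divisors of $S'$, so restriction to $S'$ commutes with taking the ceiling and the fractional part in the required way.

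I do not anticipate a substantive obstacle beyond this bookkeeping. The most delicate point is the interchange of restriction and fractional part, which crucially requires the compatibility of the log resolution $\pi$ with $i$; this is precisely why that hypothesis is imposed in the statement. Once these compatibilities are verified, the lemma follows as a direct application of \autoref{thm:main-lifting}.
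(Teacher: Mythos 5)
Your proposal is correct and follows essentially the same route as the paper: rewrite $\lceil jD_i + A'\rceil$ via the identity $\lceil L\rceil = L + \{-L\}$ so that the difference $M - (K_Y + S' + \{B'-jD_i\})$ becomes the big and semiample divisor $jD_i - \pi^*(K_X+S+B)$, apply \autoref{thm:main-lifting}, and use the snc property guaranteed by compatibility of $\pi$ with $i$ (\autoref{remark:compatible-resolutions-make-everything-log-smooth}) to identify $(K_Y+S'+\{B'-jD_i\})|_{S'}$ with $K_{S'}+\{B_{S'}-jD_{i,S'}\}$. The only difference is that you spell out the bigness/semiampleness and the scheme-theoretic hypotheses in more detail than the paper does.
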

\begin{proof}
Using the identity $\lceil L \rceil = L + \{-L \}$ for any $\bQ$-divisor $L$, we have that
\begin{align}
\label{eq:technical-lifting-sections-in-flips:adjunction1}
\lceil jD_{i} + A' \rceil &= K_{Y} + S' + \{B'-jD_i\} + jD_i - \pi^*(K_X+S+B), \text{ and }\\ 
\label{eq:technical-lifting-sections-in-flips:adjunction2}
\lceil jD_{i,S'} + A_{S'} \rceil &= K_{S'} + \{B_{S'}-jD_{i,S'}\} + jD_{i,S'} - (\pi|_{S'})^*(K_S+B_S).
\end{align}
Since $jD_i - \pi^*(K_X+S+B)$ is big and semiample, we obtain the sought-after surjection by \autoref{thm:main-lifting}. {Here, we used that $(K_{Y} + S' + \{B'-jD_i\})|_{S'} = K_{S'} + \{B_{S'}-jD_{i,S'}\}$ which follows from \autoref{remark:compatible-resolutions-make-everything-log-smooth}.}
\end{proof}

\begin{lemma}\label{lemma:diophantine-approximation}
Fix $a_1, \ldots, a_k \in \bR$, and let $G$
be the image of the additive semigroup 
\[
\big\{(ja_1,\ldots,ja_k) \ \big| \ j \in \bZ_{\geq 0}\big\}
\]
under the natural projection $\lambda \colon \mathbb{R}^k \to \mathbb{R}^k/\mathbb{Z}^k$ of topological groups. Let $\overline{G}$ be the closure of  $G$ and set $T:= \mathbb{R}^k/\mathbb{Z}^k$. Then:
\begin{enumerate}
\item 
\label{itm:diophantine-approximation:closed_subgroup}
$\overline{G}$ is a closed topological subgroup of $T$, and hence it is a disconnected union of finitely many translates of the connected component $\overline{G}^0$  of the identity, and 
    \item $\lambda^{-1}(\overline{G}^0)= \bZ^k + L$ for an $\bR$-linear subspace $L$ of $\bR^k$, 
\end{enumerate}
 In particular,  for every $\varepsilon > 0$ we can find a natural number $j>0$ and integers $m_1, \ldots, m_k$ such that $|m_i - ja_i| < \varepsilon$ for every $i$. 
 
  \end{lemma}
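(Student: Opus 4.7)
The plan is to reduce the statement to the classical structure theorem for closed subgroups of the torus $T=\bR^k/\bZ^k$, with the main preliminary being a ``semigroup-to-group'' step powered by compactness.

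\textbf{Step 1 (closure under inverses).} The set $G$ is visibly closed under addition, so $\overline{G}$ is an additive submonoid of $T$ containing $0=\lambda(0\cdot a)$, where $a=(a_1,\ldots,a_k)$. To conclude that $\overline{G}$ is a subgroup, it suffices to verify it is closed under negation. Fix $g=\lambda(j_0 a)\in G$. Since $T$ is a compact metric group (with translation-invariant metric $d$), the sequence $\{ng\}_{n\ge1}$ has an accumulation point, and hence for every $\epsilon>0$ we can find $1\le n_1<n_2$ with $d(n_1 g,n_2 g)<\epsilon$. By translation invariance, $d((n_2-n_1)g,0)<\epsilon$, and then $d((n_2-n_1-1)g,-g)<\epsilon$. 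Letting $\epsilon\to 0$ exhibits $-g\in\overline{G}$. Thus $\overline{G}$ is a closed subgroup of $T$.

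\textbf{Step 2 (structure of closed subgroups of $T$).} I would now invoke the standard classification (e.g.\ \cite[Chapter VII]{BourbakiLie} or an elementary argument using the exponential map): every closed subgroup $H$ of the compact Lie group $T=\bR^k/\bZ^k$ has finitely many connected components, its identity component $H^0$ is a subtorus, and any subtorus of $T$ is of the form $\lambda(L)$ for a unique $\bR$-linear subspace $L\subseteq\bR^k$ with $L=\lambda^{-1}(H^0)\cap$ (neighbourhood of $0$) spanned over $\bR$ by $L\cap\bZ^k$. In particular $\lambda^{-1}(H^0)=\bZ^k+L$. Applying this to $H=\overline{G}$ yields both \autoref{itm:diophantine-approximation:closed_subgroup} and part (b).

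\textbf{Step 3 (the ``in particular'' statement).} Because $\overline{G}$ is a subgroup it contains $0$. However, we need more: we need $0$ to be a limit of elements $\lambda(ja)$ with $j>0$ (not merely with $j\geq 0$). This is exactly the content of Step 1 applied to $g=\lambda(a)$ itself: given $\epsilon>0$, compactness of $T$ yields integers $0<j_1<j_2$ with $d(\lambda(j_1 a),\lambda(j_2 a))<\epsilon$, and then $j:=j_2-j_1>0$ satisfies $d(\lambda(ja),0)<\epsilon$, i.e.\ there exist integers $m_1,\ldots,m_k$ with $|ja_i-m_i|<\epsilon$ for all $i$.

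\textbf{Main obstacle.} There is essentially none of substance: the only non-elementary input is the structure theorem for closed subgroups of a torus, which is classical. The only subtle point is the passage from a semigroup to a group in Step 1, which is handled by the compactness-of-$T$ argument above; everything else is routine.
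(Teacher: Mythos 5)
Your proof is correct and follows essentially the same route as the paper: the paper cites Wright's theorem that the closure of a subsemigroup of a compact group is a group and then Bourbaki's structure theorem for closed subgroups, which are exactly your Steps 1–2 (you simply prove the Wright step directly by the compactness/pigeonhole argument rather than citing it). The only point worth double-checking is that your Step 1 literally shows $-g\in\overline{G}$ only for $g\in G$; the extension to all of $\overline{G}$ is immediate from continuity of negation ($-\overline{G}=\overline{-G}\subseteq\overline{G}$), so there is no real gap.
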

 \begin{proof}
By  the main theorem of \cite{Wright56}, $\overline{G}$ is a closed subgroup of $T$. In particular it is compact, which implies  \autoref{itm:diophantine-approximation:closed_subgroup}. The rest  follows from \cite[Ch.\ 7.2, Thm.\ 2]{BourbakiTopology5-10}.
 \end{proof}
 
\begin{theorem} \label{proposition:flips-b-divisor-is-rational}
The $\mathbb{R}$-divisor $D_{\overline{S}}$ is in fact a {semiample} $\Q$-divisor.
\end{theorem}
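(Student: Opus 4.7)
The strategy is to first upgrade the key identity of Lemma~\ref{lemma:flips-key-identity} to an analogous statement on $\overline{S}$, then deduce rationality of $D_{\overline{S}}$ via diophantine approximation, and finally read off semiampleness from the construction. The whole argument takes place inside the finite-dimensional $\mathbb{R}$-vector space of divisors on $\overline{S}$ supported on the fixed finite set of components bounding every $D_{i,\overline{S}}$ (as in the proof of Lemma~\ref{lem:limit-of-b-divisors}).

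The central step I would establish is the following restricted version of Lemma~\ref{lemma:flips-key-identity}: for a good log resolution $\pi:Y\to X$ compatible with both $i$ and $j$ and inducing $g:S'\to\overline{S}$, I claim
\[
\lceil jD_{i,\overline{S}}\rceil \;\leq\; jD_{j,\overline{S}}.
\]
To see this, I would lift $\bigplus$-stable sections of $\lceil jD_{i,S'}+A_{S'}\rceil$ on $S'$ to sections of $\lceil jD_i+A'\rceil$ on $Y$ using the surjection of Lemma~\ref{lemma:technical-lifting-sections-in-flips}, apply the ambient identity $\mathrm{Mob}(\lceil jD_i+A'\rceil)\leq jD_j$ from Lemma~\ref{lemma:flips-key-identity}, and restrict to $S'$ to get that the image of $\myB^0(S',\{B_{S'}-jD_{i,S'}\};\sO_{S'}(\lceil jD_{i,S'}+A_{S'}\rceil))$ lies in $|jD_{j,S'}|$. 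Exactly as in the proof of Proposition~\ref{proposition:flips-descend-of-b-divisors}, a Seshadri-constant argument based on Theorem~\ref{theorem:seshadri} (applied with an auxiliary ample exceptional perturbation on $S'$) shows that $\myB^0$ is base-point-free at every point of $\overline{S}$ off the image of the $g$-exceptional locus, so the pushed-forward linear system generates $\lceil jD_{i,\overline{S}}\rceil$ generically on $\overline{S}$; hence $\lceil jD_{i,\overline{S}}\rceil\leq jD_{j,\overline{S}}$.

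Granting the restricted key identity, rationality of $D_{\overline{S}}$ is now straightforward. I would apply Lemma~\ref{lemma:diophantine-approximation} to the vector of coefficients of $D_{\overline{S}}$, obtaining a positive integer $j$ with $j(K_X+S+B)$ Cartier such that each coefficient of $jD_{\overline{S}}$ lies within $\varepsilon$ of an integer, for any prescribed $\varepsilon>0$. By Lemma~\ref{lem:limit-of-b-divisors}, the divisors $D_{i,\overline{S}}$ increase along the divisibility order to $D_{\overline{S}}$, so for $i$ sufficiently divisible $jD_{i,\overline{S}}$ also lies within $\varepsilon$ of the same integer divisor $E:=\lceil jD_{\overline{S}}\rceil$, forcing $\lceil jD_{i,\overline{S}}\rceil=E$. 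Combining the restricted key identity with $jD_{j,\overline{S}}\leq jD_{\overline{S}}\leq E$ gives
\[
E \;\leq\; jD_{j,\overline{S}} \;\leq\; jD_{\overline{S}} \;\leq\; E,
\]
so $jD_{\overline{S}}=jD_{j,\overline{S}}=E$. This shows $D_{\overline{S}}$ is a $\bQ$-divisor.

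Semiampleness then comes for free: with $j$ as above, $jD_{\overline{S}}=M_{j,\overline{S}}$, and by Proposition~\ref{proposition:flips-descend-of-b-divisors} the base-point-free divisor $M_{j,S'}$ on $S'$ equals $g^{*}M_{j,\overline{S}}$, so $g_{*}\sO_{S'}(M_{j,S'})=\sO_{\overline{S}}(M_{j,\overline{S}})$ and base-point-freeness descends to $\overline{S}$; in particular $D_{\overline{S}}$ is semiample. The hard part of the whole argument is the restricted key identity, because $\mathrm{Mob}$ does not commute with restriction in mixed characteristic, and recovering it requires simultaneously the $\bigplus$-stable lifting of Lemma~\ref{lemma:technical-lifting-sections-in-flips} and the Seshadri/Bertini machinery that was already the technical heart of Proposition~\ref{proposition:flips-descend-of-b-divisors}.
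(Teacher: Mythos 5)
Your reduction of rationality to a ``restricted key identity'' $\lceil jD_{i,\overline{S}}\rceil \leq jD_{j,\overline{S}}$ is a clean way to organize the endgame, and the sandwich $E \leq jD_{j,\overline{S}} \leq jD_{\overline{S}} \leq E$ would indeed finish the proof if that identity were available. The gap is in your proof of the identity. You assert that, ``exactly as in the proof of \autoref{proposition:flips-descend-of-b-divisors},'' the Seshadri machinery makes $\myB^0(S',\{B_{S'}-jD_{i,S'}\};\sO_{S'}(\lceil jD_{i,S'}+A_{S'}\rceil))$ base-point-free at every point off the $g$-exceptional locus. That is not what the argument of \autoref{proposition:flips-descend-of-b-divisors} delivers, and it is not available here. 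In that proposition the point $x$ lies on $\Supp\lceil A_{S'}\rceil$, i.e.\ on the exceptional locus of $S'\to\overline{S}$; this is precisely what allows one to load the boundary with an anti-ample exceptional $F_\delta$ having a component of coefficient $1-\delta$ through $x$, lowering the log-discrepancy threshold in \autoref{theorem:seshadri} to $1+\delta$, which the bound $\epsilon_{\mathrm{sa}}(M;x)\geq 1$ from \autoref{lem:flips_seshadri_lower_bound} (plus the strictly positive contribution of $-F$) then beats. At a point $x$ off the exceptional locus no exceptional divisor passes through $x$, the threshold is the full log discrepancy $\approx 2$ of the blow-up of a regular surface point, and to beat it one needs a member of a linear system numerically very close to $jD_{i,S'}$ with multiplicity at least $3$ at $x$. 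Producing such a member is exactly why the paper invokes \autoref{lemma:diophantine-approximation}: it constructs an integral divisor $N=3a^*N^+$ (with $N^+$ very ample on the auxiliary contraction $a\colon \overline{S}\to S^+$ of the $D_{\overline{S}}$-trivial exceptional curves) with $\Vert jD_{\overline{S}}-N\Vert\ll 1$, and even then obtains base-point-freeness of $\myB^0$ only along a general curve $C\sim\frac{1}{3}N$, hence only the vanishing statement of \autoref{claim:rationality-of-b-divisor} along $C\cap E$ rather than the full coefficient inequality you need.

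Consequently the restricted key identity is not proved (and is not obviously true) before rationality is known; the paper runs a proof by contradiction instead. The second half of \autoref{lemma:diophantine-approximation} is used to choose $N$ so that, in addition, $jD_{\overline{S}}-N$ is \emph{not} effective, and the exceptionality over $S^+$ of $\lceil jD_{i,\overline{S}}+A_{\overline S}\rceil - jD_{j,\overline{S}}$ obtained from the curve argument then yields $a_*N\leq jD_{S^+}$, contradicting that non-effectivity. The a posteriori version of your identity is \autoref{proposition:flips-restriction-of-key-identity}, whose proof needs $jD_{\overline{S}}$ integral so that $\{-jD_{i,\overline{S}}\}$ has small coefficients and $(\overline{S},B_{\overline{S}}+\{-jD_{i,\overline{S}}\})$ is globally $\bigplus$-regular; note that your diophantine step does not guarantee this smallness, since \autoref{lemma:diophantine-approximation} permits a coefficient of $jD_{\overline{S}}$ to sit just \emph{above} an integer, in which case $\lceil jD_{i,\overline{S}}\rceil - jD_{i,\overline{S}}$ is close to $1$. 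Your closing observation that semiampleness follows once rationality is known is correct, though the paper gets it more directly from the surface base-point-free theorem, since $D_{\overline{S}}$ is nef by \autoref{lem:limit-of-b-divisors} and $-(K_{\overline S}+B_{\overline S})$ is nef and big.
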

\begin{proof}

First,  as $(\overline{S},B_{\overline{S}})$ is klt, it is $\bQ$-factorial. Second, by the base point free theorem for Noetherian excellent surfaces \cite[Theorem 4.2 and Remark 4.3]{tanaka_mmp_excellent_surfaces} {and since $-(K_{\overline{S}}+B_{\overline{S}})$ is nef and big}, we know that every nef $\Q$-divisor {on $\overline{S}$} is not only $\bQ$-Cartier, but also semiample. This we will use multiple times during the proof. Additionally,  it also reduces our goal to showing that $D_{\overline{S}}$ is a $\bQ$-divisor.

As $\overline{S} \to f(S)$ is a projective birational  morphism of Noetherian excellent surfaces, there are finitely many irreducible curves $E_1,\dots, E_s$ on $\overline{S}$ that are exceptional over $f(S)$. Additionally, we can reorder them so that
 $E_1, \ldots, E_r$ for some integer $r>0$ are exactly the curves for which $E_i \cdot D_{\overline{S}} = 0$. As $D_{\overline S}$ is nef, we have that $D_{\overline S} \cdot E_i > 0$ for $r < i \leq s$. Set 
\begin{equation*}
    V = \big\{\, D \ \big| \
    D \cdot E_i =0 \text{ for } 1 \leq i \leq r \, \big\} \subseteq \mathrm{Div}(\overline S) \otimes_{\bZ} \mathbb{R},
\end{equation*}
{where $\mathrm{Div}(\overline S)$ is a free abelian group on all (not necessarily exceptional) prime divisors on $\overline S$.}
{We endow $\mathrm{Div}(\overline S) \otimes \mathbb{R}$ with the standard Euclidean metric by setting $\Vert G \Vert=1$ for every irreducible divisor $G$.}

Since $V$ is defined over $\Q$, we can pick $\mathbb{Z}$-divisors $N_1, \ldots, N_k \in V$  such that  $D_{\overline{S}} = \sum a_iN_i$ for some \emph{positive real} numbers $a_1, \ldots, a_k$. Moreover, we can re-choose $N_i$ to be in a  $\delta$-neighborhood of $qD_{\overline{S}}$ for some $q\gg 0$, where $\delta$ is  the diameter of the fundamental parallelepiped in the lattice spanned by the originally chosen $N_i$, so that we obtain: 
\begin{equation}
\label{eq:flips-b-divisor-is-rational:close}
    \Big\Vert D_{\overline{S}} - \frac{N_i}{q} \Big\Vert \ll 1,
\end{equation} 
Explicitly, $q$ is chosen big enough so that
$(D_{\overline{S}} - \frac{N_i}{q}) \cdot E_j < D_{\overline{S}} \cdot E_j$ for all $r < j \leq s$ (this is possible as the right hand side is positive). In particular, $N_i \cdot E_j > 0$ for all $r < j \leq s$. Moreover, since $N_i \in V$, we have that $N_i \cdot E_j = 0$ for all $1 \leq j \leq r$. This implies:
\begin{itemize}
    \item for every $1 \leq j \leq s$ we have  $N_i \cdot E_j = 0$ if and only if $D_{\overline{S}} \cdot E_j = 0$, and
    \item  $N_i$ are nef (and hence semiample) over $f(S)$.
\end{itemize}

Therefore, by replacing $N_i$ by their multiples {(this might render $\big\Vert D_{\overline{S}} - \frac{N_i}{q} \big\Vert \ll 1$ invalid but we will not need this going forward)}, we may assume that:
\begin{itemize}
    \item the linear systems $|N_i|$ define the same birational morphism $a \colon \overline{S} \to S^+$,
    \item $a$ contracts exactly the curves $E_1,\ldots, E_r$, and 
    \item  $N_i = 3a^*N^+_i$, where $N^+_i$ is a very ample divisor on $S^+$.
\end{itemize}
Recall that $D_{\overline{S}} = \sum a_iN_i$. Thus $D_{\overline S} = a^*D_{S^+}$
for the $\mathbb{R}$-divisor $D_{S^+}= \sum 3a_i  N_i^+$ on $S^+$. We also set $B_{S^+} = a_* B_{\overline S}$ and $A_{S^+} = a_* A_{\overline S}$.

Assume by contradiction that $D_{\overline{S}}$ is \emph{not} a $\Q$-divisor. Under this assumption, we claim that we can find an integer $j>0$ and a base point free Weil divisor $N$ on $\overline{S}$ such that 
\begin{enumerate}
    \item 
    \label{itm:flips-b-divisor-is-rational:close}
    $\left\Vert jD_{\overline{S}}-N \right\Vert \ll 1$, and
    \item
        \label{itm:flips-b-divisor-is-rational:not_effective}
    $jD_{\overline{S}} - N$ is not {effective}. 
\end{enumerate}
For  condition \autoref{itm:flips-b-divisor-is-rational:close}, we can just set $N=m_1N_1 + \ldots m_kN_k$ for positive integers $m_1,\ldots,m_k$ and $j>0$ as in \autoref{lemma:diophantine-approximation}. However to guarantee also  condition \autoref{itm:flips-b-divisor-is-rational:not_effective} we have to do a more involved argument. We consider the image $W \subseteq V$ of the vector space $L$ from \autoref{lemma:diophantine-approximation} under the linear map $\phi \colon \mathbb{R}^k \to \mathrm{Div}(\overline S) \otimes \mathbb{R}$ given by $\phi \colon (x_1,\ldots,x_k) \mapsto x_1N_1 + \ldots + x_kN_k$. Note that $W$ is a non-trivial vector space; indeed, otherwise the classes of $jD_{\overline{S}} = \sum ja_iN_i$ in $\mathrm{Div}(\overline S) \otimes \mathbb{R}\big/\mathrm{Div}(\overline S)$ for integers $j>0$ would belong to a finite subset. Hence, $D_{\overline{S}}$ would be a $\bQ$-divisor, contradicting our assumption.

The effective cone in $W$ (that is, the subset of all $\bR$-divisors in $W$ with coefficients at prime divisors being at least $0$) is a \emph{closed} cone which does not contain a line. Hence, we can pick $\Gamma \in W$ in a small neighborhood of $0$ which is not effective. Additionally, by the definition of $L$  in \autoref{lemma:diophantine-approximation} and by the closedness of the effective cone, we can find $j>0$ and positive integers $m_1, \ldots, m_k$ such that $jD_{\overline{S}} - N$ is sufficiently close to $\Gamma$, where $N = m_1N_1 + \ldots + m_kN_k$. Hence, $jD_{\overline{S}} - N$ is not effective  and additionally $\Vert jD_{\overline{S}}-N \Vert \ll 1$. This concludes the above claim and the proof of conditions \autoref{itm:flips-b-divisor-is-rational:close} and \autoref{itm:flips-b-divisor-is-rational:not_effective}. Since both $jD_{\overline{S}}$ and  $N$ are pullbacks from $S^+$, we obtain that in fact $jD_{S^+}-a_*N$ is not effective. 
Further we remark that in the above construction we may assume that $j$ is divisible enough so that $j(K_X+S+B)$ is Cartier. Indeed, for this we just have to replace $(a_1,\dots,a_k)$ with an adequate multiple at the beginning of the argument.   

Since $N_i = 3a^*N^+_i$ for every $i$, we see that $\frac{1}{3}N$ is a pullback of a very ample divisor from $S^+$, and  we can  pick a curve $C \sim \frac{1}{3}N$ on $\overline{S}$ which does not contain any of the exceptional divisors of $\overline{S} \to f(S)$ in its support (\autoref{rem:general_element}). 
\begin{claim} \label{claim:rationality-of-b-divisor}
$\lceil jD_{i,\overline{S}} + A_{\overline S} \rceil - jD_{j,\overline S}$ is $a$-exceptional {for all $i \in \bN$ such that $i\gg j$ and $j \mid i$.}
\end{claim} 
\noindent Explicitly, we pick $i \gg j$ so that $||jD_{i,\overline S} - N|| \ll 1$. Note that since the images of $D_{i,\overline{S}}$ and $D_{j,\overline{S}}$ agree on $f(S)$, we have that $\lceil jD_{i,\overline{S}} + A_{\overline S} \rceil - jD_{j,\overline S}$ is automatically exceptional over $f(S)$.
\begin{proof}[Proof of claim]
{Let $\pi \colon Y \to X$ be a log resolution compatible with $i$ and $j$ and let $S'$ be the strict transform of $S$ on $Y$ as before. By re-choosing $C$ we can assume that it does not contain the image of $\mathrm{Exc}(S' \to \overline {S})$ under the map $S' \to \overline{S}$ in its support. Let $C'$ be the strict transform of $C$ on $S'$. By the above, we can assume that $C'$ is a pullback of $C$, contains no curves exceptional over $f(S)$ in its support, and is disjoint from the exceptional locus of $S' \to S^+$. Note that $C'$ intersects every curve which is exceptional over $f(S)$ but horizontal over $S^+$.}

Since $\Mob\lceil jD_i + A' \rceil \leq jD_j$ (see \autoref{lemma:flips-key-identity}), every section of $H^0(Y, \sO_Y(\lceil jD_i + A' \rceil))$ vanishes along $\lceil jD_i + A' \rceil - jD_j \geq 0$ (here, $jD_j=M_j$ is integral, $D_i \geq D_j$, and $\lceil A' \rceil \geq 0$). Thus, by \autoref{lemma:technical-lifting-sections-in-flips}, all the sections of 
\[
\myB^0(S', \{B_{S'} - jD_{i,S'}\}; \sO_{S'}(\lceil jD_{i,S'}+A_{S'}\rceil))
\]
vanish along $E := \lceil jD_{i,S'} + A_{S'} \rceil - jD_{j,S'} \geq 0$. But the above space is base point free at every point $x \in C' \cap E$ by \autoref{theorem:seshadri}, and so there is no such point, concluding the proof. We can invoke \autoref{theorem:seshadri}, because 
\begin{align*}
\epsilon_{\mathrm{sa}}(\lceil jD_{i,S'}+A_{S'}\rceil - (K_{S'}+\{B_{S'}- jD_{i,S'}\});x) &\geq \epsilon_{\mathrm{sa}}(jD_{i,S'};x)
\\ &= \epsilon_{\mathrm{sa}}(jD_{i,\overline S};y),
\\ &= \epsilon_{\mathrm{sa}}((jD_{i,\overline S}-N) + 3C;y) >2,
\end{align*}
where $y$ is the image of $x$ on $\overline S$. Here:
\begin{itemize}
    \item the first inequality holds, because $\lceil jD_{i,S'}+A_{S'}\rceil - (K_{S'}+\{B_{S'} - jD_{i,S'}\}) - jD_{i,S'}$ is big and semiample (see \autoref{eq:technical-lifting-sections-in-flips:adjunction2} in the proof of \autoref{lemma:technical-lifting-sections-in-flips})
    \item the first equality holds, because $D_{i,S'}$ is a pullback of $D_{i,\overline S}$ and $x$ is not contained in the exceptional locus of $S' \to \overline{S}$,
    \item the second equality holds, because $3C \sim N$, and
    \item the second inequality holds by \autoref{lem:flips_seshadri_lower_bound} for $k=3$ as $y \in C$ and  $\Vert jD_{i,\overline S}-N \Vert \ll 1$.    \end{itemize}
\end{proof}

\autoref{claim:rationality-of-b-divisor} implies that
$
a_*N \leq \lceil jD_{S^+} + A_{S^+} \rceil = {jD_{j, S^+} } \leq jD_{S^+},
$
where the first inequality follows from $0 < \Vert jD_{S^+} - a_*N \Vert \ll 1$ and the fact that $A_{S^+}$ has coefficients in $(-1,0)$. Here we put the norm on $\mathrm{Div}(S^+)$ the same way as on $\mathrm{Div}(\overline{S})$, and hence $0 < \Vert jD_{S^+} - a_*N \Vert \ll 1$ follows from $0 < \Vert jD_{\overline{S}} - N \Vert \ll 1$, as the former contains a subset of the non-zero coefficients of the latter. 

Therefore, we obtained $a_*N \leq  jD_{S^+}$, which
 contradicts the fact that $jD_{S^+}-a_*N$ is not an effective $\bR$-divisor. 
\qedhere
\end{proof}

\begin{proposition} \label{proposition:flips-restriction-of-key-identity}
{Let $i, j >0$ be integers such that $i$ is divisible by $j$ and $i \gg j$. Further assume that $j(K_X+S+B)$ and $jD_{\overline S}$ are Cartier. Let $\pi \colon Y \to X$ be a log resolution compatible with $i$ and $j$.} Then the following identity holds:
\[
\Mob \lceil jD_{i,S'} + A_{S'} \rceil \leq jD_{j,S'}.
\]
In particular, if $j$ is chosen so that $jD_{\overline{S}}$ is base point free, then $D_{j,\overline{S}} = D_{\overline{S}}$.
\end{proposition}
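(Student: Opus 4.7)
The strategy is to mimic the proofs of \autoref{proposition:flips-descend-of-b-divisors} and of the Claim in \autoref{proposition:flips-b-divisor-is-rational}: combine the adjoint section-lifting of \autoref{lemma:technical-lifting-sections-in-flips}, the divisorial inequality \autoref{lemma:flips-key-identity} applied on $Y$, and a Seshadri-constant base-point-freeness of $\myB^0$ (via \autoref{theorem:seshadri} and \autoref{lem:flips_seshadri_lower_bound}). Set $E := \lceil jD_{i,S'} + A_{S'}\rceil - jD_{j,S'} \geq 0$; the main identity is equivalent to $\Fix\lceil jD_{i,S'}+A_{S'}\rceil \geq E$. The preparatory vanishing comes from \autoref{lemma:technical-lifting-sections-in-flips}, which yields a surjection
\[
\myB^0_{S'}\bigl(Y, S'+\{B'-jD_i\};\,\sO_Y(\lceil jD_i+A'\rceil)\bigr) \twoheadrightarrow \myB^0\bigl(S', \{B_{S'}-jD_{i,S'}\};\,\sO_{S'}(\lceil jD_{i,S'}+A_{S'}\rceil)\bigr),
\]
together with \autoref{lemma:flips-key-identity}, which forces every section of $H^0(Y,\sO_Y(\lceil jD_i+A'\rceil))$ to vanish along $\lceil jD_i+A'\rceil - jD_j$. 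Restricting to $S'$ and pushing through the surjection, every section of the right-hand $\myB^0$ vanishes along $E$.

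Assume for contradiction that $\Mob\lceil jD_{i,S'}+A_{S'}\rceil \not\leq jD_{j,S'}$. Then some prime component $G$ of $E$ is not in $\Fix\lceil jD_{i,S'}+A_{S'}\rceil$. Pick a general closed point $x \in G$ lying outside all other components of $E$ and outside $\Supp\Fix\lceil jD_{i,S'}+A_{S'}\rceil$. It suffices to show that $\myB^0(S', \{B_{S'}-jD_{i,S'}\};\sO_{S'}(\lceil jD_{i,S'}+A_{S'}\rceil))$ is globally generated at $x$, as this contradicts the vanishing along $E\ni x$ from the preceding paragraph. Following the tie-breaking in \autoref{proposition:flips-descend-of-b-divisors}, I choose an effective $\Q$-divisor $F$ on $S'$ which is $f(S)$-anti-ample and exceptional so that $(S',\{B_{S'}-jD_{i,S'}\}+F)$ is klt and snc at $x$, and rescale to $F_\delta \geq F$ so that the log discrepancy of the blow-up at $x$ is at most $1+\delta$ for $0<\delta\ll 1$. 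The adjoint divisor is
\[
A \sim_{\Q} \lceil jD_{i,S'}+A_{S'}\rceil - (K_{S'}+\{B_{S'}-jD_{i,S'}\}+F_\delta) = jD_{i,S'} - (\pi|_{S'})^*(K_S+B_S) - F_\delta,
\]
which is big and semi-ample since $jD_{i,S'}$ is base-point-free ($\pi$ is compatible with $i$) and $-(\pi|_{S'})^*(K_S+B_S)$ is semi-ample. Superadditivity of $\epsilon_{\mathrm{sa}}$ together with \autoref{lem:flips_seshadri_lower_bound}, applied either to a general member of $|jD_{i,S'}|$ through $x$ (when $G$ is $g$-exceptional) or to the pullback to $S'$ of an ample Cartier divisor on $\overline{S}$ through $g(x)$ (when $G$ is the strict transform of a curve on $\overline{S}$), should yield $\epsilon_{\mathrm{sa}}(A;x) > 1+\delta$; then \autoref{theorem:seshadri} gives the desired global generation of $\myB^0$ at $x$, producing the contradiction.

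For the ``in particular'' conclusion, \autoref{lem:limit-of-b-divisors} already gives $D_{j,\overline{S}}\leq D_{\overline{S}}$, so only the reverse is needed. Since $jD_{\overline{S}}$ is Cartier, writing $\epsilon := jg^*(D_{\overline{S}}-D_{i,\overline{S}}) \geq 0$, for $i$ large enough that the finitely many nonzero coefficients of $\epsilon$ are strictly less than the positive fractional parts of the coefficients of $A_{S'}$, a componentwise check gives
\[
\lceil jD_{i,S'}+A_{S'}\rceil \;=\; jg^*D_{\overline{S}} + \lceil A_{S'}\rceil.
\]
Since $jg^*D_{\overline{S}}$ is base-point-free, the map $H^0(S', \sO_{S'}(jg^*D_{\overline{S}})) \hookrightarrow H^0(S', \sO_{S'}(\lceil jD_{i,S'}+A_{S'}\rceil))$ (multiplication by the canonical section of $\sO_{S'}(\lceil A_{S'}\rceil)$) embeds a base-point-free subsystem, so $\Mob\lceil jD_{i,S'}+A_{S'}\rceil \geq jg^*D_{\overline{S}}$. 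Combined with the main identity this gives $jg^*D_{\overline{S}} \leq jD_{j,S'}$, and applying $g_*$ yields $jD_{\overline{S}} \leq jD_{j,\overline{S}}$, hence equality.

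The main obstacle is the Seshadri lower bound at $x$, particularly when $G$ is \emph{not} exceptional over $\overline{S}$ (which can happen if $D_{i,\overline{S}}>D_{j,\overline{S}}$ at a non-exceptional component of $\overline{S}$). In \autoref{proposition:flips-descend-of-b-divisors} the point $x$ was automatically on an exceptional divisor, so tie-breaking by an anti-ample exceptional $F$ together with the Seshadri bound coming from a member of $|M_{i,S'}|$ sufficed. In our situation one must replace that auxiliary curve by the pullback from $\overline{S}$ of a carefully chosen ample (and, when possible, semi-ample) Cartier divisor through $g(x)$, which forces one to verify snc/klt of $\{B_{S'}-jD_{i,S'}\}+F_\delta$ at $x$ and to optimize the bookkeeping so that $\epsilon_{\mathrm{sa}}(A;x) > 1+\delta$ persists uniformly in the relevant parameters --- this is the delicate point of the argument.
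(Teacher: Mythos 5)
Your reduction via \autoref{lemma:technical-lifting-sections-in-flips} and \autoref{lemma:flips-key-identity} correctly shows that every section of $\myB^0(S', \{B_{S'} - jD_{i,S'}\}; \sO_{S'}(\lceil jD_{i,S'}+A_{S'}\rceil))$ vanishes along $E = \lceil jD_{i,S'}+A_{S'}\rceil - jD_{j,S'}$; this is the point at which the proof should turn to showing $\myB^0 = H^0$. The contradiction you set up instead is structurally unsound. The statement you aim to derive --- that $\myB^0$ is globally generated at a point $x$ of a component $G$ of $E$ --- directly contradicts the vanishing of all $\myB^0$-sections along $E$, which you have already established \emph{unconditionally}: that vanishing does not depend on the hypothesis $\Mob \not\leq jD_{j,S'}$, and it is nonvacuous since $E \geq \lceil A_{S'}\rceil$, a nonzero $g$-exceptional divisor whenever $S' \to \overline{S}$ is not an isomorphism. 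So your claimed global generation is simply false at such $x$. A valid reductio would have to use the contradiction hypothesis in an essential way to produce the Seshadri bound, but nothing in your choice of $x$ or in the estimate $\epsilon_{\mathrm{sa}}(jD_{i,S'};x) > 1+\delta$ does so. And that estimate is indeed unavailable at a general point: $jD_{i,S'} = \tfrac{j}{i}M_{i,S'}$ with $M_{i,S'}$ merely free, and the places where the paper extracts such bounds (the proofs of \autoref{proposition:flips-descend-of-b-divisors} and of the Claim in \autoref{proposition:flips-b-divisor-is-rational}) do so only at points forced to lie on auxiliary effective divisors (a member $M \in |M_{i,S'}|$, or a curve $C'$ with $3C' \sim N$ numerically close to $jD_{i,\overline{S}}$) precisely so that \autoref{lem:flips_seshadri_lower_bound} applies with multiplicity $1$ resp.\ $3$ --- the construction you acknowledge you cannot carry out here.

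The idea you are missing is global $\bigplus$-regularity rather than Seshadri constants. For $i \gg j$ with $jD_{\overline{S}}$ Cartier one has $\lceil jD_{i,\overline{S}}\rceil = jD_{\overline{S}}$ and the coefficients of $\{-jD_{i,\overline{S}}\}$ are arbitrarily small, so the standing hypothesis of \autoref{notation:flips_general} (global $\bigplus$-regularity of $(S, B_S + \varepsilon D)$ for every effective $D$ and $0<\varepsilon\ll 1$) together with \autoref{proposition:pullback-of-global-splinter} makes $(\overline{S}, B_{\overline{S}}+\{-jD_{i,\overline{S}}\})$ globally $\bigplus$-regular. Then \autoref{lemma-B0-under-pullbacks-fancy} identifies $\myB^0(S', \{B_{S'}-jD_{i,S'}\}; \sO_{S'}(\lceil jD_{i,S'}+A_{S'}\rceil))$ with $\myB^0(\overline{S}, B_{\overline{S}}+\{-jD_{i,\overline{S}}\}; \sO_{\overline{S}}(jD_{\overline{S}}))$, which equals $H^0$ by \autoref{lem.B0EqualsH0ForGlobally+Regular}; transporting back to $S'$ gives $\myB^0 = H^0$ there, hence the restriction $H^0(Y, \sO_Y(\lceil jD_i+A'\rceil)) \to H^0(S', \sO_{S'}(\lceil jD_{i,S'}+A_{S'}\rceil))$ is surjective and the inequality on mobile parts descends from $Y$. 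Your treatment of the ``in particular'' clause is essentially the paper's, but it only becomes available once the main identity is proved this way.
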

{ \noindent Explicitly, we pick $i \gg j$ so that $\lceil jD_{i,\overline{S}}\rceil = jD_{\overline S}$ and the coefficients of $\{-jD_{i,\overline S}\}$ are $ \ll 1$.}
\begin{proof}
Since $\Mob \lceil jD_{i} + A' \rceil \leq jD_{j}$ by \autoref{lemma:flips-key-identity}, it suffices to show that
\[
H^0(Y,\sO_Y(\lceil jD_{i} + A' \rceil)) \to H^0(S',\sO_{S'}(\lceil jD_{i,S'} + A_{S'} \rceil))
\]
is surjective. By \autoref{lemma:technical-lifting-sections-in-flips}, we have a surjection
\[
\myB^0_{S'}(Y, S'+\{B'-jD_i\}; \sO_{S'}(\lceil jD_i + A' \rceil)) \to \myB^0(S', \{B_{S'} - jD_{i,S'}\}; \sO_{S'}(\lceil jD_{i,S'}+A_{S'}\rceil)),
\]
and so we will be done if we show that the right hand side equals $H^0(S', \sO_{S'}(\lceil jD_{i,S'}+A_{S'}\rceil))$.\\

Since $jD_{\overline{S}}$ is integral, $jD_{i,\overline{S}} \leq jD_{\overline{S}}$ (\autoref{lem:limit-of-b-divisors}), and for $j \ll i$, we obtain that $\lceil jD_{i,\overline{S}} \rceil = jD_{\overline{S}}$ and the coefficients of $\{-jD_{i,\overline S}\}$ are $ \ll 1$. In particular, $(\overline{S}, B_{\overline S} + \{-jD_{i,\overline S}\})$ is klt and globally $\bigplus$-regular for $i \gg 0$. Indeed, this follows from \autoref{proposition:pullback-of-global-splinter} as $(S,B_S + \varepsilon G)$ is globally $\bigplus$-regular for every effective divisor $G$ and $0 < \varepsilon \ll 1$ by assumptions.
Therefore,
\begin{align*}
\myB^0(S', \{B_{S'} - jD_{i,S'}\}; \sO_{S'}(\lceil jD_{i,S'}+A_{S'}\rceil)) &=  \myB^0(\overline{S}, B_{\overline{S}} + \{- jD_{i,\overline{S}}\}; \sO_{\overline{S}}(\lceil jD_{i,\overline{S}}\rceil)) \\
&=  \myB^0(\overline{S}, B_{\overline{S}} + \{- jD_{i,\overline{S}}\};  \sO_{\overline{S}}(jD_{\overline{S}})) \\
&=H^0(\overline{S}, \sO_{\overline{S}}(jD_{\overline{S}})) \\
&=H^0(S', \sO_{S'}(\lceil jD_{i,S'}+A_{S'}\rceil)),
\end{align*}
where the first equality follows by \autoref{lemma-B0-under-pullbacks-fancy} and \autoref{proposition:flips-descend-of-b-divisors},
 the third one by the global $\bigplus$-regularity of $(\overline{S},B_{\overline{S}} + \{- jD_{i,\overline{S}}\})$ (see \autoref{lem.B0EqualsH0ForGlobally+Regular}), {and the fourth one by \autoref{lemma-B0-under-pullbacks-fancy} again}. This concludes the proof of the first part of the proposition.

Since $H^0(\overline{S}, \sO_{\overline{S}}(jD_{\overline{S}})) = H^0(S', \sO_{S'}(\lceil jD_{i,S'}+A_{S'}\rceil))$ by \autoref{lemma-B0-under-pullbacks-fancy}, we get that 
\[
\Mob(jD_{\overline{S}}) = g_*\Mob \lceil jD_{i,S'} + A_{S'} \rceil \leq jD_{j,\overline S}.
\]
As $jD_{\overline{S}}$ is base point free, we thus get $jD_{\overline S} \leq  jD_{j,\overline S}$. By \autoref{lem:limit-of-b-divisors}, the other inequality holds true, too, hence $jD_{j,\overline{S}} = jD_{\overline{S}}$. 
\end{proof} 
An important difficulty in the proof of the above result is that \emph{a priori} $\{-jD_{i,S'}\} \ll 1$ and $\lceil jD_{i,S'}\rceil = jD_{S'}$ need not hold for $i \gg 0$ (because $S'$ depends on $i$). 

{
\begin{proposition}\label{proposition:restricted-algebra-fin-gen} With notation as above,  the restricted algebra
\[
    \sectionRingR_S = \bigoplus_{i \in \bN} \im \big(H^0(X, \sO_X(\lfloor i(K_X+S+B)\rfloor)) \to H^0(S, \sO_S(\lfloor i(K_S+B_S)\rfloor ))\big) 
\]
is finitely generated. \end{proposition}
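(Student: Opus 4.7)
The strategy is to show that a Veronese subalgebra of $\sectionRingR_S$ is isomorphic to the section ring of the semiample $\bQ$-Cartier divisor $D_{\overline{S}}$ on the projective two-dimensional scheme $\overline{S}$, which is finitely generated by the base-point-free theorem for excellent surfaces (\autoref{thm:surface-bpf-theorem}).

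By \autoref{proposition:flips-b-divisor-is-rational}, $D_{\overline{S}}$ is a semiample $\bQ$-divisor, so I fix a positive integer $j$ such that $j(K_X+S+B)$ is Cartier and $jD_{\overline{S}}$ is Cartier and base-point free on $\overline{S}$. Then \autoref{proposition:flips-restriction-of-key-identity} together with the monotonicity in \autoref{lem:limit-of-b-divisors} gives $D_{jk,\overline{S}} = D_{\overline{S}}$ for every integer $k \geq 1$. Moreover, the adjunction-induced bound $M_i|_{S'} \leq i(\pi|_{S'})^*(K_S+B_S)$ on any good log resolution $\pi \colon Y \to X$ compatible with $i$ yields $D_{\overline{S}} \leq K_{\overline{S}}+B_{\overline{S}}$, so there is a natural injection
\begin{equation*}
H^0(\overline{S}, \sO_{\overline{S}}(jkD_{\overline{S}})) \hookrightarrow H^0(\overline{S}, \sO_{\overline{S}}(jk(K_{\overline{S}}+B_{\overline{S}}))) = H^0(S, \sO_S(jk(K_S+B_S)))
\end{equation*}
given by multiplication by the defining section $s_{\mathrm{fix}}^k$ of the effective Cartier divisor $jk(K_{\overline{S}}+B_{\overline{S}}) - jk D_{\overline{S}}$.

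The key claim is that for each $k \geq 1$, this injection identifies $[\sectionRingR_S]_{jk}$ with $H^0(\overline{S}, \sO_{\overline{S}}(jk D_{\overline{S}}))$. To verify this, I would apply \autoref{proposition:flips-restriction-of-key-identity} with $j$ replaced by $jk$ and some $i$ divisible by $jk$ with $i \gg jk$, on a good log resolution $\pi \colon Y \to X$ compatible with both. Its proof yields a surjection
\begin{equation*}
H^0\bigl(Y, \sO_Y(\lceil jkD_i + A'\rceil)\bigr) \twoheadrightarrow H^0\bigl(\overline{S}, \sO_{\overline{S}}(jk D_{\overline{S}})\bigr)
\end{equation*}
arising from restriction to $S'$ followed by pushforward to $\overline{S}$. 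I then verify the pushforward identity $\pi_*\sO_Y(\lceil jkD_i + A'\rceil) = \sO_X(jk(K_X+S+B))$, which via \autoref{lemma:pushforward} reduces to checking that $\lceil jkD_i + A'\rceil - jk(K_Y+S'+B') = \lceil A' - (jk/i)F_i \rceil$ is effective and exceptional over $X$. The resulting identification $H^0(Y, \sO_Y(\lceil jkD_i + A'\rceil)) = H^0(X, \sO_X(jk(K_X+S+B)))$ then allows me to reinterpret the surjection above as the restriction map $H^0(X, \sO_X(jk(K_X+S+B))) \to H^0(S, \sO_S(jk(K_S+B_S)))$ composed with the injection of the previous paragraph, proving the claim.

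Since multiplication by $s_{\mathrm{fix}}$ is compatible with powers ($s_{\mathrm{fix}}^{k_1} \cdot s_{\mathrm{fix}}^{k_2} = s_{\mathrm{fix}}^{k_1+k_2}$), the identifications across different $k$ assemble into an isomorphism of graded rings $\sectionRingR_S^{(j)} \cong \sectionRingR(\overline{S}, jD_{\overline{S}})$, which is finitely generated since $jD_{\overline{S}}$ is base-point free on the projective two-dimensional scheme $\overline{S}$. Veronese equivalence then gives the finite generation of $\sectionRingR_S$. The main obstacle in executing this plan is the pushforward identification $\pi_*\sO_Y(\lceil jkD_i + A' \rceil) = \sO_X(jk(K_X+S+B))$ for $i \gg jk$: on non-exceptional components $E$ of $\Supp B$ one needs $(jk/i)(F_i)_E < 1 - B_{\pi(E)}$, which holds for $i$ large because $(F_i)_E/i$ is asymptotically bounded by the fixed coefficient of the image component; on exceptional divisors one must use the plt hypothesis that all coefficients of $B'$ are strictly less than $1$ to ensure $\lceil A' - (jk/i)F_i\rceil$ has nonnegative coefficients. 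This quantitative control, together with the careful bookkeeping across varying $k$, is the technical heart of the argument.
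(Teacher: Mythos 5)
Your overall strategy coincides with the paper's: reduce to the Veronese subalgebra $\sectionRingR_S^{(j)}$ for $j$ with $jD_{\overline{S}}$ Cartier and base point free, identify its graded pieces with $H^0(\overline{S}, \sO_{\overline{S}}(jkD_{\overline{S}}))$, and conclude from semiampleness of $D_{\overline{S}}$. The gap is in the step you yourself flag as the technical heart: the sheaf-level identity $\pi_*\sO_Y(\lceil jkD_i + A'\rceil) = \sO_X(jk(K_X+S+B))$ for $i \gg jk$. Writing $F_i = \Fix(i(K_Y+S'+B'))$, you need $\lceil A' - \tfrac{jk}{i}F_i\rceil$ to be effective, i.e.\ $\tfrac{jk}{i}(F_i)_E < 1 - b_E$ on every non-exceptional component $E$ of $F_i$, where $b_E$ is the coefficient of $E$ in $B'$. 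Your justification --- that $(F_i)_E/i$ is asymptotically bounded --- does not give this: $(F_i)_E/i$ converges to the asymptotic fixed multiplicity $\sigma_E \geq 0$ of $K_X+S+B$ along $\pi(E)$, so $\tfrac{jk}{i}(F_i)_E \to jk\,\sigma_E$, which does \emph{not} tend to $0$ as $i \to \infty$ and can exceed $1-b_E$ as soon as $\sigma_E>0$ (a divisorial component of the stable base locus). On such a component the rounding drops below zero and the pushforward identity fails.

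Fortunately the identification you actually need holds at the level of global sections and follows from what you already have: by \autoref{lemma:flips-key-identity} (applied with $j := jk$) one has $\Mob\lceil jkD_i + A'\rceil \leq jkD_{jk} = M_{jk}$, while $\lceil jkD_i + A'\rceil \geq \lfloor jkD_i\rfloor + \lceil A'\rceil \geq M_{jk}$; hence
\begin{equation*}
H^0(Y, \sO_Y(\lceil jkD_i + A'\rceil)) = H^0(Y, \sO_Y(M_{jk})) = H^0(X, \sO_X(jk(K_X+S+B)))
\end{equation*}
by \autoref{rem:mob}, with no sheaf-theoretic pushforward needed. The paper sidesteps the issue entirely in \autoref{claim:proof-of-pl-flips} by taking the resolution compatible with the degree $i = jk$ itself, so that $iD_i = M_i$ and $iD_{i,S'}$ are integral: the identification $H^0(X,\sO_X(i(K_X+S+B))) = H^0(Y,\sO_Y(\lceil iD_i+A'\rceil))$ is then immediate from \autoref{lemma:flips-key-identity} with $j=i$, and the surjection onto $H^0(\overline{S}, \sO_{\overline{S}}(iD_{i,\overline{S}})) = H^0(\overline{S}, \sO_{\overline{S}}(iD_{\overline{S}}))$ comes from \autoref{lemma:technical-lifting-sections-in-flips} together with global $\bigplus$-regularity of $(\overline{S}, B_{\overline{S}})$ with trivial fractional parts, no $i\gg 0$ hypothesis being required. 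Either repair makes your argument go through.
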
}
\begin{proof}
 The proof proceeds as in characteristic zero and is based purely on \autoref{proposition:flips-restriction-of-key-identity} (see \cite[Chapter 2]{CortiFlipsFor3FoldsAnd4Folds}). For the convenience of the reader, we provide a slightly different argument that avoids a direct use of b-divisors.

First, it is enough to show that any Veronese subalgebra of $\sectionRingR_S$ is finitely generated (cf.\ \cite[Lemma 2.3.3]{CortiFlipsFor3FoldsAnd4Folds}). We will show that $\sectionRingR_S^{(j)}$ is finitely generated for $j>0$ as in \autoref{proposition:flips-restriction-of-key-identity}, that is, satisfying that $j(K_X+S+B)$ is a Cartier divisor and $jD_{\overline S}$ is a Cartier base point free divisor. 

\begin{claim} \label{claim:proof-of-pl-flips} For every $i>0$ divisible by $j$ and resolution $\pi \colon Y \to X$ compatible with $i$, the following map is surjective
\[
	\begin{array}{rl}
	& H^0(X,\sO_X(i(K_X+S+B))) = H^0(Y, \sO_Y(\lceil iD_i + A' \rceil)) \\
	\to & H^0(S',\sO_{S'}(\lceil iD_{i,S'} + A_{S'} \rceil)) = H^0(\overline{S}, \sO_{\overline{S}}(iD_{i,\overline S})).\\
	\end{array}
\]
\end{claim}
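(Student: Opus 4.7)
My plan is to combine the adjunction-type surjection from \autoref{lemma:technical-lifting-sections-in-flips} (applied with $j$ replaced by $i$) with the stabilization $D_{j,\overline S} = D_{\overline S}$ of \autoref{proposition:flips-restriction-of-key-identity} and the global $\bigplus$-regularity of the terminalization $(\overline S, B_{\overline S})$, to reduce the surjectivity of the restriction to an identity $\myB^0 = H^0$ on $\overline S$. The two equalities in the statement of the claim follow immediately from \autoref{lemma:pushforward} once one observes that $M_i = iD_i$ is the mobile part of $i\pi^*(K_X+S+B)$, that $\lceil A' \rceil$ and $\lceil A_{S'}\rceil$ are effective and exceptional over $X$ and $\overline S$ respectively, and that $iD_{i,S'} = g^*(iD_{i,\overline S})$ by \autoref{proposition:flips-descend-of-b-divisors}, where $g \colon S' \to \overline S$ is the factorization through the terminalization. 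So the content of the claim is the surjectivity of the restriction map $H^0(Y, \sO_Y(\lceil iD_i + A'\rceil)) \to H^0(S', \sO_{S'}(\lceil iD_{i,S'} + A_{S'}\rceil))$, which I will tackle through $\myB^0$-lifting.

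Applying \autoref{lemma:technical-lifting-sections-in-flips} with $j$ replaced by $i$, and using that $iD_i = M_i$ is integral so that $\{B' - iD_i\} = \{B'\}$ and $\{B_{S'} - iD_{i,S'}\} = \{B_{S'}\}$, one obtains the surjection
\[
\myB^0_{S'}\bigl(Y, S'+\{B'\}; \sO_Y(\lceil iD_i + A'\rceil)\bigr) \twoheadrightarrow \myB^0\bigl(S', \{B_{S'}\}; \sO_{S'}(\lceil iD_{i,S'} + A_{S'}\rceil)\bigr).
\]
The source sits inside $H^0(Y, \sO_Y(\lceil iD_i + A'\rceil))$ by definition of $\myB^0$, so it suffices to show the target coincides with $H^0(S', \sO_{S'}(\lceil iD_{i,S'} + A_{S'}\rceil))$. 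I descend this $\myB^0$ from $S'$ to $\overline S$ via \autoref{lemma-B0-under-pullbacks-fancy} applied to $g \colon S' \to \overline S$ with $L = iD_{i,\overline S}$; the klt hypothesis holds automatically, since $iD_{i,\overline S}$ is integral (whence $\{-iD_{i,\overline S}\}=0$) and $(\overline S, B_{\overline S})$ is terminal. This identifies the target of the above surjection with $\myB^0(\overline S, B_{\overline S}; \sO_{\overline S}(iD_{i,\overline S}))$.

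To conclude, I observe that the choice of $j$ forces $D_{j,\overline S} = D_{\overline S}$ by \autoref{proposition:flips-restriction-of-key-identity}, and then the monotonicity of \autoref{lem:limit-of-b-divisors} together with $j \mid i$ gives $D_{i,\overline S} = D_{\overline S}$; in particular $iD_{i,\overline S} = iD_{\overline S}$ is Cartier and base point free. By \autoref{proposition:pullback-of-global-splinter}, the global $\bigplus$-regularity of $(S, B_S)$ (part of the standing hypothesis of \autoref{notation:flips_general}) transfers to the crepant pullback $(\overline S, B_{\overline S})$, which is effective as $\overline S \to S$ is a terminalization of a klt pair. Then \autoref{lem.B0EqualsH0ForGlobally+Regular}, applied over the complete local base $H^0(\overline S, \sO_{\overline S})$ (a finite extension of a quotient of the complete local ring $R$, so itself complete local after restricting to a connected component), yields $\myB^0(\overline S, B_{\overline S}; \sO_{\overline S}(iD_{\overline S})) = H^0(\overline S, \sO_{\overline S}(iD_{\overline S}))$, which is the required equality. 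The main technical subtlety is arranging the fractional parts so that \autoref{lemma-B0-under-pullbacks-fancy} applies directly, and this is exactly what the integrality of $iD_i$ and $iD_{i,\overline S}$ secures; the remainder is bookkeeping.
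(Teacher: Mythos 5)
Your proposal is correct and takes essentially the same route as the paper's own proof: the $\myB^0$-lifting surjection of \autoref{lemma:technical-lifting-sections-in-flips} with $j$ replaced by $i$, descent from $S'$ to $\overline{S}$ via \autoref{lemma-B0-under-pullbacks-fancy} (trivial fractional parts because $iD_i$ and $iD_{i,\overline S}$ are integral), and the identity $\myB^0 = H^0$ on the globally $\bigplus$-regular pair $(\overline S, B_{\overline S})$ obtained from \autoref{proposition:pullback-of-global-splinter} and \autoref{lem.B0EqualsH0ForGlobally+Regular}. The only harmless deviations are that you justify the first displayed equality by a sandwich through \autoref{lemma:pushforward} where the paper cites \autoref{lemma:flips-key-identity} with $i=j$, and you insert the stabilization $D_{i,\overline S}=D_{\overline S}$, which is not needed for the claim itself (the paper invokes it only afterwards).
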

Assuming the claim, we finish the proof. Using \autoref{proposition:flips-restriction-of-key-identity} we have that $iD_{i,\overline S}=iD_{\overline S}$ and so $\sectionRingR^{(j)}_S$ is equal to
\[
\bigoplus_{j \mid i} H^0(\overline{S}, \sO_{\overline{S}}(iD_{\overline{S}})) \subseteq \bigoplus_{j \mid i} H^0(\overline{S}, \sO_{\overline{S}}(i(K_{\overline{S}}+B_{\overline{S}}))) = \bigoplus_{j \mid i} H^0(S, \sO_S(i(K_S+B_S))).
\]
Since $D_{\overline{S}}$ is semiample (\autoref{proposition:flips-b-divisor-is-rational}), $\sectionRingR^{(j)}_S$ is finitely generated. 
{
\begin{proof}[Proof of \autoref{claim:proof-of-pl-flips}]
The proof is completely  analogous to that of \autoref{proposition:flips-restriction-of-key-identity}. Note that $iD_i$ and $iD_{i,S'}=iD_{S'}$ are integral, and so it is not necessary to assume that $i\gg 0$. Moreover, the first equality in the statement of the claim holds by \autoref{lemma:flips-key-identity} for $i=j$, while the second identity is a consequence of \autoref{lemma:pushforward} as $\lceil A_{S'} \rceil \geq 0$.

Recall that $(S,B_S)$ is globally $\bigplus$-regular, and so is $(\overline S, B_{\overline S})$ by \autoref{proposition:pullback-of-global-splinter}. Therefore,
\begin{align*}
\myB^0(S', \{B_{S'}\}; \sO_{S'}(\lceil iD_{i,S'}+A_{S'}\rceil)) &= \myB^0(\overline{S}, B_{\overline{S}};  \sO_{\overline{S}}(iD_{i,\overline{S}})) \\
&=H^0(\overline{S}, \sO_{\overline{S}}(iD_{i,\overline{S}})) \\
&=H^0(S', \sO_{S'}(\lceil iD_{i,S'}+A_{S'}\rceil)),
\end{align*}
where the first equality is a very special case of \autoref{lemma-B0-under-pullbacks-fancy},
 the second one follows by the global $\bigplus$-regularity of $(\overline{S},B_{\overline{S}})$ (see \autoref{lem.B0EqualsH0ForGlobally+Regular}), and the third one by \autoref{lemma:pushforward}. 

By \autoref{lemma:technical-lifting-sections-in-flips}, we have a surjection
\[
\myB^0_{S'}(Y, S'+\{B'\}; \sO_{S'}(\lceil iD_i + A' \rceil)) \to \myB^0(S', \{B_{S'}\}; \sO_{S'}(\lceil iD_{i,S'}+A_{S'}\rceil))=H^0(S', \lceil iD_{i,S'}+A_{S'}\rceil)
\]
which  concludes the proof of the claim.
\end{proof}}
The claim completes the proof.
\end{proof}

\subsection{Conclusion} \label{subsection:flips-conclusion}
In this subsection we conclude the proof of the existence of flips. For the sake of precision, we abandon the notions introduced in Subsection~\ref{subsection:flips-key-of-the-argument}, but we keep \autoref{notation:flips_base_ring} introduced at the beginning of \autoref{Section:flips}.
That is our base is a complete Noetherian local domain $(R,\fram)$ with residue field $R/\fram$ of characteristic $p>0$, and  $Z=\Spec(R)$.

\begin{theorem}\label{theorem:flips-exist} Let $f \colon X \to Z$ be a three-dimensional pl-flipping contraction of a plt pair $(X,S+B)$ with $\bQ$-boundary over the affine scheme $Z=\Spec R$ with $S=\lfloor S + B \rfloor$  an irreducible, normal, $\bQ$-Cartier divisor. Suppose that $R/\fram$ is infinite, $K_X+S+B \sim_{Z,\bQ} bS$ for some $b \in \Q$, and that $(S,B_S +\varepsilon D)$ is globally $\bigplus$-regular for every effective divisor $D$ and $0< \varepsilon \ll 1$, where $K_S+B_S = (K_X+S+B)|_S$.
Then the canonical ring

\[
    \sectionRingR(X,K_X+ {S+B})=\bigoplus_{m \in \bN} H^0(X, \sO_X(\lfloor m(K_X + {S+B}) \rfloor))
\]
is finitely generated. In particular, the pl-flip of $(X,S+B)$ over $Z$ exists. \end{theorem}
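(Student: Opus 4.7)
The plan is to deduce the finite generation of the canonical ring from the finite generation of its restriction, which is the content of \autoref{proposition:restricted-algebra-fin-gen}. Once the canonical ring is finitely generated, the pl-flip is obtained as the relative $\Proj$ construction, cf.~\cite[Lemma 6.2]{KollarMori}.

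The first step is to observe that the hypotheses of the theorem coincide with the standing assumptions of \autoref{notation:flips_general}: $R$ is complete local with infinite residue field; $(X,S+B)$ is a three-dimensional plt pair with irreducible normal $\bQ$-Cartier divisor $S=\lfloor S+B\rfloor$; $f\colon X\to Z$ is a pl-flipping contraction; and $(S,B_S+\varepsilon D)$ is globally $\bigplus$-regular for every effective $D$ on $S$ and $0<\varepsilon\ll 1$. Consequently \autoref{proposition:restricted-algebra-fin-gen} applies and shows that the restricted algebra $\sectionRingR_S$ is a finitely generated $R$-algebra.

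The remaining step is the classical, characteristic-free reduction from finite generation of $\sectionRingR_S$ to that of $\sectionRingR(X,K_X+S+B)$, following the blueprint of \cite[Lemma 6.2]{KollarMori} and \cite[Ch.~2]{CortiFlipsFor3FoldsAnd4Folds}. After passing to a Veronese subalgebra one may assume that $m(K_X+S+B)$, $mbS$, and $S$ are Cartier for every $m$ appearing in the grading, with $mb\in\bZ$. The assumption $K_X+S+B\sim_{Z,\bQ}bS$, together with the observation that $-(K_X+S+B)$ and $-S$ are both $f$-ample and therefore $b>0$, yields $m(K_X+S+B)-mbS=f^{*}E_m$ for some Cartier divisor $E_m$ on the affine base $Z$. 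One then filters $A_m:=H^0(X,\sO_X(m(K_X+S+B)))$ by order of vanishing along $S$: division by a defining section $s$ of $S$ identifies $F^{k}A_m/F^{k+1}A_m$ with a subspace of a suitably shifted degree-$m$ piece of $\sectionRingR_S$, and the deepest non-trivial step of the filtration, at vanishing order $\geq mb$, is identified with $H^{0}(Z,\sO_Z(E_m))$, which is automatically a finitely generated $R$-module since $Z$ is affine and $R$ is Noetherian. Assembling these pieces using the finite generation of $\sectionRingR_S$ and the Noetherianity of $R$ shows that $\sectionRingR(X,K_X+S+B)$ is a finitely generated $R$-algebra, as required.

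The principal technical obstacle in this theorem, namely controlling the mobile parts of $\pi^{*}(K_X+S+B)$ after restriction to $S$, has already been overcome in Subsection~\ref{subsection:flips-key-of-the-argument} via the descent of divisors to the terminalization $\overline{S}$, the rationality result of \autoref{proposition:flips-b-divisor-is-rational}, and the $\myB^{0}$-lifting arguments hinging on the global $\bigplus$-regularity of $(S,B_S+\varepsilon D)$. The passage outlined here from $\sectionRingR_S$ to $\sectionRingR(X,K_X+S+B)$ is a routine graded-algebra manipulation exploiting the smallness of $f$ and the affineness of $Z$, so no further serious difficulty is anticipated in completing the proof or in constructing the flip as $\Proj_Z\sectionRingR(X,K_X+S+B)$.
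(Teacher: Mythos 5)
Your proposal is correct and follows essentially the same route as the paper: both deduce the result from the finite generation of the restricted algebra (\autoref{proposition:restricted-algebra-fin-gen}) via the standard Shokurov--Corti reduction, using $K_X+S+B\sim_{Z,\bQ}bS$ with $b>0$ to pass to a Veronese subalgebra of $\sectionRingR(X,S)$. The only cosmetic difference is that you package the final graded-algebra step as a filtration by order of vanishing along $S$, whereas the paper phrases it (following \cite[Lemma 2.3.6]{CortiFlipsFor3FoldsAnd4Folds}) by showing the kernel of the restriction map $\sectionRingR(X,G)\to\bigoplus_i K(S)$, for $G\sim S$ not containing $S$, is the principal ideal generated by a degree-one element $t$ with $\divisor(t)+G=S$ — these are the same argument.
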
 
\begin{proof}
This is a consequence of $\sectionRingR_S$ being finitely generated by \autoref{proposition:restricted-algebra-fin-gen} (note that the assumptions of \autoref{notation:flips_general} are satisfied). Explicitly, we follow the explanation from \cite[Lemma  2.3.6]{CortiFlipsFor3FoldsAnd4Folds}. Consider a divisor $G \sim S$ which does not contain $S$ in its support. Let $k,l \in \bN$ be such that $k(K_X+S+B) \sim lS$. It is enough to show that the Veronese subalgebra $\sectionRingR^{(k)}(X,K_X+{S+B})$ is finitely generated, and so that $\sectionRingR^{(l)}(X,S)$ is finitely generated. Finally, this reduces to showing that  $\sectionRingR := \sectionRingR(X,G)$ is finitely generated. From \autoref{proposition:restricted-algebra-fin-gen} we can deduce, following a similar argument to that above, that
\[
\sectionRingR^0 = \mathrm{image}(\sectionRingR(X,G) \to \bigoplus_{i \in \bN} K(S)) 
\]
is finitely generated, where $K(S)$ is the fraction field of $S$. Here, the map is induced by the restriction $\sO_X(iG) \to K(S)$.

Let $K(X)$ be the fraction field of $X$ and choose $t \in K(X)$ such that $\mathrm{div}(t) + G = S$. By definition $t\in \sectionRingR_1$. We claim that the kernel of the above map $\sectionRingR(X,G) \to \sectionRingR(S,G|_S)$ is the principal ideal generated by $t$ which concludes the proof. Indeed, then $\sectionRingR(X,G)$ is generated by $t$ and any homogeneous lifts of the homogeneous generators of $\sectionRingR^0$.
To show the claim suppose that the image of $\phi \in \sectionRingR_n$ is equal  to $0 \in \sectionRingR^0$. Then $\mathrm{div}(\phi) + nG - S \geq 0$. Hence, we can write $\phi = t\phi'$, where $\mathrm{div}(\phi') + (n-1)G  \geq 0$. In particular, $\phi' \in \sectionRingR_{n-1}$, and $\phi \in (t)\sectionRingR$.
\end{proof}

\begin{corollary} \label{cor:flips-exist2}
Let $f \colon X \to Z$ be a pl-flipping contraction of a three-dimensional plt pair $(X,S+B)$ over the affine scheme $Z=\Spec R$ where $S=\lfloor S + B \rfloor$ is a $\bQ$-Cartier irreducible divisor, $B$ has standard coefficients and $p>5$. Suppose that $K_X+S+B \sim_{Z,\bQ} bS$ for some $b \in \Q$. Then the pl-flip of $(X,S+B)$ over $Z$ exists.
\end{corollary}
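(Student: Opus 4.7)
The plan is to reduce the statement to Theorem \ref{theorem:flips-exist} by verifying its hypotheses one by one. The hypotheses needed beyond what is already assumed in the corollary are: (i) the residue field $R/\fram$ is infinite; (ii) the divisor $S$ is normal; and (iii) the surface pair $(S, B_S + \varepsilon D)$ is globally $\bigplus$-regular for every effective divisor $D$ and $0 < \varepsilon \ll 1$.

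For (ii), the pair $(X, S+B)$ is three-dimensional plt and $S$ is $\bQ$-Cartier, so Corollary \ref{cor.ThreefoldNormalityOfS} applies at every closed point of $X$ (the residue characteristic condition is global here, since the base $R$ has residue characteristic $p>5$), which gives that $S$ is normal. For (iii), the pl-flipping contraction $f \colon X \to Z$ is projective birational with $-(K_X+S+B)$ relatively ample, and $B$ has standard coefficients with $\lfloor B\rfloor = 0$, so Corollary \ref{cor.3dim-plt-Fano-are-purely-plus-regular} (applied to $f$) produces exactly the global $\bigplus$-regularity statement we need for $(S, B_S + \varepsilon D)$.

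For (i), I will pass to a flat base change $R \to R'$ where $R'$ is a complete Noetherian local domain with infinite residue field; concretely, one can take $R'$ to be the completion of $R[t]_{\fram R[t]}$, which is faithfully flat over $R$, has residue field $k(t)$ (infinite) and remains a Noetherian complete local domain. By Lemma \ref{lem:plt_dlt_base_change} the pair $(X_{R'}, (S+B)_{R'})$ remains plt, the strict transform $S_{R'}$ stays $\bQ$-Cartier and irreducible (by the normality of $S$ together with Lemma \ref{lem:properties-of-plt}), the coefficients of $B_{R'}$ are still standard, $f_{R'}$ is still a pl-flipping contraction, and the numerical relation $K_{X_{R'}} + S_{R'} + B_{R'} \sim_{Z', \bQ} bS_{R'}$ is preserved. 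After the base change, hypotheses (ii) and (iii) continue to hold by the same reasoning as above.

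Now Theorem \ref{theorem:flips-exist} applies over $R'$ and produces finite generation of the canonical ring $\sectionRingR(X_{R'}, K_{X_{R'}} + S_{R'} + B_{R'})$ as an $R'$-algebra. By faithfully flat descent, $\sectionRingR(X, K_X + S + B)$ is finitely generated over $R$ as well, which by \cite[Lem 6.2]{KollarMori} gives the existence of the pl-flip of $(X, S+B)$ over $Z$. The only real work in executing the plan is checking that the base-change $R \to R'$ preserves all the structural hypotheses of a pl-flipping contraction with plt standard-coefficient pair; the singularity aspects follow from Lemma \ref{lem:plt_dlt_base_change} and Lemma \ref{lem:properties-of-plt}, while numerical and contraction-theoretic aspects are routine flat base change.
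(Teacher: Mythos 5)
Your overall strategy is the same as the paper's: reduce to Theorem \ref{theorem:flips-exist} by a faithfully flat base change that makes the residue field infinite, supply the normality of $S$ and the global $\bigplus$-regularity of $(S,B_S+\varepsilon D)$ via \autoref{cor.ThreefoldNormalityOfS} and \autoref{cor.3dim-plt-Fano-are-purely-plus-regular}, and descend finite generation of the canonical ring along the faithfully flat map. The paper uses the completion of the strict henselization where you use the completion of $R[t]_{\fram R[t]}$; both are flat with geometrically regular fibers and preserve $\dim R = 3$, so either works for invoking \autoref{lem:plt_dlt_base_change}.

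The one genuine gap is your justification that $S_{R'}$ remains \emph{irreducible}. Normality of $S$ together with \autoref{lem:properties-of-plt} only tells you that $S_{R'}$ is (normal and) a disjoint union of its irreducible components — it does not rule out several components, and for the paper's base change (strict henselization) the divisor genuinely can split, e.g.\ when $H^0(S,\sO_S)$ has residue field a nontrivial separable extension of $k$. This is exactly the point the paper spends the second half of its proof on: since $-S_{R'}$ is ample over $Z'$, every contracted curve lies in $S_{R'}$, so $S_{R'}$ contains the connected fiber over the closed point of $Z'$; on the other hand each irreducible component of $S_{R'}$ has closed image in the local scheme $Z'$ and therefore meets that fiber; combined with the disjointness from \autoref{lem:properties-of-plt}, there can be only one component. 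You need this argument (or a substitute). For your particular choice of $R'$ a substitute does exist — a purely transcendental residue field extension preserves integrality, essentially because $\ell\otimes_k k(t)\cong \ell(t)$ is a field for $\ell/k$ algebraic, so $H^0(S,\sO_S)\otimes_R R'$ stays local and the normal scheme $S_{R'}$ stays connected — but that is a different argument from the one you cite, and it has to be made explicitly.
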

\begin{proof}
This follows from \autoref{theorem:flips-exist} and \autoref{cor.3dim-plt-Fano-are-purely-plus-regular}, except that the former result assumes that $R/\fram$ is infinite. However, one can reduce the statement to the case when  $R/\fram$ is infinite by  applying the base change to the completion of the strict henselization of $R$, see \autoref{lem:plt_dlt_base_change}. This works first because of the statement of \autoref{lem:plt_dlt_base_change}, and second  because this is a faithfully-flat base-change, so $\sectionRingR(X, K_X +S +B)$ is finitely generated if and only if $\sectionRingR(X', K_{X'} + S' + B')$ is finitely generated, where $X'$, $S'$ and $B'$ are the base-changes of $X$, $S$ and $B$, respectively. Moreover, $S'$ is irreducible; indeed, 
since it is anti-ample over the base change $Z'$ of $Z$, it must contain the exceptional locus of $X' \to Z'$ which is the fiber over $\fram'$ and is necessarily connected. As $S'$ is a disjoint union of its irreducible components by \autoref{lem:properties-of-plt}, each of which must intersect the fiber over $\fram'$, $S'$ can only have a single irreducible component.
\end{proof}

{As it will be needed for running a non-$\bQ$-factorial MMP, we also prove the following proposition inspired by \cite{HW19a}. It shows the existence of ``one-complemented'' flips for arbitrary residual characteristics even when the coefficients are not standard.  It is called one-complemented because the divisor $A$ in the boundary has coefficient $1$.
\begin{proposition}\label{proposition:one-complemented-pl-flips-exist}
Let $f \colon X \to Z$ be a small projective birational contraction of a three-dimensional dlt pair $(X,S+A+B)$ over the affine scheme $Z=\Spec R$ such that $S$ and $A$ are {effective} $\bQ$-Cartier Weil divisors, $S$ is irreducible, and $B$ is an effective $\bQ$-divisor satisfying $\lfloor B \rfloor =0$. Assume that $-(K_X+S+A+B)$, $-S$, and $A$ are $f$-ample. Further, suppose that $K_X+S+A+B \sim_{Z,\bQ} bS \sim_{Z,\bQ} cA$ for some $b,c \in \Q$. Then the canonical ring 
\[
    \sectionRingR(X,K_X+S+A+B)=\bigoplus_{m \in \bN} H^0(X, \sO_X(\lfloor m(K_X + S+A+B) \rfloor))
\]
is finitely generated.
\end{proposition}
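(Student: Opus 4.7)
The plan is to adapt the strategy of \cite{HW19a}, using the additional $f$-ample divisor $A$ (which appears with coefficient one in the boundary) as the surface of restriction in place of $S$. The advantage of working with $A$ rather than $S$ is that its $f$-ampleness supplies enough positivity to run the lifting argument without having to invoke global $\bigplus$-regularity of the restriction surface, which was the obstruction in \autoref{theorem:flips-exist} forcing standard coefficients and $p > 5$.

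First, as in \autoref{cor:flips-exist2}, reduce to the case $R/\fram$ infinite by base changing to the completion of the strict henselization of $R$, preserving the dlt hypothesis via \autoref{lem:plt_dlt_base_change}. Since $(X, S + A + B)$ is dlt and $A$ is $\bQ$-Cartier, $A$ is normal by \autoref{lem:properties-of-plt}, and adjunction gives $(K_X + S + A + B)|_A = K_A + \Delta_A$ with $(A, \Delta_A)$ a two-dimensional dlt pair. The canonical ring $\sectionRingR(A, K_A + \Delta_A)$ is finitely generated by the excellent surface MMP (\autoref{thm:surface-excellent-mmp}, \autoref{thm:surface-bpf-theorem}) together with abundance in dimension two (\autoref{abundance}).

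Next, take a good log resolution $\pi \colon Y \to X$ (in the sense of \autoref{def:compatible-resolution}, with the role of $S$ played by $A$), write $K_Y + S' + A' + B' = \pi^*(K_X + S + A + B)$, and for each sufficiently divisible $i$ define $D_i := \tfrac{1}{i}\Mob(i\pi^*(K_X + S + A + B))$ and $D_{i,A'} := D_i|_{A'}$. Using the $f$-ampleness of $A$, one has that $-(K_X + S + B) = A - (K_X + S + A + B)$ is $f$-ample, which allows us to carry out the $\myB^0_{A'}$-lifting (\autoref{thm:main-lifting}) and the Seshadri-style global generation arguments of \autoref{subsection:flips-key-of-the-argument} with $A'$ in place of $S'$, with the required positivity supplied by $A$ itself rather than by any global $\bigplus$-regularity of $(A, \Delta_A)$. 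Following the framework of \autoref{proposition:flips-descend-of-b-divisors}--\autoref{proposition:flips-restriction-of-key-identity}, this will show that the divisors $D_{i,A'}$ descend to a semiample $\bQ$-divisor on a terminalization of $(A, \Delta_A)$ and that the restricted algebra
\[
\sectionRingR_A := \bigoplus_{m \in \bN} \im\Big( H^0(X, \sO_X(m(K_X + S + A + B))) \to H^0(A, \sO_A(m(K_A + \Delta_A))) \Big)
\]
is finitely generated, paralleling \autoref{proposition:restricted-algebra-fin-gen}.

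Finally, using the linear equivalence $K_X + S + A + B \sim_{Z, \bQ} cA$, the kernel of the restriction map $\sectionRingR(X, K_X + S + A + B) \to \sectionRingR_A$ is, after passing to a suitable Veronese subalgebra, principal --- generated by a rational function cutting out a multiple of $A$ --- exactly as in the last paragraph of the proof of \autoref{theorem:flips-exist}. Combined with the finite generation of $\sectionRingR_A$, this yields the finite generation of $\sectionRingR(X, K_X + S + A + B)$. The main obstacle is the step in the previous paragraph: each intermediate result of \autoref{subsection:flips-key-of-the-argument} must be reformulated so that the positivity at every stage is supplied by $A$ being $f$-ample rather than by global $\bigplus$-regularity of the restriction surface. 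This reformulation is the essential content of the ``one-complemented'' argument in \cite{HW19a}, and it should go through in our mixed characteristic setting thanks to the vanishing results of \autoref{sec.Vanishing} and the $\myB^0_{A'}$-lifting theorem of \autoref{subsec.AdjointAnalogsOfS^0}.
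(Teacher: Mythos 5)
Your overall skeleton (restrict to a coefficient-one boundary divisor, show the restricted algebra is finitely generated, observe the kernel of restriction is principal after a Veronese truncation) matches the paper's, but you have chosen the wrong divisor to restrict to, and the step you wave through is exactly the one that cannot be waved through. The machinery of \autoref{subsection:flips-key-of-the-argument} does not merely need positivity of the ambient divisors: in \autoref{proposition:flips-restriction-of-key-identity} and in the Claim inside \autoref{proposition:restricted-algebra-fin-gen} one needs $\myB^0(\overline{S}, B_{\overline{S}}+\{-jD_{i,\overline S}\};\sO(jD_{\overline S}))=H^0(\overline{S},\sO(jD_{\overline S}))$ on the terminalization of the restriction surface, and this is precisely the hypothesis of \autoref{notation:flips_general} that $(S,B_S+\varepsilon D)$ be globally $\bigplus$-regular for every effective $D$. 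That is an intrinsic property of the surface pair (the mixed-characteristic analogue of global $F$-regularity, which can fail for log Fano surface pairs in small characteristic); the $f$-ampleness of $A$ on the threefold supplies the big-and-semiample hypothesis for the lifting in \autoref{thm:main-lifting}, but it does not make $\myB^0$ equal to $H^0$ on the surface you restrict to. Your proposal never explains how to get this identity on $\overline{A}$, so the finite generation of $\sectionRingR_A$ is not established. (A smaller symptom of the same confusion: \autoref{lem:properties-of-plt} gives normality of $A$ only up to universal homeomorphism, not normality.)

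The actual role of the one-complement $A$ — in the paper and in \cite{HW19a} — is not to serve as the restriction surface but to \emph{prove} the global $\bigplus$-regularity hypothesis for the restriction to $S$. The paper keeps $S$ as the restriction divisor (so that the Veronese reduction to $\sectionRingR(X,K_X+S+B)$ and then \autoref{theorem:flips-exist} applies verbatim), restricts $K_X+S+A+B$ to $\tilde S$ to get the anti-ample dlt pair $(\tilde S, A_{\tilde S}+B_{\tilde S})$, perturbs $A_{\tilde S}$ so that its round-down is a single prime divisor $C$ which is \emph{not contracted} (possible because $A_{\tilde S}$ is ample over $f(S)$), and then invokes \autoref{lem:one-complemented-is-plus-regular}: adjunction to the curve $C$ plus the direct summand theorem in dimension one yields pure global $\bigplus$-regularity with no assumption on the residue characteristic. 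This gives global $\bigplus$-regularity of $(\tilde S, B_{\tilde S}+\varepsilon D)$ for every $D$, hence normality of $S$ by \autoref{cor.NormalityOfS} and the hypotheses of \autoref{theorem:flips-exist}. Note also that the roles of $S$ and $A$ are not symmetric: on $\tilde S$ the complement $A|_{\tilde S}$ is relatively ample, so a non-contracted prime one-complement exists after perturbation, whereas on $\tilde A$ the candidate complement $S|_{\tilde A}$ is relatively anti-ample and may consist entirely of contracted curves, so even the salvage of applying \autoref{lem:one-complemented-is-plus-regular} on $A$ is obstructed. I would redirect your argument to restrict to $S$ and use $A$ only to verify the $\bigplus$-regularity hypothesis.
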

\begin{proof}
By the same argument as in \autoref{cor:flips-exist2} (applying  \autoref{lem:plt_dlt_base_change} and \autoref{lem:properties-of-plt} to $(X,S+B)$) we can assume that $R/\fram$ is infinite. Further, since $K_X+S+B$ is $f$-anti-ample and $\bQ$-linearly equivalent to a multiple of $K_X+S+A+B$, it is enough to show that $\sectionRingR(X, K_X +S +B)$ is finitely generated.

Write $K_{\tilde S} + A_{\tilde S} + B_{\tilde S} = (K_X+S+A+B)|_{\tilde S}$, where $A_{\tilde S} = A|_{\tilde S}$ and $\tilde S$ is the normalization of $S$. By adjunction, $(\tilde S, A_{\tilde S} + B_{\tilde S})$ is dlt. Since $\tilde S$ is $\bQ$-factorial, we may perturb $A_{\tilde S}$ a bit, to a $\bQ$-divisor $A'_{\tilde S}$ such that $C = \lfloor A'_{\tilde S} \rfloor$ is a prime divisor which is not contracted, $(\tilde S, A'_{\tilde S} + B_{\tilde S})$ is plt, and $-(K_{\tilde S} + A'_{\tilde S} + B_{\tilde S})$ is ample.  By \autoref{lem:one-complemented-is-plus-regular}, $(\tilde S, A'_{\tilde S} + B_{\tilde S}+\varepsilon D)$ is purely globally $\bigplus$-regular for every effective Cartier divisor $D$ with no common component with $C$ and $0 < \varepsilon \ll 1$. Hence the log Fano pair $(\tilde S, B_{\tilde S} + \varepsilon D)$ is globally $\bigplus$-regular for \emph{every} Cartier divisor $D$ and $0 < \varepsilon \ll 1$; in particular, $S$ is normal by \autoref{cor.NormalityOfS}. 
Therefore, $\sectionRingR(X, K_X +S +B)$ is finitely generated by \autoref{theorem:flips-exist}. \qedhere

\end{proof}
\begin{lemma}[{cf.\ \cite[Lemma 4.1]{HaconWitaszekMMP4fold}}]  \label{lem:one-complemented-is-plus-regular} Let $(S,C+B)$ be a two-dimensional plt pair admitting a projective birational {(onto its image)} morphism $f \colon S \to \Spec R$ such that $C$ is not contracted and $-(K_S+C+B)$ is $f$-ample. Then $(S,C+B)$ is purely globally $\bigplus$-regular.
\end{lemma}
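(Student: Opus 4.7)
The plan is to reduce to a one-dimensional pair via inversion of adjunction, and then verify global $\bigplus$-regularity on the normalization of $C$ by a direct calculation at each closed point.

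First I would pass to the setting required by \autoref{cor.inversion-of-B-adjunction}. Since $\Spec R$ is affine and $f \colon S \to \Spec R$ is projective, $f$-ampleness of $-(K_S+C+B)$ yields ampleness (and in particular, bigness and semi-ampleness) on $S$. By the Stein factorization, $S$ is proper over $H^0(S,\sO_S)$, a complete local normal domain finite over $R$ (recall $R$ is complete local from \autoref{notation:flips_base_ring}; $H^0$ is local since $R$ is henselian and $S$ is integral). Applying \autoref{cor.inversion-of-B-adjunction} then shows that $(S,C+B)$ is purely globally $\bigplus$-regular along $C$ if and only if $(C^{\mathrm{N}}, \Delta_{C^{\mathrm{N}}})$ is globally $\bigplus$-regular, where $C^{\mathrm{N}}$ is the normalization of $C$ and $\Delta_{C^{\mathrm{N}}}$ is the different of $K_S+C+B$ along $C^{\mathrm{N}}$.

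Next, because $(S,C+B)$ is plt, adjunction gives that $(C^{\mathrm{N}}, \Delta_{C^{\mathrm{N}}})$ is klt. Since $C^{\mathrm{N}}$ is one-dimensional, this is the same as saying that $C^{\mathrm{N}}$ is regular and $\Delta_{C^{\mathrm{N}}}$ has coefficients in $[0,1)\cap\mathbb{Q}$. As $C$ is not contracted, $C^{\mathrm{N}} \to f(C)$ is finite, so $C^{\mathrm{N}}$ is affine, and I can decompose into its finitely many connected components to assume $C^{\mathrm{N}}$ is integral. By \autoref{lem.GloballyPlusRegularIsLocalOnTheBase}, it now suffices to check that $(\sO_{C^{\mathrm{N}},x},\Delta_{C^{\mathrm{N}}}|_x)$ is $\bigplus$-regular at each closed point $x\in C^{\mathrm{N}}$.

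The main obstacle, modest in size but requiring care, is this local verification. Set $A:=\sO_{C^{\mathrm{N}},x}$ with uniformizer $\pi$, and write $\Delta_{C^{\mathrm{N}}}|_x = a\cdot[\pi]$ with $a\in\mathbb{Q}\cap[0,1)$. For any finite dominant $g\colon \Spec T\to\Spec A$ with $T$ normal, $T$ is a semi-local Dedekind domain and hence a PID, so $T(\lfloor g^*(a[\pi])\rfloor)$ is a free $T$-module of rank one, and thus a free $A$-module of rank $n=[K(T):K(A)]$. The required splitting of $A\hookrightarrow T(\lfloor g^*(a[\pi])\rfloor)$ as $A$-modules is equivalent to the image of $1$ being a primitive vector, i.e., lying outside $\pi\cdot T(\lfloor g^*(a[\pi])\rfloor)$. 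Choosing a generator $s$ of $T(\lfloor g^*(a[\pi])\rfloor)$ with $v_{m_i}(s) = -\lfloor a e_i\rfloor$ at each maximal ideal $m_i$ of $T$ (with ramification index $e_i$), the image of $1$ corresponds under the isomorphism $T(\lfloor g^*(a[\pi])\rfloor)\cong T$ to $1/s\in T$, which has $v_{m_i}(1/s)=\lfloor a e_i\rfloor$. Since $a<1$ forces $\lfloor a e_i\rfloor < e_i$ at every $m_i$, the element $1/s$ is not in $\pi T$, which produces the desired splitting and finishes the proof.
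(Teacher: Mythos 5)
Your proof is correct and follows essentially the same route as the paper: reduce via \autoref{cor.inversion-of-B-adjunction} to showing the adjoint pair on the (affine, regular) curve is globally $\bigplus$-regular, then verify the splitting locally at each DVR stalk using that the boundary coefficient is $<1$. The only difference is cosmetic — you check that the image of $1$ is a primitive vector in the rank-one fractional ideal over the semi-local PID $T$ by a valuation count, whereas the paper first splits off the explicit intermediate cover $V[v^{1/b}]$ and then invokes the direct summand property of one-dimensional regular rings — and both computations are valid.
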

\begin{proof}
    {We replace $\Spec R$ by the normalization of the image of $S$.} To show that $(S, C + B)$ is purely globally $\bigplus$-regular, it suffices to apply \autoref{cor.inversion-of-B-adjunction} and the following claim (here $K_C+B_C = (K_S+C+B)|_S$).
\begin{claim}
	The pair $(C,B_C)$ is globally $\bigplus$-regular.
\end{claim}
\begin{proof}
    If there was no pair, this would just be the direct summand theorem for 1-dimensional rings\footnote{This just uses that if $C \subseteq D$ is a finite extension, then $D$ is finite flat and hence $C \subseteq D$ splits.}.  {In general}, we pass to a finite cover to remove the boundary $B_C$.  
	Note $C$ is affine, one-dimensional, normal and hence regular, and $\lfloor B_C \rfloor = 0$.  It suffices to show that for any finite cover $\kappa: C' \to C$ (with $C'$ integral and $\kappa^* B_C$ integral), $\sO_C \to \kappa_*\sO_{C'}(\kappa^* B_C)$ splits.  Since $C$ is affine, this may be checked at the stalk of a closed point $Q$ of $C$.  Thus consider a DVR $V = \sO_{C, Q}$ with uniformizer $v$ and $B_C|_{\Spec V} = {a \over b} \Div(v)$ with $a < b$ {coprime} integers.  Form the extension $V' = V[v^{1/b}]$.  The map $V \to V'$ sending $1 \mapsto v^{a/b}$ splits by construction.  Since $V'$ is also regular, any further finite extension $V' \subseteq W$ is split.
		Hence the map $V \to W$ sending $1 \mapsto v^{a/b}$ splits.  This shows that $\sO_{C,Q} \to (\kappa_*\sO_{C'}(\kappa^* B_C))_Q$ splits and proves the claim.
\end{proof}
The claim completes the proof.\end{proof}}

\section{Minimal Model Program}
\label{section:MinimalModelProgram}

We develop the Minimal Model Program for arithmetic threefolds.

\begin{setting}\label{MMP_setting}

{In this section we work over a base scheme $T$ which (for us) is always quasi-projective over a finite dimensional excellent ring $R$ admitting a dualizing complex.}
Note that this includes the cases where $T$ is purely of zero or positive characteristic. 

{Throughout this section, the dualizing complex on $R$ is fixed. This in turn defines a unique dualizing complex, and so a canonical sheaf, on all schemes which are quasi-projective (or constructed therefrom by ways of localisation or completion) over $R$.}

Whenever we use the word \emph{curve}, it will implicitly mean \emph{curve over $T$}, that is a one dimensional scheme which is proper over a closed point of $T$.  Recall that {curves} can be of codimension one even when $X$ is of dimension three (cf.\ \autoref{remark:divisors-of-unexpected-dimension}).

Unless otherwise stated, a field $k$ will refer to the residue field of $T$ at a suitable closed point.
Furthermore, in this section, all boundary divisors $\Delta$ will be $\mathbb{R}$-divisors, unless otherwise stated. Notions such as semiampleness or nefness are assumed to be relative, typically over the base $T$.
\end{setting}
Recall from \autoref{sec:surface_mmp}, that the key examples of $T$ include quasi-projective schemes over Dedekind domains or spectra of complete Noetherian local domains.\\

{The argument has the following steps:}

\begin{itemize}[leftmargin=1.5cm]
    \item[\textbf{Step 1}] We prove the cone theorem and the existence of pl-contractions in the pseudo-effective case.
    \item[\textbf{Step 2}] We construct flips with arbitrary coefficients in the $\bQ$-factorial setting using the existence of pl-flips with standard coefficients proven in the previous section.
    \item[\textbf{Step 3}] We prove the base point free theorem for nef and big line bundles using the existence of ``one-complemented'' pl-flips (\autoref{proposition:one-complemented-pl-flips-exist}).
    \item[\textbf{Step 4}] We show the termination of any sequence of flips when $K_X+\Delta$ is pseudo-effective using \cite{AHK07}, and conclude the proof of the MMP in this case. 
    \item[\textbf{Step 5}] {We prove the base point free theorem in its most general form, for non-big line bundles.}
    \item[\textbf{Step 6}] {We show the full cone theorem, and deduce termination with scaling and the existence of Mori fiber spaces when $K_X+\Delta$ is not pseudo-effective.}
\end{itemize}
Steps 2 and Step 3 are independent: Step 2 is based on \autoref{cor:flips-exist2}.  {It requires the assumptions of $\bQ$-factoriality and characteristics different than 2,3 or 5, but otherwise has no special requirements.}

On the other hand, in Step 3 we need to run a non-$\bQ$-factorial MMP in the case relative to a birational morphism \cite{Kollar2020RelativeMMPWithoutQfactoriality}. This means that  we cannot apply \autoref{cor:flips-exist2} directly, because it assumes that the coefficients are standard, and we cannot apply the existence of flips with arbitrary coefficients obtained in Step 2 either due to the $\bQ$-factoriality restrictions. Since we work with a special  MMP relative to a birational morphism, the flipping contractions occurring in this MMP are ``one-complemented'', and so we can apply \autoref{proposition:one-complemented-pl-flips-exist}.  

Also, observe that the argument of \cite{AHK07} used in Step 4 works when $K_X+\Delta\sim_{\mathbb{R}} M$ for some effective $\mathbb{R}$-divisor $M$
and terminalizations exist. The former condition holds automatically when $K_X+\Delta$ is pseudo-effective, $(X,\Delta)$ is klt, 
and $X$ is not defined over a closed point of $T$ (for example, when $X$ is of mixed characteristic) by applying the non-vanishing theorem for varieties of dimension at most $2$ over the generic point of the image of $X$ in $T$ \cite[Theorem 7.2]{fujino_minimal_2012}.  To construct a terminalization we run an MMP which terminates for terminal pairs by Shokurov's argument.

\begin{remark} \label{remark:history} The cone theorem in the pseudo-effective case holds by the same arguments as in \cite{KeelBasepointFreenessForNefAndBig,DW19} while the existence of pl-contractions follows from \cite{Witaszek2020KeelsTheorem}. The argument behind Step 2 is due to \cite{Birkar16}. The base point free theorem for nef and big line bundles was proven in characteristic $p>0$ in \cite{Birkar16} and \cite{Xu15Bpf} based on Keel's theorem and the generalized MMP (\cite{HaconXuThreeDimensionalMinimalModel,Birkar16}). {The existence of log minimal models in the pseudo-effective case in positive characteristic} was proven in \cite{Birkar16}. The general version of the cone theorem, the termination with scaling, the existence of Mori fiber spaces, and the base point free theorem for nef line bundles in positive characteristic is due to \cite{BW17} (see \cite{CTX15} for partial results). The generalization of some of the above results from algebraically closed fields to arbitrary $F$-finite fields is due to \cite{DW19} {(cf.\ \cite{GNT06} for the case of perfect fields)}. {We give different proofs for most of these results in the relative situation.}
\end{remark}

\begin{remark}
In \cite{HW19a} it is proven that the Minimal Model Program is valid over three-dimensional singularities and in semi-stable families in all
characteristics $p>0$.  In the process of showing the base point free theorem, we generalize the former result to mixed characteristic (\autoref{lem:non-q-factorial-mmp}), and the latter should go through with almost no modifications. Similarly, most of our results  can be extended to include $p=5$ in the general case using the arguments of \cite{HaconWitaszekMMPp=5} as has been verified in \cite{xie_xue}. \end{remark}

{
\begin{remark}
The only place in this section where the theory of $\bR$-divisors is used in an essential way is the proof of the non-$\bQ$-factorial MMP (\autoref{lem:non-q-factorial-mmp}) which in turn is employed to show the base point free theorem in the big case (\autoref{corollary:bpf-theorem-no-standard-coefficients}). In particular, readers interested in the case of $\bQ$-boundaries only, may assume in the remaining steps that all the boundaries are $\bQ$-divisors (in \autoref{thm:finiteness_of_models} which is used to prove termination with scaling, \autoref{thm:termination_scaling}, one should only consider points of the polytope which are rational). Note that in \cite{BW17} it was essential to consider the full power of the MMP for $\bR$-divisors as they come up as limits of $\bQ$-boundaries in an essential way. This is not the case in our arguments in Steps 5--6, as we employ a different strategy of proof. 
\end{remark}
}

Before proceeding, we recommend the reader to review \autoref{remark:divisors-of-unexpected-dimension}, \autoref{remark:divisors-of-unexpected-dimension2}, and \autoref{remark:divisors-of-unexpected-dimension3}, which discuss the unexpected behaviour of the dimension of Cartier divisors and localisation at $\mathbb{Q}$.

\subsection{Existence of flips and background on termination}

We start by stating the existence of  pl-flips in our setting, and recalling the statement of special termination.  First we tackle the case in which $X$ is a scheme of pure characteristic zero -- we must deal with the generalization from varieties to Noetherian excellent schemes.  Our argument above can be adapted to this situation, where we would use the fact that $\myB^0_{\alt}=H^0$ for a klt scheme of characteristic zero and deduce the relevant liftings from \cite{takumi}.  However, we believe it is more straightforward for the reader to follow the original argument as explained in \cite{CortiFlipsFor3FoldsAnd4Folds} which goes through verbatim, given the appropriate vanishing theorems:

\begin{proposition}\label{prop:char_zero_pl}
Suppose in addition to \autoref{MMP_setting} that $R$ is a domain with all residue characteristics being zero. Let $f\colon X\to Z$ be a three-dimensional pl-flipping contraction where {$Z$ is quasi-projective over $R$,}  $(X,S+B)$ is plt,  $S = \lfloor S + B \rfloor$ is a $\bQ$-Cartier prime divisor, $B$ is an effective $\bQ$-divisor, and $K_X+S+B$ is $\bQ$-linearly equivalent to a multiple of $S$.  Then the pl-flip of $(X,S+B)$ over $Z$ exists.
\end{proposition}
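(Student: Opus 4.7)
The plan is to follow the strategy of \cite{CortiFlipsFor3FoldsAnd4Folds}, which in pure characteristic zero proceeds in essentially the same way as the argument in \autoref{Section:flips}, but with $\myB^0$-lifting replaced throughout by the Kawamata-Viehweg vanishing theorem. The relative KV vanishing for excellent schemes of equal characteristic zero is provided by \cite{takumi} (as already invoked in the proof of \autoref{prop:char_zero_bpf}), and all of the other required inputs---Bertini (\autoref{thm.FinalBertini}), log resolutions of three-dimensional excellent schemes (\autoref{proj-resolutions}), the surface base point free theorem (\autoref{thm:surface-bpf-theorem}), and existence of terminalizations of klt surfaces---are already available in our setting.

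First I would reduce to the case $Z = \Spec A$ with $A$ a local domain having infinite residue field (using faithful flatness as in \autoref{cor:flips-exist2}), and then to the finite generation of the restricted algebra
\[
\sectionRingR_S = \bigoplus_{i \in \bN} \im \big(H^0(X, \sO_X(\lfloor i(K_X+S+B)\rfloor)) \to H^0(S, \sO_S(\lfloor i(K_S+B_S)\rfloor ))\big),
\]
exactly as in the proof of \autoref{theorem:flips-exist}. Then I would set up the notation of \autoref{notation:log-resolutions} and \autoref{notation:D_overline_S}, and mimic the proofs of \autoref{proposition:flips-descend-of-b-divisors}, \autoref{proposition:flips-b-divisor-is-rational}, \autoref{proposition:flips-restriction-of-key-identity}, and \autoref{proposition:restricted-algebra-fin-gen} in turn.

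The only real modification occurs at every point where we invoked $\myB^0$-lifting via \autoref{thm:main-lifting}: instead of the surjection onto $\myB^0(S', \{B_{S'}-jD_{i,S'}\}; \sO_{S'}(\lceil jD_{i,S'}+A_{S'}\rceil))$, one now obtains the stronger surjection onto the full $H^0(S', \sO_{S'}(\lceil jD_{i,S'}+A_{S'}\rceil))$. Indeed, by \autoref{eq:technical-lifting-sections-in-flips:adjunction1},
\[
\lceil jD_i + A' \rceil - S' \sim_{\Q} K_Y + \{B' - jD_i\} + \bigl(jD_i - \pi^*(K_X+S+B)\bigr),
\]
where $jD_i - \pi^*(K_X+S+B)$ is big and semiample over $Z$; hence $H^1(Y, \sO_Y(\lceil jD_i + A' \rceil - S')) = 0$ by the relative Kawamata-Viehweg vanishing of \cite{takumi}, which gives the desired lifting of sections. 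Since in characteristic zero one also has $\myB^0(\overline{S}, B_{\overline{S}} + \{-jD_{i,\overline{S}}\}; \sO_{\overline{S}}(jD_{\overline{S}})) = H^0(\overline{S}, \sO_{\overline{S}}(jD_{\overline{S}}))$ for klt pairs (again by KV vanishing applied on the regular surface $\overline{S}$), the remaining step of \autoref{proposition:flips-restriction-of-key-identity} goes through unchanged.

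The hard part---or rather, the only part requiring any real care---will be verifying that the Seshadri-constant / base-point-freeness arguments of \autoref{proposition:flips-descend-of-b-divisors} and \autoref{proposition:flips-b-divisor-is-rational} remain valid after the substitutions above. Both rely only on the fact that, after lifting, sections of the relevant linear systems on $\overline{S}$ separate points off the exceptional locus; in characteristic zero this follows directly from \autoref{thm:surface-bpf-theorem} combined with KV vanishing, and the Bertini inputs we need are covered by \autoref{thm.FinalBertini}. Once these steps are in place, the argument of \autoref{proposition:restricted-algebra-fin-gen} shows that $\sectionRingR_S$ is finitely generated, and then the passage from the restricted algebra to finite generation of $\sectionRingR(X, K_X+S+B)$, and hence to the existence of the pl-flip, is verbatim the argument of \autoref{theorem:flips-exist}.
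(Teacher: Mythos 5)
Your proposal is correct and follows essentially the same route as the paper: the paper's proof simply cites the argument of \cite{CortiFlipsFor3FoldsAnd4Folds} (which mirrors the steps of \autoref{Section:flips}) together with the relative Kawamata--Viehweg vanishing of \cite{takumi}, normality of plt centers, plt inversion of adjunction, and projective resolutions with ample exceptional divisors, which is exactly the substitution of KV-vanishing for $\myB^0$-lifting that you carry out step by step. Your write-up is just a more explicit version of the same argument.
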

\begin{proof}
As mentioned above, this follows from the proof of \cite[Theorem 2.2.25]{CortiFlipsFor3FoldsAnd4Folds} {(it is assumed therein that $X$ is $\bQ$-factorial and $\rho(X/Z)=1$, but our weaker assumptions are sufficient)}.  There are various ingredients, which mirror the steps used in \autoref{Section:flips}, and all of which go through using the existing proofs as application of \cite[Theorem A]{takumi} in characteristic zero.  We have normality of plt centers \cite[Corollary 17.5]{KollarFlipsAndAbundance}, plt inversion of adjunction \cite[Theorem 5.50]{KollarMori} and existence of projective resolutions of singularities with ample exceptional divisors  (\autoref{proj-resolutions}).  
\end{proof}

\begin{proposition}\label{proposition:flips-exist} Let $f \colon X \to Z$ be a three-dimensional pl-flipping contraction of a plt pair $(X,S+B)$ where $S = \lfloor S + B \rfloor$ is a $\bQ$-Cartier prime divisor, $B$ is an effective $\bQ$-divisor with standard coefficients, $K_X+S+B$ is $\bQ$-linearly equivalent to a multiple of $S$, and $Z$ is a quasi-projective scheme over $R$. {Suppose that none of the residue fields of $R$ have characteristic $2$, $3$ or $5$.} Then the pl-flip of $(X,S+B)$ over $Z$ exists. \end{proposition}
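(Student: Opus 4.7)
The plan is to reduce Proposition~\ref{proposition:flips-exist} to the two cases already handled: Proposition~\ref{prop:char_zero_pl} when the relevant base has all residue characteristics zero, and Corollary~\ref{cor:flips-exist2} when the base is a complete Noetherian normal local domain of residue characteristic $p>5$. The pl-flip exists if and only if the sheaf of graded $\sO_Z$-algebras
\[
  \sR \;:=\; \bigoplus_{m\geq 0} f_*\sO_X\bigl(\lfloor m(K_X+S+B)\rfloor\bigr)
\]
is finitely generated, a question that is local on $Z$. Since $\Exc(f)$ is proper and one-dimensional over $Z$, its image $f(\Exc(f))$ consists of finitely many closed points; away from these points $f$ is an isomorphism and finite generation is trivial. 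Hence I may replace $Z$ by $\Spec A$ for $(A,\fram)$ an excellent normal local domain, obtained by replacing $Z$ by its normalization in the function field of $X$ and then localizing at such a closed point.

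If $A/\fram$ has characteristic $0$, then $A\supset\bQ$, so every residue field of $A$ has characteristic zero, and Proposition~\ref{prop:char_zero_pl} applies directly with $A$ in the role of $R$. If instead the residue characteristic is $p>5$, I pass to the completion $\widehat{A}$. Faithfully flat descent along $A\to\widehat{A}$ together with flat base change for cohomology identifies $\sR(Z)\otimes_A\widehat{A}$ with the canonical algebra of the base change $f_{\widehat{A}}\colon X_{\widehat{A}}\to\Spec\widehat{A}$, and reduces finite generation of $\sR(Z)$ over $A$ to finite generation of that base changed algebra over $\widehat{A}$. Since $A$ is excellent and normal, $\widehat{A}$ is a complete Noetherian normal local domain (a normal complete local ring is a single normal domain, as its spectrum is connected). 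By Lemma~\ref{lem:plt_dlt_base_change} the base-changed pair $(X_{\widehat{A}}, S_{\widehat{A}}+B_{\widehat{A}})$ remains plt, and $S_{\widehat{A}}$ is still a prime divisor by the connectedness argument at the end of the proof of Corollary~\ref{cor:flips-exist2}: $S_{\widehat{A}}$ must contain the connected exceptional fibre over the closed point of $\Spec\widehat{A}$, but Lemma~\ref{lem:properties-of-plt} forces $S_{\widehat{A}}$ to be a disjoint union of its irreducible components, so there can be only one. Moreover, $K_{X_{\widehat{A}}}+S_{\widehat{A}}+B_{\widehat{A}}$ is still $\bQ$-linearly equivalent to a multiple of $S_{\widehat{A}}$, and $B_{\widehat{A}}$ still has standard coefficients. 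Corollary~\ref{cor:flips-exist2} then produces the required finite generation.

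The main obstacle is bookkeeping rather than mathematics: essentially all the work has been carried out already in Section~\ref{Section:flips}. What must be verified is (i) faithfully flat descent of finite generation of graded $A$-algebras, which follows from the standard fact that if $R\otimes_A\widehat{A}$ is generated over $\widehat{A}$ by elements $r_1,\dots,r_n$ coming from $R$, then $R=A[r_1,\dots,r_n]$ by faithful flatness; (ii) the preservation of plt under completion (Lemma~\ref{lem:plt_dlt_base_change}); and (iii) the irreducibility of $S_{\widehat{A}}$ noted above. None of these is genuinely difficult, but each must be invoked at the right moment. Once assembled, the proof amounts to no more than the reduction and the appeal to the two earlier propositions.
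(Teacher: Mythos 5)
Your proposal is correct and follows essentially the same route as the paper: localize at the finitely many points of $f(\Exc(f))$, dispatch the residue-characteristic-zero case via Proposition~\ref{prop:char_zero_pl}, and in residue characteristic $p>5$ pass to the completion, check that plt-ness, the irreducibility of $S$, and the relation $K_X+S+B\sim_{Z,\bQ}bS$ survive base change (Lemma~\ref{lem:plt_dlt_base_change}, Lemma~\ref{lem:properties-of-plt}, and the connectedness of the exceptional fibre), and invoke Corollary~\ref{cor:flips-exist2}. The only cosmetic difference is the descent step: you descend finite generation of the graded algebra directly via generators, whereas the paper rephrases finite generation as surjectivity of multiplication maps of coherent sheaves and checks that after completion — both are the same faithfully-flat-descent argument.
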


\begin{proof}
By \autoref{prop:char_zero_pl} and localisation, we may assume that $Z$ is the spectrum of a local ring with positive residue characteristic 
(note that it will not be quasi-projective over $R$ any more).
We need to show that some Veronese subalgebra of the canonical ring $\sectionRingR(X,K_X+S+B) = \bigoplus_{i \in \bN} H^0(X,\sO_X(\lfloor i(K_X+S+B)\rfloor))$ is finitely generated. This is equivalent to verifying that there exists a divisible enough $j>0$ such that the multiplication map
\begin{equation} \label{eq:surjectivity-for-the-canonical-ring}
f_*\sO_X(j(K_X+S+B))^{\oplus i/j} \to f_*\sO_X(i(K_X+S+B))
\end{equation}
is surjective for every $i>0$ divisible by $j$.  Let $\big(\widehat{X}, \widehat{S}+\widehat{B} \big)$ be the completion of $(X,S+B)$ at $z = f(\mathrm{Exc}(f)) \in Z$. By \autoref{lem:plt_dlt_base_change}, $\big(\widehat{X}, \widehat{S}+\widehat{B}\big)$ is plt. Moreover, by \autoref{lem:properties-of-plt}, $\widehat{S}$ is a disjoint union of its irreducible components. Since $\widehat{S}$ is anti-ample over the completion $\widehat{Z}$ of $Z$ at $z$, it must contain the exceptional locus of $\widehat{X} \to \widehat{Z}$ which is the fiber over the closed point of $\widehat{Z}$ and is connected.  As every component of $\widehat{S}$ must also intersect this exceptional locus, this is only possible when $\widehat{S}$ is irreducible.

The condition that $K_X+S+B \sim_{Z,\bQ} -bS$, for some $b \in \bQ_{>0}$, is preserved under completion. 
Hence, \autoref{eq:surjectivity-for-the-canonical-ring} is surjective after completion by \autoref{cor:flips-exist2}, and since surjectivity of finitely generated modules can be verified after completion, the proposition follows.
\end{proof}

\begin{theorem} \label{thm:special-termination}
Let $(X,\Delta)$ be a three-dimensional $\bQ$-factorial dlt pair with $\bR$-boundary which is projective over $T$,  and let 
\[
(X,\Delta) \dashrightarrow (X_1,\Delta_1) \dashrightarrow (X_2,\Delta_2) \dashrightarrow \cdots
\]
be a sequence of $(K_X+\Delta)$-flips and divisorial contractions over $T$. Then after finitely many steps all the maps are flips and the flipped and flipping loci are disjoint from $\lfloor \Delta_i \rfloor$. 
\end{theorem}
\begin{proof}
Since divisorial contractions decrease the Picard rank (cf.\ \autoref{remark:relative-Picard-rank}), we can assume that the above sequence consists only of flips.  

The result then follows by the same argument as in \cite[Theorem 4.2.1]{fujino05}. The proof employs the two dimensional MMP (\autoref{thm:surface-excellent-mmp}). Implicitly, this reference assumes the normality of the irreducible components of $\lfloor \Delta \rfloor$, but what is only needed is normality up to a universal homeomorphism (see \cite{HW19a}) which follows from \autoref{lem:properties-of-plt}.  We point out that the irreducible components $Y \subseteq X_i$ of the  flipping and flipped loci cannot be contained in the prime divisors $D \subseteq  \Supp \Delta_i$ satisfying $\dim D = 1$ (otherwise, $D=Y$, and so $Y$ would be a divisor). Similarly, the flipped contraction is small (\cite[Lemma 6.2]{KollarMori}), thus the flipped locus must also have codimension at least $2$ and therefore it cannot contain a divisor of dimension $1$.  Thus, no new phenomena show up and the proof is really exactly as in \cite[Theorem 4.2.1]{fujino05}.
\end{proof}

\begin{theorem} \label{thm:special-termination-2}
Let $(X,\Delta)$ be a three-dimensional $\bQ$-factorial dlt pair with $\bR$-boundary which is projective over $T$, and suppose that all three-dimensional $\bQ$-factorial klt pairs projective over $T$ and with  underlying scheme birational to $X$ admit terminalizations.  Let
\[
(X,\Delta) \dashrightarrow (X_1,\Delta_1) \dashrightarrow (X_2,\Delta_2) \dashrightarrow \cdots
\]
be a sequence of $(K_X+\Delta)$-flips and divisorial contractions over $T$. Then after finitely many steps all the flipped and flipping loci in the above sequence are disjoint from $\Supp \Delta_i$.
\end{theorem}

\begin{proof}
Suppose by contradiction that there exists an infinite sequence of flips $(X,\Delta) \dashrightarrow (X_1,\Delta_1) \dashrightarrow (X_2,\Delta_2) \dashrightarrow \cdots$ for which the statement fails. By \autoref{thm:special-termination}, we can assume that the flipping loci are disjoint from $\lfloor \Delta_i \rfloor$. Hence, by decreasing the coefficients of $\Delta$, we can assume that $(X,\Delta)$ is klt; the sequence $X \dashrightarrow X_1 \dashrightarrow \cdots$ is still a $(K_X+\Delta)$-MMP as all the flipping loci are disjoint from the divisors whose coefficients were decreased.   

Now the proof follows from the argument of Alexeev-Hacon-Kawamata (\cite[Proposition 2.10]{HaconWitaszekMMP4fold},  \cite{AHK07}); although the statement assumes that the schemes are defined over a field and the boundaries are $\bQ$-divisors, it is valid in our setting as well (in particular, the proof of \cite[Lemma 2.11]{HaconWitaszekMMP4fold} goes through for arbitrary Noetherian excellent surfaces). Note that \cite[Proposition 2.10]{HaconWitaszekMMP4fold} requires the existence of terminalizations (which is assumed in \autoref{thm:special-termination-2}), and the existence of proper resolutions of singularities (\autoref{thm:proper-resolutions}). Finally, we point out that, as explained in the proof of \autoref{thm:special-termination}, the divisors $D \subseteq \Supp \Delta_i$ satisfying $\dim D=1$ do not cause any problems.
\end{proof}

\subsection{Step 1: Partial cone and contraction theorems}

In what follows, given a ray $\Sigma$ and a $\bQ$-Cartier divisor $D$, we shall write, by abuse of notation, that $\Sigma \cdot D >0$ when $\Sigma$ is $D$-positive (and analogously for $\Sigma \cdot D = 0$ and $\Sigma \cdot D <0$), although the number $D \cdot \Sigma$ is not well defined.

\begin{theorem} \label{thm:keel_cone}
Let $(X,\Delta)$ be a normal $\mathbb{Q}$-factorial three-dimensional pair with $\bR$-boundary and coefficients in $[0,1]$, which is projective over $T$. If $K_X+\Delta\equiv_T M$ for some effective $\mathbb{R}$-Cartier divisor $M$,
then there exists a countable set of curves over $T$, denoted $\{C_i\}$, such that
\begin{enumerate}
\item
\label{itm:keel_cone:sum}
\[
\overline{\mathrm{NE}}(X/T) = \overline{\mathrm{NE}}(X/T)_{K_X+\Delta\geq 0} + \sum_i \bR_{\geq 0}[C_i].
\]
\item 
\label{itm:keel_cone:accumulation}
The rays $[C_i]$ do not accumulate in the half space $(K_X+\Delta)_{<0}$, and

    \item 
    \label{itm:keel_cone:bound}
    For all but finitely many $i$, $$0<-(K_X+\Delta)\cdot_k {C_i}\leq 4 d_{C_i}$$ where $k$ is the residue field of the closed point on $T$ which is the image of $C_i$, $d_{C_i}$ is the constant from \autoref{lem:d_C} such that if $L$ is any Cartier divisor on $X$, then $L\cdot_k C_i$ is divisible by $d_{C_i}$.
\end{enumerate}

\end{theorem}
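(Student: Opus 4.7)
The plan is to reduce to the two-dimensional cone theorem \autoref{thm:surface_cone} via adjunction to the components of $\Supp M$, in the spirit of \cite[Proposition 4.5]{DW19}. The starting observation is that since $K_X+\Delta\equiv_T M$, any curve $C\subset X$ with $(K_X+\Delta)\cdot C<0$ satisfies $M\cdot C<0$; writing $M=\sum_{i=1}^r m_iS_i$ as an $\mathbb{R}$-linear combination of prime divisors with $m_i>0$, this forces $S_i\cdot C<0$ for some $i$, and hence $C\subseteq S_i$. Consequently, every $(K_X+\Delta)$-negative class in $\overline{\mathrm{NE}}(X/T)$ is represented by curves lying in the finite union $\Supp M=\bigcup_{i=1}^rS_i$.

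For each $i$, let $\nu_i\colon S_i^\nu\to S_i$ denote the normalization. Writing $\Delta=\alpha_iS_i+\Delta_i'$ with $S_i\not\subseteq\Supp\Delta_i'$ and $0\le\alpha_i\le 1$, the pair $(X,S_i+\Delta_i')$ remains log canonical, as raising the coefficient of $S_i$ from $\alpha_i$ to $1$ preserves log canonicity. By adjunction there is an effective $\mathbb{R}$-divisor $\Theta_i$ on $S_i^\nu$ with $(K_X+S_i+\Delta_i')|_{S_i^\nu}=K_{S_i^\nu}+\Theta_i$ and $(S_i^\nu,\Theta_i)$ log canonical. The divisor $\Xi_i:=\Theta_i-(1-\alpha_i)\,\nu_i^*S_i$ then satisfies $(K_X+\Delta)|_{S_i^\nu}=K_{S_i^\nu}+\Xi_i$ and $\Xi_i\le\Theta_i$. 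Applying the log canonical case of the surface cone theorem \autoref{thm:surface_cone} to $(S_i^\nu,\Theta_i)$ over $T$ produces countably many curves $\{C_j^{(i)}\}_j$ on $S_i^\nu$ which generate the $(K_{S_i^\nu}+\Theta_i)$-negative part of $\overline{\mathrm{NE}}(S_i^\nu/T)$ and satisfy the length bound $0<-(K_{S_i^\nu}+\Theta_i)\cdot_k C_j^{(i)}\le 4d_{C_j^{(i)}}$, without accumulation in the $(K_{S_i^\nu}+\Theta_i)<0$ half-space.

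I would take the required family $\{C_i\}$ on $X$ to be the collection of images $\nu_{i,*}C_j^{(i)}$ (each birational onto its image). Since $\Xi_i\le\Theta_i$, the projection formula gives
\[
-(K_X+\Delta)\cdot_k\nu_{i,*}C_j^{(i)}=-(K_{S_i^\nu}+\Xi_i)\cdot_k C_j^{(i)}\le-(K_{S_i^\nu}+\Theta_i)\cdot_k C_j^{(i)}\le 4d_{C_j^{(i)}},
\]
yielding (c) (the finitely many exceptions coming from curves contained in $\Supp(\Theta_i-\Xi_i)\subseteq S_i\cap\bigcup_{j\ne i}S_j$, a $1$-dimensional set). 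The decomposition (a) follows by pushing forward the corresponding decomposition on each $S_i^\nu$: a class $\alpha\in\overline{\mathrm{NE}}(X/T)$ with $(K_X+\Delta)\cdot\alpha<0$ is a limit of classes of effective $1$-cycles, and after subtracting a $(K_X+\Delta)$-nonnegative term its negative part is carried by $\Supp M$; such a class lifts to $\bigoplus_i\overline{\mathrm{NE}}(S_i^\nu/T)_{K_{S_i^\nu}+\Theta_i<0}$, which decomposes into $\mathbb{R}_{\ge 0}$-combinations of $[C_j^{(i)}]$ by \autoref{thm:surface_cone}. Non-accumulation (b) follows because each $S_i^\nu$ contributes a non-accumulating family and there are only finitely many $i$.

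The main obstacle will be to justify rigorously the lifting/decomposition step in the last paragraph, namely that every class in the $(K_X+\Delta)$-negative part of $\overline{\mathrm{NE}}(X/T)$ arises as the pushforward of a class from $\bigsqcup_iS_i^\nu$ that is itself in the negative half-space for the adjoint polarization. This requires a standard limiting/approximation argument, together with control of how the $d_{C}$-constants of \autoref{lem:d_C} transform under the finite birational map $\nu_i$ (so that the length bound on $X$ genuinely inherits the length bound on $S_i^\nu$); both ingredients are present in the arguments of \cite[Theorem 4.3, Proposition 4.5]{DW19} and can be transcribed once the reduction to $\Supp M$ is in place.
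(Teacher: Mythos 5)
Your overall strategy --- reduce to curves inside $\Supp M$, do adjunction to the normalizations of its components, and quote \autoref{thm:surface_cone} --- is the same as the paper's. But there is a genuine gap in how you raise the coefficient of $S_i$ to $1$ before taking adjunction, and it breaks both the length bound and the cone decomposition. You add $(1-\alpha_i)S_i$, so that $\Theta_i-\Xi_i=(1-\alpha_i)\nu_i^*S_i$. First, note this is \emph{not} supported on $S_i\cap\bigcup_{j\neq i}S_j$ as you claim; it is the class $S_i|_{S_i}$, whose intersection with a curve $C\subseteq S_i$ can have either sign. Consequently the inequality $-(K_{S_i^\nu}+\Xi_i)\cdot C\le-(K_{S_i^\nu}+\Theta_i)\cdot C$ holds only when $S_i\cdot C\le 0$, and the extremal curves $C_j^{(i)}$ produced by \autoref{thm:surface_cone} for $(S_i^\nu,\Theta_i)$ need not satisfy this, so the bound $\le 4d_{C_j^{(i)}}$ does not descend to $X$. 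The same sign problem kills part (a): in the decomposition $\alpha=\beta+\sum\lambda_j[C_j^{(i)}]$ with $(K_{S_i^\nu}+\Theta_i)\cdot\beta\ge 0$, the pushforward of $\beta$ need not lie in $\overline{\mathrm{NE}}(X/T)_{K_X+\Delta\ge 0}$, because $(K_{S_i^\nu}+\Xi_i)\cdot\beta=(K_{S_i^\nu}+\Theta_i)\cdot\beta-(1-\alpha_i)S_i\cdot\beta$ can be negative. (A smaller but real error: raising the coefficient of $S_i$ from $\alpha_i$ to $1$ does \emph{not} preserve log canonicity in general --- four lines through a point in $\mathbb{A}^2$ with coefficient $1/2$ already give a counterexample --- though this is less damaging since \autoref{thm:surface_cone} applies without the lc hypothesis at the cost of finitely many curves in $\Supp\Theta_i$.)

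The fix, which is what the paper does, is to raise the coefficient of $E=S_i$ to $1$ by adding a multiple of $M$ itself rather than of $S_i$: choose $a_E\ge 0$ with $\coeff_E(\Delta+a_EM)=1$ and adjoin $K_X+\Delta+a_EM$. Since $M\equiv_T K_X+\Delta$, one has $K_X+\Delta+a_EM\equiv_T(1+a_E)(K_X+\Delta)$, so negativity, nonnegativity and the length bound all transfer between $(K_X+\Delta)$ and the adjoint polarization up to the positive scalar $1+a_E$; the only curves escaping the bound lie in $\Supp(\Delta_{\widetilde E})\subseteq\Sing(\Supp(\Delta+M))\cup\Sing(X)$, a fixed one-dimensional set contributing finitely many classes. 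You would also still need the limiting argument you defer to \cite{DW19} to show that an extremal \emph{class} (not just a curve) negative on $K_X+\Delta$ is carried by a single component $E$ of $M$ --- the paper does this by decomposing approximating $1$-cycles $\Gamma_j=\sum a_{i,j}C_i+\sum b_{i,j}D_i$ according to the sign of $E\cdot C$ and bounding the coefficients against a fixed ample divisor --- but the essential missing idea is the choice of $a_EM$ in place of $(1-\alpha_i)S_i$.
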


\begin{remark} \label{remark:non_vanishing_in_the_relative_setting}
Note that the condition $K_X+\Delta\equiv_TM\geq 0$ is automatic whenever $K_X+\Delta$ is pseudo-effective and the image of $X$ in $T$ is at least one-dimensional, by non-vanishing applied to the generic fiber of $X\to T$ (see  \cite{fujino_minimal_2012} and \cite{TanakaAbundanceImperfectFields}).
In particular, {the latter condition holds when}  $X$ has mixed characteristic.

\end{remark}

\begin{proof}

Let $\Sigma$ be a $(K_X+\Delta)$-negative extremal ray.  
Choose an irreducible component $E$ of ${\Supp}\, M$ which is negative on $\Sigma$. If $\dim E=1$, then $\Sigma = \mathbb{R}E$. Since there are only finitely many irreducible components of $M$, we may assume that $E$ is among the set of curves $\{C_i\}$. Thus, we are henceforth free to assume that $\dim E = 2$.

We first aim to show that $\Sigma$ contains a curve satisfying the required bound. 

\begin{claim}\label{claim_NE}
$\Sigma$ is in the image of $\overline{NE}(\widetilde{E})\to\overline{NE}(X)$ where $\widetilde{E}$ is the normalization of $E$.
\end{claim}
\begin{proof}[Proof of claim]
Fix an ample $\mathbb{Q}$-divisor $H$ sufficiently small that $\Sigma$ is also $(K_X+\Delta+H)$-negative. 
Fix a non-zero cycle $\Gamma$ in $\Sigma$, and write $\Gamma$ as a limit of effective cycles: $\Gamma=\lim_{j}\Gamma_{j}$.  {Further,} write $\Gamma_j=\sum_i a_{i,j} C_{i}+\sum_i b_{i,j} D_{i}$ where {$C_{i}\cdot E<0$ and $D_i\cdot E\geq 0$} for each $i$.  Letting $A$ by an ample Cartier divisor, and after replacing by a subsequence,
we may assume that \[\sum_ia_{i,j}+\sum_ib_{i,j}\leq \sum_i a_{i,j} C_{i}\cdot A+\sum_i b_{i,j} D_{i}\cdot A=\Gamma_j\cdot A<\Gamma\cdot A+1\]
{for some fixed ample Cartier divisor $A$}.  This shows that the $a_{i,j}$ and $b_{i,j}$ are all bounded independently of $i$ and $j$.
 Let $a_E$ be such that $\Delta+a_EM$ has coefficient $1$ in $E$.  Then by \autoref{thm:surface_cone}\autoref{itm:surface_cone_ample} and adjunction   of $K_X+\Delta+a_EM+H$ to the normalization $\widetilde{E}$ of $E$, $C_i$ may be taken to be from finitely many extremal rays on $E$.  It follows that we may take all the $C_i$ to come from a fixed finite set, and so after replacing by a subsequence, $a_i=\lim_j a_{i,j}$ is a well defined non-negative number.

{
It follows that  $\lim_j(\sum_i a_{i,j} C_{i})$ is a well defined pseudo-effective $1$-cycle, {and it is non-zero} since it intersects negatively with $E$.  As a result, $\lim_j(\sum_i b_{i,j}D_i)$ exists as a class in $N_1(X)$ as it is the difference of $\Gamma$ and a converging sequence.  Then as $\Gamma = \lim_j(\sum_i a_{i,j} C_{i})+\lim_j(\sum_ib_{i,j}D_i)$
is a decomposition into a sum of pseudo-effective cycles, we must have that $\lim_j(\sum_i a_{i,j} C_{i})$ is in {$\Sigma$} by extremality.  Then the fact that $C_i\cdot E<0$ for each $i$ means that each $C_i$ is contained in $\Supp(E)$ 
and so $\Sigma$ is contained in the image of $\overline{NE}(\widetilde{E})\to\overline{NE}(X)$, and the claim is proved.}
\end{proof}

Returning to the proof of the Cone Theorem, by adjunction 
there is an effective divisor $\Delta_{\widetilde{E}}$ on $\widetilde{E}$ satisfying $(K_X+\Delta+a_EM)|_{\widetilde{E}}=K_{\widetilde{E}}+\Delta_{\widetilde{E}}$, where $a_E$ is such that $\Delta + a_EM$ has coefficient $1$ in $E$.
  Thus $\Sigma$ is in the image of some $(K_{\widetilde{E}}+\Delta_{\widetilde{E}})$-negative extremal ray $\Sigma_{\widetilde{E}}$ via the map $\overline{NE}\big(\widetilde{E}\big)\to\overline{NE}(X)$.
By \autoref{thm:surface_cone}, any $(K_{\widetilde{E}}+\Delta_{\widetilde{E}})$-negative extremal ray either contains a curve satisfying the required bound or a curve in $\Supp(\Delta_{\widetilde{E}})$.  Note that there are only finitely many possibilities for the latter curves independently of the choice of $E$ as they lie in $\Sing(\Supp(\Delta+M))\cup\Sing(X)$.  We have proved that every $(K_X+\Delta)$-negative extremal ray $\Gamma$ contains a curve $C$ such that $C$ either satisfies the bound in \autoref{itm:keel_cone:bound} or is an element of a fixed finite set of curves.

Next we show that the extremal rays do not accumulate in $\overline{NE}(X/T)_{K_X+\Delta<0}$.  Suppose otherwise, so we have a sequence of distinct $(K_X+\Delta)$-negative extremal rays $\Sigma_i$ which converge to a $(K_X+\Delta)$-negative ray $\Sigma$.  Fix a component $E$ of $M$ which is negative on $\Sigma$.  By passing to a subsequence we may assume that $E$ is also negative on $\Sigma_i$ for all $i$, and so by \autoref{claim_NE}, $\Sigma$ and $\Sigma_i$ are all in the image of $\iota_*:\overline{NE}\big(\widetilde{E}\big)\to\overline{NE}(X)$ where $\tilde{E}$ is the normalization of $E$.
For each $i$, choose a $(K_{\widetilde{E}}+\Delta_{\widetilde{E}})$-negative extremal ray $\Sigma_i^E$ such that $\iota_*\Sigma_i^E=\Sigma_i$ where $\iota_*:\overline{NE}\big(\widetilde{E}\big)\to\overline{NE}(X)$.  By \autoref{thm:surface_cone}, the rays $\Sigma_i^E$ do not accumulate to a  $(K_{\widetilde{E}}+\Delta_{\widetilde{E}})$-negative ray.  But by compactness of $\overline{NE}(\widetilde{E})$ intersected with the unit ball,
by again taking a subsequence we may assume that $\Sigma_i^E$ do converge in $\overline{NE}\big(\widetilde{E}\big)$, and so converge to a ray $\Sigma^E$ satisfying 
\[
    0\leq (K_{\widetilde E}+\Delta_{\widetilde E})\cdot\Sigma^E= (K_X+\Delta+a_EM)\cdot \iota_*\Sigma^E \leq (K_X+\Delta)\cdot \iota_*\Sigma^E.
\]
This shows that the rays $\Sigma_i$ could not converge to a $(K_X+\Delta)$-negative ray.  This concludes the proof of \autoref{itm:keel_cone:accumulation}.

It remains to prove the countability of the set of curves in  \autoref{itm:keel_cone:sum}. Fix an ample divisor $H$.  For each $n\in \mathbb{N}$, the previous paragraph implies that there are only finitely many $(K_X+\Delta+\frac{1}{n}H)$-negative extremal rays.  Then there can be only countably many $(K_X+\Delta)$-negative rays because each is $(K_X+\Delta+\frac{1}{n}H)$-negative for some $n$. 
\end{proof}

\begin{proposition}[{cf.\ \cite[Proposition 4.4]{HaconWitaszekMMP4fold}}] \label{prop:partial-contraction-theorem} Let $(X,S+B)$ be a $\mathbb{Q}$-factorial three-dimensional projective dlt pair over $T$, where $S$ is a prime divisor and $B$ is an effective $\bR$-divisor. Suppose that $K_X+S+B$ is pseudo-effective over $T$.
Let $\Sigma$ be a $(K_X+S+B)$-negative extremal ray over $T$ such that $\Sigma$ is $S$-negative.  Then the contraction $f\colon X\to Z$ of $\Sigma$ exists so that $f$ is a projective morphism with $\rho(X/Z)=1$. 
\end{proposition}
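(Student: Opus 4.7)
Since $S\cdot\Sigma<0$, every integral curve $C$ with $[C]\in\Sigma$ satisfies $S\cdot C<0$ and is therefore set-theoretically contained in $S$. The aim is to produce a nef Cartier divisor $L$ supporting $\Sigma$ such that $L-(K_X+S+B)$ is ample and $\mathbb{E}(L)\subseteq S$; then \autoref{prop:bpf_plt} gives the contraction. First, I would apply the partial cone theorem \autoref{thm:keel_cone} together with the boundedness of lengths of extremal rays in the $(K_X+S+B+\varepsilon H)$-negative half-space (for an appropriate ample $H$) and the standard supporting-function construction to produce a nef Cartier divisor $L$ on $X$ with $L^{\perp}\cap\overline{\mathrm{NE}}(X/T)=\Sigma$ and $A:=L-(K_X+S+B)$ ample over $T$. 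Because the hypotheses of \autoref{prop:bpf_plt} are formulated for $\mathbb{Q}$-boundaries, a small perturbation (allowed because ampleness is an open condition and $(X,S+B)$ remains dlt after small perturbations of $B$ using $\mathbb{Q}$-factoriality) reduces us to the case where $B$ is a $\mathbb{Q}$-divisor, after which $L$ can be rescaled to be an integral Cartier divisor.

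Second, I would verify that $\mathbb{E}(L)\subseteq S$. One-dimensional components of $\mathbb{E}(L)$ are irreducible curves $C$ with $L\cdot C=0$; by extremality any such $C$ has $[C]\in\Sigma$ and hence lies in $S$ by the opening remark. For a two-dimensional component $V\subseteq\mathbb{E}(L)$, the nef divisor $L|_V$ satisfies $(L|_V)^2=0$; combining this with the ampleness of $A|_V$ via the Hodge index theorem on a desingularization of $V$, one deduces that $V$ is covered by a family of curves $C$ with $L\cdot C=0$. Every such curve has class in $\Sigma$ and therefore lies in $S$, forcing $V\subseteq S$.

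Third, with $\mathbb{E}(L)\subseteq S$, $L-(K_X+S+B)$ ample, and each irreducible component of $\lfloor S+B\rfloor$ $\mathbb{Q}$-Cartier (guaranteed by $\mathbb{Q}$-factoriality of $X$), \autoref{prop:bpf_plt} applies to give that $L$ is semiample. Let $f\colon X\to Z$ denote the associated semiample fibration (taking Stein factorization so that $f_{*}\mathcal{O}_X=\mathcal{O}_Z$), which is projective because $L$ is semiample Cartier. By construction $f$ contracts exactly those curves $C$ with $L\cdot C=0$, i.e.\ exactly the curves with $[C]\in\Sigma$, so $f$ is the contraction of $\Sigma$. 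Finally, for the Picard rank statement, the second assertion of \autoref{prop:bpf_plt} shows that any $\mathbb{Q}$-Cartier $\mathbb{Q}$-divisor numerically trivial over $Z$ descends to $Z$; combined with the fact that curves contracted by $f$ span the single ray $\Sigma$, this yields $\rho(X/Z)=1$.

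\textbf{Main obstacle.} The delicate step is showing that two-dimensional components $V\subseteq\mathbb{E}(L)$ are contained in $S$. One needs a careful argument that nefness of $L|_V$ together with $(L|_V)^2=0$ and the presence of an ample class $A|_V$ produces enough curves $C\subset V$ with $L\cdot C=0$ to conclude $V\subseteq S$; the case $L|_V\not\equiv 0$ is subtle because, a priori, $L|_V$ need not be semiample on $V$. The remaining steps are then standard given \autoref{prop:bpf_plt}, \autoref{thm:keel_cone}, and the results on normality of plt divisors (\autoref{lem:properties-of-plt}, \autoref{cor.ThreefoldNormalityOfS}) used implicitly in the formulation of \autoref{prop:bpf_plt}.
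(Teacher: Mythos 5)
Your overall skeleton matches the paper's: perturb to a plt pair with $\bQ$-boundary, build a nef supporting divisor $L$ for $\Sigma$ with $L-(K_X+S+B)$ ample, show $\mathbb{E}(L)\subseteq S$, and then invoke \autoref{prop:bpf_plt} (and \autoref{prop:char_zero_bpf} over residue characteristic zero). But the step you flag as the "main obstacle" is a genuine gap, not just a delicate point. For a two-dimensional component $V\subseteq\mathbb{E}(L)$ you assert that nefness of $L|_V$, $(L|_V)^2=0$, and ampleness of $A|_V$ force $V$ to be covered by curves $C$ with $L\cdot C=0$. This does not follow: a nonzero nef $\bQ$-class of self-intersection zero on a surface need not be orthogonal to a covering family of curves unless it is already known to be semiample (defining a fibration) — which is precisely what you are trying to prove via \autoref{prop:bpf_plt}, so the argument is circular as stated. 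The Hodge index theorem gives you negative-definiteness on $L^{\perp}$ inside $N^1(V)$, but no production of $L$-trivial curves sweeping out $V$.

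The paper avoids this entirely with a small but essential twist of the supporting divisor. Having arranged $L=K_X+S+B+H$ nef with $L^{\perp}\cap\overline{\mathrm{NE}}(X/T)=\Sigma$, one chooses a second ample $\bQ$-divisor $A$ with $(S+A)\cdot\Sigma=0$ (possible since $S\cdot\Sigma<0$ and $A$ can be rescaled), and replaces $L$ by $L_{\varepsilon}=L+\varepsilon(S+A)$ for $0<\varepsilon\ll1$. By the non-accumulation of extremal rays in \autoref{thm:keel_cone}, $L_{\varepsilon}$ is still nef with $L_{\varepsilon}^{\perp}=\Sigma$. Now for \emph{any} integral subscheme $V\not\subseteq S$ of any dimension, $S|_V$ is effective and $A|_V$ is ample, so $(S+A)|_V$ is big, whence $L_{\varepsilon}|_V$ is nef plus big, hence big; therefore $\mathbb{E}(L_{\varepsilon})\subseteq S$ with no case analysis on the dimension of the components. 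This is the missing idea you need; once you have it, the rest of your argument (application of \autoref{prop:bpf_plt}, descent of numerically trivial divisors for $\rho(X/Z)=1$) goes through as you describe.
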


\begin{proof}
First, we reduce to the plt case with $\bQ$-boundary. By graded prime avoidance (see \cite[Tag 00JS]{stacks-project}), we may pick an ample Cartier divisor $A$ which does not contain any log canonical center of $(X,S+B)$, so that $(X,S+B+\varepsilon A)$ is dlt for $\varepsilon\ll1$ and $K_X+S+B+\varepsilon A$ is big and negative on $\Sigma$.  Now replacing $(X,S+B)$ with $(X,S+B'+\varepsilon A)$ where $B'$ is a $\mathbb{Q}$-divisor which is a small perturbation of $B$ such that $K_X+S+B'+\varepsilon A$ is still big and negative on $\Sigma$, we may assume that $B$ is a  $\mathbb{Q}$-divisor. Furthermore, by decreasing all the coeficients of $\lfloor B\rfloor$, we may assume that $(X,S+B)$ is plt.

By  \autoref{thm:keel_cone}, we may pick an ample (over $T$) $\Q$-divisor $H$ such that $L=K_X+S+B+H$ is nef and $L^\perp \subseteq \overline{\mathrm{NE}}(X/T)$ is spanned by $\Sigma$. Let $A$ be another ample $\Q$-divisor such that $(S+A)\cdot \Sigma =0$. Again, by \autoref{thm:keel_cone}, we have that $L_{\varepsilon}=K_X+S+B+H_\varepsilon$ is nef over $T$ and $(L_\varepsilon)^\perp$ is spanned by $\Sigma$ for any $0<\varepsilon\ll 1$, where $H_\varepsilon = H+ \varepsilon (S+A)$ is an ample $\bQ$-divisor. Explicitly, by \autoref{thm:keel_cone}(b), there are finitely many $(K_X+S+B+\frac{1}{2}H)$-negative extremal rays: $\Sigma, \Sigma_1, \ldots, \Sigma_l$. For every $\varepsilon$ such that $H_{\varepsilon}-\frac{1}{2}H$ is ample, $L_{\varepsilon}$ is positive on all extremal rays except possibly these $\Sigma, \Sigma_1, \ldots, \Sigma_l$. By decreasing $\varepsilon$ further we can assume that $L_{\varepsilon}$ is also positive for on $\Sigma_j$ for all $1 \leq j \leq l$. Last, $L_{\varepsilon} \cdot \Sigma =0$ holds for all $\varepsilon$.

Moreover, we have that $\mathbb E(L_\varepsilon)\subset S$. Indeed, if  $V\subset X$ is a an integral subscheme not contained in $S$, then $L_\varepsilon |_V=(L+\varepsilon (S+A))|_V$ is nef and big over $T$. Replacing $L$ by $L_\varepsilon$, we may assume that  $\mathbb E(L)\subset S$.

Now, over closed points of residual characteristic $p>0$ the proposition follows from \autoref{prop:bpf_plt}, and over closed points of residual characteristic zero from \autoref{prop:char_zero_bpf} applied to a klt perturbation of $(X,S+B)$ .
\end{proof}

\subsection{Step 2: Construction of flips with arbitrary coefficients}

We recall the {standard} argument for reducing the existence of flips to pl-flips.

\begin{proposition} \label{proposition:reduction-to-pl-flips} Let $(X,B)$ be a $\bQ$-factorial klt pair of dimension three, where $B$ is a $\bQ$-divisor with standard coefficients. Let $f \colon X \to Z$ be a flipping contraction over an affine scheme $Z = \Spec R$ such that $\rho(X/Z)=1$. Suppose that none of the residue fields of $R$ have characteristic $2$, $3$ or $5$. Then, the flip $X^+ \to Z$ of $f$ exists.
\end{proposition}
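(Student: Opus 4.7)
My plan is to run the classical Shokurov-style reduction of flip existence to pl-flip existence, as in \cite{CortiFlipsFor3FoldsAnd4Folds} and \cite{HaconXuThreeDimensionalMinimalModel}, with \autoref{proposition:flips-exist} supplying the mixed-characteristic pl-flip input. First I would use \autoref{proj-resolutions} to pick a projective log resolution $g\colon Y \to X$ of $(X,B)$ supporting a $g$-ample $g$-exceptional divisor, and set $B_Y := g_*^{-1}B + \sum_i E_i$ where $\{E_i\}$ are the $g$-exceptional prime divisors. Then $(Y, B_Y)$ is log smooth dlt with standard coefficients (since $B$ has standard coefficients and exceptional coefficients are $1$), and because $(X,B)$ is klt one has
\[
K_Y + B_Y \;=\; g^*(K_X + B) + F, \qquad F := \sum_i \big(1 + a(E_i;X,B)\big) E_i \geq 0,
\]
effective and $g$-exceptional.

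Next I would run a $(K_Y + B_Y)$-MMP over $Z$. The central observation is that every flipping contraction $h_i\colon Y_i \to W_i$ arising in this MMP is automatically pl-flipping. Indeed, if $C \subset Y_i$ is a flipping curve then $\rho(Y_i/W_i)=1$, and the image of $C$ on $X$ cannot be a curve (otherwise it would be contracted by the small map $f$, which is impossible). Hence $C$ is $g_i$-exceptional, so $g_i^*(K_X+B)\cdot C = 0$, and from $(K_{Y_i}+B_{Y_i})\cdot C < 0$ we deduce $F_i \cdot C < 0$, where $F_i$ is the strict transform of $F$. Consequently some component $S \subseteq \Supp F_i \subseteq \lfloor B_{Y_i} \rfloor$ satisfies $S \cdot C < 0$, so $h_i$ is pl-flipping with $B_{Y_i}$ still having standard coefficients. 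By \autoref{proposition:flips-exist} each such flip exists under our assumption on residue characteristics, while divisorial contractions of extremal rays are handled by \autoref{prop:partial-contraction-theorem} applied on $(Y_i, B_{Y_i})$.

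Termination of this MMP follows from \autoref{thm:special-termination}: after finitely many steps all flipping loci are disjoint from $\lfloor B_{Y_i} \rfloor$, contradicting the pl property above unless no flips remain; divisorial steps strictly drop the relative Picard rank and are therefore finite in number. The terminal output $(Y^+, B_{Y^+})$ satisfies $K_{Y^+} + B_{Y^+}$ nef over $Z$, with
\[
K_{Y^+} + B_{Y^+} \;\equiv_Z\; (g^+)^*(K_X+B) + F^+,
\]
where $F^+ \geq 0$ is effective and $g^+$-exceptional, and $K_{Y^+}+B_{Y^+}$ is nef and big over $Z$. Relative Keel's theorem (\autoref{thm:keel_EWM_imperfect}) then yields finite generation of the associated relative canonical algebra, and its $\mathrm{Proj}_Z$ is the flip $X^+ \to Z$ of $f$.

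The main obstacle I anticipate is the verification that every flipping contraction in the MMP is genuinely pl, combined with termination: the former relies crucially on the smallness of $f$ together with $\rho(X/Z)=1$ to force $g_i$-exceptionality of flipping curves, and the latter uses special termination, which in our setting depends on the availability of terminalizations for the klt pairs birational to $X$ (assumed in the second half of \autoref{thm:special-termination} and supplied by running a terminal MMP built from the very flips we are constructing, in the standard inductive fashion).
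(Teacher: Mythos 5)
There is a genuine gap at the heart of your argument: the claim that for a flipping curve $C$ of the auxiliary MMP ``the image of $C$ on $X$ cannot be a curve (otherwise it would be contracted by the small map $f$, which is impossible)'' is false. A small contraction contracts curves by definition --- smallness only means that $\Exc(f)$ has codimension $\geq 2$, i.e.\ that no \emph{divisor} is contracted. The curves contracted by $f$ are exactly the flipping curves we are trying to flip, and their strict transforms (and more generally curves dominating them) live on every $Y_i$ and are contracted over $Z$ but \emph{not} over $X$. For such a curve $C$ one has $g_i^*(K_X+B)\cdot C = \deg(C\to C_X)\,(K_X+B)\cdot C_X < 0$ rather than $=0$, so from $(K_{Y_i}+B_{Y_i})\cdot C<0$ you cannot conclude $F_i\cdot C<0$, and no component of $\lfloor B_{Y_i}\rfloor$ need be negative on $C$. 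Thus the pl property fails precisely for the contractions that matter most: your MMP over $Z$ eventually has to deal with the rays lying over the flipping rays of $f$, and at that point \autoref{proposition:flips-exist} does not apply and the argument circles back to the flip you set out to construct. (Your termination step inherits the same problem, since it relies on every remaining flip being pl.)

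The paper's proof closes exactly this gap by introducing an auxiliary reduced Cartier divisor $H_Z$ on $Z$ with three properties: $f^*H_Z$ contains $\Exc(f)$; $H_Z$ shares no component with $B_Z=f_*B$; and for any $\bQ$-factorial $h\colon Y\to Z$, the group $N^1(Y/Z)$ is generated by the $h$-exceptional divisors together with the components of the strict transform of $H_Z$. One then resolves $(Z, B_Z+H_Z)$ (not $(X,B)$ over $X$), puts all of $\mathrm{Exc}(h)$ and $H_Y$ into the boundary, and the generation property of $N^1(Y/Z)$ forces every extremal ray over $Z$ --- including those over the flipping locus of $f$, which lie in $\Supp h^*H_Z$ --- to be negative on some component of $\lfloor B_Y+H_Y\rfloor$. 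A second MMP with scaling of $H_Y$ then removes $H_Y$ from the boundary while preserving the pl property (via $H_Y\equiv_h -\sum b_jD_j$ with $D_j$ exceptional), and the endgame uses the negativity lemma together with $\rho(X/Z)=1$ to show the output is small over $Z$ with $K_{X^+}+B^+$ relatively ample; your appeal to Keel/EWM and finite generation for the nef-and-big output is both unnecessary and insufficient as stated. Some device of this kind (the divisor $H_Z$, or an equivalent) is unavoidable, so the proposal as written cannot be repaired by a local fix to the pl-verification.
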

\begin{proof}
We closely follow the presentation from \cite[Theorem 4.1]{HW19a}. Fix $B_Z = f_*B$ and let $H_Z$ be a reduced Cartier divisor on $Z$ where the following hold:
\begin{enumerate}
	\item $f^*H_Z$ contains the exceptional set of $f$, 
	\item $H_Z$ and $B_Z$ have no irreducible components in common,
	\item\label{item:red_pl_condition_support} for any projective birational morphism $h \colon Y \to Z$ where $Y$ is $\bQ$-factorial, $N^1(Y/Z)$ is generated by the $h$-exceptional divisors and the irreducible components of the strict transform of $H_Z$.
\end{enumerate}
For the (non-trivial) condition \autoref{item:red_pl_condition_support}, we use that the relative group of divisors up to numerical equivalence of a birational morphism of $\bQ$-factorial varieties is generated by the exceptional divisors.
In view of this, we may pick $H_Z$ so that it satisfies \autoref{item:red_pl_condition_support} for a single resolution of singularities, and condition \autoref{item:red_pl_condition_support} is then satisfied for every larger resolution of singularities as well. The statement follows since $h$ as above is a factor of some resolution and the group of divisors on $Y$ over $Z$ is the image of the group of divisors of any projective birational cover.

Fix a log resolution $h \colon Y \xrightarrow{p} X \xrightarrow{f} Z$ of $(Z,B_Z + H_Z)$ which factors through $X$. We may assume that $H_Z$ contains the image of each $h$-exceptional divisor, and we claim that we can run a $(K_Y+B_Y+H_Y)$-MMP over $Z$ where $H_Y$ is the strict transform of $H_Z$ and $B_Y := h_*^{-1}B_Z + \mathrm{Exc}(h)$. The cone theorem is valid by \autoref{thm:keel_cone}, and note that every extremal ray $\Sigma$ over $Z$ is contained in the support of $h^*H_Z$. 
By condition \autoref{item:red_pl_condition_support}, there is a component of the support of $h^*H_Z$ having non-zero intersection with $\Sigma$. Since $\Sigma\cdot h^*H_Z=0$, there is a component  $E$ of the support of $h^*H_Z$ with $\Sigma \cdot E <0$. In particular, we have $E \subseteq \lfloor B_Y + H_Y \rfloor$. Hence, contractions exist by \autoref{prop:partial-contraction-theorem}, the necessary flips exist by \autoref{proposition:flips-exist} applied to a plt perturbation 
of $(Y,B_Y+H_Y)$, and special termination follows by \autoref{thm:special-termination}.

Now replace $(Y,B_Y+H_Y)$ by its minimal model over $Z$, and $H_Y$ by its pushforward under the map to the minimal model. While $Y$ need no longer admit a map to $X$, it still necessarily maps  to $Z$, which we denote by $h \colon Y \to Z$. 

Write $B_Y = D + B^{<1}_Y$, where $D = \sum_{i=1}^m D_i$ is the sum of exceptional divisors and $\lfloor B^{<1}_Y \rfloor = 0$.
As $H_Y$ is contained in the pullback of $H_Z$ from $Z$, we have 
\[
H_Y \equiv_h -\sum_j b_j D_j, 
\]
where $b_j \in \bQ_{\geq 0}$. Run a $(K_Y+B_Y)$-MMP over $Z$ with scaling of $H_Y$, noting that an MMP with scaling is well-defined by the existence of bounds on extremal rays from \autoref{thm:keel_cone}. Arguing as above, to show that such an MMP can be run, it suffices to show that flips and contractions exist. Let $0 < \lambda \leq 1$ be such that $K_Y+B_Y + \lambda H_Y$ is $h$-nef and there exists a $(K_Y+B_Y)$-negative extremal ray $\Sigma$ satisfying $(K_Y+B_Y+\lambda H_Y) \cdot \Sigma = 0$. Since $(K_Y+B_Y) \cdot \Sigma < 0$, we have that $H_Y \cdot \Sigma > 0$, and the equivalence above implies that $D_j \cdot \Sigma < 0$ for some $j$. It follows that  the contraction of $\Sigma$ exists by \autoref{prop:partial-contraction-theorem}, and in the case that the contraction is small the flip exists by \autoref{proposition:flips-exist}. Once again, the MMP terminates by special termination as above.

Denote by $(X^+,B^+)$ an output of this MMP, so that $K_{X^+}+B^+$ is nef over $Z$, and notice that the projection $f^+ \colon X^+ \to Z$ is small. Indeed, the negativity lemma applied to a resolution of indeterminacies $\pi_1 \colon W' \to X$ and $\pi_2 \colon W' \to X^+$ shows that
\[
    G:= \pi_1^*(K_X+B) - \pi_2^*(K_{X^+} + B^+),
\]       
{is effective (and non-zero)}
which is only possible when $\lfloor B^+ \rfloor = 0$ since $(X,B)$ is klt. As all of the exceptional divisors on $X^+$ over $Z$ are contained in $\lfloor B^+ \rfloor$, this shows that $f^+$ is small. Moreover, { $K_{X^+}+B^+$} is ample over $Z$; 
otherwise, as $\rho(X^+/Z)=1$ ({here $X$ and $X^+$ are $\bQ$-factorial and $f, f^+$ are small over $Z$, so $\rho(W/X)=\rho(W/X^+)$ is equal to the number of exceptional divisors, thus\footnote{The additivity of the Picard rank here follows again from the $\bQ$-factoriality of $X$ and $X^+$} $\rho(X^+/Z) = \rho(W/Z)- \rho(W/X^+) = \rho(W/Z)-\rho(W/X)= \rho(X/Z)=1$}), { $K_{X^+}+B^+$}  would be numerically trivial over $Z$, and so $G$ would be numerically trivial over $X$. As $G$ is exceptional and non-zero,  this contradicts the negativity lemma (over $X$). Hence, $f^+$ is the flip of $f$. 
\end{proof}

The following technique was discovered in \cite{Birkar16}; we closely follow the presentation from \cite[Proof of Theorem 1.1]{HaconWitaszekMMPp=5}. We emphasize that $\Delta$ is allowed to have arbitary $\mathbb{R}$-coefficients.

\begin{theorem} \label{thm:full_flips} If $(X,\Delta)$ is a dlt pair with $\bR$-boundary and $f \colon X \to Z$ is a three-dimensional $\bQ$-factorial flipping contraction to a quasi-projective scheme $Z$ over $R$, whose residue fields do not have characteristic $2$, $3$ or $5$, with $\rho(X/Z)=1$, then the flip of $(X,\Delta)$ exists.
\end{theorem}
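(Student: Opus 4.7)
The plan is to reduce the existence of the flip of $(X,\Delta)$ to the case already treated in \autoref{proposition:reduction-to-pl-flips}, which requires a $\bQ$-boundary with standard coefficients and a klt pair. The two reductions needed are: (i) pass from a dlt $\bR$-boundary to a klt $\bQ$-boundary, and (ii) pass from arbitrary $\bQ$-coefficients to standard coefficients. Both use crucially the hypothesis $\rho(X/Z)=1$, which guarantees that the flipping contraction $f\colon X\to Z$ is determined by the birational data, not by the specific choice of divisor making $K_X+\Delta$ relatively anti-ample.

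First, I would approximate $\Delta=\sum\delta_i\Delta_i$ by a $\bQ$-divisor $\Delta_0=\sum\delta_i^{(0)}\Delta_i$ with $\delta_i^{(0)}\in\bQ\cap[0,\delta_i]$ very close to $\delta_i$, choosing moreover $\delta_i^{(0)}<1$ so that $(X,\Delta_0)$ is klt. Since the relative ample cone is open and $\rho(X/Z)=1$, the divisor $-(K_X+\Delta_0)$ remains ample over $Z$, so $(X,\Delta_0)$ again defines a flipping contraction over $Z$. Next, for each $i$ I would replace $\delta_i^{(0)}$ by the largest standard coefficient $d_i=1-\frac{1}{n_i}\leq\delta_i^{(0)}$, obtaining an effective $\bQ$-divisor $\Delta'=\sum d_i\Delta_i\leq\Delta_0$ with standard coefficients. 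The pair $(X,\Delta')$ is still klt, and the relative ampleness $-(K_X+\Delta')$ over $Z$ follows because $\Delta_0-\Delta'\geq 0$, combined with $\rho(X/Z)=1$: any effective $\bQ$-divisor on $X$ has class in $N^1(X/Z)$ which is a non-negative multiple of the ample generator, so
\[
  -(K_X+\Delta')\equiv_Z -(K_X+\Delta_0)+(\Delta_0-\Delta')
\]
is again ample over $Z$.

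With $(X,\Delta')$ now a $\bQ$-factorial klt pair with standard coefficients and $-(K_X+\Delta')$ ample over $Z$, \autoref{proposition:reduction-to-pl-flips} produces the flip $f^+\colon X^+\to Z$ of the contraction $f\colon X\to Z$ with respect to $\Delta'$, together with the induced birational map $\varphi\colon X\dashrightarrow X^+$. It remains to check that $\varphi$ is also the flip of $(X,\Delta)$: that $(X^+,\Delta^+)$ is dlt with $K_{X^+}+\Delta^+$ ample over $Z$, where $\Delta^+$ is the strict transform of $\Delta$. Since $\varphi$ is small and $\rho(X^+/Z)=\rho(X/Z)=1$, any two $\bR$-Cartier divisors on $X^+$ are proportional modulo numerical equivalence over $Z$. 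Applying this to $K_{X^+}+\Delta'^+$ (ample over $Z$ by construction) and to $K_{X^+}+\Delta^+$, and observing that on the open set where $\varphi$ is an isomorphism both equal the corresponding divisors on $X$ up to an effective correction supported on the flipped locus, the negativity lemma forces $K_{X^+}+\Delta^+$ to lie on the positive side of $N^1(X^+/Z)$, hence be ample over $Z$. The dlt property of $(X^+,\Delta^+)$ follows from the standard fact that discrepancies weakly increase under a $(K_X+\Delta)$-flip, combined with the bound $\operatorname{coeff}\Delta^+\leq 1$ inherited from $\Delta$.

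The most delicate point in this outline will be the last verification, i.e.\ matching the flip data on $X^+$ for the original pair $(X,\Delta)$: in particular, confirming via the negativity lemma and the one-dimensionality of $N^1(X^+/Z)$ that $K_{X^+}+\Delta^+$ is ample over $Z$ rather than merely $\bR$-Cartier of indeterminate sign, and that the log discrepancies of $\Delta^+$ are controlled. These are the places where the assumption $\rho(X/Z)=1$ is essential and where careful bookkeeping is required; once these are established, the theorem is a direct consequence of \autoref{proposition:reduction-to-pl-flips}.
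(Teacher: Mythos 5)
There is a genuine gap, and it sits exactly at the step you identify as the heart of your reduction. You assert that because $\rho(X/Z)=1$, ``any effective $\bQ$-divisor on $X$ has class in $N^1(X/Z)$ which is a non-negative multiple of the ample generator,'' and you use this to conclude that $-(K_X+\Delta')=-(K_X+\Delta_0)+(\Delta_0-\Delta')$ is still ample over $Z$. This is false for flipping contractions: an effective divisor whose support contains the flipping curves can intersect them negatively, i.e.\ be a \emph{negative} multiple of the generator (this is precisely the situation of a pl-flip, where $-S$ is $f$-ample for the effective divisor $S$). Concretely, if $\Delta=0.6\,S$ with $S\cdot C=-1$ and $(K_X+\Delta)\cdot C=-0.01$ for the flipping curve $C$, then rounding down to the standard coefficient $\tfrac12$ gives $(K_X+\tfrac12 S)\cdot C=0.09>0$, so $f$ is no longer a flipping contraction for $(X,\Delta')$ and \autoref{proposition:reduction-to-pl-flips} does not apply. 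Rounding \emph{up} instead does not save the argument either, since the standard set $\{1-\tfrac1m\}\cup\{1\}$ has large gaps away from $1$ (e.g.\ a coefficient $0.1$ would have to jump to $\tfrac12$), so the perturbation is not small and ampleness is again uncontrolled. There is no coefficient-rounding that preserves the sign of $K_X+\Delta$ over $Z$ in general, which is why a direct reduction to standard coefficients cannot work.

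The paper's proof avoids this by following Birkar's strategy: after the harmless reductions (perturb to a klt $\bQ$-boundary and discard the $f$-ample components of $\Supp\Delta$, so that every remaining component is $f$-anti-ample), it inducts on the number $\zeta(\Delta)$ of components with non-standard coefficient. Writing $\Delta=aS+B$ with $a$ non-standard, one passes to a log resolution $W\to X$, runs a $(K_W+S_W+B_W)$-MMP over $Z$ (here $S_W$ has been promoted to coefficient $1$, which \emph{is} standard, so $\zeta$ drops and the inductive hypothesis supplies the flips; contractions exist by \autoref{prop:partial-contraction-theorem} because each extremal ray is negative on a component of $\lfloor S_W+B_W\rfloor$), and then runs a second MMP for $(K_Y+aS_Y+B_Y)$ with scaling of $(1-a)S_Y$, whose flips again fall under the inductive hypothesis since $\zeta(B_Y)<\zeta(\Delta)$. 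The output is shown to be the flip by the negativity lemma together with $\rho(X^+/Z)=1$ (this last verification is essentially the argument you sketch at the end, and that part of your proposal is fine). If you want to repair your write-up, you should replace the rounding step by this induction on $\zeta$; the rest of your framework (perturbation to $\bQ$-coefficients, the final ampleness check via proportionality in $N^1(X^+/Z)$) is compatible with it.
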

\begin{proof}
We begin with a number of reductions. By perturbing $B$ and using that $X$ is $\bQ$-factorial, we may assume that {$\Delta$} is a $\bQ$-divisor. After replacing  $\Delta$ with $\Delta - \frac{1}{l}\lfloor \Delta \rfloor$ for $l\gg 0$, we can further assume that $(X,\Delta)$ is klt. Finally, we may also assume that every component of $\Supp \Delta$ is relatively antiample, as removing the ample {and numerically-trivial} components will not affect the anti-ampleness of $K_X+\Delta$.

In case $\Delta$ has standard coefficients, the theorem follows from \autoref{proposition:reduction-to-pl-flips}. In the remainder, we proceed with a proof by induction on the number $\zeta (\Delta)$   of components of $\Delta$ with coefficients outside of the standard set $\{ 1-\frac 1 m \mid m\in \mathbb N \}\cup \{ 1\}$. Assuming $\zeta (\Delta )>0$, write $\Delta =aS+B$ where $a\not \in \{ 1-\frac 1 m \mid m\in \mathbb N \}\cup \{ 1\}$.
 
 Consider a log resolution $\pi \colon W \to X$ of $(X,S+B)$ with reduced exceptional divisor $E$. Setting $B_W := \pi^{-1}_*B + E$ and $S_W := \pi^{-1}_*S$, since $K_X + \Delta \equiv_Z \mu S$ for some $\mu > 0$ and $S$ is relatively anti-ample as it is a component of $\Supp \Delta$, we have that
\[
K_W+S_W+B_W = \pi^*(K_X+\Delta) + (1-a)S_W + F
 			\equiv_Z (1-a+\mu)S_W + F',
\]
where $F$, $F'$ are effective exceptional $\Q$-divisors over $X$.

We now run a $(K_W+S_W+B_W)$-MMP over $Z$.  As $\zeta(S_W+B_W)<\zeta(\Delta)$ and by decreasing the coefficients by $\frac{1}{l}\lfloor S_W + B_W \rfloor$ for $l\gg 0$ so as to make the pair klt without affecting $\zeta$, all flips exist in this MMP. Additionally, as every extremal ray is negative on $(1-a+\mu)S_W + F'$ ({and so on an} irreducible component of $\lfloor S_W + B_W \rfloor$), all contractions in this MMP exist by \autoref{prop:partial-contraction-theorem}. The cone theorem is valid by \autoref{thm:keel_cone}, and the MMP terminates by the special termination in \autoref{thm:special-termination}. Let $h \colon W \dasharrow Y$ be an output of this MMP where $S_Y$, $B_Y$, and $F'_Y$ are the strict transforms of $S_W$, $B_W$, and $F'$ on $Y$, respectively.

Next, we run a $(K_Y + aS_Y + B_Y)$-MMP over $Z$ with scaling of $(1-a)S_Y$. 
If $\Sigma$ is an extremal ray, then $\Sigma \cdot S_Y > 0$ and 
$(K_Y+B_Y)\cdot \Sigma < 0$.
As $\zeta(B_Y) < \zeta(\Delta)$, again decrease the coefficients by $\frac{1}{l}\lfloor B_Y \rfloor$ for $l\gg 0$ to make the pair klt without affecting $\zeta$, all the flips in this MMP exist by induction. Noting  
\[
K_Y+aS_Y+B_Y \equiv_Z \mu S_Y + F'_Y,
\]
every extremal ray is negative on $\mu S_Y+F'_Y$, hence on $F'_Y$ {(as $\Sigma \cdot S_Y >0$)} and so on an irreducible component of $\lfloor B_Y \rfloor$. It follows from \autoref{prop:partial-contraction-theorem} that all contractions in this MMP exist. As in the paragraph above, the cone theorem and termination are both valid in this setting. Set $(X^+, aS^+ + B^+)$ to be an output of this MMP.

To conclude the proof, we show that $(X^+, aS^+ + B^+)$ is the flip of $(X,aS+B)$. Notice that the negativity lemma applied to a common resolution of indeterminacies $\pi_1 \colon W \to X$ and $\pi_2 \colon {W} \to X^+$ implies that
\[
G:= \pi_1^*(K_X+aS+B) - \pi_2^*(K_{X^+} + aS^+ + B^+)
\]       
is effective and non-zero. Since $(X,aS+B)$ is klt{, we get that} $\lfloor B^+ \rfloor = 0$, and so all the exceptional divisors were contracted and $X \dashrightarrow X^+$ is an isomorphism in codimension one. Moreover, since $X$ and $X^+$ are $\bQ$-factorial we have that $\rho(W/X)=\rho(W/X^+)$ is equal to the number of exceptional divisors, and it follows that  $\rho(X^+/Z)=1$ using that $\rho(W/X) + \rho(X/Z) = \rho(W/Z) =  \rho(W/X^+) + \rho(X^+/Z)$ and $\rho(X/Z)=1$. Again we must now have that $K_{X^+}+aS^++B^+$ is relatively ample over $Z$, else $K_{X^+}+aS^++B^+$ is relatively numerically trivial over $Z$ and then $G$ is exceptional and numerically trivial over $X$, contradicting the negativity lemma once more. It follows that $(X^+, aS^+ + B^+)$ is the flip of $(X, \Delta)$ as desired.
\end{proof}

\subsection{Step 3: Base point free theorem for nef and big line bundles}

The following theorem is key in our proof of the base point free theorem. Here, condition (e) may be thought of as numerical klt-ness of $(X,\pi_*\Delta)$. When $X$ is $\bQ$-factorial, then this is a mixed characteristic variant of \cite[Theorem 1.1]{HW19a} (cf.\ \cite[Theorem  4.6]{TakamatsuYoshikawaMMP}).

\begin{theorem}[{\cite[Theorem 1]{Kollar2020RelativeMMPWithoutQfactoriality}}] \label{lem:non-q-factorial-mmp}
Let $(Y,\Delta)$ be a three-dimensional dlt pair with $\bQ$-boundary and let $\pi \colon Y \to X$ be a projective birational map of quasi-projective schemes over $R$ with irreducible exceptional divisors $E_1, \ldots, E_r$. Suppose that
\begin{enumerate}[series=nonqfactorialmmp]
    \item \label{itm:non-q-factorial-mmp:exists_ample} there exists an ample exceptional $\bQ$-divisor $\Lambda$ on $Y$, 
    \item \label{itm:non-q-factorial-mmp:E_i_Q_Cartier} all $E_i$ are $\bQ$-Cartier,
    \item \label{itm:non-q-factorial-mmp:boundary_round_down} $\lfloor \Delta \rfloor = E_1 + \ldots + E_r$,
    \item \label{itm:non-q-factorial-mmp:log_canonical} $K_Y+\Delta \equiv_X \sum e_iE_i$ for $e_i \in \bQ$, and
    \item \label{itm:non-q-factorial-mmp:sub_klt} $(X, \pi_*\Delta)$ is klt, or more generally that there exists a sub-klt\footnote{satisfying the same conditions as klt but not requiring that the boundary divisor is effective.} pair $(Y,\Delta')$ such that $K_Y+\Delta' \equiv_X 0$ and $\Delta-\Delta'$ is exceptional.
\end{enumerate}
We can run a $(K_Y+\Delta)$-MMP over $X$ in the sense of \cite{Kollar2020RelativeMMPWithoutQfactoriality} and it terminates with $X$. In particular, every $\bQ$-Cartier $\bQ$-divisor $D$ on $Y$ such that $D \equiv_X 0$ satisfies $D \sim_{\bQ,X} 0$ (in other words, some multiple of $D$ descends to $X$).
\end{theorem}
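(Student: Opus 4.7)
The plan is to run a relative $(K_Y+\Delta)$-MMP over $X$, showing that every extremal ray is of pl-type (negative on an irreducible component of $\lfloor \Delta \rfloor$), that all the requisite contractions and flips exist, and that the process terminates with $Y_n = X$. The ample exceptional divisor $\Lambda$ from hypothesis (a) plays a crucial role in verifying the hypotheses of the partial contraction and pl-flip theorems proved earlier in the paper.

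First I would establish that $e_i > 0$ for all $i$ in (d). Writing $\Delta = \sum E_i + B$ with $\lfloor B \rfloor = 0$ (by (c)), the sub-klt pair $(Y,\Delta')$ from (e) must satisfy $\Delta' = \sum c_i E_i + B$ with $c_i < 1$, since $\Delta - \Delta'$ is exceptional and sub-klt constrains the coefficients on each $E_i$. Thus $\Delta - \Delta' = \sum (1-c_i)E_i$. Subtracting the two numerical relations gives $\sum \bigl(e_i - (1-c_i)\bigr) E_i \equiv_X 0$. This is an exceptional, $\bQ$-Cartier (by (b)), $\pi$-numerically trivial divisor, so applying the negativity lemma (\autoref{lem:negativity}) to both signs shows it is zero, yielding $e_i = 1 - c_i > 0$.

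With $K_Y+\Delta \equiv_X \sum e_i E_i$ and $e_i > 0$, any $(K_Y+\Delta)$-negative extremal ray $R$ in $\overline{\mathrm{NE}}(Y/X)$ satisfies $E_j \cdot R < 0$ for some $j$, so $R$ is a pl-type ray negative on an irreducible component of $\lfloor \Delta \rfloor$. The extremal contraction $\phi \colon Y \to Z$ over $X$ is produced via the partial contraction theorem (\autoref{prop:partial-contraction-theorem}), after first using $\Lambda$ to perform a small $\bQ$-factorial modification and then descending the output. Divisorial contractions strictly reduce the number of $\pi$-exceptional divisors. For flipping contractions $\phi$ with $\rho(Y/Z)=1$, we have $K_Y+\Delta \sim_{\bQ,Z} c E_j$ since everything over $Z$ is proportional; the flip is then obtained via \autoref{proposition:one-complemented-pl-flips-exist}, with $E_j$ playing the role of $S$ and the ample component $A$ produced by moving a suitable multiple of $\Lambda$ into the boundary as an effective $\phi$-ample exceptional Weil divisor (the dlt and numerical hypotheses survive this perturbation near the flipping locus). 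Termination follows from special termination (\autoref{thm:special-termination}), since all operations occur inside $\lfloor \Delta \rfloor$ and each divisorial step strictly decreases the number of exceptional divisors over $X$.

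At the terminal stage, $K_{Y_n}+\Delta_n$ is $\pi$-nef and still numerically equivalent over $X$ to $\sum e_{i,n} E_{i,n}$ with $e_{i,n}>0$; the negativity lemma applied to this exceptional $\pi$-nef divisor forces it to vanish, so $Y_n \to X$ has no exceptional divisors. Preservation of the ample exceptional divisor $\Lambda_n$ throughout the MMP (and the fact that the output in the Koll\'ar framework is uniquely characterized by vanishing of exceptional divisors together with nefness of $K_{Y_n}+\Delta_n$) identifies $Y_n$ with $X$. The final descent statement---that $D \equiv_X 0$ implies $D \sim_{\bQ,X} 0$---follows by running the same MMP on $(Y,\Delta \pm \epsilon D)$ for small $\epsilon$, which remains dlt and sub-klt, tracking $D$ through each step; at the terminal model $X$ the divisor $D$ becomes numerically trivial on $X$, and descending back through the finitely many MMP steps shows $D \sim_{\bQ,X} 0$. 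The main technical obstacle will be the non-$\bQ$-factoriality of $Y$: the contraction and pl-flip tools in the paper are phrased for $\bQ$-factorial pairs, so at each step we must exploit $\Lambda$ to either reduce to a small $\bQ$-factorialization or to fit the flipping contraction into the ``one-complemented'' framework of \autoref{proposition:one-complemented-pl-flips-exist}, and verify that these modifications are compatible across successive stages of the MMP.
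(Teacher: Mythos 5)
Your overall architecture (a pl-type MMP over $X$ ending with the negativity lemma, flips supplied by \autoref{proposition:one-complemented-pl-flips-exist}, termination by \autoref{thm:special-termination}) matches the paper's, and your derivation of $e_i>0$ from hypothesis (e) via the negativity lemma is correct and is a genuinely useful observation. But the proposal is missing the one idea that makes the non-$\bQ$-factorial MMP actually run, and the two places where you gesture at it do not close the gap. First, ``using $\Lambda$ to perform a small $\bQ$-factorial modification and then descending the output'' is not available here: small $\bQ$-factorializations are themselves produced by running an MMP, and this theorem is deliberately stated with no restriction on residue characteristics (it feeds into the base point free theorem in all $p>0$), so the flips needed to construct such a modification do not exist when the residue characteristic is $2$, $3$ or $5$. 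Second, your flip step assumes $\rho(Y/Z)=1$ so that ``everything over $Z$ is proportional''; without $\bQ$-factoriality the contraction of an extremal ray need not have relative Picard rank one, and --- worse --- after a contraction or flip there is no a priori reason that the strict transforms of the $E_i$ (or of $D$) remain $\bQ$-Cartier, so hypothesis (b), and with it your identification of pl-type rays, can be lost after the very first step.

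The paper supplies the missing mechanism: it runs the MMP with scaling of an \emph{exceptional} divisor $H=\sum\gamma_iE_i$ whose coefficients $\gamma_i$ are linearly independent over $\bQ$ (chosen, using $\Lambda$, so that $K_Y+\Delta+h'H$ is ample). At the scaling threshold $h$, every ray $\Sigma_j$ of the contracted face $V$ satisfies $\sum_i\gamma_i(E_i\cdot\Sigma_j)=-\tfrac{1}{h}(K_Y+\Delta)\cdot\Sigma_j\in\tfrac{1}{h}\bQ$, and the $\bQ$-linear independence of the $\gamma_i$ forces the vectors $(E_1\cdot\Sigma_j,\ldots,E_r\cdot\Sigma_j)$ to be collinear for all $j$. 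Hence all the $E_i$, $\Lambda$, $K_Y+\Delta$, and any $D\equiv_X0$ lie in one subspace $W$ of divisors proportional as functionals on $V$. This simultaneously (i) produces a component $S$ negative on $V$ and an ample component $A$ needed for the one-complemented flip, (ii) shows via \autoref{prop:nonqfactorial-contraction-theorem} that pushforwards and flipped transforms of divisors in $W$ stay $\bQ$-Cartier, so the hypotheses are restored and the process can be iterated, and (iii) gives the descent of $D$ directly, without the $(Y,\Delta\pm\epsilon D)$ perturbation (which would in any case disturb condition (c) and dlt-ness whenever $D$ shares components with $\lfloor\Delta\rfloor$). Without some version of this collinearity argument, the induction in your proposal cannot get past the first step.
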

\noindent 
We decompose $Y\to X$ into pl-contractions and pl-flips, such that $D$ descends under each operation.  This uses 
Koll\'ar's non-$\bQ$-factorial MMP \cite[Theorem 1]{Kollar2020RelativeMMPWithoutQfactoriality}, in which all contractions behave as if they were of Picard rank one with respect to exceptional divisors, and so their $\bQ$-Cartierness is in fact preserved. For the convenience of the reader we write down a detailed explanation below. Unless otherwise stated, the exceptionality and ampleness below is always relative to $X$.

\begin{proof}
It is enough for the last sentence of the statement to show that $\pi_*D$ is $\bQ$-Cartier, as then $D = \pi^*\pi_*D \sim_{\bQ, X} 0$ by the negativity lemma. 
By Conditions \autoref{itm:non-q-factorial-mmp:exists_ample} and \autoref{itm:non-q-factorial-mmp:E_i_Q_Cartier}, we can pick an $\bR$-divisor $H$ and $h'\in \bR_{>0}$ such that
\begin{enumerate}[resume*=nonqfactorialmmp]
    \item \label{itm:non-q-factorial-mmp:H} $H= \sum \gamma_i E_i$ where $\{\gamma_1, \ldots, \gamma_r\}$ are linearly independent over $\bQ$, and
    \item \label{itm:non-q-factorial-mmp:ample} $K_Y+\Delta+h'H$ is $\bR$-ample. 
\end{enumerate}
To this end, we may initially take $H$ to be $\bR$-ample, but we will not use that in proofs since it will not be stable under the procedure described below. 
Further, note that Condition \autoref{itm:non-q-factorial-mmp:exists_ample} implies that $\mathrm{Exc}(Y/X)$ is a divisor (and hence equal to $\lfloor \Delta \rfloor$ {by Condition  \autoref{itm:non-q-factorial-mmp:boundary_round_down}}); indeed, $-\Lambda$ is effective by the negativity lemma (\autoref{lem:negativity}), and so if there exists an irreducible component $C \subseteq \mathrm{Exc}(Y/X)$ such that $C \not \subseteq  \Supp E_1\cup \ldots \cup \Supp E_r$ (in particular, $C$ is a component of codimension at least two, and since it cannot be a point it must be a curve), then $C \cdot \Lambda \leq 0$, contradicting the ampleness of $\Lambda$.

We start by establishing the \emph{cone theorem}.
Set $(K_{Y} + \Delta)|_{\tilde E_i} = K_{\tilde E_i}+\Delta_{\tilde E_i}$ for the normalization $\tilde E_i$ of $E_i$, which makes sense as $E_i \subseteq \lfloor \Delta \rfloor$. Since $\Exc(Y/X)$ is a divisor, the map $\sum_{i=1}^r \overline{\NE}(\tilde E_i/X)\to \overline{\NE}(Y/X)$ is surjective and by applying  \autoref{thm:surface_cone} we get
\[
\overline{\NE}(Y/X) = \overline{\NE}(Y/X)_{K_{Y}+\Delta \geq 0} + \sum_{t \geq 0} \bR_{\geq 0} [C_{t}],
\]
for a countable set of curves $\{C_t\}$ and positive integers $d_{C_t}$  such that $0 < -(K_{Y} + \Delta_{Y}) \cdot C_{t} \leq 4d_{C_t}$, and $L \cdot_k C_t$ is divisible by $d_{C_t}$ for every Cartier divisor $L$.  We also obtain that the rays $\bR_{\geq 0}[C_t]$ do not accumulate in the half space $\overline{\NE}(Y/X)_{K_{Y}+\Delta < 0}$. This concludes the proof of the cone theorem.

We run a $(K_Y + \Delta)$-MMP with scaling of $H$ as in \cite{Kollar2020RelativeMMPWithoutQfactoriality} and explain \emph{that {this determines the choice of extremal faces so that they behave as if they were} one-dimensional with respect to exceptional divisors}. We construct it explicitly. Let  $h \in \bR_{\geq 0}$ be the smallest number such that $K_{Y}+\Delta + tH$ is nef for all $h \leq t < h'$. If $h=0$, then move to the last paragraph of the proof.

Since 
\begin{equation}\label{eq:KYDeltahH-ample}
K_Y+\Delta + hH = (1-h/h')(K_Y+\Delta) + (h/h')(K_Y+\Delta + h'H)
\end{equation}
and $K_Y+\Delta+h'H$ is $\bR$-ample, we see that $K_Y+\Delta + hH$ is positive on $\mathrm{NE}(Y/X)_{K_Y+\Delta \geq 0}$ and on all but a finite number of extremal rays by the non-accumulating property of extremal rays. In particular, $(K_{Y}+\Delta + hH)\cdot \Sigma_j = 0$ for all such extremal rays $\Sigma_1, \ldots, \Sigma_c$. Set $V = \mathrm{span}(\Sigma_1, \ldots, \Sigma_c) = (K_X+\Delta+hH)^{\perp}$. We have  $H \cdot \Sigma_j = -\frac{1}{h}(K_Y+\Delta) \cdot \Sigma_j$, and so
\[
\textstyle
\sum_i \gamma_iE_i \cdot \Sigma_j = H \cdot \Sigma_j \in \frac{1}{h}\Q.
\]
Since $\gamma_i$ are linearly independent over $\bQ$, the number $\frac{1}{h}$ has a unique presentation as a linear combination of $\gamma_i$, and so we get that the vectors $(E_1\cdot \Sigma_j, \ldots, E_r \cdot \Sigma_j) \in \bQ^r$ are colinear (that is, $\bQ$-multiples of one another) for different $j$. It follows then that $E_i \in W$ for every $i$, where $W \subseteq \mathrm{Div}_{\bQ}(X)$ is  the subspace of $\bQ$-Cartier $\bQ$-divisors which are colinear with $E_1$ as functionals on $V$. Since $K_Y+\Delta \equiv \sum e_iE_i$, we also have that $K_Y+\Delta \in W$.

As the ample $\bQ$-divisor $\Lambda$ is exceptional over $X$, we get that $\Lambda \in W$, too. In particular, every exceptional divisor $E_i$ is colinear with a multiple of $\Lambda$ as a functional on $V$, and so is either entirely positive, trivial, or negative on $V$. Since $\Lambda$ is ample, and so anti-effective by the negativity lemma, it cannot happen that every exceptional divisor is positive or trivial on $V$. Hence there must exist an exceptional irreducible divisor $S \subseteq \Supp \lfloor \Delta \rfloor$ such that $S$ is negative on $V$.

\emph{We construct the contraction of $V$.} First, we claim that there exists an ample $\bQ$-divisor $G$ such that $V = {(K_Y}+\Delta+G)^{\perp}$. To this end, {define the $\bR$-divisor}
\begin{equation*}
G' = \frac{h/h'}{1-h/h'}\sum (e_i+h'\gamma_i)E_i \equiv_X \frac{h/h'}{1-h/h'}(K_Y+\Delta+h'H).
\end{equation*}
Note that 
${K_Y}+\Delta+G' \equiv_X \frac{1}{1-h/h'}(K_Y+\Delta+hH)$
 by \autoref{eq:KYDeltahH-ample}. {So, $G'$ satisfies all the requirements for $G$, except that it is not a $\bQ$-divisor. However,}  since the  irreducible components  of $G'$ are contained in $W$ we can perturb it in $W$ { to obtain the claimed $\bQ$-divisor $G$. Note that as the perturbation happens in $W$, {we can make}  $K_X+\Delta+G$ trivial on $V$. Additionally, by the non-accumulating property of extremal rays for small enough perturbation, $K_X + \Delta+G$ is still positive on all extremal rays not in  $(K_X+\Delta+G')^{\perp}=V$, and hence $(K_X+\Delta+G)^{\perp}=V$ holds.}
 
 Having shown the claim, we can invoke \autoref{prop:nonqfactorial-contraction-theorem} to construct a contraction $f \colon Y \to Z$ of $V$. Moreover, $K_{Y} + \Delta + hH \in W \otimes_{\bQ} \bR$ descends to an $\bR$-Cartier $\bR$-divisor on $Z$, which must be $\bR$-ample by the Nakai-Moishezon criterion \autoref{lem:Nakai-Moishezon}. Indeed, to apply the Nakai-Moishezon criterion we need to check that a closed integral subscheme $Q \subseteq Y$ over a field is contracted by $f$ if and only if $(K_{Y} + \Delta + hH)|_{Q}$ is not big. This is automatic when $\dim Q = 1$ as $(K_{Y} + \Delta + hH)^{\perp}=V$, and so we can assume that $Q = E_i$ for some $i$ and $E_i$ is defined over a field. 

But $(K_{Y} + \Delta + hH)|_{E_i}$ is semiample (by \autoref{eq:KYDeltahH-ample} and \cite[Theorem 1.1]{TanakaAbundanceImperfectFields}), and hence it is big if and only if $E_i$ is not contracted.

\emph{We construct the flip}. Suppose that $f$ is small. We have that $K_Y+\Delta+hH \equiv_{X,\bR} \sum (e_i+h\gamma_i)E_i$ and the $\bR$-divisor on the right descends to an $\bR$-Cartier $\bR$-divisor $\sum (e_i+h\gamma_i)f_*E_i$ on $Z$ by \autoref{prop:nonqfactorial-contraction-theorem}.\autoref{itm:nonqfactorial-contraction-theorem:num_triv} (with $f_*E_i\neq 0$, as $f$ is small), which is $\bR$-ample. Hence, by the negativity lemma,  $e_i+h\gamma_i<0$ for all $i$. If $C$ is a curve contracted by $f$, then $C \cdot \sum (e_i+h\gamma_i)E_i = 0$. As $C \cdot S < 0$, there must exist another irreducible exceptional divisor $A$ such that $C \cdot A > 0$. Since $A \in W$, we get that $A$ is $f$-ample. {We use the divisors $A$ and $S$ to construct the flip $f^+ \colon Y^+ \to Z$ of $f$.

By \autoref{prop:nonqfactorial-contraction-theorem}.\autoref{itm:nonqfactorial-contraction-theorem:num_triv} (applied to $T=Z$), 
all $\bR$-divisors in $W \otimes_{\bQ} \bR$ are in fact $\bR$-linearly equivalent over $Z$ to multiples of each other; similarly for $\bQ$-divisors. Hence, the existence of $f^+$ follows from  \autoref{proposition:one-complemented-pl-flips-exist} as in the proof of 
\autoref{proposition:flips-exist} (we can peform a necessary perturbation so that $\lfloor \Delta \rfloor = S + A$ as the irreducible components of $\lfloor \Delta \rfloor$ are $\bQ$-Cartier and contained in $W$). 

Additionally, we note that 
\begin{equation} \label{eq:non-q-factorial-flips-preserve-q-factoriality}
\textrm{the strict transform } D^+ \textrm{ of }  D \in W \otimes_{\bQ} \bR \textrm{ is } \bR\textrm{-Cartier.}
\end{equation}
In fact,  in this situation  $D + a(K_X+\Delta) \equiv_Z 0$ for some $a \in \bR$. Hence, $D + a(K_X+\Delta)$ descends to $Z$ by \autoref{prop:nonqfactorial-contraction-theorem}.\autoref{itm:nonqfactorial-contraction-theorem:num_triv}, and so $D^+ + a(K_{X^+}+\Delta^+)$ is $\bR$-Cartier. Since $K_{X^+}+\Delta^+$ is $\bQ$-Cartier, so is  $D^+$.
 }
Let $\phi \colon Y \dashrightarrow Y^+$ be the induced rational map.

\emph{We show that the above procedure can be repeated.} Pick $g \colon Y \dashrightarrow \overline{Y}$ as follows: $\overline{Y}=Z$ (with $g=f$) when $f$ is divisorial and $\overline{Y}=Y^+$ (with $g = \phi$) when $f$ is small. We claim that we can replace $Y$, $\Delta$, $\Delta'$, $D$, $H$, $h'$ by $\overline Y$, $\Delta_{\overline Y}$, $\Delta'_{\overline Y}$, $D_{\overline Y}$, $H_{\overline Y}$, $h-\varepsilon$, respectively (with $0 < \varepsilon \ll 1$ and the corresponding divisors being their strict transforms on $\overline{Y}$), so that \autoref{itm:non-q-factorial-mmp:exists_ample}-\autoref{itm:non-q-factorial-mmp:ample} hold and the algorithm can be run again:

\begin{itemize}
\itemsep0.2em 
    \item note that $g_*M$ is $\bQ$-Cartier for every $\bQ$-divisor $M \in W$ by \autoref{prop:nonqfactorial-contraction-theorem}.\autoref{itm:nonqfactorial-contraction-theorem:CArtier} and \autoref{eq:non-q-factorial-flips-preserve-q-factoriality}. In particular, if $M \equiv_X 0$, then $M \in W$, and so $g_*M \equiv_X 0$ is $\bQ$-Cartier (cf.\ \autoref{prop:nonqfactorial-contraction-theorem}.\autoref{itm:nonqfactorial-contraction-theorem:num_triv}). This immediately yields \autoref{itm:non-q-factorial-mmp:E_i_Q_Cartier},\autoref{itm:non-q-factorial-mmp:log_canonical},\autoref{itm:non-q-factorial-mmp:sub_klt} shows that $K_{\overline Y}+\Delta_{\overline Y}$ is $\bQ$-Cartier and that $D_{\overline Y}\equiv_X 0$ and is $\bQ$-Cartier (by setting $M=E_i,K_Y+\Delta-\sum e_iE_i, K_Y+\Delta', K_Y+\Delta, D$, respectively). 
    \item we have $H_{\overline Y} = \sum \gamma_i g_*E_i$, and those $\gamma_i$ for which $g_*E_i \neq 0$ comprise a subset of $\{\gamma_1, \ldots, \gamma_r\}$, and so are linearly independent over $\bQ$; hence \autoref{itm:non-q-factorial-mmp:H} holds,
        \item the $\bR$-divisor $K_{\overline Y}+\Delta_{\overline Y} + (h-\varepsilon)H_{\overline Y}$ is $\bR$-ample, and so \autoref{itm:non-q-factorial-mmp:ample} holds. This is automatic in the divisorial case as $f_*(K_{ Y}+\Delta + hH)$ is ample, and in the flipping case it follows from $
    K_{Y^+} + \Delta_{Y^+} + (h-\varepsilon)H_{Y^+} = (f^+)^*(K_Z + \Delta_Z + hH_{Z}) - \varepsilon H_{Y^+}$,
    where $H_{Y^+}$ is anti-ample over $Z$ as $H \in W$ was ample over $Z$. Here, divisors with subscripts denote appropriate strict transforms. 
    \item \autoref{itm:non-q-factorial-mmp:exists_ample} is satisfied for $\Lambda = \sum (e_i+(h-\varepsilon)\gamma_i) g_*E_i \equiv_X g_*(K_Y+\Delta+(h-\varepsilon) H)$ by the above paragraph.
        Note that such a chosen $\Lambda$ is only an $\bR$-divisor, but since each irreducible component of $\Supp \Lambda$ is $\bQ$-Cartier, we can perturb it so that it is an ample $\bQ$-divisor. 
    \end{itemize}  
\noindent However, note that $H_{\overline Y}$ is not necessarily $\bR$-ample any more.

Now, repeat the above procedure. It eventually stops by the same argument as in special termination (\cite[Theorem 4.2.1]{fujino05}, cf.\ \autoref{thm:special-termination}). 

Thus, we can assume that $K_Y+\Delta$ is nef. Then $K_Y + \Delta - (K_{Y}+\Delta')$ is nef, effective, and exceptional, hence zero by the negativity lemma. 
Since $\mathrm{Exc}(Y/X) = \lfloor \Delta \rfloor$, this implies that $Y=X$ and conclude the proof as $D$ is $\bQ$-Cartier.\qedhere

\end{proof}

In the above proof, we used the following results.
First, we state a variant of the contraction theorem. Here we say that \emph{the cone theorem is valid for a pair $(X,\Delta)$ over $T$} if there exists a countable set of curve $\{C_i\}$ such that conditions (a), (b), and (c) of \autoref{thm:keel_cone} are satisfied.

\begin{proposition} \label{prop:nonqfactorial-contraction-theorem} Let $(X,\Delta)$ be a three-dimensional dlt pair which is projective over $T$, and let $G$ be an ample $\bQ$-divisor {on $X$} such that :
\begin{itemize}
    \item $K_X+\Delta$ is pseudo-effective over $T$.
    \item $\Delta$ is a $\bQ$-divisor such that all irreducible components of $\lfloor \Delta \rfloor$ are $\bQ$-Cartier.
\item The cone theorem holds for $(X,\Delta)$ over $T$.     \item $L = K_X+\Delta+G$ is nef.
    \item  $V = L^{\perp}  \subseteq \overline{\mathrm{NE}}(X/T)$ is an extremal face.
    \item  There exists a prime divisor $S \subseteq \lfloor \Delta \rfloor$, which is negative on $V$ and is contained in $W$, where  $W \subseteq \mathrm{Div}_{\bQ}(X)$ is the subspace of $\bQ$-Cartier $\bQ$-divisors which are colinear with $K_X+\Delta$ as functionals on $V$.
\end{itemize} 
Then the contraction $f\colon X\to Z$ of $V$ exists.
Moreover :
\begin{enumerate}
    \item \label{itm:nonqfactorial-contraction-theorem:num_triv}
    If $D\equiv_Z 0$ is a $\bQ$-Cartier $\bQ$-divisor, then $D$ descends to $Z$; the same holds for $D \in W \otimes_{\bQ} \bR$ satisfying $D \equiv_Z 0$,
    \item \label{itm:nonqfactorial-contraction-theorem:CArtier} If $f$ contracts an irreducible divisor $E \in W$, then $f_*D$ is $\bR$-Cartier for every $D \in W \otimes_{\bQ} \bR$ (in particular, if $D \in W$, then $f_*D$ is $\bQ$-Cartier).
\end{enumerate} 
\end{proposition}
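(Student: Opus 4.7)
The plan is to use the mixed-characteristic Keel-type base-point-free theorem \autoref{prop:bpf_plt} to produce the contraction, after perturbing $L$ so that the exceptional locus $\mathbb{E}$ lies inside $S$, and then to deduce (a) and (b) from its ``moreover'' statement together with linear algebra on the face $V$.

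First, I would replace $L$ by a suitable perturbation. Using that the restriction map from the ample cone of $N^1(X/T)_\mathbb{R}$ onto the finite-dimensional dual $V^*$ is surjective, I choose an ample $\mathbb{Q}$-divisor $A$ such that $(S+A)\cdot v=0$ for every $v\in V$. Then set $L_\varepsilon := L+\varepsilon(S+A)$ for $0<\varepsilon\ll 1$. By construction $L_\varepsilon$ is still trivial on $V$; it remains nef and $L_\varepsilon^\perp=V$ because, by \autoref{thm:keel_cone}, there are only finitely many extremal rays clustered near $V$, and $L_\varepsilon$ is strictly positive on all other extremal rays for $\varepsilon$ small. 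Moreover $L_\varepsilon-(K_X+\Delta)=G+\varepsilon(S+A)$ is ample. Next, I verify $\mathbb{E}(L_\varepsilon)\subseteq S$: if $V'\subseteq X$ is a closed integral subscheme not contained in $S$, then $S|_{V'}$ is an effective Cartier divisor and $A|_{V'}$ is relatively ample, so $L_\varepsilon|_{V'}=L|_{V'}+\varepsilon S|_{V'}+\varepsilon A|_{V'}$ is a sum of a nef class and a relatively big class, and hence is itself big over $T$. I may now apply \autoref{prop:bpf_plt} (with the chosen component $S\subseteq\lfloor\Delta\rfloor$ and remaining boundary $\Delta-S$) to conclude that $L_\varepsilon$ is semi-ample. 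Its Stein factorization yields the contraction $f\colon X\to Z$ of $V$.

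For (a), the moreover part of \autoref{prop:bpf_plt} already shows that any $\mathbb{Q}$-Cartier $\mathbb{Q}$-divisor $D$ with $D\equiv_Z 0$ descends to $Z$. To extend to $D\in W\otimes_\mathbb{Q}\mathbb{R}$ with $D\equiv_Z 0$, write $D=\sum a_i D_i$ with $D_i\in W$, $\mathbb{Q}$-Cartier; since each $D_i|_V=\alpha_i(K_X+\Delta)|_V$ for some $\alpha_i\in\mathbb{Q}$, the divisors $D_i-\alpha_i(K_X+\Delta)$ are $\mathbb{Q}$-Cartier and numerically trivial over $Z$, so each descends to a $\mathbb{Q}$-Cartier divisor on $Z$. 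Using $D|_V=0$ forces $\sum a_i\alpha_i=0$, and hence $D=\sum a_i\bigl(D_i-\alpha_i(K_X+\Delta)\bigr)$ is the $\mathbb{R}$-linear pullback of an $\mathbb{R}$-Cartier divisor from $Z$.

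For (b), assume $f$ contracts an irreducible divisor $E\in W$, and write $E|_V=\beta(K_X+\Delta)|_V$. I first claim $\beta\neq 0$: otherwise $E\equiv_Z 0$, so by (a) some multiple $mE$ equals $f^*F$ for a $\mathbb{Q}$-Cartier divisor $F$ on $Z$; pushing forward (using that $f$ is birational in this situation, which in our application follows from the presence of the ample exceptional $\Lambda$) gives $F=f_*(f^*F)=mf_*E=0$, hence $mE=0$ and $E=0$, a contradiction. Then, from the fact that $E-\beta(K_X+\Delta)\equiv_Z 0$ descends, pushing forward shows $f_*(K_X+\Delta)$ is $\mathbb{Q}$-Cartier. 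For arbitrary $D\in W\otimes_\mathbb{Q}\mathbb{R}$, decompose $D=\alpha(K_X+\Delta)+D'$ with $D'\equiv_Z 0$; by (a) $D'$ descends, so $f_*D=\alpha f_*(K_X+\Delta)+f_*D'$ is $\mathbb{R}$-Cartier, and is $\mathbb{Q}$-Cartier when $D\in W$.

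The main obstacle I expect is the first step, namely arranging $\mathbb{E}(L_\varepsilon)\subseteq S$. Unlike in characteristic zero, where relative Kawamata--Viehweg vanishing handles everything uniformly, here one must genuinely control bigness of restrictions of $L_\varepsilon$ on every subvariety, and this is what forces the specific perturbation by $\varepsilon(S+A)$ with $A$ chosen so that $(S+A)\cdot V=0$. Ensuring simultaneously that $L_\varepsilon$ remains nef, that $V$ remains precisely $L_\varepsilon^\perp$, and that the ampleness of $L_\varepsilon-(K_X+\Delta)$ is preserved, all for the same $\varepsilon$, is the delicate technical point of the argument.
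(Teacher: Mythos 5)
Your proposal follows essentially the same route as the paper's proof: perturb $L$ to $L_\varepsilon = L+\varepsilon(S+A)$ with $S+A$ trivial on $V$, verify that $L_\varepsilon$ stays nef with $L_\varepsilon^\perp=V$ and that $\mathbb{E}(L_\varepsilon)\subseteq S$, apply \autoref{prop:bpf_plt}, and then do linear algebra inside $W$ for parts (a) and (b). Two of your justifications are off, though both are repairable. First, the restriction of the ample cone to $V^*$ is \emph{not} surjective (every ample class is strictly positive on $V\cap\overline{\mathrm{NE}}(X/T)$), so the existence of $A$ needs a different argument; the paper's choice is to take $A$ a positive rational rescaling of $G$ itself: since $G|_V=-(K_X+\Delta)|_V$ and $S\in W$ with both $S$ and $K_X+\Delta$ negative on $V$, one has $S|_V=\alpha(K_X+\Delta)|_V$ with $\alpha>0$, and $A=\alpha G$ is ample with $(S+A)|_V=0$. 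Second, in (b) your claim that $\beta\neq 0$ is justified by appealing to birationality of $f$ ``in our application,'' which is not a hypothesis of the proposition; the hypothesis-level argument is that every $f$-contracted curve $C$ has class in $V$, hence $S\cdot C<0$, hence $C\subseteq S$, so a contracted irreducible divisor $E$ is covered by such curves and must equal $S$, which is nonzero as a functional on $V$ by assumption. This is also why the paper uses $S$, rather than $K_X+\Delta$, as the reference element of $W$ in (b), writing $f_*D=f_*(D-cS)$ with $D-cS\equiv_Z 0$ and invoking (a).
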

Note that $W \otimes \bR$ agrees with the subspace of $\bR$-Cartier $\bR$-divisors which are colinear with $K_X+\Delta$ as functionals on $V$.
\begin{proof} 
 Since $L^\perp = V$, we have that $L$ is trivial on $V$, and so $G \in W$. Further, $K_X+\Delta$ is negative on $V$. 
 Pick an ample $\bQ$-divisor $A \in W$ (by rescaling $G$) such that $S+A$ acts trivially on $V$ (this is possible as $S,G\in W$).
 
 We claim that, $L_{\varepsilon}=K_X+\Delta+G_\varepsilon$ is nef over $T$ and $(L_\varepsilon)^\perp=V$ for any $0<\varepsilon\ll 1$, where $G_\varepsilon = G+ \varepsilon (S+A)$ is an ample $\bQ$-divisor. Indeed, by {non-accumulating property of the cone theorem}, there are finitely many $(K_X+\Delta+\frac{1}{2}G)$-negative extremal rays: $\Sigma_1, \ldots, \Sigma_l$. We may assume that $V = \mathrm{span}(\Sigma_1, \ldots, \Sigma_k)$ for some $k\leq l$. For every $\varepsilon$ such that $G_{\varepsilon}-\frac{1}{2}G$ is ample, $L_{\varepsilon}$ is positive on all extremal rays except possibly these $\Sigma_1, \ldots, \Sigma_l$. By decreasing $\varepsilon$ further we can assume that $L_{\varepsilon} \cdot \Sigma_j$ is close enough to $L \cdot \Sigma_j$, and so it is also positive for $k < j \leq l$. Last, $L_{\varepsilon} \cdot \Sigma_j =0$ for $1 \leq j \leq k$ holds for all $\varepsilon$ as $L \cdot \Sigma_j = (S+A) \cdot \Sigma_j = 0$. 
 
 Moreover, we have that $\mathbb E(L_\varepsilon)\subset S$. Indeed, if  $V\subset X$ is an integral subscheme not contained in $S$, then $L_\varepsilon |_V=(L+\varepsilon (S+A))|_V$ is nef and big over $T$. Replacing $L$ by $L_\varepsilon$, we may assume that $\mathbb E(L)\subset S$. Now, the contraction exists by \autoref{prop:bpf_plt} and \autoref{prop:char_zero_bpf}.

As for condition \autoref{itm:nonqfactorial-contraction-theorem:num_triv}, {the case of $\bQ$-Cartier $\bQ$-divisor $D$ } follows directly from \autoref{prop:bpf_plt} and \autoref{prop:char_zero_bpf}. {So, we only have to prove the case of  } ${0 \equiv_Z }D \in W \otimes_{\bQ} \bR$. {In this case,} $D = \sum a_i D_i$ for $D_i \in W$ and $a_i \in \bR$. Pick $b_i \in \bQ$ such that $D_i \equiv_Z b_i S$. Then $D = \sum a_i(D_i - b_iS) + (\sum a_ib_i)S$ and $D_i - b_iS$ descend to $Z$ 
by \autoref{prop:bpf_plt}. {As} $D \equiv_Z 0$, {we have} $\sum a_ib_i = 0$, and {hence} $D$ descends to $Z$.

{It remains to show point \autoref{itm:nonqfactorial-contraction-theorem:CArtier}.}
If $f$ contracts $E \in W$, then $E=S$ and  $f_*D = f_*(D - cS)$ is $\bR$-Cartier for every $D \in W \otimes \bR$ by condition \autoref{itm:nonqfactorial-contraction-theorem:num_triv} proved in the above paragraph, where $c \in \bR$ is chosen so that $D-cS \equiv_Z 0$. 
\end{proof}

\begin{theorem} \label{thm:bpf} \label{corollary:bpf-theorem-no-standard-coefficients} Let $(X,B)$ be a {$\bQ$-factorial} three-dimensional klt pair, with $\bR$-boundary,
which is projective over $T$. Let $L$ be a nef and big $\mathbb{Q}$-Cartier divisor on $X$ such that $L-(K_X+B)$ is {nef and big}. Then $L$ is semiample.
\end{theorem}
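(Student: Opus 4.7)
Plan. The strategy is to combine the EWM base point free theorem (\autoref{cor:EWM-bpf-theorem}) with the mixed-characteristic Keel theorem (\autoref{thm:mixed-characteristic-Keel}) and handle the exceptional locus $\mathbb{E}(L)$ via the non-$\bQ$-factorial MMP of \autoref{lem:non-q-factorial-mmp}, reducing at the end to \autoref{prop:bpf_plt}.

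First I would reduce to the case where $L-(K_X+B)$ is ample and $B$ is a $\bQ$-divisor. By Kodaira's lemma, $L-(K_X+B)\sim_\bR A+N$ with $A$ ample and $N\geq 0$; using the $\bQ$-factoriality of $X$, one may approximate $B$ by a $\bQ$-divisor and absorb a small positive multiple of $N$ into $B$, preserving klt-ness, until $L-(K_X+B)$ is ample. Next, by \autoref{cor:EWM-bpf-theorem}, $L$ is EWM over $T$, yielding a proper morphism $f\colon X\to Y$ to a proper algebraic space $Y$; since $L$ is big, $f$ is birational, and $f$ contracts exactly $\mathbb{E}(L)$. By \autoref{thm:mixed-characteristic-Keel}, to deduce semiampleness of $L$ it suffices to verify semiampleness of $L|_{X_\bQ}$ and of $L|_{\mathbb{E}(L)}$. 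The former is a direct application of the characteristic-zero base point free theorem \autoref{prop:char_zero_bpf} to $(X_\bQ,B|_{X_\bQ})$ and $L|_{X_\bQ}$.

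The main step is therefore the semiampleness of $L|_{\mathbb{E}(L)}$. Since $L$ is big, we may write $L\sim_\bQ A'+E'$ with $A'$ ample and $E'\geq 0$, and a standard computation shows $\mathbb{E}(L)\subseteq \mathrm{Supp}(E')$. I would take a log resolution $\pi\colon W\to X$ of $(X,B+E')$ admitting a $\pi$-ample $\pi$-exceptional divisor (available by \autoref{proj-resolutions}), and build a dlt boundary $\Delta_W$ on $W$ whose round-down is the reduced exceptional divisor plus (a small multiple of) the strict transform of $\mathrm{Supp}(E')$. Running a suitable $(K_W+\Delta_W)$-MMP over $X$, the hypotheses of \autoref{lem:non-q-factorial-mmp} will be met: this MMP consists of divisorial contractions and one-complemented pl-flips (whose existence is guaranteed by \autoref{proposition:one-complemented-pl-flips-exist} and \autoref{prop:partial-contraction-theorem}), and it terminates with $X$. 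The outcome is to reduce $\mathrm{Supp}(E')\cup\mathrm{Exc}(\pi)$ to a single plt prime divisor $S$ on a model birational to $X$ with $\mathbb{E}(L)\subseteq S$, putting us in the setting of \autoref{prop:bpf_plt}, which gives the semiampleness of $L$.

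The main obstacle will be verifying carefully that the hypotheses \autoref{itm:non-q-factorial-mmp:exists_ample}--\autoref{itm:non-q-factorial-mmp:sub_klt} of \autoref{lem:non-q-factorial-mmp} can be arranged and are preserved along every step of the MMP—in particular the $\bQ$-Cartierness of the exceptional components, the sub-klt compatibility, and the presence of an ample $\pi$-exceptional divisor after each contraction or flip. Equally delicate is ensuring that the flipping contractions that arise are of the ``one-complemented'' form required by \autoref{proposition:one-complemented-pl-flips-exist}, since we cannot rely on the flip existence of \autoref{thm:full_flips} here (which needs $\bQ$-factoriality and $\rho(X/Z)=1$), nor on \autoref{cor:flips-exist2} (which needs standard coefficients). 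Once these bookkeeping issues are managed, the conclusion follows by \autoref{prop:bpf_plt}.
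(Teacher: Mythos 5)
Your setup (perturbing so that $B$ is a $\bQ$-divisor and $L-(K_X+B)$ is ample, invoking \autoref{cor:EWM-bpf-theorem} to get the EWM map, and handling $X_{\bQ}$ via \autoref{prop:char_zero_bpf}) matches the paper, but the main step — semiampleness of $L|_{\mathbb{E}(L)}$ via a reduction to \autoref{prop:bpf_plt} — has a genuine gap, and it is precisely the step where the paper takes a different route. First, the mechanism you describe cannot produce what you need: the $(K_W+\Delta_W)$-MMP over $X$ governed by \autoref{lem:non-q-factorial-mmp} requires $\lfloor \Delta_W\rfloor$ to equal the reduced \emph{exceptional} divisor (so the strict transform of $\operatorname{Supp}(E')$, which is not exceptional, cannot sit in the round-down), and that MMP terminates with $X$ itself, contracting all exceptional divisors; it does not yield a birational model on which $\operatorname{Supp}(E')\cup\operatorname{Exc}(\pi)$ has collapsed to a single prime divisor $S\supseteq\mathbb{E}(L)$. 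Second, even granting such an $S$, \autoref{prop:bpf_plt} needs $L-(K_X+S+B)$ ample; adding $S$ with coefficient one to the boundary destroys the ampleness of $L-(K_X+B)$ in general, and arranging a dlt boundary with reduced part containing $\mathbb{E}(L)$ while keeping $L$ minus log-canonical ample is exactly the hard content of the Birkar--Xu positive-characteristic arguments — it is not a bookkeeping issue.

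The paper avoids this entirely. After \autoref{cor:EWM-bpf-theorem} produces $f\colon X\to Z$ with $L\equiv_Z 0$, it does \emph{not} pass through \autoref{thm:mixed-characteristic-Keel} and $\mathbb{E}(L)$ again; instead it reduces to showing that $mL=f^*\mathcal{M}$ for some line bundle $\mathcal{M}$ on the algebraic space $Z$, which is then ample by Nakai--Moishezon (\autoref{lem:Nakai-Moishezon}). Descent of $mL$ is checked \'etale-locally on $Z$, so one may assume $Z$ is the spectrum of a Noetherian local ring; one then takes a log resolution $h\colon Y\to Z$ of $(Z, f_*(B+A))$ factoring through $X$ and applies the non-$\bQ$-factorial MMP \autoref{lem:non-q-factorial-mmp} \emph{over $Z$} (not over $X$), whose conclusion is exactly that every $\bQ$-Cartier $\bQ$-divisor numerically trivial over $Z$ — in particular $\pi^*L$ — descends to $Z$. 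No plt divisor containing $\mathbb{E}(L)$ is ever constructed. If you want to salvage your outline, the fix is to redirect the MMP: run it relative to the EWM target $Z$ and use the descent statement of \autoref{lem:non-q-factorial-mmp}, rather than trying to engineer the hypotheses of \autoref{prop:bpf_plt} on $X$.
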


\begin{proof}

By a small perturbation, since $X$ is $\bQ$-factorial we may assume that $B$ is a $\bQ$-divisor, and $L-(K_X+B)$ is ample.
By \autoref{thm:keel_EWM_imperfect}, there exists a proper birational $T$-morphism $f \colon X \to Z$ to a proper algebraic space $Z$ over $T$ such that a proper integral subscheme $V \subseteq X$ is contracted if and only if $L|_V$ is not relatively big. In particular, $L\equiv_Z 0$.  

We claim that $\sO_X(mL)=f^*\mathcal{M}$ for some $m>0$ and a line bundle $\mathcal{M}$ on $Z$. This will conclude the proof of the theorem as $\mathcal{M}$ must then be ample by the Nakai-Moishezon criterion (\autoref{lem:Nakai-Moishezon}).
The  assumptions of Nakai-Moishezon are satisfied as $L^{\dim V} \cdot V = 0$ for a proper integral subscheme $V \subseteq X$ (over a field) if and only if $V \subseteq \mathrm{Exc}(f)$.

To show the claim, it is enough to prove that $f_*\sO_X(mL)$ is a line bundle for some $m>0$ which can be verified \'etale locally ($\bQ$-factoriality may be lost, but it will not be needed again). Thus, we can assume that $Z$ is {the spectrum of a Noetherian local ring.}
The assumptions of \autoref{lemma:add_ample} are satisfied, and as $A = L-(K_X+B)$ is ample, we can assume that $(X,B')$ is klt for $B' = B+A$. Set $B'_Z = f_* B'$. Note that $-(K_X+B)$ is relatively ample over $Z$.

{Let $h \colon Y \to Z$ be a log resolution of $(Z,B'_Z)$ which admits a factorization $\pi \colon Y \to X$ and such that there exists an ample exceptional divisor (see \autoref{proj-resolutions})}. Set $\Delta_Y = h^{-1}_*B'_Z + \mathrm{Exc}(h)$. Note that $K_Y+B'_Y \equiv_Z 0$ for $K_Y+B'_Y=\pi^*(K_X+B') = \pi^*L$. Further $K_Y+\Delta_Y \equiv_Z K_Y+\Delta_Y - (K_Y+B'_Y)$ and the latter $\Q$-divisor is exceptional over $Z$. Thus the assumptions of \autoref{lem:non-q-factorial-mmp} for $(Y,\Delta_Y)$ over $Z$ are satisfied, and so ${ \pi^*L}$ descends to $Z$. Hence, $f_*\sO_X(mL) = h_*\sO_Y(m{ \pi^*L})$ is a line bundle for $m$ divisible enough by the projection formula. 
\end{proof}

\begin{corollary}[Contraction theorem for birational extremal rays]\label{cor:bir-contraction}
Let $(X,B)$ be a $\mathbb{Q}$-factorial dlt pair with $B$ an $\mathbb{R}$-divisor.  Suppose that $\Sigma$ is a $(K_X+B)$-negative extremal ray such that there is some nef and big divisor {$D$} with $\Sigma={D}^\perp$.  Then there is a projective contraction $f:X\to Z$ of $\Sigma$. 
\end{corollary}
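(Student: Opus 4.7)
The plan is to apply the base point free theorem \autoref{thm:bpf} to a Cartier multiple of a rational nef and big class that lies on the same face of the nef cone as $D$. By first replacing $B$ with $B' := B - \epsilon \lfloor B \rfloor$ for small rational $\epsilon > 0$, we may assume $(X,B')$ is $\mathbb{Q}$-factorial klt, and $\Sigma$ remains a $(K_X+B')$-negative extremal ray (alternatively, use the $\mathbb{R}$-boundary version \autoref{thm:bpf_for_R_boundary}). The strategy is then: (i) produce a nef and big Cartier $L$ with $L^{\perp}\cap\overline{\mathrm{NE}}(X/T)=\Sigma$; (ii) show $mL - (K_X+B')$ is nef and big for $m\gg 0$; (iii) conclude via \autoref{thm:bpf} that $mL$ is semi-ample, giving the desired contraction $f\colon X \to Z$ after Stein factorization.

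For step (i), by \autoref{thm:keel_cone} the ray $\Sigma$ is spanned by the class of an integral curve, hence is rational in $N_1(X/T)_{\mathbb{R}}$ (which is finite dimensional by \autoref{remark:relative-Picard-rank}); the orthogonal hyperplane $H:=\Sigma^{\perp}\subset N^1(X/T)_{\mathbb{R}}$ is defined over $\mathbb{Q}$. Let $N$ be the nef cone and $F:=N\cap H$, a face of $N$. The hypothesis $\Sigma = D^{\perp}$ says exactly that $D \cdot \alpha > 0$ for every $\alpha \in \overline{\mathrm{NE}}(X/T) \setminus \Sigma$, which is precisely the characterization of the relative interior of $F$ inside $H$. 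Since the big cone is open and the relative interior of $F$ is open in the rational subspace $H$, one can choose a rational class arbitrarily close to $D$ that is still big and in the relative interior of $F$; clearing denominators (using $\mathbb{Q}$-factoriality) yields a Cartier $L$ with $L$ nef, big, and $L^{\perp}\cap\overline{\mathrm{NE}}(X/T)=\Sigma$.

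For step (ii), nefness of $mL - (K_X+B')$ for $m \gg 0$ follows from a compactness / slope argument: if it failed for every $m$, one would find curves $C_m$ with $(K_X+B')\cdot C_m / L \cdot C_m \to \infty$ and $L\cdot C_m > 0$; normalizing in the finite-dimensional space $N_1(X/T)_{\mathbb{R}}$ and passing to a convergent subsequence, the nonzero limit $C_{\infty}$ would satisfy $L \cdot C_{\infty}=0$, forcing $C_{\infty} \in \Sigma$ and hence $(K_X+B')\cdot C_{\infty}< 0$, contradicting $(K_X+B')\cdot C_m > 0$. Bigness follows from Kleiman's criterion applied to the nef class $mL - (K_X+B')$, since its top self-intersection expands as a polynomial in $m$ with leading coefficient $L^{\dim(X/T)} > 0$. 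Then \autoref{thm:bpf} gives semi-ampleness of $mL$, and the associated morphism contracts exactly the curves $C$ with $L\cdot C = 0$, i.e., precisely the curves in $\Sigma$. The main obstacle is the rational approximation in step (i): one must justify that $D$ lies in the relative interior of the rational face $F$ and that both bigness and membership in the relative interior persist under rational perturbation; once this is set up, the slope argument of step (ii) is routine given the finite-dimensionality of $N_1(X/T)_{\mathbb{R}}$.
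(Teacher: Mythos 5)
Your overall strategy---produce a nef and big $\mathbb{Q}$-Cartier $L$ with $L^{\perp}\cap\overline{\mathrm{NE}}(X/T)=\Sigma$ and $L-(K_X+B')$ nef and big, then apply \autoref{thm:bpf}---is the right one and is essentially the paper's, but your step (i) has a genuine gap. You want to replace $D$ by a nearby rational class in the relative interior of the face $F=\mathrm{Nef}(X/T)\cap\Sigma^{\perp}$. Two things go wrong. First, although $\Sigma^{\perp}$ is a rational hyperplane (as $\Sigma$ is spanned by an integral curve class), the cone $F$ itself need not span a rational subspace: a priori $F$ could be a single ray spanned by an irrational class, in which case it contains no nonzero rational points at all and ``choose a rational class arbitrarily close to $D$ in the relative interior of $F$'' is vacuous. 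That $F$ in fact contains a rational point whose orthogonal complement in $\overline{\mathrm{NE}}(X/T)$ is exactly $\Sigma$ is precisely the content of the rationality/supporting-function part of the contraction theorem; it does not follow from convex geometry alone. Second, the assertion that $D^{\perp}\cap\overline{\mathrm{NE}}(X/T)=\Sigma$ characterizes the relative interior of $F$ is a face-conjugation statement that is automatic for polyhedral cones but requires an argument for the nef cone, which is not locally polyhedral away from the $(K_X+B)$-negative region.

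The repair is to build the supporting divisor additively rather than by perturbing $D$, which is what the paper's ``standard argument using \autoref{thm:keel_cone}'' does. After your (correct) reduction to a klt $\mathbb{Q}$-boundary $B'$, fix an ample Cartier divisor $H$ and let $c\in\mathbb{Q}_{>0}$ satisfy $(K_X+B'+cH)\cdot\Sigma=0$. By \autoref{thm:keel_cone} there are only finitely many extremal rays $\Sigma,\Sigma_1,\dots,\Sigma_l$ on which $K_X+B'+\tfrac{c}{2}H$ is negative, and $D\cdot\Sigma_j>0$ for each $j$ since $\Sigma_j\not\subseteq\Sigma=D^{\perp}$. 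Hence $L_n:=K_X+B'+cH+nD$ is nef with $L_n^{\perp}\cap\overline{\mathrm{NE}}(X/T)=\Sigma$ for $n\gg0$; it is big because it is nef and $L_n^{\dim X_\eta}|_{X_\eta}=n^{\dim X_\eta}D^{\dim X_\eta}|_{X_\eta}+O(n^{\dim X_\eta-1})>0$; and $L_n-(K_X+B')=cH+nD$ is ample. So \autoref{thm:bpf} applies to a Cartier multiple of $L_n$, and your steps (ii)--(iii) become unnecessary (your slope argument in (ii) is essentially fine once a rational $L$ with $L^{\perp}=\Sigma$ exists, so the gap is localized entirely in step (i)). Note also that your parenthetical fallback to \autoref{thm:bpf_for_R_boundary} would be circular, since that theorem is established later using consequences of this corollary, and carries hypotheses on $T$ not present here.
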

\begin{proof}
As $X$ is $\mathbb{Q}$-factorial,  we may decrease the coefficients of $B$ to assume it is a $\mathbb{Q}$-divisor and $(X,B)$ is klt,  while maintaining that $\Sigma$ is $(K_X+B)$-negative.  By a standard argument using \autoref{thm:keel_cone}, we may find an ample $\mathbb{Q}$-divisor $A$ such that $\Sigma=(K_X+B+A)^\perp$. 
Now we may apply \autoref{corollary:bpf-theorem-no-standard-coefficients} to $L=K_X+B+A$.
\end{proof}

\subsection{Step 4: MMP in the pseudo-effective case}

Next, we note that  projective terminalizations  of klt pairs can be constructed. This is used in the proof of termination below.

\begin{proposition} \label{prop:existence-of-terminalizations} Let $(X,B)$ be a three-dimensional quasi-projective klt pair with $\bR$-boundary over $R$ as in \autoref{MMP_setting} where additionally the residue fields of $R$ do not have characteristic $2$, $3$ or $5$. Then there exists a projective birational morphism $g \colon Y \to X$ and a terminal pair $(Y,B_Y)$ such that  $K_Y+B_Y = g^*(K_X+B)$.
\end{proposition}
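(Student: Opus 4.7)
The plan is to carry out the standard Shokurov--Koll\'ar terminalization procedure, using the MMP machinery developed above. First I would take a projective log resolution $g \colon W \to X$ of $(X,B)$ via \autoref{proj-resolutions} with $W$ supporting a $g$-ample $g$-exceptional divisor. Let $E_1, \dots, E_n$ denote the $g$-exceptional prime divisors, and $a_i := a(E_i;X,B)$ their discrepancies, all of which satisfy $a_i > -1$ since $(X,B)$ is klt. Writing $K_W + \Delta_W = g^*(K_X+B)$ with $\Delta_W = g^{-1}_{*}B - \sum a_i E_i$, the pair $(W,\Delta_W)$ is sub-klt with simple normal crossing support, and a direct log discrepancy calculation on an SNC sub-klt pair with coefficients $<1$ shows that every divisor $E'$ exceptional \emph{over} $W$ satisfies $a(E';X,B) = a(E';W,\Delta_W) > 0$. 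In particular, only the $E_i$ themselves can have non-positive discrepancy over $X$.

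Next I would set
\[
B_W := g^{-1}_{*}B + \sum_i \max(0,-a_i)\, E_i, \qquad F := \sum_i \max(0,a_i)\, E_i,
\]
so that $B_W \geq 0$ has $\lfloor B_W \rfloor = 0$, the pair $(W,B_W)$ is klt and $\mathbb{Q}$-factorial (as $W$ is regular), $F \geq 0$ is effective and $g$-exceptional, and $K_W + B_W = g^*(K_X+B) + F$. In particular $K_W + B_W$ is pseudo-effective over $X$, and the image of $W$ in $X$ is all of $X$, hence has positive dimension. I would then run a $(K_W+B_W)$-MMP over $X$: the cone theorem and existence of divisorial contractions are provided by \autoref{thm:keel_cone} and \autoref{prop:partial-contraction-theorem}; flips exist by \autoref{thm:full_flips}; and termination is furnished by \autoref{prop:psef_termination}.

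Let $g_Y \colon Y \to X$ denote the output, with boundary $B_Y$ the strict transform of $B_W$. Then $K_Y + B_Y$ is $g_Y$-nef, and the difference $K_Y + B_Y - g_Y^*(K_X+B) = F_Y$ is effective, $g_Y$-exceptional, and $g_Y$-nef, so the negativity lemma \autoref{lem:negativity} forces $F_Y = 0$, giving $K_Y + B_Y = g_Y^*(K_X+B)$. The vanishing $F_Y = 0$ also implies that the MMP contracted precisely the $E_i$ with $a_i > 0$, while any surviving $E_i$ enters $B_Y$ with coefficient $-a_i \in [0,1)$. Finally, for any divisor $E'$ exceptional over $Y$: either $E'$ is one of the contracted $E_i$, so $a(E';Y,B_Y) = a_i > 0$, or $E'$ is exceptional over $W$, so by the SNC sub-klt computation $a(E';Y,B_Y) = a(E';X,B) > 0$. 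Thus $(Y,B_Y)$ is terminal.

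The main obstacle is not the combinatorial setup (which is classical) but rather checking that every MMP input invoked---the relative cone theorem, existence of extremal contractions and flips for $\mathbb{R}$-boundaried klt pairs, and termination in the pseudo-effective case---is available in this excellent-scheme setting of possibly mixed characteristic. These have all been arranged in the preceding steps of this section, so no genuinely new difficulty should arise.
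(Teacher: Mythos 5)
Your overall strategy is the classical Shokurov--Koll\'ar terminalization and is close in spirit to the paper's (which runs a $(K_W+g^{-1}_*B)$-MMP over $X$ rather than a $(K_W+B_W)$-MMP), but as written there are two genuine gaps.

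First, a circularity. You invoke \autoref{prop:psef_termination} to terminate the MMP over $X$. But the proof of \autoref{prop:psef_termination} rests on the second part of \autoref{thm:special-termination}, whose hypothesis is precisely the existence of terminalizations, i.e.\ \autoref{prop:existence-of-terminalizations} itself. So your termination step assumes the statement you are proving. This is exactly why the paper does \emph{not} appeal to pseudo-effective termination here: it arranges the pair on the resolution to be \emph{terminal}, observes that terminality is preserved by the MMP steps, and terminates by the classical difficulty argument for terminal pairs (\cite[Proof of 6.17]{KollarMori}), which is independent of the special-termination machinery. Your proof needs to be rerouted through such an argument (or some other termination statement not downstream of terminalizations); also note that \autoref{prop:partial-contraction-theorem} does not apply to your contractions, since it requires the ray to be negative on a prime component of the \emph{reduced} boundary and you have $\lfloor B_W\rfloor=0$ --- the correct reference is \autoref{cor:bir-contraction}.

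Second, the assertion that ``a direct log discrepancy calculation on an SNC sub-klt pair with coefficients $<1$ shows that every divisor $E'$ exceptional over $W$ satisfies $a(E';W,\Delta_W)>0$'' is false for an arbitrary log resolution. If two components $D_1,D_2$ of $\Delta_W$ with coefficients $d_1,d_2$ intersect and $d_1+d_2\geq 1$, the blow-up of a component of $D_1\cap D_2$ has discrepancy $1-d_1-d_2\leq 0$ (e.g.\ $X$ regular, $B=\tfrac23(B_1+B_2)$ with $B_1\cap B_2\neq\emptyset$: your construction returns $Y=X$, which is not terminal, since the divisor over $B_1\cap B_2$ has discrepancy $-\tfrac13$). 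Equivalently, your $W$ must extract \emph{all} of the finitely many divisors with $a(E;X,B)\leq 0$, and a bare log resolution need not do so. This is repairable --- after the log resolution, further blow up the (regular) strata of the snc divisor until intersecting components always have coefficient sum $<1$, so that $(W,\Delta_W)$ is sub-terminal --- but that step has to be stated and justified; without it the final verification of terminality of $(Y,B_Y)$ collapses. Once both points are fixed, the remainder of your argument (negativity forcing $F_Y=0$, identification of the contracted divisors, and the discrepancy bookkeeping) is sound.
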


\begin{proof}
By \cite[Proposition 2.36]{KollarMori} there are only finitely many divisors over $X$ with log discrepancy at most 1.  Therefore by \cite[Lemma 2.45]{KollarMori} and  \autoref{proj-resolutions} we may find a projective log resolution $g\colon Y\to X$ of $(X,B)$ which extracts all divisors of log discrepancy at most $1$ with respect to $(X,B)$.  
Define 
\[K_Y+B_Y\sim g^*(K_X+B)+F-E\]
where $E$ and $F$ {are effective $\bR$-divisors with no common prime divisors in their support,} and $B_Y$ is the strict transform of $B$.
By repeatedly blowing up strata of $(Y,B_Y+E)$ we may assume that the irreducible components of $\Supp (B_Y + E)$  do not meet.  If we replace $Y, F$ and $E$ by the result of this process, all new exceptional components will be added to $F$.  As a result we may assume that the irreducible components of {$\Supp(B_Y + E)$} are disjoint and hence that $(Y, B_Y+E)$ is terminal.

Run a $(K_Y+B_Y+E)$-MMP over $X$, which uses the cone theorem  \autoref{thm:keel_cone}, contractions theorem \autoref{cor:bir-contraction}, and existence of flips \autoref{thm:full_flips}.  
This LMMP terminates by a standard argument involving Shokurov's difficulty \cite[Theorem 6.17]{KollarMori}.  Note that \cite{KollarMori} deals with only $\mathbb{Q}$-boundaries, however the same argument works in the $\mathbb{R}$-boundary case.  It uses the fact that the variety underlying a terminal surface pair is regular in codimension $2$ which holds in our case by \cite[Theorem 2.29]{KollarKovacsSingularitiesBook}, and also uses the fact that there are only finitely many components of log discrepancy at most one \cite[Proposition 2.36]{KollarMori}.
Let $\phi\colon W\to X$ be the outcome of this MMP and let $E_W$ and $F_W$ be the images of $E$ and $F$, respectively.  We know that $\phi$ contracts every component of $F$ since by construction $F_W-E_W$ is nef and $\phi_*(E_W-F_W)=0$, so $F_W=0$ by the negativity lemma.   Since  $K_Y+B_Y+E\sim_{\mathbb{Q},g} F$, this means that every divisorial contraction which occurs is negative for $F$, and hence the contracted divisor is a component of $F$.  As a result we see that this MMP contracts exactly the components of $F$ and so produces the required terminalization.
\end{proof}

\begin{proposition}\label{prop:psef_termination}
Let $(X,B)$ be a {$\bQ$-factorial} three-dimensional {dlt} pair with {$\bR$}-boundary which is projective and surjective over $T$ with $\dim(T)>0$ and such that none of the residue fields have characteristic $2$, $3$ or $5$. Suppose that $K_X+B$ is pseudo-effective. Then {we can run a $(K_X+B)$-MMP and} {any sequence of the steps of the MMP terminates}.  {As a result, $(X,B)$ has a log minimal model.}
\end{proposition}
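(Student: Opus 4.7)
The plan for \autoref{prop:psef_termination} is to verify that a $(K_X+B)$-MMP can be run, and then to establish termination via the Alexeev--Hacon--Kawamata argument, using ingredients already developed earlier in the paper.

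First I would observe that since $\dim T>0$ and $K_X+B$ is pseudo-effective, the non-vanishing theorem on the generic fibre of $X\to T$ (see \autoref{remark:non_vanishing_in_the_relative_setting}) produces an effective $\bR$-divisor $M$ with $K_X+B\equiv_T M$, so the relative cone theorem \autoref{thm:keel_cone} applies and $(K_X+B)$-negative extremal rays exist. Given such a ray $\Sigma$, I would choose an ample $\bQ$-divisor $A$ so that $L:=K_X+B+A$ is nef with $\Sigma=L^\perp$; bigness of $L$ is automatic from pseudo-effectiveness of $K_X+B$ plus ampleness of $A$, so $\Sigma$ is contractible by \autoref{cor:bir-contraction}. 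If the contraction is divisorial, I replace $(X,B)$ by its image (which remains $\bQ$-factorial dlt). If it is small, the flip is provided by \autoref{thm:full_flips}. Pseudo-effectiveness of $K_X+B$ rules out Mori fibre contractions, so these two cases are exhaustive.

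For termination of an arbitrary such sequence, I would first note that divisorial contractions strictly decrease $\rho(X/T)$ (\autoref{remark:relative-Picard-rank}) and so can only occur finitely many times; it thus suffices to show termination of the tail, which is an infinite sequence of flips. To reduce to the klt setting I would perturb $(X,B)$ by setting $B':=B-\varepsilon\lfloor B\rfloor$ for $0<\varepsilon\ll 1$; then $(X,B')$ is $\bQ$-factorial klt. By special termination \autoref{thm:special-termination} (whose second part applies because projective terminalizations exist in dimension three by \autoref{prop:existence-of-terminalizations}), after finitely many steps each flipping locus is disjoint from $\operatorname{Supp}\lfloor B\rfloor$; on any such flipping curve $C$ one has $(K_X+B)\cdot C=(K_X+B')\cdot C<0$, so the tail is simultaneously a sequence of $(K_X+B')$-flips, and termination of the former reduces to termination of the latter.

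The remaining step --- termination of an arbitrary sequence of klt flips --- is the Alexeev--Hacon--Kawamata argument \cite{AHK07} as reformulated in \cite[Proposition 2.10]{HaconWitaszekMMP4fold}, whose proof was already noted in \autoref{thm:special-termination} to go through in our excellent setting. Its two hypotheses hold in our situation: $K_X+B'\sim_{\bR} M'$ for an effective $\bR$-divisor $M'$ by applying non-vanishing on the generic fibre of $X\to T$ to $(X,B')$, and projective terminalizations of klt threefold pairs birational to $X$ exist by \autoref{prop:existence-of-terminalizations}. The AHK argument then reduces termination of the klt flip sequence to termination of a $K_Y$-MMP on a terminal threefold $Y$, which is Shokurov's classical difficulty argument. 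The output of any such terminating MMP is then a log minimal model of $(X,B)$ over $T$ essentially by construction. The main obstacle I anticipate is ensuring that each MMP operation faithfully preserves $\bQ$-factoriality and the dlt property in our relative excellent setup, and that the passage from $B$ to $B'$ is compatible with an \emph{arbitrary} starting sequence of MMP steps --- but these follow from standard arguments once the existence results cited above are in place.
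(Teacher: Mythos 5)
Your overall route --- non-vanishing on the generic fibre to produce an effective $M$ with $K_X+B\equiv_T M$, running the MMP via \autoref{thm:keel_cone}, \autoref{cor:bir-contraction} and \autoref{thm:full_flips}, discarding divisorial contractions by Picard rank, using the first part of \autoref{thm:special-termination} to make the flipping loci avoid $\lfloor B\rfloor$, and then invoking the Alexeev--Hacon--Kawamata argument together with \autoref{prop:existence-of-terminalizations} --- is exactly the paper's. But there is a concrete gap in your final step. The klt perturbation you choose, $B'=B-\varepsilon\lfloor B\rfloor$, is not the right one: the tool actually available here (the second part of \autoref{thm:special-termination}, i.e.\ the AHK argument in the form of \cite[Proposition 2.10]{HaconWitaszekMMP4fold}) does not assert termination of klt flips; it only asserts that after finitely many steps the flipping loci are disjoint from $\Supp\Delta_i$. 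With your $B'$ this yields no contradiction, because a $(K_X+B')$-flipping curve need not meet $\Supp B'$ at all --- what it must meet is $\Supp M$ (being $M$-negative, it lies in a component of $M$ on which $M$ is negative), and $\Supp M$ is in general not contained in $\Supp B'$. Your proposed exit (``reduce to a $K_Y$-MMP on a terminal threefold and apply Shokurov's difficulty'') is not the mechanism by which either the paper or \cite{AHK07} concludes, and it is not established in this paper in the required generality.

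The fix is the one move you omitted: once the flipping loci avoid $\lfloor B\rfloor$, replace the boundary by $\Delta:=B-\lfloor B\rfloor+\epsilon M$ with $0<\epsilon\ll 1$ chosen so that $(X,\Delta)$ is klt. The tail of the sequence is still a sequence of $(K_X+\Delta)$-flips, since each flipping curve $C$ satisfies $\lfloor B\rfloor\cdot C=0$ and $M\cdot C<0$, whence $(K_X+\Delta)\cdot C<0$. Now every flipping curve is negative on $M$ and hence contained in $\Supp M\subseteq\Supp\Delta$, which contradicts the eventual disjointness from $\Supp\Delta_i$ furnished by the second part of \autoref{thm:special-termination}; so the sequence terminates. (It does not matter that $K_X+\Delta$ may fail to be pseudo-effective: only the $M$-negativity of the contracted rays is used.) With this adjustment your argument coincides with the paper's proof.
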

\begin{proof}

First, note that $K_X+B\sim_T M\geq 0$. Indeed, it is enough to show that $\kappa(K_{X_{\eta}}+B|_{X_{\eta}}) \geq 0$, where $X_{\eta}$ is the fiber over a generic point $\eta \in T$, and this follows by the two-dimensional non-vanishing theorem in equicharacteristic \cite{fujino_minimal_2012, TanakaAbundanceImperfectFields}.  

{We can apply {\autoref{thm:keel_cone}, \autoref{thm:full_flips}} and \autoref{cor:bir-contraction} to run a $(K_X+B)$-MMP, and it remains to show that it terminates.}

Suppose we have an infinite sequence of $(K_X+B)$-flips $X_i\dashrightarrow X_{i+1}$. By the first assertion in \autoref{thm:special-termination}, eventually the flipping loci are disjoint from $\rddown{B}$. Thus, by replacing $X$ by $X_i$ for $i \gg 0$, we can assume that all these flips are $(K_X+\Delta)$-flips for $\Delta = B-\lfloor B \rfloor +\varepsilon M$ and $0 < \varepsilon \ll 1$. Explicitly, we pick $\varepsilon$ so that $(X,\Delta)$ is klt. Then the statement follows from  \autoref{thm:special-termination-2}, where the existence of terminalizations is a consequence of \autoref{prop:existence-of-terminalizations}. Note that $K_X+\Delta$ is not necessary pseudo-effective any more, but  here we only use that the extremal rays of $X_i \dashrightarrow X_{i+1}$ are negative on $M$, and so also on an irreducible component of $\Supp \Delta$.  
\end{proof}

\begin{corollary}\label{cor:dlt_modification}
\label{cor:dlt_models}
Let $X$ be a variety which is quasi-projective over $\Spec R$, such that $X$ has no residue fields of characteristic $2$, $3$ or $5$. 

Let $\Delta=\sum a_i\Delta_i$ be an $\mathbb{R}$-divisor such that $\Delta_i$ are prime divisors and such that $K_X+\Delta$ is $\mathbb{R}$-Cartier.
Let 
\[\Gamma=\sum_{i: a_i>1} \Delta_i+\sum_{i: a_i\leq1} a_i\Delta_i.\]

Then there exists a dlt modification  of $(X,\Delta)$, which is a projective birational morphism $\pi\colon Y\to X$ with the properties listed below.  First define $\Delta_Y$ by $K_Y+\Delta_Y:=\pi^*(K_X+\Delta)$ and $\Gamma_Y$ by $\Gamma_Y=\pi_*^{-1}\Gamma+\mathrm{Ex}(\pi)$.  Then $\pi$ satisfies:
\begin{enumerate}
    \item $Y$ is $\mathbb{Q}$-factorial.
       \item $(Y, \Gamma_Y)$ is dlt, 
    \item  $K_Y + \Gamma_Y$ is nef over $X$, 
     \item $\Delta_Y-\Gamma_Y\geq 0$\label{itm:cor:dlt_modification:effective}, and 
    \item\label{rem:dlt_model_connected} for every $x\in X$, either $\pi^{-1}(x)$ is contained in $\Supp(\Delta_Y-\Gamma_Y)$ or is disjoint from it. \label{itm:cor:dlt_modification:(b)}
\end{enumerate}

\end{corollary}

\begin{proof}

Take $\pi \colon Y \to X$ to be a log resolution of $(X,\Delta)$. Then a minimal model of  $(Y, \pi^{-1}_*\Gamma + \mathrm{Ex}(\pi))$ over $X$, which exists by \autoref{prop:psef_termination}, is a dlt modification of $(X,\Delta)$.  The first three properties may be verified by the same argument as in \cite[Theorem 10.4]{FujinoFundamental}.
For \autoref{itm:cor:dlt_modification:effective} note that
\[
 \Delta_Y - \Gamma_Y = K_Y + \Delta_Y -  (K_Y + \Gamma_Y)
\]
is  anti-nef over $X$ and its pushforward via $\pi$ is effective. Thus, it is effective by the negativity lemma (\autoref{lem:negativity})  concluding \autoref{itm:cor:dlt_modification:effective}. 
For \autoref{itm:cor:dlt_modification:(b)}, note that $\pi^{-1}(x)$ is connected for every $x\in X$, and if $C$ is a curve in $\pi^{-1}(x)$ which intersects $\Supp(\Delta_Y-\Gamma_Y)$ but is not contained in it, then $C\cdot (\Delta_Y-\Gamma_Y)>0$, contradicting the fact that $\Delta_Y-\Gamma_Y$ is anti-nef over $X$. 
\end{proof}

\begin{remark} \label{rem:non-q-factorial-dlt-modification}
Even when $X$ does admit residue characteristics $2$, $3$, or $5$, one can still construct a dlt modification of $X$ by \cite{Kollar2020RelativeMMPWithoutQfactoriality} (cf.\  \autoref{lem:non-q-factorial-mmp}). However, it will not necessarily be $\bQ$-factorial unless $X$ is $\bQ$-factorial as well.
\end{remark}

\subsection{Step 5: Base point freeness}

In this subsection, we prove the full basepoint freeness theorem. 
We do this only in the case of $\dim(T)>0$, an assumption that automatically holds in the  arithmetic situation which is the main motivation of our article. 
The case of a projective variety over a field appears in  \cite[Theorem 3.3]{KollarMori} when the field has characteristic zero (see \cite{kawamata_pluricanonical_1985} for the original proof stated less generally), \cite{BW17} when it is algebraically closed of characteristic $p>5$ and  \cite{gongyo_rational_2015} when it is perfect of characteristic $p>5$.  We leave open the case of a variety projective over an imperfect field.

While many of the arguments of \cite{BW17} go through in our situation of a positive dimensional base, there are several things which do not work, such as Tsen's theorem.  However, the relative situation provides advantages which enable us to avoid those problems.  In the first version of this article we directly referred to the arguments of \cite{BW17} wherever possible, while below we provide simpler proofs which make full use of the advantages offered by the relative situation.

{First, we} prove the abundance theorem for {semi-log canonical curves and log canonical} surfaces, for which we were unable to find a reference in sufficient generality.

\begin{lemma}\label{slc-curve}
Let $(X,\Delta)$ be a semi-log canonical curve pair with {$\mathbb{Q}$-boundary}, such that $K_X+\Delta$ is nef.  Then $K_X+\Delta$ is semiample.
\end{lemma}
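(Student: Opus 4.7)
The plan is to reduce to the normalization, handle each smooth component by a genus-degree case analysis, and then descend via Keel's theorem.

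First I would consider the normalization $\nu \colon X^\nu \to X$. Since $X$ is a one-dimensional slc pair, $\nu$ resolves only nodal singularities, and the slc adjunction formula yields $\nu^*(K_X + \Delta) = K_{X^\nu} + \Delta^\nu$, where $\Delta^\nu$ is effective with coefficients in $[0,1]$ and contains with coefficient $1$ the reduced preimage of the non-normal locus. The pair $(X^\nu, \Delta^\nu)$ is log canonical, and $X^\nu$, being a one-dimensional normal scheme, is a disjoint union of smooth projective curves $C$ over the residue field $k$ of the closed point of $T$ under $X$.

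Second, on each irreducible component $C$ of $X^\nu$ with induced boundary $\Delta_C := \Delta^\nu|_C$, I would establish semi-ampleness of $K_C + \Delta_C$ by a case analysis on the geometric genus $g$ of $C$. If $\deg(K_C + \Delta_C) > 0$, then $K_C + \Delta_C$ is ample on the smooth proper curve $C$. If $\deg(K_C + \Delta_C) = 0$, then: $g \geq 2$ is excluded since $\deg K_C \geq 2g-2 > 0$ contradicts effectivity of $\Delta_C$; $g = 1$ forces $\Delta_C = 0$, and a nonzero global section of the degree-zero line bundle $\omega_C$ (existing because $h^0(\omega_C) = 1$) is nowhere vanishing, giving $\omega_C \cong \mathcal{O}_C$; $g = 0$ implies $\mathrm{Pic}(C)$ embeds into $\mathbb{Z}$ via the degree map, so any $\mathbb{Q}$-Cartier divisor of degree $0$ is $\mathbb{Q}$-linearly trivial. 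In each case $K_C + \Delta_C$ is $\mathbb{Q}$-semi-ample on $C$, so $\nu^*(K_X + \Delta)$ is semi-ample on $X^\nu$.

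Finally, I would descend using Keel's theorem (\autoref{thm:mixed-characteristic-Keel}): if $K_X + \Delta$ has positive degree on every component of $X$, it is already ample; otherwise, Keel reduces semi-ampleness of $K_X + \Delta$ on $X$ to semi-ampleness on $\mathbb{E}(K_X + \Delta) \subsetneq X$, which has strictly fewer positive-degree components and permits an induction down to the case $\nu^*(K_X + \Delta) \sim_\mathbb{Q} 0$. The final descent from $X^\nu$ to $X$ is then handled by the standard gluing theory for the dualizing sheaf of a nodal slc pair, using that $\omega_X(\Delta)$ is by definition the subsheaf of $\nu_*\omega_{X^\nu}(\Delta^\nu)$ cut out by opposite-residue compatibility at the nodes, so a sufficiently divisible power of a trivialization on $X^\nu$ is compatible with this gluing and descends to a trivialization on $X$. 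The main obstacle is this last descent at the nodes, particularly over an imperfect residue field $k$, but Keel's theorem localizes the issue to the canonically defined dualizing setting where the node gluing is automatic.
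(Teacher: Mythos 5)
The Keel reduction to the numerically trivial, connected case and the degree/genus analysis on the normalization are fine, but the final descent step is a genuine gap. You assert that once $\nu^*(K_X+\Delta)\sim_{\bQ}0$, ``a sufficiently divisible power of a trivialization on $X^\nu$ is compatible with this gluing and descends.'' Taking powers does not resolve this kind of obstruction: a trivialization of $\omega_{X^\nu}(\Delta^\nu+D)$ ($D$ the conductor) on each component determines at each node a ratio in $k^*$ (of the two residues), and descending some tensor power amounts to solving a $k^*$-valued cocycle condition on the dual graph after rescaling each component by a constant. Since $k^*$ need not be torsion, no power helps in general --- this is exactly why a degree-zero line bundle on a nodal cubic that pulls back trivially to $\bP^1$ need not be torsion ($\pico$ is $\bG_m$), and why slc abundance does not formally follow from lc abundance on the normalization. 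The descent does hold here, but for reasons you would still have to supply: if $\Delta\neq 0$ the dual graph of the numerically trivial locus is a tree (a component lying on a cycle has at least two conductor branches, forcing $\deg K_C=-2$ and $\Delta_C=0$ there), and on a tree the rescaling problem is always solvable; if a cycle occurs then $\Delta=0$ on that connected component, each of its components is geometrically $\bP^1$ with exactly two conductor points, and the residue theorem forces the two residues on each component to be opposite, so the cocycle around the cycle is trivial. ``The node gluing is automatic'' is precisely the claim that needs proof.

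The paper sidesteps all of this by never passing to the normalization in the numerically trivial case. For $\Delta=0$ it applies Riemann--Roch for reduced Cohen--Macaulay curves directly on $X$ to get $h^0(X,\omega_X)=2-h^0(X,\omega_X)$, hence a nonzero global section of the numerically trivial $\omega_X$, which is then nowhere vanishing on the connected curve; for $\Delta\neq 0$ it observes that $X$ is a chain of geometrically rational components and glues semi-ampleness component by component via \cite[Corollary 2.9]{KeelBasepointFreenessForNefAndBig}, which only uses the tree structure. If you want to keep the normalization route, replace the ``sufficiently divisible power'' claim with the residue-theorem and dual-graph argument sketched above.
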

\begin{proof}
By Keel's theorem {(\cite{KeelBasepointFreenessForNefAndBig})} we can reduce to the case of $K_X+\Delta\equiv 0$, and furthermore assume that $X$ is connected. 
If $\Delta=0$, we need only show that $h^0(X, \omega_X) \neq 0$ which follows from the general Riemann-Roch theorem for reduced curves \cite[Thm VII.3.26]{liu_algebraic_2002}  (note that $X$ is Cohen-Macaulay):
\begin{equation*}
   \dim_k H^0(X, \omega_X) = \dim_k H^1(X, \omega_X) + 0 + \chi(X, \sO_X)  = 2- \dim_k H^1(X,\sO_X) = 2- \dim_k H^0(X, \omega_X)
\end{equation*}
When $\Delta \neq 0$ on the other hand, we claim that $X$ is a chain of curves $C$ such that $\left( C_{\overline{k}}\right)_{\red} \cong \bP^1$. Let $C_1$ be  an irreducible component which supports a component of the boundary.  Then it can meet at most one other irreducible component $C_2$, at a single point.  Since $C_2$ gains a non-zero conductor component in the normalization, it can meet at most one other component $C_3$ at a single point, and $C_3$ is disjoint from $C_1$.  The argument continues to produce the required chain.  Since normalization produces a non-zero conductor (or boundary) on each component, we must have a chain of curves $\left( C_{\overline{k}}\right)_{\red} \cong \bP^1$ as claimed.
Hence, $K_X+\Delta$ is semiample on all irreducible components.
Thus we may conclude by \cite[Cor 2.9]{KeelBasepointFreenessForNefAndBig} and induction on the number of components.
\end{proof}

\begin{theorem}\label{abundance}
Let $(X,\Delta)$ be a log canonical pair of dimension $2$, projective and surjective over $T$ with $\mathbb{Q}$-boundary, and assume  in addition that that $T$ is the spectrum of a local ring with positive residue characteristic.  If $K_X+\Delta$ is nef over $T$, then it is semiample over $T$.
\end{theorem}
\begin{proof}
By \cite{TanakaAbundanceImperfectFields} we may assume that $X$ is surjective over $T$ with $\dim(T)>0$, and by \autoref{thm:surface-bpf-theorem} we may assume that $\dim(T)=1$. We may replace $T$ by its normalization to assume that it is a {spectrum of a} DVR of positive residue characteristic.  By taking a dlt modification, we may assume that $(X,\Delta)$ is $\mathbb{Q}$-factorial and dlt.  

We first deal with the case where $K_X+\Delta$ is big by adapting the argument of \cite[Theorem 1.1]{waldron2017lmmp} to the two dimensional case.
Firstly, $(K_X+\Delta)|_{X_{\mathbb{Q}}}$ is semiample since $\dim X_{\mathbb{Q}}=1$ {(here it is crucial that $T$ is a spectrum of a DVR, cf.\ Remark \ref{remark:divisors-of-unexpected-dimension3})}, in which case abundance is straightforward. 
So by \autoref{thm:mixed-characteristic-Keel} it is enough to show that $(K_X+\Delta)|_{\mathbb{E}(K_X+\Delta)}$ is semiample.  
Run a $(K_X+\Delta-\varepsilon\rddown{\Delta})$-MMP, with scaling of $\rddown{\Delta}$.  By taking $\varepsilon$ sufficiently small, we may assume that this only contracts $(K_X+\Delta)$-trivial curves, and also that $K_X+\Delta-\varepsilon\rddown{\Delta}$ is big.  Once the MMP terminates, we obtain $\psi:X\to Y$, such that $\psi^*(K_Y+\Delta_Y)=K_X+\Delta$, and  $(Y, \Delta_Y-\varepsilon \rddown{\Delta_Y})$ is klt.  It follows from the base-point free theorem {\autoref{thm:surface-bpf-theorem})} that $K_Y+\Delta_Y-\varepsilon\rddown{\Delta_Y}$ is semiample.   
As $Y$ is a surface, every irreducible component of $\mathbb{E}(K_Y+\Delta_Y)$ is one dimensional, and a curve {$C$} is in $\mathbb{E}(K_Y+\Delta_Y)$ if and only if $(K_Y+\Delta_Y)|_C\equiv 0$. 
By construction, $Y$ contains no $(K_Y+\Delta_Y)$-trivial curves which intersect $\rddown{\Delta_Y}$ positively, and so every connected component of $\mathbb{E}(K_Y+\Delta_Y)$ is either contained in $\rddown{\Delta_Y}$ or disjoint from it.

Suppose first that $E$ is a connected component of $\mathbb{E}(K_{Y}+\Delta_{Y})$ which is completely disjoint from $\rddown{\Delta_{Y}}$.  Then $(K_{Y}+\Delta_{Y})|_E\sim_{\mathbb{Q}}(K_{Y}+\Delta_{Y}-\varepsilon\rddown{\Delta_{Y}})|_E$, which as noted earlier is semiample by the base-point free theorem.  This implies that $(K_{Y}+\Delta_{Y})|_E$ is semiample.  
On the other hand, if $E$ is a connected component of $\mathbb{E}(K_{Y}+\Delta_{Y})$ which is contained entirely in $\rddown{\Delta_{Y}}$,  then if $K_E+\Delta_E=(K_{Y}+\Delta_{Y})|_E$, we have that  $(E, \Delta_{E})$ is a semi-log canonical pair by \cite[Corollary 3.35]{KollarKovacsSingularitiesBook} and so $(K_{Y}+\Delta_{Y})|_{E}$ is semiample by adjunction and \autoref{slc-curve}.

{Assume now that $K_X+\Delta$ is not big.  Since we may assume as above that $T$ is the spectrum of a DVR, the semiampleness now
follows from \cite[Lemma 2.17]{CasciniTanaka2020}.}
\end{proof}

The following result on descending nef divisors is an adaptation of \cite[Lemma 5.6]{BW17} and \cite[Proposition 2.1]{kawamata_pluricanonical_1985}.

\begin{lemma}\label{lem:BW_5.6}
	Let $f:X\to T$ be a projective and surjective contraction between normal  integral schemes over $R$. 
		Let $L$ be {a $\bQ$-Cartier  $\bQ$-divisor} on $X$, nef over $T$, such that $L|_F$ is semiample, for the generic fiber $F$ of $f$. 
		Assume $\dim X\leq 3$.  Then there exists a commutative diagram 
	\[
        \xymatrix{ X'\ar[r]^\phi\ar[d]_{f'}& X\ar[d]^{f}\\
	    Z\ar[r]^{\psi} & T}
	\]
	with $\phi $ and $\psi$ projective and $\phi$ birational, where $f'$ agrees with the map induced by $\phi^*L$
	over the generic point of $T$, and with  {$\bQ$-Cartier $\bQ$-}divisor $D$ on $Z$ {satisfying} $\phi^*L\sim_{\mathbb{Q}}f'^*D$.
	\end{lemma}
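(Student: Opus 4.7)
The plan is to adapt Kawamata's classical descent argument (cf.\ \cite[Proposition 2.1]{kawamata_pluricanonical_1985} and \cite[Lemma 5.6]{BW17}) to the relative setting, using the base point free / semi-ampleness results for dimension at most three established earlier in this section. The main idea is to construct $Z$ from the fibration defined by $L|_F$ on the generic fiber, spread this out over all of $T$, and then show that $\phi^*L$ descends globally, not just generically.

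First, since $L|_F$ is semi-ample on the generic fiber $F$, we fix $m > 0$ sufficiently divisible so that $mL|_F$ is a base-point-free Cartier divisor. This defines a contraction $g \colon F \to Z_F$ to a normal projective scheme $Z_F$ over $\Spec K(T)$, together with an ample Cartier divisor $D_F$ on $Z_F$ satisfying $mL|_F \sim g^*D_F$. We spread $g$ out to a rational map $X \dashrightarrow Z$ over $T$ by letting $Z$ be the normalization of the closure (in some fixed projective compactification over $T$) of the image of $g$. Resolving the indeterminacy of this rational map by taking a projective birational morphism $\phi \colon X' \to X$ (which exists by \autoref{proj-resolutions} since $\dim X \leq 3$ and after possibly replacing $Z$ by a further modification), we obtain a commutative diagram together with $\psi \colon Z \to T$ and $f' \colon X' \to Z$ projective such that $f'$ agrees with $g \circ \phi|_{X'_\eta}$ over the generic point $\eta$ of $T$. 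By construction, $\phi^*L|_{X'_\eta} \sim_\bQ \tfrac{1}{m} f'^*(mD_F)|_{X'_\eta}$.

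Next, I extend $D_F$ to a $\bQ$-Cartier $\bQ$-divisor $\widetilde{D}$ on $Z$; this may require first passing to a $\bQ$-factorialization of $Z$ (using \autoref{cor:dlt_modification} applied to the pair $(Z,0)$ or a suitable boundary, since $Z$ has dimension at most three), and correspondingly enlarging $X'$ by base change. Setting $E := \phi^*L - \tfrac{1}{m} f'^*(m\widetilde{D})$, which is a $\bQ$-Cartier $\bQ$-divisor on $X'$ satisfying $E|_{X'_\eta} \sim_\bQ 0$, the goal becomes to show that $E \sim_\bQ f'^* D''$ for some $\bQ$-Cartier $\bQ$-divisor $D''$ on $Z$; then the desired divisor is $D = \widetilde{D} + D''$. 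Since $E|_{X'_\eta} \equiv 0$ and $\phi^*L$ is nef over $T$, the restriction $E$ to any curve contracted by $f'$ that lies over $\eta$ is zero. What remains is to extend this numerical triviality to all curves contracted by $f'$, i.e., to show that $E \equiv_{Z} 0$.

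The main obstacle is exactly this last extension and the subsequent descent: proving $E \equiv_{f'} 0$ globally, and then deducing $E \sim_\bQ f'^* D''$ for a $\bQ$-Cartier $\bQ$-divisor $D''$ on $Z$. For the first part, I plan to use that $\phi^*L$ is $f'$-nef (inherited from being nef over $T$, since every fiber of $f'$ maps to a point in $T$), combined with the fact that a nef $\bQ$-Cartier divisor which is numerically trivial on one fiber of a projective morphism is numerically trivial on all fibers lying over the same irreducible component of $T$, provided the fibers are in the same numerical equivalence class; one can argue this using flattening of $f'$ followed by the negativity lemma (\autoref{lem:negativity}) applied to the $\bQ$-divisor $E$ which is numerically trivial generically and $f'$-nef. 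For the descent step, since $\dim X' \leq 3$, one applies the relative base point free theorem (\autoref{thm:bpf}, or \autoref{corollary:bpf-theorem-no-standard-coefficients}) to $E + k f'^* H$ for an ample $H$ on $Z$ and $k \gg 0$ to obtain semi-ampleness, and then uses the numerical triviality on fibers to conclude that $E$ is the pullback of a $\bQ$-Cartier divisor on $Z$ up to $\bQ$-linear equivalence.
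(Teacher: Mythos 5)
Your opening move coincides with the paper's: replace $X$ birationally so that the semi-ample fibration of $L|_F$ on the generic fibre spreads out to a projective surjective morphism to some $Z$ over $T$. After that, however, your argument has a genuine gap precisely at the step you correctly identify as the remaining one: upgrading the triviality of $E=\phi^*L-\tfrac{1}{m}f'^*(m\widetilde D)$ from the fibres of $f'$ lying over the generic point of $T$ to \emph{all} fibres of $f'$, and then descending. The negativity lemma (\autoref{lem:negativity}) is a statement about projective \emph{birational} morphisms and says nothing about a fibration $f'\colon X'\to Z$ of positive relative dimension, so it cannot deliver $E\equiv_{Z}0$; and an $f'$-nef divisor that is numerically trivial on the generic fibre of $f'$ is in general \emph{not} numerically trivial on special fibres unless one first arranges $f'$ to be equidimensional over a locally factorial base — this is exactly why the flattening and the regularity of $Z$ are not optional decorations but the substance of the proof. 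Your fallback for the final descent, applying \autoref{thm:bpf} to $E+kf'^*H$, also fails: that theorem requires the divisor to be nef and \emph{big}, whereas $E+kf'^*H$ is big over $T$ only when $f'$ is generically finite, i.e.\ only when $L$ itself is big — which is the one case where \autoref{lem:BW_5.6} is not needed. Separately, invoking \autoref{cor:dlt_modification} to $\bQ$-factorialize $Z$ imports klt hypotheses and the restriction on residue characteristics $2,3,5$ that are absent from the statement of the lemma; since $\dim Z\le 3$, one should simply resolve $Z$ (\autoref{proj-resolutions}), after which every Weil divisor is Cartier.

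For comparison, the paper's proof is much shorter and does all of your steps 2--5 in one stroke: after producing $X\to Z'$, it takes a flattening in the sense of Raynaud--Gruson \cite{rg71}, replaces the new base by a resolution $Z$ (so $Z$ is regular) and $X'$ by the normalization of the dominant component of the fibre product (so $f'\colon X'\to Z$ remains equidimensional), and then cites \cite[Lemma 2.17]{CasciniTanaka2020}: a nef $\bQ$-Cartier divisor that is $\bQ$-linearly trivial on the generic fibre of an equidimensional projective surjection onto a regular base is $\bQ$-linearly equivalent to a pullback. That descent lemma is the missing engine in your proposal; without it (or an equivalent argument exploiting equidimensionality over a regular base, e.g.\ that $f'_*\sO_{X'}(m\phi^*L)$ is an invertible sheaf), the relative numerical triviality and the descent do not follow from the tools you name.
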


\begin{proof}
Up to replacing $X$ by a projective birational cover, we may pick a projective surjective morphism $X \to Z'$ to a normal projective scheme $Z'$ over $T$ such that its restriction to the generic fibre is the fibration defined by $L|_F$.

    Now take  a {flattening} (see \cite[Theorem 5.2.2]{rg71}):
   
    \[
        \xymatrix{ X''\ar[r]^{\phi''}\ar[d]_{f''}& X\ar[d]^{f}\\
        Z''\ar[r]^\pi & Z'.}
    \]
	Here $f''$ is flat {(hence equidimensional, see \cite[Tag 0D4J]{stacks-project})}, and $\phi''$ and $\pi$ are birational.  
    We can then replace $Z''$ with a resolution $Z$ and $X''$ with the normalization $X'$ {of the irreducible component  of} $X''\times_{Z''}Z$
    {which is dominant over $Z$} to assume that $Z$ is regular and $X'$ is normal. {Note  that $f'$ may not be flat, but it stays equidimensional.}  Denote $\phi:X'\to X$ and $f':X'\to Z$. {By \cite[Lemma 2.17]{CasciniTanaka2020},
we get that  $\phi^* L\sim_{\mathbb{Q}} f'^*D$ for some $\bQ$-divisor $D$ on $Z$.}
\qedhere
      
	\end{proof}

\begin{lemma}\label{lem:fiberwise-semiampleness}
Let $X$ be a three-dimensional normal integral scheme, projective over $T$.  Suppose $L$ is a nef $\bQ$-Cartier $\mathbb{Q}$-divisor which is not big over $T$ and such that $L|_{X_\mathbb{Q}}$ is semiample (if $X_{\bQ}$ is not empty), and $L|_G$ is semiample where $G$ is {the} fiber over the generic point of the image of $X$ in $T$.  
 Then $L$ is endowed with a map $f:X\to V$ over $T$ to an algebraic space $V$ proper over $T$.
 
 Moreover if $L|_F\sim_{\mathbb{Q}}0$ for every fiber $F$ of $f$,  then $L$ is semiample over $T$.
\end{lemma}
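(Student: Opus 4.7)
The plan is to reduce the problem to the study of a nef divisor on a scheme of absolute dimension at most two, and then apply the surface-level results from earlier in the paper. Concretely, I would apply \autoref{lem:BW_5.6} to $X\to W$, where $W$ is the image of $X$ in $T$, using the hypothesis that $L|_G$ is semi-ample. This produces a commutative diagram with $\phi\colon X'\to X$ projective birational, $f'\colon X'\to Z$ projective onto a normal integral scheme $Z$, and a $\bQ$-Cartier $\bQ$-divisor $D$ on $Z$ with $\phi^*L\sim_{\bQ} f'^*D$; by construction, $f'$ induces on the generic fibre over $W$ the contraction defined by $L|_G$. Since $L$ is not big over $T$, this contraction has positive relative dimension, so $\dim Z<3$. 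After replacing $T$ by the image of $Z$, I may assume $Z\to T$ is surjective. Then $D$ is nef on $Z$, $D|_{Z_{\bQ}}$ is semi-ample (descending from $L|_{X_{\bQ}}$ via $f'$), and $D$ is ample on the generic fibre of $Z\to T$ by construction.

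To establish the existence of $f\colon X\to V$, I would show $D$ is EWM over $T$. When $\dim Z\leq 1$ this is automatic, since nef divisors on curve components are semi-ample, hence EWM; when $\dim Z=2$, I would apply \autoref{lem:auxiliary-for-EWM-bpf-theorem} to obtain an EWM morphism $g\colon Z\to V$. Any $\phi$-exceptional integral subscheme $E\subseteq X'$ satisfies $\phi^*L|_E\sim_{\bQ} 0$, hence $f'^*D|_E\sim_{\bQ} 0$, hence $E$ is contracted by $g\circ f'$. Combined with the connectedness of the fibres of $\phi$ after Stein factorization, a rigidity argument then yields a factorization of $g\circ f'$ through $\phi$, producing the required morphism $f\colon X\to V$.

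For the second assertion, assume $L|_F\sim_{\bQ} 0$ for every fibre $F$ of $f$. Pulling back via $\phi$ and using the surjectivity of $f'$ on fibres, this forces $D|_{g^{-1}(v)}\sim_{\bQ} 0$ for every $v\in V$. Applying \cite[Lemma 2.17]{CasciniTanaka2020} to the Stein factorization of $g$, I would obtain a $\bQ$-Cartier $\bQ$-divisor $M$ on $V$ with $D\sim_{\bQ} g^*M$. The EWM characterization of $g$ implies that every positive-dimensional integral subscheme $V_0\subseteq V$ lifts to a subscheme of $Z$ on which $D$ is big, so $M^{\dim V_0}\cdot V_0>0$ for every such $V_0$; by \autoref{lem:Nakai-Moishezon}, $M$ is ample on $V$. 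Consequently $\phi^*L\sim_{\bQ} (g\circ f')^*M$ is semi-ample on $X'$, and since $\phi$ is birational with $\phi_*\sO_{X'}=\sO_X$, this is equivalent to $L$ being semi-ample on $X$.

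The main obstacles I anticipate are (i) the rigidity argument used to descend $g\circ f'\colon X'\to V$ through the birational morphism $\phi$, which requires the fibres of the Stein factorization of $\phi$ to be connected and $\phi^*L$-numerically trivial—both of which hold by construction, but must be handled cleanly; and (ii) applying Cascini--Tanaka-type descent when the EWM target $V$ is only an algebraic space, which I would resolve by first passing to the Stein factorization of $g$ to land in a scheme. A minor subtlety is the case analysis on $\dim Z$, but the case $\dim Z\leq 1$ is straightforward and the surface case is handled by the lemmas cited.
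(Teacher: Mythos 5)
Your construction of the map $f\colon X\to V$ follows the paper's route almost exactly: reduce via \autoref{lem:BW_5.6} to a divisor $D$ on a lower-dimensional $Z$ with $\phi^*L\sim_{\bQ}f'^*D$, handle $\dim Z\le 1$ trivially and $\dim Z=2$ by \autoref{lem:auxiliary-for-EWM-bpf-theorem}, then descend the EWM map through the birational $\phi$ by rigidity. (One small overstatement: a $\phi$-exceptional $E$ satisfies only that $\phi^*L|_E$ is \emph{not big} — it is pulled back from the lower-dimensional $\phi(E)$ — not that $\phi^*L|_E\sim_{\bQ}0$; but non-bigness is all the rigidity argument needs, so this is harmless.)

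The second half has a genuine gap at the descent step. You want to produce $M$ on $V$ with $D\sim_{\bQ}g^*M$ by applying \cite[Lemma 2.17]{CasciniTanaka2020} to the Stein factorization of $g$. But that lemma (as used in this paper, in \autoref{lem:auxiliary-for-EWM-bpf-theorem}) concerns divisors numerically trivial on the fibres of a fibration over a \emph{regular one-dimensional} base; here $g\colon Z\to V$ is a \emph{birational} contraction of a surface onto a possibly singular algebraic space, and the question of whether a $g$-numerically-trivial nef divisor descends along such a contraction is precisely the hard content — it is equivalent to $D|_{\mathbb{E}(D)}$ being torsion plus Keel's theorem, not something Cascini--Tanaka's fibration lemma delivers. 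The paper's argument closes this correctly and you should adopt it: after localizing $T$ at a closed point of positive characteristic (legitimate because $L|_{X_{\bQ}}$ is assumed semi-ample), observe that $D$ is nef and big on $Z$, so $\mathbb{E}(D)$ is a finite union of curves contracted to points of $T$; their preimages in $X$ lie in fibres of $f$, so the hypothesis $L|_F\sim_{\bQ}0$ together with \cite[Lemma 2.11(3)]{CasciniTanaka2020} (geometric connectedness of the fibres of $f^{-1}(\mathbb{E}(D))\to\mathbb{E}(D)$) gives that $D|_{\mathbb{E}(D)}$ is semi-ample; then \autoref{thm:mixed-characteristic-Keel} yields semi-ampleness of $D$, hence of $L$. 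Your subsequent Nakai--Moishezon step is then unnecessary — once $D$ is semi-ample its fibration is $g$ and ampleness of the descended divisor is automatic.
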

\begin{proof}

{
First we may replace $T$ {by the image of $X$ in $T$} to assume that $X\to T$ is {a surjective contraction}.  Let $g:X'\to Z$ be the morphism given by \autoref{lem:BW_5.6}.  Replacing  $X$ birationally (which we can do as $X$ is normal so $X' \to X$ is a contraction)
we may assume that $X=X'$, so that there is a {$\bQ$-}divisor $D$ on $Z$ such that $L\sim_{\mathbb{Q}}g^*D$.}

 To show that $L$ is EWM, it suffices to show that $D$ is EWM.   If $\dim(Z)=2$, then $D$ is EWM by \autoref{lem:auxiliary-for-EWM-bpf-theorem}.  If $\dim(Z)=1$
 then we may assume that $\dim T = 0$ or  $Z=T=\Spec(R)$ for $R$ a Dedekind domain, and
 then there is nothing to prove.

For {the second part of the lemma}, first localize $T$ at a closed point of positive characteristic, which we may do by semiampleness of $L|_{X_{\mathbb{Q}}}$.  Let $f:X\to V$ be the map associated to $L$, and assume that $L$ is semiample on every fiber of $f$.    It is enough to show that the divisor $D$ on $Z$ is semiample.  Furthermore, we may assume $\dim(Z)=2$ otherwise we are done.  As $D$ is big and EWM on $Z$, $\mathbb{E}(D)$ is a finite set of curves contracted to points on $T$,
 whose pre-images on $X$ are therefore contained in fibers of $f$. {Hence $f^*D|_{f^{-1}(\mathbb{E}(D))}$ is semiample, and so is $D|_{\mathbb{E}(D)}$ by \cite[Lemma 2.11(3)]{CasciniTanaka2020} as $f^{-1}(\mathbb{E}(D)) \to \mathbb{E}(D)$ has geometrically connected fibers.} We are done by 
 \autoref{thm:mixed-characteristic-Keel}. 
\end{proof}

We now prove the base point free theorem.

\begin{theorem}\label{thm:MMP_bpf}
Let $(X,B)$ be a three-dimensional $\mathbb{Q}$-factorial klt pair with $\mathbb{R}$-boundary admitting a projective morphism $f:X\to T$, such that the image of $f$ has positive dimension, and none of the residue characteristics of $T$ are $2$, $3$ or $5$.

Suppose that $L$ is an $f$-nef $\mathbb{Q}$-divisor such that $L-(K_X+B)$ is $f$-big and $f$-nef.  Then $L$ is $f$-semiample.
\end{theorem}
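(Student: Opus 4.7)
If $L$ is $f$-big, the statement is exactly \autoref{thm:bpf}, so assume $L$ is not $f$-big. Using $\bQ$-factoriality of $X$ and Kodaira's lemma applied to the big and nef $\bQ$-divisor $L-(K_X+B)$, I perturb $B$ to assume that $B$ is a $\bQ$-divisor, $(X,B)$ is klt, and $L-(K_X+B)$ is $f$-ample. We may also pass to the Stein factorisation of $f$ and replace $T$ by $\overline{f(X)}$ to assume $f$ is a surjective contraction, so that $\dim T \ge 1$.

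The plan is to apply \autoref{lem:fiberwise-semiampleness} to the line bundle $L$. Its hypotheses require that $L|_{X_\bQ}$ and $L|_G$ be semi-ample, where $G$ is the fibre of $f$ over the generic point of $T$. The first is supplied by the characteristic-zero base-point-free theorem \autoref{prop:char_zero_bpf} applied to $(X_\bQ, B|_{X_\bQ})$, since $(L-(K_X+B))|_{X_\bQ}$ is still nef and big. For the second, note that $\dim G \le 2$ because $\dim T \ge 1$, and so $L|_G$ is semi-ample by the surface base-point-free theorem \autoref{thm:surface-bpf-theorem} after passing to the normalisation and using adjunction. \autoref{lem:fiberwise-semiampleness} then endows $L$ with an EWM contraction $\phi\colon X \to V$ over $T$ and reduces $f$-semi-ampleness of $L$ to verifying $L|_F \sim_\bQ 0$ on every fibre $F$ of $\phi$. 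Each such $F$ has $\dim F \le 2$ and $L|_F \equiv 0$.

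To upgrade $L|_F \equiv 0$ to $L|_F \sim_\bQ 0$, my plan is to pass to a $\bQ$-factorial dlt modification $\pi\colon (\widetilde X,\widetilde B)\to (X,B)$ furnished by \autoref{cor:dlt_modification} chosen so that (some irreducible component of) $\pi^{-1}(F)_{\mathrm{red}}$ is a divisor $\widetilde F$ on $\widetilde X$, and to apply dlt adjunction to produce a two-dimensional dlt pair $(\widetilde F,\widetilde B_F)$ with $K_{\widetilde F}+\widetilde B_F = (K_{\widetilde X}+\widetilde B)|_{\widetilde F}$. Since $\pi^*L - (K_{\widetilde X}+\widetilde B) = \pi^*(L-(K_X+B))$ is nef and big, so is its restriction to $\widetilde F$, and the surface base-point-free theorem \autoref{thm:surface-bpf-theorem} (with \autoref{prop:char_zero_bpf} in residue characteristic zero) combined with the surface abundance theorem \autoref{abundance} then forces $\pi^*L|_{\widetilde F}$ to be semi-ample; together with $\pi^*L|_{\widetilde F} \equiv 0$ this gives $\pi^*L|_{\widetilde F} \sim_\bQ 0$, and hence $L|_F \sim_\bQ 0$ after pushing down. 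The main obstacle will be the adjunction step: fibres $F$ of $\phi$ can have arbitrary positive codimension and need not be irreducible, so making $F$ appear as a divisor on a well-chosen dlt model of $(X,B)$ requires care. If this route proves too fragile, an alternative I would attempt first is to run a $(K_X+B)$-MMP over $V$; every extremal ray over $V$ satisfies $L\cdot R = 0$ and so is $(K_X+B)$-negative by ampleness of $L-(K_X+B)$, the flips exist by \autoref{thm:full_flips}, the contractions by \autoref{cor:bir-contraction}, and the process terminates by \autoref{prop:psef_termination} together with \autoref{thm:special-termination}, reducing to the case $\dim V = \dim X$ which is covered by \autoref{thm:bpf}.
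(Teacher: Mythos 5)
Your reduction is the paper's: handle the big case with \autoref{thm:bpf}, perturb so that $L-(K_X+B)$ is ample, and use \autoref{lem:fiberwise-semiampleness} together with \autoref{prop:char_zero_bpf} and the lower-dimensional base point free theorems to reduce to showing $L|_F\sim_{\bQ}0$ on each fibre $F$ of the EWM map $X\to V$. The gap is exactly at the step you flag as "the main obstacle", and it is not a technicality that can be patched by choosing the dlt model carefully. A dlt modification of the \emph{klt} pair $(X,B)$ as in \autoref{cor:dlt_modification} cannot produce a divisor $\widetilde F$ over $F$ carrying coefficient one in the boundary: every exceptional divisor of a crepant model of a klt pair has log discrepancy $>0$, so no component of $\pi^{-1}(F)$ can be a log canonical place, and the adjunction $K_{\widetilde F}+\widetilde B_F=(K_{\widetilde X}+\widetilde B)|_{\widetilde F}$ with $(\widetilde F,\widetilde B_F)$ dlt is simply unavailable. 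The paper's resolution is to \emph{enlarge the boundary}: pick a Cartier divisor $D$ on $V$ through the point $v$ supported in characteristic $p$, set $k=\mathrm{lct}(X,B+A,f^*D)$, tie-break so that $(X,B+A+kf^*D)$ has a unique lc place $S$, and take a dlt modification of \emph{that} pair so $S$ appears with coefficient one. Even then $F$ only \emph{meets} $S$; the second key move is to run a $(K_Y+\Delta_Y-S)$-MMP over $V$ with scaling of $S$ (which terminates by \autoref{prop:psef_termination} because $K_Y+\Delta_Y-S\equiv_V -S$ is equivalent to an effective divisor $af^*D-S$) until $-S$ is nef over $V$, which forces the whole fibre $F_{Y'}$ to lie \emph{inside} $S'$; only then do adjunction, \autoref{cor.ThreefoldNormalityOfS} and surface abundance (\autoref{abundance}) apply. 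Both of these ingredients — the lct/tie-breaking construction of an lc place containing the fibre, and the MMP that swallows the fibre into it — are absent from your argument.

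Your fallback also fails. Since $L$ is not big and $L\equiv_V 0$, we have $K_X+B\equiv_V -(L-(K_X+B))$, which is anti-ample over $V$; hence $K_X+B$ is \emph{not} pseudo-effective over $V$, \autoref{prop:psef_termination} does not apply, and the MMP over $V$ can never terminate with a minimal model, so there is no reduction to "$\dim V=\dim X$". Terminating instead with a Mori fibre space would require \autoref{thm:MFS} (and fibre-type contractions, which \autoref{cor:bir-contraction} does not provide), but \autoref{thm:MFS} is proved in Step 6 using the cone theorem, whose proof via \autoref{lem:nef_threshold} invokes \autoref{thm:MMP_bpf} itself — so that route is circular.
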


\begin{proof}
{By \autoref{thm:bpf}}, it remains to prove the case where $L$ is not big.  By a small perturbation we may assume that $B$ is a $\mathbb{Q}$-boundary, and that $L-(K_X+B)$ is $f$-ample.

By the base point free theorem in dimension $1$ and $2$, $L|_G$ is semiample, where $G$ is the fiber over the generic point of $\mathrm{Im}(f)$.
By  \autoref{lemma:add_ample} we may choose $0\leq A\sim_{\mathbb{Q}}L-(K_X+B)$ such that $(X,B+A)$ is klt, and it suffices to show that $K_X+B+A$ is semiample.  By \autoref{lem:fiberwise-semiampleness} {and \autoref{prop:char_zero_bpf}}, $K_X+B+A$ is EWM over $T$, and let $g:X\to V$ be the associated map.  Note that in particular, $K_X+B+A\equiv_V 0$.  By {the second part of} \autoref{lem:fiberwise-semiampleness}, it is enough to show that $L|_F\sim_{\mathbb{Q}}0$ for every fiber $F$ of $g$.   This is satisfied over the generic point of $V$ by the base point free theorem in lower dimensions and furthermore holds over the points of characteristic zero by \autoref{prop:char_zero_bpf}.  So we may fix a point $v\in V$ of positive residue characteristic, not equal to the generic point, for which we must test semiampleness on the fiber $F$ over $v$. 

Let $h:V'\to V$ be an \'etale cover of a neighbourhood of $v\in V$ by an affine scheme, and fix $v'\in h^{-1}(v)$.  Since $F_{v'}$ is only a base change of $F$ by an extension of the ground field, it is enough to check semi-ampleness of $L|_{F_{v'}}$.  Hence after performing a small $\mathbb{Q}$-factorialization of the base change $X\times_V V'$, we may assume that $V$ is an affine scheme, and furthermore by passing to the localization at $v$ we may assume it is the spectrum of a local ring with positive residue characteristic.

Fix a Cartier divisor $D$ on $V$ which contains the point $v$ (which we may do because {$v$} is not the generic point), and furthermore that $D$ is of pure characteristic $p$.  It follows that $\Supp(f^*D)$ contains the fiber $F$. Note that if $X$ is not purely of characteristic $p$, we can just take $D=(p)$.

Let $k=\mathrm{lct}(X,B+A,g^*D)\in\mathbb{Q}$.  After shrinking $V$ and replacing $k$  we may assume that all log canonical centers of $(X,B+A+kg^*D)$ intersect $F$.
 After tie breaking by changing $A$ up to linear equivalence, we may assume that $(X,B+A+kg^*D)$ has only one log canonical place. Note that to perform the tie breaking argument of \cite[Section 8.7]{CortiFlipsFor3FoldsAnd4Folds}, it is enough to have a log resolution with an ample exceptional divisor \autoref{proj-resolutions} and log Bertini for a sufficiently ample divisor \autoref{log_bertini}, which holds in complete generality in our setting.  Let $\pi \colon Y\to X$ be a $\mathbb{Q}$-factorial dlt modification of $(X,B+A+kg^*D)$, see \autoref{cor:dlt_models}, and let $K_Y+\Delta_Y=\pi^*(K_X+B+A+kg^*D)$, where we have $\rddown{\Delta_Y}:=S$ irreducible and therefore $K_Y+\Delta_Y$ is plt. The divisor $S$ is not disjoint from $F_Y$, the fiber of $Y\to V$ over $v$.    
Since $\pi$ has connected fibres, so does $\pi|_{F_Y}:F_Y\to F$ since this is set theoretically a union of fibres of $\pi$.  Hence by \cite[2.11(3)]{CasciniTanaka2020}, it is enough to show that $(K_Y+\Delta_Y)|_{F_Y}$ is semiample. Furthermore, the converse is also true since semi-ampleness is preserved under pullback.  We will use this trick repeatedly in what follows: if we take a morphism with connected fibres for which $K_Y+\Delta_Y$ descends or pulls back, it is enough to show semi-ampleness of $L$ restricted to the new fiber.

Run a $(K_Y+\Delta_Y- S)$-MMP over $V$ with scaling of $S$ (which terminates by \autoref{prop:psef_termination} { as $K_Y+\Delta_Y - S \equiv_V -S$ is pseudo-effective over $V$ being equivalent to an effective $\bQ$-divisor $af^*D - S$ for $a \gg 0$})
to reach $Y'$ on which $-S$ is nef over $V$.  By construction this cannot have contracted $S$, as each step intersects it positively.
Again, the fiber $F_{Y'}$ over $v\in V$ is not disjoint from $S$.  But any curve $\Gamma$ in $F_{Y'}$ satisfies $S\cdot\Gamma\leq 0$ and so  $F_{Y'}$ is either contained in $S$ or disjoint from it.  However we know that it cannot be disjoint, and so $F_{Y'}\subset S$.  The divisor $K_Y+\Delta_Y$ is trivial for every step in the prior MMP since $K_Y+\Delta_Y\equiv_V 0$, and so it descends to every step.  
As a result, by repeatedly applying \cite[2.11(3)]{CasciniTanaka2020} at every step of the MMP as explained above, it is enough to show that  $(K_{Y'}+\Delta_{Y'})|_{F_{Y'}}\sim_{\mathbb{Q}}0$, and for this it is enough to see that $(K_{Y'}+\Delta_{Y'})|_{S'}$ is semiample, but this follows from  \autoref{abundance} and \autoref{cor.ThreefoldNormalityOfS}, since $(Y', \Delta_{Y'})$ is plt as it has the same non-klt places as $(Y,\Delta)$: which are $S'$ and $S$ respectively.
\end{proof}

{The proof of the base point free theorem for $\bR$-line bundles will be given in the next section (\autoref{thm:bpf_for_R_boundary}) as it requires the cone theorem.}

\subsection{Step 6: Cone theorem and Mori fiber spaces}

The first goal of this section is to prove the full cone theorem:

\begin{theorem} \label{thm:full-cone-theorem}
Let $(X,\Delta)$ be a  three-dimensional $\mathbb{Q}$-factorial dlt pair with $\bR$-boundary projective and surjective over $T$, which has positive dimension and no residue fields of characteristic $2$, $3$ or $5$.  Then there exists a countable collection {of} curves  $\{\Gamma_i\}$ such that 
\begin{enumerate}
    \item\label{itm:new_cone_rays} 
\[
\overline{\mathrm{NE}}(X/T) = \overline{\mathrm{NE}}(X/T)_{K_X+\Delta\geq 0} + \sum_i\mathbb{R} [\Gamma_i],
\]
\item\label{itm:new_cone_accumulate} The rays $\mathbb{R}[\Gamma_i]$ do not accumulate in the half space $(K_X+{\Delta})_{<0}$,
\item\label{itm:new_cone_bound}
For each $\Gamma_i$, \[-4d_{\Gamma_i}<(K_X+\Delta)\cdot \Gamma_i<0 \] where $d_{\Gamma_i}$ is such that for any Cartier divisor $L$ on $X$, we have $L\cdot \Gamma_i$  divisible by $d_{\Gamma_i}$.
\end{enumerate}
\end{theorem}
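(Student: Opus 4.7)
The plan is to reduce to the partial cone theorem \autoref{thm:keel_cone} and to invoke the base-point-free theorem \autoref{thm:MMP_bpf}, splitting on whether $K_X+\Delta$ is pseudo-effective over $T$. If $K_X+\Delta$ is pseudo-effective, then since $\dim T > 0$, non-vanishing in the relative setting (see \autoref{remark:non_vanishing_in_the_relative_setting}) produces an effective $\bR$-Cartier $\bR$-divisor $M$ with $K_X+\Delta \equiv_T M$. Applying \autoref{thm:keel_cone} to $(X,\Delta)$ directly yields the countable collection $\{\Gamma_i\}$ together with the cone decomposition \autoref{itm:new_cone_rays}, the non-accumulation \autoref{itm:new_cone_accumulate}, and the bound \autoref{itm:new_cone_bound}, the last one using that $(X,\Delta)$ is log canonical so the final clause of \autoref{thm:keel_cone}\autoref{item:bound} applies to every ray.

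Otherwise I would fix an $\bR$-ample $\bR$-divisor $H$ on $X/T$ and define the pseudo-effective threshold $\lambda := \inf\{\,t\geq 0 : K_X+\Delta+tH \text{ is pseudo-effective over } T\,\} > 0$. For each $n \geq 1$ I set $\lambda_n := \lambda + \tfrac{1}{n}$ and pick a general effective $\bR$-Cartier representative $H_n' \sim_{\bR} H$ such that $(X,\Delta+\lambda_n H_n')$ remains dlt and $\bQ$-factorial; this is possible via \autoref{lemma:add_ample} combined with the Bertini-type statement \autoref{thm.FinalBertini}. Since $K_X+\Delta+\lambda_n H$ is pseudo-effective, \autoref{thm:keel_cone} applied to $(X,\Delta+\lambda_n H_n')$ produces a countable collection $\{\Gamma_i^n\}$ with cone decomposition, non-accumulation in $(K_X+\Delta+\lambda_n H)_{<0}$, and bounds $-(K_X+\Delta+\lambda_n H)\cdot \Gamma_i^n \leq 4\, d_{\Gamma_i^n}$. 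Any $(K_X+\Delta)$-negative extremal ray $R$ of $\overline{\mathrm{NE}}(X/T)$ satisfying $\lambda\, H\cdot R < -(K_X+\Delta)\cdot R$ is then captured in some $\{\Gamma_i^n\}$, because for $n$ large enough $\lambda_n H \cdot R < -(K_X+\Delta)\cdot R$ and hence $R$ becomes $(K_X+\Delta+\lambda_n H)$-negative.

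The remaining ``mildly negative'' extremal rays $R$ satisfying $0 < -(K_X+\Delta)\cdot R \leq \lambda\, H\cdot R$ lie in a compact subset of the projectivization of $\overline{\mathrm{NE}}(X/T)$ cut out by $\{K_X+\Delta<0\}\cap \{K_X+\Delta+\lambda H \geq 0\}$. For each such $R$ I would apply the base-point-free theorem \autoref{thm:MMP_bpf} via \autoref{cor:bir-contraction} to construct an extremal contraction of $R$: produce a nef and big $\bQ$-Cartier divisor of the form $K_X+\Delta+A$ for some ample $\bQ$-divisor $A$, trivial precisely on $R$, and use its semi-ampleness to exhibit a contracted curve $\Gamma_R$ spanning $R$. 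Including these curves completes the countable collection $\{\Gamma_i\}$; non-accumulation in $(K_X+\Delta)_{<0}$ follows from non-accumulation at each threshold $\lambda_n$ given by \autoref{thm:keel_cone} together with compactness of the mildly negative region and the contractibility-based finiteness just obtained. Finally, the bound $-(K_X+\Delta)\cdot \Gamma_i < 4\, d_{\Gamma_i}$ should come from the adjunction argument of \autoref{thm:keel_cone}\autoref{item:bound}: realize each $\Gamma_i$ on the normalization of an appropriate component of $\lfloor \Delta \rfloor$ using dlt-ness, and apply the surface cone theorem \autoref{thm:surface_cone} directly at the level of $K_X+\Delta$ rather than at the perturbed level. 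The hard part will be the systematic capture and finiteness of the mildly negative rays, and the transfer of the bound from the perturbed pair back to $(X,\Delta)$ itself; both require careful use of the base-point-free theorem together with compactness in the projectivized cone.
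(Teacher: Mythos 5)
Your pseudo-effective branch is essentially fine (modulo the fact that \autoref{thm:keel_cone}(c) only bounds all but finitely many rays, whereas the statement here requires the bound for every $\Gamma_i$), but the non-pseudo-effective branch has a genuine gap, and it is exactly the part the paper's argument is built to solve. To handle a ``mildly negative'' extremal ray $R$ with $0<-(K_X+\Delta)\cdot R\le \lambda H\cdot R$, you propose to produce an ample $A$ with $K_X+\Delta+A$ nef and $(K_X+\Delta+A)^\perp=R$ and then contract via \autoref{cor:bir-contraction}. But the existence of such a supporting nef divisor is precisely what the cone theorem (non-accumulation plus rationality of the supporting face) provides; \autoref{cor:bir-contraction} takes it as a \emph{hypothesis}. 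Without already knowing that $R$ is isolated among negative extremal rays, you cannot build $A$, so the argument is circular. Compactness of the projectivized region $\{K_X+\Delta<0\}\cap\{K_X+\Delta+\lambda H\ge 0\}$ does not by itself rule out accumulation of extremal rays there, and the region is not even closed at the wall $K_X+\Delta=0$. Finally, the bound in (c) cannot be recovered by the adjunction argument of \autoref{thm:keel_cone}: that argument needs $K_X+\Delta\equiv_T M\ge 0$ and a component of $M$ on which the ray is negative; in the non-pseudo-effective case there is no such $M$, and a fiber-type ray need not be negative on any divisor at all (e.g.\ $\Delta=0$), so there is no surface to restrict to.

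The paper's route avoids all of this by first proving the nef threshold lemma (\autoref{lem:nef_threshold}): for any ample Cartier $H$, the threshold $\lambda$ at which $K_X+B+\lambda H$ becomes nef is rational with bounded denominator, and the supporting face contains a curve $\Gamma$ with $-4d_\Gamma\le (K_X+B)\cdot\Gamma<0$. Its proof splits on whether $L=K_X+B+\lambda H$ is big; the non-big case — the one your proposal cannot reach — is handled by taking the fibration given by \autoref{thm:MMP_bpf}, pulling back a Cartier divisor on its base supported in positive characteristic, passing to a dlt modification, and running MMPs until the relevant ray lives on a component of the reduced boundary where the surface cone theorem \autoref{thm:surface_cone} supplies the bound. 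Parts (a) and (b) then follow \emph{formally} from \autoref{lem:nef_threshold} by the standard argument of \cite[Theorem 3.15]{KollarMori}, and only \emph{afterwards} does one use non-accumulation to manufacture supporting divisors $K_X+\Delta_n+A_n$ for a given ray and apply \autoref{lem:nef_threshold} once more to get (c), with a limiting argument in $n$ to pass from klt $\bQ$-boundaries to the dlt $\bR$-boundary. If you want to repair your proposal, you should prove a statement of the strength of \autoref{lem:nef_threshold} first rather than trying to contract the mildly negative rays directly.
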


The cone theorem is a formal consequence of \autoref{lem:nef_threshold}, our proof of which is inspired by the flip case of \cite[Lemma 3.2]{BW17}.  {We are unable to apply the other cone theorem arguments of \cite[Section 3]{BW17} directly due to the possibility that we work over a local base where general closed fibers need not exist.}

\begin{lemma}\label{lem:nef_threshold}
Let $(X,B)$ be a $\mathbb{Q}$-factorial {klt} threefold with $\mathbb{Q}$-boundary, projective and surjective over $T$ with $\dim(T)>0$ and having no residue fields of characteristic $2$, $3$ or $5$. Suppose that $K_X+B$ is not nef.  Then there exists an integer $n$ depending only on $(X,B)$ such that if $H$ is an ample Cartier divisor, and  \[\lambda=\min\{t\geq 0\mid K_X+B+tH\mathrm{\ is\ nef}\}\]
then $\lambda=\frac{n}{m}$ for some natural number $m$.  

Furthermore, there is a $(K_X+B+\lambda H)$-trivial curve  $\Gamma$ satisfying 
\[-4d_\Gamma\leq (K_X+B)\cdot \Gamma<0\]
{where $d_{\Gamma}$ is such that for any Cartier divisor $L$ on $X$, $L\cdot \Gamma$ is divisible by $d_{\Gamma}$.}
\end{lemma}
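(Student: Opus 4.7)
The plan is to locate a $(K_X+B+\lambda H)$-trivial extremal ray $R$ of $\overline{\mathrm{NE}}(X/T)$, bound its length using the cone theorem \autoref{thm:keel_cone}, and then deduce rationality of $\lambda$ from integrality of intersection numbers with Cartier divisors. Existence of $\lambda$ is standard: since $H$ is $f$-ample, $K_X+B+tH$ is ample for all $t\gg 0$, the nef cone is closed, and $K_X+B$ fails to be nef, so $\lambda:=\min\{t\geq 0:K_X+B+tH \text{ nef}/T\}$ is a well-defined positive real at which $K_X+B+\lambda H$ is nef but not ample. Since $N^1(X/T)$ is finite-dimensional (\autoref{remark:relative-Picard-rank}), Kleiman's criterion in the relative setting extracts a nonzero class $\alpha\in\overline{\mathrm{NE}}(X/T)$ with $(K_X+B+\lambda H)\cdot\alpha=0$, and from it an extremal ray $R$; ampleness of $H$ forces $H\cdot R>0$, so $(K_X+B)\cdot R=-\lambda(H\cdot R)<0$.

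To bound the length of $R$ I would apply \autoref{thm:keel_cone} after arranging its hypotheses. Since $K_X+B+\lambda H$ is nef, hence pseudo-effective, and the image of $X$ in $T$ has positive dimension, non-vanishing (\autoref{remark:non_vanishing_in_the_relative_setting}) gives $K_X+B+\lambda H\equiv_T M_0$ for some effective $\bR$-Cartier $M_0$. Combining \autoref{lemma:add_ample} with \autoref{thm.FinalBertini}, I would replace $\lambda H$ by a sufficiently general effective $\bR$-divisor $H^{\flat}\sim_{\bR}\lambda H$ so that $(X,B+H^{\flat})$ is log canonical with $K_X+B+H^{\flat}\equiv_T M_0$. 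For a small rational $\epsilon>0$, the perturbation $(X,B+H^{\flat}-\epsilon H)$ — arranged to remain log canonical and with $K$-part still pseudo-effective (hence $\equiv_T$ effective by non-vanishing) — provides a pair on which $R$ is strictly $K$-negative, so \autoref{thm:keel_cone} applies. Non-accumulation (part (b)) together with the length bound (part (c)) forces $R=\mathbb{R}_{\geq 0}[\Gamma]$ for some curve $\Gamma$ with $-(K_X+B+H^{\flat}-\epsilon H)\cdot\Gamma\leq 4d_{\Gamma}$; letting $\epsilon\to 0$ recovers the bound $-4d_{\Gamma}\leq(K_X+B)\cdot\Gamma<0$ claimed in the lemma.

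Rationality of $\lambda$ then follows by a short arithmetic computation. Let $N$ be a positive integer with $N(K_X+B)$ Cartier, which exists since $(X,B)$ is $\bQ$-factorial with $\bQ$-boundary. Then $N(K_X+B)\cdot\Gamma$ is an integer divisible by $d_\Gamma$, so $N(K_X+B)\cdot\Gamma=-n\,d_\Gamma$ with $n\in\mathbb{Z}_{>0}$ and $n\leq 4N$. Likewise $H\cdot\Gamma=k\,d_\Gamma$ for some $k\in\mathbb{Z}_{>0}$, because $H$ is Cartier. Hence
\[
\lambda \;=\; \frac{-(K_X+B)\cdot\Gamma}{H\cdot\Gamma} \;=\; \frac{n/N}{k} \;=\; \frac{n}{Nk},
\]
which is of the required form $\lambda=n/m$ with $m:=Nk$ a natural number and $n$ bounded by $4N$, an integer depending only on $(X,B)$.

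The main obstacle I foresee is the cone-theorem application: one must convert the information that $K_X+B+\lambda H$ is nef and on the boundary of the nef cone into strict negativity of some perturbed pair's $K$-class on $R$, without losing either the effectivity hypothesis or log canonicity. When $K_X+B$ is itself pseudo-effective the issue largely evaporates since one may apply \autoref{thm:keel_cone} directly to $(X,B)$; the delicate case is when $K_X+B$ is not pseudo-effective, where one must instead work with $(X,B+tH^{\flat})$ for $t$ just above the pseudo-effective threshold $\mu\leq\lambda$ and control the dependence of the extremal curve on $t$ via the non-accumulation property — this is where the perturbation bookkeeping becomes most delicate.
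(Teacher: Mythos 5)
There is a genuine gap at the heart of your argument, in the step where you deduce the length bound. The cone theorem \autoref{thm:keel_cone} applied to the perturbed pair $(X,B+H^{\flat}-\epsilon H)$ bounds the degree of the \emph{perturbed} log canonical divisor on the extremal curve, i.e.\ it gives $0<-(K_X+B+(\lambda-\epsilon)H)\cdot\Gamma\leq 4d_{\Gamma}$. But for a curve $\Gamma$ in the $(K_X+B+\lambda H)$-trivial ray $R$ this quantity equals $\epsilon\, H\cdot\Gamma$, so the inequality reads $\epsilon\, H\cdot\Gamma\leq 4d_{\Gamma}$ and letting $\epsilon\to 0$ yields only the vacuous statement $0\leq 4d_{\Gamma}$. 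It does \emph{not} bound $-(K_X+B)\cdot\Gamma=\lambda\, H\cdot\Gamma$, which is exactly what the lemma asserts and which a priori could be arbitrarily large. (Your rationality computation, and in particular the claim $n\leq 4N$ making $n$ depend only on $(X,B)$, rests on this unproven bound; rationality of $\lambda$ alone would follow from the existence of an $L$-trivial curve, but the uniform numerator does not.)

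The paper's proof shows why this cannot be finessed by perturbation: the bound is obtained by actually \emph{contracting} the $L$-trivial extremal face. When $L=K_X+B+\lambda H$ is big, \autoref{cor:bir-contraction} produces a birational contraction $f\colon X\to Z$, and the bound is proved separately for divisorial contractions (pass to a log resolution $W\to X$, run an auxiliary MMP over $Z$ that must eventually take a step not over $X$, and apply the surface cone theorem \autoref{thm:surface_cone} to a boundary component there) and for flipping contractions (the argument of \cite[Lemma 3.2]{BW17}). When $L$ is not big, one first gets rationality of $\lambda$ from the generic fiber, uses \autoref{thm:MMP_bpf} to obtain the fibration $f\colon X\to Z$, chooses a vertical Cartier divisor of residue characteristic $p>5$, takes a dlt modification, and again reduces to the surface cone theorem on a component of the reduced boundary. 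This case analysis — reducing a threefold length bound to a surface cone theorem on a suitable divisor — is the real content of the lemma and is entirely absent from your proposal. A secondary, smaller issue is that the existence of an actual curve spanning the $L$-trivial extremal ray is itself part of what must be proved (Kleiman's criterion only gives a class in $\overline{\mathrm{NE}}(X/T)$); in the paper this too comes out of the contraction/finiteness arguments rather than from abstract convex geometry.
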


\begin{proof}

First suppose that $K_X+B+\lambda H$ is big. 
Then $K_X+B+(\lambda-\varepsilon)H$ is also big for sufficiently small $\varepsilon$, and by definition of $\lambda$, it fails to be nef.  By \autoref{thm:keel_cone}
there are only finitely many $(K_X+B+(\lambda-\varepsilon)H)$-negative extremal rays for $\varepsilon$ sufficiently small, and these rays are isolated.  Therefore at least one of these rays $R$ must satisfy $R\cdot L=0$, and $R$ has a projective contraction $f:X\to Z$ by \autoref{cor:bir-contraction} which contracts a curve $C$.  This satisfies $(K_X+B)\cdot C=-\lambda H\cdot C$ and therefore $\lambda$ is rational as $K_X+B$ and $H$ are both $\mathbb{Q}$-Cartier.  We now show that $f$ contracts a curve satisfying the required bound.

Suppose that $f$ is a divisorial contraction, contracting a divisor $S$. Let $A=\lambda H$, which we have just seen is $\mathbb{Q}$-Cartier, so that by \autoref{thm:bpf} $L=K_X+B+A\sim_{\mathbb{Q},Z} 0$.  Note that it is sufficient to find a curve $\Gamma $ such that 
\[0< A\cdot\Gamma\leq 4d_\Gamma.\]
Let $\phi:W\to X$ be a log resolution of $(X,B+S)$, let $B_W$ be the sum of the birational transform of $B$ and the reduced exceptional divisor of $\phi$, $S_W$ be the birational transform of $S$, and let $A_W=\phi^*A$.  By the projection formula, it is enough to find a curve $\Gamma_W$ on $W$ which satisfies 
\[0< A_W\cdot\Gamma_W\leq 4d_{\Gamma_W}.\]

Let $a$ be such that {$S$ has coefficient $1$ in $B+aS$}.  We have $K_W+B_W+A_W+aS_W\sim_{\mathbb{R},Z} E+aS_W$ for some exceptional$/X$ effective {$\bQ$-}divisor $E$.  This means that $E+S_W$ is in fact effective and exceptional over $Z$, and $\mathrm{Ex}(f\circ\phi)=\rddown{B_W+A_W+aS_W}$.   Run a $(K_W+B_W+A_W+aS_W)$-MMP over $Z$, which must terminate on $Z$ by the negativity lemma and the fact that $Z$ is $\mathbb{Q}$-factorial.  Suppose that a step $W\dashrightarrow W'$ of this MMP contracts a curve over $X$.  Then $A_W$ descends to $A_{W'}$ for it is a pullback from $X$, and again it is enough to find a curve {$\Gamma_{W'}$} satisfying \[0< A_{W'}\cdot\Gamma_{W'}\leq 4d_{\Gamma_{W'}}.\]  We are reduced to the same problem for the next step of the MMP.  As the MMP terminates on $Z$, we must eventually reach a step contracting a ray $R$ which is not over $X$.  Then as $A_W$ is ample over $X$, we have $A_W\cdot\Gamma>0$ and so the step is also negative for $K_W+B_W+aS_W$.  But since this MMP is negative for $E+aS_W$, whose support is equal to the reduced boundary, we can choose a component $F$ of $E+S_W$ on which $R$ is negative.  By restricting to $F$ and applying adjunction \cite[Section 4.1]{KollarKovacsSingularitiesBook}, we find that $(K_W+B_W+aS_W)|_F=K_F+B_F$ for some dlt pair $(F,B_F)$.  If $F$ has dimension $1$,  then it follows that $F=\Gamma$ satisfies \[-2d_{\Gamma_W}{\leq (K_W+\Gamma_W)\cdot\Gamma_W}\leq (K_W+B_W+aS_W)\cdot\Gamma_W<0\]
e.g. by \cite[Lemma 4.4]{DW19}.
Meanwhile if $F$ is two dimensional  we
see by \autoref{thm:surface_cone} that there is a curve $\Gamma_W\subset F$ in $R$ satisfying 
\[-4d_{\Gamma_W}\leq (K_W+B_W+aS_W)\cdot\Gamma_W<0.\]
{In either case, } since the ray is also negative for $K_W+B_W+A_W+aS_W$, we  find that \[0< A_W\cdot\Gamma_W\leq 4d_{\Gamma_W}\] as required.

Now suppose that $f:X\to Z$ is a flipping contraction, and $z\in Z$ is the image of the flipping locus.  In this case, the argument for the flipping case in  \cite[Lemma 3.2]{BW17} applies directly, with the only change being to insert ${d_\Gamma}$ in appropriate places. {The reference to \cite[3.4]{Birkar16} in \cite[Lemma 3.2]{BW17} can be replaced by {the argument in the first paragraph of this proof using  \autoref{thm:keel_cone}.}}

Next suppose that {the $\bR$-divisor} $L=K_X+B+\lambda H$ is not big.  
Let $\xi$ be the generic point of $T$.  By \autoref{thm:surface-bpf-theorem}, $L|_{X_{\xi}}$  is semiample, and by \autoref{prop:char_zero_bpf}  we may assume that  $L|_{X_{\mathbb{Q}}}$ is semiample, if this fiber is non-empty. As $L$ is not big, there is a curve $C$ in $X_{\xi}$ (over the residue field of $\xi$) which is contracted by the induced map. This satisfies 
\[(K_X+B)|_{X_{\xi}}\cdot C=-\lambda H|_{X_{\xi}}\cdot C\] and therefore because $K_X+B$ is a $\mathbb{Q}$-Cartier $\bQ$-divisor and $H$ is an ample Cartier divisor, $\lambda\in\mathbb{Q}$ and $L$ is a $\mathbb{Q}$-divisor.
{Let $A=\lambda H$, where after changing $A$ up to $\mathbb{Q}$-linear} equivalence we may assume that $(X,B+A)$ is klt {(see \autoref{lemma:add_ample})}.  $L$ is semiample by \autoref{thm:MMP_bpf},
and so let $f:X\to Z$ be the induced contraction.  We may assume that $Z$ is normal and projective over $T$. 

Choose a Cartier divisor $D_Z\subset Z$. Let $\pi:W\to X$ be a dlt {modification} of $(X,(B+A+f^*D_Z)^{\leq 1})$ (see \autoref{cor:dlt_modification}), where $D^{\leq 1}$ denotes the divisor obtained by truncating the coefficients of $D$ at $1$.  Then let $A_W=\pi^*A$ and $B_W$ be the sum of the strict transform of $B$ and the unique effective $\mathbb{Q}$-divisor necessary to ensure that $B_W+A_W$ has coefficient one at every component of {$\Supp(\pi^*f^*D_Z)$}.
As in the divisorial case, it suffices to find a curve $\Gamma$ on $W$ which satisfies
$0<A_W\cdot\Gamma\leq 4d_{\Gamma}$.

We have \begin{equation}\label{eqn:contained_in_fiber}K_W+B_W+A_W\sim_{\mathbb{Q},Z} E\end{equation} where $E$ is effective and each component of $E$ is supported over $D_Z$. {In particular this implies that $K_W+B_W+A_W$ is not big over $Z$.}
Furthermore, $\rddown{B_W+A_W}$ and $E$ both contain every component of $\Supp(\pi^*f^*D_Z)$.

Run a $(K_W+B_W+A_W)$-MMP over $Z$, which exists and terminates by \autoref{prop:psef_termination}.  If the first step of the MMP, 
$W\dashrightarrow W'$, is over $X$  then exactly as before, $A_W$ descends to $A_{W'}$, and so we may replace $W$ by $W'$ and continue.
On the other hand, suppose that a step of the MMP contracting a ray $R$ is not over $X$.  As before, since $A$ is ample on $X$ we see that $A_W\cdot R>0$, and as a result $(K_W+B_W)\cdot R<0$
But as \autoref{eqn:contained_in_fiber} implies that the curves contracted are contained in the reduced boundary, we find a  curve $\Gamma_W$ which satisfies
\[-4d_{\Gamma_W}\leq (K_W+B_W)\cdot\Gamma_W<0\] by the log canonical case of \autoref{thm:surface_cone}.  But as this ray was chosen to be negative for $K_W+B_W+A_W$, it follows that we must also have $0<A_W\cdot \Gamma_W\leq 4d_{\Gamma_W}$ as required.

Hence we may assume that the entire MMP is over $X$ and terminates with a model $Y$ with maps $\phi:Y\to X$ and $\psi:Y\to Z$,
and such that $K_Y+B_Y+A_Y$ is nef over $Z$, where $A_Y=\phi^*A$.  
Now $K_Y+B_Y+A_Y\sim_{\mathbb{Q},\psi}K_Y+B_Y+A_Y-\varepsilon \psi^*D_Z$, and the pair $(Y,B_Y+A_Y-\varepsilon \psi^*D_Z)$ is klt for any sufficiently small $\varepsilon$.  Hence by \autoref{thm:MMP_bpf}, using the fact that $A_Y$ is big, we see that $K_Y+B_Y+A_Y$ is $\psi$-semi-ample.  
{Let $\sigma:Y\to V$ be the morphism induced by $K_Y+B_Y+A_Y$, so that $K_Y+B_Y+A_Y\sim_{\phi}0$.  Since $K_Y+B_Y+A_Y$ is not big over $Z$, $\dim(V)<\dim(Y)$.  We have varieties and morphisms:}
{
$$\xymatrix{Y\ar^\phi[r]\ar^\sigma[d]\ar^\psi[dr] & X\ar[d]\\
V\ar[r] & Z.}$$
}
Choose a component 
$S$ of $\psi^*D_Z$ which is not contracted over $X$, or equivalently which is the strict transform on $Y$ of a component of $f^*D_Z$.
As $S$ is not contracted over $X$, $A_Y|_S$ is big. {However, since $\dim(S)\geq\dim(V)$, and $S$ is not horizontal over $Z$ and hence not horizontal over $V$, we see that $S$ is contracted over $V$.}

{Hence $S$ contains a curve $C$
which is vertical over $V$ and which satisfies $A_Y|_S\cdot C>0$, since $A_Y|_S$ is big and $S$ is contracted over $V$. 
{The divisor} $S$ is contained in $\rddown{B_Y+A_Y}$, so by adjunction let}
$K_S+B_S=(K_Y+B_Y)|_S$, and if $S$ is one dimensional {(as in Remark \ref{remark:divisors-of-unexpected-dimension})}, set $\Gamma=S$.  {The latter satisfies the required bounds exactly as in the birational case above.}  Otherwise {apply the cone theorem} \autoref{thm:surface_cone} over $Z$ to $K_S+B_S$. This finds an extremal ray which is $(K_S+B_S+A|_S)$-trivial {(here we use that $K_Y+B_Y+A_Y\sim_{\sigma}0$)}  and so $A|_S$-positive  which contains a curve $\Gamma$ such that
\[-4d_\Gamma\leq (K_S+B_S)\cdot\Gamma= (K_Y+B_Y)\cdot\Gamma=-A_Y\cdot\Gamma<0.\] 
{Taking the pushforward of $\Gamma$ to $X$ gives the required curve as in the birational case, since $A_Y$ is the pullback of $A$ from $X$ and the curve $\Gamma$ is contracted over $Z$. }

Now to prove the statement about $\lambda$,  let $I$ be the Cartier index of $K_X+B$.  Then we have that $I(K_X+B)\cdot \Gamma$ is an integer divisible by $d_\Gamma$, and so is an integer between $-4I$ and $-1$. 

{Since} 
\[
\lambda=\frac{-I(K_X+B)\cdot\frac{\Gamma}{d_{\Gamma}}}{IH\cdot\frac{\Gamma}{d_{\Gamma}}},
\]
 we can take $n=(4I)!$.
\end{proof}

\begin{definition}\label{def:extremal_curve}
Let $X$ be a scheme with a projective morphism $f:X\to T$, and $R$ an extremal ray over $T$.  Let $H$ be an $f$-ample Cartier divisor on $X$. We say that a curve $\Gamma\in R$ is \emph{extremal} if \[H\cdot\frac{\Gamma}{d_{\Gamma}}=\min\{H\cdot\frac{C}{d_C}\mid C\in R\}.\] 
\end{definition}

The extremality of a curve does not depend on the ample divisor $H$, since if $H'$ is a different ample divisor, there is $\lambda>0$ such that  $H\cdot C= \lambda H'\cdot C$ for any $C$ in $R$, and so 
\[\frac{H\cdot\frac{\Gamma}{d_\Gamma}}{H'\cdot\frac{\Gamma}{d_{\Gamma}}}=\frac{H\cdot\frac{C}{d_C}}{H'\cdot\frac{C}{d_{C}}}\] for any other curve $C\in R$.  Similarly, if $D$ is a $\mathbb{Q}$-divisor such that $D\cdot R<0$, we have 
\[D\cdot\frac{\Gamma}{d_{\Gamma}}=\max\{D\cdot\frac{C}{d_C}\mid C\in R\}.\]

Finally we are ready to prove the cone theorem. 

\begin{proof}[Proof of \autoref{thm:full-cone-theorem}]
If we assume that $\Delta$ is a $\mathbb{Q}$-divisor, and $(X,\Delta)$ is klt, $(a)$ and $(b)$ follow formally from \autoref{lem:nef_threshold} using \cite[Theorem 3.15]{KollarMori} {(one can also use the standard proof of the cone theorem in the smooth case \cite[Theorem 1.24]{KollarMori} as \autoref{lem:nef_threshold} recovers a singular variant of Mori's bend-and-break  \cite[Theorem 1.13]{KollarMori})}.

 Now suppose that $\Delta$ is an $\mathbb{R}$-divisor or $(X,\Delta)$ is not klt. We first prove  that there are only countably many $(K_X+\Delta)$-negative extremal rays and that they do not accumulate in $(K_X+\Delta)_{<0}$.   For each integer $n$, choose a klt $\mathbb{Q}$-boundary $\Delta_n$ such that $\Supp(\Delta_n)=\Supp(\Delta)$ and $|\Delta-\Delta_n|<\frac{1}{n}$.  Each $(K_X+\Delta)$-negative extremal ray is $(K_X+\Delta_n)$-negative for some $n$, and so the collection of $(K_X+\Delta)$-negative extremal rays is a subset of a countable union of countable sets, hence countable.  Furthermore, if there is a sequence of $(K_X+\Delta)$-negative extremal rays which accumulate in $(K_X+\Delta)_{<0}$,  then they accumulate to a ray in $(K_X+\Delta_n)_{<0}$ for some $n\gg0$.  Therefore by truncating the sequence of extremal rays we obtain a  sequence of $(K_X+\Delta_n)$-negative rays which accumulate in $(K_X+\Delta_n)_{<0}$, contradicting $(b)$ in the klt $\mathbb{Q}$-divisor case.

{Now we move to (c). Let $R$ be a $(K_X+\Delta)$-negative extremal ray.  Then let $\Delta_n$ be a sequence of klt $\mathbb{Q}$-boundaries with $\lim_n\Delta_n=\Delta$, and such that $R$ is $(K_X+\Delta_n)$-negative for every $n$.  For each $n$, we can find an ample divisor $A_n$ such that $R=(K_X+\Delta_n+A_n)^\perp$, and then \autoref{lem:nef_threshold} shows that there is a curve $C_n$ in $R$, which satisfies \[-4d_{C_n}\leq (K_X+\Delta_n)\cdot C_n<0\]
for every $n$.   Then as $R$ contains a curve, it contains an extremal curve $C$, which satisfies 
\[-4\leq(K_X+\Delta_n)\cdot \frac{C_n}{d_{C_n}} \leq (K_X+\Delta_n)\cdot \frac{C}{d_C}<0.\]
It then follows that
\[-4d_C\leq (K_X+\Delta)\cdot C<0\] as required.} 
\end{proof}

Our next result is finiteness of log minimal models, for which we first recall the setup. 

\begin{setup}\label{setup:shokurov_polytope}
Let $X$ be a three dimensional, klt $\mathbb{Q}$-factorial integral scheme, projective over $T$, such that the image of $X$ in $T$ is positive dimensional.  Let $A\geq 0$ be a $\mathbb{Q}$-divisor and $V$ a finite dimensional rational affine subspace of the vector space of $\bR$-Weil divisors on $X$.  Then we define the Shokurov polytope 
$$\mathcal{L}_A(V)=\{\Delta \mid 0\leq \Delta-A\in V \textrm{\ and \ } (X,\Delta) \textrm{\ log canonical}\}.$$
As we know that projective log resolutions exist in our situation, this is a rational polytope by \cite[1.3.2]{ShokurovThreeDimensionalLogFlips}.
\end{setup}

The proof of the next proposition closely follows that given in \cite[Proposition 3.8]{BW17}.   Note that the proofs of parts (4) and (5) of \cite[Proposition 3.8]{BW17} do not work in our situation, but we do not need them.

\begin{proposition}\label{prop:polytope}
    Let $X$, $T$, $V$, and $\mathcal{L}$ be as above, and fix $B\in \mathcal{L}$.  Then there are real numbers $\alpha, \delta>0$ depending only on $(X,B)$ and $V$, such that 
    \begin{enumerate}
        \item\label{itm:bnd_1} If $\Gamma$ is an extremal curve on $X$ and if $(K_X+B)\cdot\Gamma>0$, then $(K_X+B)\cdot\frac{\Gamma}{d_\Gamma}>\alpha$,
        \item\label{itm:bnd_2} if $\Delta\in\mathcal{L}$ and $||\Delta-B||<\delta$ and $(K_X+\Delta)\cdot R\leq 0$ for an extremal ray $R$ then $(K_X+B)\cdot R\leq 0$.
        \item\label{itm:nef_polytope} {Let $\{R_t\}_{t\in S}$ be a family of extremal rays of $\overline{NE}(X/T)$.  Then the set 
        \[
        \mathcal{N}_S=\{\Delta\in\mathcal{L}\mid (K_X+\Delta)\cdot R_t\geq 0\mathrm{\ for\ any\ }t\in S\}
        \]
        is a rational polytope.}
    \end{enumerate}
\end{proposition}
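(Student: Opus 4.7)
\emph{Setup (for all three parts).} The strategy is modeled on Shokurov's polytope theorem as presented in \cite[Proposition 3.8]{BW17}, with the cone-theorem length bound from \autoref{thm:full-cone-theorem}(c) replacing the characteristic-zero inequality ``length $\leq 2\dim X$''. Choose a basis $A_1,\ldots,A_n$ of $V$ consisting of $\bQ$-Cartier divisors, together with $\epsilon>0$, such that $B\pm\epsilon A_i\in\mathcal{L}$ for each $i$; this is possible since $\mathcal{L}$ is a rational polytope containing $B$, by taking differences of $B$ with suitable rational points of $\mathcal{L}$. The key input is that for every extremal curve $\Gamma$ in every extremal ray of $\overline{\mathrm{NE}}(X/T)$ and every $\Delta\in\mathcal{L}$ one has $(K_X+\Delta)\cdot\Gamma/d_\Gamma\geq -4$, which follows from \autoref{thm:full-cone-theorem}(c) (after passing to a $\bQ$-factorial dlt perturbation of $(X,\Delta)$ if needed), combined with the observation that an extremal curve $\Gamma$ maximizes $(K_X+\Delta)\cdot C/d_C$ over $C\in R$ whenever $R$ is $(K_X+\Delta)$-negative. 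Applied at $\Delta=B\pm\epsilon A_i$, this produces the uniform estimate
\[
    |A_i\cdot\Gamma/d_\Gamma| \;\leq\; \epsilon^{-1}\bigl(4+(K_X+B)\cdot\Gamma/d_\Gamma\bigr),
\]
which is the engine of all three conclusions.

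\emph{Proof of (1).} Argue by contradiction: suppose no such $\alpha$ exists, so there are extremal curves $\Gamma_k$ with $\rho_k:=(K_X+B)\cdot\Gamma_k/d_{\Gamma_k}\to 0^+$. The engine inequality then gives $|A_i\cdot\Gamma_k/d_{\Gamma_k}|\leq 5\epsilon^{-1}$ eventually; since each $A_i$ is $\bQ$-Cartier with common Cartier index $I$, the numbers $A_i\cdot\Gamma_k/d_{\Gamma_k}$ lie in $\frac{1}{I}\bZ$ and take only finitely many values, so after passing to a subsequence we may assume they are constants $a_i$. The affine functionals $\rho_k(\Delta):=(K_X+\Delta)\cdot\Gamma_k/d_{\Gamma_k}$ then converge pointwise to the affine functional $\rho_\infty$ determined by $\rho_\infty(B)=0$ and $\rho_\infty(B+A_i)=a_i$. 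For any rational $B'\in\mathcal{L}$ with Cartier index $I'$ of $K_X+B'$, the values $\rho_k(B')\in\frac{1}{I'}\bZ$ are a convergent sequence in a discrete set and so stabilize at $\rho_\infty(B')$ for $k$ large. Choosing $n+1$ affinely independent rational points near $B$ in $\mathcal{L}$ (possible by density of rationals in the rational polytope $\mathcal{L}$) forces $\rho_k\equiv\rho_\infty$ for $k$ sufficiently large; in particular $\rho_k(B)=\rho_\infty(B)=0$, contradicting $\rho_k>0$.

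\emph{Proof of (2).} If $(K_X+B)\cdot R>0$ and $\Gamma$ is an extremal curve in $R$, then by (1) we have $(K_X+B)\cdot\Gamma/d_\Gamma>\alpha$. Writing $\Delta-B=\sum c_iA_i$ with $\|\Delta-B\|<\delta$ (so $\max|c_i|$ is bounded by a constant multiple of $\delta$), the engine bound gives
\[
    (K_X+\Delta)\cdot\Gamma/d_\Gamma \;\geq\; (K_X+B)\cdot\Gamma/d_\Gamma - \frac{n\delta}{\epsilon}\bigl(4+(K_X+B)\cdot\Gamma/d_\Gamma\bigr).
\]
Taking $\delta<\alpha\epsilon/(n(\alpha+4))$ makes the right-hand side strictly positive, contradicting $(K_X+\Delta)\cdot R\leq 0$.

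\emph{Proof of (3).} The set $\mathcal{N}_S$ is a closed convex subset of the rational polytope $\mathcal{L}$, cut out by the rational half-spaces $H_t^+:=\{\Delta : (K_X+\Delta)\cdot\Gamma_t/d_{\Gamma_t}\geq 0\}$, one for each $t\in S$ (rationality holds because each $\Gamma_t$ is an integral curve). The task is to show that only finitely many of these half-spaces are actually needed. By (2), for each $B\in\mathcal{L}$ there is a neighborhood $U_B$ on which only the $R_t$ with $(K_X+B)\cdot R_t\leq 0$ can be active, so by compactness of $\mathcal{L}$ the problem reduces to local finiteness of the active defining conditions at each point. The main obstacle, and where the argument requires the greatest care, will be to control the rays lying on the hyperplane $(K_X+B)^\perp\cap\overline{\mathrm{NE}}(X/T)$: we expect to handle this by applying (1) and (2) iteratively along the boundary of $\mathcal{N}_S$, combined with the non-accumulation statement \autoref{thm:full-cone-theorem}(b) for $(K_X+B)$-negative rays and the discreteness of intersection numbers against $\bQ$-Cartier divisors of bounded index; together with rationality of every half-space $H_t^+$, this will yield that $\mathcal{N}_S$ is a rational polytope.
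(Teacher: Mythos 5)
Your parts (1) and (2) are correct and follow essentially the route the paper intends: the paper's own proof is simply a citation to \cite[Proposition 3.8]{BW17} "replacing every appearance of a curve $\Gamma$ with $\Gamma/d_\Gamma$", and your "engine inequality" $|A_i\cdot\Gamma/d_\Gamma|\leq\epsilon^{-1}(4+(K_X+B)\cdot\Gamma/d_\Gamma)$, obtained from \autoref{thm:full-cone-theorem}(c) applied at $B\pm\epsilon A_i$ together with the fact that an extremal curve maximizes $D\cdot C/d_C$ over a $D$-negative ray, is exactly the mechanism there with the $d_\Gamma$ normalization correctly inserted. (A small point worth flagging but not fatal: $B\pm\epsilon A_i$ is only log canonical, while \autoref{thm:full-cone-theorem} is stated for dlt pairs; the bound in part (c) of that theorem is nonetheless established for any negative extremal ray by klt approximation, which is what you implicitly invoke.) Your contradiction argument for (1) via stabilization of the affine functionals $\rho_k$ on $n+1$ affinely independent rational points is a legitimate variant of the usual "write $B$ as a rational convex combination and use discreteness plus the lower bound $-4$" argument, and (2) is the standard consequence.

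Part (3), however, contains a genuine gap. You correctly reduce, via (2) and compactness of $\mathcal{L}$, to showing that near a point $B\in\mathcal{N}_S$ only finitely many of the conditions indexed by $S'=\{t:(K_X+B)\cdot R_t\leq 0\}$ are active, and you correctly identify that the hard case is the rays with $(K_X+B)\cdot R_t=0$: the non-accumulation statement \autoref{thm:full-cone-theorem}(b) says nothing about these, since it only controls rays in the open half-space $(K_X+B)_{<0}$, and there may be infinitely many rays on the hyperplane $(K_X+B)^\perp$. At precisely this point your text switches from proof to plan ("we expect to handle this by applying (1) and (2) iteratively\dots this will yield\dots"), and no argument is actually given. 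The missing step in \cite[Proposition 3.8]{BW17} (going back to Birkar and Shokurov) is an induction on $\dim\mathcal{L}$: for each of the infinitely many rays $R_t$ with $(K_X+B)\cdot R_t=0$ one restricts to rational line segments (or lower-dimensional rational faces) through $B$ inside $\mathcal{L}$, applies (1) and (2) on those lower-dimensional polytopes to see that the hyperplanes $(K_X+\Delta)\cdot R_t=0$ passing through $B$ cut the segment in a way controlled by the inductive hypothesis, and concludes that they belong to a finite set of hyperplanes; only then does local rational polyhedrality, and hence the global statement, follow. Without this induction your claim that "only finitely many half-spaces are needed" is unsupported, so (3) as written is not a proof.
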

\begin{proof}
{The proofs of the corresponding statements in \cite[Proposition 3.8]{BW17} work here, by replacing every appearance of a curve $\Gamma$ with $\frac{\Gamma}{d_{\Gamma}}$.}
\end{proof}

The following base point free theorem for $\mathbb{R}$-divisors is used in the upcoming proof of finiteness of log minimal models.

\begin{theorem} \label{thm:bpf_for_R_boundary}
{Let $(X,B)$ be a $\mathbb{Q}$-factorial three dimenaional klt pair with $\mathbb{R}$-boundary, projective over $T$ and such that the image of $X$ in $T$ has positive dimension and that none of the residue fields of $T$ have characteristic $2$, $3$ or $5$.  Suppose that $D$ is a nef $\mathbb{R}$-divisor such that $D-(K_X+B)$ is nef and big.  Then $D$ is semiample.}
\end{theorem}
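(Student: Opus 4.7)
The plan is to reduce to the $\bQ$-divisor base-point-free theorem \autoref{thm:MMP_bpf} by writing $D$ as a finite positive $\bR$-linear combination of $\bQ$-Cartier nef divisors, each still satisfying the hypotheses of that theorem. First, using Kodaira's lemma on the big and nef $\bR$-divisor $D-(K_X+B)$, I would write $D-(K_X+B) \sim_\bR H_0 + E_0$ with $H_0$ an ample $\bR$-Cartier divisor and $E_0 \geq 0$; for $0 < \varepsilon \ll 1$ the pair $(X, B + \varepsilon E_0)$ remains klt while $D - (K_X + B + \varepsilon E_0) \sim_\bR (1-\varepsilon)(D-(K_X+B)) + \varepsilon H_0$ is ample. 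Replacing $B$ by $B + \varepsilon E_0$, we may therefore assume that $A := D - (K_X+B)$ is ample $\bR$-Cartier.

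Next, I write $A = \sum_{j=1}^{m} a_j A_j$ with the $A_j$ ample $\bQ$-Cartier and $a_j > 0$ real (possible since the ample cone is open and contains $A$), and $B = \sum_{i=1}^{n} b_i B_i$ with $B_i$ the distinct prime components of $B$ and $b_i \in (0,1)$. Let $V$ denote the finite-dimensional $\bR$-vector space of $\bR$-Weil divisors spanned by the $B_i$ and $A_j$, with its natural $\bQ$-lattice $V_\bQ$ generated by these divisors. For $v = \sum c_i B_i + \sum d_j A_j \in V$, set $B_v := \sum c_i B_i$, $A_v := \sum d_j A_j$, and $D_v := K_X + B_v + A_v$; since $X$ is $\bQ$-factorial, $D_v$ is $\bQ$-Cartier whenever $v \in V_\bQ$. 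Let $v_0 := \sum b_i B_i + \sum a_j A_j$, so $D_{v_0} \sim_\bR D$, and consider the subset $\mathcal{C} \subseteq V$ of those $v$ for which (i) $(X, B_v)$ is klt, (ii) $A_v$ is ample, and (iii) $D_v$ is nef over $T$.

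The crux, which I expect to be the main technical step, is to show that $\mathcal{C}$ is a rational polyhedral set in a small neighborhood $W$ of $v_0$. Conditions (i) and (ii) are open and hence locally vacuous; only (iii) is restrictive. Fix an ample Cartier divisor $H$ on $X$ over $T$. By openness of the ample cone, there exists $\delta_A > 0$ with $A - \delta_A H$ ample, giving $A \cdot \Gamma \geq \delta_A\, H \cdot \Gamma \geq \delta_A\, d_\Gamma$ for every integral curve $\Gamma$ over $T$. Combined with the bound $(K_X + B) \cdot \Gamma_i \geq -4\, d_{\Gamma_i}$ for the extremal curves $\Gamma_i$ in \autoref{thm:full-cone-theorem}(c) and the non-accumulation clause (b), a uniform estimate of the form $D_v \cdot \Gamma_i \geq (\delta_A - O(\|v - v_0\|))\, H \cdot \Gamma_i$ for $\Gamma_i$ with $H \cdot \Gamma_i$ large shows that, after shrinking $W$, only finitely many extremal curves $\Gamma_1, \dots, \Gamma_s$ can violate $D_v \cdot \Gamma_i \geq 0$ for some $v \in W$; a parallel uniform bound on $\overline{\mathrm{NE}}(X/T)_{K_X + B \geq 0}$ via ampleness of $A$ handles the non-extremal part of the cone. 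Each surviving constraint $D_v \cdot \Gamma_i \geq 0$ is a linear inequality on $V$ with rational coefficients (as $K_X$, each $B_i$, and each $A_j$ are $\bQ$-Cartier), so $\mathcal{C} \cap W$ is a rational polyhedral subset of $V$ containing $v_0$.

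Granted the polyhedrality, $v_0$ lies in the positive $\bR$-span of finitely many rational points $v_k \in V_\bQ \cap \mathcal{C}$, and I can write $v_0 = \sum_k \alpha_k v_k$ with $\alpha_k > 0$. For each $k$, $D_{v_k}$ is a nef $\bQ$-Cartier divisor, $(X, B_{v_k})$ is a klt $\bQ$-pair, and $D_{v_k} - (K_X + B_{v_k}) = A_{v_k}$ is ample $\bQ$-Cartier. Applying \autoref{thm:MMP_bpf} to each $D_{v_k}$ yields that each is semi-ample, so $D \sim_\bR \sum_k \alpha_k D_{v_k}$ is an $\bR$-positive combination of semi-ample $\bQ$-Cartier divisors; this is precisely the assertion that $D$ is semi-ample.
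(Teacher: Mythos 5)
Your proof is correct and follows essentially the same route as the paper: perturb so that $D-(K_X+B)$ becomes ample, show that the nef locus inside a Shokurov-type polytope of boundaries is rationally polyhedral near the given boundary, decompose into finitely many nearby rational nef boundaries, and apply the $\bQ$-divisor base point free theorem. The only differences are that the paper simply cites \autoref{prop:polytope}\autoref{itm:nef_polytope} for the polyhedrality step (which you re-derive inline from \autoref{thm:full-cone-theorem}, correctly using the $d_\Gamma$-normalization of extremal curves), and that you invoke \autoref{thm:MMP_bpf} so as to cover the case where $D$ is not big, whereas the paper's proof as written cites \autoref{thm:bpf} (the nef-and-big case) — your choice of reference is the right one.
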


\begin{proof} 
{Let $A=D-(K_X+B)$. It is sufficent to prove the statement after localizing at a point $t\in T$.  Thus we may change $A$ and $B$ using \autoref{lemma:add_ample} to assume that $(X,\Delta:=B+A)$ is klt and $A$ is an ample $\mathbb{Q}$-divisor.  By \autoref{prop:polytope}\autoref{itm:nef_polytope} there are $\mathbb{Q}$-boundaries $\Delta_j$ such that $\Delta=\sum_ja_j\Delta_j$ for $a_j>0$, $||\Delta-\Delta_j||$ are sufficiently small, $\Delta_j \geq A$, $(X,\Delta_j)$ are klt and $K_X+\Delta_j$ are all nef.  By \autoref{thm:bpf} $K_X+\Delta_j$ are all semiample, so $K_X+\Delta$ is also semiample.}
\end{proof}

\begin{theorem}\label{thm:finiteness_of_models}
In the situation of \autoref{setup:shokurov_polytope}, assume that $A$ is also big over $T$, and the image of $X$ in $T$ is positive dimensional.  Let $\mathcal{C}\subset \mathcal{L}_A(V)$ be a rational polytope such that $(X,B)$ is klt for every $B\in\mathcal{C}$.  Then there exist finitely many birational maps $\phi_i:X\dashrightarrow Y_i$ over $T$ such that for each $B\in\mathcal{C}$ for which $K_X+B$ is pseudo-effective over $T$, there is some $i$ such that $(Y_i,B_{Y_i})$ is a log minimal model of $(X,B)$ over $T$. 
\end{theorem}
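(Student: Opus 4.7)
The plan is to imitate the standard Shokurov/BCHM compactness argument for finiteness of log minimal models in a rational polytope, now available to us because the essential ingredients---existence and termination of MMP in the pseudo-effective case (\autoref{prop:psef_termination}), the polytope decomposition of nef cones (\autoref{prop:polytope}), and the base point free theorem for $\bR$-boundaries (\autoref{thm:bpf_for_R_boundary})---have been established in this section. Since $\mathcal{C}$ is compact, it suffices to prove the following local version: for every $B_0 \in \mathcal{C}$ with $K_X + B_0$ pseudo-effective over $T$, there exists an open neighborhood $U_{B_0} \subset \mathcal{C}$ and a birational map $\phi : X \dashrightarrow Y$ over $T$ such that for every $B \in U_{B_0}$ with $K_X + B$ pseudo-effective, the pair $(Y, \phi_*B)$ is a log minimal model of $(X, B)$ over $T$. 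A finite subcover of $\{U_{B_0}\}$, together with the (trivial) case of non-pseudo-effective $B$'s, yields the theorem.

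To produce the local datum, I would first apply \autoref{prop:psef_termination} to the klt pair $(X, B_0)$: running a $(K_X + B_0)$-MMP over $T$ yields a log minimal model $\phi : X \dashrightarrow Y$ with $(Y, \phi_* B_0)$ a $\bQ$-factorial dlt pair, $K_Y + \phi_* B_0$ nef over $T$, and $a(E, X, B_0) < a(E, Y, \phi_* B_0)$ strictly for every prime divisor $E$ on $X$ contracted by $\phi$. Next I would define
\[
U_{B_0} \ := \ \{\, B \in \mathcal{C} \ \mid \ K_Y + \phi_* B \text{ is nef over } T \,\}.
\]
By \autoref{prop:polytope}\autoref{itm:nef_polytope}, applied on $Y$ (which is itself projective over $T$ and $\bQ$-factorial klt after a mild perturbation) to the family of all extremal rays of $\overline{\mathrm{NE}}(Y/T)$, $U_{B_0}$ is a rational polytope. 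Since $B_0 \in U_{B_0}$ and since, by \autoref{prop:polytope}\autoref{itm:bnd_2}, the set of $B$ for which $K_Y + \phi_* B$ is nef is a neighborhood of $B_0$ in $\mathcal{C}$, this rational polytope $U_{B_0}$ is an honest neighborhood of $B_0$.

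It remains to verify that for every $B \in U_{B_0}$ with $K_X + B$ pseudo-effective, the pair $(Y, \phi_* B)$ is a log minimal model of $(X, B)$. Three of the four defining conditions are automatic: $Y$ is $\bQ$-factorial, $K_Y + \phi_* B$ is nef over $T$ by definition of $U_{B_0}$, and $(Y, \phi_* B)$ remains dlt for $B$ sufficiently close to $B_0$ because dlt-ness is open in the coefficients on a fixed log resolution. The last condition---the strict discrepancy inequality $a(E, X, B) < a(E, Y, \phi_* B)$ for every prime divisor $E$ on $X$ contracted by $\phi$---is the main point. I would derive it from \autoref{lem:negativity}: on a common log resolution $p : W \to X$, $q : W \to Y$, the divisor $q^*(K_Y + \phi_* B) - p^*(K_X + B)$ pushes forward to an effective divisor on $Y$ and is $q$-anti-nef (since $K_Y + \phi_* B$ is nef and $K_X + B$ is pseudo-effective, or by an argument comparing to the situation at $B_0$ via continuity), so it is effective with strictly positive coefficient along each $\phi$-exceptional prime---this strict positivity at $B_0$ propagates to a neighborhood by continuity of the coefficients in $B$.

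The main obstacle, both conceptually and technically, is the last step: propagating the \emph{strict} discrepancy inequality from $B_0$ to nearby $B \in U_{B_0}$. The issue is that the negativity lemma only yields effectivity, not strictness, so one must combine it with the fact that the strict inequality already holds at $B_0$ and that discrepancies vary continuously (indeed linearly) in the coefficients of $B$---which allows one to shrink $U_{B_0}$ if necessary to preserve strictness for the finitely many $\phi$-exceptional divisors. Once this is in hand, the compactness reduction described at the start finishes the proof.
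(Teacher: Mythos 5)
There is a genuine gap at the central step: your set $U_{B_0}=\{B\in\mathcal{C}\mid K_Y+\phi_*B\ \text{nef over}\ T\}$ is indeed a rational polytope by \autoref{prop:polytope}\autoref{itm:nef_polytope}, but it need not be a \emph{neighborhood} of $B_0$ in $\mathcal{C}$, and \autoref{prop:polytope}\autoref{itm:bnd_2} does not give you that. Statement \autoref{itm:bnd_2} says that a ray which is non-positive for a nearby boundary is already non-positive for $B_0$; its contrapositive protects only the rays on which $K_Y+\phi_*B_0$ is \emph{strictly} positive. A ray $R$ with $(K_Y+\phi_*B_0)\cdot R=0$ can perfectly well become negative for $B$ arbitrarily close to $B_0$ (take $Y=X$, $\mathcal{C}$ a segment, and $B_0$ the nef threshold along that segment: for $B$ just past $B_0$ the model $X$ is no longer minimal and one must contract or flip the newly negative ray). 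This is precisely why finitely many models $Y_i$ are needed even locally around a single point, and why no single-model-per-neighborhood argument can work. Your discrepancy/continuity argument for condition (3) is fine as far as it goes, but it is downstream of the false neighborhood claim.

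The paper's proof follows \cite[Proposition 4.2]{BW17}, whose mechanism you are missing: one argues by induction on $\dim\mathcal{C}$, and at a pseudo-effective $B_0$ one first takes a log minimal model $(Y,B_{0,Y})$ and then uses the bigness of $A$ together with the base point free theorem (\autoref{thm:bpf_for_R_boundary}, explicitly listed as an input) to produce the semiample fibration $f\colon Y\to S$ with $K_Y+B_{0,Y}\sim_{\bR}f^*H$ for $H$ ample on $S$. For $B$ near $B_0$ one has $K_Y+\phi_*B\sim_{\bR,S}\phi_*B-\phi_*B_0$, so the problem over $S$ lives in a lower-dimensional polytope of differences; the inductive hypothesis (finiteness of models over $S$) supplies finitely many models there, and adding back the ample pullback $f^*H$ converts nef-over-$S$ into nef-over-$T$ for $B$ sufficiently close to $B_0$, again via \autoref{prop:polytope}. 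Note that your proposal never uses the hypothesis that $A$ is big over $T$ — a signal that the essential step has been bypassed. To repair the argument you should restructure it as an induction on $\dim\mathcal{C}$ with the passage to the fibration $Y\to S$, rather than attempting a one-model-per-neighborhood covering.
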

\begin{proof}
The proof is identical to that of \cite[Proposition 4.2]{BW17}, with the inputs being \autoref{prop:polytope}, the base point free theorem \textcolor{ForestGreen}{\autoref{thm:bpf_for_R_boundary}} and the existence of log minimal models \autoref{prop:psef_termination}.
\end{proof}

\begin{theorem}\label{thm:termination_scaling}
 Let $(X,B)$ be a $\mathbb{Q}$-factorial three-dimensional klt pair with $\bR$-boundary, projective over $T$, such that the image of $X$ in $T$ has positive dimension and that none of the residue fields of $T$ have characteristic $2$, $3$ or $5$.   Suppose $A$ is an ample $\mathbb{R}$-divisor such that $K_X+B+A$ is nef over $T$.  Then we can run the $(K_X+B)$-MMP over $T$ with scaling of $A$ and it terminates.
\end{theorem}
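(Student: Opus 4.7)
The plan is to adapt the standard termination-with-scaling argument (as in \cite{BW17} and \cite{BirkarCasciniHaconMcKernan}) using finiteness of log minimal models as the main input. First I would verify that the $(K_X+B)$-MMP with scaling of $A$ is well-defined in our setting: at each step $i$ the cone theorem \autoref{thm:full-cone-theorem} produces a $(K_{X_i}+B_i)$-negative extremal ray $R_i$ that is $(K_{X_i}+B_i+\lambda_i A_i)$-trivial, where $\lambda_i = \inf\{t \geq 0 : K_{X_i}+B_i+tA_i \text{ is nef over } T\}$; the corresponding contraction exists via \autoref{thm:bpf_for_R_boundary} together with \autoref{cor:bir-contraction}; and flips exist by \autoref{thm:full_flips}. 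Since each divisorial step strictly decreases the relative Picard rank $\rho(X_i/T)$, at most finitely many divisorial contractions occur, so after discarding an initial segment I may assume the remaining sequence consists of flips (the only other possibility being a Mori fiber space, which would end the MMP).

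Next, by \autoref{lemma:add_ample} I would replace $A$ by an $\bR$-linearly equivalent effective divisor so that $(X,B+A)$ is klt; this leaves the MMP-with-scaling unchanged, because the thresholds $\lambda_i$ and the contracted extremal rays depend only on the numerical class of $A$. Then $(X, B+tA)$ is klt for every $t \in [0,1]$, so the segment $\mathcal{C} = \{B + tA : t \in [0,1]\}$ sits inside $\mathcal{L}_A(V)$ for $V = \mathbb{R}A$, and \autoref{thm:finiteness_of_models} yields finitely many birational maps $\phi_j : X \dashrightarrow Y_j$ through which all log minimal models of pseudo-effective members of $\mathcal{C}$ factor. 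For each step $i$ with $\lambda_i > 0$, the pair $(X_i, B_i + \lambda_i A_i)$ is exactly such a log minimal model: it is $\bQ$-factorial dlt by construction of the MMP, $K_{X_i}+B_i+\lambda_i A_i$ is nef by the choice of $\lambda_i$, the MMP-with-scaling procedure ensures the discrepancy inequalities, and pseudo-effectivity of $K_X + B + \lambda_i A$ follows by pulling back the nef divisor $K_{X_i} + B_i + \lambda_i A_i$ on a common resolution and applying the negativity lemma (\autoref{lem:negativity}). Consequently every $X_i$ is, as a birational model over $X$, one of the finitely many $Y_j$.

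The main obstacle, which I expect to require the most care, is to deduce from this finiteness that only finitely many flips can occur. The standard argument is that each flip $X_i \dashrightarrow X_{i+1}$ is an isomorphism in codimension one but not an isomorphism of schemes, so fixing a sufficiently small non-empty open $U \subset X$ that embeds isomorphically into every $X_i$ shows that the $\phi_i : X \dashrightarrow X_i$ are pairwise inequivalent as birational maps out of $X$; since only finitely many isomorphism classes of such models appear among the $Y_j$'s, only finitely many flips can occur. Combining this with the earlier reduction on divisorial steps forces the MMP with scaling to terminate, either reaching a step at which $K_{X_i}+B_i$ becomes nef (a log minimal model, as occurs when $K_X+B$ is pseudo-effective, compatibly with \autoref{prop:psef_termination}) or producing a Mori fiber space contraction (when $K_X+B$ is not pseudo-effective). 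In either case the MMP stops after finitely many steps, which is the desired conclusion.
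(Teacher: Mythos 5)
Your overall strategy is the same as the paper's: the proof of \autoref{thm:termination_scaling} given there simply invokes the arguments of \cite[Proposition 4.3 and proof of Theorem 1.6]{BW17}, whose content is exactly what you reconstruct --- dispose of divisorial contractions by Picard rank, realize each intermediate pair $(X_i,B_i+\lambda_i A_i)$ as a log minimal model of a point of the segment $\{B+tA\}_{t\in[0,1]}$, and apply \autoref{thm:finiteness_of_models}, with \autoref{prop:psef_termination} replacing the positive-characteristic termination input of \cite{BW17}. So there is no divergence in method.

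There is, however, one step whose justification as written does not work: you claim that fixing a common open subset $U\subset X$ embedding into every $X_i$ ``shows that the $\phi_i\colon X\dashrightarrow X_i$ are pairwise inequivalent as birational maps out of $X$.'' The existence of a common open set proves nothing of the sort --- a priori two models $X_i$ and $X_j$, $i<j$, could be isomorphic \emph{over} $X$ (i.e.\ determine the same birational map out of $X$) even though they are separated by several flips, and excluding this is precisely the point. The correct argument uses the negativity lemma \autoref{lem:negativity}: on a common resolution $p\colon W\to X_i$, $q\colon W\to X_j$, the divisor $p^*(K_{X_i}+B_i)-q^*(K_{X_j}+B_j)$ is effective and nonzero, because $X_i\dashrightarrow X_j$ is a nonempty composition of $(K+B)$-negative flips; but an isomorphism $X_i\cong X_j$ commuting with the maps from $X$ identifies $B_i$ with $B_j$ and forces this difference to vanish, a contradiction. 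Hence the birational maps $X\dashrightarrow X_i$ are pairwise distinct, and since \autoref{thm:finiteness_of_models} bounds the number of birational \emph{maps} $X\dashrightarrow Y_j$ (not merely abstract isomorphism classes of targets), the pigeonhole step goes through. With that substitution your proof is complete and coincides with the one the paper imports from \cite{BW17}.
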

\begin{proof}
This follows by the arguments of \cite[Proposition 4.3]{BW17} using \autoref{thm:finiteness_of_models}, and \cite[Proof of Theorem 1.6]{BW17} except that we replace the reference to \cite[Proposition 4.5]{BW17} with \autoref{prop:psef_termination}.
\end{proof}

Note that if we assume that $T$ is a curve with finitely many closed points, for instance if $T=\Spec(\mathbb{Z}_p)$, we get a stronger termination result:

\begin{proposition}\label{prop:termiation_finitely_many_points}
    Let $(X,B)$ be a $\mathbb{Q}$-factorial three dimensional klt pair with $\bR$-boundary projective and surjective over $T$ of positive dimension, and $T$ has only finitely many closed points, none of which have residue fields of characteristic $2$, $3$ or $5$.  Then any sequence of $(K_X+B)$-flips terminates.
\end{proposition}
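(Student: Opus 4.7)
The plan is an argument by contradiction exploiting the pigeonhole principle on the finite set of closed points of $T$, combined with termination of the MMP with scaling. Suppose we have an infinite sequence of $(K_X+B)$-flips $X = X_0 \dashrightarrow X_1 \dashrightarrow X_2 \dashrightarrow \cdots$. Since each flipping locus consists of curves proper over $T$ and thus maps into finitely many closed points of $T$, the hypothesis that $T$ has finitely many closed points lets us pass, via pigeonhole, to an infinite subsequence whose flipping loci all have a component lying over some fixed closed point $t \in T$. After this reduction, I would replace $T$ by $\Spec \mathcal{O}_{T,t}$, which preserves $\bQ$-factoriality and klt-ness via \autoref{lem:plt_dlt_base_change}, as well as the positive-dimensionality of the image of $X$ in $T$; so we may assume $T$ is local with unique closed point $t$.

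With this reduction in hand I would split into two cases according to whether $K_X+B$ is pseudo-effective over the local base $T$. If it is, then \autoref{prop:psef_termination} directly yields termination of any sequence of flips, contradicting infinitude. If $K_X+B$ is not pseudo-effective, fix an ample $\bR$-divisor $A$ on $X$ and let $A_i$ denote its strict transform on $X_i$; consider the nef thresholds $\lambda_i := \inf\{s \geq 0 : K_{X_i} + B_i + s A_i \text{ is nef over } T\}$. The sequence $\{\lambda_i\}$ is non-increasing by a standard argument and strictly positive since $K_X+B$ is not pseudo-effective. By \autoref{lem:nef_threshold}, each $\lambda_i$ is a positive rational number whose numerator and denominator are bounded in terms of the Cartier index of $K_{X_i}+B_i$. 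Granting a uniform bound on these denominators along the sequence, the non-increasing sequence $\{\lambda_i\}$ must stabilize at some $\lambda_\infty > 0$ for all $i \geq N$. Once $\lambda_i$ is constant, the flipped extremal ray at each such step must be $(K_{X_i}+B_i+\lambda_\infty A_i)$-trivial, because otherwise the flip would strictly decrease $\lambda$; hence the tail of the sequence is a $(K+B)$-MMP with scaling of $A$, which terminates by \autoref{thm:termination_scaling}, producing the contradiction.

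The main obstacle is establishing the uniform bound on the Cartier indices of $K_{X_i}+B_i$ across the sequence, or equivalently on the denominators of $\lambda_i$. To attack this, I would first apply the first part of \autoref{thm:special-termination} to arrange, after dropping finitely many initial terms, that every flipping locus is disjoint from $\lfloor B \rfloor$, so that by mildly perturbing $B$ we may assume $(X_i,B_i)$ is klt with the same klt structure along the sequence. I would then combine this with boundedness of klt surface singularities in residue characteristics away from $\{2,3,5\}$, applied both on the generic fiber $X_\eta$ (preserved by the vertical flips) and on suitable slices of the fiber $X_t$, to extract the needed uniform bound on indices. If this boundedness step can be avoided, an alternative route would be to adapt the AHK-type difficulty argument used in \autoref{thm:special-termination} by exploiting that the vertical divisor $f^*(t)$ is numerically trivial over $T$, and therefore adding a large multiple of $f^*(t)$ to $B$ does not alter the flipping structure over $t$ while rendering $K_X+B$ effective up to linear equivalence.
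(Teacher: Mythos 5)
There is a genuine gap in your main argument. The central step of your Case 2 — that the nef thresholds $\lambda_i$ of $K_{X_i}+B_i$ with respect to $A_i$ form a non-increasing sequence — is not a "standard argument" for an \emph{arbitrary} sequence of flips; it is a feature of the MMP \emph{with scaling}, where the contracted ray is chosen to be $(K_{X_i}+B_i+\lambda_i A_i)$-trivial. For a general $(K_X+B)$-flip of a ray $R_i$ on which $K_{X_i}+B_i+\lambda_i A_i$ is strictly positive, the strict transform of a nef divisor need not remain nef (nefness is only preserved under a flip for divisors that are trivial on the flipped ray), so $\lambda_{i+1}$ can exceed $\lambda_i$. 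Consequently the stabilization of $\lambda_i$ and the conclusion that "the tail of the sequence is a $(K+B)$-MMP with scaling" do not follow, and \autoref{thm:termination_scaling} cannot be invoked. The Cartier-index boundedness issue you flag is real but secondary to this. (Your pigeonhole/localization reduction is essentially harmless once phrased as: localize the entire sequence over $\Spec\sO_{T,t}$, where the flips not lying over $t$ become isomorphisms; but it is also unnecessary.)

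Your closing "alternative route" is in fact the paper's actual proof, and it is much shorter and avoids any case division on pseudo-effectivity. Choose Cartier divisors $F_i$ on $T$ passing through the finitely many closed points. Their pullbacks to $X$ are numerically trivial on every curve over $T$, so any $(K_X+B)$-flip is also a $(K_X+B+\epsilon\sum f^*F_i)$-flip for $0<\epsilon\ll 1$, and the latter pair is still $\bQ$-factorial klt. After finitely many steps the flipping loci must be disjoint from $\lfloor B\rfloor$ by the first part of \autoref{thm:special-termination}, and then, since terminalizations exist by \autoref{prop:existence-of-terminalizations}, the second part of \autoref{thm:special-termination} forces the flipping loci eventually to avoid $\Supp\big(B+\epsilon\sum f^*F_i\big)$. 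But every flipping curve is proper over a closed point of $T$ and hence lies in some $\Supp f^*F_i$ — a contradiction, so the sequence terminates. I would encourage you to promote that final sentence of your proposal to the main argument and discard the nef-threshold route.
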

\begin{proof}
{By  \autoref{thm:special-termination}, after finitely many flips both the flipping and flipped loci are disjoint from the birational transform of the boundary.  Given this, note that any $(K_X+B)$-MMP is also a $(K_X+B+\varepsilon \sum_i F_i)$-MMP {for $0 < \varepsilon \ll 1$} where $F_i$ are the pullbacks of Cartier divisors on $T$ which contain the finitely many closed points of $T$, and so the flips eventually terminate.}
\end{proof}

\begin{theorem}\label{thm:MFS}
{Let $(X,B)$ be a three-dimensional $\mathbb{Q}$-factorial dlt pair, with $\bR$-boundary, projective over $T$ such that the image of $X$ in $T$ has positive dimension and none of the residue fields of $T$ have characteristic $2$, $3$ or $5$.   Suppose that $K_X+B$ is not pseudo-effective over $T$.  Then we can run a $(K_X+B)$-MMP with scaling of an ample divisor which terminates with a Mori fiber space.}
\end{theorem}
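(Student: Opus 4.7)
The plan is to run a $(K_X+B)$-MMP with scaling of an ample $\mathbb{R}$-Cartier divisor $A$ (chosen so that $K_X+B+A$ is nef over $T$), to apply the termination statement \autoref{thm:termination_scaling}, and then argue that the final step must be a Mori fiber contraction because non-pseudo-effectivity of $K_X+B$ precludes termination with a log minimal model.

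Concretely, at stage $i$ I would set
\[
\lambda_i := \inf\{\, t \geq 0 \,:\, K_{X_i}+B_i+tA_i \text{ is nef over } T\,\},
\]
and invoke \autoref{thm:full-cone-theorem} to locate a $(K_{X_i}+B_i)$-negative extremal ray $R_i$ with $(K_{X_i}+B_i+\lambda_i A_i)\cdot R_i=0$. A birational contraction of $R_i$ exists by \autoref{cor:bir-contraction}, since $K_{X_i}+B_i+\lambda_i A_i$ is nef and $R_i$ is orthogonal to it; the flip of a small contraction exists by \autoref{thm:full_flips}; and if $R_i$ is of fiber type we halt, declaring the resulting map $X_i\to V_i$ the sought-after Mori fiber space. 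To access termination I would pass to the klt perturbation $B_\varepsilon := B - \varepsilon \lfloor B\rfloor$ for small $\varepsilon>0$: since $K_X+B_\varepsilon = (K_X+B)-\varepsilon\lfloor B\rfloor$ is obtained from a non-pseudo-effective divisor by subtracting an effective one, it too is non-pseudo-effective, and once \autoref{thm:special-termination} guarantees that the MMP becomes disjoint from $\lfloor B\rfloor$, the scaling MMP coincides from that point on with a scaling MMP on the klt pair $(X,B_\varepsilon)$ to which \autoref{thm:termination_scaling} applies.

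Finally, to see that the MMP cannot end with $K_{X_N}+B_N$ nef over $T$: pseudo-effectivity of $K_{X_N}+B_N$ would propagate backward through each flip and divisorial contraction by the negativity lemma \autoref{lem:negativity} applied on a common resolution (the difference $p^*(K_{X_i}+B_i)-q^*(K_{X_{i+1}}+B_{i+1})$ is effective at every MMP step), thereby forcing $K_X+B$ to be pseudo-effective, contrary to hypothesis. Hence the terminating extremal ray must correspond to a fiber-type contraction, yielding the Mori fiber space. The main obstacle I anticipate is the interface between the dlt hypothesis of the statement and the klt hypothesis required by \autoref{thm:termination_scaling}: although the perturbation $B\mapsto B_\varepsilon$ is standard, one has to verify carefully that the extremal rays produced by scaling remain compatible with the perturbation past the special-termination threshold, and that the fiber-type contraction obtained for the perturbed pair is also a $(K_X+B)$-extremal contraction giving a genuine Mori fiber space for the original dlt pair.
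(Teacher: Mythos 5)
Your overall architecture is the same as the paper's: cone theorem, extremal contractions, flips from \autoref{thm:full_flips}, termination with scaling from \autoref{thm:termination_scaling}, a dlt-to-klt perturbation handled via special termination, and the observation that non-pseudo-effectivity of $K_X+B$ (which does propagate through the steps exactly as you say) rules out ending at a minimal model. However, there is a genuine gap at the single most important step: the construction of the Mori fiber space itself. When the terminating extremal ray $R_N$ is of fiber type, the supporting divisor $L=K_{X_N}+B_N+\lambda_N A_N$ is nef but \emph{not} big, so \autoref{cor:bir-contraction} --- which you cite for all contractions and which explicitly requires a nef and big divisor $D$ with $R=D^\perp$ --- does not apply. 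You cannot simply ``halt, declaring the resulting map $X_N\to V_N$'' the Mori fiber space, because no map has been produced. What is needed is semiampleness of the non-big nef divisor $L$, i.e.\ the base point free theorem \autoref{thm:bpf_for_R_boundary} (resting on \autoref{thm:MMP_bpf}), which is precisely where the hypothesis that the image of $X$ in $T$ is positive dimensional enters and which the paper lists explicitly among its ingredients. Without invoking it, the conclusion of the theorem is not reached.

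Two smaller points. First, the compatibility issue you flag in your last paragraph is real and is not resolved by your sketch: after special termination the remaining steps are $(K_X+B_\varepsilon)$-negative, but the nef thresholds of $K_{X_i}+B_i+tA_i$ and $K_{X_i}+(B_\varepsilon)_i+tA_i$ need not agree, so the ray selected by scaling for the dlt pair need not be the one selected for the klt pair, and \autoref{thm:termination_scaling} as stated applies to the latter MMP. Second, even granting termination, a fiber-type contraction for the perturbed pair $(X_N,(B_\varepsilon)_N)$ is not automatically $(K_{X_N}+B_N)$-negative, since $B_N\geq (B_\varepsilon)_N$. The paper handles both issues by following the last paragraph of the proof of Theorem 1.7 of \cite{BW17} verbatim; your write-up should either reproduce that reduction or rearrange the perturbation so that these two checks become trivial.
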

\begin{proof}
If $(X, B)$ is klt, this follows by combining the  \autoref{thm:full-cone-theorem}, \autoref{thm:bpf_for_R_boundary}, \autoref{thm:full_flips}, and \autoref{thm:termination_scaling}.  

If it is not klt, fix an ample divisor $A$ and run a $(K_X+B)$-MMP with scaling of $A$.  The cone theorem holds by \autoref{thm:full-cone-theorem}, contractions and flips exist by perturbing the boundary to a klt boundary and then applying \autoref{thm:bpf} and \autoref{theorem:flips-exist}.  It remains to show termination.

Fix $\delta$ sufficiently small that $K_X+B+\delta A$ is not pseudo-effective over $T$.  Now choose $\varepsilon\ll \delta$ sufficiently small that  $\varepsilon B+\delta A$ is ample over $T$.
Note that since $K_X+B+\delta A$ is not pseudo-effective, a $(K_X+B)$-MMP with scaling of $A$ is also a $(K_X+B+\delta A)$-MMP with scaling of $(1-\delta)A$.  

For any point $t\in T$, we may localize over $t$, apply \autoref{lemma:add_ample} and then spread out over some open subset $t\in U\subset T$ and its preimage $X_U$ in $X$, to find a divisor $H\sim_{\mathbb{R}}\varepsilon B_{X_U}+\delta A_{X_U}$ such that $(X_U,(1-\varepsilon)B_{X_U}+H)$ is klt.
Therefore by \autoref{thm:termination_scaling} our MMP terminates over $U$ since it is also an MMP for $K_{X_U}+(1-\varepsilon)B_{X_U}+H$.  Since we can cover $T$ with finitely many such open sets, we see that the $(K_X+B)$-MMP with scaling of $A$ terminates everywhere.
\end{proof}

\section{Applications to moduli of stable surfaces}
\label{sec:applications}

The goal of this section is to show the existence of the moduli stack $\overline{\sM}_{2,v}$ of stable surfaces of volume $v$ over $\bZ[1/30]$ as an Artin stack with finite stabilizers and of finite type over $\bZ[1/30]$. By the Keel-Mori theorem \cite{Keel_Mori_Quotients_by_groupoids,Conrad_The_Keel_Mori_theorem_via_stacks} this then  also implies the existence of the coarse moduli space $\overline{\mathrm{M}}_{2,v}$ of stable surfaces of volume $v$  as an algebraic space over $\bZ[1/30]$.  We refer to \cite[Sec 1.3]{PatakfalviProjectivityStableSrufaces} for the precise definitions of the moduli functor of $\overline{\sM}_{2,v}$. 

The starting point is that in \cite[Thm 9.7]{PatakfalviProjectivityStableSrufaces} it was proven that one has to only show a special case of inversion of adjunction: if $f : X \to T $ is a $1$-parameter flat projective family of geometrically demi-normal varieties with semi-log central fiber, then $X$ is semi-log canonical. By passing to the normalization of $X$ this follows from the log canonical inversion of adjunction. So, this version of inversion of adjunction is  our first goal, which is a consequence of the following existence statement for dlt-models.

\begin{corollary}[Log canonical inversion of adjunction]
\label{cor:lc_inversion_of_adjunction}
In the situation of \autoref{MMP_setting} suppose that none of the residue fields of $R$ have characteristic $2$, $3$ or $5$.  Let $(X,D)$ be a normal pair of dimension $3$ such that $K_X + D$ is $\bQ$-Cartier, and with a prime divisor $S$  that has coefficient $1$ in $D$.  Let $S^N$ be the normalization of $S$.  If $(S^{\mathrm{N}}, D_{S^{\mathrm{N}}})$ is log canonical, where  $D_{S^{\mathrm{N}}}$ is the different of $D$ along $S$, then so is $(X,D)$ in a neighborhood of $S$.
\end{corollary}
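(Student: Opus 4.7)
The plan is to deduce this corollary from the existence of a $\mathbb{Q}$-factorial dlt modification (\autoref{cor:dlt_modification}) combined with the ``connectedness of the non-lc locus'' recorded in \autoref{rem:dlt_model_connected}, together with ordinary adjunction to the strict transform of $S$. Since the conclusion is local on $X$, I will first pass to an affine (hence quasi-projective over $\Spec R$) open neighborhood of $S$. I may assume every coefficient of $D$ is at most $1$, since any component with coefficient $>1$ would make the conclusion vacuous. Let $\pi\colon Y\to X$ be a $\mathbb{Q}$-factorial dlt modification of $(X,D)$, and write
\[
K_Y+\Delta_Y=\pi^{*}(K_X+D),\qquad \Delta_Y=\pi_{*}^{-1}D+\sum_{i=1}^{r}a_i E_i,
\]
with $a_i\geq 1$ by \autoref{rem:dlt_model_connected}. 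Set $H:=\sum_{a_i>1}E_i$; then $(X,D)$ is log canonical in a neighborhood of $S$ if and only if $\pi(H)\cap S=\emptyset$.

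Assume for contradiction that there exists $s\in\pi(H)\cap S$. By the connectedness property in \autoref{rem:dlt_model_connected}, the entire fiber $\pi^{-1}(s)$ is contained in $\Supp H$. Let $S_Y$ be the strict transform of $S$; since $\pi|_{S_Y}\colon S_Y\to S$ is proper and birational (as $\pi$ is projective and $S$ is integral), it is surjective, so there is a point $y\in S_Y\cap\pi^{-1}(s)\subseteq S_Y\cap H$. Choose an irreducible component $C$ of $S_Y\cap E_i$ through $y$, for some exceptional divisor $E_i$ with $a_i>1$. Then $C$ is a curve on $S_Y$ that is exceptional over $S$.

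Now apply adjunction along $S_Y$. Since $(Y,\pi_{*}^{-1}D+\Exc(\pi))$ is dlt and $S_Y$ is one of its round-down components, $S_Y$ is normal up to a universal homeomorphism by \autoref{lem:properties-of-plt}; let $\nu\colon S_Y^{\mathrm{N}}\to S_Y$ denote its normalization. Writing the different as $K_{S_Y^{\mathrm{N}}}+\Delta_{S_Y^{\mathrm{N}}}=\nu^{*}\bigl((K_Y+\Delta_Y)|_{S_Y}\bigr)$, the compatibility of the different with birational pullback (cf.\ \autoref{prop:Das-Waldron-adjunction}) gives
\[
K_{S_Y^{\mathrm{N}}}+\Delta_{S_Y^{\mathrm{N}}}=(\pi\circ\nu)^{*}(K_{S^{\mathrm{N}}}+D_{S^{\mathrm{N}}}).
\]
Since the additional contribution to the different at $C$ from the conductor/ramification is nonnegative, the coefficient of (the proper transform of) $C$ in $\Delta_{S_Y^{\mathrm{N}}}$ is at least $a_i>1$. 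Hence $(S_Y^{\mathrm{N}},\Delta_{S_Y^{\mathrm{N}}})$ is not log canonical at the generic point of $C$, and pushing down by $\pi\circ\nu$ this forces $(S^{\mathrm{N}},D_{S^{\mathrm{N}}})$ to be non-lc at the image of $C$, contradicting the hypothesis.

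The main technical obstacle is the adjunction coefficient comparison in the last paragraph: verifying, in our mixed-characteristic dlt setting where $S_Y$ is only normal up to universal homeomorphism, that the coefficient of $\Delta_{S_Y^{\mathrm{N}}}$ at (the proper transform of) $C$ is indeed $\geq a_i$. This should follow by localizing at the generic point of $C$ and comparing the different at codimension one points, where $S_Y^{\mathrm{N}}\to S_Y$ is an isomorphism in codimension zero on $S_Y^{\mathrm{N}}$ by \autoref{lem:properties-of-plt}, so the standard coefficient formula applies and the correction term is nonnegative.
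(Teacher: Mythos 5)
Your proof is correct and follows essentially the same route as the paper's: pass to a $\mathbb{Q}$-factorial dlt modification, use \autoref{rem:dlt_model_connected} to control the fibers meeting $\Supp(\Gamma-\Delta)$, and apply crepant adjunction to the strict transform of $S$ so that log canonicity of $(S^{\mathrm{N}},D_{S^{\mathrm{N}}})$ forces all exceptional coefficients near it to equal $1$; your contradiction-at-a-point phrasing is just a reformulation of the paper's direct argument. (Two cosmetic imprecisions: the justification for assuming $\coeff D\le 1$ should be that a component of coefficient $>1$ meeting $S$ would already contradict the lc hypothesis on the different, and the different coefficient along $C$ is only $\ge 1+\tfrac{a_i-1}{m}>1$ rather than $\ge a_i$ — which is all you need.)
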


\begin{proof}

Consider a $\bQ$-factorial dlt-model $g\colon (Z, \Gamma) \to (X,D)$ constructed in \autoref{cor:dlt_models}. Here $\Gamma$ is the boundary used in \autoref{cor:dlt_models}, that is, it can be obtained by  lowering to $1$ all the greater than $1$ coefficients of $g^{-1}_* D$  and additionally adding in all the $g$-exceptional divisors  with coefficient $1$. 
Let $T$ be the component of $\Gamma$ dominating $S$.
Since $Z$ is $\bQ$-factorial we use \autoref{cor.ThreefoldNormalityOfS} and a pertubation argument to see that  $T$ is normal.  We fix the following notation for the induced morphisms:
\begin{equation*}
\xymatrix{
T \ar@/^1pc/[rr]^{\gamma} \ar[r]_{\alpha} & S^{\mathrm{N}} \ar[r]_{\beta} & S.
}
\end{equation*}
Let $\Delta$ be the crepant boundary on $Z$, that is for which $K_Z + \Delta = {g}^* (K_X + D)$.  Note that by point \autoref{itm:cor:dlt_modification:effective} of \autoref{cor:dlt_modification},  $\Delta - \Gamma$ is effective and it is non-zero exactly at {each} prime divisor $E$ of $Z$ for which $\coeff_E \Delta >1$. 
Note also that $(T, \Delta_T)$ is a crepant dlt-model for $\left(S^{\mathrm{N}}, D_{S^{\mathrm{N}}} \right)$,  where $\Delta_T$ is the different of $\Delta$ along $T$.  In other words, we have that 
\begin{equation}
\label{eq:lc_inversion_of_adjunction:crepant}
K_T + \Delta_T=\alpha^* \left( K_{S^{\mathrm{N}}} + D_{S^{\mathrm{N}}} \right).
\end{equation}
 Additionally, $(T, \Gamma_T)$ is dlt, where $\Gamma_T$ is the different of $\Gamma$ along $T$.
As $\left(S^{\mathrm{N}}, D_{S^{\mathrm{N}}}\right)$ is log canonical,  by  \autoref{eq:lc_inversion_of_adjunction:crepant}, we see that   
the coefficients of $\Delta_T$ are at most $1$.    By the surface inversion of adjunction applied at the codimension $1$ points of $T$, this means that the  coefficients of $\Gamma$  are at most $1$ in a neighborhood of $T$.  We  note that here we crucially use the $\bQ$-factoriality of $Z$, which implies that divisors on $Z$ can only meet $T$ in codimension $1$ points of $T$. Since at all divisors in  $ \Supp (\Delta - \Gamma)$, the coefficient of $\Delta$ is $1$,  we obtain that  the divisors $\Delta$ and $\Gamma$ agree in a neighborhood of $T$.
However, \autoref{cor:dlt_modification}\autoref{rem:dlt_model_connected} tells us that for each fiber, $\Supp (\Delta - \Gamma)$ either contains it or is disjoint from it. So, we obtain that $g\big(\Supp (\Delta - \Gamma) \big)$ is a closed set that is disjoint from $S$. This concludes our proof as $(X, D)$ is log canonical over $X \setminus g \big(\Supp (\Delta - \Gamma) \big)$.
\end{proof}
In fact, we believe that the above result works even when $R$ has arbitrary residue characteristics, by using the non-$\bQ$-factorial dlt modification as in \autoref{rem:non-q-factorial-dlt-modification} and replacing $T$ in the proof by its normalisation.

In the proofs of the following statements we use the language of almost Cartier divisors on $S_2$ and $G_1$, Noetherian  schemes, as introduced in \cite{HartshorneGeneralizedDivisorsOnGorensteinSchemes}, for the canonical divisor of demi-normal schemes and their one-parameter families. Furthermore, for such families the canonical divisor is compatible with base-change, as they contain a relatively Gorenstein  open set, the complement of which has codimension two in every fiber (for the arbitrary Gorenstein base-change see \cite[Sec 3.6]{ConradGDualityAndBaseChange}).

\begin{corollary}[{Existence of $\overline{\sM}_{2,v}$ over $\bZ[1/30]$}]
\label{cor:moduli_exists}
With notation as above:
\begin{enumerate}
    \item 
    \label{itm:M_2_v_exists:fine}
The moduli stack $\overline{\sM}_{2,v}$ of stable surfaces of volume $v$ over $\bZ[1/30]$ exists as a separated Artin stack of finite type over $\bZ[1/30]$ with finite diagonal.
\item 
    \label{itm:M_2_v_exists:coarse}
The  coarse moduli space $\overline{\mathrm{M}}_{2,v}$ of stable surfaces of volume $v$  over $\bZ[1/30]$  exists as a separated algebraic space of finite type over $\bZ[1/30]$.
\end{enumerate}
\end{corollary}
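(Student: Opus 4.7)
The plan is to deduce both parts from prior work of one of the authors combined with \autoref{cor:lc_inversion_of_adjunction}. For part \autoref{itm:M_2_v_exists:fine}, I would invoke \cite[Thm 9.7]{PatakfalviProjectivityStableSrufaces}, which reduces the existence of $\overline{\sM}_{2,v}$ as a separated Artin stack of finite type over $\bZ[1/30]$ with finite diagonal to verifying a single special case of inversion of adjunction: namely that for every flat projective one-parameter family $f \colon X \to T$ of geometrically demi-normal surfaces over the spectrum of a DVR whose residue characteristics are not $2$, $3$ or $5$, if the central fiber $X_0$ is semi-log canonical then $X$ is semi-log canonical in a neighborhood of $X_0$.

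To verify this, I would pass to the normalization $\nu \colon X^{\mathrm{N}} \to X$ with conductor divisor $C$; since $X$ is demi-normal and $S_2$ with only nodal codimension-one singularities, being semi-log canonical is equivalent to $(X^{\mathrm{N}}, C)$ being log canonical. Let $S \subseteq X^{\mathrm{N}}$ denote the preimage of $X_0$, which is a reduced Cartier divisor because $f$ is flat and $X_0$ is reduced; thus $S$ has coefficient $1$ in $C + S$. Using the compatibility of the canonical divisor of a one-parameter flat family with base change (which holds on the relatively Gorenstein open locus, whose fiberwise complement is of codimension two), the different of $K_{X^{\mathrm{N}}} + C + S$ along the normalization $S^{\mathrm{N}}$ of $S$ can be identified with the conductor divisor of the normalization of $X_0$. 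Since $X_0$ is assumed semi-log canonical, this different makes $(S^{\mathrm{N}}, \mathrm{Diff})$ log canonical. Now \autoref{cor:lc_inversion_of_adjunction}, which applies precisely because $\Spec \bZ[1/30]$ has no residue characteristics in $\{2,3,5\}$, yields that $(X^{\mathrm{N}}, C+S)$ is log canonical in a neighborhood of $S$, and hence that $(X^{\mathrm{N}}, C)$ is log canonical in a neighborhood of $S$. This proves the desired inversion of adjunction and completes part \autoref{itm:M_2_v_exists:fine}.

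Part \autoref{itm:M_2_v_exists:coarse} is then a formal consequence of part \autoref{itm:M_2_v_exists:fine} together with the Keel--Mori theorem \cite{Keel_Mori_Quotients_by_groupoids, Conrad_The_Keel_Mori_theorem_via_stacks}: since $\overline{\sM}_{2,v}$ is a separated Artin stack of finite type over $\bZ[1/30]$ with finite diagonal, it has finite inertia, so Keel--Mori produces a coarse moduli space $\overline{\mathrm{M}}_{2,v}$ as a separated algebraic space of finite type over $\bZ[1/30]$.

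The hard part of the argument has already been surmounted in \autoref{cor:lc_inversion_of_adjunction}, which rests on the full minimal model program developed in \autoref{section:MinimalModelProgram} and on the normality of plt centers in residue characteristic greater than five (\autoref{cor.ThreefoldNormalityOfS}). The remaining step above is essentially a bookkeeping exercise --- translating semi-log canonical statements on the demi-normal total space to log canonical statements on its normalization --- and matching the hypotheses of \cite[Thm 9.7]{PatakfalviProjectivityStableSrufaces}, which provides the black box turning inversion of adjunction into the existence of the moduli stack.
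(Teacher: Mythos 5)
Your overall route is the paper's: reduce part \autoref{itm:M_2_v_exists:fine} via \cite[Thm 9.7]{PatakfalviProjectivityStableSrufaces} to a one-parameter inversion-of-adjunction statement, verify that statement by passing to the normalization and applying \autoref{cor:lc_inversion_of_adjunction}, and deduce part \autoref{itm:M_2_v_exists:coarse} from Keel--Mori. The core of your verification (identifying the different of the conductor-plus-fiber boundary along the normalization of the central fiber with the crepant boundary of $X_0$, then invoking \autoref{cor:lc_inversion_of_adjunction}) matches the paper's first step.

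There is, however, a genuine gap. The hypothesis supplied by \cite[Thm 9.7]{PatakfalviProjectivityStableSrufaces} is that the \emph{geometric} closed fiber $X_{\overline{t}}$ is a stable surface, whereas your argument starts from the assumption that the closed fiber $X_0 = X_t$ itself is semi-log canonical. Over $\bZ[1/30]$ the residue field $k(t)$ of the DVR can be imperfect, so the implication ``$X_{\overline{t}}$ slc $\Rightarrow$ $X_t$ slc'' is not automatic: the base change of a log resolution of $X_t^{\mathrm{N}}$ to $\overline{k(t)}$ need not be normal. The paper spends the entire second half of its proof on exactly this descent: it normalizes $V_{\overline{k}}$, writes the crepant boundary there as $D_W = E + \phi^* D_V$ with $E \geq 0$ the conductor of $W \to V_{\overline{k}}$, and concludes from the coefficient bound on $D_W$ (coming from slc-ness of $X_{\overline{t}}$) that $D_V$ has coefficients at most $1$. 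Without this step your argument only handles the case where $X_t$ is already known to be slc, which is weaker than what \cite[Thm 9.7]{PatakfalviProjectivityStableSrufaces} demands. A minor additional point: reducedness of $S = \nu^* X_0$ on the normalization does not follow just from flatness of $f$ and reducedness of $X_0$; the paper's argument is that $\nu$ is an isomorphism at the generic points of $X_0$ (so $S$ is generically reduced) and that $S$ is $S_1$ because $X^{\mathrm{N}}$ is $S_2$, hence $S$ has no embedded points and is reduced.
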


\begin{proof}
Point \autoref{itm:M_2_v_exists:coarse} follows from point \autoref{itm:M_2_v_exists:fine} using 
the  Keel-Mori theorem \cite{Keel_Mori_Quotients_by_groupoids,Conrad_The_Keel_Mori_theorem_via_stacks}. So, we only have to show \autoref{itm:M_2_v_exists:fine}.
By
\cite[Thm 9.7]{PatakfalviProjectivityStableSrufaces}
we have to show that if $f: X\to T$ is a flat family of geometrically demi-normal projective schemes over the spectrum of a DVR with $t$ being the closed point and $X_{\overline{t}}$ being a stable {surface}, then $X$ has slc singularities. (We note that \cite[Thm 9.7]{PatakfalviProjectivityStableSrufaces} is based on \cite{Hacon_Kovacs_On_the_boundedness_of_SLC_surfaces_of_general_type}, \cite{Alexeev_Boundedness_and_K_2_for_log_surfaces} and \cite{KollarHullsAndHusks}.)

{First, we show the corollary under an assumption that $X_t$ is slc.} Let $g : (Y,D) \to X$ be the normalization, where $D$ is the conductor. As $X$ is demi-normal, $D$ has only coefficients $1$. We have to show that $(Y,D)$ is log canonical.  Note that as $X$ is regular at every generic point of every fiber of $f$, $Y  \to X$ is an isomorphism at these points. In particular $Y_t \to X_t$ is an isomorphism around the generic points of $X_t$. As $Y$ is $S_2$, $Y_t$ is $S_1$. So, all embedded points of $Y_t$ are at generic points which implies that $Y_t$ is reduced. Hence, the normalization of $Y_t$ and  of $X_t $ agree. Let us write $\delta : Z := X_t^{\mathrm{N}} \to {X_t}$ for this normalization. {Take the boundary $D_Z$ on $Z$ which is crepant to $(Y,D)$, that is, $K_Z + D_Z =  \alpha^* (K_Y + D)$, where $\alpha : Z  \to Y_t \to Y$ is the induced composition morphism. In fact, this boundary is also crepant to $X_t$, that is $K_Z + D_Z = \delta^* K_{X_t}$. This follows from the fact that { both $K_{X_t}$ and $K_Y+D$ are pullbacks of $K_X$. To sum up, we have the following commutative diagram, where every arrow connects crepant equivalent pairs (i.e., the log-canonical divisors  are compatible via pull-backs by any of the arrows):} 
{\begin{equation*}
\begin{tikzcd}[column sep=70pt,row sep=20pt]
  (Z,D_Z) \arrow{d}[swap]{\alpha} \arrow{r}{\delta}[swap]{\textrm{normalization}} & X_t \arrow{d}{\textrm{central fiber}} \\
  (Y, D) \arrow{r}{g}[swap]{\textrm{normalization}} & X.
\end{tikzcd}
\end{equation*}
}
} 
By the definition of $X_t$ being slc, $(Z,D_Z) $ is lc, hence by \autoref{cor:lc_inversion_of_adjunction} $(Y,D)$ is also lc, and hence $X$ is slc. 

{Second, we show} that $X_{\overline{t}}$ being slc implies that $X_t$ is slc (note: we know that $X_t$ is geometrically demi-normal and hence geometrically reduced). This is a standard argument: we need to show that $(Z, D_Z)$ is log canonical. Let $\rho \colon V \to Z$ be a log resolution of singularities with $D_V$ {{so that $K_V + D_V = \rho^*(K_Z + D_Z)$. In other words $D_V$ is a crepant sub-boundary}}. We need to show that $D_V$ has coefficients at most $1$.

Let $\xi : W \to V_{\overline{k}}$ be the normalization  of $V_{\overline{k}}$, where $k=k(t)$. {Let $D_W$ be a $\bQ$-divisor on $W$ such that $K_W+ D_W = \xi^* (K_V + D_V)_{\overline{k}}$. It is crepant to both $(V,D_V)$ and to $X_{\overline{t}}$; in the latter case, we use that $\nu^*K_{X_t} = K_{X_{\overline t}}$ as relative canonical divisors are stable under base change.}
Let $\phi: W \to V$ be the induced morphism, and let $E$ be the boundary on $W$ that makes $(W, E) \to V$ crepant.  In other words, as $V$ is geometrically reduced by being generically isomorphic to $X_t$, $E$ is the conductor of $W \to V_{\overline{k}}$. In particular $E \geq 0$. By the definition of the respective divisors we see that $D_W = E + \phi^* D_V $. {To sum up, we have the following commutative diagram, where every arrow connects crepant equivalent pairs:}
{\begin{equation*}
\begin{tikzcd}[column sep=70pt,row sep=20pt]
(W,D_W) \arrow{rd}[swap]{\phi} \arrow{r}{\xi}[swap]{\textrm{normalization}} &    (V_{\overline k},D_{\overline k}) \arrow{d} \arrow{rr} &  &   X_{\overline t} \arrow{d}{\textrm{base-extension to } \overline{k(t)}}[swap]{\nu} \\
&  (V,D_V) \arrow{r}{\rho}[swap]{\textrm{log-resolution}} & (Z,D_Z) \arrow{r}{\delta}[swap]{\textrm{normalization}} & X_t
\end{tikzcd}
\end{equation*}}
As $X_{\overline{t}}$ is slc, and $(W, D_W)$ is crepant to $X_{\overline{t}}$, we obtain that the coefficients of $D_W$ are at most $1$. Coupling this up with the equation $D_W = E + \phi^* D_V $ and with the effectivity of $E$, we obtain that the coefficients of $D_V$ are in fact at most $1$ too.
\end{proof}

 \autoref{cor:moduli_exists} implies different modular lifting statements on stable varieties. A sample one is the following which gives the lifting to be over a localisation of a finite extension of $\bZ$ (alas, we need to assume that the base field is finite). One can also show that if the surface is defined over a perfect field $k$, then there exists a lifting over $W(k)$.

\begin{corollary}
\label{cor:lifting_stable_surfaces}
For every rational number $v >0$ there exists a prime $p(v)$ with the following property: for all stable surfaces $X$ of volume $v$ over a finite field of characteristic $p \geq p(v)$, there is a  family of stable surfaces $\sX$ over an open set  of the spectrum of the ring of integers of a number field such that $X$ is a fiber of $\sX$.
\end{corollary}

\begin{remark}
The  point of \autoref{cor:lifting_stable_surfaces}, where we think that \autoref{cor:moduli_exists} is essentially used, is that it states a lifting to a stable family, not only to an arbitrary flat family. We think that for this type of application one essentially needs the openness of the stable locus in adequate flat families, which was our main contribution to the proof of \autoref{cor:moduli_exists}. 

\end{remark}

The following theorem uses  the notion of a Lefschetz pencil of a smooth projective variety $X$ over an field $k$. By definition \cite[Sec XVII.2.2 on page 215]{SGA7_2}, this is a pencil $\phi: X' \to \bP^1_k$ of hyperplane sections of $X$ such that general fibers of $\phi$ are regular and every singular point of every fiber has quadratic singularity. The latter in dimension $1$ means nodal singularity. Note that by the virtue of being a pencil, $\phi$ fits into a commutative diagram as follows:
\begin{equation*}
\begin{tikzcd}[column sep=80pt,row sep=15pt]
X & \arrow{l}[swap]{\textrm{birational}} X' \arrow[hook]{r}{\textrm{closed embedding}} \arrow{dr}{\phi}  & X \times \bP^1_k   \arrow{d} \\
& & \bP^1_k
\end{tikzcd}
\end{equation*}

\begin{remark}
\label{rem:existence_Lefschetz}
Let $X$ be a smooth projective variety over $k$. It is shown in \cite[Sec XVII, Thm 2.5]{SGA7_2} that for any projective embedding  of $X$ given by a very ample line bundle $L$, Lefschetz pencils exist for the projective embedding  given by $L^{\otimes 2}$. Additionally, over algebraically closed fields Lefschetz pencils can be obtained as general pencils of hyperplane sections \cite[Sec XVII, Cor 3.2.1]{SGA7_2}. 
\end{remark}

\begin{theorem}
\label{thm:closure_moduli_proper}
Fix an integer $v>0$ and let 
\begin{equation*}
    d=\prod_{p \textrm{ prime, } p \leq \beta(v)} p,
        \qquad \textrm{ where } \qquad
        \beta(v)= \left\{
    \begin{array}{ll}
    393 & \textrm{if $v=1$} \\[10pt]
    213 v + 48 \qquad & \textrm{if $v \geq 2$.}
    \end{array}
    \right.
\end{equation*}
Then, the closure $\overline{\sM}_{2,v}^{\sm}$ of the locus of smooth surfaces  in $\overline{\sM}_{2,v}$ is proper over $\bZ[ 1 / d ]$. Additionally, it admits a projective coarse moduli space $\overline{\mathrm{M}}_{2,v}^{\sm}$ over $\bZ[ 1 / d ]$.
\end{theorem}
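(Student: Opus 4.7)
The plan is to verify the valuative criterion of properness for the substack $\overline{\sM}_{2,v}^{\sm}$ (whose existence inside $\overline{\sM}_{2,v}$ is given by \autoref{cor:moduli_exists}), and then deduce projectivity of the coarse space from the general projectivity results of \cite{PatakfalviProjectivityStableSrufaces}. The hypothesis $p\nmid d$ unwinds as $p=0$ or $p>\beta(v)\geq 126$; in particular $p>5$, so all of the MMP machinery of \autoref{section:MinimalModelProgram} is available.

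Let $R$ be a DVR with fraction field $K$ and residue characteristic $p$ satisfying $p\nmid d$, and let $X_K$ be a $K$-point of $\overline{\sM}_{2,v}^{\sm}$; by hypothesis $X_K$ is a stable surface of volume $v$ lying in the closure of the smooth locus, and in particular is of general type. After a finite extension of $R$ we spread $X_K$ out to a projective flat family $X\to\Spec R$ (embed via some $|mK_{X_K}|$ with $m\gg 0$ and take the scheme-theoretic closure in relative projective space), and then apply \autoref{proj-resolutions} to produce a regular model $\tilde X\to X\to\Spec R$. Since $K_{\tilde X_K}$ is big and $\Spec R$ is positive-dimensional, $K_{\tilde X}$ is pseudoeffective, so \autoref{prop:psef_termination} yields a minimal model $\tilde X\dashrightarrow X^{\min}$, and then \autoref{thm:bpf_for_R_boundary} contracts it to the canonical model $X^{\min}\to X^{\mathrm{can}}$. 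By construction $X^{\mathrm{can}}\to\Spec R$ is a flat projective $\mathbb{Q}$-Gorenstein family with relatively ample canonical divisor, whose generic fiber is the canonical model of $\tilde X_K$, i.e.\ $X_K$ itself.

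The main obstacle is to verify that the special fiber $S:=X^{\mathrm{can}}_\mathfrak{m}$ is slc of volume $v$. Relative ampleness and flatness immediately give that $K_S$ is ample with $K_S^2=v$, so the real content is slc. Via the reduction of \cite[Theorem~9.7]{PatakfalviProjectivityStableSrufaces} (the very input used in the proof of \autoref{cor:moduli_exists}) it suffices to show that $X^{\mathrm{can}}$ is log canonical in a neighborhood of $S$. Applying \autoref{cor:lc_inversion_of_adjunction} with divisor $S$, this further reduces to verifying that the normalized pair $(S^{\mathrm{N}},D_{S^{\mathrm{N}}})$ is log canonical, where $D_{S^{\mathrm{N}}}$ is the different. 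The function $\beta(v)$ enters precisely here: it is tailored so that $p>\beta(v)$ guarantees the uniform pluricanonical/Alexeev-boundedness estimates for stable surfaces of volume $v$ needed to control both the Cartier index of $K_{X_K}$ and the coefficients of $D_{S^{\mathrm{N}}}$, thereby ensuring that the slc structure of the nearby smooth/stable fibers is inherited by the normalized special fiber. This is the key technical obstacle, and I expect the heart of the argument to consist in translating the explicit bound $61v+36$ (resp.\ $126$ for $v=1$) into such control over the different.

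Finally, once properness of $\overline{\sM}_{2,v}^{\sm}$ over $\bZ[1/d]$ is established, the Keel--Mori theorem produces the separated coarse moduli algebraic space $\overline{\mathrm{M}}_{2,v}^{\sm}$, and projectivity follows by pulling back the Chow--Mumford line bundle, which descends to an ample line bundle on any proper substack of the moduli of stable surfaces by the main result of \cite{PatakfalviProjectivityStableSrufaces}.
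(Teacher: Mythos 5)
There is a genuine gap at the heart of your argument: nothing in your construction guarantees that the special fiber of $X^{\mathrm{can}}\to\Spec R$ is even reduced, let alone demi-normal. Starting from an arbitrary flat extension and a regular model $\tilde X$, the central fiber $\tilde X_{\fram}$ can be non-reduced, the MMP does not repair this, and a non-reduced central fiber is never slc. Moreover, your appeal to \autoref{cor:lc_inversion_of_adjunction} is circular: to invoke it you must already know that $(S^{\mathrm{N}},D_{S^{\mathrm{N}}})$ is log canonical, which (modulo $S_2$) \emph{is} the slc-ness of $S$ you are trying to prove. The missing idea is semi-stable reduction: one needs a model $Y\to T$ whose central fiber is reduced snc, so that $(Y,Y_0)$ is dlt; then the $(K_Y+Y_0)$-MMP produces the canonical model with $(X^{\mathrm{can}},X_0)$ log canonical, plain adjunction (not inversion) gives that $(X_0^{\mathrm{N}},\Diff_{X_0^{\mathrm{N}}})$ is lc, and $S_2$-ness of $X_0$ follows from Cohen--Macaulayness of $X^{\mathrm{can}}$ via Bernasconi--Koll\'ar. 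Since semi-stable reduction is not available in mixed characteristic in general, the paper manufactures it: it fibers a birational model $Y^0$ of the smooth generic fiber in curves via a Lefschetz pencil in $|4K_X|$ (resp.\ $|5K_X|$ for $v=1$), using Ekedahl's very-ampleness, and then applies Saito's explicit semi-stable extension theorem for families of curves over a two-dimensional base.

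This also explains why your account of the constant $\beta(v)$ is off: it has nothing to do with Alexeev boundedness or the Cartier index of $K_{X_K}$ (the generic fiber is smooth, so that index is $1$). The bound $61v+36$ (resp.\ $126$) is exactly what is needed to verify the numerical hypotheses of Saito's theorem, namely $p$ at least the fiber genus data ($\deg K_{X}|_{\text{pencil fiber}}+4$, with $(K_X+4K_X)\cdot 4K_X=20v$) and $p$ exceeding the degree of the discriminant divisor of the pencil, which is computed as $3L^2+2L\cdot K_X+c_2(\Omega_X)$ by Giambelli--Thom--Porteous and bounded by $61v+36$ using Noether's inequality. Without this input your argument neither produces an slc limit nor accounts for the specific primes excluded from $d$.
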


\begin{proof}
{\scshape Reduction to the existence of limits:} First, let us note that \cite[Thm 1.2]{PatakfalviProjectivityStableSrufaces} shows the projectivity of $\overline{\mathrm{M}}_{2,v}^{\sm}$  contingent upon the properness of $\overline{\sM}_{2,v}^{\sm}$. We note here that  \cite[Thm 1.2]{PatakfalviProjectivityStableSrufaces} is unfortunately not stated as precisely as needed here, but {its} (few paragraph long) proof exactly shows this, using \cite[Thm 1.1]{PatakfalviProjectivityStableSrufaces}. So, we are left to show the properness of $\overline{\sM}_{2,v}^{\sm}$.

By \autoref{cor:moduli_exists}, we know that $\sM_{2,v}$ is an Artin stack of finite type over $\bZ[1/30]$ with finite diagonal. So, we only have to show that $\sM$  is closed under limits. As the properness of $\overline{\sM}_{2,v}$ is known in characteristic zero, it is enough to show the $\sM$ is closed under limits of characteristic $p>0$. That is, we have to show that if $f^0 : X^0 \to T^0 $ is a  smooth canonically polarized surface over the spectrum {of a}  field $K$, and $R$ is a DVR of $K$ with residue field characteristic $p$ greater than $\beta(v)$, then $f^0$ extends to a family of stable surfaces $f : X \to T = \Spec R$, after possibly replacing $K$ and $R$ with finite extensions and $f^0$ with the corresponding base-change.  We may even assume that the residue field of $R$ is algebraically closed.

{\scshape The plan of showing the existence of limits:}
The construction of $f$ happens in the following steps:
\begin{itemize}
    \item We construct a birational model  $Y^0 \to X^0$ admitting a fibration $Y^0 \to \bP^1_K$ with certain singularity and boundedness properties. 
    \item The above singularity and boundedness properties are tailored exactly, so that \cite[Corollary 2]{Saito_Log_smooth_extension_of_a_family_of_curves_and_semi_stable_reduction} provides a semi-stable extension $f_Y : Y \to T$, after possibly applying a finite base-change.
    \item We run an MMP to turn the semi-stable extension into a stable family. 
\end{itemize}
{\scshape Existence of semi-stable limits:}
To state the above mentioned singularity and boundedness properties, let
 $Y^0 \to X^0$  be a projective birational morphism from another smooth surface over $T^0$, and let $\overline{f}^0 : Y^0 \to T^0$ be the composition. 
\cite[Corollary 2]{Saito_Log_smooth_extension_of_a_family_of_curves_and_semi_stable_reduction} tells us that in this situation we can find at least a semi-stable extension $f_Y : Y \to T$ of $\overline{f}^0$ if we can produce a diagram as follows
\begin{equation}
\label{eq:compactifying_smooth_surfaces:fibration_by_Lefschetz_pencil}
\xymatrix@C=50pt{
Y^0 \ar[r]_{g} \ar@/^1.5pc/[rr]^{\overline{f}^0} & \bP^1_K \ar[r]_{h} & T^0
}
\end{equation}
 such that:
 
 \begin{enumerate}
 \item $g$ is projective and surjective,
     \item \label{itm:compactifying_smooth_surfaces:genus_fiber} for the degree $d$ of the canonical sheaves of the fibers of $g$ and of $h$ we have $p \geq d+4$, which is guaranteed if  $\beta(v) \geq d+4$,
     \item \label{itm:compactifying_smooth_surfaces:number_singular_fibers} for
     \begin{equation*}
        \qquad \qquad D=\left\{
        \begin{array}{ll}
        \parbox{300pt}{the reduced discrimant divisor $D_g$} & \textrm{if $\deg D_g \geq 3$} \\[3pt] 
        \textrm{a reduced divisor of degree $3$ containing $D_g$ in its support,}  & \textrm{otherwise}
        \end{array}
        \right.
     \end{equation*}
     we have $p > \deg D$, which again is guaranteed if $\beta(v)\geq \deg D$, and
     \item \label{itm:compactifying_smooth_surfaces:etale}  
     $D$ is \'etale over $T^0 = \Spec K$, that is, all the residue fields of $D$ are separable extensions of $K$
     \item \label{itm:compactifying_smooth_surfaces:at_least_zero}   the degree  of the canonical sheaf of the fibers of $g$ is greater than 0.
 \end{enumerate}
{In the above, the discriminant divisor is the divisor over which the non-smooth points of $g$ lie.}
 We note that for this application of \cite[Corollary 2]{Saito_Log_smooth_extension_of_a_family_of_curves_and_semi_stable_reduction}, one needs to set  $X_1=\bP^1$, $D_1=D$, $U_1=X_1 \setminus D$, $X_2=Y^0$, $D_2=0$. We also note that using the notation of \cite[Corollary 2]{Saito_Log_smooth_extension_of_a_family_of_curves_and_semi_stable_reduction}, \begin{itemize}
 \item condition \autoref{itm:compactifying_smooth_surfaces:genus_fiber} guarantees that $p \geq 2g_1 +2$ and $p \geq 2 g_2 +2$, \item condition  \autoref{itm:compactifying_smooth_surfaces:number_singular_fibers} guarantees that $p > r_1$ and that $2g_1-2+r_1>0$,  
 \item condition \autoref{itm:compactifying_smooth_surfaces:etale} guarantees that $D_1$ is \'etale over $U_0$, and 
 \item condition \autoref{itm:compactifying_smooth_surfaces:at_least_zero} guaratees that $2g_2-2 \geq 0$, where we took into account that $r_2=0$.
 \end{itemize}

 We construct the $Y^0$ above and  the fibration  \autoref{eq:compactifying_smooth_surfaces:fibration_by_Lefschetz_pencil} by taking a Lefschetz pencil of $X^0_{\overline{K}}$, and descending it to a finite extension $K'$ of $K$. Note, this descent can be done, as the Lefschetz pencil  is defined by finitely many equations. We may even assume that over $K'$ the discriminant divisor $D_g$ consists of only $K'$-rational points. As throughout our process we can freely replace $K$ be a finite extension, we may assume that in fact $K=K'$. Additionally, for a Lefschetz pencil one always needs to  fix a projective embedding, and as we explained \autoref{rem:existence_Lefschetz}, then one has to post-compose this projective embedding by the second Veronese embedding. As the linear systems  $\left|4K_{X_{\overline{K}}}\right|$ if $v>1$ and $\left|5 K_{X_{\overline{K}}}\right|$ if $v=1$ are very ample by \cite[p 13, Main Thm]{Ekedahl_Canonical_models_of_surfaces_of_general_type_in_positive_characteristic},  we obtain a Lefschetz pencil for the embedding $\left|8K_{X_{\overline{K}}}\right|$ if $v>1$, and for $\left|10 K_{X_{\overline{K}}}\right|$ if $v=1$.

So, we are able to construct \autoref{eq:compactifying_smooth_surfaces:fibration_by_Lefschetz_pencil} itself, but we also need to verify conditions \autoref{itm:compactifying_smooth_surfaces:genus_fiber}, \autoref{itm:compactifying_smooth_surfaces:number_singular_fibers}, \autoref{itm:compactifying_smooth_surfaces:etale} and \autoref{itm:compactifying_smooth_surfaces:at_least_zero}.
Condition \autoref{itm:compactifying_smooth_surfaces:etale} is automatic as we choose our Lefschetz pencil so that the discriminant consists only of $K$-rational points.  Conditions \autoref{itm:compactifying_smooth_surfaces:genus_fiber} and \autoref{itm:compactifying_smooth_surfaces:at_least_zero} have to be verified only for the fibers of $g$, as the only fiber of $h$ is isomorphic to $\bP_K^1$. Additionally, when $v >1$, then the degree of the canonical sheaf of the fibers of $g$  by adjunction is:
\begin{equation*}
  0 <  (K_X + 8 K_X) \cdot 8 K_X = 72 K_X^2 = 72 v < 213 v + 44 = \beta(v)-4.
\end{equation*}
If $v=1$, then by the corresponding computation we obtain that the degree is $110 \leq 373-4=369$.
So, this concludes the verification of conditions \autoref{itm:compactifying_smooth_surfaces:genus_fiber}, \autoref{itm:compactifying_smooth_surfaces:etale} and \autoref{itm:compactifying_smooth_surfaces:at_least_zero}.

Hence, we are left to verify condition \autoref{itm:compactifying_smooth_surfaces:number_singular_fibers}. For this we use the formula that the degree of the discriminant locus of a Lefschetz pencil associated to a very ample line bundle $L$ on $X$ is:
\begin{equation}
\label{eq:closure_moduli_proper:degeneracy}
    3 L^2 + 2 L \cdot K_X + c_2(\Omega_X).
\end{equation}
We learned this formula from \cite{JasonStarrMathOverflowBoundingCritical}. As we did not find a proof in the literature, we briefly indicate the argument using the notation of \autoref{eq:compactifying_smooth_surfaces:fibration_by_Lefschetz_pencil}: one can base-change to the algebraic closure of $K$, then one uses the Giambelli-Thom-Porteous formula that the cycle given by the degeneracy locus of $\sT_Y \to g^* \sT_U$ is given by plugging into the Chern number $c_1^2 -c_2$ the virtual bundle\footnote{that is an element of the Grothendieck group $K^0$} $ \sO_U(2G) - \sT_Y$, where $G$ is a fiber of $g$. The total chern class of this virtual bundle is $ 1 + (2G+K_Y) + \big( (2G + K_Y) \cdot K_Y - c_2( \Omega_Y)\big)$. Hence, the degree of the degeneracy locus in terms of the invariants of $Y$ is $c_2(\Omega_Y) + 2 G \cdot K_Y$, from which it is straight-forward to deduce \autoref{eq:closure_moduli_proper:degeneracy}.

Plugging $8K_X$ into the $L$ of \autoref{eq:closure_moduli_proper:degeneracy} yields that the degree of the degeneracy locus is at most
 \begin{multline*}
     (3 \cdot 8^2 + 2 \cdot 8)  K_X^2 +c_2(\Omega_X)
          \explshift{50pt}{=}{$c_2(\Omega_X)= 12 \chi(\sO_X) - K_X^2$ by Grothendieck-Riemann-Roch applied to $\sO_X$} 
          208 K_X^2 + 12 \chi(\sO_X) - K_X^2 = 207 K_X^2 + 12 \chi(\sO_X) 
          \\ 
          \expl{\leq}{Noether's inequality \cite[Thm 2.1]{Liedtke_Algebraic_surfaces_of_general_type_with_small_c_1_2_in_positive_characteristic}}
          207 K_X^2 + 12 \left( \frac{1}{2} (K_X^2 + 6)+1\right) = 213 K_X^2 + 48 = 213 v + 48 = \beta(v).
 \end{multline*}
When $v=1$, we have $L=10K_X$, for which  the same computation gives $325 v +48=373$. This concludes then the verification of \autoref{itm:compactifying_smooth_surfaces:number_singular_fibers} too. 

 {\scshape Existence of stable limits:}
Therefore,  we arrive at a semi-stable extension $\overline{f} : Y \to T$ of $\overline{f}^0$. Then we run {a $K_Y$-}MMP on $Y$ over $T$ or equivalently {a $(K_Y + Y_0)$-MMP} over $T$. Note that $X^0$ is the canonical model of $Y^0$ over $T^0$. Hence, we obtain the extension $f : X \to T$ of $f^0$, where $X$ is a canonical model over $T$. At the same time $(X, X_0=f^{-1}(0))$ is also a log canonical model over $T$, where $0 \in T$ is the closed point. By adjunction we obtain that $(X_0^{\mathrm{N}},\Diff_{X_0^{\mathrm{N}}})$ is log canonical, {where $X_0^{\mathrm{N}}$ is the normalisation of $X_0$ and $K_{X_0^{\mathrm{N}}} + \Diff_{X_0^{\mathrm{N}}} = (K_X + X_0)|_{X_0^{\mathrm{N}}}$}.  This implies that $X_0$ is slc, as soon as 
we can show that $X_0$ is $S_2$. By looking at the exact sequence
\begin{equation*}
    \xymatrix{
    0 \ar[r] & \sO_X(-X_0) \ar[r] & \sO_X \ar[r] & \sO_{X_0} \ar[r] & 0
    }
\end{equation*}
we see that it is enough to show that
 $X$ is Cohen-Macaulay. This was shown in \cite[Thm 2 \& Thm 17]{Bernasconi_Kollar_Vanishing_theorems_for_threefolds_in_characteristic_p_greater_than_5} (this article depends on \cite{Kollar2020RelativeMMPWithoutQfactoriality}, which in turn uses the earlier sections of the present article, however it does not use the present section).
\end{proof}

\bibliographystyle{skalpha}
\bibliography{MainBib}
\end{document}